\newcommand{\qbinom}[3][q]{\genfrac[]{0pt}0{#2}{#3}_{#1}}
\newcommand{\ul}[1]{\underline{#1}}
\newcommand{\plink}[1]{\hypertarget{#1}{}\label{page:#1}}
\numberwithin{equation}{section}
\newtheorem{theorem}{Theorem}[section]
\newtheorem{maintheorem}[theorem]{Main Theorem}
\newtheorem{proposition}[theorem]{Proposition}
\newtheorem{conjecture}[theorem]{Conjecture}
\newtheorem{corollary}[theorem]{Corollary}
\newtheorem{lemma}[theorem]{Lemma}
\DeclareMathOperator{\ad}{ad}
\DeclareMathOperator{\Ann}{Ann}
\DeclareMathOperator{\End}{End}
\DeclareMathOperator{\Ad}{Ad}
\DeclareMathOperator{\Hom}{Hom}
\DeclareMathOperator{\id}{id}
\DeclareMathOperator{\can}{can}
\DeclareMathOperator{\sgn}{sgn}
\newcommand{\la}{\langle}
\newcommand{\ra}{\rangle}
\newcommand{\lr}[2]{\la #2,#1\ra}
\newcommand{\lra}[2]{\la #1,#2\ra}
\newcommand{\beps}{{\boldsymbol\epsilon}}
\theoremstyle{definition}
\newtheorem{remark}[theorem]{Remark}
\newtheorem{example}[theorem]{Example}
\newtheorem{definition}[theorem]{Definition}
\newcommand{\lact}{\triangleright}
\newcommand{\ract}{\triangleleft}
\newcommand{\invprod}{\diamond}
\newcommand{\ZZU}{\prescript{}{\ZZ}U}
\newcommand{\lie}[1]{\mathfrak{#1}}
\newcommand{\tensor}{\otimes}
\newcommand{\fgfrm}[2]{\llparenthesis#1,#2\rrparenthesis}
\DeclareFontFamily{U}{mathb}{\hyphenchar\font45}
\DeclareFontShape{U}{mathb}{m}{n}{
      <5> <6> <7> <8> <9> <10> gen * mathb
      <10.95> mathb10 <12> <14.4> <17.28> <20.74> <24.88> mathb12
      }{}
\DeclareSymbolFont{mathb}{U}{mathb}{m}{n}
\DeclareMathSymbol{\smallsquare}        {2}{mathb}{"05}
\def\ZZ{\mathbb{Z}}
\def\QQ{\mathbb{Q}}
\def\CC{\mathbb{C}}
\def\bb{\mathfrak{b}}
\def\gg{\mathfrak{g}}
\def\nn{\mathfrak{n}}
\def\hh{\mathfrak{h}}
\def\nn{\mathfrak{n}}
\def\kk{\Bbbk}
\begin{document}
\newgeometry{margin=1in}

\title{Double canonical bases}
\author{Arkady Berenstein}
\address{\noindent Department of Mathematics, University of Oregon,
Eugene, OR 97403, USA} \email{arkadiy@math.uoregon.edu}

\author{Jacob Greenstein}
\address{Department of Mathematics, University of
California, Riverside, CA 92521.} 
 \email{jacob.greenstein@ucr.edu}

\thanks{This work was partially supported by the NSF grants DMS-1101507 and~DMS-1403527 (A.~B) 
and by the Simons foundation collaboration grant no.~245735~(J.~G.)}
 
\begin{abstract}
We introduce a new class of bases for quantized universal enveloping algebras~$U_q(\gg)$ and 
other doubles attached to semisimple and Kac-Moody Lie algebras. These bases contain dual 
canonical bases of upper and lower halves of $U_q(\gg)$ and are invariant under many symmetries
including all Lusztig's symmetries if~$\gg$ is semisimple. It also turns out that a part 
of a double canonical basis of~$U_q(\gg)$ spans its center and consists of higher Casimirs which 
suggests physical applications. 
\end{abstract}

\maketitle

\tableofcontents

\section{Introduction and main results}

The goal of this paper is to construct a canonical basis ${\bf B}_\gg$ of a quantized enveloping algebra $U_q(\gg)$ where $\gg$ is a semisimple or 
a Kac-Moody Lie algebra. 
For instance, if $\gg=\lie{sl}_2$, then $\mathbf B_\gg$ is given by
\begin{equation}
\label{eq:canonical basis sl_2}
{\bf B}_{\lie{sl}_2}=\{q^{n(m_- -m_+)}K^n C^{(m_0)}F^{m_-} E^{m_+}\,|\,n\in \ZZ,\,m_0,m_\pm\in \ZZ_{\ge 0},\min(m_-,m_+)=0\}\ ,
\end{equation}
where we used a slightly non-standard presentation of $U_q(\lie{sl}_2)$ 
(obtained from the more familiar one by rescaling generators $E\mapsto (q^{-1}-q)E$, $F\mapsto (q-q^{-1})F$)
$$
U_q(\lie{sl}_2):=\la E,F,K^{\pm 1}\,:\, KEK^{-1}=q^2E,\, KFK^{-1}=q^2F,\, EF-FE=(q^{-1}-q)(K-K^{-1})\ra.
$$
Here the $C^{(m)}$ are central elements of~$U_q(\lie{sl}_2)$
defined by $C^{(0)}=1$, $C=C^{(1)}=EF-q^{-1}K-qK^{-1}=FE-qK-q^{-1}K^{-1}$  and $C\cdot C^{(m)}=C^{(m+1)}+C^{(m-1)}$ for $m\ge 1$. 

We call ${\bf B}_{\lie{sl}_2}$ double canonical because of the following  remarkable properties (we will explain later, in~\S\ref{subs:sl_2}
the reason why we must use Chebyshev polynomials $C^{(m)}$  
instead of $C^m$).

\begin{enumerate}[{\bfseries 1.}]
\item Each element of ${\bf B}_{\lie{sl}_2}$ is homogeneous and is fixed by the  bar-involution $u\mapsto \overline u$, 
which is the $\mathbb Q$-anti-automorphism of $U_q(\lie{sl}_2)$ 
given by $\bar q=q^{-1}$, $\bar E=E$, $\bar F=F$, $\bar K=K$.

\item ${\bf B}_{\lie{sl}_2}$ is invariant, as a set, under the $\QQ(q)$-linear anti-automorphisms $u\to u^*$ and $u\to u^t$ given respectively by 
$E^*=E$, $F^*=F$, $K^*=K^{-1}$ and $E^t=F$, $F^t=E$, $K^t=K$; and under the rescaled Lusztig's symmetry $T$   given by 
$T(E)=qFK^{-1}$, $T(F)=q^{-1}KE$, $T(K)=K^{-1}$. 

\item Each monomial in $E,F$, $K^{\pm 1}$ 
is in the $\ZZ_{\ge 0}[q,q^{-1}]$-span of ${\bf B}_{\lie{sl}_2}$.  

\item ${\bf B}_{\lie{sl}_2}$ is compatible with the filtered mock Peter-Weyl components $\mathcal J_s=\sum\limits_{r=0}^s(\ad U_q(\lie{sl}_2))(K^{r})$
(see e.g. \cite{joseph-mock}), where $\ad$ denotes an adjoint action of the Hopf algebra $U_q(\lie{sl}_2)$ on itself. 
\end{enumerate}
\begin{remark} It should be noted that this basis is rather different from Lusztig's canonical basis since 
the latter is in the  {\em modified} quantized enveloping algebra 
$\overset{\bullet}{U}_q(\lie{sl}_2)$, as defined in  \cite{L1}*{\S23.1.1} and we are not aware of any relationship between these
bases. It would also be interesting to compare our bases with the ones announced by Fan Qin in~\cites{QF,QF1}. Finally, 
it should be noted that John Foster constructed in~\cite{Fos} a basis of~$U_q(\lie{sl}_2)$ which differs from~\eqref{eq:canonical basis sl_2} in that
Chebyshev polynomials~$C^{(m)}$ are replaced by~$C^m$.
\end{remark}
We establish properties of~$\mathbf B_{\lie{sl}_2}$ in~\S\S\ref{subs:sl_2},\ref{subs:crystal} and~\S\ref{subs:braid-sl2}.

To construct ${\bf B}_\gg$ for any symmetrizable Kac-Moody Lie algebra~$\gg$ we need some notation. Fix a triangular
decomposition $\gg=\nn_-\oplus \hh\oplus \nn_+$ and let $\tilde \gg:=\gg\oplus \hh$, which we view as the Drinfeld double  of the Borel subalgebra $\bb_+=\nn_+\oplus \hh$. 
Let  $U_q(\tilde \gg)$ be the quantized enveloping algebra of $\tilde \gg$ of {\it adjoint type} over $\kk=\QQ(q^{\frac{1}{2}})$.
Thus, $U_q(\tilde\gg)$\plink{U_q(g)} is the $\kk$-algebra  generated by the $E_i$, $F_i$, $K_{\pm i}$,  $i\in I$
subject to the relations: ${\mathcal K}=\la K_{+i},K_{-i}\,:\,i\in I\ra$ is commutative 
and
\begin{gather}
\label{eq:commutation g}
[E_i,F_j]=\delta_{ij}(q_i^{-1}-q_i)(K_{+i}-K_{-i})
,\quad K_{\pm i} E_j =
q_i^{\pm a_{ij}}E_j K_{\pm i},\quad K_{\pm i} F_j =q_i^{\mp a_{ij}}F_jK_{\pm i}\ ,
\\
\label{eq:qserre}
\sum\limits_{r,s\ge 0,\,r+s=1-a_{ij}}\mskip-8mu (-1)^s  E_i^{\la r\ra}E_j E_i^{\la s\ra}= 
\sum\limits_{r,s\ge 0,\,r+s=1-a_{ij}}\mskip-8mu (-1)^s  F_i^{\la r\ra}F_j F_i^{\la s\ra}=0
\end{gather}
for all $i,j\in I$,  where  $A=(a_{ij})_{i,j\in I}$ is the Cartan matrix of $\gg$, the $d_i$ are
positive integers such that
$DA=(d_i a_{ij})_{i,j\in I}$ is symmetric, $q_i=q^{d_i}$, 
$X_i^{\la k\ra}:=\big(\prod\limits_{s=1}^k \la s\ra_{q_i}\big)^{-1} X_i^k$ and $\la s\ra_v=v^s-v^{-s}$.

\begin{remark}\label{rem:duality}
The reason for choosing such a non-standard presentation \eqref{eq:commutation g}-\eqref{eq:qserre} of $U_q(\tilde\gg)$ is that one can now view $U_q(\tilde\gg)$ as a quantized coordinate algebra of ${\mathcal O}_q(\tilde G^*)$, were $\tilde G^*$ is the Poisson dual group of the Lie group $\tilde G$ of $\tilde\gg$. This agrees with Drinfeld's observation that the dual Hopf algebra of the complete Hopf algebra $U_h(\tilde\gg^*)$ (where $\tilde\gg^*$ is the Lie dual bialgebra of the Lie bialgebra $\tilde\gg$) is, on the one hand,  ${\mathcal O}_h(\tilde G^*)$ and, on the other hand, is isomorphic to $U_h(\tilde\gg)$. In particular, our basis ${\bf B}_{\gg}$ will have a ``dual canonical'' flavor.  
\end{remark}

\plink{H^+_q(g)}
Our strategy for constructing $\mathbf B_\gg$ is as follows. First,  we define quantum Heisenberg algebras ${\mathcal H}_q^\pm(\gg)$ by ${\mathcal H}_q^\pm(\gg):=U_q(\tilde \gg)/\la K_{\mp i},i\in I\ra$. 
Then we use a variant of Lusztig's Lemma (Proposition~\ref{prop:LL}) to construct the double canonical basis $\bf B_\gg^+$ of ${\mathcal H}_q^+(\gg)$ (see Theorem~\ref{thm:circle} below). Furthermore, using a natural embedding of $\kk$-vector spaces $\iota_+:{\mathcal H}_q^+(\gg)\hookrightarrow U_q(\tilde\gg)$, which splits the canonical projection $\pi_+:U_q(\tilde \gg)\twoheadrightarrow {\mathcal H}_q^+(\gg)$ and the Lusztig's lemma variant again, we build the {\it double canonical basis} ${\bf B}_{\tilde \gg}$ of $U_q(\tilde \gg)$ out of $\iota_+(\mathbf B_{\gg}^+)$. Finally, the desired basis ${\bf B}_\gg$ is just the image of ${\bf B}_{\tilde \gg}$ under the canonical projection $U_q(\tilde \gg)\twoheadrightarrow U_q(\gg)=U_q(\tilde \gg)/\la K_{+i}K_{-i}-1, i\in I\ra$.

More precisely, by a slight abuse of notation we denote by $E_i,F_i$, $K_{+i}$ (respectively $K_{-i}$) the images of $E_i,F_i,K_{+i}$ (respectively $K_{-i}$) under the canonical projection $\pi_+:U_q(\tilde \gg)\twoheadrightarrow {\mathcal H}_q^+(\gg)$ (respectively under $\pi_-:U_q(\tilde \gg)\twoheadrightarrow {\mathcal H}_q^-(\gg)$).  \plink{subalg}It is obvious (and well-known) that, applying $\pi_\pm$ to the triangular decomposition 
$U_q(\tilde{\gg})={\mathcal K}_-\otimes {\mathcal K}_+\otimes U_q^-\otimes  U_q^+$, where $U_q^-=\la F_i\,:\,i\in I\ra$, 
$U_q^+=\la E_i\,:\,i\in I\ra$, ${\mathcal K}_\pm=\la K_{\pm i}\,:\,i\in I\ra$, one obtains a triangular decomposition 
$${\mathcal H}_q^\pm(\gg)={\mathcal K}_\pm \otimes U_q^-\otimes  U_q^+\ . $$

\plink{Gamma}
Let $\Gamma=\bigoplus_{i\in I} \mathbb Z_{\ge 0}\alpha_i$ be a free abelian monoid, $\widehat \Gamma=\Gamma\oplus\Gamma$
and set $\alpha_{-i}=(\alpha_i,0),
\alpha_{+i}=(0,\alpha_i)\in \widehat\Gamma$. 
Then it is easy to see that 
$U_q(\tilde\gg)$ and $\mathcal H_q^\pm(\gg)$ are graded by~$\widehat\Gamma$ via $\deg_{\widehat \Gamma} E_i=\alpha_{+i}$, $\deg_{\widehat \Gamma} 
F_i=\alpha_{-i}$ and 
$\deg_{\widehat\Gamma} K_{\pm i}=\alpha_{+i}+\alpha_{-i}$. Denote by $\mathbf K_{+}$ (respectively, $\mathbf K_{-}$) the submonoid of~$\mathcal K$ generated 
by the $K_{+i}$ (respectively, the $K_{-i}$), $i\in I$ and let $\mathbf K=\mathbf K_-\mathbf K_+$. Sometimes it is convenient to regard $U_q^+$ as graded by~$\Gamma$.

Denote by ${\bf B}_{\nn_\pm}$\plink{B pm} the {\it dual canonical basis} of $U^\pm_q$ (see \cite{L1}*{Chapter~14} and Section~\ref{sec:prelim} for the details) i.e.
the upper global crystal basis of~\cite{Kas}. \plink{bar}
By definition, each element of ${\bf B}_{\nn_\pm}$ is homogeneous
and is fixed under the involutive $\QQ$-linear anti-automorphism $\overline\cdot$ of $U_q(\tilde \gg)$ determined by $\overline {q^{\frac{1}{2}}}=q^{-\frac{1}{2}}$, $\overline E_i=E_i$, $\overline F_i=F_i$, $\overline {K_{\pm i}}=K_{\pm i}$. For instance, 
if $\lie g=\lie{sl}_2$ then $\mathbf B_{\nn_+}=\{ E^r\,:\, r\in\ZZ_{\ge 0}\}$ and $\mathbf B_{\nn_-}=\{F^r\,:\,r\in\ZZ_{\ge 0}\}$.

We have an action $\invprod$\plink{diamond} of the algebra~$\mathcal K$ on~$U_q(\tilde\gg)$ defined by
\begin{equation}\label{eq:triangle-def}
K_{\pm i}\invprod x:=q_i^{\mp\frac12 \alpha_i^\vee(\deg_{\widehat\Gamma} x)} K_{\pm i}x,
\end{equation}
where $\alpha_i^\vee\in\Hom_{\ZZ}(\widehat\Gamma,\ZZ)$\plink{alpha v} is defined by $\alpha_i^\vee(\alpha_{\pm j})=\pm a_{ij}$ and $x\in U_q(\tilde\gg)$ is 
homogeneous. This action is more suitable for our purposes than the left multiplication due to the following easy property
\begin{equation}\label{eq:triangle-bar}
\overline{K\invprod x}=\overline K\invprod \overline x,\qquad K\in\mathcal K,\, x\in U_q(\tilde\gg).
\end{equation}

Note that this action, as well as the involution $\overline\cdot$, factors through to 
a $\mathcal K_\pm$-action and an
anti-involution $\overline{\cdot}$ on ${\mathcal H}_q^\pm(\gg)$ via the canonical projection $\pi_\pm:U_q(\tilde \gg)\twoheadrightarrow {\mathcal H}_q^\pm(\gg)$ and \eqref{eq:triangle-bar} holds. 

We will show (Propositions~\ref{prop:multipliers} and~\ref{prop:square-roots}) that for each pair $(b_-,b_+)\in \mathbf B_{\nn_-}\times \mathbf B_{\nn_+}$ there exists a unique monic $\mathbf d_{b_-,b_+}\in\ZZ[q+q^{-1}]$\plink{d b_- b_+}
of minimal degree such that in~$U_q(\tilde\gg)$ one has
$$
\mathbf d_{b_-,b_+}(b_+ b_- - b_- b_+)\in \sum_{K\in\mathbf K\setminus\{1\},\, b'_\pm\in\mathbf B_{\nn_\pm}}
\ZZ[q,q^{-1}] \mathbf d_{b'_-,b'_+} K\invprod (b'_-b'_+).
$$
It turns out all $\mathbf d_{b_-,b_+}$ are, up to a power of~$q$, products of cyclotomic polynomials in~$q$ (Proposition~\ref{prop:cyclotom}) and 
 that for~$\gg$ semisimple $\mathbf d_{b_-,b_+}=1$ for all $b_\pm\in\mathbf B_{\nn_\pm}$ (Theorem~\ref{thm:g-ss-denom}).
Some examples are shown in~\S\ref{subs:dual-cb-elts}.

\begin{maintheorem}\label{thm:circle} For any $(b_-,b_+)\in {\bf B}_{\nn_-}\times {\bf B}_{\nn_+}$ there is a unique element 
\plink{b_- o b_+}$b_-\circ b_+\in \mathcal H_q^+(\gg)$
fixed by $\overline\cdot$ and satisfying
$$ b_-\circ b_+-  \mathbf d_{b_-,b_+} b_- b_+\in 
\sum q \ZZ[q] \mathbf d_{b'_-,b'_+} K_+\invprod(b'_-b'_+)$$
where the sum is over $K_+\in  \mathbf K_+\setminus\{1\}$, $b'_\pm\in {\bf B}_{\nn_\pm}$ such that 
$\deg_{\widehat\Gamma} b'_- b'_++\deg_{\widehat\Gamma} K_+=\deg_{\widehat\Gamma} b_-b_+$.
\end{maintheorem}
We prove this theorem in Section~\ref{sec:prelim} using a variant of Lusztig's Lemma (Proposition~\ref{prop:LL}) which we refer to as the equivariant
Lusztig's Lemma.
\begin{corollary}\plink{B^+ g}
The set ${\mathbf B}^+_\gg:=\{K_+\invprod(b_-\circ b_+)\,:\, (b_-,b_+)\in {\bf B}_{\nn_-}\times {\bf B}_{\nn_+},\,K_+\in \mathbf K_+\}$ is a
$\bar\cdot$-invariant
$\QQ(q^{\frac{1}{2}})$-linear basis of 
${\mathcal H}_q^+(\gg)$.
\end{corollary}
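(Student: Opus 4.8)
The plan is to deduce this from the Main Theorem by the same kind of triangularity argument that turns Lusztig's Lemma into the existence of a basis. First I would record the ``standard basis'' of $\mathcal H_q^+(\gg)$: by the triangular decomposition $\mathcal H_q^+(\gg)=\mathcal K_+\otimes U_q^-\otimes U_q^+$ and the fact that the $K_{+i}$ are algebraically independent, the products $K_+b_-b_+$ for $(K_+,b_-,b_+)\in\mathbf K_+\times\mathbf B_{\nn_-}\times\mathbf B_{\nn_+}$ form a $\kk$-basis; since $b_-b_+$ is $\widehat\Gamma$-homogeneous, $K_+\invprod(b_-b_+)$ is a nonzero power of $q$ times $K_+b_-b_+$, so $\{K_+\invprod(b_-b_+)\}$ is a $\kk$-basis as well. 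Both this basis and $\mathbf B_\gg^+$ are compatible with the $\widehat\Gamma$-grading, and I would note that each graded piece $\mathcal H_q^+(\gg)_\gamma$ is finite-dimensional: for $\gamma=\sum_i(p_i\alpha_{+i}+r_i\alpha_{-i})$ a contributing triple must have $K_+=\prod_i K_{+i}^{n_i}$ with $0\le n_i\le\min(p_i,r_i)$, $\wt b_-=\sum_i(r_i-n_i)\alpha_i$, $\wt b_+=\sum_i(p_i-n_i)\alpha_i$, and the weight spaces of $U_q^\pm$ are finite-dimensional.

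Next, fix $\gamma$ and expand each $b_-\circ b_+$ via the Main Theorem; since $b_-\circ b_+$ is $\widehat\Gamma$-homogeneous and $\invprod$ is an algebra action (so $K_+\invprod(K'\invprod x)=(K_+K')\invprod x$, and $K_+\invprod(-)$ is $\kk$-linear on each homogeneous component), applying $K_+\invprod(-)$ gives
\[
K_+\invprod(b_-\circ b_+)=\mathbf d_{b_-,b_+}\,K_+\invprod(b_-b_+)+\sum c\,\mathbf d_{b'_-,b'_+}\,(K_+K')\invprod(b'_-b'_+),
\]
where $c\in q\ZZ[q]$ and the sum runs over $K'\in\mathbf K_+\setminus\{1\}$ and $b'_\pm\in\mathbf B_{\nn_\pm}$. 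Each correction term has the same $\widehat\Gamma$-degree as the left-hand side, and its $\mathbf K_+$-part $K_+K'$ is \emph{properly} divisible by $K_+$ in the free abelian monoid $\mathbf K_+$.

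Now, on the finite index set of triples of a fixed $\widehat\Gamma$-degree, put the partial order $(K_+,b_-,b_+)\preceq(K''_+,b''_-,b''_+)$ whenever $K_+$ divides $K''_+$ in $\mathbf K_+$, and refine it to a total order. The displayed identity says exactly that the transition matrix from $\mathbf B_\gg^+$ to the standard basis $\{K_+\invprod(b_-b_+)\}$ is triangular for this order, with diagonal entries $\mathbf d_{b_-,b_+}$; these are monic, hence nonzero in $\kk=\QQ(q^{\frac12})$, so the matrix is invertible and $\mathbf B_\gg^+$ is a $\kk$-basis. Finally, $\bar\cdot$-invariance is immediate: each $b_-\circ b_+$ is $\bar\cdot$-fixed by the Main Theorem, $\overline{K_{+i}}=K_{+i}$ forces $\overline{K_+}=K_+$ for all $K_+\in\mathbf K_+$, and $\overline{K_+\invprod x}=\overline{K_+}\invprod\overline x$ holds on $\mathcal H_q^+(\gg)$ by~\eqref{eq:triangle-bar}, so every element of $\mathbf B_\gg^+$ is in fact individually fixed by $\bar\cdot$. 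The only step that needs any care is the bookkeeping in the second paragraph — checking that $\invprod$ is an action and that every correction term preserves the $\widehat\Gamma$-degree while strictly enlarging the $\mathbf K_+$-part — and I expect no essential difficulty there.
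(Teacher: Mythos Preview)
Your argument is correct and is essentially the same triangularity reasoning the paper uses. The only organizational difference is that the paper obtains Main Theorem~\ref{thm:circle} and this corollary in one stroke: Proposition~\ref{prop:LL} (the equivariant Lusztig's Lemma, via Theorem~\ref{thm:circ diag braiding}(a)) produces elements $C_{K_+,b_-,b_+}$ that are simultaneously $\bar\cdot$-fixed, satisfy the triangular expansion, and form an $R_0$-basis, and then one checks $C_{K_+,b_-,b_+}=K_+\invprod(b_-\circ b_+)$; you instead take the Main Theorem as input and replay the triangularity part afterwards, which is a perfectly valid and equivalent route.
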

We call $\bf B^+_\gg$ the {\em double canonical basis} of ${\mathcal H}_q^+(\gg)$ (the double canonical basis  
${\bf B}^-_\gg$ of ${\mathcal H}_q^-(\gg)$ is defined verbatim, with $q$ replaced by~$q^{-1}$).

Furthermore, we have a natural, albeit not $\bar\cdot$-equivariant, inclusion $\iota_+:{\mathcal H}_q^+(\gg)={\mathcal K}_+ \otimes U_q^-\otimes U_q^+\hookrightarrow {\mathcal K}_-\otimes ({\mathcal K}_+
\otimes U_q^- \otimes U_q^+) = U_q(\tilde \gg)$.

\begin{maintheorem}\label{thm:bullet} For any $(b_-,b_+)\in {\bf B}_{\nn_-}\times {\bf B}_{\nn_+}$ 
there is a unique element $b_-\bullet b_+\in U_q(\tilde\gg)$\plink{b_- . b_+}
fixed by $\bar\cdot$ and satisfying 
$$
b_-\bullet b_+-\iota_+(b_-\circ b_+)\in
\sum q^{-1}\ZZ[q^{-1}] 
K\invprod\mathbf \iota_+(b'_-\circ b'_+)
$$
where the sum is taken over $K\in\mathbf K\setminus\mathbf K_+$ and $b'_\pm\in \mathbf B_{\nn_\pm}$ such that 
$\deg_{\widehat\Gamma} b'_- b'_+ +\deg_{\widehat\Gamma} K=\deg_{\widehat\Gamma} b_-b_+$. 
\end{maintheorem}
We prove this Theorem in Section~\ref{sec:Lusztig} using the equivariant Lusztig's Lemma (Proposition~\ref{prop:LL}).
\plink{B tg}
\begin{corollary}
The set ${\bf B}_{\tilde \gg}:=\{K\invprod (b_-\bullet b_+), (b_-,b_+)\in {\bf B}_{\nn_-}\times {\bf B}_{\nn_+},
\,K\in\mathbf K\}$ is a $\QQ(q^{\frac{1}{2}})$-basis of 
$U_q(\tilde\gg)$. 
\end{corollary}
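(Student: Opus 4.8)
The plan is to deduce this from Main Theorem~\ref{thm:bullet} and from the Corollary asserting that $\mathbf B_\gg^+$ is a $\kk$-basis of $\mathcal H_q^+(\gg)$, by exhibiting a unitriangular change of basis. As a preliminary step I would check that the auxiliary set
$$\widehat{\mathbf B}:=\{K\invprod\iota_+(b_-\circ b_+)\,:\,K\in\mathbf K,\ (b_-,b_+)\in\mathbf B_{\nn_-}\times\mathbf B_{\nn_+}\}$$
is itself a $\kk$-basis of $U_q(\tilde\gg)$, parametrised bijectively by $\mathcal T:=\mathbf K\times\mathbf B_{\nn_-}\times\mathbf B_{\nn_+}$. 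Here I would use that $\invprod$ is not just a rescaling of left multiplication but an honest action of the commutative monoid $\mathbf K$ on $U_q(\tilde\gg)$ --- this is because $\alpha_i^\vee(\deg_{\widehat\Gamma}K)=a_{ij}-a_{ij}=0$ for every $K\in\mathbf K$, so the scalar in~\eqref{eq:triangle-def} does not change upon further multiplication by elements of $\mathbf K$, giving $(KK')\invprod x=K\invprod(K'\invprod x)$. Since $\mathbf K=\mathbf K_-\mathbf K_+$ with unique factorisation $K=K_-K_+$, and since $\iota_+$ is $\widehat\Gamma$-graded and intertwines the $\mathcal K_+$-action $\invprod$ on $\mathcal H_q^+(\gg)$ with that on $U_q(\tilde\gg)$, one gets $\widehat{\mathbf B}=\{K_-\invprod\iota_+(\beta)\,:\,K_-\in\mathbf K_-,\ \beta\in\mathbf B_\gg^+\}$. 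Now $\iota_+$ identifies $\mathcal H_q^+(\gg)=\mathcal K_+\otimes U_q^-\otimes U_q^+$ with the subspace of $U_q(\tilde\gg)=\mathcal K_-\otimes\mathcal K_+\otimes U_q^-\otimes U_q^+$ on which the $\mathcal K_-$-component is trivial, so $\iota_+(\mathbf B_\gg^+)$ is a $\kk$-basis of that subspace, and the triangular decomposition says precisely that multiplication $\mathcal K_-\otimes\iota_+(\mathcal H_q^+(\gg))\to U_q(\tilde\gg)$ is a $\kk$-linear isomorphism; as $\mathbf K_-$ is a $\kk$-basis of $\mathcal K_-$ and $K_-\invprod y$ differs from $K_-y$ by an invertible scalar, $\widehat{\mathbf B}$ is a $\kk$-basis of $U_q(\tilde\gg)$. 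Its bijective parametrisation by $\mathcal T$ is inherited from the bijective parametrisation of $\mathbf B_\gg^+$ by $\mathbf K_+\times\mathbf B_{\nn_-}\times\mathbf B_{\nn_+}$, which is part of the proof of that Corollary (the leading term of $K_+\invprod(b_-\circ b_+)$ relative to the triangular decomposition being a nonzero scalar multiple of the monomial $K_+b_-b_+$, and these being linearly independent since the $\mathbf B_{\nn_\pm}$ are bases of $U_q^\pm$ and the $\mathbf K_\pm$ are bases of $\mathcal K_\pm$).

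Next I would compare $\mathbf B_{\tilde\gg}$ with $\widehat{\mathbf B}$. Both are $\widehat\Gamma$-homogeneous and parametrised by $\mathcal T$, the element attached to $(K,b_-,b_+)$ having degree $\deg_{\widehat\Gamma}K+\deg_{\widehat\Gamma}b_-+\deg_{\widehat\Gamma}b_+$; for $\mathbf B_{\tilde\gg}$ this uses that $b_-\circ b_+$, hence $b_-\bullet b_+$, is homogeneous of degree $\deg_{\widehat\Gamma}b_-b_+$, which is built into Main Theorems~\ref{thm:circle} and~\ref{thm:bullet}. It therefore suffices to prove the claim inside each $\widehat\Gamma$-graded component $U_q(\tilde\gg)_\gamma$; this is finite-dimensional (by the triangular decomposition) with $\kk$-basis the finitely many members of $\widehat{\mathbf B}$ indexed by the finite set $\mathcal T_\gamma$ of triples of total degree $\gamma$. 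Applying the invertible operator $K\invprod(-)$ to the defining relation of Main Theorem~\ref{thm:bullet} and using that $\invprod$ is an action gives
$$K\invprod(b_-\bullet b_+)=K\invprod\iota_+(b_-\circ b_+)+\sum c_{K',b'_-,b'_+}\,(KK')\invprod\iota_+(b'_-\circ b'_+),$$
with $c_{K',b'_-,b'_+}\in q^{-1}\ZZ[q^{-1}]$ and the sum over $K'\in\mathbf K\setminus\mathbf K_+$ and $b'_\pm\in\mathbf B_{\nn_\pm}$ satisfying $\deg_{\widehat\Gamma}(KK')+\deg_{\widehat\Gamma}b'_-+\deg_{\widehat\Gamma}b'_+=\gamma$; in particular every triple appearing on the right lies in $\mathcal T_\gamma$. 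Setting $N(K):=\sum_i n_i$ when $K_-=\prod_i K_{-i}^{n_i}$, the condition $K'\notin\mathbf K_+$ forces $N(K')\ge 1$, so each error triple satisfies $N(KK')=N(K)+N(K')>N(K)$. Hence, ordering $\mathcal T_\gamma$ so that larger $N$ comes first, the matrix expressing the members of $\mathbf B_{\tilde\gg}$ in terms of those of $\widehat{\mathbf B}$ is unitriangular: the identity on the diagonal, with off-diagonal entries supported in positions $(t,s)$ with $N(K(s))>N(K(t))$, which come strictly earlier. Such a matrix is invertible, so $\mathbf B_{\tilde\gg}$ restricts to a $\kk$-basis of $U_q(\tilde\gg)_\gamma$; summing over $\gamma$ completes the proof.

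The essential input comes from Main Theorem~\ref{thm:bullet} and from the Corollary describing $\mathbf B_\gg^+$, so there is no single hard step here --- the argument is purely organisational. The one point that really needs care is that $\invprod$ is a monoid action of $\mathbf K$ on $U_q(\tilde\gg)$ (equivalently, that it makes $U_q(\tilde\gg)$ a $\mathcal K$-module), which rests on $\alpha_i^\vee(\deg_{\widehat\Gamma}K)=0$ for $K\in\mathbf K$ and is exactly the property for which the normalisation~\eqref{eq:triangle-def} was introduced; a secondary routine point is the bijectivity of the parametrisation of $\widehat{\mathbf B}$ by $\mathcal T$, inherited from the analogous statement for $\mathbf B_\gg^+$ established in the proof of the Corollary to Main Theorem~\ref{thm:circle}.
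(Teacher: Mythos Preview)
Your argument is correct and is essentially the paper's own proof, just unpacked. In the paper the Corollary is obtained as a byproduct of the proof of Main Theorem~\ref{thm:bullet} via Theorem~\ref{thm:circ diag braiding}\eqref{thm:circ diag braiding.b}: there Proposition~\ref{prop:LL} is applied with $\hat E$ the $R_0$-span of $\{K_{\alpha_-,\alpha_+}\invprod\iota(b_-\circ b_+)\}$ (your $\widehat{\mathbf B}$), $\mathcal E=\{K_{0,\alpha}\invprod(b_-\circ b_+)\}$ and $\mathcal R=\{K_{\alpha,0}\}$, and Proposition~\ref{prop:LL} already contains the unitriangularity step (equation~\eqref{eq:LL}) together with the conclusion that the $C_{r,e}$ form an $R_0$-basis; the identification $C_{K_{\alpha_-,0},\,K_{0,\alpha_+}\invprod(b_-\circ b_+)}=K_{\alpha_-,\alpha_+}\invprod(b_-\bullet b_+)$ then follows by uniqueness exactly as in part~\eqref{thm:circ diag braiding.a}. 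Your verification that $\invprod$ is a genuine $\mathcal K$-action via $\alpha_i^\vee(\deg_{\widehat\Gamma}K)=0$ is the content of Lemma~\ref{lem:inv-act-defn}, and your ``leading term'' argument for bijective parametrisation is the same device the paper uses (implicit in Proposition~\ref{prop:LL}'s hypothesis that $\{r\iota(e)\}$ is an $R_0$-basis).
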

We call $\mathbf B_{\tilde\gg}$ the {\em double canonical basis} of $U_q(\tilde \gg)$.
\begin{remark} 
Note that $\mathbf B_{\tilde\gg}$ contains both bases $\mathbf B_{\nn^\pm}$ as subsets and therefore has a ``dual flavor''.
\end{remark}

Let $U_q(\tilde\gg,J)$ (respectively, $U_q(J,\tilde\gg)$), $J\subset I$ be the subalgebra 
of $U_q(\tilde\gg)$ generated by the $\mathcal KU_q^+$ and $F_j$, $j\in J$
(respectively, $\mathcal KU_q^-$ and $E_j$, $j\in J$) and let $U_q(J_-,\tilde\gg,J_+)=U_q(\tilde\gg,J_+)\cap 
U_q(J_-,\tilde\gg)$, $J_\pm\subset I$. The following is immediate.
\begin{theorem}\label{thm:compat-parab}
For any $J_\pm\subset I$, $\mathbf B_{\tilde\gg}\cap U_q(J_-,\tilde\gg,J_+)$ is a basis of $U_q(J_-,\tilde\gg,J_+)$.
\end{theorem}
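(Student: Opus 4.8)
The plan is to show that the basis~$\mathbf B_{\tilde\gg}$ is already ``diagonalised'' with respect to the subalgebra $U_q(J_-,\tilde\gg,J_+)$, in the sense that an element $K\invprod(b_-\bullet b_+)$ lies in $U_q(J_-,\tilde\gg,J_+)$ if and only if the ``indices'' $b_\pm$ and~$K$ lie in the correct places. Since a basis of a vector space restricts to a basis of a subspace precisely when the subspace is spanned by a subset of the basis, it suffices to prove: $\mathbf B_{\tilde\gg}\cap U_q(J_-,\tilde\gg,J_+)$ spans $U_q(J_-,\tilde\gg,J_+)$. First I would introduce a refined grading. Observe that $U_q(\tilde\gg)$ is graded not just by~$\widehat\Gamma$ but, because of the triangular decomposition $U_q(\tilde\gg)=\mathcal K_-\otimes\mathcal K_+\otimes U_q^-\otimes U_q^+$, by the monoid $\Gamma_{\mathcal K_-}\times\Gamma_{\mathcal K_+}\times\Gamma\times\Gamma$ recording separately the $K_{-i}$-degree, the $K_{+i}$-degree, the $F$-degree and the $E$-degree; call these four components $(\kappa_-,\kappa_+,\gamma_-,\gamma_+)$. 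The key point is that $U_q(\tilde\gg,J_+)$ is the span of the homogeneous components whose $E$-degree~$\gamma_+$ is supported on~$J_+$ (i.e. $\gamma_+\in\bigoplus_{j\in J_+}\ZZ_{\ge 0}\alpha_j$), and symmetrically $U_q(J_-,\tilde\gg)$ is the span of the components with $F$-degree~$\gamma_-$ supported on~$J_-$; this is immediate from the PBW-type bases of these subalgebras, which are generated over~$\mathcal K$ by~$U_q^+$ together with the~$F_j$, $j\in J_+$ (and symmetrically). Consequently $U_q(J_-,\tilde\gg,J_+)$ is the span of those homogeneous components with $\gamma_-\in\bigoplus_{j\in J_-}\ZZ_{\ge0}\alpha_j$ and $\gamma_+\in\bigoplus_{j\in J_+}\ZZ_{\ge0}\alpha_j$.

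Next I would check that each basis vector $K\invprod(b_-\bullet b_+)$ is supported, in this refined grading, on components all having the same $\gamma_\pm$, namely $\gamma_- = \deg b_-$ and $\gamma_+=\deg b_+$ (the $E$- and $F$-degrees of the dual-canonical-basis elements $b_\pm\in\mathbf B_{\nn_\pm}$). Indeed, by construction $b_-\bullet b_+ - \iota_+(b_-\circ b_+)$ lies in a sum of terms $K'\invprod\iota_+(b'_-\circ b'_+)$ with $\deg_{\widehat\Gamma}b'_-b'_+ + \deg_{\widehat\Gamma}K' = \deg_{\widehat\Gamma}b_-b_+$; unwinding this $\widehat\Gamma$-degree equality into its two $\Gamma$-components and using that $K'$ contributes equally to both the $\alpha_{-i}$ and $\alpha_{+i}$ parts, one gets that the error terms have strictly smaller $\gamma_-$ and strictly smaller $\gamma_+$ than $b_-b_+$ unless they too have $\gamma_\pm=\deg b_\pm$. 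Here I must use the analogous statement one level down, for $b_-\circ b_+\in\mathcal H_q^+(\gg)$ from Main Theorem~\ref{thm:circle}: its error terms involve $K'_+\invprod(b'_-b'_+)$ with $\deg b'_-b'_+ + \deg K'_+ = \deg b_-b_+$, and since $K'_+\in\mathbf K_+\setminus\{1\}$ adds equally to $\gamma_-$ and $\gamma_+$, the $F$-degree of the error strictly drops. A short induction on, say, the total degree~$|\gamma_-|+|\gamma_+|$ then shows that each $b_-\circ b_+$, hence each $\iota_+(b_-\circ b_+)$, and finally each $b_-\bullet b_+$, is a linear combination of homogeneous components all of whose $F$-degree equals $\deg b_-$ and whose $E$-degree equals $\deg b_+$. (More precisely: write $b_-\bullet b_+$ as a sum of refined-homogeneous components; every component has $\gamma_-\le\deg b_-$ and $\gamma_+\le\deg b_+$, and the claim is that in fact equality holds for every component. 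This follows because the "leading" term $\iota_+(b_-\circ b_+)$ has $\gamma_\pm=\deg b_\pm$ by the inductive hypothesis applied in $\mathcal H_q^+(\gg)$, and all remaining terms have by induction components with $\gamma_\pm$ equal to the degrees of the corresponding $b'_\pm$; but the degree constraint forces those to be $\le\deg b_\pm$ with the deficit absorbed entirely by $K'\notin\mathbf K_+$ — yet $K'$ increments $\gamma_-$ and $\gamma_+$ equally, so if the deficit in $\gamma_+$ is nonzero the same deficit appears in $\gamma_-$, contradicting nothing, and one simply records that those components have strictly smaller total degree and invokes the induction.) Since applying $K\invprod(-)$ only rescales and left-multiplies by a monomial in the $K_{\pm i}$, it changes $\kappa_\pm$ but leaves $\gamma_\pm$ untouched. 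Therefore $K\invprod(b_-\bullet b_+)\in U_q(J_-,\tilde\gg,J_+)$ as soon as $\deg b_-\in\bigoplus_{j\in J_-}\ZZ_{\ge0}\alpha_j$ and $\deg b_+\in\bigoplus_{j\in J_+}\ZZ_{\ge0}\alpha_j$.

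Conversely, given an arbitrary element $u\in U_q(J_-,\tilde\gg,J_+)$, expand it in the basis $\mathbf B_{\tilde\gg}$. Since $u$ is a linear combination of refined-homogeneous components with $\gamma_-$ supported on $J_-$ and $\gamma_+$ supported on $J_+$, and since each basis vector $K\invprod(b_-\bullet b_+)$ occurring with nonzero coefficient is (by the previous paragraph) refined-homogeneous in $\gamma_\pm$ with $\gamma_\pm=\deg b_\pm$, comparing refined-homogeneous components forces $\deg b_-$ to be supported on $J_-$ and $\deg b_+$ on $J_+$ for every basis vector that appears. (Strictly one should note that $b_-$ and $b_+$ are determined by $b_-\bullet b_+$ together with the $\widehat\Gamma$-grading up to the ambiguity in $K$, and that elements of $\mathbf B_{\nn_-}$ of a given $\Gamma$-degree supported outside $J_-$ cannot contribute to a sum landing in $U_q(J_-,\tilde\gg)$ — this is exactly the statement that $\mathbf B_{\nn_-}\cap U_q^-(J_-)$ is the dual canonical basis of $U_q^-(J_-)$, a standard compatibility of dual canonical bases with Levi subalgebras, which I would cite.) Hence every $\mathbf B_{\tilde\gg}$-term of $u$ lies in $U_q(J_-,\tilde\gg,J_+)$, proving that $\mathbf B_{\tilde\gg}\cap U_q(J_-,\tilde\gg,J_+)$ spans, and therefore is a basis of, $U_q(J_-,\tilde\gg,J_+)$.

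The main obstacle is the bookkeeping in the second paragraph: one needs to track not the coarse $\widehat\Gamma$-grading but the finer four-component grading, and to verify carefully that the "descent" in the defining recursions of $\circ$ and $\bullet$ respects the $F$-degree and $E$-degree separately (so that the error terms never introduce new $\gamma_\pm$-support). Everything else is the routine observation that a basis restricts to a basis of any subspace cut out by vanishing of a set of graded components, together with the already-known compatibility of the dual canonical bases $\mathbf B_{\nn_\pm}$ with Levi subalgebras.
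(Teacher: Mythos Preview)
Your overall strategy is right, but the central claim of your second paragraph is false, and your converse argument depends on it. You assert that every refined-homogeneous component of $b_-\bullet b_+$ has $\gamma_-=\deg b_-$ and $\gamma_+=\deg b_+$. Look at the paper's own $\mathfrak{sl}_2$ example: $F\bullet E=C=FE-qK_{+}-q^{-1}K_{-}$, and the terms $K_{\pm}$ have $\gamma_-=\gamma_+=0$, not $\alpha$. More generally $b_-\bullet b_+$ is almost never refined-homogeneous in your four-component sense; that decomposition is only a vector-space splitting, not an algebra grading (the relation $[E_i,F_i]=(q_i^{-1}-q_i)(K_{+i}-K_{-i})$ already violates it).

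What \emph{is} true, and what you correctly extract from the $\widehat\Gamma$-degree constraint in Main Theorems~\ref{thm:circle} and~\ref{thm:bullet}, is the inequality: every term $K'\invprod(b'_-b'_+)$ appearing in the expansion of $b_-\bullet b_+$ satisfies $\deg b'_-\preceq\deg b_-$ and $\deg b'_+\preceq\deg b_+$ in~$\Gamma$. This is already enough for one direction: if $\deg b_-$ is supported on $J_+$ and $\deg b_+$ on $J_-$ (note your labels are swapped in the converse paragraph), then so are all $\deg b'_\pm$, hence $K\invprod(b_-\bullet b_+)\in U_q(J_-,\tilde\gg,J_+)$. For spanning, drop the refined-homogeneity argument and use unitriangularity instead: the transition from $\{K\invprod(b_-\bullet b_+)\}$ to $\{\mathbf d_{b_-,b_+}K\invprod b_-b_+\}$ is unitriangular, and the degree inequality shows this restricts to a unitriangular transition on the index subset with $\deg b_-$ supported on $J_+$, $\deg b_+$ on $J_-$; since the latter monomials form a basis of $U_q(J_-,\tilde\gg,J_+)$ by the triangular decomposition, so do the former. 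The reverse inclusion $\mathbf B_{\tilde\gg}\cap U_q(J_-,\tilde\gg,J_+)\subset\mathbf B'$ then follows from linear independence of $\mathbf B_{\tilde\gg}$. This is presumably why the paper calls the theorem ``immediate'' and gives no proof.
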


\begin{remark}
Analogously to the classical ($q=1$) case (cf. e.g.~\cite{joseph3}), it is natural to call $U_q(J_-,\tilde\gg,J_+)$ quantum bi-parabolic (or seaweed) algebras.
\end{remark}

\plink{t *}
As one should expect from a canonical basis, $\mathbf B_{\tilde\gg}$ is preserved, as a set, by various symmetries of~$U_q(\tilde\gg)$. 
First, let  $x\mapsto x^t$ and $x\to x^*$ be the $\QQ(q^{\frac12})$-linear anti-automorphism of~$U_q(\tilde\gg)$ 
defined by
$$
E_i^t=F_i,\quad  F_i^t=E_i,\quad (K_{\pm i})^t=K_{\pm i}
\qquad\text{and} \qquad
E^*_i=E_i,\quad F^*_i=F_i,\quad (K_{\pm i})^*=K_{\mp i}.
$$
Then $\mathbf B_{\nn_\pm}^t=\mathbf B_{\nn_\mp}$ while ${}^*$ preserves both $\mathbf B_{\nn_\pm}$ as sets.
\begin{theorem}\label{thm:transp-star}
$\mathbf B_{\tilde\gg}^t=\mathbf B_{\tilde\gg}$. More precisely,
for all $b_\pm\in\mathbf B_{\nn_\pm}$, $K\in\mathbf K$ be have 
$(K\invprod (b_-\bullet b_+))^t=K\invprod 
(b_+)^t\bullet (b_-)^t$. 
\end{theorem}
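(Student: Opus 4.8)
The plan is to deduce the statement from the uniqueness assertions underlying the construction of $\mathbf B_{\tilde\gg}$ --- Propositions~\ref{prop:multipliers} and~\ref{prop:square-roots} for the $\mathbf d_{b_-,b_+}$, and Main Theorems~\ref{thm:circle} and~\ref{thm:bullet} for $b_-\circ b_+$ and $b_-\bullet b_+$ --- by showing that applying $t$ to the defining property of each object yields the defining property of the ``transposed'' object. First I will record three compatibilities of $x\mapsto x^t$ with the ingredients of the construction. (i) Since $t$ and the bar-involution are both anti-automorphisms, $t\circ\overline{\cdot}$ and $\overline{\cdot}\circ t$ are $\QQ$-algebra automorphisms of $U_q(\tilde\gg)$ that agree on $E_i,F_i,K_{\pm i}$ and on $q^{\frac12}$, so $t$ commutes with $\overline{\cdot}$, and likewise on each $\mathcal H_q^\pm(\gg)$. (ii) $t$ fixes every element of $\mathcal K$ and acts on $\widehat\Gamma=\Gamma\oplus\Gamma$ by the involution $\tau$ exchanging the two summands (hence $\alpha_{+i}\leftrightarrow\alpha_{-i}$); because $\alpha_i^\vee\circ\tau=-\alpha_i^\vee$, a direct computation using \eqref{eq:commutation g} and \eqref{eq:triangle-def} gives $(K\invprod x)^t=K\invprod x^t$ for every $K\in\mathcal K$ and homogeneous $x$, and $\deg_{\widehat\Gamma}K$ is $\tau$-invariant for all $K\in\mathbf K$, so every degree constraint in the construction is $t$-stable. (iii) $\ker\pi_+=\la K_{-i}:i\in I\ra$ is $t$-stable, so $t$ descends to an anti-automorphism of $\mathcal H_q^+(\gg)$ fixing each $K_{+i}$ and interchanging $E_i,F_i$, with $\pi_+\circ t=t\circ\pi_+$; moreover $t$ carries the PBW complement $\iota_+(\mathcal H_q^+(\gg))$ (monomials with no $K_{-i}$) into itself, so $\pi_+\bigl(t(\iota_+(x))-\iota_+(t(x))\bigr)=0$ forces $t\circ\iota_+=\iota_+\circ t$.

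Now I run three uniqueness arguments. \emph{Step 1:} $\mathbf d_{b_-,b_+}=\mathbf d_{(b_+)^t,(b_-)^t}$, by induction on $\deg_{\widehat\Gamma}(b_-b_+)$ (the base case being trivial). Applying $t$ to the defining membership of $\mathbf d_{b_-,b_+}$ and using (ii) together with the inductive hypothesis on the pairs $(b'_-,b'_+)$ occurring on the right --- of strictly smaller degree because $K\neq1$ there --- turns it into the defining membership of $\mathbf d_{(b_+)^t,(b_-)^t}$ with multiplier $\mathbf d_{b_-,b_+}$, so minimality forces $\mathbf d_{(b_+)^t,(b_-)^t}\mid\mathbf d_{b_-,b_+}$; applying $t$ once more (it is an involution) gives the reverse divisibility, hence equality of these monic polynomials. \emph{Step 2:} $(b_-\circ b_+)^t=(b_+)^t\circ(b_-)^t$; note $(b_+)^t\in\mathbf B_{\nn_-}$ and $(b_-)^t\in\mathbf B_{\nn_+}$. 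By (i), $(b_-\circ b_+)^t$ is fixed by $\overline{\cdot}$; applying $t$ to the characterizing relation of Main Theorem~\ref{thm:circle} --- whose right-hand side involves only the ordinary products $b'_-b'_+$ and the multipliers $\mathbf d_{b'_-,b'_+}$ --- and using (ii) and Step 1 shows that $(b_-\circ b_+)^t$ satisfies the relation characterizing $(b_+)^t\circ(b_-)^t$; uniqueness in Main Theorem~\ref{thm:circle} concludes. \emph{Step 3:} $(b_-\bullet b_+)^t=(b_+)^t\bullet(b_-)^t$; again $(b_-\bullet b_+)^t$ is $\overline{\cdot}$-fixed by (i), and applying $t$ to the characterizing relation of Main Theorem~\ref{thm:bullet} and using (ii), (iii) and Step 2 (for the lower-degree pairs on the right as well) shows it satisfies the relation characterizing $(b_+)^t\bullet(b_-)^t$; uniqueness in Main Theorem~\ref{thm:bullet} concludes. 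Finally $(K\invprod(b_-\bullet b_+))^t=K^t\invprod(b_-\bullet b_+)^t=K\invprod\bigl((b_+)^t\bullet(b_-)^t\bigr)$ by (ii) and $K^t=K$; since $\mathbf B_{\nn_\pm}^t=\mathbf B_{\nn_\mp}$, the map $(b_-,b_+,K)\mapsto((b_+)^t,(b_-)^t,K)$ is a bijection of $\mathbf B_{\nn_-}\times\mathbf B_{\nn_+}\times\mathbf K$ onto itself, so $\mathbf B_{\tilde\gg}$ is $t$-stable and the displayed identity holds.

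The only point that is not pure bookkeeping is the $\invprod$-equivariance of $t$ in (ii): one must verify that the twisting scalar $q_i^{\mp\frac12\alpha_i^\vee(\deg_{\widehat\Gamma}x)}$ in \eqref{eq:triangle-def} is unchanged when $x$ is replaced by $x^t$ and $K_{\pm i}x$ is rewritten via \eqref{eq:commutation g} --- this holds because passing from $x$ to $x^t$ both negates $\alpha_i^\vee(\deg_{\widehat\Gamma}x)$ and converts left multiplication by $K_{\pm i}$ into right multiplication, and the two sign changes cancel; this is exactly why the $\invprod$-action, rather than plain left multiplication, is used throughout. Everything else reduces to checking that the index sets $\mathbf B_{\nn_-}\times\mathbf B_{\nn_+}$, $\mathbf K_+\setminus\{1\}$, $\mathbf K\setminus\mathbf K_+$ and the degree constraints are $t$-stable, plus three invocations of uniqueness; the only care required is in Step 1, where the inductive hypothesis must be applied at strictly smaller degree, guaranteed by the condition $K\neq1$.
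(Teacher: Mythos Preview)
Your proof is correct and follows essentially the same strategy as the paper's: the paper deduces Theorem~\ref{thm:transp-star} from Proposition~\ref{prop:transp}, whose proof verifies the $\invprod$-equivariance identity $(K_{\alpha_-,\alpha_+}\invprod b_-b_+)^t=K_{\alpha_-,\alpha_+}\invprod b_+^tb_-^t$ (your item~(ii)) and then invokes the abstract ``triangular automorphism'' Lemma~\ref{lem:auto}, which is precisely the packaged version of your direct uniqueness arguments in Steps~2 and~3. The one place where you are more explicit than the paper is your Step~1 ($\mathbf d_{b_-,b_+}=\mathbf d_{(b_+)^t,(b_-)^t}$): the paper's claim that ${}^t$ is triangular in the sense of Definition~\ref{defn:triang} tacitly uses this, since $\ul\psi$ must be a permutation of $\mathcal E=\{\mathbf d_{c_-,c_+}c_-c_+\}$, and your inductive argument from the uniqueness in Proposition~\ref{prop:multipliers} is exactly what is needed to justify it.
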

We prove this Theorem in Section~\ref{sec:Lusztig}.
\begin{conjecture}\label{thm:star}
$\mathbf B_{\tilde\gg}^*=\mathbf B_{\tilde\gg}$. 
More precisely,
for all $b_\pm\in\mathbf B_{\nn_\pm}$, $K\in\mathbf K$ be have 
$(K\invprod(b_-\bullet b_+))^*=
K^*\invprod (b_-)^*\bullet (b_+)^*$.
\end{conjecture}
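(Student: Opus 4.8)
The plan is to derive $\mathbf B_{\tilde\gg}^*=\mathbf B_{\tilde\gg}$ from the uniqueness in Main Theorem~\ref{thm:bullet}. One checks directly that $\overline{\cdot}$ and $*$ commute (they agree on the generators, and both send $q^{\frac12}\mapsto q^{-\frac12}$, resp.\ fix it), and that
$$(K\invprod x)^*=K^*\invprod x^*,\qquad K\in\mathcal K,\ x\in U_q(\tilde\gg)\ \text{homogeneous};$$
the latter follows from~\eqref{eq:triangle-def} once one observes that $*$ preserves the $\widehat\Gamma$-grading, as $\deg_{\widehat\Gamma}K_{+i}=\deg_{\widehat\Gamma}K_{-i}$. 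Since $*$ is an involution permuting each of $\mathbf B_{\nn_\pm}$ and $\mathbf K$, these two facts reduce everything to the single equality $(b_-\bullet b_+)^*=(b_-)^*\bullet(b_+)^*$ for $b_\pm\in\mathbf B_{\nn_\pm}$, and then also yield the refined formula in the statement. The left-hand side $(b_-\bullet b_+)^*$ is $\overline{\cdot}$-fixed, so by the uniqueness in Main Theorem~\ref{thm:bullet} it suffices to show that, expanded in the basis $\{K\invprod\iota_+(b'_-\circ b'_+)\}$, it equals $\iota_+((b_-)^*\circ(b_+)^*)$ plus a $\sum q^{-1}\ZZ[q^{-1}]$-combination of the basis vectors $K\invprod\iota_+(b'_-\circ b'_+)$ with $K\in\mathbf K\setminus\mathbf K_+$ and $\deg_{\widehat\Gamma}b'_-b'_++\deg_{\widehat\Gamma}K=\deg_{\widehat\Gamma}b_-b_+$.

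Applying $*$ to the expansion of $b_-\bullet b_+$ provided by Main Theorem~\ref{thm:bullet} and using $(K\invprod x)^*=K^*\invprod x^*$, the problem is reduced to the following \emph{key claim}: for all $b_\pm\in\mathbf B_{\nn_\pm}$,
$$(\iota_+(b_-\circ b_+))^*\in\iota_+\bigl((b_-)^*\circ(b_+)^*\bigr)+\sum q^{-1}\ZZ[q^{-1}]\,K\invprod\iota_+(b'_-\circ b'_+),$$
the sum over $K\in\mathbf K\setminus\{1\}$ and $b'_\pm\in\mathbf B_{\nn_\pm}$ with $\deg_{\widehat\Gamma}b'_-b'_++\deg_{\widehat\Gamma}K=\deg_{\widehat\Gamma}b_-b_+$. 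Granting the key claim, $(b_-\bullet b_+)^*$ and $(b_-)^*\bullet(b_+)^*$ are both $\overline{\cdot}$-fixed and both congruent to $\iota_+((b_-)^*\circ(b_+)^*)$ modulo the $q^{-1}\ZZ[q^{-1}]$-span of the basis vectors $K\invprod\iota_+(b'_-\circ b'_+)$ with $K\neq1$, all of which precede $\iota_+((b_-)^*\circ(b_+)^*)$ in the partial order underlying Main Theorems~\ref{thm:circle} and~\ref{thm:bullet}; since the $\overline{\cdot}$-action is unitriangular on that basis and $\ZZ[q+q^{-1}]\cap q^{-1}\ZZ[q^{-1}]=0$, their difference is $0$, as required.

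To prove the key claim I would proceed in two steps. First, $*$ induces an anti-isomorphism $\mathcal H_q^+(\gg)\xrightarrow{\sim}\mathcal H_q^-(\gg)$ (it carries $\langle K_{-i}:i\in I\rangle$ onto $\langle K_{+i}:i\in I\rangle$) and I claim it takes $\mathbf B^+_\gg$ onto $\mathbf B^-_\gg$, precisely $(b_-\circ b_+)^*=(b_-)^*\circ^-(b_+)^*$, where $\circ^-$ is the operation of the $q\leftrightarrow q^{-1}$ mirror of Main Theorem~\ref{thm:circle}; here one uses that the denominators $\mathbf d_{b_-,b_+}\in\ZZ[q+q^{-1}]$ are $\overline{\cdot}$-fixed and transform covariantly under $*$ (Propositions~\ref{prop:multipliers},~\ref{prop:square-roots} and~\ref{prop:cyclotom}). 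Second, one compares $*\circ\iota_+$ with $\iota_-$: since $*\iota_+(y)$ and $\iota_-(y^*)$ have the same image under $\pi_-$, one obtains $(\iota_+(b_-\circ b_+))^*=\iota_-\bigl((b_-)^*\circ^-(b_+)^*\bigr)+z_{b_-,b_+}$ with $z_{b_-,b_+}$ in the two-sided ideal $\langle K_{+i}:i\in I\rangle$, and it remains to re-expand $\iota_-((b_-)^*\circ^-(b_+)^*)$ and $z_{b_-,b_+}$ in the basis $\{K\invprod\iota_+(b'_-\circ b'_+)\}$ and to verify that the coefficients attached to $K\neq1$ lie in $q^{-1}\ZZ[q^{-1}]$.

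The main obstacle is this last verification, and it is not a formality: since $*$ is $\QQ(q^{\frac12})$-linear it does not interchange $q\ZZ[q]$ with $q^{-1}\ZZ[q^{-1}]$, so the needed integrality has to come from a cancellation between the commutator terms produced when re-ordering monomials by means of $[E_i,F_j]=\delta_{ij}(q_i^{-1}-q_i)(K_{+i}-K_{-i})$ and the ``$\circ$-correction'' terms already built into $b_-\circ b_+$. This is visible already for $\gg=\lie{sl}_2$: there $F\circ E=FE-qK_+$, so $(\iota_+(F\circ E))^*=(FE-qK_+)^*=FE+(q^{-1}-q)K_+-q^{-1}K_-=\iota_+(F\circ E)+q^{-1}(K_+-K_-)$, the $+q$ produced by rewriting $EF$ being cancelled exactly by the $-q$ coming from the $K_+$-correction, and only this cancellation places the error into $q^{-1}\ZZ[q^{-1}]$. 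Controlling this cancellation uniformly — that is, understanding for an arbitrary pair $(b_-,b_+)$ of dual canonical basis elements how the re-ordering terms interact with the $\circ$-corrections — is where the real work lies; I would attempt it by induction on $\deg_{\widehat\Gamma}(b_-b_+)$, using at each stage the $*$-invariance of $\mathbf B_{\nn_\pm}$ and of the denominators together with the equivariant Lusztig's Lemma (Proposition~\ref{prop:LL}).
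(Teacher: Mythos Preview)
The statement you are attempting to prove is labelled in the paper as a \emph{Conjecture} (Conjecture~\ref{thm:star}); the paper does not prove it, so there is no paper proof to compare your proposal against.

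Your strategy is the natural one and mirrors the paper's proof of the $^t$-invariance (Theorem~\ref{thm:transp-star} via Proposition~\ref{prop:transp}): verify the compatibilities $(K\invprod x)^*=K^*\invprod x^*$ and $\overline{(\cdot)^*}=(\overline{\cdot})^*$, then invoke the uniqueness in the equivariant Lusztig's Lemma. The reason this succeeds for~$^t$ but is genuinely open for~$^*$ is exactly the obstacle you name. In the language of Definition~\ref{defn:triang}, the anti-involution~$^t$ is \emph{triangular}: since $b_\pm^t\in\mathbf B_{\nn_\mp}$, one has $(b_-b_+)^t=b_+^t b_-^t$ already in ``$\mathbf B_{\nn_-}\cdot\mathbf B_{\nn_+}$-order'', so $^t$ permutes the initial basis elements $K\invprod(b_-b_+)$ up to the required $q\ZZ[q]$-tail, and Lemma~\ref{lem:auto} applies directly. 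By contrast $b_\pm^*\in\mathbf B_{\nn_\pm}$, so $(b_-b_+)^*=b_+^*b_-^*$ is in the wrong order; re-ordering via Proposition~\ref{prop:product} produces a $\ZZ[q,q^{-1}]$-combination with no a~priori control on the sign of the exponents. Thus~$^*$ is \emph{not} triangular in the sense of Definition~\ref{defn:triang}, and the machinery of~\S\ref{subs:Equiv-L-L} does not yield the key claim.

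Your ``key claim'' is therefore equivalent in difficulty to the conjecture itself; the induction you propose has no evident inductive step, since knowing $(b'_-\bullet b'_+)^*=(b'_-)^*\bullet(b'_+)^*$ for smaller degrees does not by itself control the $q$-adic valuation of the re-ordering coefficients at the next step. Your $\lie{sl}_2$ computation is correct and already illustrates that the cancellation is delicate (it is the Chebyshev structure of~\S\ref{subs:sl_2} that makes it work there). A more promising line, hinted at in Remark~\ref{rem:interchange +-}, would be to prove directly that the $b_-\bullet b_+$ can equivalently be characterised starting from $\mathcal H_q^-(\gg)$ with $q\leftrightarrow q^{-1}$; but that, too, is left open in the paper.
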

\begin{remark}\label{rem:interchange +-}
It is easy to see that this conjecture implies that $\mathbf B_{\tilde\gg}$ can also be obtained by 
replacing $\mathcal H^+_q(\gg)$ with $\mathcal H^-_q(\gg)$ and interchanging $q$ and~$q^{-1}$ in Theorems~\ref{thm:circle} and~\ref{thm:bullet}.
\end{remark}

\plink{U tg localized}
It turns out that $\mathbf B_\gg$ and~$\mathbf B_{\tilde\gg}$
are preserved by appropriately modified Lusztig's symmetries. First of all, 
set $\widehat U_q(\tilde\gg)=U_q(\tilde\gg)[\mathbf K^{-1}]$. Clearly, $\bar\cdot$, ${}^t$ and ${}^*$ 
extend naturally to that algebra.
\begin{theorem}\plink{T_i}
\begin{enumerate}[{\rm(a)}]
\item 
For each $i\in I$ there exists a unique automorphism~$T_i$ of $\widehat U_q(\tilde\gg)$ which satisfies
$T_i(K_{\pm j})=K_{\pm j}K_{\pm i}^{-a_{ij}}$
and 
\begin{gather*}
T_i(E_j)=
\begin{cases} q_i^{-1} K_{+i}^{-1} F_i,&i=j\\ 
\sum\limits_{r+s=-a_{ij}}\mskip-8mu (-1)^r q_i^{s+\frac12 a_{ij}} E_i^{\la r\ra}E_j E_i^{\la s\ra},  &i\ne j\\
\end{cases}
\\
T_i(F_j)=
\begin{cases} q_i^{-1} K_{-i}^{-1} E_i,& i=j\\ 
\sum\limits_{r+s=-a_{ij}}\mskip-8mu  (-1)^r q_i^{s+\frac12 a_{ij}} F_i^{\la r\ra}F_j F_i^{\la s\ra}, &i\ne j\\
\end{cases}
\end{gather*}
\item For all $x\in\widehat U_q(\tilde\gg)$, $\overline{T_i(x)}=T_i(\overline x)$, 
$(T_i(x))^*=T_i^{-1}(x^*)$ and $(T_i(x))^t=T_i^{-1}(x^t)$.
\item The $T_i$, $i\in I$ satisfy the braid relations on $\widehat U_q(\tilde\gg)$, that is, they 
define a representation of the Artin braid group $\operatorname{Br}_\gg$ of~$\gg$ on~$\widehat U_q(\tilde\gg)$. 
\end{enumerate}
\label{thm:braid group double}
\end{theorem}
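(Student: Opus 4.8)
The plan is to establish (a), (b), (c) in that order, reducing as much as possible to classical facts about Lusztig's symmetries.

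First, for part~(a), the natural approach is to exhibit $T_i$ explicitly and check the defining relations. Note that the formulas for $T_i(E_j),T_i(F_j)$ with $i\neq j$ are (up to the normalizing powers of $q_i$ built into the rescaled presentation \eqref{eq:commutation g}--\eqref{eq:qserre}) exactly Lusztig's automorphisms $T''_{i,1}$ on $U_q^+$ and $U_q^-$, while the action on $\mathcal K$ is the standard one induced by the simple reflection $s_i$ acting on the weight lattice. So I would first define $T_i$ on generators, then verify it preserves the relations~\eqref{eq:commutation g}: the $\mathcal K$-relations are routine since $s_i$ preserves the pairing $a_{ij}$; the Serre relations~\eqref{eq:qserre} are inherited from Lusztig's theory (cf.~\cite{L1}*{Chapter~37}) after accounting for the rescaling $E\mapsto(q^{-1}-q)E$, $F\mapsto(q-q^{-1})F$; the only genuinely new relation to check is the commutator $[E_i,F_j]=\delta_{ij}(q_i^{-1}-q_i)(K_{+i}-K_{-i})$. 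For $i\ne j$ this follows from Lusztig's computations on the rank-2 parabolic together with the fact that $T_i(E_i),T_i(F_i)$ involve $K_{\pm i}^{-1}$, which land in $\widehat U_q(\tilde\gg)$ (this is the reason for inverting $\mathbf K$). For $i=j$ one computes directly: $T_i(E_i)T_i(F_i)-T_i(F_i)T_i(E_i) = q_i^{-2}(K_{+i}^{-1}F_iK_{-i}^{-1}E_i - K_{-i}^{-1}E_iK_{+i}^{-1}F_i)$ and, moving the $K$'s past using~\eqref{eq:commutation g}, this reduces to a scalar multiple of $K_{+i}^{-1}K_{-i}^{-1}(E_iF_i-F_iE_i)$ plus lower terms, which by the $\lie{sl}_2$-relation equals $(q_i^{-1}-q_i)(K_{-i}^{-1}-K_{+i}^{-1}) = (q_i^{-1}-q_i)(T_i(K_{+i})-T_i(K_{-i}))$ as required. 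Uniqueness is immediate since the listed elements generate $\widehat U_q(\tilde\gg)$, and invertibility of $T_i$ follows by exhibiting $T_i^{-1}$ through the analogous formulas with $q_i\mapsto q_i^{-1}$ (equivalently, conjugating by $\overline\cdot$), or by the standard argument that $T_i^{-1}$ is determined on generators and the composite fixes them.

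For part~(b), the three identities $\overline{T_i(x)}=T_i(\overline x)$, $(T_i(x))^*=T_i^{-1}(x^*)$, $(T_i(x))^t=T_i^{-1}(x^t)$ are anti/automorphism identities, so it suffices to verify them on the generators $E_j,F_j,K_{\pm j}$, where they become finite checks. The bar-compatibility on $\mathcal K$ is trivial; on $E_j,F_j$ with $i\ne j$ it follows because the coefficients $(-1)^r q_i^{s+\frac12 a_{ij}}$ are symmetric under $q^{\frac12}\mapsto q^{-\frac12}$ once the sum is reindexed $r\leftrightarrow s$ (using $r+s=-a_{ij}$), together with $\overline{E_i^{\la r\ra}}=E_i^{\la r\ra}$; for $i=j$ it is direct from $\overline{K_{\pm i}^{-1}}=K_{\pm i}^{-1}$. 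The ${}^*$ and ${}^t$ identities similarly reduce to comparing the explicit formulas: applying ${}^*$ (which swaps $K_{+i}\leftrightarrow K_{-i}$ and fixes $E_i,F_i$) to $T_i(E_j)$ and matching with $T_i^{-1}(E_j^*)=T_i^{-1}(E_j)$ amounts precisely to the fact that $T_i^{-1}$ has the same shape as $T_i$ with the roles of $K_{+i}$ and $K_{-i}$ exchanged; and ${}^t$ swaps $E\leftrightarrow F$, again matching $T_i^{-1}$.

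For part~(c), the braid relations, I would deduce them from the corresponding classical statement for Lusztig's symmetries rather than verify them by brute force. Both sides of a braid relation $T_iT_jT_i\cdots = T_jT_iT_j\cdots$ are algebra automorphisms of $\widehat U_q(\tilde\gg)$, so it suffices to check they agree on the generators. On $\mathcal K$ this is the braid relation for the simple reflections $s_i$ acting on the weight lattice, which is classical. On $E_j,F_j$ we are, after forgetting the $K$-rescalings and on the subalgebras $U_q^\pm$, exactly in the setting of Lusztig's braid group action on $U_q(\gg)$ (\cite{L1}*{Theorem~39.4.3}), where the braid relations are known; the rescaling factors and the extra Cartan part $\mathcal K$ are handled by the same reduction to the rank-$2$ case, in which one verifies the relation by an explicit (if tedious) computation in $U_q(\tilde{\gg})$ for $\gg$ of type $A_1\times A_1$, $A_2$, $B_2$, $G_2$. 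The main obstacle will be this last rank-$2$ verification: one must confirm that the chosen normalization $q_i^{s+\frac12 a_{ij}}$ and the placement of $K_{\pm i}^{-1}$ in the $i=j$ formulas are exactly those for which the braid relation holds on the nose (with no residual scalar); I expect this to follow from Lusztig's rank-$2$ computations after a careful bookkeeping of the rescaling $E\mapsto(q^{-1}-q)E$, $F\mapsto(q-q^{-1})F$ and of the conversion between the action by left multiplication and the action $\invprod$ of~\eqref{eq:triangle-def}, but it is the step requiring the most care.
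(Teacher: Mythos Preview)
Your approach is correct in outline, but the paper takes a different and considerably more efficient route that avoids most of the computations you flag as delicate.

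The paper's key observation is that $T_i$ can be written as a composition
\[
T_i = \Ad^{\tfrac12}K_{-i}\circ T''_{i,-1},
\]
where $T''_{i,-1}$ is Lusztig's symmetry and $\Ad^{\tfrac12}K_{\pm i}(x)=\chi^{\frac12}((\alpha_{\pm i},0),\deg_{\widehat\Gamma}x)\,x$. To make sense of this, the paper works in an auxiliary algebra $U'_q(\tilde\gg)$ (a central extension by elements $z_i$ with $z_i^2=K_{+i}K_{-i}$) into which both $U_q(\gg)$ and $\widehat U_q(\tilde\gg)$ embed; Lusztig's $T''_{i,-1}$ then transports directly. This factorization buys two things. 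First, existence as an automorphism is immediate: $T''_{i,-1}$ is already known to be one, and $\Ad^{\tfrac12}K_{-i}$ is trivially so. You never have to verify the Serre relations or the commutator relation by hand. Second, the braid relations follow from a one-line computation: writing $T_{i_1}\cdots T_{i_r}=\Ad^{\tfrac12}K_{\la w\ra,0}\circ T''_{w,-1}$, where $\la w\ra=\sum_{\beta\in R_+\cap w(-R_+)}\beta$, the right-hand side visibly depends only on $w$ (since $\la w\ra$ does, and the $T''_{i,-1}$ satisfy the braid relations by~\cite{L1}*{Theorem~39.4.3}). This completely sidesteps the rank-$2$ bookkeeping you correctly identified as the most fragile step.

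Your approach, verifying relations on generators and reducing braid relations to rank-$2$ checks, would work, but the normalization issues you worry about (scalar discrepancies from the rescaling and from the $\invprod$ action) are precisely what the $\Ad^{\tfrac12}$ factorization absorbs automatically. One small correction to your part~(b): you write $\overline{E_i^{\la r\ra}}=E_i^{\la r\ra}$, but in fact $\overline{E_i^{\la r\ra}}=(-1)^r E_i^{\la r\ra}$ since $\overline{\la r\ra_{q_i}!}=(-1)^r\la r\ra_{q_i}!$; this sign is needed to make the bar-invariance check go through, and the paper uses it explicitly.
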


We prove this Theorem in Section~\ref{sec:symm}.
\begin{remark}\label{rem:Lusz-sym}
Since for each $i\in I$, $T_i$ preserves the ideal $\mathfrak J=\la K_{+j}K_{-j}-1\,:\, j\in I\ra$, $T_i$ factors through to an automorphism of~$U_q(\gg)=U_q(\tilde\gg)/\mathfrak J$ 
which, for $x\in U_q(\gg)$ homogeneous, equals $q_i^{\frac12 \alpha_i^\vee(\deg x)}T''_{i,-1}(x)$ where 
$T''_{i,-1}$ is one of Lusztig's symmetries defined in~\cite{L1}*{\S37.1} (see Lemma~\ref{lem:extend-double-braid}).
\end{remark}
Clearly, $\invprod$ extends to the group generated by~$\mathbf K$ acting on~$\widehat U_q(\tilde\gg)$.
Then the set $\widehat{\mathbf B}_{\tilde\gg}:=\mathbf K^{-1}\invprod\mathbf B_{\tilde\gg}$ is a $\bar\cdot$-invariant 
basis of $\widehat U_q(\tilde\gg)=U_q(\tilde\gg)[\mathbf K^{-1}]$.
\begin{conjecture}\label{conj:braid-basis}
Let $\gg$ be semisimple. Then
for all $i\in I$, $T_i(\widehat{\mathbf B}_{\tilde\gg})=\widehat{\mathbf B}_{\tilde\gg}$. In other words,
$\operatorname{Br}_\gg$ acts on $\widehat{\mathbf B}_{\tilde\gg}$ by permutations.
\end{conjecture}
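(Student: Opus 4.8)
The plan is to derive the statement from a single one‑sided triangularity property of~$T_i$ relative to the construction of~$\mathbf B_{\tilde\gg}$ and then to invoke the equivariant Lusztig Lemma (Proposition~\ref{prop:LL}). We begin with the reductions. Since the~$T_i$ satisfy the braid relations (Theorem~\ref{thm:braid group double}(c)) it suffices to fix one index~$i\in I$. The anti‑automorphism~$t$ extends to~$\widehat U_q(\tilde\gg)$ and commutes with the~$\invprod$‑action of the group generated by~$\mathbf K$: for homogeneous~$x$ the identity $xK_{\pm j}=q_j^{\mp\alpha_j^\vee(\deg_{\widehat\Gamma}x)}K_{\pm j}x$ together with $\alpha_j^\vee(\deg_{\widehat\Gamma}x^t)=-\alpha_j^\vee(\deg_{\widehat\Gamma}x)$ (because $t$ interchanges the two~$\Gamma$‑components of the~$\widehat\Gamma$‑degree) gives $(K\invprod x)^t=K\invprod x^t$, whence $\widehat{\mathbf B}_{\tilde\gg}^t=\widehat{\mathbf B}_{\tilde\gg}$ by Theorem~\ref{thm:transp-star}. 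Combining this with $(T_i(x))^t=T_i^{-1}(x^t)$ from Theorem~\ref{thm:braid group double}(b), the inclusion $T_i(\widehat{\mathbf B}_{\tilde\gg})\subseteq\widehat{\mathbf B}_{\tilde\gg}$ already forces $T_i^{-1}(\widehat{\mathbf B}_{\tilde\gg})\subseteq\widehat{\mathbf B}_{\tilde\gg}$, and hence equality. Finally~$T_i$ restricts to an automorphism of the group generated by~$\mathbf K$ (via $K_{\pm j}\mapsto K_{\pm j}K_{\pm i}^{-a_{ij}}$) and, being homogeneous for the~$\widehat\Gamma$‑grading once~$T_i$ is allowed to act on degrees through~$s_i$ on each~$\Gamma$‑summand, it intertwines the corresponding~$\invprod$‑actions (a direct check with the formulas of Theorem~\ref{thm:braid group double}(a)). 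Thus it is enough to prove $T_i(b_-\bullet b_+)\in\widehat{\mathbf B}_{\tilde\gg}$ for every $(b_-,b_+)\in\mathbf B_{\nn_-}\times\mathbf B_{\nn_+}$.

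Put $z:=T_i(b_-\bullet b_+)$. Then~$z$ is fixed by~$\bar\cdot$ because $\overline{T_i(y)}=T_i(\overline y)$ and~$b_-\bullet b_+$ is fixed by~$\bar\cdot$; writing $z=\sum_{b}c_b\,b$ over $b\in\widehat{\mathbf B}_{\tilde\gg}$ and using that each~$b$ is individually~$\bar\cdot$‑invariant, all~$c_b$ are~$\bar\cdot$‑invariant. The plan is to deduce $z\in\widehat{\mathbf B}_{\tilde\gg}$ from the rigidity mechanism behind Main Theorems~\ref{thm:circle} and~\ref{thm:bullet} — $\bar\cdot$‑invariance together with membership in the appropriate~$\ZZ[q^{-1}]$‑lattice and a one‑term leading condition — once one establishes: (i)~\emph{integrality}: $T_i$ preserves the standard~$\ZZ[q,q^{-1}]$‑form of~$\widehat U_q(\tilde\gg)$ (of which~$\widehat{\mathbf B}_{\tilde\gg}$ is a~$\ZZ[q,q^{-1}]$‑basis); and (ii)~\emph{one‑term leading behaviour}: with respect to the filtration of~$\widehat U_q(\tilde\gg)$ by~$\mathbf K$‑content underlying the construction of~$b_-\bullet b_+$ and the~$\ZZ[q^{-1}]$‑lattice~$\mathcal L$ spanned by~$\widehat{\mathbf B}_{\tilde\gg}$, one has $z\equiv b_0\pmod{q^{-1}\mathcal L}$ for exactly one $b_0\in\widehat{\mathbf B}_{\tilde\gg}$. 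Given (i) and (ii), feeding the~$\bar\cdot$‑invariance of~$z$ and of each~$b$ into Proposition~\ref{prop:LL} forces $c_{b_0}=1$ and $c_b=0$ for $b\ne b_0$, i.e.\ $z=b_0$.

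It remains to verify (i) and (ii), and here the argument must be assembled layer by layer, matching the three layers of the construction of~$\mathbf B_{\tilde\gg}$. On the rank‑one subalgebra generated by $E_i,F_i,K_{\pm i}$ the operator~$T_i$ is a double‑torus version of the rescaled symmetry~$T$ of~$U_q(\lie{sl}_2)$ figuring in Property~2 of~$\mathbf B_{\lie{sl}_2}$ (cf.\ Remark~\ref{rem:Lusz-sym}), and the required compatibility is exactly the content of~\S\ref{subs:braid-sl2}. On the ``off‑$i$'' halves, the explicit formulas for $T_i(E_j)$ and $T_i(F_j)$ with~$j\ne i$, together with the known behaviour of the dual canonical bases~$\mathbf B_{\nn_\pm}$ under~$T_i$ and its compatibility with the crystal/PBW structure (cf.~\cite{L1}*{Ch.~37--40}), yield that $T_i(b_\pm)$ lies in the~$\ZZ[q^{\pm1}]$‑span of~$\mathbf B_{\nn_\pm}$ with a controlled, essentially unitriangular, leading term. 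These are then glued using that~$\gg$ is semisimple: by Theorem~\ref{thm:g-ss-denom} one has $\mathbf d_{b_-,b_+}=1$, so $b_-\circ b_+\equiv b_-b_+$ modulo terms carrying a nontrivial~$\mathbf K_+$‑factor and $b_-\bullet b_+\equiv\iota_+(b_-\circ b_+)$ modulo terms carrying a~$\mathbf K$‑factor outside~$\mathbf K_+$; applying~$T_i$ to the plain product~$b_-b_+$ and straightening the resulting mixed word in the~$E$'s and~$F$'s by the commutation relations~\eqref{eq:commutation g} — every use of which introduces a term on a strictly lower~$\mathbf K$‑layer — one should recover~$z$ as the appropriate double canonical leading vector plus a~$q^{-1}\ZZ[q^{-1}]$‑combination of genuinely lower basis vectors.

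The main obstacle is precisely this last straightening‑and‑gluing step, made delicate by the opposite~$q$‑normalizations of the two halves of the construction: the~$\circ$‑construction is normalized by~$q\ZZ[q]$ and the~$\bullet$‑construction by~$q^{-1}\ZZ[q^{-1}]$, so one must show that the coefficients produced when commuting~$T_i(b_-)$ past~$T_i(b_+)$ are integral and, on every strictly lower~$\mathbf K$‑layer, lie in~$q^{-1}\ZZ[q^{-1}]$, \emph{uniformly} in~$(b_-,b_+)$. We expect this to require an induction on~$\deg_{\widehat\Gamma}(b_-b_+)$ with Main Theorems~\ref{thm:circle} and~\ref{thm:bullet} as the inductive hypothesis and the~$\lie{sl}_2$ and Lusztig inputs above as base data; the control of these error terms is exactly the difficulty that, for the moment, keeps the statement at the level of a conjecture.
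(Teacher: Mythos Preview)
The statement is a \emph{Conjecture} in the paper, and the paper does not prove it; it provides only partial evidence (the $\lie{sl}_2$ case in Lemma~\ref{lem:Braid-sl2}, tameness of all $b_\pm\in\mathbf B_{\nn_\pm}$ in Theorem~\ref{thm:Spec braid action}, and counterexamples for non-semisimple $\gg$ in~\S\ref{ex:braid-fails}). You recognize this in your closing sentence, so what you have written should be read as a strategy rather than a proof.

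Your reductions are correct: the passage from one $T_i$ to the whole braid group, the use of ${}^t$ together with $(T_i(x))^t=T_i^{-1}(x^t)$ to upgrade one-sided inclusion to equality, and the intertwining $T_i(K\invprod x)=T_i(K)\invprod T_i(x)$ (which does hold, the verification using $d_ja_{ji}=d_ia_{ij}$) all go through. The scheme of proving $T_i(b_-\bullet b_+)\in\widehat{\mathbf B}_{\tilde\gg}$ by matching it against the uniqueness clause of Main Theorem~\ref{thm:bullet} is exactly the mechanism the paper uses for its partial result Theorem~\ref{thm:Spec braid action}: there one rewrites $T_i(b_+)$ via the crystal expansion of Corollary~\ref{lem:crystal-decomp} and Proposition~\ref{prop:Lusztig}, recognizes the outcome as a $\bar\cdot$-invariant element of the triangular form~\eqref{eq:frm-spec-bullet}, and concludes.

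One imprecision: your claim that ``$T_i(b_\pm)$ lies in the $\ZZ[q^{\pm1}]$-span of $\mathbf B_{\nn_\pm}$'' is false as stated, since $T_i(E_i)=q_i^{-1}K_{+i}^{-1}F_i\notin U_q^+$; only $T_i(\mathbf B_{\nn_+}\cap\ker\partial_i)\subset\mathbf B_{\nn_+}$ holds (Proposition~\ref{prop:Lusztig}). The paper's Theorem~\ref{thm:Spec braid action} shows that for general $b_+$ one lands in $\widehat{\mathbf B}_{\tilde\gg}$ via an element involving an $F_i$-power and a $K_{+i}^{-1}$-shift, already outside $U_q^+$. This is consistent with the ``mixed words'' you invoke later, but the earlier formulation should be corrected. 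Your diagnosis of the genuine obstacle---controlling simultaneously the $q\ZZ[q]$-lattice from the $\circ$-step and the $q^{-1}\ZZ[q^{-1}]$-lattice from the $\bullet$-step after commuting $T_i(b_-)$ past $T_i(b_+)$---is accurate; the paper's proof of Theorem~\ref{thm:Spec braid action} rests on the delicate estimate of Proposition~\ref{prop:Ei square triang}, and no analogue for the full product $b_-b_+$ is available.
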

We prove supporting evidence for this conjecture
in Section~\ref{sec:symm}. In view of Remark~\ref{rem:Lusz-sym},
the conjecture implies that 
$T_i(\mathbf B_\gg)=\mathbf B_\gg$.

If $\gg$ is infinite dimensional,
this does not hold for all elements of $\widehat{\mathbf B}_{\tilde\gg}$ (see Example~\ref{ex:braid-fails}). 
To amend this conjecture we introduce the following notion. We say that $\mathbf b\in\widehat{\mathbf B}_{\tilde\gg}$ is 
{\em tame} if $T_i(\mathbf b)\in \widehat{\mathbf B}_{\tilde\gg}$ for all $i\in I$. We prove 
(Theorem~\ref{thm:Spec braid action})
that all elements of~$\mathbf B_{\nn_\pm}$ are tame.
\begin{conjecture}
 If $\mathbf b\in\widehat{\mathbf B}_{\tilde\gg}$ is tame then $T(\mathbf b)\in \widehat{\mathbf B}_{\tilde\gg}$ for
 all $T\in\operatorname{Br}_\gg$.
\label{conj:braid-basis-infinite}
\end{conjecture}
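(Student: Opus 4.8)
The plan is to isolate one lemma and deduce the conjecture from it by a short induction. \textbf{Reduction.} It suffices to prove that the set $\mathbf T$ of tame elements of $\widehat{\mathbf B}_{\tilde\gg}$ is stable under every $T_i^{\pm1}$, $i\in I$ (here one should first check, using the anti-automorphisms ${}^t,{}^*$ together with Theorems~\ref{thm:transp-star} and~\ref{thm:braid group double}(b), that tameness is equivalent to the a priori stronger two-sided condition $T_i^{\pm1}(\mathbf b)\in\widehat{\mathbf B}_{\tilde\gg}$ for all $i$, since ${}^t$ exchanges $T_i$ and $T_i^{-1}$ and preserves $\widehat{\mathbf B}_{\tilde\gg}$). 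Granting stability of $\mathbf T$, let $\mathbf b\in\mathbf T$ and $T=T_{i_1}^{\epsilon_1}\cdots T_{i_k}^{\epsilon_k}\in\operatorname{Br}_\gg$; applying the generators one at a time from the right, each partial product $T_{i_j}^{\epsilon_j}\cdots T_{i_k}^{\epsilon_k}(\mathbf b)$ lies in $\mathbf T\subseteq\widehat{\mathbf B}_{\tilde\gg}$, and for $j=1$ this is $T(\mathbf b)\in\widehat{\mathbf B}_{\tilde\gg}$ (independence of $T(\mathbf b)$ on the chosen word is automatic since the $T_i$ satisfy the braid relations, Theorem~\ref{thm:braid group double}(c)). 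So everything reduces to the stability of $\mathbf T$.

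To attack stability, fix a tame $\mathbf b=K\invprod(b_-\bullet b_+)$ and $i\in I$, put $\mathbf b'=T_i^{\pm1}(\mathbf b)\in\widehat{\mathbf B}_{\tilde\gg}$, and try to show $T_j(\mathbf b')\in\widehat{\mathbf B}_{\tilde\gg}$ for every $j$. Half of this is free: by Theorem~\ref{thm:braid group double}(b) one has $\overline{T_j(\mathbf b')}=T_j(\overline{\mathbf b'})=T_j(\mathbf b')$, so $T_j(\mathbf b')$ is automatically $\bar\cdot$-fixed, and by the triangularity characterizations of Main Theorems~\ref{thm:circle} and~\ref{thm:bullet} it belongs to $\widehat{\mathbf B}_{\tilde\gg}$ as soon as its double-basis expansion differs from a single basis vector $K''\invprod(b''_-\bullet b''_+)$ by $q^{-1}\ZZ[q^{-1}]$-lower-order terms. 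To establish that one wants a workable description of $\mathbf T$: the expectation is that $\mathbf b=K\invprod(b_-\bullet b_+)$ is tame precisely when, for each $i$, the factors $b_\pm$ lie in the Lusztig subalgebras ${}_iU_q^\pm=\{x:T''_{i,-1}(x)\text{ is a noncommutative polynomial in the }E_j,F_j\}$ (equivalently, are not $E_i$-, resp.\ $F_i$-divisible in the appropriate sense). Under that condition Lusztig's theorem that $T''_{i,-1}$ carries the canonical basis of ${}_iU_q^\pm$ onto that of ${}^iU_q^\pm$ (see \cite{L1}*{\S\S37,41--42}), transported through Remark~\ref{rem:Lusz-sym} and the duality between $\mathbf B_{\nn_\pm}$ and the dual canonical basis, forces $T_i$ to act on the relevant part of $\widehat{\mathbf B}_{\tilde\gg}$ by a weight-shifted permutation, which is exactly the missing triangularity. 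Theorem~\ref{thm:Spec braid action} (all of $\mathbf B_{\nn_\pm}$ is tame) is the prototype of this picture and should be the base case of the induction.

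With such a description in hand, what remains is to check that the ``genericity at $i$'' conditions defining $\mathbf T$ are preserved by $T_i^{\pm1}$. For $j\ne i$ with $m_{ij}<\infty$ this is handled by rewriting $T_jT_i(\mathbf b)$ via the rank-two braid relation $T_iT_jT_i\cdots=T_jT_iT_j\cdots$ inside the sub-datum $\{i,j\}$, where the analogue of the statement is accessible directly (the semisimple Conjecture~\ref{conj:braid-basis} is proved in low rank in Section~\ref{sec:symm}). The \emph{main obstacle} is the remaining cases, where the braid group supplies no relation: $j=i$, and $j\ne i$ with $m_{ij}=\infty$ in the Kac--Moody setting. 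Here $T_i$ does not even preserve $U_q^\pm$ (e.g.\ $T_i(E_i)=q_i^{-1}K_{+i}^{-1}F_i\notin U_q^+$), so $T_i(\iota_+(b_-\circ b_+))$ must be re-expanded in the double basis and one must verify, by a direct $\ZZ[q,q^{-1}]$-integral computation, that $T_i^{\pm1}$ permutes the ``$i$-string'' of double basis vectors through $\mathbf b$ while keeping the genericity condition at $i$ intact. This is precisely the step that breaks for non-tame elements in infinite type (Example~\ref{ex:braid-fails}), which is why the induction must stay inside $\mathbf T$; pinning down the description of $\mathbf T$ sharply enough to run this last verification is where the real difficulty lies, everything else being bookkeeping with bar-invariance and triangularity already available from Main Theorems~\ref{thm:circle},~\ref{thm:bullet} and Theorem~\ref{thm:braid group double}.
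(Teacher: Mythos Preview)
This statement is a \emph{conjecture} in the paper (Conjecture~\ref{conj:braid-basis-infinite}); the paper does not prove it but only offers supporting evidence (Theorem~\ref{thm:Spec braid action}, Lemma~\ref{lem:Braid-sl2}, and the examples in~\S\ref{subs:tame}). What you have written is accordingly not a proof but a strategy, and you say so yourself: you identify the ``main obstacle'' and concede that ``pinning down the description of $\mathbf T$ sharply enough to run this last verification is where the real difficulty lies.'' The reduction to stability of the tame set under the generators is correct but tautological---it simply restates the conjecture as ``$T_i(\mathbf b)$ is again tame,'' which is precisely what is not known.

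There are two concrete gaps beyond the one you acknowledge. First, your proposed characterization of tameness---that $K\invprod(b_-\bullet b_+)$ is tame iff $b_\pm\in{}_iU_q^\pm$ for every $i$---is false in both directions. Theorem~\ref{thm:Spec braid action} shows every element of $\mathbf B_{\nn_+}$ is tame, yet $E_i\notin{}_iU_q^+$ in your sense (indeed $T_i(E_i)=K_{+i}^{-1}\invprod F_i\notin U_q^+$); the content of Theorem~\ref{thm:Spec braid action} is exactly that $T_i(b_+)$ lands in $K_{+i}^{-\ell_i(b_+)}\invprod F_i^{\ell_i(b_+)}\bullet(\mathbf B_{\nn_+}\cap\ker\partial_i^{op})$, not in $\mathbf B_{\nn_+}$. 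Conversely, Example~\ref{ex:braid-fails} shows $F_{ij}\bullet E_{ij}$ is wild in affine type even though $E_{ij},F_{ij}$ are dual canonical basis elements of the kind your criterion would admit. Tameness is therefore not a property of the factors $b_\pm$ separately; it depends on their interaction through $\bullet$, and no factorwise description is available. Second, your claim that ${}^t$ upgrades one-sided to two-sided tameness does not go through: from $T_i(\mathbf b)\in\widehat{\mathbf B}_{\tilde\gg}$ and $(T_i(x))^t=T_i^{-1}(x^t)$ you obtain $T_i^{-1}(\mathbf b^t)\in\widehat{\mathbf B}_{\tilde\gg}$, not $T_i^{-1}(\mathbf b)\in\widehat{\mathbf B}_{\tilde\gg}$; tameness of $\mathbf b$ is equivalent to inverse-tameness of $\mathbf b^t$, and there is no a priori reason these coincide.
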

We provide supporting evidence for this conjecture in Section~\ref{sec:braid}. We show some of it below for which
more notation is necessary. Let~$W$ be the Weyl group of~$\gg$. Following~\cite{L1}*{\S39.4.4}, for each $w\in W$ define 
$T_w\in\operatorname{Br}_\gg$ recursively as 
$T_{s_i}=T_i$ and $T_w=T_{w'}T_{w''}$ for any non-trivial reduced factorization $w=w'w''$, $w',w''\in W$
(see~\S\ref{subs:braid-group} for the details). Define the {\em quantum Schubert cells} $U_q^+(w)$ and~$U_q^-(w)$, $w\in W$ by 
$U_q^+(w):=T_w(\mathcal KU_q^-)\cap U_q^+$ and $U_q^-(w):=U_q^-\cap T_w^{-1}(\mathcal K U_q^+)$.
Clearly, these are subalgebras of~$U_q^\pm$.
For $\gg$ semisimple we provide an elementary proof (Proposition~\ref{prop:quant-schub-lus}) that $U_q^+(w)$ coincides with the subspace~$U_q^+(w,1)$ of~$U_q^+$ defined 
by Lusztig (\cite{L1}*{\S40.2}) via a choice of a reduced decomposition of~$w$, and conjectured that this is the case for all Kac-Moody $\gg$ 
(Conjecture~\ref{conj:schub}\footnote{While preparing the final version of the present paper we learned that Conjecture~\ref{conj:schub} 
was proved by Tanisaki in~\cite{T}; shortly after an alternative proof was provided by Kimura (\cite{Kim2})}).
Let $\mathbf B_{\nn_\pm}(w)=\mathbf B_{\nn_\pm}\cap U_q^\pm(w)$ 
(since, conjecturally, $U_q^+(w,1)=U_q^+(w)$, by~\cite{Ki}*{Theorem~4.22} $\mathbf B_{\nn_+}(w)$ is a basis of~$U_q^+(w)$).
The following refines Conjecture~\ref{conj:braid-basis}.
\begin{conjecture}
$T_w^{-1}(\mathbf B_{\nn_+}(ww'))\subset \mathbf K^{-1}\invprod \mathbf B_{\nn_-}(w)\bullet \mathbf B_{\nn_+}(w')$ 
for all $w,w'\in W$ such that the factorization $ww'$ is reduced. 
\end{conjecture}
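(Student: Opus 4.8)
The plan is to reduce this to statements we already know about the building blocks $\circ$, $\bullet$ and the compatibility of $\mathbf B_{\nn_\pm}$ with the quantum Schubert cells. Fix $w,w'\in W$ with $\ell(ww')=\ell(w)+\ell(w')$, so that $T_{ww'}=T_w T_{w'}$ in $\operatorname{Br}_\gg$. First I would pick $b\in\mathbf B_{\nn_+}(ww')$ and expand $T_w^{-1}(b)$ in the double canonical basis $\widehat{\mathbf B}_{\tilde\gg}=\mathbf K^{-1}\invprod\mathbf B_{\tilde\gg}$, writing $T_w^{-1}(b)=\sum c_{K,b_-,b_+}\, K^{-1}\invprod(b_-\bullet b_+)$ with $c_{K,b_-,b_+}\in\QQ(q^{\frac12})$; the goal is to show every term that appears has $b_-\in\mathbf B_{\nn_-}(w)$, $b_+\in\mathbf B_{\nn_+}(w')$, and the right $\mathbf K$-factor, in fact with coefficient $1$ once normalised. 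The key structural input is the triangular-decomposition description of the quantum Schubert cells: by definition $U_q^+(w)=T_w(\mathcal K U_q^-)\cap U_q^+$ and $U_q^-(w')=U_q^-\cap T_{w'}^{-1}(\mathcal K U_q^+)$, so applying $T_w^{-1}$ to $b\in U_q^+(ww')\subset T_{ww'}(\mathcal K U_q^-)=T_w(T_{w'}(\mathcal K U_q^-))$ lands $T_w^{-1}(b)$ inside $T_{w'}(\mathcal K U_q^-)$, and a parallel argument with $T_{ww'}^{-1}$ acting on $U_q^+(ww')\subset U_q^+$ together with $T_w^{-1}(U_q^+)\subset T_{w'}(\mathcal K U_q^+)$ confines it to the correct ``positive'' side as well. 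Intersecting these two membership conditions is exactly the kind of bi-parabolic constraint governed by Theorem~\ref{thm:compat-parab}, which tells us $\mathbf B_{\tilde\gg}$ is compatible with intersections of one-sided subalgebras.

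Concretely, the steps would be: (1) establish, using Proposition~\ref{prop:quant-schub-lus} (the identification $U_q^+(w)=U_q^+(w,1)$ for semisimple $\gg$, conjecturally in general via Conjecture~\ref{conj:schub}) and \cite{Ki}*{Theorem~4.22}, that $\mathbf B_{\nn_+}(ww')$ is a basis of $U_q^+(ww')$ and more importantly that it is compatible with the factorisation $U_q^+(ww')\cong U_q^+(w)\otimes T_w(U_q^+(w'))$ at the level of crystal subcrystals; (2) show $T_w^{-1}$ maps the ambient space spanned by $\{K^{-1}\invprod(b_-\bullet b_+)\}$ with $b_\pm$ ranging over all of $\mathbf B_{\nn_\pm}$ into itself after suitable $\mathbf K$-localisation, which follows from Theorem~\ref{thm:braid group double}(a) together with the explicit action of $T_i$ on $\mathcal K$; (3) use the $\bar\cdot$-equivariance of $T_i$ (Theorem~\ref{thm:braid group double}(b): $\overline{T_i(x)}=T_i(\overline x)$) to conclude that $T_w^{-1}(b)$ is bar-invariant, since $b$ is; (4) combine the degree bookkeeping in $\widehat\Gamma$ — $T_w^{-1}$ acts on $\widehat\Gamma$ through $w^{-1}$ on each summand, and $\deg_{\widehat\Gamma}$ of any term equals $w^{-1}\deg_{\widehat\Gamma} b$ — with the defining inequalities in Main Theorems~\ref{thm:circle} and~\ref{thm:bullet} to pin down which $b_-\bullet b_+$ can occur; (5) invoke the uniqueness clauses in those theorems, i.e. the equivariant Lusztig Lemma (Proposition~\ref{prop:LL}), to upgrade ``lies in the span of the right subset'' to ``is one of the listed basis elements, with coefficient $1$.''

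The crux is step (4)–(5): one must prove that the bar-invariant element $T_w^{-1}(b)$ has its ``leading term'' (with respect to the partial order used in the equivariant Lusztig Lemma) equal to a single $b_-\bullet b_+$ with $b_-\in\mathbf B_{\nn_-}(w)$, $b_+\in\mathbf B_{\nn_+}(w')$, and that all correction terms also stay within $\mathbf K^{-1}\invprod\mathbf B_{\nn_-}(w)\bullet\mathbf B_{\nn_+}(w')$. Getting the leading term is where the Schubert-cell compatibility of $\mathbf B_{\nn_\pm}$ under braid symmetries — the content of Theorem~\ref{thm:Spec braid action} (tameness of $\mathbf B_{\nn_\pm}$) — is indispensable, and extending control to the lower-order corrections requires an induction on $\ell(w)+\ell(w')$, reducing via $T_w=T_{s_i}T_{w_1}$ for a reduced factorisation to the rank-one case $T_i^{-1}$, where the explicit formulas of Theorem~\ref{thm:braid group double}(a) can be matched term-by-term against the defining recursions for $b_-\bullet b_+$. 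I expect the main obstacle to be precisely this inductive propagation of the containment through the correction terms: the $T_i$-formulas mix $E_i\leftrightarrow K_{\pm i}^{-1}F_i$, so a single $\bullet$-product can acquire $\mathbf K$-denominators and split across the $\otimes$-factors in ways that must be shown to respect both Schubert subalgebras simultaneously; keeping the two one-sided membership conditions of step (1) aligned under iterated $T_i^{-1}$, without losing bar-invariance or the minimal-degree normalisation from Propositions~\ref{prop:multipliers}–\ref{prop:square-roots}, is the delicate point, and it is likely why the statement is offered as a conjecture rather than a theorem.
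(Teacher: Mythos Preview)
This statement is presented in the paper as a \emph{conjecture}, not a theorem: the paper offers no proof of it whatsoever, only the remark immediately following it that the conjecture would imply that $\mathbf K^{-1}\invprod\mathbf B_{\nn_-}(w)\bullet\mathbf B_{\nn_+}(w')$ is a basis of the double Schubert cell $\mathcal K U_q^-(w)U_q^+(w')=T_w^{-1}(U_q^+(ww'))$. There is therefore nothing in the paper to compare your proposal against.

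Your proposal is not a proof either, and you correctly recognise this at the end. What you have written is a reasonable roadmap identifying the relevant ingredients (bar-equivariance of $T_w$, the equivariant Lusztig Lemma, tameness of $\mathbf B_{\nn_\pm}$, Schubert-cell compatibility), but step~(5) is where the argument genuinely stalls: the uniqueness clause of Proposition~\ref{prop:LL} tells you that a bar-invariant element with a prescribed leading term \emph{equals} some $C_{r,e}$, but it does not by itself force the lower-order terms to lie in the subspace $\mathbf K^{-1}\invprod\mathbf B_{\nn_-}(w)\bullet\mathbf B_{\nn_+}(w')$ rather than in all of $\widehat{\mathbf B}_{\tilde\gg}$. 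The inductive reduction to $T_i^{-1}$ you sketch does not close this gap, because even in the rank-one step Theorem~\ref{thm:Spec braid action} only controls $T_i^{\pm1}$ on elements of $\mathbf B_{\nn_+}$, not on general products $b_-\bullet b_+$ with $b_-\neq 1$; handling those is exactly the content of the stronger Conjecture~\ref{conj:braid-basis}, which the paper also leaves open. So your diagnosis of the obstacle is accurate, and the statement remains a conjecture.
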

\begin{remark}
Note also that this conjecture implies that  $\mathbf K^{-1}\invprod \mathbf B_{\nn_-}(w)\bullet\mathbf B_{\nn_+}(w')$ is a 
basis in the {\em double Schubert cell} $
\mathcal KU_q^-(w)U_q^+(w')=T_w^{-1}(U_q^+(ww'))$.
\end{remark}

Another application of our construction is a double canonical basis in each {\em quantum Weyl algebra} $\mathcal A^\beps_q(\gg)$. 
Given a function $\beps:I\to\{+,-\}$, $\beps(i)=\epsilon_i$, let~$\mathcal A^\beps_q(\gg)$ be a $\kk$-algebra generated by the 
$x_i$, $y_i\in I$ subject to the following relations 
\begin{equation}\label{eq:Aeps-presentation}
\begin{gathered}
x_i y_i-y_i x_i=\epsilon_i(q_i^{-1}-q_i),\qquad 
x_i y_j=q_i^{\epsilon_i\delta_{\epsilon_i,\epsilon_j}a_{ij}} y_jx_i,\\
\sum_{r+s=1-a_{ij}} (-1)^r q_i^{r \epsilon_j\delta_{\epsilon_i,-\epsilon_j} a_{ij}} x_i^{\la s\ra}x_j x_i^{\la r\ra}=0=\sum_{r+s=1-a_{ij}} (-1)^r 
q_i^{-r \epsilon_j \delta_{\epsilon_i,-\epsilon_j} a_{ij}} y_i^{\la s\ra}y_j y_i^{\la r\ra},\qquad i\not=j.
\end{gathered}
\end{equation}
We will show (see Proposition~\ref{prop:color-Serre}) that each $\mathcal A_q^{\beps}(\gg)$ is naturally
a subalgebra of a Heisenberg algebra $\mathcal H_q^\beps(\gg)$ which ``interpolates'' between $\mathcal H_q^+(\gg)$ and
$\mathcal H_q^-(\gg)$ (see~\S\ref{subs:color-Weyl} for the details) and obtain the following result.
\begin{theorem}\label{thm:Weyl-basis}
Each quantum Weyl algebra $\mathcal A^\beps_q(\gg)$ has a double canonical basis $\mathbf B_{\nn_-}\circ_\beps \mathbf B_{\nn_+}$.
\end{theorem}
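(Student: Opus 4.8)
\textbf{Proof proposal for Theorem~\ref{thm:Weyl-basis}.}
The plan is to mimic the construction of $\mathbf B^+_\gg$ in Main Theorem~\ref{thm:circle} and its Corollary, but carried out inside the ``colored'' Heisenberg algebra $\mathcal H_q^\beps(\gg)$ rather than $\mathcal H_q^+(\gg)$, and then to intersect with the subalgebra $\mathcal A_q^\beps(\gg)$. First I would record, via Proposition~\ref{prop:color-Serre}, the triangular decomposition $\mathcal H_q^\beps(\gg)=\mathcal K_\beps\otimes U_q^-\otimes U_q^+$ (where $\mathcal K_\beps$ is generated by the $K_{\epsilon_i i}$, $i\in I$) together with the $\mathcal K_\beps$-action $\invprod$ and the bar-involution $\overline{\cdot}$ that it inherits from $U_q(\tilde\gg)$ — all of these being obtained by the same kind of projection/twisting as for $\mathcal H_q^\pm(\gg)$, so that \eqref{eq:triangle-bar} continues to hold. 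The key point is that the mixed commutators $b_+b_--b_-b_+$ still expand, after multiplication by the \emph{same} denominators $\mathbf d_{b_-,b_+}$ (these are computed in $U_q(\tilde\gg)$ and their images survive the projection since $\mathbf K_\beps\setminus\{1\}$ injects appropriately), into a $\ZZ[q,q^{-1}]$-combination of terms $\mathbf d_{b'_-,b'_+}K_\beps\invprod(b'_-b'_+)$ with $K_\beps\in\mathbf K_\beps$; this is what lets the equivariant Lusztig's Lemma (Proposition~\ref{prop:LL}) apply verbatim.

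Next I would invoke Proposition~\ref{prop:LL} in $\mathcal H_q^\beps(\gg)$ with the partial order given by $\widehat\Gamma$-degree (refined so that strictly fewer $K_\beps$'s is lower) to produce, for each $(b_-,b_+)\in\mathbf B_{\nn_-}\times\mathbf B_{\nn_+}$, a unique bar-invariant element $b_-\circ_\beps b_+\in\mathcal H_q^\beps(\gg)$ with
$$b_-\circ_\beps b_+-\mathbf d_{b_-,b_+}\,b_-b_+\in\sum q\ZZ[q]\,\mathbf d_{b'_-,b'_+}\,K_\beps\invprod(b'_-b'_+),$$
the sum over $K_\beps\in\mathbf K_\beps\setminus\{1\}$ and $b'_\pm\in\mathbf B_{\nn_\pm}$ of the correct total degree, exactly as in Main Theorem~\ref{thm:circle}; then $\mathbf B_{\nn_-}\circ_\beps\mathbf B_{\nn_+}:=\{b_-\circ_\beps b_+\}$ together with its $\mathbf K_\beps\invprod(-)$ translates is a bar-invariant $\kk$-basis of $\mathcal H_q^\beps(\gg)$ (Corollary to Main Theorem~\ref{thm:circle}, applied in this setting). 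Finally I would show that the subset landing in $\mathcal A_q^\beps(\gg)$ is precisely a basis of $\mathcal A_q^\beps(\gg)$: under the embedding $\mathcal A_q^\beps(\gg)\hookrightarrow\mathcal H_q^\beps(\gg)$ of Proposition~\ref{prop:color-Serre}, $\mathcal A_q^\beps(\gg)$ is the span of the ordered monomials in the $x_i=E_i$, $y_i=F_i$ with \emph{no} $K_\beps$-factors, i.e. $\mathcal A_q^\beps(\gg)=U_q^-\otimes U_q^+$ as a subspace; since each $b_-\circ_\beps b_+\equiv\mathbf d_{b_-,b_+}b_-b_+$ modulo the span of $K_\beps\invprod(b'_-b'_+)$ with $K_\beps\neq1$, and the latter elements together with the $b'_-b'_+$ form a basis respecting the direct-sum decomposition $\mathcal H_q^\beps(\gg)=\big(\bigoplus_{K_\beps\neq1}K_\beps\invprod(U_q^-U_q^+)\big)\oplus(U_q^-U_q^+)$, the bar-invariant elements $b_-\circ_\beps b_+$ lying in $\mathcal A_q^\beps(\gg)$ are exactly those — and they form a basis of it.

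The main obstacle, and the step deserving genuine care, is verifying that $\mathcal A_q^\beps(\gg)$ is a $\overline{\cdot}$-stable subspace of $\mathcal H_q^\beps(\gg)$ that meets the double basis in a \emph{full} basis — equivalently, that $b_-\circ_\beps b_+$ itself (not merely some linear combination) lies in $\mathcal A_q^\beps(\gg)$ whenever $\mathbf d_{b_-,b_+}b_-b_+$ does, which amounts to checking that the correction terms forced by bar-invariance never introduce a nontrivial $K_\beps$. This should follow from $\widehat\Gamma$-homogeneity together with the fact that the only homogeneous bar-invariant elements of $\mathcal K_\beps\otimes U_q^-\otimes U_q^+$ of a fixed degree supported on $\mathbf K_\beps\setminus\{1\}$ are $q$-multiples of $K_\beps\invprod(U_q^-U_q^+)$-elements (so they cannot be bar-fixed on the nose and must cancel); but making this precise requires the structure of $\mathbf K_\beps$ as a free monoid and the explicit form of the twist in \eqref{eq:triangle-def}. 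I would also need the colored quantum Serre relations in \eqref{eq:Aeps-presentation} to be \emph{consequences} of the relations of $\mathcal H_q^\beps(\gg)$ under the identification $x_i=E_i$, $y_i=F_i$ — this is exactly the content of Proposition~\ref{prop:color-Serre}, which I would cite rather than reprove. Everything else is a faithful transcription of Sections~\ref{sec:prelim} with $+$ replaced by $\beps$.
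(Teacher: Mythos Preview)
Your approach has two genuine gaps, and the paper's proof is in fact quite different.

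\textbf{First gap: the single-direction Lusztig's Lemma does not apply for non-constant~$\beps$.} You propose to run Proposition~\ref{prop:LL} inside $\mathcal H_q^\beps(\gg)$ with correction terms in $q\ZZ[q]$, exactly as in Main Theorem~\ref{thm:circle}. But for $i$ with $\epsilon_i=+$ the commutator $[E_i,F_i]$ produces a $K_{+i}$-type term (whose bar-correction wants $q\ZZ[q]$), whereas for $i$ with $\epsilon_i=-$ it produces a $K_{-i}$-type term (whose bar-correction wants $q^{-1}\ZZ[q^{-1}]$). With a mixed~$\beps$ both types occur simultaneously in $\overline{b_-b_+}-b_-b_+$, so no single choice of ``upper'' vs.\ ``lower'' triangularity in Proposition~\ref{prop:LL} works. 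The paper says this explicitly in the remark following Theorem~\ref{thm:Weyl-basis}: the description \`a la Theorem~\ref{thm:circle} is available \emph{only} when $\beps$ is constant.

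\textbf{Second gap: the embedding and the ``no $K$-factor'' claim.} Proposition~\ref{prop:color-Serre} does \emph{not} send $x_i\mapsto E_i$, $y_i\mapsto F_i$; it sends $x_i\mapsto q_i^{\frac12\epsilon_i}L_i^{-1}E_i$ and $y_i\mapsto q_i^{\frac12\epsilon_i}F_iL_i^{-1}$, so that $\mathcal A_q^\beps(\gg)$ lands in the degree-zero subalgebra $\widehat{\mathcal H}_q^\beps(\gg)_0$ with respect to the $Q$-grading, not in $U_q^-\otimes U_q^+$. Separately, your assertion that the bar-invariant elements $b_-\circ_\beps b_+$ contain no nontrivial $K_\beps$-term is false already for $\beps$ constant: e.g.\ in $\mathcal H_q^+(\lie{sl}_2)$ one has $F^m\circ E^m=\sum_{j\ge0}(-q)^j\qbinom[q^2]{m}{j}K_+^j\invprod F^{m-j}E^{m-j}$ (Lemma~\ref{lem:circ-sl2}), which manifestly involves $K_+^j$ for $j>0$. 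Your proposed cancellation argument fails because $K_\beps\invprod(b'_-\circ b'_+)$ is itself bar-invariant by~\eqref{eq:triangle-bar}.

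\textbf{What the paper actually does.} The paper avoids both issues by transporting the already-built $\bullet$-basis of $U_q(\tilde\gg)$ (which, via the two-step construction of Theorems~\ref{thm:circle} and~\ref{thm:bullet}, handles both $K_+$- and $K_-$-directions) through the homomorphism $\psi^\beps:U_q(\tilde\gg)\to\widehat{\mathcal H}_q^\beps(\gg)$, obtaining $\mathbf B_{\nn_-}\bullet_\beps\mathbf B_{\nn_+}:=\psi^\beps(\mathbf B_{\nn_-}\bullet\mathbf B_{\nn_+})$. It then applies the bar-equivariant projection $\tau$ of Lemma~\ref{lem:tau-defn} (multiplication by a suitable monomial in the $L_i^{-1}$) to land in $\widehat{\mathcal H}_q^\beps(\gg)_0$, and finally pulls back along the isomorphism $j_\beps$ of Proposition~\ref{prop:color-Serre}: $\mathbf B_{\nn_-}\circ_\beps\mathbf B_{\nn_+}:=j_\beps^{-1}\tau(\mathbf B_{\nn_-}\bullet_\beps\mathbf B_{\nn_+})$. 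No new Lusztig-Lemma argument is run.
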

We prove this Theorem in~\S\ref{subs:color-Weyl}.
\begin{remark}
In fact, the $\mathcal A_q^{\beps}(\gg)$ are closely related to braided Weyl algebras
(see e.g.~\cite{Majid}). Note that 
algebras $\mathcal A_q^{\beps}(\gg)$ and $\mathcal A_q^{\beps'}(\gg)$ are not (anti)isomorphic if $\beps\not=-\beps'$. Thus, the resulting bases 
$\mathbf B_{\nn_-}\circ_\beps \mathbf B_{\nn_+}$ and $\mathbf B_{\nn_-}\circ_{\beps'} \mathbf B_{\nn_+}$ are rather different. To the best of our knowledge, 
these bases admit an alternative description similar to that in Theorem~\ref{thm:circle} {\em only} when $\beps$ is a constant function, i.e. $\epsilon_i=+$ (respectively, $\epsilon_i=-$) for all~$i\in I$.
\end{remark}

Next we discuss the properties of the decomposition of elements of the natural basis of~$U_q(\tilde\gg)$ with respect to~$\mathbf B_{\tilde\gg}$. 
Define $C^{b_-,b_+}_{b'_-,b'_+,K}\in \kk$ for all $b_\pm,b'_\pm\in\mathbf B_{\nn_\pm}$ and $K\in\mathbf K$ by 
$$
\mathbf d_{b_-,b_+} b_-b_+=\sum_{b'_\pm,K} C^{b_-,b_+}_{b'_-,b'_+,K} K\invprod b'_-\bullet b'_+.
$$
Then Main Theorem~\ref{thm:bullet} immediately implies that 
$C^{b_-,b_+}_{b'_-,b'_+,K}\in \ZZ[q,q^{-1}]$. These Laurent polynomials play the role similar to that of 
Kazhdan-Lusztig polynomials due to the following conjectural result.
\begin{conjecture}\label{conj:strconst}
If $\gg$ is semisimple then
$C^{b_-,b_+}_{b'_-,b'_+,K}\in \ZZ_{\ge 0}[q,q^{-1}]$ for all $b_\pm,b'_\pm\in\mathbf B_{\nn_\pm}$, $K\in\mathbf K$.
\end{conjecture}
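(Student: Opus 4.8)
The plan is to prove positivity geometrically, following the template of Lusztig's proof of positivity of the canonical basis of $U_q^+$ and of Kazhdan--Lusztig positivity, by realizing $\mathbf B_{\tilde\gg}$ as the set of classes of self-dual simple objects in a suitable monoidal (equivalently, graded highest-weight) category and reading off $C^{b_-,b_+}_{b'_-,b'_+,K}$ as a graded composition multiplicity. Since $\gg$ is semisimple, Theorem~\ref{thm:g-ss-denom} gives $\mathbf d_{b_-,b_+}=1$, so it suffices to show that every product $b_-b_+$, $b_\pm\in\mathbf B_{\nn_\pm}$, expands in $\mathbf B_{\tilde\gg}$ with coefficients in $\ZZ_{\ge0}[q,q^{-1}]$ (and then that the analogous statement descends to $U_q(\gg)=U_q(\tilde\gg)/\mathfrak J$).

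First I would recall that each half is already categorified: $\mathbf B_{\nn_\pm}$ is the set of classes of self-dual simple graded modules over the Khovanov--Lauda--Rouquier (quiver Hecke) algebras $R_\nu$ attached to $\gg$, the bar-involution is the Khovanov--Lauda bar-duality, multiplication is parabolic induction $\operatorname{Ind}$, which is exact, and hence products of dual canonical basis elements of $U_q^\pm$ already have coefficients in $\ZZ_{\ge0}[q,q^{-1}]$ (Rouquier, Varagnolo--Vasserot, McNamara, Kato). The substantive step is to build from this a category $\mathcal C$ categorifying $\mathcal H_q^+(\gg)=\mathcal K_+\otimes U_q^-\otimes U_q^+$ --- for instance as a category of graded modules over a Heisenberg/Drinfeld double of the $R^-_\nu\boxtimes R^+_\nu$, twisted by the $\mathcal K_+$-action $\invprod$, or via perverse sheaves on the corresponding pairs of quiver varieties --- and then a category $\widetilde{\mathcal C}$ categorifying $U_q(\tilde\gg)=\mathcal K_-\otimes\mathcal H_q^+(\gg)$, with the following features: (i) $[\widetilde{\mathcal C}]$ is the integral form $U_q(\tilde\gg)_{\ZZ[q,q^{-1}]}$, compatibly with the $\widehat\Gamma$-grading, and $\bar\cdot$ is realized by a contravariant duality $D$; (ii) left multiplication by $U_q^-$, right multiplication by $U_q^+$, and each twist $K\invprod(-)$ are induced by exact endofunctors, the last being tensoring with an invertible object; (iii) the $\widehat\Gamma$-grading together with the mock Peter--Weyl filtration $\mathcal J_s$ makes $\widetilde{\mathcal C}$ standardly stratified, with standard objects $\Delta(b_-,b_+)$ whose classes are the PBW-type elements $b_-b_+$ appearing in Main Theorems~\ref{thm:circle} and~\ref{thm:bullet}.

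Given such a category, I would identify $\mathbf B_{\tilde\gg}$ with the self-dual simples: their classes $[L(b_-,b_+)]$ are $D$-invariant, hence bar-invariant, and in a standardly stratified setting $[L]-[\Delta]\in\sum q\ZZ[q]\,[\Delta']$ (respectively $q^{-1}\ZZ[q^{-1}]$ for the Ringel-dual stratification), so by the uniqueness clauses of Main Theorems~\ref{thm:circle} and~\ref{thm:bullet} and the equivariant Lusztig's Lemma (Proposition~\ref{prop:LL}) they must coincide with $K\invprod(b_-\bullet b_+)$, passing through the splitting $\iota_+$ at the level of an embedding $\mathcal C\hookrightarrow\widetilde{\mathcal C}$. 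Then $C^{b_-,b_+}_{b'_-,b'_+,K}=[\operatorname{Ind}(L_-\boxtimes L_+):K\invprod(b'_-\bullet b'_+)]$ is a graded composition multiplicity, hence lies in $\ZZ_{\ge0}[q,q^{-1}]$; and since $U_q(\gg)$ is a graded quotient of $U_q(\tilde\gg)$, reduction modulo $\mathfrak J$ (which only collapses grading shifts) preserves positivity.

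The main obstacle is constructing this monoidal categorification of the \emph{double}: KLR algebras and quiver varieties categorify $U_q^\pm$ (or $A_q^\pm$), not $\mathcal H_q^+(\gg)$ or $U_q(\tilde\gg)$, so one must produce the Heisenberg-double category, realize the $\invprod$-twist, and --- crucially --- match the bar-involution with a genuine contravariant duality and show that the order coming from $\widehat\Gamma$ is ``highest-weight-like'', so that Lusztig's Lemma coincides with the categorical recollement. A further subtlety is that Theorem~\ref{thm:circle} is triangular in $q\ZZ[q]$ whereas Theorem~\ref{thm:bullet} is triangular in $q^{-1}\ZZ[q^{-1}]$, so $\mathcal C$ and $\widetilde{\mathcal C}$ should carry \emph{opposite} highest-weight structures and the passage through $\iota_+$ ought to be a Ringel-duality-type equivalence; making this precise is likely the crux. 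An alternative would be to exhibit a quantum cluster algebra structure on a suitable localization of $U_q(\tilde\gg)$ for which $\mathbf B_{\tilde\gg}$ is a common triangular basis in the sense of Qin \cites{QF,QF1}, so that positivity follows from cluster positivity theorems; but proving that $\mathbf B_{\tilde\gg}$ is such a basis appears to be of comparable difficulty.
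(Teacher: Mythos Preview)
The statement is a \emph{conjecture} in the paper, not a theorem: the paper does not prove it in general and only verifies the case $\gg=\lie{sl}_2$ by direct computation (the lemma following~\eqref{eq:charsl2}). So there is no ``paper's own proof'' to compare with. What you have written is not a proof either, and you say so yourself: it is an outline of a categorification program whose central object---a monoidal category $\widetilde{\mathcal C}$ categorifying the Drinfeld double $U_q(\tilde\gg)$ with a contravariant duality realizing $\bar\cdot$ and a highest-weight structure matching the triangularity in Main Theorems~\ref{thm:circle} and~\ref{thm:bullet}---does not currently exist. The KLR machinery categorifies the halves $U_q^\pm$, not the double, and the ``Heisenberg-double category'' you invoke, together with the Ringel-duality passage reconciling the opposite $q\ZZ[q]$ versus $q^{-1}\ZZ[q^{-1}]$ triangularities, is precisely the missing ingredient. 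You have correctly located the difficulty, but locating it is not resolving it.

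There is also a scope mismatch worth noting. Your argument leans on the identification of $\mathbf B_{\nn_\pm}$ with classes of simple KLR modules and on positivity of their products (Varagnolo--Vasserot et al.), which is known only for \emph{symmetric} Cartan matrices; in the non-symmetric case the structure constants of $\mathbf B_{\nn_\pm}$ can fail to be positive. Yet the Remark immediately following the conjecture in the paper says the authors expect Conjecture~\ref{conj:strconst} to hold even when that half-positivity fails. So even if your program could be carried out, it would at best establish the symmetric case, and a different mechanism would be needed to explain the conjectured positivity in the non-symmetric case.
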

We provide some examples in Section~\ref{sec:strconst}. 

\begin{remark}
It is well-known (cf.~\cite{L1}) that if the Cartan matrix of~$\gg$ is symmetric then the structure constants of~$\mathbf B_{\nn_\pm}$ 
belong to $\ZZ_{\ge 0}[q^{\frac12},q^{-\frac12}]$. However, we expect that Conjecture~\ref{conj:strconst} holds even for those 
$\gg$ (with non-symmetric Cartan matrix) for which such positivity fails.
\end{remark}

Next we discuss the relation between the adjoint action of~$\widehat U_q(\tilde\gg)$ on itself and the double canonical basis.  
We expect that the basis $\widehat{\mathbf B}_{\tilde\gg}$ is perfect in the sense of the following extension of Definition~5.30 from~\cite{BK}.
\begin{definition}
Let $\mathscr V$ be a $\kk$-vector space with linear endomorphisms $e_i$, $i\in I$ and functions $\varepsilon_i:\mathscr V\setminus\{0\}\to \ZZ$
such that $\varepsilon_i(e_i(v))=\varepsilon_i(v)-1$ for all $v\notin\ker e_i$.
We say that a basis $\mathbf B$ of $\mathscr V$ is perfect
if for all $i\in I$ and $\mathbf b\in \mathbf B$ either $e_i(\mathbf b)=0$ or 
there exists a unique $\mathbf b'\in\mathbf B$ with $\varepsilon_i(\mathbf b')=\varepsilon_i(\mathbf b)-1$ such that 
$$
e_i(\mathbf b)\in \kk^\times\mathbf b'+\sum_{\mathbf b''\in\mathbf B\,:\,\varepsilon_i(\mathbf b'')<\varepsilon_i(\mathbf b')} \kk \mathbf b''.
$$
\end{definition}

Consider the adjoint action of~$\widehat U_q(\tilde\gg)$ on itself which factors through to 
an action of~$U_q(\gg)$ via 
\begin{equation}\label{eq:adjoint-action}
F_{i}(x):=F_i x- K_{-i} x K_{-i}^{-1} F_i, \quad E_{i}(x):=
[E_i,x]K_{+i}^{-1},\quad
K_i(x):=K_{+i}xK_{+i}^{-1}
\end{equation}
for all $i\in I$, $x\in\widehat U_q(\tilde\gg)$; here $K_i$ denotes the canonical image of~$K_{+i}$ in~$U_q(\gg)$.
It is cuirous that this action  preserves the subalgebra $U_q(\tilde\gg)[\mathbf K_+^{-1}]\subset \widehat U_q(\tilde\gg)$ and its 
ideal generated by the $K_{-i}$, $i\in I$, hence descends to $\mathcal H_q^+(\gg)[\mathbf K_+^{-1}]$.
\begin{conjecture}\label{thm:perfect-Weyl}
For any symmetrizable Kac-Moody algebra $\gg$, the bases $\mathbf K_+^{-1}\invprod\mathbf B_{\nn_-}\circ \mathbf B_{\nn_+}$ 
and $\widehat{\mathbf B}_{\tilde\gg}$ are perfect
with respect to the action~\eqref{eq:adjoint-action} of $U_q(\gg)$ on~$\mathcal H_q^+(\gg)[\mathbf K_+^{-1}]$ and $\widehat U_q(\tilde\gg)$, respectively.
\end{conjecture}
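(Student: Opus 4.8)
\emph{Proof strategy.} The plan is to deduce the statement from the perfectness of the dual canonical bases $\mathbf B_{\nn_\pm}$ together with the triangularity furnished by Main Theorems~\ref{thm:circle} and~\ref{thm:bullet}. It suffices to treat $\mathcal H_q^+(\gg)[\mathbf K_+^{-1}]$ with the basis $\mathbf K_+^{-1}\invprod\mathbf B_{\nn_-}\circ\mathbf B_{\nn_+}$ and the operators $E_i$ of~\eqref{eq:adjoint-action}: the analogous assertion for the $F_i$ is obtained by a symmetric argument in which the roles of $\mathbf B_{\nn_-}$ and $\mathbf B_{\nn_+}$ are exchanged (using the anti-automorphism ${}^t$ of Theorem~\ref{thm:transp-star} and the opposite adjoint action), while the assertion for $\widehat{\mathbf B}_{\tilde\gg}$ on $\widehat U_q(\tilde\gg)$ follows from the one on $\mathcal H_q^+(\gg)[\mathbf K_+^{-1}]$ via the $\iota_+$-triangularity of $b_-\bullet b_+$, once one checks that~\eqref{eq:adjoint-action} is compatible with the filtration of $\widehat U_q(\tilde\gg)$ by powers of the augmentation ideal of $\mathbf K_-$. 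The key elementary input, immediate from~\eqref{eq:commutation g} and the vanishing of the $K_{-j}$ in $\mathcal H_q^+(\gg)$, is that $E_i$ restricted to $U_q^-$ equals $(q_i^{-1}-q_i)$ times Lusztig's skew-derivation $r_i$ with $r_i(F_j)=\delta_{ij}$, for which $\mathbf B_{\nn_-}$ is a perfect basis with $\varepsilon_i$ the crystal function of $B(\infty)$ (Kashiwara; cf.~\cite{BK}*{Definition~5.30}).

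Granting this, I would argue as follows. By Leibniz's rule for the skew-derivation $E_i$ of the algebra $\mathcal H_q^+(\gg)$, the element $E_i(K\invprod b_-b_+)$ is a sum whose dominant contribution --- with respect to a filtration $\nu_i$ to be specified --- is a unit multiple of $K\invprod\big(r_i(b_-)\,b_+\big)$, all other contributions lying either in strictly smaller $\mathbf K_+$-degree (since $E_i$ applied to the $U_q^+$- and $\mathcal K_+$-factors acquires a factor $K_{+i}^{-1}$) or being produced by the lower $\varepsilon_i$-terms of $r_i(b_-)$. Perfectness of $\mathbf B_{\nn_-}$ for $r_i$ gives $r_i(b_-)=c\,b_-'+\sum_{\varepsilon_i(b_-'')<\varepsilon_i(b_-')}(\ast)\,b_-''$ with $c\in\kk^\times$ and $b_-'$ the unique element of $\mathbf B_{\nn_-}$ with $\varepsilon_i(b_-')=\varepsilon_i(b_-)-1$; since the $b_-'b_+$ and $b_-''b_+$ are already PBW-type monomials, Main Theorem~\ref{thm:circle} (resp.~Main Theorem~\ref{thm:bullet} and Proposition~\ref{prop:multipliers}) rewrites each of them in the basis $\{K\invprod(b_-\circ b_+)\}$ (resp.~$\{K\invprod(b_-\bullet b_+)\}$) as the evident leading term plus corrections involving $K'\in\mathbf K_+$ nontrivial. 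Assigning to $K_+^{-1}\invprod(b_-\circ b_+)$ the value $\nu_i:=\varepsilon_i(b_-)-M\,c_i(K_+)$, where $c_i(K_+)$ is the multiplicity of $\alpha_i$ in $\deg_{\widehat\Gamma}K_+$ and $M$ is a suitably large integer, one sees that $E_i$ acts triangularly for $\nu_i$ with leading term $c\cdot K_+^{-1}\invprod(b_-'\circ b_+)$; setting $\varepsilon_i:=\nu_i$, the identity $\varepsilon_i(E_i(\mathbf b))=\varepsilon_i(\mathbf b)-1$ and the uniqueness of $\mathbf b'$ are then inherited from $\mathbf B_{\nn_-}$, while $\bar\cdot$-invariance together with the uniqueness in the equivariant Lusztig's Lemma (Proposition~\ref{prop:LL}) make all choices canonical. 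This yields the perfectness.

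The main obstacle is the assertion that the change of basis from the PBW-type basis $\{K\invprod b_-b_+\}$ to $\{K\invprod(b_-\circ b_+)\}$ (resp.~$\{K\invprod(b_-\bullet b_+)\}$) is unitriangular for $\nu_i$; equivalently, that forming $b_-\circ b_+$ and re-expanding products never raises the crystal quantity $\varepsilon_i$ of the $U_q^-$-factor beyond the shift recorded by $c_i(K_+)$. In other words one needs the defining congruences of Main Theorems~\ref{thm:circle} and~\ref{thm:bullet} to be compatible with the crystal lattice of $\mathbf B_{\nn_-}$. This is plausible given their ``$\widehat\Gamma$-graded plus $q$-adic'' shape and the cyclotomic nature of the $\mathbf d_{b_-,b_+}$ (Proposition~\ref{prop:cyclotom}), but it seems to call for a genuinely new compatibility between the global crystal basis and the $\circ$-construction --- which is why the present statement is recorded only as a conjecture. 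A secondary, more technical difficulty is the bootstrap to $\widehat U_q(\tilde\gg)$: since $\iota_+$ is not $\bar\cdot$-equivariant, one must control how the $\mathcal K_-$-part genuinely enters~\eqref{eq:adjoint-action}, for which the $\mathbf K_-$-adic filtration should suffice but has to be matched against the triangularity of Main Theorem~\ref{thm:bullet}.
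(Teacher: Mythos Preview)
This statement is recorded in the paper as a \emph{conjecture}; the paper does not prove it in general but only verifies it for $\gg=\lie{sl}_2$ (see \S\ref{subs:crystal}). There the argument is entirely by direct computation: using the explicit description $F^{m_-}\bullet E^{m_+}=F^{[m_--m_+]_+}C^{(m)}E^{[m_+-m_-]_+}$ of Proposition~\ref{prop:DCB-sl2}, Lemma~\ref{lem:lambda-action} computes the twisted operators $E_\lambda$, $F_\lambda$ on each basis element as an explicit short sum of basis elements, and the subsequent corollaries read off the function $\varepsilon^\lambda(F^{m_-}\bullet E^{m_+})=\tfrac\lambda2+\max(0,m_--m_+)$ together with the required triangularity. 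No structural argument of the kind you outline is attempted.

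Your strategy has, beyond the compatibility gap you already flag, a more basic defect visible already in $\lie{sl}_2$. You take $\varepsilon_i$ to depend only on $b_-$ and on the $\mathbf K_+$-part, and you declare the dominant contribution to $E_i(K\invprod(b_-\circ b_+))$ to come from $r_i(b_-)\,b_+$. But in $\mathcal H_q^+(\lie{sl}_2)$ one has $F^{m_-}\circ E^{m_+}=C_+^{m_-}E^{m_+-m_-}$ whenever $m_-\le m_+$ (Lemma~\ref{lem:circ-sl2}), and since $C_+$ is central this element is annihilated by $x\mapsto[E,x]K_+^{-1}$, even though your $r_i(F^{m_-})\ne0$ for every $m_->0$. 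Thus the PBW-leading term is exactly cancelled by the $K_+$-corrections built into $\circ$, and the paper's $\lie{sl}_2$ answer $\varepsilon=\max(0,m_--m_+)$ shows that the correct function must depend on both $b_-$ and $b_+$. Any general approach would need a genuinely new idea to capture this interaction between the two halves; this is precisely why the statement remains a conjecture.
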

We prove this conjecture for $\gg=\lie{sl}_2$ in~\S\ref{subs:crystal}. 

We now discuss further the behavior of the double basis with respec to the action~\eqref{eq:adjoint-action} by using an extension of the remarkable $U_q(\gg)$-equivarint map map $U_q(\gg)^*\to U_q(\gg)$ defined 
in~\cites{RST,Dr}.
In particular, this map yields Joseph's decomposition of the locally finite part of $U_q(\gg)$ and, in the finite dimensional case, 
the center of $U_q(\gg)$ (see \cite{BG-tony}*{} and Proposition~\ref{prop:quasi-center} below). 

Let $V$ be a lowest weight $U_q(\gg)$-module (e.g., a Verma module or
its unique simple quotient) of lowest weight $-\mu\in\Lambda$ where $\Lambda$ is an integral weight lattice for~$\gg$. Then $V$ inherits a $\Gamma$-grading 
from~$U_q^+$, and we denote $|v|$ the degree of a homogeneous element~$v\in V$. 
Following a remarkable construction of Reshetikhin and Semenov-Tian-Shansky from~\cite{RST} (see also \cites{Don,joseph-mock}) we define a $\kk$-linear map  
$\Xi: V\tensor V\to \check U_q(\tilde\gg)$ by 
\begin{equation}\label{eq:Xi-formula}
\Xi(u\tensor v):=q^{\frac12\ul\gamma(|v|)-\frac12\ul\gamma(|u|)}\sum_{b_\pm\in\mathbf B_{\lie n_\pm}} q^{\eta(\deg_{\widehat\Gamma} b_+)}
\la u\,|\,\check b_-\check b_+(v)\ra_V (K_{|\check b_+(v)|,0}\invprod b_-)(K_{0,2\mu-|v|}\invprod b_+)
\end{equation}
for any $u,v\in V$ homogeneous, where $\la\cdot\,|\,\cdot\ra_V: V\tensor V\to\kk$ is the Shapovalov paring, $\check b_\pm\in U_q^\mp$ are elements of Lusztig's canonical
basis corresponding to $b_\pm$
 and $\check U_q(\tilde\gg)$ is $\widehat U_q(\tilde\gg)$ extended by adjoining elements of the form 
$K_{0,2\mu}$, $\mu\in \Lambda$ (see~\S\ref{subs:tony} for the details; the functions $\ul\gamma:\Gamma\to\mathbb Z$ and~$\eta:\widehat\Gamma\to\mathbb Z$
are defined in~\S\ref{subs:notation}). Let $\ul\Xi$ be the composition of $\Xi$ with the canonical projection $\check U_q(\tilde\gg)\twoheadrightarrow \check U_q(\gg):=
\check U_q(\tilde\gg)/\la K_{+i}K_{-i}-1\ra$.
In fact, we chose $\mathbf B_{\nn_\pm}$ for convenience but the right hand side of~\eqref{eq:Xi-formula} is independent of 
the choice of bases of~$U_q^\pm$.
\begin{theorem}\label{thm:tony-map}
For any symmetrizable Kac-Moody Lie algebra $\gg$ we have 
\begin{enumerate}[{\rm(a)}]
\item For any lowest weight module $V$,
$\Xi$ is a homomorphism of $U_q(\gg)$-modules $V\tensor V\to \check U_q(\tilde\gg)$ where the action of $U_q(\gg)$ on~$V\tensor V$
(respectively, $\check U_q(\tilde\gg)$) is defined by $K_i(v\tensor v')=K_i^{-1}(v)\tensor K_i(v')$,
$$
E_i(v\tensor v')=v\tensor E_i(v')-K_{i}^{-1}F_i(v)\tensor K_{i}(v'),\,\, F_i(v\tensor v')=K_i(v)\tensor F_i(v')-E_i K_i(v)\tensor v'
$$
for all $i\in I$, $v,v'\in V$ while the $U_q(\gg)$-action on~$\check U_q(\tilde\gg)$ is defined by~\eqref{eq:adjoint-action}.

\item If $V$ is simple integrable of lowest weight $-\mu$ then $V\tensor V$ is integrable, 
$\Xi$ and $\ul\Xi$ are injective and $J_V:=\ul\Xi(V\tensor V)$ is the corresponding Joseph's component $\ad U_q(\gg)K_{0,2\mu}$
(\cites{joseph-mock}).
\end{enumerate}
\end{theorem}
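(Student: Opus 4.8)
The plan is to treat the two parts separately.

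\textbf{Part~(a).} The claim is that $\Xi$ intertwines the twisted $U_q(\gg)$-action on $V\tensor V$ with the adjoint action~\eqref{eq:adjoint-action} on $\check U_q(\tilde\gg)$. Both module structures are generated by the $E_i,F_i,K_i$, so it suffices to check $\Xi(g\cdot(u\tensor v))=g\cdot\Xi(u\tensor v)$ for $g\in\{E_i,F_i,K_i\}$ and homogeneous $u,v\in V$. The case $g=K_i$ is a weight count: the powers of~$q$ carrying the normalization functions $\ul\gamma$ and~$\eta$ together with the twists $K_{|\check b_+(v)|,0}\invprod(-)$ and $K_{0,2\mu-|v|}\invprod(-)$ in~\eqref{eq:Xi-formula} are arranged precisely so that conjugation by $K_{+i}$ on the right reproduces the shifts by $K_i^{-1}$ on the $u$-slot and by $K_i$ on the $v$-slot on the left. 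For $g=E_i$ I would re-index the defining sum of $\Xi$ using (i) the multiplication rules for the canonical bases $\{\check b_-\}\subset U_q^+$, $\{\check b_+\}\subset U_q^-$ under left and right multiplication by $E_i$ (the Kashiwara-operator recursions); (ii) contravariance of the Shapovalov form on~$V$ with respect to the relevant anti-involution; and (iii) the commutation relations~\eqref{eq:commutation g}, especially $[E_i,F_j]=\delta_{ij}(q_i^{-1}-q_i)(K_{+i}-K_{-i})$; the case $g=F_i$ then follows analogously, with the help of the anti-automorphism~${}^t$ of Theorem~\ref{thm:transp-star}. Conceptually this is the observation, going back to Reshetikhin and Semenov-Tian-Shansky~\cite{RST} and Drinfeld~\cite{Dr}, that up to the normalizations $\Xi$ is the composition of a matrix-coefficient map $V\tensor V\to{\mathcal O}_q(\tilde G)$, equivariant because the left and right regular actions combine to the adjoint action, with the ``quantum Killing form'' identification ${\mathcal O}_q(\tilde G)\to\check U_q(\tilde\gg)$, which is equivariant as well; the sum over $\mathbf B_{\nn_\pm}$ in~\eqref{eq:Xi-formula} merely realizes the pertinent canonical element (a quasi-$R$-matrix), which is also why the right-hand side is independent of the chosen bases of~$U_q^\pm$. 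I expect the principal obstacle to be the purely computational matching of the two cross-terms of $E_i(v\tensor v')=v\tensor E_i(v')-K_i^{-1}F_i(v)\tensor K_i(v')$ against the two terms produced on the right by the commutator in $E_i(x)=[E_i,x]K_{+i}^{-1}$, i.e.\ verifying that the $q$-power and $K$-twist normalizations in~\eqref{eq:Xi-formula} are exactly those that make this bookkeeping close up.

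\textbf{Part~(b), integrability and the image.} I would first record that the twisted module $V\tensor V$ of~(a) is isomorphic, via the Shapovalov form, to $V^\vee\tensor V$ with the ordinary coproduct action, where $V^\vee$ is the restricted graded dual of~$V$; when $V$ is simple integrable of lowest weight $-\mu$ one has $V^\vee\cong L(\mu)$, simple integrable of highest weight~$\mu$. Since $E_i$ and $F_i$ act locally nilpotently on each tensor factor, a standard argument with the coproduct (the $q$-binomial expansion of $E_i^{\la N\ra}$, $F_i^{\la N\ra}$ applied to a pure tensor) shows they act locally nilpotently on $L(\mu)\tensor L(-\mu)$, so $V\tensor V$ is integrable (its weight spaces may be infinite-dimensional in the Kac-Moody case). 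For the identification of the image: by~(a), $\ul\Xi$ is a morphism of $U_q(\gg)$-modules for the adjoint action, which, as recalled before the theorem, descends to $\check U_q(\gg)$; hence the image of $\ul\Xi$ is an $\ad U_q(\gg)$-submodule of $\check U_q(\gg)$. Evaluating~\eqref{eq:Xi-formula} at a lowest weight vector $u=v=v_{-\mu}$ and using $F_jv_{-\mu}=0$, only the term $b_-=b_+=1$ survives — the $b_+$-sum collapses because $\check b_+(v_{-\mu})=0$ unless $b_+=1$, and the Shapovalov pairing then forces $b_-=1$ — so $\ul\Xi(v_{-\mu}\tensor v_{-\mu})=c\,K_{0,2\mu}$ with $c=\la v_{-\mu}\,|\,v_{-\mu}\ra_V\neq 0$. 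Thus the image of $\ul\Xi$ contains $\ad U_q(\gg)K_{0,2\mu}=J_V$, and the reverse inclusion follows once one checks that $V\tensor V$ is cyclic on $v_{-\mu}\tensor v_{-\mu}$: in the model $L(\mu)\tensor L(-\mu)$ this vector is $v_\mu\tensor v_{-\mu}$, applying the $F_j$ produces all of $L(\mu)\tensor v_{-\mu}$ (again because $F_jv_{-\mu}=0$), and then applying the $E_j$ and inducting on the $\Gamma$-degree of the second factor exhausts $L(\mu)\tensor L(-\mu)$.

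\textbf{Part~(b), injectivity.} It is enough to show that $\Xi$ is injective: in every monomial $(K_{|\check b_+(v)|,0}\invprod b_-)(K_{0,2\mu-|v|}\invprod b_+)$ occurring in~\eqref{eq:Xi-formula} both Cartan exponents are determined by $|u|$, $|v|$ and $b_\pm$, so the canonical projection $\check U_q(\tilde\gg)\twoheadrightarrow\check U_q(\gg)$ restricts to an injection on the image of~$\Xi$, whence injectivity of~$\Xi$ yields that of~$\ul\Xi$. After pairing with the quantum Killing form, injectivity of~$\Xi$ is equivalent to linear independence of the matrix coefficients $x\mapsto\la u\,|\,x(v)\ra_V$ of~$V$ (cf.~\cites{RST,joseph-mock}), and this holds because $V$ is simple: since $\End_{U_q(\gg)}V=\kk$, the representation $U_q(\gg)\to\End_\kk V$ has dense image in the finite topology and therefore separates finite-rank operators. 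As characters are of no help here, this separation argument — rather than a dimension count — is the one place where simplicity of~$V$ is genuinely used, and it is the step I would examine most carefully in the Kac-Moody generality. Combining the pieces gives that $\Xi$ and $\ul\Xi$ are injective and that $J_V=\ul\Xi(V\tensor V)=\ad U_q(\gg)K_{0,2\mu}$ is the Joseph component of~\cite{joseph-mock}.
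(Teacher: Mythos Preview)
Your outline for part~(a) is essentially the paper's approach: the paper carries out exactly the direct verification you describe, checking $K_i$ by a weight count and then expanding $[E_i^{\la 1\ra},\Xi(u\tensor v)]K_{+i}^{-1}$ and $\Xi(E_i^{\la 1\ra}(u\tensor v))$ separately and matching them. The key identities used are the adjunctions~\eqref{eq:frm-prop-xi} between $\partial_i,\partial_i^{op}$ and left/right multiplication by $E_i,F_i$ under the pairing~$\{\cdot,\cdot\}$ (these are what you call ``multiplication rules for the canonical bases''), together with the basis-change formulae $u_+=\sum_{b_+}\{\check b_+,u_+\}b_+=\sum_{b_-}\{b_-,u_+\}\check b_-$. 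So your ingredients are right; your ``principal obstacle'' is indeed where all the work sits, and it is a page of explicit bookkeeping rather than anything conceptual.

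For part~(b) the paper does \emph{not} give an independent argument: it defers to the companion paper~\cite{BG-tony}. Your sketch therefore goes beyond what is written here, and it is broadly sound. Two remarks. First, the reduction ``injectivity of $\Xi$ implies injectivity of $\ul\Xi$'' is correct for the reason you give --- with $\mu$ fixed, the $K_+$-exponent $2\mu-|v|$ determines $|v|$ and hence the $K_-$-exponent $|v|-|b_+|$ --- so the projection $\check U_q(\tilde\gg)\twoheadrightarrow\check U_q(\gg)$ is injective on the span of the monomials appearing in~\eqref{eq:Xi-formula}. Second, your density argument for injectivity of $\Xi$ needs one more sentence: $\Xi(\sum c_k u_k\tensor v_k)=0$ only yields $\sum c_k\la u_k\,|\,y_+y_-(v_k)\ra_V=0$ for $y_+\in U_q^+$, $y_-\in U_q^-$, not for all $x\in U_q(\gg)$; to pass to arbitrary $x$ you must observe (after grouping by weight so that $|u_k|$, $|v_k|$ are fixed) that in the triangular decomposition $x=\sum y_+hy_-$ the Cartan part $h$ acts by a scalar depending only on $|y_-(v_k)|$, hence vanishing on the span of $U_q^+U_q^-$ already forces vanishing on all of $U_q(\gg)$. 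With that gap closed, Jacobson density applies as you say.
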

Our proof of Theorem~\ref{thm:tony-map} (see~\S\ref{subs:tony}) relies on results of~\cite{BG-tony}.

It is very tempting to relate some known bases in $V\tensor V$ with our basis $\mathbf B_{\tilde\gg}$. 
The relation is not immediate. However, as all interesting bases contain the cannonical $U_q(\gg)$-invariant element $1_V\in V\tensor V$ (cf.~\S\ref{subs:tony}),
we suggest the following Conjecture
\begin{conjecture}\label{conj:tony-conj}
Let $\gg$ be semisimple and let $V$ be the simple finite dimensional $U_q(\gg)$-module of lowest weight~$-\mu$.
Then $(-1)^{2\rho^\vee(\mu)}
C_V$, where $C_V:=\Xi(1_V)$ and $2\rho^\vee$ is the sum of all positive coroots of~$\gg$ 
belongs to the double canonical basis of~$\check U_q(\tilde\gg)$.
\end{conjecture}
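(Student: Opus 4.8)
The plan is to prove Conjecture~\ref{conj:tony-conj} by identifying $C_V=\Xi(1_V)$ explicitly and then matching it against the defining ``upper-triangularity'' characterization of $\mathbf B_{\tilde\gg}$ from Main Theorems~\ref{thm:circle} and~\ref{thm:bullet}. First I would compute $\Xi(1_V)$ using formula~\eqref{eq:Xi-formula}: writing $1_V\in(V\tensor V)^{U_q(\gg)}$ in terms of a weight basis, the Shapovalov pairing forces the sum to run over matched pairs of dual-canonical-basis elements, so that $C_V$ becomes a sum $\sum_{b_\pm} c_{b_-,b_+}\, (K\invprod b_-)(K'\invprod b_+)$ where the coefficients $c_{b_-,b_+}$ are (up to explicit powers of $q$) the Shapovalov norms of the canonical basis vectors of~$V$. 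The sign $(-1)^{2\rho^\vee(\mu)}$ is exactly the one needed so that the ``leading term'' of $C_V$ — the term of maximal $\widehat\Gamma$-degree, corresponding to the highest weight vector of~$V$ — has coefficient a positive power of~$q$, matching the normalization $b_-\bullet b_+ - \iota_+(b_-\circ b_+)\in q^{-1}\ZZ[q^{-1}]\cdots$.

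Next I would verify the two properties that characterize membership in $\mathbf B_{\tilde\gg}$. Barinvariance: by Theorem~\ref{thm:tony-map}(a), $\Xi$ is $U_q(\gg)$-equivariant, and $1_V$ is the distinguished invariant; one checks that $\bar\cdot$ acts on $V\tensor V$ compatibly (the Shapovalov pairing and the canonical basis of~$V$ are bar-compatible by Kashiwara/Lusztig) and fixes~$1_V$ up to the sign $(-1)^{2\rho^\vee(\mu)}$, whence $(-1)^{2\rho^\vee(\mu)}C_V$ is $\bar\cdot$-fixed in $\check U_q(\tilde\gg)$ — here I would use~\eqref{eq:triangle-bar} to handle the $K\invprod(\cdot)$ twists. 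Triangularity: one must show that after extracting the leading term $\mathbf d_{b_-^{\mathrm{top}},b_+^{\mathrm{top}}}\,b_-^{\mathrm{top}} b_+^{\mathrm{top}}$ (with $b_\pm^{\mathrm{top}}$ the canonical basis elements of $U_q^\mp$ of top weight acting on the lowest weight line of~$V$), the remaining terms of $(-1)^{2\rho^\vee(\mu)}C_V$ lie in $\sum q^{-1}\ZZ[q^{-1}]\,K\invprod \iota_+(b'_-\circ b'_+)$ with strictly smaller $\mathbf K$-component; this is where the recursion defining $b_-\bullet b_+$ would be invoked, together with the explicit $q$-powers $q^{\frac12\ul\gamma(|v|)-\frac12\ul\gamma(|u|)}$ and $q^{\eta(\deg_{\widehat\Gamma} b_+)}$ in~\eqref{eq:Xi-formula}, which are designed precisely so the off-diagonal Shapovalov matrix entries contribute only negative powers of~$q$. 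By the uniqueness clause in Main Theorem~\ref{thm:bullet} (extended to $\check U_q(\tilde\gg)$), this forces $(-1)^{2\rho^\vee(\mu)}C_V = K\invprod(b_-^{\mathrm{top}}\bullet b_+^{\mathrm{top}})$ for the appropriate $K\in\mathbf K$, which is an element of $\mathbf B_{\tilde\gg}$ (or rather of its extension $\widehat{\mathbf B}_{\tilde\gg}$ inside $\check U_q(\tilde\gg)$).

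The main obstacle I expect is the triangularity estimate: controlling the $q$-degrees of the Shapovalov pairing entries $\la u\mid \check b_-\check b_+(v)\ra_V$ relative to the normalization of Lusztig's canonical basis, and showing that every non-leading monomial, once re-expressed in the natural PBW-type basis ${\mathcal K}U_q^-U_q^+$ and then rewritten via $b'_-\bullet b'_+$, genuinely lands in $q^{-1}\ZZ[q^{-1}]$ times a strictly-smaller-degree basis element. This requires a careful analysis of how the positivity/integrality of Lusztig's canonical basis of~$V$ interacts with the denominators $\mathbf d_{b_-,b_+}$ (Propositions~\ref{prop:multipliers}–\ref{prop:cyclotom}); for $\gg$ semisimple Theorem~\ref{thm:g-ss-denom} gives $\mathbf d_{b_-,b_+}=1$, which simplifies matters considerably but still leaves the $q$-degree bookkeeping. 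A secondary subtlety is making sure the passage from $\check U_q(\tilde\gg)$ down to $\check U_q(\gg)$ (and the interplay with the extra generators $K_{0,2\mu}$) does not disturb the basis membership — but since $\mathbf B_{\tilde\gg}\cap U_q(J_-,\tilde\gg,J_+)$ behaves well under such projections (Theorem~\ref{thm:compat-parab}) and the $K_{0,2\mu}$ twist is just an invertible element of the enlarged torus, this should be routine once the core identification is in place.
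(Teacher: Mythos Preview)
The statement is a \emph{conjecture}, and the paper does not prove it in general. What the paper actually does (in \S4.6 and the examples following) is verify the conjecture for $\gg=\lie{sl}_2$ and for specific fundamental representations of $\lie{sl}_{n+1}$ and $\lie{sp}_4$, in every case by direct explicit computation: one writes out $\Xi(1_V)$ term by term using the formula~\eqref{eq:Xi-formula}, checks $\bar\cdot$-invariance by hand (for $\lie{sl}_{n+1}$ this is the paragraph beginning ``First, we need to prove that $\Xi(1_V)$ is $\bar\cdot$-invariant''), and then matches the result against a previously computed element of $\mathbf B_{\tilde\gg}$ (e.g.\ $C^{(m)}$ for $\lie{sl}_2$, or $F_{[1,n]}\bullet E_{[1,n]}^*$ for $\lie{sl}_{n+1}$).

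Your proposal is not a proof of the conjecture but rather an outline of why it is plausible and what a proof would require. The two substantive steps you name are exactly the ones that are \emph{not} available in general. For $\bar\cdot$-invariance, the $U_q(\gg)$-equivariance of $\Xi$ from Theorem~\ref{thm:tony-map}(a) does not by itself give $\bar\cdot$-equivariance of $\Xi$; there is no abstract compatibility of $\bar\cdot$ with $\Xi$ established anywhere in the paper, and in the worked examples $\bar\cdot$-invariance of $\Xi(1_V)$ is checked by a separate direct computation. For the triangularity estimate, you yourself flag this as ``the main obstacle'' --- and it is: showing that the non-leading Shapovalov contributions land in $q^{-1}\ZZ[q^{-1}]$ after rewriting in terms of the $\iota_+(b'_-\circ b'_+)$ is precisely what is not known, and is the reason the statement remains a conjecture. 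Your remark that $\mathbf d_{b_-,b_+}=1$ in the semisimple case (Theorem~\ref{thm:g-ss-denom}) is correct and helpful, but it does not resolve the $q$-degree bookkeeping for the off-diagonal terms. In short, your outline correctly identifies the shape a proof would take and the genuine obstruction; it does not overcome that obstruction, and neither does the paper.
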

We prove this conjecture for $\gg=\lie{sl}_2$ and provide other supporting evidence for $\lie{sl}_n$ and~$\lie{sp}_4$ in~\S\ref{subs:tony}.

Theorem~\ref{thm:tony-map} implies that the $C_V$ and $\ul C_V:=\ul\Xi(1_V)$ are central. Their importance for the representation theory of~$\check U_q(\gg)$  
is due to following result (see e.g.~\cite{BG-tony}*{Theorem~1.11} which in turn was inspired by Drinfeld's construction from~\cite{Dr}). 
\begin{theorem}
\label{prop:quasi-center}
For any  semisimle Lie algebra $\gg$ 
the map assinging to a simple $U_q(\gg)$-module $V$ the element $\ul C_{V}:=\ul\Xi(1_V)$
defines an isomorphisms  
between the Grothendieck ring $\kk\tensor_\ZZ K_0(\gg)$ of the category of finite dimensional $U_q(\gg)$-modules and the center of~$\check U_q(\gg)$.
\end{theorem}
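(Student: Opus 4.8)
The plan is to combine the $U_q(\gg)$-equivariance of~$\ul\Xi$ from Theorem~\ref{thm:tony-map}(a) with the Joseph decomposition of the locally finite part of $\check U_q(\gg)$ and a counting argument over the Grothendieck ring. First I would recall, following~\cite{joseph-mock} and~\cite{BG-tony}, that the adjoint action~\eqref{eq:adjoint-action} of~$U_q(\gg)$ on $\check U_q(\gg)$ has a well-behaved locally finite part $F(\check U_q(\gg))$, and that it decomposes as $\bigoplus_{\mu\in\Lambda^+} (\ad U_q(\gg))\ul K_{0,2\mu}$ with each summand $J_{\ul V}$ isomorphic, as a $U_q(\gg)$-module, to $\operatorname{End}_{\text{fin}}(V)\cong V\tensor V^*$ for $V$ the simple module of lowest weight~$-\mu$ — this is exactly the content of Theorem~\ref{thm:tony-map}(b) passed through~$\ul\Xi$. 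Since $1_V\in V\tensor V$ spans the (one-dimensional) space of $U_q(\gg)$-invariants in $V\tensor V$, its image $\ul C_V=\ul\Xi(1_V)$ spans the $U_q(\gg)$-invariants of $J_{\ul V}$, i.e.\ lies in the center of $\check U_q(\gg)$; moreover the $\ul C_V$, as $V$ ranges over simple finite-dimensional modules, are linearly independent because they live in distinct $\widehat\Gamma$-graded (equivalently $\mathbf K$-weight) components, being the ``top'' term $\ul K_{0,2\mu}$ plus lower-degree corrections.

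Second I would show that the $\ul C_V$ exhaust the center. The key input is that the center of $\check U_q(\gg)$ is contained in the locally finite part (it is fixed, hence certainly locally finite, under the adjoint action), and that within $F(\check U_q(\gg))=\bigoplus_\mu J_{\ul V}$ the invariants of each $J_{\ul V}$ are one-dimensional and spanned by~$\ul C_V$. Hence $Z(\check U_q(\gg))=\bigoplus_{\mu\in\Lambda^+}\kk\,\ul C_V$, with basis $\{\ul C_V\}$ indexed by simple finite-dimensional modules, matching the $\kk$-basis $\{[V]\}$ of $\kk\tensor_\ZZ K_0(\gg)$. So the assignment $[V]\mapsto \ul C_V$ is a $\kk$-linear isomorphism of vector spaces.

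Third, and this is the step requiring the most care, I would verify that this linear isomorphism is a ring homomorphism, i.e.\ that $\ul C_V\cdot \ul C_{V'}=\sum_W m^W_{V,V'}\,\ul C_W$ where $V\tensor V'\cong\bigoplus_W m^W_{V,V'}W$ in $K_0(\gg)$. The natural argument uses that $\Xi$ (and hence $\ul\Xi$) is compatible with tensor products: the invariant $1_{V\tensor V'}\in (V\tensor V')\tensor(V\tensor V')$ maps under the analogue of~\eqref{eq:Xi-formula} to a product of the relevant $C$-elements, using that the Reshetikhin--Semenov-Tian-Shansky / Drinfeld map intertwines the algebra structure on $\check U_q(\tilde\gg)$ with the natural ``convolution'' structure coming from tensor products of modules, together with the decomposition of $1_{V\tensor V'}$ into the invariants $1_W$ of the simple summands~$W$ of $V\tensor V'$ (each appearing with the multiplicity $m^W_{V,V'}$). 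Concretely one expands $\ul C_V\ul C_{V'}$ using~\eqref{eq:Xi-formula} for $V$ and~$V'$, recognizes the resulting double sum over canonical basis elements as $\ul\Xi$ applied to a bilinear pairing computation in $V\tensor V\tensor V'\tensor V'$, and then reorganizes via the Shapovalov form and the $U_q(\gg)$-module decomposition to land on $\sum_W m^W_{V,V'}\ul C_W$. The main obstacle is bookkeeping the prefactors $q^{\frac12\ul\gamma(|v|)-\cdots}$ and $q^{\eta(\deg_{\widehat\Gamma}b_+)}$ and the various $K$-twists through the multiplication $\invprod$, so that the product formula comes out with coefficients exactly equal to the integer multiplicities $m^W_{V,V'}$ and no spurious powers of~$q$; all of this is controlled by the results of~\cite{BG-tony}, which is precisely why our proof is said to rely on that reference. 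Finally, injectivity plus surjectivity plus multiplicativity give the claimed ring isomorphism $\kk\tensor_\ZZ K_0(\gg)\xrightarrow{\ \sim\ } Z(\check U_q(\gg))$.
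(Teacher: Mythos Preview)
The paper does not supply its own proof of this theorem: it is stated in the introduction with the parenthetical reference ``(see e.g.~\cite{BG-tony}*{Theorem~1.11} which in turn was inspired by Drinfeld's construction from~\cite{Dr})'' and is treated as a known result imported from~\cite{BG-tony}. There is therefore nothing in the paper to compare your argument against line by line.

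That said, your sketch is the standard route and matches what the citation to~\cite{BG-tony} is meant to cover: centrality of $\ul C_V$ follows from Theorem~\ref{thm:tony-map}(a) together with the $U_q(\gg)$-invariance of~$1_V$; linear independence and spanning come from Joseph's mock Peter--Weyl decomposition of the locally finite part (Theorem~\ref{thm:tony-map}(b) and~\cite{joseph-mock}); and the ring-homomorphism property is precisely the multiplicativity of the Drinfeld/Reshetikhin--Semenov-Tian-Shansky map, which is the content of~\cite{BG-tony}*{Theorem~1.11}. One small caution: your claim that the $\ul C_V$ lie in ``distinct $\widehat\Gamma$-graded components'' is not quite right as stated, since all central elements have $\widehat\Gamma$-degree zero in the quotient $\check U_q(\gg)$; what actually distinguishes them is the leading term $\ul K_{0,2\mu}$ in the filtration by~$\mathbf K$, which is what you meant in the next clause.
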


Thus, the canonical basis of the Grothendieck ring of the category of finite dimensional $\gg$-modules identifies 
with a subset of the double canonical basis~$\mathbf B_\gg$ and so $\mathbf B_{\lie{sl}_n}$ contains (the canonical basis of) all Schur polynomials $s_\lambda$.
Namely, Conjecture~\ref{conj:tony-conj} and Theorem~\ref{prop:quasi-center} 
imply that the map assigning to the simple lowest weight module $V(-\mu)$ of lowest map $-\mu$ 
the element $C_\mu:=(-1)^{2\rho^\vee(\mu)}\ul C_{V(-\mu)}$ defines a homomorphism of 
rings $K_0(\gg)\to \check U_q(\gg)$ and that the $C_\mu$ 
belong to the double canonical basis 
of $\check U_q(\gg)$.
Furthermore, it would be interesting to extend these observations to the case when $V$ is simple infinite dimensional. In that case $1_V$ is a well-defined element 
of a certain completion $V\widehat\tensor V$ of $V\tensor V$ and is image~$C_V$ under~$\Xi$ belongs to a completion of~$\check U_q(\tilde\gg)$. It would be interesting to 
relate these elements with the quantum Casimir defined in \cite{GZh}. We believe that these elements $C_V$ should be important for physical applications, for instance  
when $\gg$ is affine and $V$ is its basic 
module.
\subsection*{Acknowledgments}
An important part of this work was done during our visits to Centre de Recherches Math\'ematiques (CRM), Montr\'eal, Canada and 
to the Department of Mathematics of MIT. We gratefully 
acknowledge the support of these institutions and of the organizers of the thematic program ``New directions in
Lie theory'' held at~CRM during the winter semester of~2014.
We are grateful to
P.~Etingof for his support and hospitality and important discussions of structural properties and representation theory
of quantum groups, to A.~Joseph for explaining to us his remarkable results on the center of quantum enveloping algebras 
and to M.~Kashiwara for his explanation of a crucial property of crystal operators.
We thank B.~Leclerc, J. Li and D.~Rupel for stimulating discussions.

\section{Equivariant Lusztig's Lemma and bases of Heisenberg and Drinfeld doubles}
\label{sec:Lusztig}

\subsection{An equivariant Lusztig's Lemma}\label{subs:Equiv-L-L}
Let $\Gamma$ be an abelian monoid and 
let $R$ be a unital $\Gamma$-graded ring $R=\bigoplus_{\gamma\in\Gamma}R_\gamma$ where $R_0$ is central in~$R$.
Suppose that $\bar\cdot$ is an involution of abelian groups $R\to R$ satisfying 
$$
\overline{r\cdot r'\vphantom{1}}=\overline{r'\vphantom{1}}\cdot\overline{r\vphantom{1}},\qquad r,r'\in R
$$
and $\overline{R_\gamma}=R_\gamma$, $\gamma\in\Gamma$.
Let 
$R_+=\bigoplus_{\gamma\in\Gamma\setminus\{0\}} R_\gamma$ and $\varepsilon:R\to R/R_+\cong R_0$ be the canonical projection.
Note that $\varepsilon$ commutes with~$\bar\cdot$.

Let $\hat E=\bigoplus_{\gamma\in\Gamma} \hat E_\gamma$ be a $\Gamma$-graded left $R$-module where each $\hat E_\gamma$ is assumed to be free as an $R_0$-module. 
Suppose that $\bar\cdot$ is an involution of abelian groups on~$\hat E$ satisfying 
$\overline{x e}=\overline{x}\cdot\overline e$ for all $x\in R_0$,
$e\in \hat E$ and $\overline{\hat E_\gamma}=\hat E_\gamma$, $\gamma\in\Gamma$. Assume also that $\overline{ R_\gamma \hat E}\subset 
\sum_{\alpha\in\Gamma} R_{\alpha+\gamma} \hat E$ for all $\gamma\in\Gamma\setminus\{0\}$,
or, equivalently, $\overline{R_+ \hat E}\subset R_+ \hat E$.
Then $E=\hat E/R_+\hat E$ is naturally a $\Gamma$-graded $R_0$-module and $\bar\cdot$ factors through to
an involution of abelian groups on $E$ which also satisfies $\overline{x\cdot e}=\overline{x}\cdot\overline{e}$, $x\in R_0$, $e\in E$.

Suppose now that $E$ is also free as an $R_0$-module. Since 
$\hat E$ and~$E$ are free as $R_0$-modules and the canonical projection $\pi:\hat E\to E$ is a morphism 
of $\Gamma$-graded $R_0$-modules, it admits a homogeneous splitting $\iota:E\to \hat E$.

Define a relation $\prec$ on~$\Gamma$ by $\alpha\prec\beta$ if there exists $\gamma\in\Gamma\setminus\{0\}$ such that $\alpha+\gamma=\beta$. Assume 
that there exists a function $\ell:\Gamma\to\mathbb Z_{\ge 0}$ such that for all $\gamma\in\Gamma$, $\gamma_s\prec\gamma_{s-1}\prec\cdots\prec 
\gamma_1\prec \gamma$ implies that $s\le \ell(\gamma)$. For example, this assumption 
holds for every monoid~$\Gamma$ which admits a character $\chi:\Gamma\to\mathbb Z_{\ge 0}$ with $\chi(\gamma)>0$ if~$\gamma\not=0$,
which is the case for $\Gamma=\mathbb Z_{\ge 0}^I$ where $I$ is finite.
We will call such a monoid $\Gamma$ bounded.
If $\Gamma$ is bounded then, in particular, $\preceq$ is a partial order and $0$ is the unique minimal element of~$\Gamma$. 
\begin{lemma}\label{lem:i(E)-gen-hatE}
Let $\Gamma$ be a bounded monoid. Then
$\iota(E)$ generates $\hat E$ as an $R$-module.
\end{lemma}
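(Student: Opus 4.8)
The plan is to show that the $R$-submodule $R\cdot\iota(E)\subseteq\hat E$ equals all of $\hat E$ by a downward induction on the partial order $\preceq$, using the hypothesis that $\Gamma$ is bounded (so the length function $\ell$ makes such induction legitimate). Concretely, I would prove by induction on $\ell(\gamma)$ the statement: $\hat E_\gamma\subseteq R\cdot\iota(E)$ for every $\gamma\in\Gamma$.

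For the base case, let $\gamma$ be such that $\ell(\gamma)=0$; then there is no $\delta\prec\gamma$, so for every $x\in\hat E_\gamma$ we have $\pi(x)\in E_\gamma$ and $x-\iota(\pi(x))\in (R_+\hat E)_\gamma=\sum_{\alpha\prec\gamma}R_{\gamma-\alpha}\hat E_\alpha$, which is the zero module since no $\alpha\prec\gamma$ exists. Hence $x=\iota(\pi(x))\in\iota(E)$, establishing $\hat E_\gamma=\iota(E_\gamma)$. (In fact $\gamma=0$ is the only such element once $\Gamma$ is bounded, but the argument is uniform.)

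For the inductive step, fix $\gamma$ with $\ell(\gamma)=n>0$ and assume $\hat E_\alpha\subseteq R\cdot\iota(E)$ for all $\alpha$ with $\ell(\alpha)<n$ — in particular for all $\alpha\prec\gamma$, since $\alpha\prec\gamma$ forces $\ell(\alpha)<\ell(\gamma)$ by boundedness. Given $x\in\hat E_\gamma$, the homogeneity of $\iota$ gives $x-\iota(\pi(x))\in (R_+\hat E)_\gamma=\sum_{\delta\in\Gamma\setminus\{0\}}R_\delta\hat E_{\gamma-\delta}$, a finite sum of terms $r\cdot e$ with $r\in R_\delta$, $\delta\neq0$, and $e\in\hat E_{\gamma-\delta}$; each such $\gamma-\delta$ satisfies $\gamma-\delta\prec\gamma$, so $e\in R\cdot\iota(E)$ by the inductive hypothesis, whence $r\cdot e\in R\cdot\iota(E)$. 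Since also $\iota(\pi(x))\in\iota(E)\subseteq R\cdot\iota(E)$, we conclude $x\in R\cdot\iota(E)$. This completes the induction, and since $\hat E=\bigoplus_\gamma\hat E_\gamma$ we obtain $\hat E=R\cdot\iota(E)$.

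The only genuinely delicate point is bookkeeping with the grading: one must verify that $\iota$ being a homogeneous splitting indeed places $x-\iota(\pi(x))$ into $R_+\hat E$ and that the identification $(R_+\hat E)_\gamma=\sum_{\alpha\prec\gamma}R_{\gamma-\alpha}\hat E_\alpha$ is correct, which follows directly from $R_+=\bigoplus_{\delta\neq0}R_\delta$ and the definition of $\prec$. I do not expect any serious obstacle here; the boundedness hypothesis is exactly what is needed to run the induction on $\ell$, and the freeness assumptions on $\hat E$ and $E$ as $R_0$-modules are used only to guarantee the existence of the homogeneous splitting $\iota$, which has already been invoked in the setup preceding the lemma.
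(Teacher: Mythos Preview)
Your proof is correct and follows essentially the same approach as the paper's: induction on the partial order $\prec$ (you phrase it via $\ell(\gamma)$, the paper directly on $(\Gamma,\prec)$), using the decomposition $\hat E_\gamma=\iota(E_\gamma)\oplus(R_+\hat E)_\gamma$ and the observation that $(R_+\hat E)_\gamma$ is spanned by products $R_\alpha\hat E_\beta$ with $\beta\prec\gamma$. One minor notational caution: you write $\hat E_{\gamma-\delta}$, but $\Gamma$ is only a monoid, so subtraction is not defined; the paper instead writes $\sum_{\alpha+\beta=\gamma,\,\alpha\neq0}R_\alpha\hat E_\beta$, which is what you mean and should be used.
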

\begin{proof}
We have $\hat E=\iota(E)\oplus R_+\hat E$ as $\Gamma$-graded $R_0$-modules, hence $\hat E_\gamma=\iota(E_\gamma)\oplus (R_+\hat E)_\gamma$
for all~$\gamma\in\Gamma$.
We prove by induction on~$(\Gamma,\prec)$ that $\hat E_{\gamma}\subset 
R\iota(E)$. Since~$0\in\Gamma$ is minimal, $\hat E_0\cap R_+\hat E=0$ hence $\hat E_0\subset \iota(E)$ and the induction begins. For the inductive step, let $\gamma\in\Gamma\setminus\{0\}$ and assume that $\bigoplus_{\alpha\prec\gamma} \hat E_\alpha
\subset R\iota(E)$. Then $\hat E_\gamma\cap R_+\hat E\subset \sum_{\alpha+\beta=\gamma\,:\,\alpha\in\Gamma\setminus\{0\}}
R_\alpha \hat E_\beta\subset R\iota(E)$ by the induction hypothesis. Thus, $\hat E_\gamma=\iota(E_\gamma)\oplus (R_+\hat E)_\gamma
\subset R\iota(E)$.
\end{proof}
\noindent 
From now on we will assume that~$\Gamma$ is bounded.

Let $\mathcal E$ be a homogeneous basis of~$E$ satisfying $\bar e=e$ for all $e\in\mathcal E$. Clearly
$$
\overline{\iota(e)}-\iota(e)\in R_+\hat E.
$$
The following Lemma is obvious.
\begin{lemma}
Let $\mathcal R\subset R$, $1\in\mathcal R$.
The following are equivalent:
\begin{enumerate}[{\rm(i)}]
 \item 
 $\{ r\iota(e)\,:\, (r,e)\in\mathcal R\times\mathcal E\}$ is an $R_0$-basis of~$\hat E$
 \item As an $R_0$-module, $R=\Ann_R\iota(E)\oplus \bigoplus_{r\in\mathcal R} R_0 r$.
\end{enumerate}
\end{lemma}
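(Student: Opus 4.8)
The plan is to recognise both (i) and (ii) as two readings of the single statement that $\{\bar r:r\in\mathcal R\}$ is an $R_0$-basis of $R/A$, where $A:=\Ann_R\iota(E)$, and to obtain the equivalence by transporting a basis across the generation $\hat E=R\iota(E)$ supplied by Lemma~\ref{lem:i(E)-gen-hatE}. First I would record a few formal points. Since $R_0$ is central, $A$ is a sub-$R_0$-bimodule of $R$, so $R/A$ is an $R_0$-module and, for fixed $v\in E$, the element $r\iota(v)\in\hat E$ depends only on the class $\bar r\in R/A$; also $\iota(E)=\bigoplus_{e\in\mathcal E}R_0\iota(e)$ is $R_0$-free on $\iota(\mathcal E)$, as $\iota$ is an injective homomorphism of $R_0$-modules. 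Next, since $0$ is the unique minimal element of the partial order $\preceq$ on the bounded monoid $\Gamma$, the homogeneous subgroup $R_+$ is a two-sided ideal of $R$; hence $R_+\hat E=R_+\iota(E)$, and as $\iota$ splits $\pi\colon\hat E\to E$ this yields the $R_0$-module decomposition $\hat E=\iota(E)\oplus R_+\iota(E)$. In particular $\hat E$ is $R_0$-spanned by $\{r\iota(e):r\in R,\,e\in\mathcal E\}$; if moreover (ii) holds, writing $r=a+\sum_{r'\in\mathcal R}c_{r'}r'$ with $a\in A$, $c_{r'}\in R_0$ collapses $r\iota(e)$ to $\sum_{r'}c_{r'}\,r'\iota(e)$, so already $\{r\iota(e):r\in\mathcal R,\,e\in\mathcal E\}$ $R_0$-spans $\hat E$, and since $A\iota(E)=0$ in fact $\hat E=\big(\sum_{r\in\mathcal R}R_0r\big)\iota(E)$.

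The substantive point is then that the canonical $R_0$-linear map $\Theta\colon(R/A)\otimes_{R_0}E\to\hat E$, $\bar r\otimes v\mapsto r\iota(v)$, is an isomorphism; this is where the $\Gamma$-grading genuinely enters. The map is well defined and surjective by the preceding remarks. For injectivity I would take $\xi=\sum_{e\in\mathcal E}\bar s_e\otimes e$ with $\sum_{e}s_e\iota(e)=0$ in $\hat E$ and prove $s_e\in A$ for all $e$ by a descending induction on the finitely many $\Gamma$-degrees occurring among the $s_e$: at the lowest degree one exploits $\hat E=\iota(E)\oplus R_+\iota(E)$ and the $R_0$-freeness of $\iota(E)$ on $\iota(\mathcal E)$, and one then removes a layer of $R_+\iota(E)=R_+\hat E$ using the inductive hypothesis $\bigoplus_{\alpha\prec\gamma}\hat E_\alpha\subset R\iota(E)$ from the proof of Lemma~\ref{lem:i(E)-gen-hatE}. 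Granting that $\Theta$ is an isomorphism, the equivalence is formal. For (ii)$\Rightarrow$(i): any $\mathcal S\subset R$ whose classes form an $R_0$-basis of $R/A$ yields the $R_0$-basis $\{s\iota(e)=\Theta(\bar s\otimes e):s\in\mathcal S,\,e\in\mathcal E\}$ of $\hat E$, because $\{\bar s\otimes e\}$ is an $R_0$-basis of $(R/A)\otimes_{R_0}E$; applying this to $\mathcal S=\mathcal R$ and using (ii) gives (i). For (i)$\Rightarrow$(ii): since $E=\bigoplus_{e\in\mathcal E}R_0e$ and hence $(R/A)\otimes_{R_0}E=\bigoplus_{e\in\mathcal E}\big((R/A)\otimes_{R_0}R_0e\big)$ with each summand canonically $\cong R/A$, the $R_0$-basis $\{r\iota(e):r\in\mathcal R,\,e\in\mathcal E\}$ of $\hat E$ pulls back under $\Theta^{-1}$ to an $R_0$-basis of $(R/A)\otimes_{R_0}E$ that is a disjoint union over $e$ of subsets of the respective summands; a basis of a direct sum in which every member lies in a single summand induces, by grouping, a basis of each summand, so $\{\bar r:r\in\mathcal R\}$ is an $R_0$-basis of $R/A$, which unwinds verbatim to $R=A\oplus\bigoplus_{r\in\mathcal R}R_0r$, i.e.\ (ii).

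I expect the injectivity of $\Theta$ — equivalently, the rigidity statement that a relation $\sum_{e}s_e\iota(e)=0$ has all its coefficients $s_e$ in $\Ann_R\iota(E)$ — to be the sole step requiring an actual argument; everything else simply records that $\hat E$, being $R$-generated by the $R_0$-free module $\iota(E)\cong E$, is assembled out of $R/\Ann_R\iota(E)$ and $E$, which is why the statement is declared obvious. One should bear in mind that (i) concerns the \emph{set} $\{r\iota(e)\}$: in the applications of the Lemma the triangular decomposition of $\hat E$ makes $(r,e)\mapsto r\iota(e)$ injective, so that set and family coincide and the passage through $\Theta$ is literal.
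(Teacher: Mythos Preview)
Your overall strategy—factoring through $\Theta:(R/A)\otimes_{R_0}E\to\hat E$, $\bar r\otimes v\mapsto r\iota(v)$, with $A=\Ann_R\iota(E)$—is the natural one, and you correctly isolate injectivity of $\Theta$ as the only substantive step. But the inductive sketch you give cannot be completed: the decomposition $\hat E=\iota(E)\oplus R_+\iota(E)$ forces only the degree-$0$ parts of the $s_e$ to vanish; it gives no control over whether a higher-degree $s_e$ annihilates \emph{every} $\iota(e')$ rather than just the particular $\iota(e)$ it is paired with in $\sum_e s_e\iota(e)=0$. Indeed $\Theta$ can fail to be injective, and then the lemma itself fails. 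Take $\Gamma=\ZZ_{\ge 0}$, $R_0=\ZZ$, $R=\ZZ[t]$ with $|t|=1$, and $\hat E=\ZZ e_1\oplus\ZZ e_2\oplus\ZZ f$ with $|e_i|=0$, $|f|=1$, $te_1=f$, $te_2=tf=0$: then $A=t^2R$, and with $\mathcal R=\{1,t\}$ condition~(ii) holds while $\{e_1,e_2,f,0\}$ is not a basis, so~(i) fails. For the converse, take $R=\ZZ\oplus\ZZ t\oplus\ZZ u$ with $t^2=u^2=tu=ut=0$, $\hat E=\ZZ e_1\oplus\ZZ e_2\oplus\ZZ f_1\oplus\ZZ f_2$, $te_1=f_1$, $ue_2=f_2$, all other actions zero: then $A=0$, and $\mathcal R=\{1,t+u\}$ makes $\{e_1,e_2,f_1,f_2\}$ an $R_0$-basis of~$\hat E$, yet $R\neq R_0\oplus R_0(t+u)$. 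So the equivalence, read literally, is false in both directions.

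What makes this harmless for the paper is that in its applications (via Proposition~\ref{prop:LL} to Theorem~\ref{thm:circ diag braiding}) the module $\hat E$ is \emph{constructed} as the free $R_0$-module on $\{\mathbf d_{b_-,b_+}K_{\alpha_-,\alpha_+}\invprod(b_-b_+)\}$ with $R$ acting by~$\invprod$; each monomial $K_{\alpha_-,\alpha_+}$ then sends each basis vector to another basis vector, so $A=0$ and $\Theta$ is an isomorphism by inspection. Under that extra hypothesis (equivalently: $\hat E$ is free over $R/A$ on $\iota(\mathcal E)$) your argument goes through verbatim and both implications are immediate. The paper gives no proof and calls the lemma ``obvious,'' so the imprecision is in the statement rather than in your method; but the injectivity step you flagged really cannot be established in the stated generality.
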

Given a homogeneous element~$x$ of $R$, $\hat E$ or~$E$ we denote its degree by~$|x|$.

\begin{proposition}\label{prop:LL}
Suppose that $R_0=\ZZ[\nu,\nu^{-1}]$ and that $\bar\cdot:R_0\to R_0$ is the unique ring automorphism
satisfying $\bar \nu=\nu^{-1}$. Fix an $R_0$-module splitting $\iota:E\to \hat E$ of the canonical projection 
$\hat E\twoheadrightarrow\hat E/R_+\hat E\cong E$.
Suppose that  there exists a subset $\mathcal R\subset R$ of homogeneous elements containing $1$ such that 
\begin{enumerate}[{\rm(i)}]
 \item\label{prop:LL1.ii} As an $R_0$-module, $R=\Ann_R\iota(E)\oplus \bigoplus_{r\in\mathcal R} R_0 r$;
 \item\label{prop:LL1.iii}
 For all $r\in \mathcal R$, $e\in\mathcal E$
 \begin{equation}\label{eq:bar-iotae}
 \overline{r \iota(e)}-r\iota(e)\in \sum_{ (r',e')\in\mathcal R\times\mathcal E\,:\, |r|+|e|=|r'|+|e'|,\, |e'|\prec|e|} R_0 r'\iota(e')
 \end{equation}
\end{enumerate}
Then for each $(r,e)\in \mathcal R\times\mathcal E$ there exists a unique $C_{r,e}\in \hat E$ such that 
$\overline{C_{r,e}}=C_{r,e}$ and 
\begin{equation}\label{eq:LL}
C_{r,e}-r\iota(e)\in \sum_{(r',e')\in \mathcal R\times \mathcal E\,:\,
|r'|+|e'|=|r|+|e|,\, |e'|\prec|e|} \nu\ZZ[\nu] r'\iota(e')
\end{equation}
In particular, the set $\mathbf B_{\mathcal R,\mathcal E}:=\{C_{r,e}\,:\, (r,e)\in\mathcal R\times \mathcal E\}$ is an $R_0$-basis of~$\hat E$.
\end{proposition}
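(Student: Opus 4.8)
The plan is to prove Proposition~\ref{prop:LL} by the usual Lusztig-type mechanism: first normalize $r\iota(e)$ by subtracting a strictly-lower-degree correction, then use the rigidity of $\bar\cdot$-invariance to pin that correction down uniquely. The induction is run along the well-founded partial order $\prec$ on the bounded monoid $\Gamma$, applied to the $E$-degree $|e|$. To begin, condition~(i) together with the preceding Lemma tells me that $\mathbf B_0:=\{r\iota(e):(r,e)\in\mathcal R\times\mathcal E\}$ is already a homogeneous $R_0$-basis of $\hat E$, with $r\iota(e)$ of degree $|r|+|e|$; since $\hat E=\bigoplus_\gamma\hat E_\gamma$ with each summand $R_0$-free, every element has finite support, so all sums below are automatically finite and no finiteness hypothesis on $\mathcal R$ or $\mathcal E$ is needed. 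I would also record the two elementary facts about $R_0=\ZZ[\nu,\nu^{-1}]$ on which everything rests: $\nu\ZZ[\nu]\cap\overline{\nu\ZZ[\nu]}=\{0\}$, and for each $c\in R_0$ with $\overline c=-c$ there is a unique $a\in\nu\ZZ[\nu]$ with $a-\overline a=c$ (expand $c=\sum c_n\nu^n$; $\overline c=-c$ forces $c_0=0$ and $c_{-n}=-c_n$, and $a=\sum_{n>0}c_n\nu^n$ works).

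The inductive construction of $C_{r,e}$ is the core. Fix $(r,e)$, put $\delta:=|r|+|e|$, and assume $C_{r',e'}$ has been built, $\bar\cdot$-invariant and satisfying~\eqref{eq:LL}, for every $(r',e')$ with $|e'|\prec|e|$. By transitivity of $\prec$ these relations express the transition between $\{r'\iota(e')\}$ and $\{C_{r',e'}\}$, ranging over pairs with $|e'|\prec|e|$ and $|r'|+|e'|=\delta$, as a matrix $I+N$ with $N$ strictly triangular for $\prec$ on $E$-degrees; boundedness of $\Gamma$ makes $N$ locally nilpotent, hence $I+N$ invertible over $R_0$. Consequently, inside $\hat E_\delta$ the submodule $M$ spanned by these $r'\iota(e')$ is $R_0$-free with basis the corresponding $C_{r',e'}$, and condition~(ii) shows $\overline M=M$. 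Condition~(ii) applied to $(r,e)$ puts $\overline{r\iota(e)}-r\iota(e)$ in $M$; write it as $\sum c_{r',e'}C_{r',e'}$. Applying $\bar\cdot$ and using $\overline{\overline x-x}=-(\overline x-x)$, the $\bar\cdot$-invariance of the $C_{r',e'}$ and their $R_0$-independence, I get $\overline{c_{r',e'}}=-c_{r',e'}$; the second elementary fact then yields unique $a_{r',e'}\in\nu\ZZ[\nu]$ with $a_{r',e'}-\overline{a_{r',e'}}=c_{r',e'}$. Setting $C_{r,e}:=r\iota(e)+\sum a_{r',e'}C_{r',e'}$ gives a $\bar\cdot$-invariant element by construction, and re-expanding the $C_{r',e'}$ in the $\iota$-basis (using $\nu\ZZ[\nu]\cdot\nu\ZZ[\nu]\subseteq\nu\ZZ[\nu]$) shows $C_{r,e}$ satisfies~\eqref{eq:LL}.

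For uniqueness it is enough to check that $0$ is the only $\bar\cdot$-invariant element of $\sum\nu\ZZ[\nu]\,r'\iota(e')$, the sum over $(r',e')$ with $|e'|\prec|e|$ and $|r'|+|e'|=\delta$. Given such an $x=\sum d_{r',e'}r'\iota(e')$, choose $(r',e')$ with $|e'|$ maximal among those with $d_{r',e'}\neq0$; condition~(ii) forces the coefficient of $r'\iota(e')$ in $\overline x$ to be exactly $\overline{d_{r',e'}}$ (all other contributions to $\overline x$ come from terms $r''\iota(e'')$ whose off-diagonal corrections strictly lower the $E$-degree, and $|e''|$ is never $\succ|e'|$), so $\overline x=x$ gives $d_{r',e'}\in\nu\ZZ[\nu]\cap\overline{\nu\ZZ[\nu]}=\{0\}$, a contradiction. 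Applying this to the difference of two candidates yields uniqueness of $C_{r,e}$. Finally, since each $C_{r,e}-r\iota(e)$ lies in the span of the $r'\iota(e')$ with $|e'|\prec|e|$, the transition matrix from $\mathbf B_0$ to $\{C_{r,e}\}$ again has the locally nilpotent triangular shape $I+N$, hence is invertible over $R_0$, so $\mathbf B_{\mathcal R,\mathcal E}$ is an $R_0$-basis of $\hat E$.

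The step I expect to cost the most care is not conceptual but organizational: making sure the two triangular changes of basis (from the $\iota$-basis to the $C$-basis and back) are genuinely invertible over $R_0$ --- this is precisely where boundedness of $\Gamma$, equivalently local nilpotence of the off-diagonal part, enters essentially --- and checking that moving between the two bases respects the distinct coefficient rings $\nu\ZZ[\nu]$, $\ZZ[\nu]$, $R_0$ that appear at various stages. I would also flag that the whole scheme is triangular only because condition~(ii) uses the \emph{strict} order $\prec$: basis vectors sharing an $E$-degree never enter one another's corrections, which is what keeps the argument working even when $\mathcal E$ has several elements in a single degree.
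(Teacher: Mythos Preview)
Your proof is correct and follows the same Lusztig-Lemma mechanism as the paper: define the partial order $(r',e')<(r,e)$ via $|e'|\prec|e|$ and $|r'|+|e'|=|r|+|e|$, use well-foundedness coming from boundedness of $\Gamma$, and run the standard triangular $\bar\cdot$-correction. The only difference is packaging: the paper dispatches the argument in one line by verifying the hypotheses of \cite{BZ}*{Theorem~1.1} and citing it, whereas you have unpacked that theorem and written out the induction, the splitting $c=a-\overline a$ with $a\in\nu\ZZ[\nu]$, and the uniqueness argument in full.
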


\begin{proof}
Define a relation $<$ on $\mathcal R\times \mathcal E$ by $(r',e')<(r,e)$ if $|e'|\prec |e|$ and $|r'|+|e'|=|r|+|e|$. It is 
easy to see that $(r',e')<(r,e)$ implies
that $0\prec |r'|$ (otherwise $|e'|=|r|+|e|$ hence $|e|\preceq |e'|\prec |e|$).
Then $\le$ is a partial 
order and all assumptions of \cite{BZ}*{Theorem~1.1} for $L:=\mathcal R\times\mathcal E$, $\mathcal A=\hat E$ and $E_{(r,e)}=r\iota(e)$, 
$(r,e)\in L$ are satisfied. 
Thus, the assertion follows from the aforementioned result.
\end{proof}

We conclude this section with a discussion of some symmetries of the $\mathbf B_{\mathcal R,\mathcal E}$ constructed in Proposition~\ref{prop:LL}.
Consider the data $(R,\mathcal R,\hat E,E,\mathcal E,\iota)$ satisfying the assumptions of 
Proposition~\ref{prop:LL}.
\begin{definition}\label{defn:triang}
We say that a homogeneous $\bar\cdot$-equivariant $R_0$-module automorphism $\psi$ of~$\hat E$ 
is {\em triangular} if
there exists a permutation $\phi$ of~$\mathcal R$ with $\phi(1)=1$ and a permutation $\ul\psi$ of~$\mathcal E$ such that 
\begin{equation}\label{eq:cnd-triang}
\psi(r\iota(e))-\phi(r)\iota(\ul\psi(e))\in\sum_{(r',e')\in\mathcal R'\times \mathcal E'\,:\,
|r'|+|e'|=|r|+|e|,\, |e'|\prec|e|}  \nu\ZZ[\nu] r'\iota(e'),\qquad (r,e)\in\mathcal R\times\mathcal E.
\end{equation}
\end{definition}
Using the same argument as in the proof of Proposition~\ref{prop:LL}, we conclude 
that in all non-zero terms in the right-hand side we have $0\prec|r'|$.

\begin{lemma}\label{lem:auto}
Suppose that~$\psi:\hat E\to\hat E$ is triangular. Then
$$
\psi(C_{r,e})=C_{\phi(r),\ul\psi(e)},\qquad r\in\mathcal R,\,e\in\mathcal E.
$$
\end{lemma}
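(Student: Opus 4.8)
The plan is to mimic the uniqueness-plus-bar-invariance argument that characterizes the $C_{r,e}$ in Proposition~\ref{prop:LL}, and show that $\psi(C_{r,e})$ satisfies exactly the defining properties of $C_{\phi(r),\ul\psi(e)}$. First I would record the two ingredients we have available: (a) $C_{r,e}$ is the unique $\bar\cdot$-invariant element of $\hat E$ with $C_{r,e}-r\iota(e)\in\sum_{(r',e')<(r,e)}\nu\ZZ[\nu]\,r'\iota(e')$, where $(r',e')<(r,e)$ means $|r'|+|e'|=|r|+|e|$ and $|e'|\prec|e|$; and (b) $\psi$ is a $\bar\cdot$-equivariant $R_0$-module automorphism satisfying the triangularity estimate~\eqref{eq:cnd-triang}. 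Since $C_{r,e}=r\iota(e)+\sum_{(r',e')<(r,e)}c_{r',e'}\,r'\iota(e')$ with $c_{r',e'}\in\nu\ZZ[\nu]$, applying $\psi$ and using $R_0$-linearity gives
$$
\psi(C_{r,e})=\psi(r\iota(e))+\sum_{(r',e')<(r,e)} c_{r',e'}\,\psi(r'\iota(e')).
$$

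Next I would feed the triangularity estimate into each term. For the leading term, \eqref{eq:cnd-triang} gives $\psi(r\iota(e))=\phi(r)\iota(\ul\psi(e))+(\text{terms }r''\iota(e'')\text{ with }|r''|+|e''|=|r|+|e|,\ |e''|\prec|e|)$ with $\nu\ZZ[\nu]$-coefficients. For a term with $(r',e')<(r,e)$, we have $|e'|\prec|e|$ and $|r'|+|e'|=|r|+|e|$, and applying \eqref{eq:cnd-triang} to $\psi(r'\iota(e'))$ produces $\phi(r')\iota(\ul\psi(e'))$ plus lower terms, all of total degree $|r'|+|e'|=|r|+|e|$. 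The key bookkeeping point is that $\ul\psi$ and $\phi$ must be compatible with the degree filtration: since $\psi$ is homogeneous and $\iota$ is a homogeneous splitting, \eqref{eq:cnd-triang} forces $|\phi(r)|+|\ul\psi(e)|=|r|+|e|$, and moreover the permutation $\ul\psi$ of $\mathcal E$ must be degree-preserving (apply \eqref{eq:cnd-triang} with the smallest $e$ of a given degree to see there is no room for $\ul\psi$ to change $|e|$; more carefully, comparing degrees on both sides of \eqref{eq:cnd-triang} at its leading term shows $|\ul\psi(e)|=|e|$ whenever $\phi(1)=1$ lets us isolate, and in general one argues by induction on $\prec$). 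Hence $|\ul\psi(e')|=|e'|\prec|e|=|\ul\psi(e)|$, so every term other than $\phi(r)\iota(\ul\psi(e))$ lies in $\sum_{(r'',e'')<(\phi(r),\ul\psi(e))}\nu\ZZ[\nu]\,r''\iota(e'')$. Therefore $\psi(C_{r,e})-\phi(r)\iota(\ul\psi(e))$ lies in that span with coefficients in $\nu\ZZ[\nu]$ — using that $\nu\ZZ[\nu]$ is closed under multiplication and addition.

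Finally, $\psi(C_{r,e})$ is $\bar\cdot$-invariant because $\psi$ is $\bar\cdot$-equivariant and $\overline{C_{r,e}}=C_{r,e}$. So $\psi(C_{r,e})$ is a $\bar\cdot$-invariant element of $\hat E$ congruent to $\phi(r)\iota(\ul\psi(e))$ modulo $\sum_{(r'',e'')<(\phi(r),\ul\psi(e))}\nu\ZZ[\nu]\,r''\iota(e'')$; by the uniqueness clause of Proposition~\ref{prop:LL} applied to the pair $(\phi(r),\ul\psi(e))\in\mathcal R\times\mathcal E$, this element must equal $C_{\phi(r),\ul\psi(e)}$, which is the claim.

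I expect the main obstacle to be the degree-compatibility claim: verifying that $\ul\psi$ is degree-preserving and that the lower-order terms generated by applying $\psi$ to the already-lower terms of $C_{r,e}$ stay strictly below $(\phi(r),\ul\psi(e))$ in the partial order $<$, rather than merely below in total degree. This requires unwinding the meaning of \eqref{eq:cnd-triang} — in particular that $\phi$ fixes $1$, which pins down the degree of $\iota(\ul\psi(e))$ via the $r=1$ case — and then running a short induction on $(\Gamma,\prec)$ exactly as in Lemma~\ref{lem:i(E)-gen-hatE}. Everything else is the routine ``leading-term plus $\nu\ZZ[\nu]$-tail, then invoke uniqueness'' pattern already established for Proposition~\ref{prop:LL}.
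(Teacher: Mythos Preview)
Your proposal is correct and follows essentially the same route as the paper's proof: apply $\psi$ to the defining expansion~\eqref{eq:LL} of $C_{r,e}$, use the triangularity estimate~\eqref{eq:cnd-triang} to identify the leading term as $\phi(r)\iota(\ul\psi(e))$ with a $\nu\ZZ[\nu]$-tail below it in the partial order, note $\bar\cdot$-invariance, and invoke the uniqueness in Proposition~\ref{prop:LL}. You are in fact more careful than the paper about the bookkeeping point that $\ul\psi$ is degree-preserving; the clean argument is exactly the one you sketch---set $r=1$ in~\eqref{eq:cnd-triang}, use $\phi(1)=1$ and the homogeneity of $\psi$ to force $|\ul\psi(e)|=|e|$, whence also $|\phi(r)|=|r|$---and no induction on $\prec$ is needed.
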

\begin{proof}
Since $\psi$ commutes with~$\bar\cdot$, $\overline{\psi(C_{r,e})}=\psi(C_{r,e})$. Applying $\psi$ to~\eqref{eq:LL} we obtain 
$$
\psi(C_{r,e})-\psi(r\iota(e))\in\sum_{(r',e')\in \mathcal R\times \mathcal E\,:\,
|r'|+|e'|=|r|+|e|,\, |e'|\prec|e|} \nu\ZZ[\nu] \psi(r'\iota(e'))
$$
 Applying~\eqref{eq:cnd-triang} to the 
left and the right hand side we conclude that  
$$
\psi(C_{r,e})-\phi(r)\iota(\ul\psi(e))\in \sum_{(r',e')\in \mathcal R\times \mathcal E\,:\,
|r'|+|e'|=|r|+|e|,\, |e'|\prec|e|} \nu\ZZ[\nu] \psi(r'\iota(e'))
$$
Proposition~\ref{prop:LL} then implies that 
$\psi(C_{r,e})=C_{\phi(r),\ul\psi(e)}$.
\end{proof}

\subsection{Double bases of Heisenberg and Drinfeld doubles}\label{subs:L-L-appl}
\label{subs:proof thm circle}\label{subs:proof thm bullet}
In this section we will use the notation and the setup of \S\S\ref{subs:A-diag-braid},\ref{subs:A-Dd-diag}.

Let $\Gamma$ be a bounded abelian monoid as defined in~\S\ref{subs:Equiv-L-L}.
Let $\kk=\mathbb Q(\nu)$, $R_0=\mathbb Z[\nu,\nu^{-1}]$.
Let $H=\kk[\widehat\Gamma]$ be the monoidal algebra of~$\widehat \Gamma=\Gamma\oplus\Gamma$ with a basis $\{ K_{\alpha_-,\alpha_+}\,:\,\alpha_\pm\in\Gamma\}$
and let $R=\bigoplus_{\alpha_\pm\in\Gamma} R_0 K_{\alpha_-,\alpha_+}$.

Let $V^\pm=\bigoplus V_\alpha^\pm$ be $\Gamma$-graded vector spaces.
We regard $V^+$ (respectively, $V^-$) as a right 
(respectively, left) Yetter-Drinfeld module over the localization~$\widehat H$ of~$H$ with respect to the 
monoid $\{K_{\alpha_-,\alpha_+}\,:\,\alpha_\pm\in\Gamma\}$
(see~\S\ref{subs:A-Dd-diag} for the details). 
Let $\lr{\cdot}{\cdot}:V^-\tensor V^+\to \kk$ be a pairing such that 
$\lr{V^+_\alpha}{V^-_\beta}=0$, $\alpha\not=\beta$ and $\lr{\cdot}{\cdot}|_{V^-_\alpha\tensor V^+_\alpha}$ is non-degenerate.
Set $\Gamma_0=\{ \alpha\in\Gamma\,:\, V^\pm_\alpha\not=0\}$ and assume that $\Gamma$ is generated by~$\Gamma_0$.
Let $\chi:\Gamma\times \Gamma\to R_0^\times=\pm \nu^\ZZ$ be a symmetric bicharacter. 

Given $t_+,t_-\in\kk$, let $\mathcal U_{\chi,t_+,t_-}(V^-,V^+)$ be the algebra $\mathcal U_\chi(V^-,V^+)$ defined in \S\ref{subs:A-Dd-diag} with $\lr{\cdot}{\cdot}_\pm=t_\pm\lr{\cdot}{\cdot}$. We have in $\mathcal U_{\chi,t_+,t_-}(V^-,V^+)$ 
\begin{equation}\label{eq:torus-comm}
 K_{\alpha_-,\alpha_+}v^+=\frac{\chi(\alpha_+,\deg v^+)}{\chi(\alpha_-,\deg v^+)} v^+ K_{\alpha_-,\alpha_+},\quad 
K_{\alpha_-,\alpha_+}v^-=\frac{\chi(\alpha_-,\deg v^-)}{\chi(\alpha_+,\deg v^-)} v^- K_{\alpha_-,\alpha_+},
\end{equation}
and 
\begin{equation}\label{eq:cross-rel-t +-}
[v^+,v^-]=t_- K_{\deg v^-,0}\lr{v^+}{v^-}-t_+ K_{0,\deg v^+}\lr{v^+}{v^-},
\end{equation}
for all $v^\pm\in\mathcal B(V^\pm)$ homogeneous and $\alpha_\pm\in\Gamma$. 
We regard $\mathcal U_{\chi,t_-,t_+}(V^-,V^+)$ as graded by $\widehat\Gamma$ with $\deg_{\widehat \Gamma} v^+=(0,\deg v^+)$, $\deg_{\widehat\Gamma} v^-=
(\deg v^-,0)$ and $\deg_{\widehat\Gamma} K_{\alpha_-,\alpha_+}=(\alpha_-+\alpha_+,\alpha_-+\alpha_+)$, where $v^\pm\in V^\pm$ are homogeneous and 
$\alpha_\pm\in\Gamma$.

Denote 
\begin{gather*}
\mathcal H^0_\chi(V^-,V^+):=\mathcal U_{\chi,0,0}(V^-,V^+),\\ 
\mathcal H^+_\chi(V^-,V^+)=\mathcal U_{\chi,1,0}(V^-,V^+),\quad
\mathcal H^-_\chi(V^-,V^+):=\mathcal U_{\chi,0,1}(V^-,V^+),\\
\mathcal U_\chi(V^-,V^+)=\mathcal U_{\chi,1,1}(V^-,V^+).
\end{gather*}
Thus, all these algebras have the same underlying vector space, namely $\mathcal B(V^-)\tensor H\tensor \mathcal B(V^+)$
and 
differ only in the cross relations between $\mathcal B(V^-)$ and~$\mathcal B(V^+)$. 

Let $\bar\cdot:\kk\to \kk$ be the unique field involution defined by $\bar\nu=\nu^{-1}$. 
Fix its extension to $V^\pm$ preserving the grading and assume that $\overline{\lr{\overline{v^+}}{\overline{v^-}}}=-\lr{v^+}{v^-}$,
$v^\pm\in V^\pm$. Assume also that 
$\chi$ satisfies $\overline{\chi(\alpha,\alpha')}=\chi(\alpha,\alpha')^{-1}$ for all $\alpha,\alpha'\in\Gamma$. Then all algebras described above 
admit an anti-linear $\bar\cdot$-anti-involution extending $\bar\cdot:V^\pm\to V^\pm$ and satisfying $\bar K_{\alpha,\alpha'}=K_{\alpha,\alpha'}$,
$\alpha,\alpha'\in\Gamma$. 

Assume that~$\chi(\alpha,\alpha')\in\nu^{2\mathbb Z}$ for all $\alpha,\alpha'\in\Gamma$ and let $\chi^{\frac12}:\Gamma\times\Gamma\to\pm\nu^{\mathbb Z}$
be a bicharacter satisfying~$(\chi^{\frac12}(\alpha,\alpha'))^2=\chi(\alpha,\alpha')$, $\alpha,\alpha'\in\Gamma$. Extend 
$\chi^{\frac12}$ to a bicharacter of~$\widehat\Gamma$ via
$$
\chi^{\frac12}((\alpha_-,\alpha_+),(\beta_-,\beta_+))=\frac{\chi^{\frac12}(\alpha_+,\beta_+)\chi^{\frac12}(\alpha_-,\beta_-)}{
\chi^{\frac12}(\alpha_+,\beta_-)\chi^{\frac12}(\alpha_-,\beta_+)},\qquad \alpha_\pm,\beta_\pm\in\Gamma. 
$$
Then
we set for all $\alpha_\pm\in\Gamma$ and for all $x\in \mathcal U_{\chi,t_-,t_+}(V^-,V^+)$ homogeneous with respect to~$\widehat\Gamma$
\begin{equation}\label{eq:inv-act-defn}
\begin{aligned}
K_{\alpha_-,\alpha_+}\invprod x&=(\chi^{\frac12}((\alpha_-,\alpha_+),\deg_{\widehat\Gamma} x))^{-1} K_{\alpha_-,\alpha_+}x\\&=
(\chi^{\frac12}((\alpha_-,\alpha_+),(\deg_{\widehat\Gamma} x)^t))^{-1} x K_{\alpha_-,\alpha_+},
\end{aligned}
\end{equation}
where ${}^t:\widehat\Gamma\to\widehat\Gamma$ is defined by $(\alpha_-,\alpha_+)^t=(\alpha_+,\alpha_-)$, $\alpha_\pm\in\Gamma$. 
The following Lemma is obvious.
\begin{lemma}\label{lem:inv-act-defn}
For all $t_\pm\in \kk$, \eqref{eq:inv-act-defn} defines a structure of a left $H$-module on~$\mathcal U_{\chi,t_-,t_+}(V^-,V^+)$ satisfying 
$$
\overline{ K_{\alpha_-,\alpha_+}\invprod x}=K_{\alpha_-,\alpha_+}\invprod \overline{x},\qquad x\in \mathcal U_{\chi,t_-,t_+}(V^-,V^+),\,
\alpha_\pm\in\Gamma.
$$
\end{lemma}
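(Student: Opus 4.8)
The plan is to verify the two asserted properties of the formula~\eqref{eq:inv-act-defn} directly, since both are completely routine once the bicharacter bookkeeping is set up. First I would check that~\eqref{eq:inv-act-defn} is well-defined, i.e. that the two expressions on the right-hand side agree: this is exactly the commutation relation~\eqref{eq:torus-comm} rewritten for the extended bicharacter~$\chi^{\frac12}$ on~$\widehat\Gamma$. Indeed, for homogeneous $x$ with $\deg_{\widehat\Gamma} x=(\beta_-,\beta_+)$, relation~\eqref{eq:torus-comm} together with the definition of $\chi^{\frac12}$ on~$\widehat\Gamma$ gives $K_{\alpha_-,\alpha_+}x=\bigl(\chi^{\frac12}((\alpha_-,\alpha_+),(\beta_-,\beta_+))\bigr)\bigl(\chi^{\frac12}((\alpha_-,\alpha_+),(\beta_+,\beta_-))\bigr)^{-1} x K_{\alpha_-,\alpha_+}$ up to the obvious squaring, and after multiplying through by $(\chi^{\frac12}((\alpha_-,\alpha_+),(\beta_-,\beta_+)))^{-1}$ one reads off precisely the two displayed forms of $K_{\alpha_-,\alpha_+}\invprod x$. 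This reduction handles the well-definedness that is implicitly part of the claim.

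Next I would check the module axioms. The unit axiom $K_{0,0}\invprod x=x$ is immediate since $\chi^{\frac12}(0,-)=1$. For associativity I would compute $K_{\alpha_-,\alpha_+}\invprod(K_{\beta_-,\beta_+}\invprod x)$ using the first form of~\eqref{eq:inv-act-defn} twice; the key point is that $K_{\beta_-,\beta_+}x$ is again homogeneous of degree $(\alpha_-,\alpha_+)+\deg_{\widehat\Gamma}x$ is wrong --- rather, $K_{\beta_-,\beta_+}\invprod x$ has the same $\widehat\Gamma$-degree as $x$ shifted by $(\beta_-+\beta_+,\beta_-+\beta_+)$ coming from $\deg_{\widehat\Gamma} K_{\beta_-,\beta_+}$, so the scalar picked up in the second step is $(\chi^{\frac12}((\alpha_-,\alpha_+),\deg_{\widehat\Gamma}x+(\beta_-+\beta_+,\beta_-+\beta_+)))^{-1}$. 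Since $\chi^{\frac12}$ is a bicharacter, this factors and matches $K_{\alpha_-+\beta_-,\alpha_++\beta_+}\invprod x$ after using that $H$ is commutative; the only subtlety is tracking the extra factor $\chi^{\frac12}((\alpha_-,\alpha_+),(\beta_-+\beta_+,\beta_-+\beta_+))$ coming from the torus part, and one checks it equals $1$ because $(\beta_-+\beta_+,\beta_-+\beta_+)^t=(\beta_-+\beta_+,\beta_-+\beta_+)$ is fixed by~${}^t$, so the numerator and denominator in the definition of $\chi^{\frac12}$ on~$\widehat\Gamma$ cancel. This is the one place where a small computation is genuinely needed, and it is the main (very mild) obstacle.

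Finally I would verify bar-invariance. Apply $\bar\cdot$ to the first form in~\eqref{eq:inv-act-defn}: since $\bar\cdot$ is an anti-involution fixing each $K_{\alpha_-,\alpha_+}$ and acting on scalars by $\bar\nu=\nu^{-1}$, we get $\overline{K_{\alpha_-,\alpha_+}\invprod x}=\overline{x}\cdot\overline{K_{\alpha_-,\alpha_+}}\cdot\overline{(\chi^{\frac12}((\alpha_-,\alpha_+),\deg_{\widehat\Gamma}x))^{-1}}=\overline{(\chi^{\frac12}((\alpha_-,\alpha_+),\deg_{\widehat\Gamma}x))^{-1}}\,\overline{x}\,K_{\alpha_-,\alpha_+}$ using that $R_0^\times$ is central. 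By the assumption $\overline{\chi(\alpha,\alpha')}=\chi(\alpha,\alpha')^{-1}$, which forces $\overline{\chi^{\frac12}(\alpha,\alpha')}=\chi^{\frac12}(\alpha,\alpha')^{-1}$ and hence the same on~$\widehat\Gamma$, the surviving scalar is $\chi^{\frac12}((\alpha_-,\alpha_+),\deg_{\widehat\Gamma}x)$; and since $\bar\cdot$ preserves the $\widehat\Gamma$-grading, $\deg_{\widehat\Gamma}\overline{x}=\deg_{\widehat\Gamma}x$. Comparing with the second form of~\eqref{eq:inv-act-defn} applied to $\overline{x}$ — which reads $K_{\alpha_-,\alpha_+}\invprod\overline{x}=(\chi^{\frac12}((\alpha_-,\alpha_+),(\deg_{\widehat\Gamma}x)^t))^{-1}\overline{x}\,K_{\alpha_-,\alpha_+}$ — we would be done were it not for the apparent mismatch between $(\deg_{\widehat\Gamma}x)^t$ and $\deg_{\widehat\Gamma}x$; but that mismatch is exactly accounted for by the commutation relation~\eqref{eq:torus-comm}, equivalently by re-expanding using the first form of~\eqref{eq:inv-act-defn}, so the two sides agree. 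This completes the proof of the Lemma.
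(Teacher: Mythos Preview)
Your proof is correct. The paper itself provides no proof, declaring the Lemma ``obvious'' immediately after stating it; your verification of the module axioms and of the $\bar\cdot$-equivariance is exactly the routine bicharacter bookkeeping that justifies this, and your key observations --- that $\chi^{\frac12}((\alpha_-,\alpha_+),(\gamma,\gamma))=1$ for the associativity check and that $\chi^{\frac12}((\alpha_-,\alpha_+),(\deg_{\widehat\Gamma}x)^t)=\chi^{\frac12}((\alpha_-,\alpha_+),\deg_{\widehat\Gamma}x)^{-1}$ for the $\bar\cdot$-compatibility --- are precisely what one needs. The mid-sentence self-correction in the associativity paragraph is stylistically awkward but the content after the correction is right.
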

It should be noted, however, that $\mathcal U_{\chi,t_-,t_+}(V^-,V^+)$ is not an $H$-module algebra with respect to the $\invprod$ action.

We will now use Proposition~\ref{prop:LL} to construct a basis in $\mathcal H^+_\chi(V^-,V^+)$ starting from a natural 
basis in~$\mathcal H^0_\chi(V^-,V^+)$ and then use the resulting basis to obtain a basis in~$\mathcal U_\chi(V^-,V^+)$. First we need 
to construct a suitable ``initial basis''.

\begin{proposition}\label{prop:multipliers}
Let $\mathbf B_+$ (respectively, $\mathbf B_-$) be a $\Gamma$-homogeneous basis of $\mathcal B(V^+)$ (respectively,
of $\mathcal B(V^-)$).
Then 
\begin{enumerate}[{\rm(a)}]
 \item\label{prop:multipliers.a} There exists a unique $\mathbf d:\mathbf B_-\times \mathbf B_+\to \ZZ[\nu+\nu^{-1}]$, $(b_-,b_+)\mapsto \mathbf d_{b_-,b_+}$ such that 
for all $b_\pm\in\mathbf B_\pm$ we have in $\mathcal U_{\chi,t_+,t_-}(V^-,V^+)$
$$
\mathbf d_{b_-,b_+}(b_+ b_- - b_- b_+)\in \sum_{(\alpha_-,\alpha_+)\in\widehat\Gamma\setminus\{(0,0)\},\, b'_\pm\in\mathbf B_{\pm}}
R_0 \mathbf d_{b'_-,b'_+} K_{\alpha_-,\alpha_+}\invprod b'_-b'_+,
$$
and the degree of $\mathbf d_{b_-,b_+}$ in $\nu+\nu^{-1}$ is minimal and the highest coefficient is positive and minimal.
\item\label{prop:multipliers.b} If $\ul\Delta(\mathbf B_\pm)\in R_0\mathbf B_\pm\tensor \mathbf B_\pm$ and 
$\lra{\mathbf B_-}{\mathbf B_+},\lra{\mathbf B_-}{\ul S^{-1}(\mathbf B_+)}\subset R_0$ then $\mathbf d_{b_-,b_+}=1$ for all $(b_-,b_+)\in\mathbf B_-\times \mathbf B_+$.
\end{enumerate}
\end{proposition}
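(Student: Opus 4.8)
The plan is to argue by induction along the bounded monoid $(\widehat\Gamma,\prec)$, the engine being the straightening (normal-ordering) formula in the Drinfeld double $\mathcal U_{\chi,t_-,t_+}(V^-,V^+)$ set up in~\S\ref{subs:A-Dd-diag}, which rewrites $b_+b_-$ in the triangular basis $\mathcal B(V^-)\tensor H\tensor\mathcal B(V^+)$. The induction begins with the case $|b_-|=0$ or $|b_+|=0$: after normalizing $\mathbf B_\pm$ so that its degree-zero member is $1$, one of the $b_\pm$ is a scalar, $b_+b_--b_-b_+=0$, and $\mathbf d_{b_-,b_+}=1$ works with an empty right-hand side.

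For the inductive step I would expand $b_+b_-$ by the straightening formula. The term coming from the counit parts of both coproducts is precisely $b_-b_+$, carried by the trivial torus element, and it cancels; every remaining term has the shape $c_{b'_-,b'_+,K}\,b'_-\,K\,b'_+$ with $K=K_{\alpha_-,\alpha_+}$, $(\alpha_-,\alpha_+)\neq(0,0)$ and $\deg_{\widehat\Gamma}(b'_-b'_+)+\deg_{\widehat\Gamma}K=(|b_-|,|b_+|)$, so that $|b'_\pm|\prec|b_\pm|$ strictly, and with coefficient $c_{b'_-,b'_+,K}\in\kk$ assembled from the structure constants of the coproducts $\ul\Delta(b_\pm)$, the pairings $\lra{\mathbf B_-}{\mathbf B_+}$ and $\lra{\mathbf B_-}{\ul S^{-1}(\mathbf B_+)}$, and values of $\chi$. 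Using~\eqref{eq:torus-comm} to move $K$ past $b'_-$ and then~\eqref{eq:inv-act-defn}, each $b'_-\,K\,b'_+$ equals a unit of $R_0^\times=\pm\nu^{\ZZ}$ times $K\invprod(b'_-b'_+)$; since up to such units the $K\invprod(b'_-b'_+)$ form a $\kk$-basis of $\mathcal U_{\chi,t_-,t_+}(V^-,V^+)$, this produces a unique expansion
$$
b_+b_--b_-b_+=\sum_{b'_\pm,\,K\neq 1}g_{b'_-,b'_+,K}\;\mathbf d_{b'_-,b'_+}\;K\invprod(b'_-b'_+),\qquad g_{b'_-,b'_+,K}\in\kk,
$$
with only finitely many terms (each $\widehat\Gamma$-graded piece of $\mathcal B(V^\pm)$ being finite-dimensional) and with the multipliers $\mathbf d_{b'_-,b'_+}$ already available by the inductive hypothesis.

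It then remains to choose $\mathbf d_{b_-,b_+}$. Consider $I_{b_-,b_+}:=\{\,d\in\ZZ[\nu+\nu^{-1}]\ :\ d\,g_{b'_-,b'_+,K}\in R_0\ \text{for all }b'_\pm,K\,\}$, an ideal of $\ZZ[\nu+\nu^{-1}]$ (note $\ZZ[\nu+\nu^{-1}]\subset R_0$ and that the displayed right-hand side spans an $R_0$-submodule). It is nonzero: a common denominator $d_0\in R_0$ of the finitely many $g_{b'_-,b'_+,K}$, which exists because $\kk=\QQ(\nu)$ is the fraction field of $R_0$, satisfies $d_0(b_+b_--b_-b_+)\in\sum R_0\,\mathbf d_{b'_-,b'_+}K\invprod(b'_-b'_+)$, and then so does $d_0\overline{d_0}$, which is in addition $\overline{\cdot}$-invariant and hence lies in $\ZZ[\nu+\nu^{-1}]$. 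A nonzero ideal of $\ZZ[\nu+\nu^{-1}]\cong\ZZ[t]$ has a well-defined degree-minimal element whose leading coefficient is the least positive generator of the $\ZZ$-ideal of leading coefficients of its degree-minimal members, and this element is then unique; I would take $\mathbf d_{b_-,b_+}$ to be it, which is precisely the minimal $\overline{\cdot}$-invariant polynomial multiplier asked for in~(a). This closes the induction, and one may subsequently refine the outcome, as in Proposition~\ref{prop:cyclotom}, to see that the $\mathbf d_{b_-,b_+}$ are products of cyclotomic polynomials.

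The step needing the most care — and what I would regard as the crux — is extracting from~\S\ref{subs:A-Dd-diag} the straightening formula \emph{together with its support property}: namely that, after normal ordering and the passage to the $\invprod$-form, $b_+b_--b_-b_+$ is supported on pairs $(b'_-,b'_+)$ with $|b'_\pm|\prec|b_\pm|$ and on nontrivial $K$ only. This is what makes the recursion well-founded and confines the right-hand side to the prescribed span. Granting it, part~(b) is immediate: when $\ul\Delta(\mathbf B_\pm)\in R_0\mathbf B_\pm\tensor\mathbf B_\pm$ and $\lra{\mathbf B_-}{\mathbf B_+},\lra{\mathbf B_-}{\ul S^{-1}(\mathbf B_+)}\subset R_0$, every $c_{b'_-,b'_+,K}$ is an $R_0$-polynomial in those data and in values of $\chi\in R_0^\times$, hence lies in $R_0$; combined with the inductive assertion $\mathbf d_{b'_-,b'_+}=1$ this gives $b_+b_--b_-b_+\in\sum R_0\,K\invprod(b'_-b'_+)$, so $1\in I_{b_-,b_+}$ and minimality forces $\mathbf d_{b_-,b_+}=1$.
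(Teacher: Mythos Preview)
Your proposal is correct and follows essentially the same route as the paper: both use the straightening formula~\eqref{eq:product-formula-double-diag-II} to expand $b_+b_-$, observe that the non-identity terms are supported on $(b'_-,b'_+)$ with strictly smaller degree and nontrivial $K$, and then inductively choose the minimal $\bar\cdot$-invariant multiplier. The only notable difference is packaging: where you argue abstractly that $I_{b_-,b_+}$ is a nonzero ideal of $\ZZ[\nu+\nu^{-1}]\cong\ZZ[t]$ and extract its minimal element, the paper isolates this step as a separate Lemma~\ref{lem:clear-denom}, giving an explicit construction of $\mathbf d(\mathcal F)$ by symmetrizing the irreducible factors of the least common denominator---which also yields the cyclotomic form you mention at the end.
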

\begin{proof}
We may assume, without loss of generality, that the element of $\mathbf B_\pm$ of degree~$0$ is~$1$.
We have 
$$
(\ul\Delta\tensor 1)\ul\Delta(b_\pm)=\sum_{b'_\pm,b''_\pm,b'''_\pm\in\mathbf B_\pm} C^{b'_\pm,b''_\pm,b''_\pm}_{b_\pm} b'_\pm \tensor b''_\pm\tensor b'''_\pm,
\qquad C^{b'_\pm,b''_\pm,b''_\pm}_{b_\pm}\in \kk,
\qquad 
C^{1,b_\pm,1}_{b_\pm}=1,
$$
and $C^{b'_\pm,b''_\pm,b''_\pm}_{b_\pm}=0$ unless $\deg b'_\pm+\deg b''_\pm+\deg b'''_\pm=\deg b_\pm$.
Given a homogeneous element $u^\pm\in\mathcal B(V^\pm)$, denote its $\mathbb Z_{\ge 0}$-degree by~$|u^\pm|$. 
Then~\eqref{eq:product-formula-double-diag-II} implies that 
\begin{equation}\label{eq:prod-tmp}
\begin{aligned}
b_+\cdot b_-
&=\sum_{ b'_\pm,b''_\pm,b'''_\pm\in\mathbf B_\pm} 
(\chi(b''_-,b'_-)\chi(b''_-,b'''_-)\chi(b'''_-,b'_-)
)^{-1}
\times\\&\qquad\qquad C^{b'_+,b''_+,b''_+}_{b_+}C^{b'_-,b''_-,b''_-}_{b_-}
t_-^{|b'-|}t_+^{|b'_+|}\lra{b'_-}{\ul S^{-1}(b'''_+)}
\lra{b'''_-}{b'_+} K_{\deg b'''_-,0}b''_-b''_+K_{0,\deg b'''_+}\\
&=
\sum_{ b''_\pm\in\mathbf B_\pm,\, \alpha_\pm\in \Gamma\,:\, \deg b''_\pm+\alpha_+ + \alpha_-=\deg b_\pm}
F^{b_-,b_+}_{b''_-,b''_+,\alpha_-,\alpha_+} K_{\alpha_-,\alpha_+} b''_-b''_+
\end{aligned}
\end{equation}
where
\begin{equation}\label{eq:F-sum-tmp}
\begin{aligned}
F^{b_-,b_+}_{b''_-,b''_+,\alpha_-,\alpha_+}&=
\sum_{ b'_\pm,b'''_\pm\in\mathbf B_\pm\,:\, \deg b'''_\pm=\alpha_\pm} 
(\chi(b''_+,b'''_+)\chi(b''_-,b'''_-)\chi(b'''_-,b'_-)
)^{-1}\times\\ &\qquad\qquad\qquad\qquad
C^{b'_+,b''_+,b''_+}_{b_+}C^{b'_-,b''_-,b''_-}_{b_-}
t_-^{|b'_-|}t_+^{|b'_+|}\lra{b'_-}{\ul S^{-1}(b'''_+)}\lra{b'''_-}{b'_+}.
\end{aligned}
\end{equation}
Since $\Gamma$ and hence $\widehat\Gamma$ is bounded, we can now construct $\mathbf d$ inductively. 
For $\deg b_-=\deg b_+=0$ we set $\mathbf d_{b_-,b_+}=1$. 
We need the following
\begin{lemma}\label{lem:clear-denom}
For any finite subset $\mathcal F\subset \mathbb Q(\nu)$ there exists a unique $\mathbf d({\mathcal F})\in\ZZ[\nu+\nu^{-1}]$
such that $\mathbf d(\mathcal F)\mathcal F\subset\ZZ[\nu,\nu^{-1}]$, the degree of $\mathbf d(\mathcal F)$ in $\nu+\nu^{-1}$ is minimal and the highest 
coefficient of~$\mathbf d(\mathcal F)$ is positive and minimal. Moreover, if all poles of elements of~$\mathcal F$ are roots 
of unity then $\mathbf d(\mathcal F)=c
(\nu+\nu^{-1}-2)^{m_1}(\nu+\nu^{-1}+2)^{m_2}\prod_{k\ge 3} (\nu^{-\frac12\varphi(k)}\Phi_k(\nu))^{m_k}$ with $c,m_k\in\mathbb Z_{\ge 0}$, $c\not=0$,
where $\Phi_k$ is the $k$th cyclotomic polynomial and $\varphi(k)=\deg \Phi_k$ is the Euler function.
\end{lemma}
\begin{proof}
Let $\mathcal F=\{f_1,\dots,f_r\}$, $f_i=a_i/b_i$ where $a_i,b_i\in \ZZ[\nu]$ and are coprime. 
Then there exists a unique $f\in\mathbb Z[\nu]$ of minimal degree such that $f f_i\in\mathbb Z[\nu]$ for all~$1\le i\le r$,
namely, $f$ is the least common factor of the~$b_i$. Write 
$f=c \prod_{j=1}^t p_j^{m_j}$, where $c\in\ZZ$, each $p_j\in\ZZ[\nu]$ is irreducible and $m_j\in\ZZ_{>0}$. 
We may assume without generality that $c$ as well as the highest coefficient of each of the~$p_j$ is positive.
Given an irreducible $p\in\ZZ[\nu]$ of positive degree, define 
$$
\widetilde p=\begin{cases}
            q^{-\frac12\deg p}p,& \text{$\overline{p}=\nu^{-\deg p}p$ and $\deg p$ is even}\\
            p\overline p,&\text{otherwise}.
           \end{cases}
$$
Then $\widetilde p\in\ZZ[\nu+\nu^{-1}]$ and is irreducible in that ring. It follows that $\mathbf d(\mathcal F):=c \prod_{j=1}^t \widetilde p_j{}^{m_j}$ 
has the desired properties. This proves the first assertion. 

If the only zeroes of all the $b_i$, $1\le i\le r$ are roots of unity then the only non-constant irreducible factors of~$f$ are 
cyclotomic polynomials. Clearly, $\widetilde\Phi_1=\nu+\nu^{-1}-2$ and $\widetilde\Phi_2=\nu+\nu^{-1}+2$. Since $\varphi(k)$ is even for all
$k\ge 3$, it follows that $\widetilde \Phi_k=\nu^{-\frac12\varphi(k)}\Phi_k$.
\end{proof}

Denote $\mathcal F_{b_-,b_+}:=\{ \mathbf d_{b'_-,b'_+}^{-1}F^{b_-,b_+}_{b'_-,b'_+,\alpha_-,\alpha_+}
\,:\, b'_\pm\in\mathbf B_\pm,\, (\alpha_-,\alpha_+)\in \widehat\Gamma\setminus\{(0,0)\}\}$. Then 
$\mathcal F_{b_-,b_+}$ is finite and we set $\mathbf d_{b_-,b_+}=\mathbf d(\mathcal F_{b_-,b_+})$.
Then by the above computation
$$
\mathbf d_{b_-,b_+}(b_+b_--b_-b_+)\in\sum_{(\alpha_-,\alpha_+)\in\widehat\Gamma\setminus\{(0,0)\},\, b'_\pm\in\mathbf B_{\pm}}
R_0 \mathbf d_{b'_-,b'_+} K_{\alpha_-,\alpha_+}b'_-b'_+.
$$
It remains to observe that $R_0 K_{\alpha_-,\alpha_+}b'_-b'_+=R_0 K_{\alpha_-,\alpha_+} \invprod b'_-b'_+$. The uniqueness of~$\mathbf d$ is obvious.

Part~\eqref{prop:multipliers.b} is immediate from~\eqref{eq:F-sum-tmp}.
\end{proof}

\begin{theorem}\label{thm:circ diag braiding}
Suppose that $\mathbf B_\pm$ are $\Gamma$-homogeneous bases of~$\mathcal B(V^\pm)$ and $\overline{b_\pm}=b_\pm$ for all $b_\pm\in\mathbf B_\pm$.
Then for each $(b_-,b_+)\in\mathbf B_-\times\mathbf B_+$ there exist
\begin{enumerate}[{\rm(a)}]
 \item\label{thm:circ diag braiding.a} a unique element 
$b_-\circ b_+\in\mathcal H^+_\chi(V^-,V^+)$ such that $\overline{b_-\circ b_+}=b_-\circ b_+$ and 
$$
b_-\circ b_+- \mathbf d_{b_-,b_+} b_-b_+\in\sum_{\alpha\in\Gamma\setminus\{0\},\,
b'_\pm\in\mathbf B_\pm\,:\, \deg b'_\pm +\alpha=\deg b_\pm} \nu\ZZ[\nu] \mathbf d_{b'_-,b'_+} K_{0,\alpha}\invprod b'_-b'_+;
$$
The elements $\{ K_{\alpha_-,\alpha_+}\invprod (b_-\circ b_+)\,:\, \alpha_\pm\in\Gamma,\, b_\pm\in \mathbf B_\pm\}$ 
form a $\bar\cdot$-invariant basis of~$\mathcal H^+_\chi(V^-,V^+)$.
\item\label{thm:circ diag braiding.b}
a unique element $b_-\bullet b_+\in\mathcal U_\chi(V^-,V^+)$ such that $\overline{b_-\bullet b_+}=b_-\bullet b_+$
and 
$$
b_-\bullet b_+- \iota(b_-\circ b_+)\in\sum_{\substack{\alpha_-,\alpha_+\in\Gamma,\, \alpha_-\not=0\\
b'_\pm\in\mathbf B_\pm\,:\, \deg b'_\pm +\alpha_-+\alpha_+=\deg b_\pm}} \nu^{-1}\ZZ[\nu^{-1}] K_{\alpha_-,\alpha_+}\invprod \iota(b'_-\circ b'_+),
$$
where we denote by $\iota$ the identification of~$\mathcal H^+_{\chi}(V^-,V^+)$ with $\mathcal U_\chi(V^-,V^+)$ as vetor spaces.
The elements $\{ K_{\alpha_-,\alpha_+}\invprod (b_-\bullet b_+)\,:\, \alpha_\pm\in\Gamma,\, b_\pm\in \mathbf B_\pm\}$ 
form a $\bar\cdot$-invariant basis of~$\mathcal U_\chi(V^-,V^+)$.
\end{enumerate}
\end{theorem}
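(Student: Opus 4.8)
The plan is to deduce both parts from the equivariant Lusztig Lemma (Proposition~\ref{prop:LL}), applied over $R_0=\ZZ[\nu,\nu^{-1}]$. The role of the multipliers $\mathbf d_{b_-,b_+}$ of Proposition~\ref{prop:multipliers} is to pick out the correct $R_0$-lattice in each algebra, namely the span of the rescaled monomials $\mathbf d_{b_-,b_+}\,K\invprod(b_-b_+)$ rather than of the $K\invprod(b_-b_+)$ themselves. In each part I must produce data $(R,\mathcal R,\hat E,E,\mathcal E,\iota)$ meeting the hypotheses of Proposition~\ref{prop:LL}; the only substantial one is~\eqref{eq:bar-iotae}, and it will be supplied essentially verbatim by Proposition~\ref{prop:multipliers}(a).

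For part~(a) I would first pass to the subalgebra $\mathcal H'\subset\mathcal H^+_\chi(V^-,V^+)$ spanned by $\{K_{0,\alpha}\invprod(b_-b_+):\alpha\in\Gamma,\ b_\pm\in\mathbf B_\pm\}$ — a subalgebra because in $\mathcal U_{\chi,1,0}(V^-,V^+)$ the cross relation~\eqref{eq:cross-rel-t +-} only produces torus elements from $\mathbf K_+$ — noting that $\mathcal H^+_\chi(V^-,V^+)=\bigoplus_{\alpha_-\in\Gamma}K_{\alpha_-,0}\invprod\mathcal H'$. Inside $\mathcal H'$ I take $\hat E$ to be the $R_0$-span of $\{\mathbf d_{b_-,b_+}K_{0,\alpha}\invprod(b_-b_+)\}$, a $\widehat\Gamma$-graded module over $R:=\bigoplus_{\alpha\in\Gamma}R_0 K_{0,\alpha}$ acting by $\invprod$, I let $\iota$ send the class of $(b_-,b_+)$ in $E:=\hat E/R_+\hat E$ to $\mathbf d_{b_-,b_+}b_-b_+$, let $\mathcal E$ be the resulting basis of $E$, and $\mathcal R=\{K_{0,\alpha}:\alpha\in\Gamma\}$. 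Because $\overline{\mathbf d_{b_-,b_+}b_-b_+}=\mathbf d_{b_-,b_+}b_+b_-$ and, by Proposition~\ref{prop:multipliers}(a), $\mathbf d_{b_-,b_+}(b_+b_--b_-b_+)$ is an $R_0$-combination of the $\mathbf d_{b'_-,b'_+}K_{0,\alpha}\invprod(b'_-b'_+)$ with $\alpha\ne0$, the lattice $\hat E$ is bar-stable with $\overline{R_+\hat E}\subset R_+\hat E$, and the basis $\mathcal E$ is bar-fixed and homogeneous. Hypothesis~(i) of Proposition~\ref{prop:LL} holds because $\iota(E)$ contains the unit (the image of $(1,1)$, as $\mathbf d_{1,1}=1$) and $r\mapsto r\invprod 1$ is an injection $R\hookrightarrow\hat E$, so $\Ann_R\iota(E)=0$ and $R=\bigoplus_{r\in\mathcal R}R_0 r$; hypothesis~(ii), i.e.~\eqref{eq:bar-iotae}, is the displayed relation of Proposition~\ref{prop:multipliers}(a) after applying $K_{0,\alpha}\invprod$, the strict lowering being read off from $\deg_{\widehat\Gamma}(K_{0,\alpha}\invprod b'_-b'_+)=(\deg b'_-+\alpha,\deg b'_++\alpha)$ and $\alpha\ne0$. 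Proposition~\ref{prop:LL} then produces the unique bar-fixed $b_-\circ b_+:=C_{1,e}$, $e$ the class of $(b_-,b_+)$, with the expansion of part~(a), and $\{K_{0,\alpha}\invprod(b_-\circ b_+)\}$ is an $R_0$-basis of $\hat E$, hence a $\kk$-basis of $\mathcal H'$; applying $K_{\alpha_-,0}\invprod$ and summing over $\alpha_-$ gives the asserted basis of $\mathcal H^+_\chi(V^-,V^+)$, bar-invariance following from $K\invprod C_{1,e}=C_{K,e}$ (by uniqueness in Proposition~\ref{prop:LL}).

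For part~(b) I would run the same machine inside $\mathcal U_\chi(V^-,V^+)$, taking $\hat E$ to be the $R_0$-span of $\{K_{\alpha_-,\alpha_+}\invprod\iota(b_-\circ b_+)\}$ — $\iota$ now the identification of the underlying spaces of $\mathcal H^+_\chi(V^-,V^+)$ and $\mathcal U_\chi(V^-,V^+)$, so that $\hat E\otimes_{R_0}\kk=\mathcal U_\chi(V^-,V^+)$ by part~(a) — with $R$ and $\mathcal R$ the full torus lattice, $E:=\hat E/R_+\hat E$ with the classes of $\iota(b_-\circ b_+)$ as basis, and $\iota_E$ the evident splitting. The one new ingredient is a comparison of bar-involutions: since $\mathcal U_\chi(V^-,V^+)=\mathcal U_{\chi,1,1}(V^-,V^+)$ and $\mathcal H^+_\chi(V^-,V^+)=\mathcal U_{\chi,1,0}(V^-,V^+)$ have cross relations~\eqref{eq:cross-rel-t +-} differing only by the summand $K_{\deg v^-,0}\lr{v^+}{v^-}$, their bar-involutions agree modulo the left ideal generated by $\{K_{\alpha_-,0}:\alpha_-\ne0\}$, so $\iota(b_-\circ b_+)$, bar-fixed in $\mathcal H^+_\chi(V^-,V^+)$, is bar-fixed in $\mathcal U_\chi(V^-,V^+)$ up to a correction lying in $\sum_{\alpha_-\ne0}R_0\,K_{\alpha_-,\alpha_+}\invprod\iota(b'_-\circ b'_+)$ — its integrality coming once more from Proposition~\ref{prop:multipliers}(a), now in $\mathcal U_\chi(V^-,V^+)$. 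Since the seed is already bar-fixed in the smaller algebra, the appropriate normalization is the $\nu^{-1}\ZZ[\nu^{-1}]$ one, so I would invoke the variant of Proposition~\ref{prop:LL} obtained by renaming $\nu'=\nu^{-1}$ (legitimate, as the hypotheses are symmetric under $\nu\mapsto\nu^{-1}=\bar\nu$), getting $b_-\bullet b_+:=C_{1,e}$ with the estimate of part~(b) and the basis $\{K_{\alpha_-,\alpha_+}\invprod(b_-\bullet b_+)\}$ of $\mathcal U_\chi(V^-,V^+)$.

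The genuinely delicate point, in both parts, is the verification of~\eqref{eq:bar-iotae}: the bar-image of the seed must expand in the chosen lattice with coefficients in $\ZZ[\nu,\nu^{-1}]$, not merely in $\kk$, and supported strictly below. This is precisely what Proposition~\ref{prop:multipliers}(a) guarantees, and the $\mathbf d$-rescaling of the lattice is exactly what makes it work when some $\mathbf d_{b_-,b_+}\ne1$; the remaining checks (the gradings and freeness over $R_0$, $\Ann_R\iota(E)=0$, and the degree bookkeeping witnessing the order $\prec$) are routine.
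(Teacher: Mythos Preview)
Your proof is correct and follows the same scheme as the paper: apply Proposition~\ref{prop:LL} twice, with Proposition~\ref{prop:multipliers}\eqref{prop:multipliers.a} providing the integrality needed for hypothesis~\eqref{eq:bar-iotae}, and the $\nu\leftrightarrow\nu^{-1}$ swap in part~(b). The only differences are organizational. In part~(a) you work first in the subalgebra $\mathcal H'$ (using $\mathcal R=\{K_{0,\alpha}\}$) and then extend by $K_{\alpha_-,0}\invprod$, whereas the paper takes $\mathcal R=\{K_{\alpha_-,\alpha_+}\}$ from the start and shows a posteriori that $C_{K_{\alpha_-,\alpha_+},e}=K_{\alpha_-,\alpha_+}\invprod C_{1,e}$. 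In part~(b) the roles are reversed: you take the full $\mathcal R=\{K_{\alpha_-,\alpha_+}\}$ with $\mathcal E=\{b_-\circ b_+\}$, while the paper takes $\mathcal R=\{K_{\alpha,0}\}$ with $\mathcal E=\{K_{0,\alpha}\invprod(b_-\circ b_+)\}$. Either splitting of the index set $(\alpha_-,\alpha_+,b_-,b_+)$ into $\mathcal R\times\mathcal E$ works, and your conceptual justification for the $\alpha_-\not=0$ restriction on the correction terms (the bar involutions of $\mathcal U_{\chi,1,1}$ and $\mathcal U_{\chi,1,0}$ agree modulo the ideal generated by $\{K_{\alpha,0}:\alpha\not=0\}$) is equivalent to the paper's explicit computation expanding $\overline{\iota(b_-\circ b_+)}$ via~\eqref{eq:prod-via-circ-double}.
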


\begin{proof}
To prove~\eqref{thm:circ diag braiding.a}, we apply Proposition~\ref{prop:LL} with the following data. Let $\hat E$ be the free $R_0$-module generated 
by $\{\mathbf d_{b_-,b_+}K_{\alpha_-,\alpha_+}\invprod (b_-b_+)\,:\,b_\pm\in\mathbf B_\pm,\,\alpha_\pm\in\Gamma\}$,
which is clearly a $\widehat\Gamma$-graded $R$-module via the $\invprod$ action.
Then $E$ identifies with the $R_0$-submodule {\em of the algebra} $\mathcal H^0_\chi(V^-,V^+)$ generated by $\mathcal E:=\{\mathbf d_{b_-,b_+} b_-b_+\,:\,
b_\pm\in\mathbf B_\pm\}$. Here for simplicity we suppress the map~$\iota$ from Proposition~\ref{prop:LL}.
Let $\mathcal R=\{ K_{\alpha_-,\alpha_+}\,:\, \alpha_\pm\in\Gamma\}$.
Then in $\mathcal H^+_\chi(V^-,V^+)$ we have by Proposition~\ref{prop:multipliers}
\begin{multline*}
\overline{\mathbf d_{b_-,b_+}K_{\alpha_-,\alpha_+}\invprod b_-b_+}-\mathbf d_{b_-,b_+}K_{\alpha_-,\alpha_+}\invprod b_-b_+
=\mathbf d_{b_-,b_+}K_{\alpha_-,\alpha_+}\invprod (b_+b_- - b_- b_+)\\\in
\sum_{\alpha\in\Gamma\setminus\{0\},\, b'_\pm\in\mathbf B_{\pm}\,:\, \deg b'_\pm + \alpha=\deg b_\pm}
R_0 \mathbf d_{b'_-,b'_+} K_{\alpha_-,\alpha_+ +\alpha}\invprod b'_-b'_+\\=
\sum_{\substack{\alpha'_\pm\in\Gamma,\, b'_\pm\in\mathbf B_{\pm}\\ 
\deg_{\widehat\Gamma}b'_-b'_+ +(\alpha'_++\alpha'_-,\alpha'_++\alpha'_-)=\deg_{\widehat\Gamma} b_-b_+}}
R_0 \mathbf d_{b'_-,b'_+} K_{\alpha'_-,\alpha'_+}\invprod b'_-b'_+.
\end{multline*}
Thus, all assumptions of Proposition~\ref{prop:LL}
are satisfied and hence for each $(\alpha_-,\alpha_+)\in\Gamma\oplus\Gamma$, $(b_-,b_+)\in\mathbf B_-\times\mathbf B_{+}$ there exists a unique element 
$C_{\alpha_-,\alpha_+,b_-, b_+}\in\mathcal H^+_\chi(V^-,V^+)$ such that   
$\overline{C_{\alpha_-,\alpha_+,b_-,b_+}}=C_{\alpha_-,\alpha_+,b_-,b_+}$ and 
$$
C_{\alpha_-,\alpha_+,b_-,b_+}-\mathbf d_{b_-,b_+} K_{\alpha_-,\alpha_+}\invprod b_-b_+\in\sum_{\substack{(\alpha'_-,\alpha'_+)\in\widehat\Gamma,\,
b'_\pm\in\mathbf B_\pm\\ \deg_{\widehat\Gamma} b'_-b'_+ +(\alpha'_-+\alpha'_+,\alpha'_-+\alpha'_+)= \deg_{\widehat\Gamma}b_-b_+}}
\mskip-20mu\nu\ZZ[\nu] 
\mathbf d_{b'_-,b'_+} K_{\alpha'_-,\alpha'_+}\invprod b'_-b'_+
$$
Set $b_-\circ b_+=C_{0,0,b_-,b_+}$. Then $K_{\alpha_-,\alpha_+}\invprod b_-\circ  b_+$ has the same properties as $C_{\alpha_-,\alpha_+,b_-,b_+}$
hence they coincide. This completes the proof of part~\eqref{thm:circ diag braiding.a}.

To prove part~\eqref{thm:circ diag braiding.b}, we again employ Proposition~\ref{prop:LL}. Let $\hat E$ be 
the free $R_0$-submodule of $\mathcal U_\chi(V^-,V^+)$ generated by $\{ K_{\alpha_-,\alpha_+}\invprod \iota(b_-\circ b_+)\,:\, 
\alpha_\pm\in\Gamma,\, b_\pm\in\mathbf B_\pm\}$, which is clearly a $\widehat\Gamma$-graded $R$-module. 
Then $E$ identifies with the free $R_0$-submodule of $\mathcal H^+_\chi(V^-,V^+)$ generated by 
$\{ K_{0,\alpha}\invprod b_-\circ b_+\,:\, \alpha\in\Gamma,\, b_\pm\in\mathbf B_\pm\}$.
Let $\mathcal R=\{ K_{\alpha,0}\,:\, \alpha\in\Gamma\}$.
By part~\eqref{thm:circ diag braiding.a} we have 
\begin{equation}\label{eq:prod-via-circ-double}
\mathbf d_{b_-,b_+} b_-b_+ - \iota(b_-\circ b_+) \in 
\sum_{\substack{b'_\pm\in\mathbf B_\pm,\,\alpha\in\Gamma\setminus\{0\}\\
\deg b_\pm = \deg b'_\pm+\alpha}} 
\mskip-20mu \nu\ZZ[\nu] K_{0,\alpha}\invprod \iota(b'_-\circ b'_+).
\end{equation}
Together with Proposition~\ref{prop:multipliers} this implies that in~$\mathcal U_\chi(V^-,V^+)$
\begin{multline*}
\overline{ \mathbf d_{b_-,b_+} b_-b_+}\in \mathbf d_{b_-,b_+} b_- b_+ +
\sum_{\substack{b'_\pm\in\mathbf B_\pm,\, (\alpha_-,\alpha_+)\in\widehat\Gamma\setminus\{(0,0)\}\\ 
\deg_{\widehat\Gamma}b_-b_+=\deg_{\widehat\Gamma} K_{\alpha_-,\alpha_+}\invprod b'_-b'_+}
} R_0 \mathbf d_{b'_-,b'_+}K_{\alpha_-,\alpha_+}\invprod b'_-b'_+\\
=\iota(b_-\circ b_+) +\sum_{\substack{b'_\pm\in\mathbf B_\pm,\, (\alpha_-,\alpha_+)\in\widehat\Gamma\setminus\{(0,0)\}\\ 
\deg_{\widehat\Gamma}b_-b_+=\deg_{\widehat\Gamma} K_{\alpha_-,\alpha_+}\invprod b'_-b'_+}
} R_0 K_{\alpha_-,\alpha_+}\invprod \iota(b'_-\circ b'_+),
\end{multline*}
which together with~\eqref{thm:circ diag braiding.a} yields
\begin{multline*}
\overline{\iota(b_-\circ b_+)}\in 
\mathbf d_{b_-,b_+} \overline{b_- b_+} + \sum_{\substack{b'_\pm\in\mathbf B_\pm,\,\alpha\in\Gamma\setminus\{0\}\\
\deg b_\pm = \deg b'_\pm+\alpha}} R_0
\mathbf d_{b'_-,b'_+} K_{0,\alpha}\invprod \overline{b'_-b'_+}\\
=\iota(b_-\circ b_+) +\sum_{\substack{b'_\pm\in\mathbf B_\pm,\, \alpha_\pm\in\Gamma,\alpha_-\not=0\\ 
\deg_{\widehat\Gamma}b_-b_+=\deg_{\widehat\Gamma} K_{\alpha_-,\alpha_+}\invprod b'_-b'_+}
} R_0 K_{\alpha_-,0}\invprod (K_{0,\alpha_+}\invprod \iota(b'_-\circ b'_+)).
\end{multline*}
Note that in the last  sum only terms with~$\alpha_-\not=0$ may occur with non-zero coefficients, since $b_-\circ b_+$ is $\bar\cdot$-invariant in
$\mathcal H^+_{\chi}(V^-,V^+)$. 
Thus, all assumptions of Proposition~\ref{prop:LL} are satisfied, and, using it with $\nu$ replaced by~$\nu^{-1}$ we obtain 
the desired basis. The rest of the argument is essentially the same as in part~\eqref{thm:circ diag braiding.a} and is omitted.
\end{proof}
\begin{remark}In view of Remark~\ref{rem:interchange +-},
it would be interesting to compare our elements $b_-\bullet b_+$ with those obtained by interchanging $\nu$ and~$\nu^{-1}$ in 
and/or $\mathcal H^+_\chi(V^-,V^+)$ with $\mathcal H^-_\chi(V^-,V^+)$ in Theorem~\ref{thm:circ diag braiding}.
\end{remark}

Choose bases $\mathbf B^0_+=\{ E_i\}_{i\in I}$ of~$V^+$ and $\mathbf B_{0,-}=\{F_i\}_{i\in I}$ of~$V^-$ such that
$\deg E_i=\deg F_i$, $i\in I$; thus, $\Gamma_0=\{ \deg E_i\}_{i\in I}$. Assume that $\overline {E_i}=E_i$ and~$\overline{F_i}=F_i$.
Let ${}^t$ be the unique anti-involution $\xi$, as defined in Lemma~\ref{lem:A-bar-*-diag-Dd}\eqref{lem:A-bar-*-diag-Dd.c}, 
such that $E_i{}^t=F_i$, $F_i{}^t=E_i$.
\begin{proposition}\label{prop:transp}
Let $\mathbf B_+$ be a $\Gamma$-homogeneous basis of $\mathcal B(V^+)$ consisting of $\bar\cdot$-invariant and containing 
$\mathbf B^0_+$ and let~$\mathbf B_-=\mathbf B_+{}^t$.
Then for all $b_\pm\in\mathbf B_\pm$, $\alpha_\pm\in\Gamma$ we have in~$\mathcal U_\chi(V^-,V^+)$
$$
(K_{\alpha_-,\alpha_+}\invprod \iota(b_-\circ b_+))^t=K_{\alpha_-,\alpha_+}\invprod \iota(b_+{}^t \circ b_-{}^t),
\qquad 
(K_{\alpha_-,\alpha_+}\invprod b_-\bullet b_+)^t=K_{\alpha_-,\alpha_+}\invprod b_+{}^t\bullet b_-{}^t.
$$
\end{proposition}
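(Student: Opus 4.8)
The plan is to deduce both identities from the uniqueness assertions in Theorem~\ref{thm:circ diag braiding} by checking that ${}^t$ carries the relations characterizing $b_-\circ b_+$ and $b_-\bullet b_+$ to the relations characterizing $(b_+)^t\circ (b_-)^t$ and $(b_+)^t\bullet (b_-)^t$; here $(b_+)^t\in\mathbf B_-$ and $(b_-)^t\in\mathbf B_+$ since $\mathbf B_-=\mathbf B_+{}^t$ and ${}^t$ is an involution. First I would record the properties of ${}^t$ that make this work. By Lemma~\ref{lem:A-bar-*-diag-Dd} the map ${}^t$ is a $\kk$-linear anti-automorphism of $\mathcal U_\chi(V^-,V^+)$ that commutes with $\bar\cdot$ and fixes every $K_{\alpha_-,\alpha_+}$, and it sends a normal-ordered monomial $b_-K_{\alpha_-,\alpha_+}b_+$ (with $b_\pm\in\mathcal B(V^\pm)$) to the normal-ordered monomial $(b_+)^tK_{\alpha_-,\alpha_+}(b_-)^t$; in particular no cross relation is ever invoked, so ${}^t$ also restricts to an anti-automorphism of $\mathcal H^+_\chi(V^-,V^+)$ commuting with $\bar\cdot$, and the two restrictions coincide on the common underlying space $\mathcal B(V^-)\tensor H\tensor\mathcal B(V^+)$, on which $\iota$ is the identity map. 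From this description one reads off $\deg(x^t)=\deg x$ for $x\in\mathcal B(V^\pm)$ homogeneous and $\deg_{\widehat\Gamma}(x^t)=(\deg_{\widehat\Gamma}x)^t$ for $x\in\mathcal U_\chi(V^-,V^+)$ homogeneous, and then comparing the two expressions for the action in~\eqref{eq:inv-act-defn} gives $(K_{\alpha_-,\alpha_+}\invprod x)^t=K_{\alpha_-,\alpha_+}\invprod x^t$. Hence both identities reduce to the case $\alpha_-=\alpha_+=0$.

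Next I would establish the auxiliary symmetry $\mathbf d_{b_-,b_+}=\mathbf d_{(b_+)^t,(b_-)^t}$ by induction on the total height $|b_-|+|b_+|$; the base case $|b_-|=|b_+|=0$ is trivial since then $b_\pm=1=1^t$. For the inductive step, apply ${}^t$ to the defining relation of Proposition~\ref{prop:multipliers}\eqref{prop:multipliers.a} for the pair $(b_-,b_+)$. Using that ${}^t$ is $R_0$-linear, $\invprod$-equivariant and anti-multiplicative --- so $(b'_-b'_+)^t=(b'_+)^t(b'_-)^t$ is again normal-ordered with $(b'_+)^t\in\mathbf B_-$, $(b'_-)^t\in\mathbf B_+$ --- and that every $b'_\pm$ occurring has strictly smaller height, the inductive hypothesis rewrites that relation as the defining relation of $\mathbf d_{(b_+)^t,(b_-)^t}$ for the pair $((b_+)^t,(b_-)^t)$; the minimality normalization in Proposition~\ref{prop:multipliers} then forces the two polynomials to be equal. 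Equivalently, ${}^t$ carries the finite family $\mathcal F_{b_-,b_+}$ appearing in the proof of Proposition~\ref{prop:multipliers} onto $\mathcal F_{(b_+)^t,(b_-)^t}$ up to units of $R_0$, which do not affect the value of $\mathbf d(\,\cdot\,)$ by Lemma~\ref{lem:clear-denom}.

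Finally I would run the uniqueness argument. Set $c_-=(b_+)^t$, $c_+=(b_-)^t$. For the $\circ$-statement, apply ${}^t$ to the relation of Theorem~\ref{thm:circ diag braiding}\eqref{thm:circ diag braiding.a} inside $\mathcal H^+_\chi(V^-,V^+)$: since ${}^t$ commutes with $\bar\cdot$ there, $(b_-\circ b_+)^t$ is $\bar\cdot$-invariant, and using anti-multiplicativity, $\invprod$-equivariance and the denominator symmetry one finds (writing $c'_-=(b'_+)^t$, $c'_+=(b'_-)^t$) that $(b_-\circ b_+)^t-\mathbf d_{c_-,c_+}c_-c_+$ lies in the $\nu\ZZ[\nu]$-span of the elements $\mathbf d_{c'_-,c'_+}K_{0,\alpha}\invprod c'_-c'_+$ over precisely the index set appearing in Theorem~\ref{thm:circ diag braiding}\eqref{thm:circ diag braiding.a} written for $(c_-,c_+)$; here one uses that ${}^t$ fixes each $K_{0,\alpha}$ and preserves $\Gamma$-degrees. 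Uniqueness then gives $(b_-\circ b_+)^t=c_-\circ c_+$, whence $(\iota(b_-\circ b_+))^t=\iota(c_-\circ c_+)$ because ${}^t$ and $\iota$ agree on the common space. For the $\bullet$-statement one repeats the argument inside $\mathcal U_\chi(V^-,V^+)$ using Theorem~\ref{thm:circ diag braiding}\eqref{thm:circ diag braiding.b}: $(b_-\bullet b_+)^t$ is $\bar\cdot$-invariant in $\mathcal U_\chi(V^-,V^+)$, the $\circ$-statement just proved replaces each $(\iota(b'_-\circ b'_+))^t$ by $\iota(c'_-\circ c'_+)$, and the constraint $\alpha_-\neq 0$ survives because ${}^t$ leaves the $K$-index untouched; uniqueness in Theorem~\ref{thm:circ diag braiding}\eqref{thm:circ diag braiding.b} then concludes, and the general $K_{\alpha_-,\alpha_+}$ case follows by the $\invprod$-equivariance from the first paragraph. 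The crux is the denominator symmetry of the middle paragraph: in contrast to the rest of the argument it is not formal, and must be teased out of Proposition~\ref{prop:multipliers} by the degree induction; once it is in hand, the two concluding steps amount to careful bookkeeping with the transposition of $\widehat\Gamma$.
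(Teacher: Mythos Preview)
Your argument is correct and follows the same route as the paper. The paper packages the uniqueness step via Definition~\ref{defn:triang} and Lemma~\ref{lem:auto}: it verifies $(K_{\alpha_-,\alpha_+}\invprod b_-b_+)^t=K_{\alpha_-,\alpha_+}\invprod b_+{}^tb_-{}^t$ and then declares ${}^t$ triangular, whence Lemma~\ref{lem:auto} yields the $\circ$- and $\bullet$-statements at once. You instead appeal to the uniqueness clauses of Theorem~\ref{thm:circ diag braiding} directly, which is the same content unwound. One point worth noting: for ${}^t$ to be triangular in the sense of Definition~\ref{defn:triang} (with zero error term) on the initial basis $\{\mathbf d_{b_-,b_+}b_-b_+\}$, one needs exactly the symmetry $\mathbf d_{b_-,b_+}=\mathbf d_{(b_+)^t,(b_-)^t}$ that you isolate and prove by induction; the paper leaves this implicit, so your treatment is in fact the more careful of the two.
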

\begin{proof}
Since $\deg b_\pm{}^t=\deg b_\pm$, we have in~$\mathcal H^+_\chi(V^-,V^+)$
\begin{align*}
(K_{\alpha_-,\alpha_+}\invprod b_- b_+)^t&=(\chi^{\frac12}((\alpha_-,\alpha_+),(\deg b_-,\deg b_+)))^{-1}b_+{}^tb_-{}^tK_{\alpha_-,\alpha_+}\\
&=K_{\alpha_-,\alpha_+}\invprod b_+{}^tb_-{}^t.
\end{align*}
Thus, the anti-automorphism~${}^t$ of~$\mathcal H^+_\chi(V^-,V^+)$ is triangular in 
the sense of Definition~\ref{defn:triang}, hence $(K_{\alpha_-,\alpha_+}\invprod b_-\circ b_+)^t
=K_{\alpha_-,\alpha_+}\invprod b_+{}^t\circ b_-{}^t$. This implies that the anti-automorphism 
${}^t$ of~$\mathcal U_\chi(V^-,V^+)$ is also triangular in the sense of Definition~\ref{defn:triang} with $\nu$ replaced by~$\nu^{-1}$.
Since $\iota(x^t)=\iota(x)^t$ for all $x\in\mathcal H^+_\chi(V^-,V^+)$,
the second assertion follows.
\end{proof}

\section{Dual canonical bases and proofs of Theorems~\ref{thm:circle},\ref{thm:bullet}, \ref{thm:transp-star} and~\ref{thm:Weyl-basis}}\label{sec:prelim}

We fix some notation which will be used repeatedly throughout the rest of the paper. Define in~$\QQ(\nu)$
\plink{q int}
\begin{equation}\label{eq:sq-qan}
[a]_\nu=\frac{\nu^a-1}{\nu-1},\quad [a]_\nu!=\prod_{j=1}^a [j]_\nu,\quad \qbinom[\nu]{a}{n}=\frac{[a]_\nu[a-1]_\nu\cdots [a-n+1]_\nu}{[n]_\nu!}
\end{equation}
\plink{q (int)}
\begin{equation}\label{eq:br-qan}
(a)_\nu=\frac{\nu^a-\nu^{-a}}{\nu-\nu^{-1}},\quad (a)_\nu!=\prod_{j=1}^a (j)_\nu,\quad \binom{a}{n}_\nu=\frac{(a)_\nu(a-1)_\nu\cdots (a-n+1)_\nu}{(n)_\nu!}
\end{equation}
and \plink{q <int>}
\begin{equation}\label{eq:an-inv}
\la a\ra _\nu=\nu^a-\nu^{-a},\quad \la a\ra _\nu!=\prod_{j=1}^a \la j\ra _\nu.
\end{equation}
We always use the convention that $\textstyle\qbinom[\nu]{a}{n}=0=\displaystyle\binom{a}{n}_\nu$ if $n<0$.
If $a,n$ are non-negative integers, then all expressions in~\eqref{eq:sq-qan} lie in~$1+\nu\ZZ_{\ge 0}[\nu]$
while all expressions in \eqref{eq:br-qan} are in~$\ZZ_{\ge 0}[\nu+\nu^{-1}]$.
Clearly,
$$
[a]_{\nu^2}=\nu^{a-1}(a)_\nu=\nu^{a-1}(\nu-\nu^{-1})^{-1}\la a\ra _\nu,
$$
hence 
$$
[a]_{\nu^2}!=\nu^{\binom a2}(a)_\nu!=\nu^{\binom a2}(\nu-\nu^{-1})^{-a} \la a\ra _\nu!,\quad 
\qbinom[\nu^2]{a}{n}=\nu^{(a-n)n} \binom{a}{n}_\nu=\nu^{(a-n)n}\frac{ \la a\ra_\nu \cdots \la a-n+1\ra_\nu}{\la a\ra_\nu!}
$$
(thus, there is no need to introduce ``angular'' $\nu$-binomial coefficients).
Finally,
\begin{equation}\label{eq:bin-bar}
\qbinom[\nu^{-2}]{a}{n}=\nu^{n(n-a)}\binom{a}{n}_\nu=\nu^{2n(n-a)}\qbinom[\nu^2]{a}{n}.
\end{equation}
For every symbol $X_i$, $i\in I$ such that $X_i^n$ is defined we set $X_i^{(n)}=X_i^n/(n)_{q_i}!=(q_i-q_i^{-1})^n X_i^{\la n\ra}$.

\subsection{Bicharacters, pairings, lattices and inner products}\label{subs:notation}\label{subs:In-prod}
Let~$\kk=\mathbb Q(q^{\frac12})$ and let~$R_0=\ZZ[q^{\frac12},q^{-\frac12}]$.
Let~$\lie g$ be a symmetrizable Kac-Moody Lie algebra and let 
$A=(a_{ij})_{i,j\in I}$ be its Cartan matrix.
Fix positive integers~$d_i$, $i\in I$ such that $d_i a_{ij}=a_{ji}d_j$, $i,j\in I$.
Let~$\mathcal K$ be the monoidal algebra of~$\widehat\Gamma$
with the basis $\{ K_{\mathbf \alpha_-,\alpha_+}\,:\,\alpha_\pm\in\Gamma\}$ and denote $K_{\pm i}:=K_{\alpha_{\pm i}}$. The monoid~$\Gamma$
(and hence 
$\widehat\Gamma$) clearly affords a sign character (cf.~\S\ref{subs:A-diag-braid}).

\plink{chi eta gamma}
Define a symmetric bicharacter $\cdot:\Gamma\times \Gamma\to \ZZ$ by 
$\alpha_i\cdot\alpha_j=d_i a_{ij}$ and set $\chi(\alpha,\alpha')=q^{\alpha\cdot\alpha'}$, $\alpha,\alpha'\in\Gamma$.
It is easy to see that~$\alpha\cdot\alpha\in 2 \ZZ$ for all~$\alpha\in\Gamma$. Furthermore, let $\eta:\Gamma\to\mathbb Z_{\ge 0}$ be the character defined 
by $\eta(\alpha_i)=d_i$, $i\in I$. We extend~$\cdot $ to a bicharacter of~$\widehat\Gamma$  via  $\alpha_{\pm i}\cdot \alpha_{\mp j}=-d_i a_{ij}$, $i,j\in I$ 
and $\eta$ to a character of~$\widehat\Gamma$ via $\eta(\alpha_{\pm i})=\eta(\alpha_i)$, $i\in I$.
Define $\ul\gamma:\Gamma\to\ZZ$ by
$\ul\gamma(\alpha)=\frac12 \alpha\cdot\alpha-\eta(\alpha)$, $\alpha\in\Gamma$. Then 
\begin{equation}\label{eq:kappa}
\ul\gamma(\alpha_i)=0,\qquad  
\ul\gamma(\alpha+\alpha')=\ul\gamma(\alpha)+\ul\gamma(\alpha')+\alpha\cdot\alpha',\qquad i\in I,\,\alpha,\alpha'\in\Gamma.
\end{equation}
This implies that~$\gamma:\Gamma\to\kk^\times$ defined by $\gamma(\alpha)=q^{\ul\gamma(\alpha)}$, $\alpha\in\Gamma$ is the function 
discussed in~\S\ref{subs:A-diag-braid}.

Let $V^+=\bigoplus_{i\in I} \kk E_i$, $V^-=\bigoplus_{i\in I}\kk F_i$. We regard $V^\pm$ as $\Gamma$-graded with 
$\deg E_i=\deg F_i=\alpha_i$.
It is well-known (cf.~\cite{L1}*{Chapter~1} and \S\S\ref{subs:A-Nichols}, \ref{subs:A-diag-braid}) that $U_q^\pm$ 
is the Nichols algebra $\mathcal B(V^\pm,\Psi^\pm)$ where the braiding~$\Psi^\pm$ is defined via the bicharacter~$\chi$ 
as in~\S\ref{subs:A-diag-braid}.

Define a pairing $\lr{\cdot}{\cdot}:V^-\tensor V^+\to\kk$ by 
$\lr{E_j}{F_i}=\delta_{ij}(q_i-q_i^{-1})$. 
Then $\lr{\cdot}{\cdot}$ extends to a pairing of braided Hopf algebras $U_q^-\tensor U_q^+\to\kk$ (see \S\S\ref{subs:A-pair},\ref{subs:A-diag-braid}).
The algebra $\mathcal U_{\chi,t_-,t_+}(V^-,V^+)$, $t_\pm\in\kk$, is then $\widehat\Gamma$-graded as in~\S\ref{subs:proof thm bullet}.
\begin{proposition}\label{prop:Dr-double-pres}
The algebra $U_q(\tilde\gg)$ is isomorphic to~$\mathcal U_{\chi}(V^-, V^+)=\mathcal U_{\chi,1,1}(V^-,V^+)$ while 
$\mathcal H_q^\pm(\gg)$ identify with the subalgebra of $\mathcal H^\pm_\chi(V^-,V^+)$ generated by the $K_{+i}$ (respectively, $K_{-i}$),
$E_i$ and~$F_i$, $i\in I$,
in the notation of~\S\ref{subs:L-L-appl}.
\end{proposition}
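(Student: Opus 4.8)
The plan is to produce the isomorphism directly on generators, the observation being that $U_q(\tilde\gg)$ and $\mathcal U_\chi(V^-,V^+)=\mathcal U_{\chi,1,1}(V^-,V^+)$ have, essentially, the same presentation: both are assembled from the Nichols algebras $\mathcal B(V^\pm)$ — presented by the quantum Serre relations \eqref{eq:qserre} among the $E_i$, resp. the $F_i$, by Lusztig's theorem (\cite{L1}*{Chapter~1}) — and the monoid algebra $H=\kk[\widehat\Gamma]=\mathcal K$ of the $K_{\pm i}$, glued by the torus commutation \eqref{eq:torus-comm} and the cross relation \eqref{eq:cross-rel-t +-}. Concretely, I would first define $\phi\colon U_q(\tilde\gg)\to\mathcal U_\chi(V^-,V^+)$ by $E_i\mapsto E_i$, $F_i\mapsto F_i$, $K_{\pm i}\mapsto K_{\pm i}$ — where $K_{+i}=K_{0,\alpha_i}$ and $K_{-i}=K_{\alpha_i,0}$ — and verify \eqref{eq:commutation g}--\eqref{eq:qserre} in the target. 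The Serre relations hold by the Nichols presentation just recalled; the relations $K_{\pm i}E_j=q_i^{\pm a_{ij}}E_jK_{\pm i}$ and $K_{\pm i}F_j=q_i^{\mp a_{ij}}F_jK_{\pm i}$ follow from \eqref{eq:torus-comm} together with $\chi(\alpha_i,\alpha_j)=q^{d_ia_{ij}}=q_i^{a_{ij}}$; and \eqref{eq:cross-rel-t +-} with $t_\pm=1$ and $\lr{E_i}{F_j}=\delta_{ij}(q_i-q_i^{-1})$ gives
$$
[E_i,F_j]=\delta_{ij}(q_i-q_i^{-1})(K_{\alpha_i,0}-K_{0,\alpha_i})=\delta_{ij}(q_i^{-1}-q_i)(K_{+i}-K_{-i}).
$$
Surjectivity of $\phi$ is immediate, since $\mathcal U_\chi(V^-,V^+)$ is generated by the $E_i$, $F_i$ and the $K_{\pm i}$.

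For injectivity I would invoke the triangular decompositions on both sides. The double $\mathcal U_\chi(V^-,V^+)$ has underlying space $\mathcal B(V^-)\tensor H\tensor\mathcal B(V^+)$ by the construction recalled in \S\ref{subs:L-L-appl}. On the $U_q(\tilde\gg)$ side, the standard rewriting — commute every $K_{\pm i}$ past the $E_j,F_j$ to the far left, which costs only a unit of $\kk$, and move every $F_j$ to the left of every $E_i$ via the $[E_i,F_j]$ relation, which strictly lowers the $\widehat\Gamma$-degree — yields, by induction on the $\widehat\Gamma$-degree, the spanning decomposition $U_q(\tilde\gg)=\mathcal K\,U_q^-\,U_q^+$. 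Since $\phi$ restricts to isomorphisms $U_q^\pm\xrightarrow{\sim}\mathcal B(V^\pm)$ (again Lusztig) and $\mathcal K\xrightarrow{\sim}H$, a monomial basis of $\mathcal K$ together with PBW bases of $U_q^\pm$ gives a spanning set of $U_q(\tilde\gg)$ whose $\phi$-image, after reordering the torus factor back to the left inside $\mathcal U_\chi(V^-,V^+)$, is precisely a basis of $\mathcal B(V^-)\tensor H\tensor\mathcal B(V^+)$. A spanning set that maps bijectively onto a basis must itself be a basis and the map an isomorphism; hence $\phi$ is an isomorphism.

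The statement about $\mathcal H_q^\pm(\gg)$ is obtained by running the same argument with $\mathcal U_{\chi,1,1}$ replaced by $\mathcal U_{\chi,1,0}$ (resp. $\mathcal U_{\chi,0,1}$). Passing to $\mathcal H_q^+(\gg)=U_q(\tilde\gg)/\la K_{-i},\,i\in I\ra$ sets the $K_{-i}$ to zero, so the cross relation degenerates to $[E_i,F_j]=\delta_{ij}(q_i^{-1}-q_i)K_{+i}$ — exactly the specialization of \eqref{eq:cross-rel-t +-} at $t_-=0$, $t_+=1$ — while \eqref{eq:torus-comm} and \eqref{eq:qserre} survive verbatim. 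Hence $E_i\mapsto E_i$, $F_i\mapsto F_i$, $K_{+i}\mapsto K_{+i}$ defines a homomorphism $\mathcal H_q^+(\gg)\to\mathcal H^+_\chi(V^-,V^+)$ onto the subalgebra generated by the $K_{+i}$, $E_i$, $F_i$, which sits triangularly inside $\mathcal B(V^-)\tensor H\tensor\mathcal B(V^+)$; the spanning/PBW argument of the previous paragraph, now with $\mathcal K$ replaced by $\mathcal K_+$, shows it is injective. The case of $\mathcal H_q^-(\gg)$ is the mirror image under $\pm\mapsto\mp$.

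The only non-formal input is the PBW theorem for $\mathcal U_{\chi,t_-,t_+}(V^-,V^+)$, namely that multiplication $\mathcal B(V^-)\tensor H\tensor\mathcal B(V^+)\to\mathcal U_{\chi,t_-,t_+}(V^-,V^+)$ is bijective — this is built into the double construction of \S\ref{subs:A-Dd-diag} — together with Lusztig's identification of $U_q^\pm$ with the Nichols algebra $\mathcal B(V^\pm)$; everything else is a comparison of relations. I expect the only point requiring real care to be the bookkeeping in the first step: making sure the braiding on $V^\pm$ used in the double is exactly the one attached to $\chi$, so that the relations produced are the quantum Serre relations \eqref{eq:qserre} and not a rescaled variant, and that the identifications $K_{+i}=K_{0,\alpha_i}$, $K_{-i}=K_{\alpha_i,0}$ are taken in the order that makes the exponents in \eqref{eq:torus-comm} and the signs in \eqref{eq:cross-rel-t +-} reproduce \eqref{eq:commutation g} on the nose.
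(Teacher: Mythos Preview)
Your proposal is correct and follows essentially the same route as the paper: verify that the quantum Serre relations hold in $\mathcal B(V^\pm)$ via Lusztig and that \eqref{eq:torus-comm}--\eqref{eq:cross-rel-t +-} reproduce \eqref{eq:commutation g}, giving a surjection $U_q(\tilde\gg)\twoheadrightarrow\mathcal U_\chi(V^-,V^+)$, and then conclude injectivity from the triangular decompositions on both sides. The paper phrases the last step as ``homogeneous subspaces have the same dimensions'' rather than your explicit PBW/spanning-set argument, but the content is identical.
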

\begin{proof}
After \cite{L1}*{Proposition~1.4.3}, \eqref{eq:qserre} hold in~$\mathcal B(V^\pm)$, while~\eqref{eq:cross-rel-Dd} yield~\eqref{eq:commutation g}.
Thus, $\mathcal U_\chi(V^-,V^+)$ is a $\widehat\Gamma$-graded quotient of~$U_q(\tilde\gg)$, and it remains to observe that their homogeneous subspaces 
have the same dimensions. The assertion about~$\mathcal H_q^\pm(\gg)$ is proved similarly.
\end{proof}

Define $\bar\cdot:V^\pm\to V^\pm$ as the unique anti-linear map satisfying $\overline{ E_i}=E_i$, $\overline{F_i}=F_i$, $i\in I$.
Then $\overline{\lr{\overline{v^+}}{\overline{v^-}}}=-\lr{v^+}{v^-}$, hence by Lemma~\ref{lem:A-bar-*-diag-Dd}
$U_q(\tilde\gg)$ admits an anti-linear anti-involution $\bar\cdot$ preserving the generators, an anti-involution ${}^*$
preserving the $E_i$ and the $F_i$, $i\in I$ and satisfying $K_{\pm i}{}^*=K_{\mp i}$, $i\in I$, and an anti-involution 
${}^t$ which restricts to anti-isomorphisms $U_q^\pm\to U_q^\mp$ such that $E_i{}^t=F_i$, $F_i{}^t=E_i$, and preserves 
the $K_{\pm i}$, $i\in I$. In particular, ${}^{*t}$ is an involution which restricts to isomorphisms $U_q^\pm\to U_q^\mp$.

\plink{ZZU}
Let $\ZZU^+$ (respectively, $\ZZU^-$) be the $\ZZ[q,q^{-1}]$-subalgebra of~$U_q^+$ (respectively, $U_q^-$) generated by the 
$E_i^{\la n\ra}$ (respectively, $F_i^{\la n\ra}$), $i\in I$, $n\in\ZZ_{\ge0}$; thus, 
$\ZZU^\pm$ is the preimage under~$\psi_\pm$ of the subalgebra of $U_q^\pm$ generated by the usual divided powers 
(\cite{L1}*{\S1.4.7}).
Define 
$$
U_\ZZ^+=\{ x\in U_q^+\,:\, \lr{x}{\ZZU^-}\subset\ZZ[q,q^{-1}]\},\qquad U_\ZZ^-=\{ x\in U_q^-\,:\, \lr{\ZZU^+}{x}\subset\ZZ[q,q^{-1}]\}
$$
\begin{proposition}\label{prop:UZZ-bas}
$U_\ZZ^\pm$ is a $\ZZ[q,q^{-1}]$-subalgebra of~$U_q^\pm$ satisfying $\ul\Delta(U_\ZZ^\pm)\subset U_\ZZ^\pm\tensor_{\ZZ[q,q^{-1}]} U_\ZZ^\pm$.
\end{proposition}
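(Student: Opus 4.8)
The plan is to exploit that $\la\cdot,\cdot\ra\colon U_q^-\tensor U_q^+\to\kk$ is a pairing of braided Hopf algebras (\S\S\ref{subs:A-pair},\ref{subs:A-diag-braid}), so that multiplication in $U_q^{\mp}$ is adjoint to comultiplication in $U_q^{\pm}$, together with the observation that, by definition, $U_\ZZ^+$ (respectively $U_\ZZ^-$) is precisely the $\ZZ[q,q^{-1}]$-lattice in $U_q^+$ (respectively $U_q^-$) dual to $\ZZU^-$ (respectively $\ZZU^+$). The two cases are interchanged by swapping the roles of~$+$ and~$-$, so I will only spell out~$U_\ZZ^+$.

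The first step is to record the relevant properties of the integral forms. Following \cite{L1}*{\S1.4.7}, $\ZZU^-$ is at once a $\ZZ[q,q^{-1}]$-subalgebra and a braided subcoalgebra of $U_q^-$, that is $\ul\Delta(\ZZU^-)\subset\ZZU^-\tensor_{\ZZ[q,q^{-1}]}\ZZU^-$; in each degree $\gamma\in\Gamma$ the piece $L^-_\gamma:=\ZZU^-\cap(U_q^-)_\gamma$ is a free $\ZZ[q,q^{-1}]$-module of finite rank whose $\kk$-span is all of $(U_q^-)_\gamma$; and $\la\cdot,\cdot\ra$ restricts to a non-degenerate pairing $(U_q^-)_\gamma\tensor(U_q^+)_\gamma\to\kk$ (\cite{L1}*{Chapter~1}; see also \S\ref{subs:A-pair}). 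Consequently $L^+_\gamma:=U_\ZZ^+\cap(U_q^+)_\gamma$ is exactly the $\ZZ[q,q^{-1}]$-lattice of $(U_q^+)_\gamma$ dual to $L^-_\gamma$; in particular it is free of the same finite rank, $U_\ZZ^+=\bigoplus_{\gamma}L^+_\gamma$, and $U_\ZZ^+$ and $\ZZU^-$ are in perfect graded $\ZZ[q,q^{-1}]$-duality.

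Next comes the subalgebra claim. Let $x,x'\in U_\ZZ^+$ and $y\in\ZZU^-$ be homogeneous. The braided Hopf pairing axiom expresses $\la y,xx'\ra$ as the tensor-square pairing $\la\cdot,\cdot\ra\tensor\la\cdot,\cdot\ra$ evaluated on $\ul\Delta(y)$ and a rearrangement of $x\tensor x'$, the rearrangement contributing only a scalar $q^{m}$ with $m\in\ZZ$, since $\chi$ takes values in~$q^{\ZZ}$. Writing $\ul\Delta(y)=\sum y_{(1)}\tensor y_{(2)}$ with all $y_{(i)}\in\ZZU^-$, each resulting summand equals $q^{m}\la y_{(1)},x\ra\la y_{(2)},x'\ra\in\ZZ[q,q^{-1}]$, hence $\la\ZZU^-,xx'\ra\subset\ZZ[q,q^{-1}]$ and $xx'\in U_\ZZ^+$. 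Moreover $\la y,1\ra=\varepsilon(y)\in\ZZ$ for $y\in\ZZU^-$, so $1\in U_\ZZ^+$; thus $U_\ZZ^+$ is a $\ZZ[q,q^{-1}]$-subalgebra of $U_q^+$.

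Finally, the coalgebra claim. Fix a homogeneous $x\in U_\ZZ^+$ of degree $\gamma$ and write $\ul\Delta(x)=\sum_{\alpha+\beta=\gamma}\Delta_{\alpha,\beta}(x)$ with $\Delta_{\alpha,\beta}(x)\in(U_q^+)_\alpha\tensor(U_q^+)_\beta$. For $y\in L^-_\alpha$ and $z\in L^-_\beta$ the pairing axiom dual to the one just used gives $\la zy,x\ra=q^{m'}(\la\cdot,\cdot\ra\tensor\la\cdot,\cdot\ra)(z\tensor y,\Delta_{\alpha,\beta}(x))$ for some $m'\in\ZZ$ depending only on $\alpha,\beta$; the left-hand side lies in $\ZZ[q,q^{-1}]$ because $zy\in\ZZU^-$, so $\Delta_{\alpha,\beta}(x)$ pairs integrally with $L^-_\alpha\tensor_{\ZZ[q,q^{-1}]}L^-_\beta$ under the $q^{m'}$-twisted tensor pairing. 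Since the twist is by a unit of $\ZZ[q,q^{-1}]$, the dual lattice of the free lattice $L^-_\alpha\tensor L^-_\beta$ for this pairing is $L^+_\alpha\tensor_{\ZZ[q,q^{-1}]}L^+_\beta$ (it is spanned by the tensor products of dual bases), whence $\Delta_{\alpha,\beta}(x)\in L^+_\alpha\tensor L^+_\beta\subset U_\ZZ^+\tensor_{\ZZ[q,q^{-1}]}U_\ZZ^+$. Summing over all $(\alpha,\beta)$ with $\alpha+\beta=\gamma$ gives $\ul\Delta(x)\in U_\ZZ^+\tensor_{\ZZ[q,q^{-1}]}U_\ZZ^+$. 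The only delicate points are bookkeeping ones: pinning down the precise braiding scalars in the pairing axioms, which is harmless once one knows they are integral powers of~$q$ (guaranteed by $\chi$ taking values in~$q^{\ZZ}$), and the elementary fact that, for a pairing twisted by a global unit, the dual of a tensor product of free lattices is the tensor product of the duals; the substantive inputs, namely non-degeneracy of $\la\cdot,\cdot\ra$ on each graded piece and the simultaneous subalgebra/subcoalgebra structure of the free finite-rank lattice $\ZZU^-$, are classical and already recalled in \S\S\ref{subs:A-pair},\ref{subs:A-diag-braid} and in \cite{L1}.
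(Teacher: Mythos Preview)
Your proof is correct and follows essentially the same route as the paper: both identify $U_\ZZ^+$ with the graded $\ZZ[q,q^{-1}]$-dual of $\ZZU^-$ under the Hopf pairing and then read off the subalgebra and subcoalgebra properties of $U_\ZZ^+$ from, respectively, the subcoalgebra and subalgebra properties of $\ZZU^-$ (Lemma~\ref{lem:lusztig-lattice}). The paper compresses all of this into one sentence (``$U_\ZZ^+$ is canonically isomorphic to the graded $\Hom_R({}_\ZZ U^-,R)$, which immediately implies the proposition''), whereas you spell out both duality steps explicitly. One small cosmetic point: the braided Hopf pairing identities in~\S\ref{subs:A-pair} already read $\lra{f}{vv'}=\lra{\ul f_{(1)}}{v}\lra{\ul f_{(2)}}{v'}$ and $\lra{ff'}{v}=\lra{f}{\ul v_{(1)}}\lra{f'}{\ul v_{(2)}}$ with no explicit braiding scalar (it is absorbed into the braided coproduct $\ul\Delta$), so your ``rearrangement contributing only a scalar $q^m$'' is unnecessary, though harmless.
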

\begin{proof}
We prove the statements for $U_\ZZ^+$ only, the argument for $U_\ZZ^-$ being similar. Let $R=\mathbb Z[q,q^{-1}]$.
The following result is immediate from~\cite{L1}*{Lemma~1.4.1}
\begin{lemma}\label{lem:lusztig-lattice}
${}_\ZZ U^\pm$ is an $R$-subalgebra of~$U_q^\pm$ satisfying $\ul\Delta({}_\ZZ U^\pm)\subset {}_\ZZ U^\pm\tensor_R
{}_\ZZ U^\pm$.
\end{lemma}
 Since $U_\ZZ^+$ is a direct sum of free $R$-modules of finite length, $U_\ZZ^+$ 
is canonically isomorphic to the graded $\Hom_R({}_\ZZ U^-,R)$, which immediately implies the proposition.
\end{proof}

\subsection{Dual canonical bases}
Let $\psi:U_q(\tilde\gg)\to U_q(\tilde\gg)$ be the homomorphism defined by $E_i\mapsto (q_i^{-1}-q_i)^{-1}E_i$, $F_i\mapsto (q_i-q_i^{-1})^{-1} F_i$,
$K_{\pm i}\mapsto K_{\pm i}$. Denote by $\psi_\pm$ its restrictions to $U_q^\pm$. Clearly, the images of generators of~$U_q(\tilde\gg)$ under~$\psi$ satisfy the relations 
of the ``standard'' presentation of~$U_q(\tilde\gg)$; for example 
$$
[\psi(E_i),\psi(F_j)]=\delta_{ij} \,\frac{K_{+i}-K_{-i}}{q_i-q_i^{-1}}.
$$

\plink{B can}
Let $\mathbf B^{\can}$ be the preimage under~$\psi^-$ of Lusztig's canonical basis of~$U_q^-$ (\cite{L1}*{Chapter 14}). By~\cite{L1}*{Theorem~14.4.3}, $\mathbf B^{\can}$ is a 
$\ZZ[q,q^{-1}]$-basis 
of $\ZZU^-$. If $\lie g=\lie{sl}_2$, $\mathbf B^{\can}=\{ F^{\la r\ra}\,:\, r\in\ZZ_{\ge 0}\}$.

\plink{frm ()}
Let $\fgfrm{\cdot}{\cdot}:U_q^-\tensor U_q^-\to \kk$ be the pairing defined in~\S\ref{subs:A-Dd-diag} with~$\xi$ being the anti-involution~${}^t$ described above.
Since $\fgfrm{\cdot}{\cdot}$ is non-degenerate and restricts to non-degenerate bilinear forms on finite dimensional graded components of~$U_q^-$, 
for each $b\in \mathbf B^{\can}$ there exists a unique $\delta_b\in U_q^-$ such that $\fgfrm{\delta_b}{b'}=\delta_{b,b'}$ for all $b'\in\mathbf B^{\can}$.
\plink{B pm defn}
\begin{definition}\label{defn:dcb}
The {\em dual canonical basis} $\mathbf B_{\nn_-}$ of~$U_q^-$ is  the set $\{\delta_b\,:\, b\in\mathbf B^{\can}\}$.  The dual canonical 
basis $\mathbf B_{\nn_+}$ of~$U_q^+$ is defined as $\mathbf B_{\nn_+}=\mathbf B_{\nn_-}^t$.
\end{definition}
This definition is justified by the following Lemma.
\begin{lemma}\label{lem:dual-cb-basic}
For all $b_\pm \in \mathbf B_{\nn_\pm}$, $\bar b_\pm=b_\pm$ and $b_\pm^*\in \mathbf B_{\nn_\pm}$.
\end{lemma}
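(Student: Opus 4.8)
Since $\mathbf B_{\nn_+}=\mathbf B_{\nn_-}{}^t$, and one checks directly on the generators that the anti-automorphism ${}^t$ commutes with $\bar\cdot$ and with ${}^*$, it suffices to prove $\overline{b_-}=b_-$ and $b_-^*\in\mathbf B_{\nn_-}$ for $b_-\in\mathbf B_{\nn_-}$. By construction each $\delta_b$ ($b\in\mathbf B^{\can}$) is the \emph{unique} element of $U_q^-$ with $\fgfrm{\delta_b}{b'}=\delta_{b,b'}$ for all $b'\in\mathbf B^{\can}$, so the plan is to apply the (anti-)involutions $\bar\cdot$ and ${}^*$ to this defining system of equations and read off the conclusion from that uniqueness.

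Two inputs are needed. The first is the behaviour of the pairing $\fgfrm{\cdot}{\cdot}$ under $\bar\cdot$ and under ${}^*$, which is part of the construction in~\S\ref{subs:A-Dd-diag} (Lemma~\ref{lem:A-bar-*-diag-Dd}): from the normalization $\la F_i,E_j\ra=\delta_{ij}(q_i-q_i^{-1})$ of the braided Hopf pairing, from $\overline{\la\overline{v^+},\overline{v^-}\ra}=-\la v^+,v^-\ra$, and from the fact that $\fgfrm{\cdot}{\cdot}$ is graded (so only equal degrees pair nontrivially), one extracts identities of the shape $\overline{\fgfrm{x}{y}}=c_{|x|}\fgfrm{\overline{x}}{\overline{y}}$ and $\fgfrm{x^*}{y^*}=\fgfrm{x}{y}$ for homogeneous $x,y\in U_q^-$ of equal degree, with $c_\alpha\in R_0^\times$ an explicit degree-dependent sign. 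The second input is the behaviour of $\mathbf B^{\can}$: it is obtained from Lusztig's canonical basis of $U_q^-$ through the rescaling $\psi^-$, and Lusztig's basis is fixed by the bar involution and permuted by the anti-automorphism fixing the $F_i$ (\cites{L1,Kas}); since ${}^*$ is $\kk$-linear and degree-preserving it commutes with $\psi^-$, while $\psi^-$ acts on the $\alpha$-component by a scalar whose bar differs from itself by $c_\alpha$. One thus obtains $\overline{b}=c_{|b|}^{-1}b$ and $b^*\in\mathbf B^{\can}$ for $b\in\mathbf B^{\can}$, with the \emph{same} constants $c_\alpha$. The normalizations fixed in~\S\ref{subs:In-prod} — the ``angular'' divided powers built into $\ZZU^\pm$ and the factor $q_i-q_i^{-1}$ in the pairing — are chosen precisely so that these two occurrences of $c_\alpha$ agree.

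Granting this, apply $\bar\cdot$ to $\fgfrm{\delta_b}{b'}=\delta_{b,b'}$: using $\overline{\delta_{b,b'}}=\delta_{b,b'}$, $\overline{b'}=c_{|b'|}^{-1}b'$ and $\overline{\fgfrm{\delta_b}{b'}}=c_{|b'|}\fgfrm{\overline{\delta_b}}{\overline{b'}}$, the two constants cancel (recall $|b|=|b'|$ for the pairing to be nonzero) and we get $\fgfrm{\overline{\delta_b}}{b'}=\delta_{b,b'}$ for all $b'\in\mathbf B^{\can}$; uniqueness forces $\overline{\delta_b}=\delta_b$, i.e.\ $\overline{b_-}=b_-$. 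For ${}^*$, write $b\mapsto b^*$ for the permutation of $\mathbf B^{\can}$ induced by ${}^*$; applying ${}^*$ to $\fgfrm{\delta_b}{b'}=\delta_{b,b'}$ gives $\fgfrm{(\delta_b)^*}{(b')^*}=\delta_{b,b'}=\delta_{b^*,(b')^*}$, and since $(b')^*$ runs over $\mathbf B^{\can}$ as $b'$ does, uniqueness yields $(\delta_b)^*=\delta_{b^*}\in\mathbf B_{\nn_-}$.

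The only genuine work — and the main obstacle — is the statement at the end of the second paragraph: one must compute the sign $c_\alpha$ on both sides (from the $\bar\cdot$-behaviour of $\fgfrm{\cdot}{\cdot}$, keeping track of the ${}^t$-twist used to define it, and from the $\bar\cdot$-behaviour of $\psi^-$ applied to a bar-fixed element) and verify they coincide, and likewise confirm that $\mathbf B^{\can}$ is indeed stabilized (up to these scalars, and up to the $*$-permutation, but with no further reshuffling) by $\bar\cdot$ and ${}^*$. This is a direct but slightly delicate computation with the normalizations of~\S\ref{subs:In-prod}; once it is in place the duality arguments above are purely formal, and the case of $\mathbf B_{\nn_+}$ then follows from $\mathbf B_{\nn_+}=\mathbf B_{\nn_-}{}^t$ as noted.
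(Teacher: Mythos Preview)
Your overall strategy---dualize the known behaviour of $\mathbf B^{\can}$ under $\bar\cdot$ and ${}^*$ through the defining equations $\fgfrm{\delta_b}{b'}=\delta_{b,b'}$---is exactly the paper's, and your ${}^*$-argument is correct as written (the identity $\fgfrm{x^*}{y^*}=\fgfrm{x}{y}$ does hold, and $b^*\in\mathbf B^{\can}$ is~\cite{L1}*{\S14.4}).

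There is, however, a genuine slip in the $\bar\cdot$ half. Lusztig's bar, the one fixing his canonical basis, is an anti-linear \emph{automorphism} of $U_q^-$; the paper's $\bar\cdot$ is an anti-linear \emph{anti}-automorphism. On $U_q^-$ they differ precisely by~${}^*$. Hence bar-invariance of $\psi(b)$ in Lusztig's sense transports to $(\overline{b})^*=\sgn(b)\,b$, i.e.\ $\overline{b}=\sgn(b)\,b^*$, \emph{not} to $\overline{b}=c_{|b|}^{-1}b$ as you assert. Correspondingly the form identity is~\eqref{eq:prop-symm-form-bar}, which unwinds to $\overline{\fgfrm{x}{y}}=\sgn(x)\,\fgfrm{\overline{x}}{(\overline{y})^*}$, again with an extra~${}^*$ in the second slot compared to your stated shape. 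Your two missing ${}^*$'s happen to cancel (substituting $\overline{b'}=\sgn(b')\,(b')^*$ into that second slot returns $b'$), so the final conclusion $\fgfrm{\overline{\delta_b}}{b'}=\delta_{b,b'}$ survives; but each of your two intermediate claims is individually false, and an attempt to verify ``$\overline{b}=c_{|b|}^{-1}b$'' directly would simply fail. The fix is exactly what the paper does: work with the combined statement $(\overline{b})^*=\sgn(b)\,b$ and the combined form identity~\eqref{eq:prop-symm-form-bar}, rather than trying to decouple $\bar\cdot$ from ${}^*$ on $\mathbf B^{\can}$.
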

\begin{proof}
Note that for all $b\in \mathbf B^{\can}$, $\overline{\psi(b)}^*=\psi(b)$, hence 
$\bar b^*=\sgn(b) b$. Moreover, $b^*\in \mathbf B^{\can}$ by~\cite{L1}*{\S14.4}. It remains to apply~\eqref{eq:prop-symm-form-bar}.
\end{proof}

\begin{proposition}\label{prop:basis of UZZ}
The set 
$\{ q^{-\frac12\ul\gamma(\deg b_\pm)} b_\pm\,:\, b_\pm\in \mathbf B_{\nn_\pm}\}$ is a $\ZZ[q,q^{-1}]$-basis of~$U_\ZZ^\pm$.
\end{proposition}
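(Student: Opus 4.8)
The plan is to realise $U_\ZZ^-$ as the $\ZZ[q,q^{-1}]$-lattice dual to $\ZZU^+$ with respect to the pairing $\lr{\cdot}{\cdot}$, to recognise the dual basis of $(\mathbf B^{\can})^t$ sitting inside it, and finally to check that this dual basis is precisely $\{q^{-\frac12\ul\gamma(\deg b)}\delta_b\}$; the statement for $\mathbf B_{\nn_+}$ will then follow formally. Indeed, first I would reduce to the case of $\mathbf B_{\nn_-}$ and $U_\ZZ^-$: the anti-automorphism ${}^t$ is $\kk$-linear and interchanges the generators $E_i^{\la n\ra}$ and $F_i^{\la n\ra}$, so it restricts to a $\ZZ[q,q^{-1}]$-module isomorphism $U_q^-\to U_q^+$ carrying $\ZZU^-$ onto $\ZZU^+$, and, using the compatibility of ${}^t$ with $\lr{\cdot}{\cdot}$ from Lemma~\ref{lem:A-bar-*-diag-Dd}, carrying $U_\ZZ^-$ onto $U_\ZZ^+$. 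Since ${}^t$ preserves $\Gamma$-degrees, hence the multipliers $q^{-\frac12\ul\gamma(\deg\cdot)}$, and $\mathbf B_{\nn_+}=\mathbf B_{\nn_-}{}^t$ by definition, the $\mathbf B_{\nn_+}$-statement is the image under ${}^t$ of the $\mathbf B_{\nn_-}$-statement.

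Next I would make the dual-lattice description precise. By Lemma~\ref{lem:lusztig-lattice} together with \cite{L1}*{Theorem~14.4.3}, $\mathbf B^{\can}$ is a $\ZZ[q,q^{-1}]$-basis of $\ZZU^-$, and since $\ZZU^-\tensor_{\ZZ[q,q^{-1}]}\kk=U_q^-$ it is also a $\kk$-basis of $U_q^-$; hence $(\mathbf B^{\can})^t$ is a $\ZZ[q,q^{-1}]$-basis of $\ZZU^+$ and a $\kk$-basis of $U_q^+$. The pairing $\lr{\cdot}{\cdot}$ is $\Gamma$-graded and restricts to a perfect pairing between each finite-dimensional component $(U_q^-)_\alpha$ and $(U_q^+)_\alpha$, so there is a $\kk$-basis $\{x_b:b\in\mathbf B^{\can}\}$ of $U_q^-$ dual to $(\mathbf B^{\can})^t$, characterised by $\lr{(b')^t}{x_b}=\delta_{b,b'}$, with each $x_b$ $\Gamma$-homogeneous of degree $\deg b$. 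As $(\mathbf B^{\can})^t$ spans $\ZZU^+$ over $\ZZ[q,q^{-1}]$, the definition of $U_\ZZ^-$ gives $U_\ZZ^-=\bigoplus_{b\in\mathbf B^{\can}}\ZZ[q,q^{-1}]\,x_b$.

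Then I would identify $x_b$ with $q^{-\frac12\ul\gamma(\deg b)}\delta_b$. Unwinding the definition of the form $\fgfrm{\cdot}{\cdot}$ from \S\ref{subs:A-Dd-diag} — it is obtained from the braided Hopf pairing $\lr{\cdot}{\cdot}$ of \S\ref{subs:A-pair} by transporting through ${}^t$ and twisting by the multiplier $\gamma(\alpha)=q^{\ul\gamma(\alpha)}$ — one should obtain, for $\Gamma$-homogeneous $x,y\in U_q^-$, an identity of the form $\fgfrm{x}{y}=q^{-\frac12\ul\gamma(\deg x)}\lr{y^t}{x}$ (both sides vanishing unless $\deg x=\deg y$). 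Applying this with $x=\delta_b$ (homogeneous of degree $\deg b$) and $y=b'\in\mathbf B^{\can}$ and using $\fgfrm{\delta_b}{b'}=\delta_{b,b'}$ yields $\lr{(b')^t}{q^{-\frac12\ul\gamma(\deg b)}\delta_b}=\delta_{b,b'}$ for all $b'$, so $q^{-\frac12\ul\gamma(\deg b)}\delta_b=x_b$. Hence $\{q^{-\frac12\ul\gamma(\deg b_-)}b_-:b_-\in\mathbf B_{\nn_-}\}=\{x_b:b\in\mathbf B^{\can}\}$ is a $\ZZ[q,q^{-1}]$-basis of $U_\ZZ^-$, and the reduction step finishes the proof.

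The only non-formal point — and the step I expect to be the main obstacle — is this last one: checking that $\fgfrm{\cdot}{\cdot}$ and $\lr{\cdot}{\cdot}$ really differ only by ${}^t$ and the scalar $q^{-\frac12\ul\gamma(\deg\cdot)}$, that is, that the $\psi$-rescalings built into the definitions of $\ZZU^\pm$ and of $\mathbf B^{\can}$, together with the respective normalizations of the two pairings, combine to leave exactly this power of $q$. This is a bookkeeping computation with the definitions in \S\S\ref{subs:A-pair},\ref{subs:A-Dd-diag} and with \eqref{eq:kappa}; once it is settled, everything else above is the formal duality-of-lattices argument.
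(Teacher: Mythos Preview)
Your approach is essentially the paper's: both arguments hinge on recognising the rescaled dual canonical basis as the $\ZZ[q,q^{-1}]$-dual of (the ${}^t$-image of) $\mathbf B^{\can}$ with respect to $\lr{\cdot}{\cdot}$, using the explicit relation between $\fgfrm{\cdot}{\cdot}$ and $\lr{\cdot}{\cdot}$. The paper carries this out directly for $U_\ZZ^+$ rather than reducing via ${}^t$, but the content is the same.

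One correction to your bookkeeping step, which you rightly flagged as the crux: the definition in \S\ref{subs:A-Dd-diag} is $\fgfrm{f}{g}=(f,\xi(g^*))$ with $\xi={}^t$, so unwinding gives
\[
\fgfrm{x}{y}=q^{-\frac12\ul\gamma(\deg y)}\lr{y^{*t}}{x}=q^{-\frac12\ul\gamma(\deg x)}\lr{x^{*t}}{y},
\]
not $\lr{y^t}{x}$ --- the transport is through ${}^{*t}$, not ${}^t$ alone (the second equality is the symmetry of $\fgfrm{\cdot}{\cdot}$). Consequently $q^{-\frac12\ul\gamma(\deg b)}\delta_b$ is dual to $(b^*)^t$ rather than to $b^t$. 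This does not affect your conclusion, since ${}^*$ permutes $\mathbf B^{\can}$ and preserves degrees, so the \emph{set} $\{q^{-\frac12\ul\gamma(\deg b)}\delta_b:b\in\mathbf B^{\can}\}$ is still exactly the dual basis of $(\mathbf B^{\can})^t$; but your stated identity needs the extra~${}^*$. The paper's proof uses precisely this corrected relation, written as $q^{-\frac12\ul\gamma(\deg b)}\lr{\delta_b{}^{*t}}{b'}=\fgfrm{\delta_b}{b'}=\delta_{b,b'}$.
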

\begin{proof}
It suffices to prove that 
$\{ q^{-\frac12\ul\gamma(\deg b)} \delta_{b}{}^{*t}\,:\, b\in\mathbf B^{\can}\}$ generates 
$U_\ZZ^+$ as a $\ZZ[q,q^{-1}]$-module. Let $b,b'\in\mathbf B^{\can}$. 
Then 
$$
q^{-\frac12\ul\gamma(\deg b)}\lr{\delta_{b}{}^{*t}}{b'}=\fgfrm{\delta_b}{b'}=\delta_{b,b'}.
$$
Therefore, $q^{-\frac12\ul\gamma(\deg b)}\delta_{b}^{*t}\in U_\ZZ^+$. 
Let $x\in U_\ZZ^+$ and write $x=\sum_{b\in \mathbf B^{\can}} q^{-\frac12\ul\gamma(\deg b)}c_b 
\delta_{b}{}^{t*}$, $c_b\in \kk$. Then for all $b\in\mathbf B^{\can}$,
$$
\lr{x}{b}=\sum_{b'\in\mathbf B^{\can}} q^{-\frac12\ul\gamma(\deg b')} c_{b'}\lr{\delta_{b'}{}^{t*}}{b}=c_{b}.
$$
Thus, $c_b\in \ZZ[q,q^{-1}]$ for all~$b\in \mathbf B^{\can}$.
\end{proof}
Then Propositions~\ref{prop:UZZ-bas}, \ref{prop:basis of UZZ} and~\eqref{eq:kappa} imply the following.
\begin{corollary}\label{cor:str-const}
The structure constants $\tilde C_{b_\pm b'_\pm }^{b''_\pm }$, $\tilde C^{b_\pm b'_\pm }_{b''_\pm }$, $b_\pm ,b'_\pm ,b''_\pm \in\mathbf B_{\nn_\pm }$ defined by
$$
b_\pm b'_\pm =q^{-\frac12 \deg b_\pm \cdot \deg b'_\pm }\sum_{b''_\pm \in\mathbf B_{\nn_\pm }} \tilde C_{b_\pm b'_\pm }^{b''_\pm } b''_\pm ,\qquad 
\ul\Delta(b_\pm )=\sum_{b'_\pm ,b''_\pm \in\mathbf B_{\nn_\pm }}
q^{\frac12 \deg b'_\pm \cdot\deg b''_\pm }\tilde C^{b'_\pm b''_\pm }_{b_\pm } b'_\pm \tensor b''_\pm 
$$
belong to~$\mathbb Z[q,q^{\pm 1}]$.
\end{corollary}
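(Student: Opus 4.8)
The plan is to reduce the statement to the integrality of the $\ZZ[q,q^{-1}]$-lattice $U_\ZZ^\pm$ established in Propositions~\ref{prop:UZZ-bas} and~\ref{prop:basis of UZZ}, the only additional input being the cocycle identity~\eqref{eq:kappa} for~$\ul\gamma$. For $b_\pm\in\mathbf B_{\nn_\pm}$ abbreviate $\hat b_\pm:=q^{-\frac12\ul\gamma(\deg b_\pm)}b_\pm$. By Proposition~\ref{prop:basis of UZZ} the $\hat b_\pm$ form a $\ZZ[q,q^{-1}]$-basis of~$U_\ZZ^\pm$, which by Proposition~\ref{prop:UZZ-bas} is a $\ZZ[q,q^{-1}]$-subalgebra of~$U_q^\pm$ satisfying $\ul\Delta(U_\ZZ^\pm)\subset U_\ZZ^\pm\tensor_{\ZZ[q,q^{-1}]}U_\ZZ^\pm$. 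Since the $\hat b_\pm$ are $\Gamma$-homogeneous with $\deg\hat b_\pm=\deg b_\pm$, all the expansions below automatically respect the $\Gamma$-grading.

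First I would handle the product. Write $\hat b_\pm\hat b'_\pm=\sum_{b''_\pm\in\mathbf B_{\nn_\pm}}c^{b''_\pm}_{b_\pm b'_\pm}\hat b''_\pm$ with $c^{b''_\pm}_{b_\pm b'_\pm}\in\ZZ[q,q^{-1}]$; by homogeneity $c^{b''_\pm}_{b_\pm b'_\pm}=0$ unless $\deg b''_\pm=\deg b_\pm+\deg b'_\pm$. Substituting $b_\pm=q^{\frac12\ul\gamma(\deg b_\pm)}\hat b_\pm$, $b'_\pm=q^{\frac12\ul\gamma(\deg b'_\pm)}\hat b'_\pm$ and, on the surviving terms, $\hat b''_\pm=q^{-\frac12\ul\gamma(\deg b_\pm+\deg b'_\pm)}b''_\pm$, one obtains $b_\pm b'_\pm=q^{\frac12\ul\gamma(\deg b_\pm)+\frac12\ul\gamma(\deg b'_\pm)-\frac12\ul\gamma(\deg b_\pm+\deg b'_\pm)}\sum_{b''_\pm}c^{b''_\pm}_{b_\pm b'_\pm}b''_\pm$, and by~\eqref{eq:kappa} the exponent of~$q$ collapses to $-\frac12\deg b_\pm\cdot\deg b'_\pm$. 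Comparing with the defining formula for $\tilde C_{b_\pm b'_\pm}^{b''_\pm}$ gives $\tilde C_{b_\pm b'_\pm}^{b''_\pm}=c^{b''_\pm}_{b_\pm b'_\pm}\in\ZZ[q,q^{-1}]$.

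The coproduct case is identical. Write $\ul\Delta(\hat b_\pm)=\sum_{b'_\pm,b''_\pm}d^{b'_\pm b''_\pm}_{b_\pm}\hat b'_\pm\tensor\hat b''_\pm$ with $d^{b'_\pm b''_\pm}_{b_\pm}\in\ZZ[q,q^{-1}]$ and only terms with $\deg b'_\pm+\deg b''_\pm=\deg b_\pm$ surviving. Then $\ul\Delta(b_\pm)=q^{\frac12\ul\gamma(\deg b_\pm)}\ul\Delta(\hat b_\pm)$ while $\hat b'_\pm\tensor\hat b''_\pm=q^{-\frac12\ul\gamma(\deg b'_\pm)-\frac12\ul\gamma(\deg b''_\pm)}b'_\pm\tensor b''_\pm$, so again by~\eqref{eq:kappa} the net power of~$q$ becomes $q^{\frac12\deg b'_\pm\cdot\deg b''_\pm}$, exactly the normalization appearing in the statement, whence $\tilde C^{b'_\pm b''_\pm}_{b_\pm}=d^{b'_\pm b''_\pm}_{b_\pm}\in\ZZ[q,q^{-1}]$.

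There is no genuine obstacle here: the entire substance sits in Propositions~\ref{prop:UZZ-bas} and~\ref{prop:basis of UZZ}, and the only point requiring attention is the reconciliation of the half-integral exponents $\frac12\ul\gamma(\cdot)$ with the bilinear terms $\frac12\deg b_\pm\cdot\deg b'_\pm$, which is precisely what the quadratic identity~\eqref{eq:kappa} accomplishes; in particular no half-powers of~$q$ survive, which is why the constants land in $\ZZ[q,q^{-1}]$ rather than merely in $\ZZ[q^{\frac12},q^{-\frac12}]$.
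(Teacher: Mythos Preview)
Your proof is correct and follows exactly the approach the paper indicates: the corollary is stated as an immediate consequence of Propositions~\ref{prop:UZZ-bas} and~\ref{prop:basis of UZZ} together with the cocycle identity~\eqref{eq:kappa}, and your argument unpacks precisely that implication. The only content is the bookkeeping of the half-integral $\ul\gamma$-exponents, which you handle correctly.
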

It follows immediately from the above Corollary that for any $b_\pm\in\mathbf B_{\nn_\pm}$
\begin{equation}\label{eq:triple-coprod}
\ul\Delta(b_\pm )=\sum_{b'_\pm ,b''_\pm ,b'''_\pm \in\mathbf B_{\nn_\pm }} q^{\frac12 (\deg b'_\pm \cdot\deg b''_\pm  + \deg b'_\pm \cdot\deg b'''_\pm  +\deg b''_\pm \cdot\deg b'''_\pm )}
\tilde C^{b'_\pm ,b''_\pm ,b'''_\pm }_{b_\pm } b'_\pm \tensor b''_\pm \tensor b'''_\pm ,
\end{equation}
where~$\tilde C^{b'_\pm ,b''_\pm ,b'''_\pm }_{b_\pm }=\sum_{\check b_\pm \in\mathbf B_{\nn_\pm }} \tilde C^{\check b_\pm b'''_\pm }_{b_\pm }\tilde C^{b'_\pm b''_\pm }_{\check b_\pm }\in\ZZ[q,q^{\pm 1}]$.
\begin{remark}\label{rem:str-const-lus}
It is easy to check that for any $b,b',b''\in \mathbf B^{\can}$ we have 
$$
bb'=\sum_{b''\in\mathbf B^{\can}} \tilde C^{\delta_{b}\delta_{b'}}_{\delta_{b''}} b'',\qquad 
\ul\Delta(b'')=\sum_{b,b'\in\mathbf B^{\can}} \tilde C^{\delta_{b''}}_{\delta_b\delta_{b'}}b\tensor b'.
$$
After~\cite{L1}*{\S14.4.14}, these structure constants are Laurent polynomials in~$q$. 
\end{remark}

\begin{proposition}\label{prop:cyclotom}
$\lr{\mathbf B_{\nn_+}}{\mathbf B_{\nn_-}}\subset\lr{U^+_\ZZ}{U^-_\ZZ}\subset\ZZ[q,q^{-1},\Phi_k^{-1}\,:\, k>0]$, 
where $\Phi_k\in\ZZ[q]$ is the $k$th cyclotomic polynomial.
\end{proposition}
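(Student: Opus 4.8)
The plan is to prove the two inclusions separately. The first is essentially formal. By Proposition~\ref{prop:basis of UZZ}, $q^{-\frac12\ul\gamma(\deg b_\pm)}b_\pm\in U_\ZZ^\pm$ for every $b_\pm\in\mathbf B_{\nn_\pm}$. Since $\lr{\cdot}{\cdot}$ respects the $\Gamma$-grading, $\lr{b_+}{b_-}=0$ unless $\deg b_+=\deg b_-$, and in that case $\tfrac12\ul\gamma(\deg b_+)+\tfrac12\ul\gamma(\deg b_-)=\ul\gamma(\deg b_+)\in\ZZ$ by~\eqref{eq:kappa}, so
$$\lr{b_+}{b_-}=q^{\ul\gamma(\deg b_+)}\lr{q^{-\frac12\ul\gamma(\deg b_+)}b_+}{q^{-\frac12\ul\gamma(\deg b_-)}b_-}.$$
As $\lr{U_\ZZ^+}{U_\ZZ^-}$ is a $\ZZ[q,q^{-1}]$-submodule of $\kk$ and $q^{\ul\gamma(\deg b_+)}\in\ZZ[q,q^{-1}]$, this shows $\lr{\mathbf B_{\nn_+}}{\mathbf B_{\nn_-}}\subset\lr{U_\ZZ^+}{U_\ZZ^-}$.

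For the second inclusion I would first reduce to the statement that $U_\ZZ^\pm$ and $\ZZU^\pm$ coincide after adjoining the inverses of all cyclotomic polynomials. Write $\widetilde R:=\ZZ[q,q^{-1},\Phi_k^{-1}:k>0]$; the claim is $\widetilde R\cdot U_\ZZ^\pm=\widetilde R\cdot\ZZU^\pm$ inside $U_q^\pm$. Granting this, for $x\in U_\ZZ^+$ and $y\in U_\ZZ^-$ write $y=\sum_j r_j z_j$ with $r_j\in\widetilde R$ and $z_j\in\ZZU^-$; then $\lr{x}{y}=\sum_j r_j\lr{x}{z_j}\in\widetilde R$, since $\lr{x}{\ZZU^-}\subset\ZZ[q,q^{-1}]$ by the very definition of $U_\ZZ^+$. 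Now $U_\ZZ^-$ is the $\ZZ[q,q^{-1}]$-lattice dual to $\ZZU^+$ with respect to $\lr{\cdot}{\cdot}$, which is perfect on each finite-dimensional $\widehat\Gamma$-graded component; fixing a degree and $\ZZ[q,q^{-1}]$-bases of $\ZZU^+$ and $\ZZU^-$ in it, the inclusion $U_\ZZ^-\subset\widetilde R\cdot\ZZU^-$ (and its counterpart) is equivalent to the Gram matrix $G$ of $\lr{\cdot}{\cdot}$ in those bases having inverse with entries in $\widetilde R$, i.e.\ to $G$ having entries in $\widetilde R$ and $\det G$ being a unit of $\widetilde R$.

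That $G$ has entries in $\widetilde R$ follows from the compatibility of $\lr{\cdot}{\cdot}$ with products and coproducts together with the $\ul\Delta$-stability of $\ZZU^\pm$ (Lemma~\ref{lem:lusztig-lattice}): this reduces the evaluation of $\lr{\ZZU^+}{\ZZU^-}$ to the generators, and a rank-one computation gives $\lr{E_i^{\la n\ra}}{F_j^{\la m\ra}}$ equal to $\delta_{ij}\delta_{nm}$ times a power of $q$ divided by $\la n\ra_{q_i}!=\prod_{s=1}^n(q^{d_is}-q^{-d_is})$, each factor being a power of $q$ times a product of cyclotomic polynomials; hence $\lr{\ZZU^+}{\ZZU^-}\subset\widetilde R$. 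The remaining and principal point is that $\det G$ is a unit of $\widetilde R$, that is, a power of $q$ times a product of cyclotomic polynomials.

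This last step is where I expect the real work to be. I see two routes. One is to use Lusztig's integral PBW bases of $\ZZU^\pm$ in divided root powers together with the orthogonality of $\lr{\cdot}{\cdot}$ on PBW monomials: then $G$ is, up to a permutation, diagonal with diagonal entries $\prod_{\beta>0}\lr{E_\beta^{(m_\beta)}}{F_\beta^{(m_\beta)}}$, and each factor is, after reducing the real root $\beta$ to a simple one, a ratio of products of cyclotomic polynomials and powers of $q$, hence a unit of $\widetilde R$. The other route is a specialization argument in the spirit of Kronecker's theorem: $\det G$ lies in $\widetilde R$ by the previous paragraph, and it cannot vanish under any specialization $q\mapsto t\in\CC^\times$ with $t$ not a root of unity, since for such $t$ the divided powers are invertible scalar multiples of ordinary powers, so $\ZZU^\pm$ specializes onto the generic algebras $U_t^\pm$ on which the pairing is still perfect; therefore the part of $\det G$ not accounted for by powers of $q$ and cyclotomic factors has no zeros and must be constant.
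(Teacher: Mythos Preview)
Your overall strategy matches the paper's: reduce the second inclusion to showing that the Gram matrix $G_\alpha$ of $\lr{\cdot}{\cdot}$ on $\ZZ[q,q^{-1}]$-bases of ${}_\ZZ U^\pm_\alpha$ has inverse with entries in $\widetilde R$, which you correctly identify as equivalent to $G_\alpha$ having entries in $\widetilde R$ together with $\det G_\alpha\in\widetilde R^\times$. Your argument that $G_\alpha$ has entries in $\widetilde R$, and your proof of the first inclusion, are both correct and essentially those of the paper.

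Both of your routes for $\det G_\alpha\in\widetilde R^\times$, however, have genuine gaps. Route~1 (PBW orthogonality) needs a PBW-type basis of \emph{all} of $U_q^\pm$, which exists only when $\gg$ is semisimple; the proposition is stated for arbitrary symmetrizable Kac-Moody $\gg$, so this route is unavailable in general (it is, in effect, how the paper later proves the sharper Theorem~\ref{thm:g-ss-denom} in the semisimple case). Route~2 (specialization) gives only $\det G_\alpha=c\cdot q^n\prod_k\Phi_k^{a_k}$ with $c$ a nonzero integer, and you still must show $c=\pm1$: a nonzero integer $c\neq\pm1$ is not a unit of $\widetilde R=\ZZ[q,q^{-1},\Phi_k^{-1}]$, and nothing in your argument controls $c$. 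The paper closes precisely this gap by invoking Lusztig's almost-orthonormality of the canonical basis (\cite{L1}*{Theorem~14.2.3}): choosing for $\mathbf B^-_\alpha$ a suitable $q^\ZZ$-rescaling $\{\tilde b\}$ of the canonical basis one has $\lr{\tilde b^{*t}}{\tilde b'}\in\delta_{b,b'}+q^{-1}\ZZ[[q^{-1}]]$, whence $\det G_\alpha\in 1+q^{-1}\ZZ[[q^{-1}]]$ and therefore $c=1$.
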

\begin{proof}
Indeed, it is immediate from properties of~$\lr{\cdot}{\cdot}$ that 
$\lr{ E_{i_1}^{a_1}\cdots E_{i_r}^{a_r}}{F_{j_1}^{b_1}\cdots F_{j_s}^{b_s}}\in\ZZ[q,q^{-1}]$ for any $(i_1,\dots,i_r)\in I^r$, 
$(j_1,\dots,j_s)\in I^s$ and for any $\mathbf a=(a_1,\dots,a_r)\in\ZZ_{\ge0}^r$, $\mathbf b=(b_1,\dots,b_s)\in\ZZ_{\ge0}^s$.
Therefore, 
$$
\lr{ E_{i_1}^{\la a_1\ra}\cdots E_{i_r}^{\la a_r\ra}}{F_{j_1}^{\la b_1\ra}\cdots F_{j_s}^{\la b_s\ra}}\in 
R':=\ZZ[q,q^{-1},\Phi_k^{-1},\:\, k>0].
$$
This implies that $\lr{{}_\ZZ U^+}{{}_\ZZ U^-}\subset R'$.
We need the following, apparently well-known result.
\begin{lemma}\label{lem:Gram-det}
Let $\alpha\in\Gamma$. Let $\mathbf B^-_\alpha$ be any basis of ${}_\ZZ U^-_\alpha=\{ u\in {}_\ZZ U^-\,:\,\deg u=\alpha\}$ and 
set $G_\alpha=( \lr{b^{*t}}{b'})_{b,b'\in\mathbf B^-_\alpha}$ be the corresponding Gram matrix. Then $\det G_\alpha=q^n \prod_{k} \Phi_k(q)^{a_k}$ where $a_k\in\ZZ$
and $n\in\ZZ$.
\end{lemma}
\begin{proof}
It well-known (\cite{L1}) that the specialization of the form $\lr{\cdot}{\cdot}$ for any $q=\zeta$, where~$\zeta\in \CC^\times$ is not a root of unity, is well defined and 
non-degenerate. Thus, $\det G_\alpha$ is a rational function of~$q$ whose zeroes and poles are roots of unity and zero. This implies
$\det G_\alpha=c q^n \prod_{k} \Phi_k(q)^{a_k}$ where $c\in\mathbb Q$ and $n,a_k\in\ZZ$. It remains to prove that 
$c=1$. To prove this claim, note that by \cite{L1}*{Theorem~14.2.3} and properties of~$\lr{\cdot}{\cdot}$, 
for any $b\in\mathbf B^{\can}$, there exists $\tilde b\in q^\ZZ b$ such that for all $b,b'\in\mathbf B^{\can}$,
$\lr{\tilde b{}^{*t}}{\tilde b'}\in \delta_{b,b'}+q^{-1}\ZZ[[q^{-1}]]$. This in turn implies that 
for $\mathbf B^-_\alpha=\{ \tilde b\,:\, b\in \mathbf B^{\can},\,\deg b=\alpha\}$, $\det G_\alpha\in 1+q^{-1}\ZZ[[q^{-1}]]$.
Since~$\det G_\alpha$ is, up to a power of~$q$, independent 
of the choice of basis $\mathbf B^-_\alpha$, it follows that~$c=1$.
\end{proof}
Now, let $\mathbf B_{+,\alpha}$ be any basis of~$(U^+_\ZZ)_\alpha=\{ u\in U^+_\ZZ\,:\, \deg u=\alpha\}$ and
let $\mathbf B^-_\alpha$ be the dual basis of~$\mathbf B_{+,\alpha}$ with respect 
to $\lr{\cdot}{\cdot}$. Then the Gram matrix $G^\vee_\alpha=( \lr{b'_+}{b_+^{*t}})_{b_+,b'_+\in\mathbf B_{+,\alpha}}$ satisfies 
$G^\vee_\alpha=G_\alpha^{-1}$ over~$\mathbb Q(q)$. As $\lr{{}_\ZZ U^+}{{}_\ZZ U^-}\subset R'$, all entries of~$G_\alpha$ are in~$R'$,
while $(\det G_\alpha)^{-1}\in R'$ by Lemma~\ref{lem:Gram-det}. Therefore,
all entries of~$G^\vee_\alpha$ are in~$R'$. 

To prove the second inclusion note that by Proposition~\ref{prop:basis of UZZ}, we have for all~$b_\pm\in\mathbf B_{\nn_\pm}$
$$
\lr{q^{-\frac12 \ul\gamma(\deg b_+)}b_+}{q^{-\frac12 \ul\gamma(\deg b_-)}b_-}=q^{-\frac12 (\ul\gamma(\deg b_+)+\ul\gamma(\deg b_-))} \lr{b_+}{b_-}\in R'
$$
since $\lr{b_+}{b_-}\not=0$ implies that $\deg b_+=\deg b_-$.
\end{proof}
For $\gg$ semisimple we can strengthen Proposition~\ref{prop:cyclotom} as follows
\begin{theorem}\label{thm:g-ss-denom}
If $\gg$ is semisimple then $\lr{\mathbf B_{\nn_+}}{\mathbf B_{\nn_-}}\subset\lr{U^+_\ZZ}{U^-_\ZZ}=\ZZ[q,q^{-1}]$.
\label{thm:pairing dcb}
\end{theorem}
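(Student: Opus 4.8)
The plan is to deduce the theorem from Proposition~\ref{prop:cyclotom} by excluding cyclotomic factors from the denominators in the semisimple case. Proposition~\ref{prop:cyclotom} already gives the inclusion $\lr{\mathbf B_{\nn_+}}{\mathbf B_{\nn_-}}\subset\lr{U_\ZZ^+}{U_\ZZ^-}$ and the bound $\lr{U_\ZZ^+}{U_\ZZ^-}\subset\ZZ[q,q^{-1},\Phi_k^{-1}:k>0]$, while $\ZZ[q,q^{-1}]\subset\lr{U_\ZZ^+}{U_\ZZ^-}$ since $1\in U_\ZZ^\pm$ and $\lr11=1$; so the whole content is the inclusion $\lr{U_\ZZ^+}{U_\ZZ^-}\subset\ZZ[q,q^{-1}]$. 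By Proposition~\ref{prop:basis of UZZ} the lattices $U_\ZZ^\pm$ are degreewise $q^{\frac12}$-power rescalings of the $\ZZ[q,q^{-1}]$-spans of $\mathbf B_{\nn_\pm}$, and in a pairing $\lr{b_+}{b_-}$ with $b_\pm\in\mathbf B_{\nn_\pm}$ the relevant rescaling factor is an \emph{integral} power of $q$ (since $\lr{b_+}{b_-}\ne0$ forces $\deg b_+=\deg b_-$, so the exponents match up), hence a unit; therefore $\lr{U_\ZZ^+}{U_\ZZ^-}\subset\ZZ[q,q^{-1}]$ is equivalent to $\lr{\mathbf B_{\nn_+}}{\mathbf B_{\nn_-}}\subset\ZZ[q,q^{-1}]$. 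Finally, as $\lr{E_{i_1}^{a_1}\cdots E_{i_r}^{a_r}}{F_{j_1}^{b_1}\cdots F_{j_s}^{b_s}}\in\ZZ[q,q^{-1}]$ (as noted in the proof of Proposition~\ref{prop:cyclotom}), it suffices to show that $\mathbf B_{\nn_\pm}$ is contained in the De Concini--Kac integral form $\mathcal A^\pm$, i.e.\ in the $\ZZ[q,q^{-1}]$-subalgebra of $U_q^+$ (respectively $U_q^-$) generated by the $E_i$ (respectively the $F_i$).

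To prove $\mathbf B_{\nn_\pm}\subset\mathcal A^\pm$ I would use PBW bases. Fix a reduced expression of the longest element $\wnot\in W$ and let $E_{\beta_1},\dots,E_{\beta_N}$, $F_{\beta_1},\dots,F_{\beta_N}$ be the associated root vectors. The divided PBW monomials $F_{\beta_1}^{\la n_1\ra}\cdots F_{\beta_N}^{\la n_N\ra}$ then form a $\ZZ[q,q^{-1}]$-basis of $\ZZU^-$ (Lusztig), the non-divided ones $F_{\beta_1}^{n_1}\cdots F_{\beta_N}^{n_N}$ form a $\ZZ[q,q^{-1}]$-basis of $\mathcal A^-$ (De Concini--Kac), and $\mathcal A^-\subset\ZZU^-$; likewise in $U_q^+$. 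By Definition~\ref{defn:dcb} and Proposition~\ref{prop:basis of UZZ} the $\ZZ[q,q^{-1}]$-span of $\mathbf B_{\nn_-}$ is, up to a $q^{\frac12}$-power rescaling, the $\lr{\cdot}{\cdot}$-dual lattice of $\ZZU^+$ inside $U_q^-$; I therefore want the matrix of $\lr{\cdot}{\cdot}$ pairing the divided PBW basis of $\ZZU^+$ against the non-divided PBW basis of $\mathcal A^-$ to be invertible over $\ZZ[q,q^{-1}]$. By the Levendorskii--Soibelman commutation rules this matrix is triangular for the natural partial order on exponent vectors, with diagonal entries $\prod_k\lr{E_{\beta_k}^{\la n_k\ra}}{F_{\beta_k}^{n_k}}$.

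The decisive point is the single root-vector computation. Restricting $\lr{\cdot}{\cdot}$ to the rank-one subalgebra attached to a root $\beta$, which the braid operators identify, after rescaling, with $U_{q_\beta}(\lie{sl}_2)$, and iterating the braided Hopf-pairing identity $\lr{xy}{z}=\lr{x\tensor y}{\ul\Delta z}$ on $\lr{E_\beta^n}{F_\beta^n}$, one finds $\lr{E_\beta^n}{F_\beta^n}=\pm q_\beta^{\binom n2}\la n\ra_{q_\beta}!$, whence $\lr{E_\beta^{\la n\ra}}{F_\beta^n}=\pm q_\beta^{\binom n2}$ is a unit of $\ZZ[q,q^{-1}]$. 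Consequently the pairing matrix above is triangular with unit diagonal and with all entries in $\ZZ[q,q^{-1}]$, hence invertible over $\ZZ[q,q^{-1}]$; this yields $\mathbf B_{\nn_-}\subset\mathcal A^-$ and, symmetrically, $\mathbf B_{\nn_+}\subset\mathcal A^+$, and the theorem follows via the reduction of the first paragraph. The main obstacle is exactly this middle step: one must establish the Levendorskii--Soibelman triangularity for the \emph{mixed} (divided against non-divided) PBW pairing and control all the scalars produced by the braid operators and by the passage between divided and non-divided root vectors, in order to guarantee that the off-diagonal entries really are integral and the diagonal entries really are units. Alternatively, one may simply invoke the classical duality between the Lusztig and De Concini--Kac integral forms in finite type.
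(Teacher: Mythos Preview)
Your approach and the paper's are built on the same core computation: pair PBW monomials attached to a reduced word for $w_\circ$ and show the diagonal entries $\lr{E_\beta^{\la n\ra}}{F_\beta^n}$ are units in $\ZZ[q,q^{-1}]$. The paper's execution is shorter and avoids your detour through the De~Concini--Kac form and the inclusion $\mathbf B_{\nn_\pm}\subset\mathcal A^\pm$: it shows directly (Lemma~\ref{lem:PBW-bas}) that the rescaled non-divided PBW monomials $\mu_{\mathbf i}(\mathbf a)E_{\mathbf i}^{\mathbf a}$ (respectively $\mu_{\mathbf i}(\mathbf a)F_{\mathbf i}^{\mathbf a}$) form a $\ZZ[q,q^{-1}]$-basis of $U_\ZZ^+$ (respectively $U_\ZZ^-$), because they are exactly dual to the divided PBW basis of ${}_\ZZ U^-$ (respectively ${}_\ZZ U^+$); and then checks that these two bases of $U_\ZZ^\pm$ pair into $\ZZ[q,q^{-1}]$. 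This \emph{is} the Lusztig/De~Concini--Kac duality you allude to at the end, proved here in a self-contained way; there is no need to locate $\mathbf B_{\nn_\pm}$ inside $\mathcal A^\pm$ separately.

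The one genuine correction: the Levendorski\u\i--Soibelman commutation rules are the wrong tool for the ``triangularity'' of the pairing matrix. Those rules govern the \emph{multiplication} of root vectors, not their pairing. What you actually need is Lusztig's orthogonality result \cite{L1}*{Proposition~38.2.3}, which says the PBW bases on the two sides are \emph{orthogonal} (not merely triangular) with respect to $\lr\cdot\cdot$, namely
\[
\lr{\check E_{\mathbf i}^{\mathbf a'}}{\check F_{\mathbf i}^{\mathbf a}}=\delta_{\mathbf a,\mathbf a'}\,q^{\sum_r a_r\eta(w_{r-1}\alpha_{i_r})}\prod_{r}\lr{E_{i_r}^{\la a_r\ra}}{F_{i_r}^{\la a_r\ra}}.
\]
So the matrix you wrote down is in fact diagonal, and the single-root computation $\lr{E_i^r}{F_i^r}=q_i^{\binom r2}\la r\ra_{q_i}!$ of~\eqref{eq:par Ei Fi} gives unit diagonal for the mixed (divided against non-divided) pairing. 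Once you swap LS out for this reference, your argument collapses to the paper's.
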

We prove this Theorem in Section~\ref{sec:braid}. We expect that the converse is also true: if $\lr{U^+_\ZZ}{U^-_\ZZ}\subset \ZZ[q,q^{-1}]$
then $\lie g$ is semisimple (see Lemma~\ref{lem:partial converse} and Example~\ref{ex:partial converse}).
\begin{remark}
We can conjecture that $\lr{U^+_\ZZ(w)}{U^-_\ZZ}\subset \ZZ[q,q^{-1}]$ where $w\in W$ and $U^+_\ZZ(w)$ is the corresponding Schubert cell.
\end{remark}

\subsection{Proofs of Theorems~\ref{thm:circle}, \ref{thm:bullet} and~\ref{thm:transp-star}}\label{subs:proofs of circle and bullet}
First, we need a stronger version of Proposition~\ref{prop:multipliers}.
\begin{proposition}\label{prop:square-roots}
Let $\mathbf d:\mathbf B_{\nn_-}\times\mathbf B_{\nn_+}\to \ZZ[\nu+\nu^{-1}]$, $\nu=q^{\frac12}$, be defined as in Proposition~\ref{prop:multipliers}.
Then for all $b_\pm\in\mathbf B_{\nn_\pm}$, $\mathbf d_{b_-,b_+}=\prod_{k\ge 3} (q^{-\frac12\varphi(k)}\Phi_k(q))^{a_k}$, $a_k\in\mathbb Z_{\ge 0}$,
and, in particular, is monic. Moreover, in $U_q(\tilde\gg)$ we have 
$$
\mathbf d_{b_-,b_+}(b_+ b_- - b_- b_+)\in\sum_{\substack{(\alpha_-,\alpha_+)\in\Gamma\oplus\Gamma\setminus\{(0,0)\}\\b'_\pm\in\mathbf B_{\pm}}}
\ZZ[q,q^{-1}]\mathbf d_{b'_-,b'_+} K_{\alpha_-,\alpha_+}\invprod b'_-b'_+.
$$
\end{proposition}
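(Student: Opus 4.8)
The plan is to revisit the proof of Proposition~\ref{prop:multipliers} in the special case $\mathcal U_{\chi,1,1}(V^-,V^+)=U_q(\tilde\gg)$ (Proposition~\ref{prop:Dr-double-pres}), carrying two refinements through the induction on $(\widehat\Gamma,\prec)$ that defines $(b_-,b_+)\mapsto\mathbf d_{b_-,b_+}$: that $\mathbf d_{b_-,b_+}\in\ZZ[q+q^{-1}]$ is a monic product of the $q^{-\frac12\varphi(k)}\Phi_k(q)$ with $k\ge 3$, and that the resulting relation can be taken with coefficients in $\ZZ[q,q^{-1}]$. Throughout we use formulas~\eqref{eq:prod-tmp}--\eqref{eq:F-sum-tmp}, together with the facts that for $U_q(\tilde\gg)$ one has $\chi(\alpha,\alpha')=q^{\alpha\cdot\alpha'}\in q^{\ZZ}$ and that, by~\eqref{eq:triple-coprod} and Corollary~\ref{cor:str-const}, each triple-coproduct structure constant $C^{b',b'',b'''}_{b}$ equals $q^{\frac12(\deg b'\cdot\deg b''+\deg b'\cdot\deg b'''+\deg b''\cdot\deg b''')}$ times an element of $\ZZ[q,q^{-1}]$; recall also $\alpha\cdot\alpha\in 2\ZZ$.

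First I would prove the cyclotomic claim by induction, the base case $b_\pm=1$ being $\mathbf d_{1,1}=1$. Assuming it for all $(b'_-,b'_+)$ with $\deg b'_\pm\prec\deg b_\pm$, every such $\mathbf d_{b'_-,b'_+}$ lies in $\ZZ[q,q^{-1}]$ and is a unit at $q=\pm1$. By Proposition~\ref{prop:cyclotom} every pairing occurring in~\eqref{eq:F-sum-tmp} -- including those $\lra{b'_-}{\ul S^{-1}(b'''_+)}$, since $\ul S^{-1}$ preserves $U_\ZZ^+$ -- lies in $\ZZ[q,q^{-1},\Phi_k^{-1}:k>0]$ up to a power of $q$; hence each element of $\mathcal F_{b_-,b_+}$ has all its poles at roots of unity and reduced denominator a product of cyclotomic polynomials $\Phi_k(q)$, so Lemma~\ref{lem:clear-denom} gives $\mathbf d_{b_-,b_+}=\prod_k\widetilde\Phi_k^{\,m_k}$ with leading constant $1$ (the denominators being monic). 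It remains to show $m_1=m_2=0$, i.e.\ that the coefficients $F^{b_-,b_+}_{b'_-,b'_+,\alpha_-,\alpha_+}$ with $(\alpha_-,\alpha_+)\ne(0,0)$ are regular at $q=1$ and $q=-1$, equivalently that $b_+b_--b_-b_+$ has no pole there. This is the point I expect to be the main obstacle: one must show that the pairing $\lra{\cdot}{\cdot}$ on dual canonical basis elements of positive degree is regular at $q=\pm1$ and vanishes there -- at $q=1$ this is the statement that, on a suitable $\ZZ[q,q^{-1}]_{(q-1)}$-integral form, $\lra{\cdot}{\cdot}$ specializes to the (zero) classical pairing of $U(\nn_\mp)$, and $q=-1$ is handled analogously -- so that every off-diagonal term in~\eqref{eq:prod-tmp} acquires a factor vanishing at $q=\pm1$ and cannot contribute a pole. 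Finally, passing from the $\widetilde\Phi_k(\nu)$ of Lemma~\ref{lem:clear-denom} to the $q^{-\frac12\varphi(k)}\Phi_k(q)$ (using $\Phi_k(\nu^2)=\Phi_k(\nu)\Phi_{2k}(\nu)$ for odd $k$ and $\Phi_k(\nu^2)=\Phi_{2k}(\nu)$ for even $k$) yields the asserted form; in particular $\mathbf d_{b_-,b_+}\in\ZZ[q+q^{-1}]$, closing the induction.

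For the integrality claim, combining~\eqref{eq:prod-tmp} with~\eqref{eq:inv-act-defn} the coefficient of $\mathbf d_{b'_-,b'_+}K_{\alpha_-,\alpha_+}\invprod b'_-b'_+$ in $\mathbf d_{b_-,b_+}(b_+b_--b_-b_+)$ is
$$c=\frac{\mathbf d_{b_-,b_+}}{\mathbf d_{b'_-,b'_+}}\,F^{b_-,b_+}_{b'_-,b'_+,\alpha_-,\alpha_+}\,\chi^{\frac12}\bigl((\alpha_-,\alpha_+),(\deg b'_-,\deg b'_+)\bigr)\in\ZZ[q^{\frac12},q^{-\frac12}],$$
the membership being Proposition~\ref{prop:multipliers}. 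Since by the previous step $\mathbf d_{b_-,b_+}$ and $\mathbf d_{b'_-,b'_+}$ both lie in $\ZZ[q+q^{-1}]$ (hence involve only even powers of $q^{\frac12}$), it suffices to see that the product $F^{b_-,b_+}_{b'_-,b'_+,\alpha_-,\alpha_+}\,\chi^{\frac12}((\alpha_-,\alpha_+),(\deg b'_-,\deg b'_+))$ involves only even powers of $q^{\frac12}$ as well; then $c$ is a Laurent polynomial in $q^{\frac12}$ with only even powers, i.e.\ $c\in\ZZ[q,q^{-1}]$. Now the degree constraint from~\eqref{eq:prod-tmp} is $\deg b_\pm=\deg b'_\pm+\alpha_-+\alpha_+$, and a direct computation from~\eqref{eq:F-sum-tmp}, the shape of the $C^{b',b'',b'''}_{b}$ above, and $\alpha\cdot\alpha\in 2\ZZ$ shows that $F^{b_-,b_+}_{b'_-,b'_+,\alpha_-,\alpha_+}\in q^{\frac12 m}\QQ(q)$ for an integer $m\equiv(\deg b'_-+\deg b'_+)\cdot(\alpha_-+\alpha_+)\pmod 2$, while the definition of $\chi^{\frac12}$ on $\widehat\Gamma$ gives that $\chi^{\frac12}((\alpha_-,\alpha_+),(\deg b'_-,\deg b'_+))$ is $q^{\frac12}$ to a power $\equiv(\alpha_-+\alpha_+)\cdot(\deg b'_-+\deg b'_+)\pmod 2$; the two parities cancel, as required. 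Together with the cyclotomic claim this proves the Proposition; the remaining steps (the reductions to Lemma~\ref{lem:clear-denom} and Proposition~\ref{prop:cyclotom} and the parity bookkeeping) are routine, the sole delicate point being the regularity at $q=\pm1$ flagged above.
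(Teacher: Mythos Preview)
Your strategy coincides with the paper's. The paper carries out the key rewriting more explicitly: using~\eqref{eq:triple-coprod}, Lemma~\ref{lem:frm-deg} and~\eqref{eq:symm-diag-braid} it factors the half-integer power of~$q$ out of~$F$ in closed form, obtaining
\[
F^{b_-,b_+}_{b''_-,b''_+,\alpha_-,\alpha_+}
=\chi^{\frac12}\bigl((\alpha_-,\alpha_+),\deg_{\widehat\Gamma}(b''_-b''_+)\bigr)^{-1}\,\tilde F^{b_-,b_+}_{b''_-,b''_+,\alpha_-,\alpha_+},
\]
where $\tilde F$ is a $\ZZ[q,q^{-1}]$-combination of products of pairings $\lra{\mathbf B_{\nn_-}}{\mathbf B_{\nn_+}}$. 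This replaces your parity bookkeeping for the integrality assertion: since $b_+b_-=\sum\tilde F\,K_{\alpha_-,\alpha_+}\invprod b''_-b''_+$ with $\tilde F\in\QQ(q)$ and (inductively) all $\mathbf d$'s in $\ZZ[q+q^{-1}]$, each coefficient $\mathbf d_{b_-,b_+}\tilde F/\mathbf d_{b''_-,b''_+}$ lies in $\QQ(q)\cap\ZZ[\nu,\nu^{-1}]=\ZZ[q,q^{-1}]$.

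On the point you flag as delicate --- excluding poles at $q=\pm1$ --- the paper simply asserts $\tilde F\in\ZZ[q,q^{-1},\Phi_k^{-1}:k>2]$, invoking Proposition~\ref{prop:cyclotom}. As that proposition is stated it only yields $k>0$, so you have correctly isolated the one step that needs more; your specialization heuristic (the cross-relation~\eqref{eq:commutation g} degenerates at $q=\pm1$) is the right idea, though the paper does not spell it out either. One small correction to your bookkeeping: with $\nu=q^{\frac12}$ the factors $\tilde\Phi_1(\nu)$ and $\tilde\Phi_2(\nu)$ of Lemma~\ref{lem:clear-denom} \emph{both} correspond to $q=1$, while regularity at $q=-1$ rules out $\tilde\Phi_4(\nu)$; once $\tilde F\in\QQ(q)$ is known it is cleaner to run Lemma~\ref{lem:clear-denom} in the variable~$q$ rather than~$\nu$, which is in effect what the paper does.
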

\begin{proof}
By~\eqref{eq:prod-tmp}
\begin{equation}\label{eq:tmp-II}
b_+ b_- = 
\sum_{ b''_\pm\in\mathbf B_\pm,\, \alpha_\pm\in \Gamma\,:\, \deg b''_\pm+\alpha_+ + \alpha_-=\deg b_\pm}
F^{b_-,b_+}_{b''_-,b''_+,\alpha_-,\alpha_+} K_{\alpha_-,\alpha_+} b''_-b''_+
\end{equation}
where by~\eqref{eq:F-sum-tmp}, \eqref{eq:triple-coprod}, Lemma~\ref{lem:frm-deg} and~\eqref{eq:symm-diag-braid}
\begin{align*}
&F^{b_-,b_+}_{b''_-,b''_+,\alpha_-,\alpha_+}
=\sum_{ \substack{b'_\pm,b'''_\pm\in\mathbf B_\pm\\\deg b'''_\pm=\alpha_\pm}} 
\frac{\chi^{\frac12}(b'_+,b''_+)\chi^{\frac12}(b'_+,b'''_+)
\chi^{\frac12}(b'_-,b''_-)}
{\chi^{\frac12}(b''_+,b'''_+)\chi^{\frac12}(b''_-,b'''_-)\chi^{\frac12}(b'_-,b'''_-)
}\tilde C^{b'_+,b''_+,b''_+}_{b_+}\tilde C^{b'_-,b''_-,b''_-}_{b_-}
\lra{b'_-}{\ul S^{-1}(b'''_+)}\lra{b'''_-}{b'_+}\\
&\qquad=\sgn(\alpha_+)q^{-\ul\gamma(\alpha_+)}\,\frac{\chi^{\frac12}(\alpha_-,\deg b''_+)
\chi^{\frac12}(\alpha_+,\deg b''_-)}
{\chi^{\frac12}(\deg b''_+,\alpha_+)\chi^{\frac12}(\deg b''_-,\alpha_-)}
\sum_{ \substack{b'_\pm,b'''_\pm\in\mathbf B_\pm\\\deg b'''_\pm=\alpha_\pm}}
\tilde C^{b'_+,b''_+,b''_+}_{b_+}\tilde C^{b'_-,b''_-,b''_-}_{b_-}
\lra{b'_-}{b'''_+{}^*}\lra{b'''_-}{b'_+}\\
&\qquad=\chi^{\frac12}((\alpha_-,\alpha_+), \deg_{\widehat\Gamma}(b''_-b''_+) )^{-1} \tilde F^{b_-,b_+}_{b''_-,b''_+,\alpha_-,\alpha_+}.
\end{align*}
Since $\tilde C^{b'_\pm,b''_\pm,b'''_\pm}_{b_\pm}\in\ZZ[q,q^{-1}]$, by Proposition~\ref{prop:cyclotom} we have 
$\tilde F^{b_-,b_+}_{b''_-,b''_+,\alpha_-,\alpha_+}\in\ZZ[q,q^{-1},\Phi_k^{-1}\,:\,k>2]$. Thus, 
by Lemma~\ref{lem:clear-denom} we can choose~$\mathbf d_{b_-,b_+}$ to satisfy the first assertion. Since by~\eqref{eq:tmp-II}
\begin{equation*}
b_+b_-= 
\sum_{ b''_\pm\in\mathbf B_\pm,\, \alpha_\pm\in \Gamma\,:\, \deg b''_\pm+\alpha_+ + \alpha_-=\deg b_\pm}
\tilde F^{b_-,b_+}_{b''_-,b''_+,\alpha_-,\alpha_+} K_{\alpha_-,\alpha_+}\invprod b''_-b''_+,
\end{equation*}
the second assertion is now immediate.
\end{proof}
\begin{proof}[Proofs of Theorems~\ref{thm:circle}, \ref{thm:bullet}]
We apply Theorem~\ref{thm:circ diag braiding}
with the data from~\S\ref{subs:notation}: 
\begin{itemize}
\item[-] $\kk=\mathbb Q(\nu)$, $R_0=\ZZ[\nu,\nu^{-1}]$, $\nu=q^{\frac12}$
 \item[-]
$\Gamma=\widehat \Gamma=\bigoplus_{i\in I}\mathbb Z_{\ge 0}(\alpha_{-i}+\alpha_{+i})$;
\item[-]
$K_{0,\alpha_i}=K_{+i}$, $K_{\alpha_i,0}=K_{-i}$, $i\in I$;
\item[-]
$V^+=\bigoplus_{i\in I} \kk E_i$, $V^-=\bigoplus_{i\in I} \kk F_i$;
\item[-]
$\bar\cdot$ is determined by $\bar E_i=E_i$, $\bar F_i=F_i$, $i\in I$;
\item[-]
$\chi(\alpha_i,\alpha_j)=q_i^{a_{ij}}$, 
$\lr{E_i}{F_j}=\delta_{ij}(q_i-q_i^{-1})$, $i,j\in I$.
\end{itemize}
Then $U_q(\tilde\gg)=\mathcal U_\chi(V^-,V^+)$ while $\mathcal H_q^+(\tilde\gg)$ identifies with the subalgebra of~$\mathcal H^+_\chi(V^-,V^+)$
generated by the $K_{+i}$, $E_i$, $F_i$, $i\in I$.
Applying Theorem~\ref{thm:circ diag braiding}\eqref{thm:circ diag braiding.a}, we obtain elements $b_-\circ b_+\in\mathcal H_q^+(\tilde\gg)$ which 
proves~Theorem~\ref{thm:circle}. Theorem~\ref{thm:bullet} then follows from Theorem~\ref{thm:circ diag braiding}\eqref{thm:circ diag braiding.b}.

It remains to prove that all coefficients in the decompositions of invariant bases with respect to the 
initial ones in Theorems~\ref{thm:circle} and~\ref{thm:bullet} are polynomials in $q$ or~$q^{-1}$ and not just in~$q^{\pm\frac12}$. 
But this is immediate from Proposition~\ref{prop:square-roots}.
\end{proof}

\begin{proof}[Proof of Theorem~\ref{thm:transp-star}]
This is immediate from Proposition~\ref{prop:transp} since the anti-involution ${}^t$ of~$U_q(\tilde\gg)$ satisfies 
$\mathbf B_{\nn_\pm}{}^t=\mathbf B_{\nn_\mp}$.
\end{proof}

\subsection{Colored Heisenberg and quantum Weyl algebras and their bases}\label{subs:color-Weyl}
Let $\widehat{\mathcal H}^\beps_q(\gg)$ be the $\kk$-algebra generated by $U_q^\pm$ and $L_i^{\pm 1}$, $i\in I$ where 
$$
L_i E_i=q_i^{\frac12\epsilon_i a_{ij}}E_i L_i,\qquad L_i F_i=q_i^{-\frac12\epsilon_i a_{ij}}F_i L_i,\qquad
[E_i,F_j]=\delta_{ij}\epsilon_i L_i^2(q_i^{-1}-q_i).
$$
Note that $\widehat{\mathcal H}^\beps_q(\gg)$ admits a $\kk$-anti-linear anti-involution $\bar\cdot$ extending $\bar\cdot:U_q^\pm\to U_q^\pm$
and satisfying $\overline{L_i^{\pm1}}=L_i^{\pm 1}$ and an anti-involution ${}^t$ extending the anti-isomorphisms $U_q^\pm\to U_q^\mp$
discussed above and preserving the $L_i^{\pm 1}$, $i\in I$.
The following is obvious.
\begin{lemma}
\begin{enumerate}[{\rm(a)}]
 \item\label{lem:Heps-pres.a}
The assignments $E_i\mapsto E_i$, $F_i\mapsto F_i$, $K_{\epsilon_i i}\mapsto L_i^2$, $K_{-\epsilon_i i}\mapsto 0$, $i\in I$
define a homomorphism of algebras $\psi^\beps: U_q(\tilde\gg)\to \widehat{\mathcal H}_q^{\beps}(\gg)$. 
\item\label{lem:Heps-pres.b} $\widehat{\mathcal H}_q^{\beps}(\gg)$ is generated by $\operatorname{Im}\psi^\beps$ and $L_i^{-1}$, $i\in I$ and has 
the triangular decomposition $\widehat{\mathcal H}_q^{\beps}(\gg)=U_q^-\tensor\mathcal L\tensor U_q^+$, where 
$\mathcal L$ is the subalgebra generated by $L_i^{\pm 1}$, $i\in I$.
\item\label{lem:Heps-pres.b'} $\psi^\beps$ commutes with $\bar\cdot$ and ${}^t$.
\item\label{lem:Heps-pres.c} The set $\mathbf B_{\nn_-}\bullet_{\beps}\mathbf B_{\nn_+}:=
\psi^\beps(\mathbf B_{\nn_-}\bullet \mathbf B_{\nn_+})$ is linearly independent and $\mathbf L\cdot\mathbf B_{\nn_-}\bullet_{\beps}\mathbf B_{\nn_+}$,
where $\mathbf L$ is the multiplicative subgroup of~$\mathcal L$ generated by the $L_i^{\pm 1}$, $i\in I$, is a basis of~$\widehat{\mathcal H}_q^\beps(\gg)$.
\end{enumerate}
\end{lemma}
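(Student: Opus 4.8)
The plan is to verify each assertion directly from the presentations, the substance being in part~(c).

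For~(a) I would check that the proposed images $E_i,F_i,L_i^2,0$ of $E_i,F_i,K_{\epsilon_i i},K_{-\epsilon_i i}$ satisfy the defining relations~\eqref{eq:commutation g}--\eqref{eq:qserre} of $U_q(\tilde\gg)$. The relations~\eqref{eq:qserre} involve only the $E_i$ (resp.\ only the $F_i$), which $\psi^\beps$ fixes, so they hold because $U_q^\pm$ sit inside $\widehat{\mathcal H}_q^\beps(\gg)$. Among~\eqref{eq:commutation g}, the relations containing $K_{-\epsilon_i i}$ become $0=0$; the relation $K_{\epsilon_i i}E_j=q_i^{\epsilon_i a_{ij}}E_jK_{\epsilon_i i}$ is the square of the defining $L$--$E$ relation, and similarly for $F_j$; and $[E_i,F_j]=\delta_{ij}(q_i^{-1}-q_i)(K_{+i}-K_{-i})$ becomes $\delta_{ij}\epsilon_i(q_i^{-1}-q_i)L_i^2$, precisely the defining commutator of $\widehat{\mathcal H}_q^\beps(\gg)$. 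Hence $\psi^\beps$ is a well-defined homomorphism.

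For~(b), $\operatorname{Im}\psi^\beps$ contains $E_i,F_i,L_i^2$, and adjoining the $L_i^{-1}$ produces $L_i=L_i^2L_i^{-1}$, hence all generators of $\widehat{\mathcal H}_q^\beps(\gg)$; this gives the first claim. The triangular decomposition $\widehat{\mathcal H}_q^\beps(\gg)=U_q^-\tensor\mathcal L\tensor U_q^+$ I would establish exactly as in Proposition~\ref{prop:Dr-double-pres}: surjectivity of the multiplication map is immediate from the commutation relations, and injectivity follows by identifying $\widehat{\mathcal H}_q^\beps(\gg)$ with a localization of an algebra $\mathcal U_{\chi_\beps,t_-,t_+}(V^-,V^+)$ of~\S\ref{subs:L-L-appl} for a suitable coloured bicharacter $\chi_\beps$ (this is Proposition~\ref{prop:color-Serre}), whose triangular decomposition $\mathcal B(V^-)\tensor H\tensor\mathcal B(V^+)$ is built into its construction. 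Part~(b$'$) is then a check on generators: both $\bar\cdot$ and ${}^t$ fix $E_i,F_i,K_{\pm i},L_i^{\pm1}$, except that ${}^t$ interchanges $E_i$ and $F_i$, and $\psi^\beps$ intertwines them on generators (e.g.\ $\psi^\beps(\overline{K_{\epsilon_i i}})=L_i^2=\overline{\psi^\beps(K_{\epsilon_i i})}$ and $\psi^\beps(\overline{K_{-\epsilon_i i}})=0$); since $\psi^\beps\circ\bar\cdot$ and $\bar\cdot\circ\psi^\beps$, and likewise the composites with ${}^t$, are anti-homomorphisms agreeing on generators, they coincide.

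Part~(c) is the heart of the matter. By Main Theorems~\ref{thm:circle} and~\ref{thm:bullet}, $b_-\bullet b_+-\mathbf d_{b_-,b_+}b_-b_+$ lies in the span of elements $K\invprod(\cdots)$ with $K\in\mathbf K\setminus\{1\}$. Applying $\psi^\beps$, which fixes $U_q^\pm$ and sends $K_{-\epsilon_i i}\mapsto0$, annihilates every term whose $\mathbf K$-factor is not a product of the $K_{\epsilon_i i}$, and sends the surviving nontrivial ones to monomials in the $L_i^2$ of strictly positive $\mathbf L$-degree; hence $\psi^\beps(b_-\bullet b_+)$ is $\widehat\Gamma$-homogeneous and
\[
\psi^\beps(b_-\bullet b_+)=\mathbf d_{b_-,b_+}\,b_-b_+ + (\text{terms of strictly positive }\mathbf L\text{-degree}),
\]
with $\mathbf d_{b_-,b_+}\neq0$ by Proposition~\ref{prop:square-roots}. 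Writing each $L^m\psi^\beps(b_-\bullet b_+)$ in the normal form of part~(b), its term of lowest $\mathbf L$-degree is a nonzero scalar multiple of $\mathbf d_{b_-,b_+}\,b_-L^mb_+$, so $\mathbf L\cdot(\mathbf B_{\nn_-}\bullet_\beps\mathbf B_{\nn_+})$ is upper-unitriangular, up to nonzero scalars, with respect to the PBW basis $\{b'_-L^{m'}b'_+\}$, for the partial order $m'\succeq m\iff m'-m\in2\ZZ_{\ge0}^I$ on $\mathbf L$-exponents. Since in each $\widehat\Gamma$-graded component the $\mathbf L$-exponents that occur are bounded above (an exponent $m'$ forces $|b'_-|,|b'_+|\ge0$, hence $m'\le2\min(\deg_-,\deg_+)$ componentwise), the triangular transition is invertible within each component, inverting from the top down; therefore $\mathbf L\cdot(\mathbf B_{\nn_-}\bullet_\beps\mathbf B_{\nn_+})$ is a $\kk$-basis of $\widehat{\mathcal H}_q^\beps(\gg)$, and in particular its $\mathbf L$-exponent-$0$ part $\mathbf B_{\nn_-}\bullet_\beps\mathbf B_{\nn_+}$ is linearly independent.

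I expect part~(c) to be the only genuine obstacle: one must identify precisely which terms of the expansions of $b_-\circ b_+$ and $b_-\bullet b_+$ survive $\psi^\beps$ and check that all survivors other than $\mathbf d_{b_-,b_+}b_-b_+$ carry strictly positive $\mathbf L$-degree, and then make the triangularity argument work in the localized algebra $\widehat{\mathcal H}_q^\beps(\gg)$, whose graded components are infinite dimensional — the saving point being that in each component the set of occurring $\mathbf L$-exponents has an upper bound.
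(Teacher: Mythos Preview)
The paper gives no proof of this lemma; it is introduced with ``The following is obvious.'' Your argument is correct and supplies the details the paper omits, including the only nontrivial part~(c), where your triangularity argument (lowest $\mathbf L$-exponent for linear independence, upper bound on $\mathbf L$-exponents within a fixed bidegree for spanning) is exactly what is needed.

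Two small points. First, your appeal to Proposition~\ref{prop:color-Serre} for the triangular decomposition in~(b) is misplaced: that proposition concerns $\mathcal A_q^\beps(\gg)\cong\widehat{\mathcal H}_q^\beps(\gg)_0$, comes after the present lemma, and in fact uses part~(b). The triangular decomposition follows more directly by observing that $\operatorname{Im}\psi^\beps$ is the quotient of $U_q(\tilde\gg)$ by the ideal generated by the $K_{-\epsilon_i i}$, which inherits the decomposition $U_q^-\tensor\kk[K_{\epsilon_i i}:i\in I]\tensor U_q^+$ from $U_q(\tilde\gg)$ (Proposition~\ref{prop:Dr-double-pres}); adjoining square roots $L_i$ of the $K_{\epsilon_i i}$ and then inverting them preserves this tensor decomposition. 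Second, in~(c) your ``lowest $\mathbf L$-degree'' is with respect to a partial order on $\ZZ^I$; for the linear-independence half you should take a \emph{minimal} (not necessarily least) $m$ in the finite support and observe that the PBW term $b_-L^mb_+$ receives contributions only from $(b_-,b_+,m)$ itself---this is what you do implicitly, but it is worth saying explicitly.
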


Note that $\widehat{\mathcal H}^\beps_q(\gg)$ is graded by the group $Q:=\ZZ^I$ with $\deg_Q E_i=\deg_Q F_i=\deg_Q L_i=\alpha_i=-\deg_Q L_i^{-1}$,
where $\{\alpha_i\}_{i\in I}$ is the standard basis of~$\ZZ^I$. Let $\widehat{\mathcal H}^\beps_q(\gg)_0$ be the subalgebra of 
elements of degree~$0$.
\begin{lemma}\label{lem:tau-defn}
There exists a unique projection $\tau:\widehat{\mathcal H}^\beps_q(\gg)\to \widehat{\mathcal H}^\beps_q(\gg)_0$ 
commuting with~$\bar\cdot$ such that $\tau(x)\in q^{\frac12\ZZ} \prod_{i\in I} L_i^{-n_i} x$ for 
$x$ homogeneous with $\deg_Q x=\sum_{i\in I} n_i\alpha_i$.
\end{lemma}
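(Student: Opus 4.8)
The plan is to construct~$\tau$ explicitly on the triangular decomposition $\widehat{\mathcal H}^\beps_q(\gg)=U_q^-\tensor\mathcal L\tensor U_q^+$ from Lemma~\ref{lem:Heps-pres}\eqref{lem:Heps-pres.b} and then check the listed properties. First I would fix, for each $\alpha=\sum_{i\in I}n_i\alpha_i\in Q=\ZZ^I$, the monomial $L^{\alpha}:=\prod_{i\in I}L_i^{n_i}$ (the order of factors is immaterial since the $L_i$ commute). The algebra is $Q$-graded with $\deg_Q E_i=\deg_Q F_i=\deg_Q L_i=\alpha_i$, so a homogeneous element $x$ of degree $\alpha$ can be written uniquely as a sum of terms $u^-L^{\beta}u^+$ with $u^\pm$ homogeneous in $U_q^\pm$ of degrees $\gamma_\pm$ and $\beta\in Q$ satisfying $\gamma_+-\gamma_-+\beta=\alpha$ (I am using $\deg_Q F_i=+\alpha_i$, as stated, so $\deg_Q u^-=\gamma_-$ contributes $+\gamma_-$; in any case the bookkeeping fixes $\beta$ in terms of $u^\pm$ and $\alpha$). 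Then I define $\tau(x)$ by replacing each such term $u^-L^{\beta}u^+$ with $c_{\gamma_-,\gamma_+}\,u^-L^{\gamma_--\gamma_+}u^+$, where the degree-zero adjustment $L^{\gamma_--\gamma_+}$ is forced, and $c_{\gamma_-,\gamma_+}\in q^{\frac12\ZZ}$ is a scalar, independent of $x$, to be pinned down by the bar-invariance requirement. By construction $\tau$ is a well-defined $\kk$-linear projection onto $\widehat{\mathcal H}^\beps_q(\gg)_0$ and, for $x$ homogeneous of degree $\alpha$, each term of $\tau(x)$ lies in $q^{\frac12\ZZ}L^{-\alpha}x$ after moving the torus factor past $u^+$ using the commutation relations $L_iE_j=q_i^{\frac12\epsilon_ia_{ij}}E_jL_i$; since that repositioning only introduces a power of $q^{\frac12}$, the condition $\tau(x)\in q^{\frac12\ZZ}\prod_iL_i^{-n_i}x$ holds.

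The substantive point is bar-invariance: I need $\overline{\tau(x)}=\tau(\overline x)$, which will also force uniqueness of the normalizing scalars. Recall $\bar\cdot$ is the $\kk$-anti-linear anti-involution with $\overline{E_i}=E_i$, $\overline{F_i}=F_i$, $\overline{L_i^{\pm1}}=L_i^{\pm1}$, and $\overline{q^{\frac12}}=q^{-\frac12}$. Applying $\bar\cdot$ to a term $u^-L^{\beta}u^+$ reverses its order to $\overline{u^+}\,L^{\beta}\,\overline{u^-}$; straightening this back into the form $U_q^-\mathcal L U_q^+$ via the commutation relations produces a definite power of $q^{\frac12}$ depending only on $\gamma_\pm$ and $\beta$. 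Demanding that the two computations $\overline{\tau(x)}$ and $\tau(\overline x)$ agree on each homogeneous component yields a single equation on the scalars $c_{\gamma_-,\gamma_+}$, namely $\overline{c_{\gamma_-,\gamma_+}}=q^{(\text{explicit integer}/2)}c_{\gamma_+,\gamma_-}$ coming from the reordering cocycle; since $\bar\cdot$ is anti-linear this determines each $c_{\gamma_-,\gamma_+}$ up to a bar-invariant unit, and the minimal/canonical choice (e.g. taking $c$ to be the appropriate half-integer power of $q$ dictated by the symmetrized cocycle, as in the normalization of $\invprod$ in~\eqref{eq:triangle-def} and~\eqref{eq:inv-act-defn}) makes it unique. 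Concretely, the right normalization is the one for which $\tau$ restricted to $\mathcal L$-translates of a fixed element agrees with the $\invprod$-type rescaling already used throughout~\S\ref{subs:L-L-appl}, so the computation is the same bicharacter bookkeeping as in Lemma~\ref{lem:inv-act-defn}.

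The main obstacle I anticipate is purely organizational rather than deep: verifying that the power of $q^{\frac12}$ accumulated when straightening $\overline{u^+L^{\beta}u^-}$ back into triangular form is \emph{exactly} compensated by the chosen scalar, uniformly over all homogeneous components, so that $\tau$ genuinely commutes with $\bar\cdot$ on all of $\widehat{\mathcal H}^\beps_q(\gg)$ and not merely on monomials. This amounts to checking a cocycle identity for the bicharacter $\chi^{\frac12}$ extended to $Q$, which is the same kind of identity that underlies~\eqref{eq:triangle-bar}; once it is in hand, linearity of $\tau$ and anti-linearity of $\bar\cdot$ extend the equality from a spanning set to the whole algebra. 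Uniqueness of $\tau$ then follows because any two projections satisfying both stated properties differ, on each homogeneous $x$, by multiplication by a bar-invariant element of $q^{\frac12\ZZ}$, i.e.\ by $1$.
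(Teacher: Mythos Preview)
The paper states this lemma without proof. Your overall strategy---defining $\tau$ term-by-term on the triangular decomposition with scalars $c_{\gamma_-,\gamma_+}$ depending on the pair of degrees, then fixing those scalars by $\bar\cdot$-equivariance---is the natural one and is essentially forced. Two points need correction, however.

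First, a bookkeeping slip: replacing $u^-L^{\beta}u^+$ by $u^-L^{\gamma_--\gamma_+}u^+$ does not land in $Q$-degree zero; since $\deg_Q(u^-L^{\delta}u^+)=\gamma_-+\delta+\gamma_+$, you need $\delta=-\gamma_--\gamma_+$.

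Second, and more substantively, the condition $\tau(x)\in q^{\frac12\ZZ}\prod_i L_i^{-n_i}x$ cannot hold for every $Q$-homogeneous $x$ with a $\kk$-linear $\bar\cdot$-equivariant $\tau$, and your construction does not deliver it. The scalars $c_{\gamma_-,\gamma_+}$ must genuinely depend on $(\gamma_-,\gamma_+)$ and not only on $\alpha=\deg_Q x$: already $E_i$ and $F_i$ share $\deg_Q=\alpha_i$, yet $\bar\cdot$-equivariance forces $\tau(E_i)=q_i^{\frac12\epsilon_i}L_i^{-1}E_i$ while $\tau(F_i)=q_i^{-\frac12\epsilon_i}L_i^{-1}F_i$ (these are exactly the normalisations appearing in Proposition~\ref{prop:color-Serre}). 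Hence $\tau(E_i+F_i)$ is \emph{not} a scalar multiple of $L_i^{-1}(E_i+F_i)$, so your sentence ``each term of $\tau(x)$ lies in $q^{\frac12\ZZ}L^{-\alpha}x$'' is false as written. What your construction does yield is the stated condition for $x$ homogeneous with respect to the finer bigrading coming from the triangular decomposition, and that is all that is used in the proof of Theorem~\ref{thm:Weyl-basis}, where $\tau$ is applied only to the bihomogeneous elements $b_-\bullet_\beps b_+$. You should phrase the conclusion in those terms rather than asserting it for arbitrary $Q$-homogeneous $x$. Finally, your uniqueness argument overshoots: a $\bar\cdot$-invariant element of $q^{\frac12\ZZ}$ is $\pm1$, not just $1$, so each $c_{\gamma_-,\gamma_+}$ is determined only up to sign without an additional convention (e.g.\ choosing the positive power of $q^{\frac12}$).
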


Let $\mathcal A^\beps_q(\gg)$ be the $\kk$-algebra with presentation~\eqref{eq:Aeps-presentation}. The following Lemma is easily checked.
\begin{lemma}\label{lem:Aeps-symm}
The algebra $\mathcal A^\beps_q(\gg)$ admits an anti-linear anti-involution $\bar\cdot$ 
defined on generators by $\bar x_i=x_i$,
$\bar y_i=y_i$, and an anti-involution ${}^t$ defined by $x_i{}^t=y_i$, $y_i{}^t=x_i$.
\end{lemma}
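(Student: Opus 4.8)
The plan is to verify the lemma by the standard presentation argument: realize each of the two maps on the free algebra on the $x_i,y_i$, check that the two-sided ideal generated by the relations~\eqref{eq:Aeps-presentation} is preserved, and then observe that each map squares to the identity on generators, hence is an involution. Thus I would first introduce $\overline{\cdot}$ as the $\kk$-anti-linear anti-automorphism and ${}^t$ as the $\kk$-linear anti-automorphism of the free $\kk$-algebra on $\{x_i,y_i\}_{i\in I}$ determined by the values in the statement, and show that each sends every defining relator into the defining ideal. It is convenient to record first how they act on divided powers: since ${}^t$ is $\kk$-linear one has $(x_i^{\la n\ra})^t=y_i^{\la n\ra}$ and $(y_i^{\la n\ra})^t=x_i^{\la n\ra}$, while $\overline{x_i^{\la n\ra}}=(-1)^n x_i^{\la n\ra}$ and $\overline{y_i^{\la n\ra}}=(-1)^n y_i^{\la n\ra}$, the sign coming from $\overline{\la s\ra_{q_i}}=q_i^{-s}-q_i^s=-\la s\ra_{q_i}$.

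Next I would run through the three families of relations. For $x_iy_i-y_ix_i=\epsilon_i(q_i^{-1}-q_i)$: applying ${}^t$ leaves both sides unchanged, while applying $\overline{\cdot}$ negates both (the left since $\overline{\cdot}$ reverses the commutator, the right since $\overline{q_i^{-1}-q_i}=-(q_i^{-1}-q_i)$), so the relation is respected in both cases. For $x_iy_j=q_i^{\epsilon_i\delta_{\epsilon_i,\epsilon_j}a_{ij}}y_jx_i$: $\overline{\cdot}$ returns the same relation with the scalar merely inverted, and ${}^t$ returns the relation with $i$ and $j$ interchanged — here one uses $\delta_{\epsilon_i,\epsilon_j}=\delta_{\epsilon_j,\epsilon_i}$ together with the symmetrizability identity $d_ia_{ij}=d_ja_{ji}$, i.e.\ $q_i^{a_{ij}}=q_j^{a_{ji}}$, so that $q_i^{\epsilon_i\delta_{\epsilon_i,\epsilon_j}a_{ij}}=q_j^{\epsilon_j\delta_{\epsilon_j,\epsilon_i}a_{ji}}$. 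Finally, for the colored Serre relations I would apply the map and then reindex: applying ${}^t$ to $\sum_{r+s=1-a_{ij}}(-1)^r q_i^{r\epsilon_j\delta_{\epsilon_i,-\epsilon_j}a_{ij}}x_i^{\la s\ra}x_jx_i^{\la r\ra}$ gives $\sum_{r+s=1-a_{ij}}(-1)^r q_i^{r\epsilon_j\delta_{\epsilon_i,-\epsilon_j}a_{ij}}y_i^{\la r\ra}y_jy_i^{\la s\ra}$, and interchanging the summation labels $r\leftrightarrow s$ and substituting $s=1-a_{ij}-r$ turns this into a nonzero scalar multiple of the colored Serre relator in the $y_i$; applying $\overline{\cdot}$ instead fixes the $x$'s, conjugates the scalar, and introduces a global factor $(-1)^{1-a_{ij}}$ from $\overline{x_i^{\la s\ra}x_jx_i^{\la r\ra}}=(-1)^{r+s}x_i^{\la r\ra}x_jx_i^{\la s\ra}$, after which the same substitution identifies the result with a nonzero scalar multiple of the original $x_i$-Serre relator. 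The $y_i$-Serre relations are handled symmetrically.

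There is no real obstacle here: this is precisely the routine verification advertised by ``easily checked''. The only thing to get right is the scalar and sign bookkeeping — chiefly propagating $\overline{\la s\ra_{q_i}}=-\la s\ra_{q_i}$ through the divided powers, and matching $q_i$- versus $q_j$-powers via $d_ia_{ij}=d_ja_{ji}$ — after which both maps descend to anti-automorphisms of $\mathcal A^\beps_q(\gg)$ that are involutions by construction. Alternatively, once Proposition~\ref{prop:color-Serre} is available one can realize $\mathcal A^\beps_q(\gg)$ inside $\widehat{\mathcal H}^\beps_q(\gg)$ compatibly with the $\overline{\cdot}$ and ${}^t$ already built there and with the projection $\tau$ of Lemma~\ref{lem:tau-defn}, and simply restrict; but the direct check above is shorter and self-contained.
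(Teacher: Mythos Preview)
Your proposal is correct and is precisely the routine verification the paper has in mind: the paper gives no proof beyond the phrase ``easily checked'', and your direct check that $\bar\cdot$ and ${}^t$ preserve each defining relator (using $\overline{\la s\ra_{q_i}}=-\la s\ra_{q_i}$ for the divided powers and $d_ia_{ij}=d_ja_{ji}$ for the mixed commutation relation) is exactly what is intended. Your alternative remark about restricting from $\widehat{\mathcal H}^\beps_q(\gg)$ via Proposition~\ref{prop:color-Serre} is also valid but, as you note, logically downstream.
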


\begin{proposition}\label{prop:color-Serre}
The assignments $x_i \mapsto q_i^{\frac12\epsilon_i} L_i^{-1}E_i$, $y_i\mapsto q_i^{\frac12\epsilon_i} F_i L_i^{-1}$
 define an isomorphism of algebras $j_\beps:\mathcal A^\beps_q(\gg)\to \widehat{\mathcal H}^\beps_q(\gg)_0$ which commutes 
 with~$\bar\cdot$ and~${}^t$. Moreover, $\mathcal A_q^\beps(\gg)$ has a triangular decomposition 
 $\mathcal A^\beps_q(\gg)=U_q^{\beps,+}\tensor U_q^{\beps,-}$ where $U_q^{\beps,+}$ (respectively, $U_q^{\beps,-}$) is the 
 subalgebra of $\mathcal A^\beps_q(\gg)$ generated by the $x_i$ (respectively, $y_i$), $i\in I$.
\end{proposition}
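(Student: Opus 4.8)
The plan is to show that the displayed assignment defines a surjective algebra homomorphism $j_\beps$ onto the degree-zero subalgebra $\widehat{\mathcal H}^\beps_q(\gg)_0$, and then to force $j_\beps$ to be an isomorphism by a graded dimension count, from which the triangular decomposition of $\mathcal A^\beps_q(\gg)$ drops out. First I would record the images $\tilde x_i:=q_i^{\frac12\epsilon_i}L_i^{-1}E_i$ and $\tilde y_i:=q_i^{\frac12\epsilon_i}F_iL_i^{-1}$: each has $Q$-degree $\alpha_i-\alpha_i=0$, so $\tilde x_i,\tilde y_i\in\widehat{\mathcal H}^\beps_q(\gg)_0$. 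Moreover, using the triangular decomposition $\widehat{\mathcal H}^\beps_q(\gg)=U_q^-\tensor\mathcal L\tensor U_q^+$, a $Q$-homogeneous monomial $u_-L^{\mathbf m}u_+$ of degree $0$ necessarily has $L^{\mathbf m}=\prod_iL_i^{-m_i}$ with $\sum_im_i\alpha_i=\deg_Q u_-+\deg_Q u_+$, and splitting off each factor $L_i^{-1}$ and commuting it (at the cost of a power of $q$) next to the matching $E_i$ or $F_i$ exhibits $u_-L^{\mathbf m}u_+$ as a scalar times a word in the $\tilde x_i,\tilde y_i$. Hence $\widehat{\mathcal H}^\beps_q(\gg)_0=\la\tilde x_i,\tilde y_i:i\in I\ra$, and $\tilde x_i,\tilde y_i$ agree up to a power of $q$ with $\tau(E_i),\tau(F_i)$ for $\tau$ as in Lemma~\ref{lem:tau-defn}.

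Next I would verify that $\tilde x_i,\tilde y_i$ satisfy the defining relations~\eqref{eq:Aeps-presentation}. The two families of commutation relations are pure bookkeeping: push the $L$'s past the $E$'s and $F$'s via the torus relations of $\widehat{\mathcal H}^\beps_q(\gg)$ and use $[E_i,F_j]=\delta_{ij}\epsilon_iL_i^2(q_i^{-1}-q_i)$ — the normalizing powers $q_i^{\frac12\epsilon_i}$ and the colored factors $q_i^{\epsilon_i\delta_{\epsilon_i,\epsilon_j}a_{ij}}$ of~\eqref{eq:Aeps-presentation} are exactly what these moves produce. For the colored Serre relations one writes $\tilde x_i^{\la r\ra}\tilde x_j\tilde x_i^{\la s\ra}=c\,L^{\mathbf m}_{r,s}\,E_i^{\la r\ra}E_jE_i^{\la s\ra}$, notes that $\mathbf m_{r,s}$ is independent of $r$ once $r+s$ is fixed, and that $c$, as $r$ varies with $r+s=1-a_{ij}$ fixed, changes precisely by the factor $q_i^{r\epsilon_j\delta_{\epsilon_i,-\epsilon_j}a_{ij}}$ coming from commuting $L_i^{\la r\ra}$ through $E_j$; thus the quantum Serre relation~\eqref{eq:qserre} for the $E_i$, valid in $U_q^+\subset\widehat{\mathcal H}^\beps_q(\gg)$, becomes the colored Serre relation for the $\tilde x_i$, and symmetrically for the $\tilde y_i$. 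This yields an algebra map $j_\beps:\mathcal A^\beps_q(\gg)\to\widehat{\mathcal H}^\beps_q(\gg)_0$, surjective by the previous paragraph. It commutes with $\bar\cdot$ and $^t$ by inspection on generators: since $\bar\cdot$ is an anti-linear anti-involution fixing $E_i$ and $L_i^{\pm1}$, one gets $\overline{\tilde x_i}=q_i^{-\frac12\epsilon_i}E_iL_i^{-1}=q_i^{-\frac12\epsilon_i}q_i^{\epsilon_i}L_i^{-1}E_i=\tilde x_i$ (using $L_iE_i=q_i^{\epsilon_i}E_iL_i$), likewise $\overline{\tilde y_i}=\tilde y_i$, and $\tilde x_i{}^t=q_i^{\frac12\epsilon_i}F_iL_i^{-1}=\tilde y_i$, matching $\bar x_i=x_i,\ \bar y_i=y_i,\ x_i{}^t=y_i$.

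To prove $j_\beps$ injective I would first establish the triangular decomposition of $\mathcal A^\beps_q(\gg)$: the cross-relations in~\eqref{eq:Aeps-presentation} let every $y_j$ be moved to the right of every $x_i$ modulo terms of strictly smaller total degree in the generators, so $\mathcal A^\beps_q(\gg)$ is spanned by $U_q^{\beps,+}U_q^{\beps,-}$, and a diamond-lemma / iterated Ore extension argument on the associated graded (where $x_iy_i-y_ix_i=\text{const}$ loses its inhomogeneous term) gives a vector-space identification $\mathcal A^\beps_q(\gg)=U_q^{\beps,+}\tensor U_q^{\beps,-}$. Now $U_q^{\beps,+}$, being presented by the colored Serre relations, is a Nichols algebra $\mathcal B(V^+,\Psi^{\beps,+})$ for a diagonal braiding $\Psi^{\beps,+}$ differing from $\Psi^+$ by an antisymmetric $2$-cocycle — the same $q_{ii}$ and the same symmetric products $q_{ij}q_{ji}$, in the language of~\S\ref{subs:A-diag-braid} — and such a twist preserves Hilbert series, so $\dim U_q^{\beps,\pm}_\alpha=\dim U_q^\pm_\alpha$ for all $\alpha$. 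On the other hand, the triangular decomposition of $\widehat{\mathcal H}^\beps_q(\gg)$ shows that $\widehat{\mathcal H}^\beps_q(\gg)_0$, graded by the pair (degree of the $U_q^-$-part, degree of the $U_q^+$-part), has $(\alpha_-,\alpha_+)$-component of dimension $\dim U_q^-_{\alpha_-}\cdot\dim U_q^+_{\alpha_+}$ — with basis the $b_-\prod_iL_i^{-(n_i+m_i)}b_+$ over bases $\{b_-\},\{b_+\}$ of $U_q^-,U_q^+$ — and $j_\beps$ respects this grading against the corresponding filtration of $\mathcal A^\beps_q(\gg)$. Since $\operatorname{gr}(j_\beps)$ is then a surjection between $(\alpha_-,\alpha_+)$-components of equal finite dimension, it is bijective, hence $j_\beps$ is an isomorphism; restricting it identifies $U_q^{\beps,\pm}$ with $\la L_i^{-1}E_i:i\in I\ra$ and $\la F_iL_i^{-1}:i\in I\ra$ and delivers the asserted decomposition.

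The two genuinely technical points are: (i) in the verification of the colored Serre relations, confirming that the $r$-dependent powers of $q$ produced by dragging the divided powers $L_i^{\la r\ra}$ through $E_j$ (resp.\ $F_j$) really assemble into the coefficients $q_i^{\pm r\epsilon_j\delta_{\epsilon_i,-\epsilon_j}a_{ij}}$ of~\eqref{eq:Aeps-presentation}, the signs $\epsilon_i,\epsilon_j$ being tracked through the cases $\epsilon_i=\epsilon_j$ and $\epsilon_i=-\epsilon_j$; and (ii) the Hilbert-series identity $\dim U_q^{\beps,\pm}_\alpha=\dim U_q^\pm_\alpha$. I expect (ii) to be the real obstacle, and the cleanest route is to present $U_q^{\beps,\pm}$ as a Nichols algebra and invoke invariance of Nichols-algebra dimensions under cocycle twists from~\S\ref{subs:A-diag-braid}; alternatively one can exhibit an explicit linear isomorphism $U_q^{\beps,+}\to U_q^+$ diagonal on PBW monomials. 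Everything else reduces to routine rewriting inside the triangular decompositions.
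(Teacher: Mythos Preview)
Your proposal is correct but takes a different, more external route than the paper. You first establish the triangular decomposition of $\mathcal A^\beps_q(\gg)$ independently (diamond lemma on the associated graded) and then invoke cocycle-twist invariance of Nichols-algebra Hilbert series to get $\dim U_q^{\beps,\pm}_\alpha=\dim U_q^\pm_\alpha$, finishing by a dimension count. The paper instead exploits the triangular decomposition of the \emph{target} $\widehat{\mathcal H}^\beps_q(\gg)=U_q^-\tensor\mathcal L\tensor U_q^+$, which is already in hand: since every word $X_{i_1}\cdots X_{i_k}$ equals a nonzero scalar times $\big(\prod_r L_{i_r}^{-1}\big)E_{i_1}\cdots E_{i_k}$, any relation among the $X_i$ in $\widehat{\mathcal H}^\beps_q(\gg)_0$ is, after stripping the invertible $L$-factor, a relation among the $E_i$ in $U_q^+$---hence a consequence of the Serre relations, hence (by the very computation you outline in~(i)) a consequence of the colored Serre relations. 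This gives injectivity of $j_\beps|_{U_q^{\beps,\pm}}$ immediately, and then the quasi-commutation of $\la X_i\ra$ and $\la Y_j\ra$ together with the triangular decomposition of the target yields the global isomorphism and the triangular decomposition of $\mathcal A^\beps_q(\gg)$ as a \emph{corollary} rather than a prerequisite. The payoff is that your flagged obstacle~(ii) simply evaporates: the Hilbert-series identity is just the bijection $X_w\leftrightarrow E_w$, and no diamond-lemma or cocycle-twist machinery is needed. Your approach has the virtue of treating $\mathcal A^\beps_q(\gg)$ intrinsically and would generalize to settings where a convenient ambient algebra like $\widehat{\mathcal H}^\beps_q(\gg)$ is not available; here, though, the paper's shortcut is considerably lighter.
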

\begin{proof}
Let $X_i=L_i^{-1}E_i$, $Y_i=F_iL_i^{-1}$. Then in~$\widehat{\mathcal H}_q^\beps(\gg)$ we have 
\begin{align*}
0&=\sum_{r+s=1-a_{ij}} (-1)^r E_i^{\la s\ra}E_j E_i^{\la r\ra}=\sum_{r+s=1-a_{ij}} (-1)^r (L_i X_i)^{\la s\ra} L_j
X_j (L_i X_i)^{\la r\ra}\\
&=\sum_{r+s=1-a_{ij}} (-1)^r q_i^{-\epsilon_i(\binom s2+\binom r2)}L_i^s X_i^{\la s\ra} L_j
X_j L_i^r X_i^{\la r\ra}\\
&=L_i^{1-a_{ij}}L_j\sum_{r+s=1-a_{ij}} (-1)^r q_i^{-\epsilon_i(\binom s2+\binom r2+r s)-\frac r2\epsilon_i a_{ij}
-\frac s2\epsilon_j a_{ij}}X_i^{\la s\ra} 
X_j  X_i^{\la r\ra}\\
&=q_i^{-\frac12\epsilon_i(1-a_{ij})^2} 
L_i^{1-a_{ij}}L_j\sum_{r+s=1-a_{ij}} (-1)^r q_i^{-\frac12(r\epsilon_i+s\epsilon_j) a_{ij}}X_i^{\la s\ra} 
X_j  X_i^{\la r\ra}.
\end{align*}
This implies that 
$$
\sum_{r+s=1-a_{ij}} (-1)^r q_i^{r\epsilon_j a_{ij}\delta_{\epsilon_i,-\epsilon_j}} X_i^{\la s\ra} 
X_j  X_i^{\la r\ra}=0.
$$
Thus, the $X_i$ satisfy the defining identity of~$\mathcal A_q^\beps(\gg)$. Since $Y_i=X_i^t$, the identity for the~$Y_i$ is now immediate.
The remaining identities are trivial. Thus, $j_\beps$ is a well-defined homomorphism of algebras $\mathcal A_q^\beps(\gg)\to 
\widehat{\mathcal H}_q^\beps(\gg)$ and its image clearly lies in~$\widehat{\mathcal H}_q^\beps(\gg)_0$. Since the defining 
relations of~$U_q^{\beps,+}$ are the only relations in the subalgebra of~$\widehat{\mathcal H}_q^\beps(\gg)_0$ generated by the $\{X_i\}_{i\in I}$,
it follows that the restrictions of $j_\beps$ to $U_q^{\beps,\pm}$ are injective. Since the corresponding subalgebras quasi-commute, 
the assertion follows.
\end{proof}
Now we have all necessary ingredients to prove Theorem~\ref{thm:Weyl-basis}.
\begin{proof}[Proof of Theorem~\ref{thm:Weyl-basis}]
It follows from Lemma~\ref{lem:tau-defn} and Proposition~\ref{prop:color-Serre} that 
$\tau(\mathbf B_{\nn_-}\bullet_\beps \mathbf B_{\nn_+})$ is a basis of~$\widehat{\mathcal H}^\beps_q(\gg)_0$.
Then $\mathbf B_{\nn_-}\circ_\beps \mathbf B_{\nn_+}:=j_\beps^{-1}\tau(\mathbf B_{\nn_-}\bullet_\beps \mathbf B_{\nn_+})$ is the desired basis of $\mathcal A^\beps_q(\gg)$.
\end{proof}

\subsection{Invariant quasi-derivations}\label{subs:inv-qd}
\plink{quasi-der}
Following Lemma~\ref{lem:comm-doubl} and also~\cite{L1}*{Proposition~3.1.6}, define $\kk$-linear endomorphisms $\partial_i,\partial_i^{op}$, $i\in I$ of~$U_q^+$ by 
\begin{equation}\label{eq:partial-def}
[F_i,x^+]=(q_i-q_i^{-1})(K_{+i}\invprod \partial_i(x^+)-K_{-i}\invprod\partial_i^{op}(x^+)),\qquad x^+\in U_q^+.
\end{equation}
Then
$$
[x^-,E_i]=(q_i-q_i^{-1})(K_{+i}\invprod \partial_i(x^-{}^t)^t-K_{-i}\invprod \partial_i^{op}(x^-{}^t)^t),\qquad x^-\in U_q^-.
$$
\begin{lemma}\label{lem:partial}
For all $x^+,y^+\in U_q^+$, $i\in I$ we have 
\begin{enumerate}[{\rm(a)}]
 \item \label{lem:partial-prop.i}
$\overline{\partial_i(x^+)}=\partial_i(\overline{x^+})$,
$\overline{\partial_i^{op}(x^+)}=\partial_i^{op}(\overline{ x^+})$
\item \label{lem:partial-prop.ii}
$\partial_i(x^+{}^*)=(\partial_i^{op}(x^+))^*$ 
\item $\partial_{F_i}(x^+)=(q_i-q_i^{-1})q_i^{\frac12\alpha_i^\vee(\deg x^+-\alpha_{i})}\partial_i(x^+)$, 
$\partial_{F_i}^{op}(x^+)=(q_i-q_i^{-1})q_i^{\frac12 \alpha_i^\vee(\deg x^+-\alpha_{i})}\partial_i^{op}(x^+)$.
\label{lem:partial-prop.v}
\item  \label{lem:partial-prop.iii}
$\partial_i$, $\partial_i^{op}$ are quasi-derivations. Namely, for $x^+,y^+\in U_q^+$ homogeneous we have
\begin{equation}\label{eq:partial-inv}
\begin{gathered}
\partial_i(x^+y^+)=q_i^{\frac12\alpha_{i}^\vee(\deg y^+)}\partial_i(x^+)y^+ +
q_i^{-\frac12 \alpha_{i}^\vee(\deg x^+)}x^+\partial_i(y^+),
\\
\partial_i^{op}(x^+y^+)=q_i^{-\frac12\alpha_{i}^\vee(\deg y^+)}\partial_i^{op}(x^+)y^++q_i^{\frac12\alpha_{i}^\vee(\deg x^+)}x^+ \partial_i^{op}(y^+).
\end{gathered}
\end{equation}

\item \label{lem:partial-prop.iv}
$\partial_i \partial_j^{op}=\partial_j^{op}\partial_i$ for all $i,j\in I$
\end{enumerate}\label{lem:partial-prop}
\end{lemma}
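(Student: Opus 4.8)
The plan is to reduce each of the five assertions to the defining relation~\eqref{eq:partial-def} together with the uniqueness of the decomposition $[F_i,x^+]=a+b$ with $a\in K_{+i}U_q^+$ and $b\in K_{-i}U_q^+$, which holds because of the triangular decomposition $U_q(\tilde\gg)=\mathcal K_-\otimes\mathcal K_+\otimes U_q^-\otimes U_q^+$ and which determines $\partial_i(x^+)$ and $\partial_i^{op}(x^+)$ uniquely. I will freely use the elementary commutation rules (immediate from~\eqref{eq:commutation g} and~\eqref{eq:triangle-def}): for $u,y\in U_q^+$ with $y$ homogeneous,
$$
(K_{\pm i}\invprod u)\,y=q_i^{\pm\frac12\alpha_i^\vee(\deg y)}\,K_{\pm i}\invprod(uy),\qquad y\,(K_{\pm i}\invprod u)=q_i^{\mp\frac12\alpha_i^\vee(\deg y)}\,K_{\pm i}\invprod(yu),
$$
together with the fact (part of Lemma~\ref{lem:comm-doubl}) that $\partial_i,\partial_i^{op}$ lower the $\Gamma$-grading by~$\alpha_i$.

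\textbf{Parts \eqref{lem:partial-prop.i}, \eqref{lem:partial-prop.ii}, \eqref{lem:partial-prop.v}.} For~\eqref{lem:partial-prop.i} apply the anti-linear anti-involution $\bar\cdot$ to~\eqref{eq:partial-def}: since $\overline{F_i}=F_i$, $\overline{q_i-q_i^{-1}}=-(q_i-q_i^{-1})$ and $\overline{K_{\pm i}\invprod z}=K_{\pm i}\invprod\overline z$ by~\eqref{eq:triangle-bar}, one gets $[F_i,\overline{x^+}]=(q_i-q_i^{-1})(K_{+i}\invprod\overline{\partial_i(x^+)}-K_{-i}\invprod\overline{\partial_i^{op}(x^+)})$; comparing with~\eqref{eq:partial-def} for $\overline{x^+}$ and invoking uniqueness gives $\overline{\partial_i(x^+)}=\partial_i(\overline{x^+})$ and $\overline{\partial_i^{op}(x^+)}=\partial_i^{op}(\overline{x^+})$. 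For~\eqref{lem:partial-prop.ii} apply instead ${}^*$; here $F_i^*=F_i$, ${}^*$ is $\kk$-linear, and since $K_{\pm i}^*=K_{\mp i}$ and ${}^*$ preserves the $\Gamma$-grading of $U_q^+$ a one-line check gives $(K_{+i}\invprod z)^*=K_{-i}\invprod z^*$ and $(K_{-i}\invprod z)^*=K_{+i}\invprod z^*$ for $z\in U_q^+$. Applying ${}^*$ to~\eqref{eq:partial-def} and comparing with~\eqref{eq:partial-def} for $(x^+)^*$ then forces $\partial_i((x^+)^*)=(\partial_i^{op}(x^+))^*$, which is~\eqref{lem:partial-prop.ii}, along with $\partial_i^{op}((x^+)^*)=(\partial_i(x^+))^*$. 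Part~\eqref{lem:partial-prop.v} is pure bookkeeping: substitute $K_{\pm i}\invprod z=q_i^{\mp\frac12\alpha_i^\vee(\deg z)}K_{\pm i}z$ into~\eqref{eq:partial-def}, use $\deg\partial_i(x^+)=\deg\partial_i^{op}(x^+)=\deg x^+-\alpha_i$, and compare with the definition of $\partial_{F_i},\partial_{F_i}^{op}$ in Lemma~\ref{lem:comm-doubl}.

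\textbf{Part \eqref{lem:partial-prop.iii}.} The Leibniz rules~\eqref{eq:partial-inv} come from the fact that $z\mapsto[F_i,z]$ is a derivation of $U_q(\tilde\gg)$, so $[F_i,x^+y^+]=[F_i,x^+]\,y^++x^+\,[F_i,y^+]$. Substituting~\eqref{eq:partial-def} into all three brackets, carrying $y^+$ (resp.\ $x^+$) across the $\invprod$-actions by $K_{\pm i}$ via the commutation rules above, and collecting into $\invprod$-form, one obtains $[F_i,x^+y^+]$ as $(q_i-q_i^{-1})$ times $K_{+i}\invprod\bigl(q_i^{\frac12\alpha_i^\vee(\deg y^+)}\partial_i(x^+)y^++q_i^{-\frac12\alpha_i^\vee(\deg x^+)}x^+\partial_i(y^+)\bigr)$ minus the analogous $K_{-i}$-term with the two exponent signs reversed; comparing $K_{+i}$- and $K_{-i}$-components with~\eqref{eq:partial-def} for $x^+y^+$ yields~\eqref{eq:partial-inv}.

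\textbf{Part \eqref{lem:partial-prop.iv} (the main obstacle).} I will deduce it from~\eqref{lem:partial-prop.v} (and its $\partial^{op}$-analogue) together with the identity $\partial_{F_i}\circ\partial_{F_j}^{op}=\partial_{F_j}^{op}\circ\partial_{F_i}$, which holds in any Drinfeld double since $\partial_{F_i}$ and $\partial_{F_j}^{op}$ implement the commuting left and right translations by $F_i$ and $F_j$ on the regular representation — this is part of Lemma~\ref{lem:comm-doubl}, and if not it follows from the adjointness, up to scalars, of $\partial_i,\partial_i^{op}$ with left and right multiplication by $F_i$ on $U_q^-$ with respect to the pairing $\lr{\cdot}{\cdot}$, exactly as in~\cite{L1}*{\S1.2.13}. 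Rewriting $\partial_i=(q_i-q_i^{-1})^{-1}q_i^{-\frac12\alpha_i^\vee(\deg(\cdot)-\alpha_i)}\partial_{F_i}$ and similarly for $\partial_j^{op}$, composing in the two orders, and using $\deg\partial_i(x^+)=\deg x^+-\alpha_i$, $\deg\partial_j^{op}(x^+)=\deg x^+-\alpha_j$, the only remaining point is that the grading-dependent prefactors agree; their ratio works out to $q^{\frac12(d_ia_{ij}-d_ja_{ji})}$, which equals $1$ by symmetrizability of $A$. Hence $\partial_i\partial_j^{op}=\partial_j^{op}\partial_i$, completing the proof.
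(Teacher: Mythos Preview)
Your proof is correct and follows essentially the same path as the paper's. The paper derives parts~\eqref{lem:partial-prop.i}--\eqref{lem:partial-prop.ii} directly from~\eqref{eq:partial-def}, obtains~\eqref{lem:partial-prop.v} from~\eqref{eq:partial-def} and~\eqref{eq:Fi-Ei-comm}, deduces~\eqref{lem:partial-prop.iii} from~\eqref{eq:part-F_i-leibnitz} (via~\eqref{lem:partial-prop.v}), and gets~\eqref{lem:partial-prop.iv} from~\eqref{lem:partial-prop.v} together with Lemma~\ref{lem:brder-prop}\eqref{lem:brder-prop.c}. Two small remarks: for~\eqref{lem:partial-prop.iii} you argue directly via the derivation property of $z\mapsto[F_i,z]$ rather than through the appendix identity~\eqref{eq:part-F_i-leibnitz}, which is equally valid and arguably more self-contained; and for~\eqref{lem:partial-prop.iv} the commutativity $\partial_{F_i}\partial_{F_j}^{op}=\partial_{F_j}^{op}\partial_{F_i}$ is in Lemma~\ref{lem:brder-prop}\eqref{lem:brder-prop.c} rather than Lemma~\ref{lem:comm-doubl}, though your fallback justification covers this.
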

\begin{proof}
Parts \eqref{lem:partial-prop.i}--\eqref{lem:partial-prop.ii} are immediate consequences of~\eqref{eq:partial-def}.
Part~\eqref{lem:partial-prop.v} follows from~\eqref{eq:partial-def} and~\eqref{eq:Fi-Ei-comm}. Then 
\eqref{lem:partial-prop.iii} is a consequence of~\eqref{eq:part-F_i-leibnitz} while 
\eqref{lem:partial-prop.iv} is immediate from \eqref{lem:partial-prop.v} and Lemma~\ref{lem:brder-prop}\eqref{lem:brder-prop.c}.
\end{proof}
\plink{l_i}
In particular, the operators $\partial_i$, $\partial_i^{op}$ are locally nilpotent 
hence we can define a function $\ell_i:U_q^+\to \ZZ_{\ge 0}$ by 
$$\ell_i(x^+)=\max\{k\in\ZZ_{\ge 0}\,:\,
\partial_i^k(x^+)\not=0\},\qquad x^+\in U_q^+.
$$

\begin{corollary}\label{cor:top-deriv}
If $x^+,y^+\in U_q^+$ are homogeneous then for all~$n\ge 0$
\begin{equation}\label{eq:partial-pow}
\begin{gathered}
\partial_i^{(n)}(x^+y^+)=\sum_{a+b=n} q_i^{\frac12 \alpha_i^\vee(a \deg y^+-b\deg x^+)}
\partial_i^{(a)}(x^+)\cdot \partial_i^{(b)}
(y^+)
\\
(\partial_i^{op})^{(n)}(x^+y^+)=\sum_{a+b=n}q_i^{\frac12 \alpha_i^\vee(-a \deg y^++b\deg x^+)}
(\partial_i^{op})^{(a)}(x^+)\cdot (\partial_i^{op})^{(b)}(
y^+)
\end{gathered}
\end{equation}
In particular, \plink{q-der top}
$$
\partial_i^{(top)}(x^+y^+)=q_i^{\frac12 \alpha_i^\vee(\ell_i(x^+)\deg y^+-\ell_i(y^+)\deg x^+)} \partial_i^{(top)}(x^+)\partial_i^{(top)}(y^+),
$$
where $\partial_i^{(top)}(x^+)=\partial_i^{(\ell_i(x^+))}(x^+)$.
\end{corollary}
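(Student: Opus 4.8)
The plan is to deduce all three formulas from the $n=1$ Leibniz-type identities of Lemma~\ref{lem:partial-prop}\eqref{lem:partial-prop.iii} by induction on~$n$, the only non-formal ingredient being the elementary $q$-integer identity $(p)_{q_i}q_i^{-r}+(r)_{q_i}q_i^{p}=(p+r)_{q_i}$. It is convenient to write $w_i(x):=\alpha_i^\vee(\deg x)$ for $x\in U_q^+$ homogeneous; this is additive, $w_i(xy)=w_i(x)+w_i(y)$, so that $\alpha_i^\vee(a\deg y^+-b\deg x^+)=a\,w_i(y^+)-b\,w_i(x^+)$. First I would record the degree bookkeeping: comparing $\widehat\Gamma$-degrees on the two sides of~\eqref{eq:partial-def}, and using that both $K_{+i}$ and $K_{-i}$ have $\widehat\Gamma$-degree $\alpha_{+i}+\alpha_{-i}$ while the $\invprod$-action preserves the grading, one sees that $\partial_i$ and $\partial_i^{op}$ lower the $\Gamma$-degree of a homogeneous element of~$U_q^+$ by exactly~$\alpha_i$. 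Since $\alpha_i^\vee(\alpha_i)=a_{ii}=2$, this gives $w_i(\partial_i^{(k)}(x^+))=w_i((\partial_i^{op})^{(k)}(x^+))=w_i(x^+)-2k$ for all $k\ge 0$, which is exactly what makes the exponents in~\eqref{eq:partial-pow} combine correctly.

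For~\eqref{eq:partial-pow} I would induct on~$n$; the cases $n=0,1$ are trivial and Lemma~\ref{lem:partial-prop}\eqref{lem:partial-prop.iii}. For the step, apply $\partial_i$ to the formula for~$n$ and use $\partial_i\circ\partial_i^{(n)}=(n+1)_{q_i}\partial_i^{(n+1)}$ on the left; on the right, applying Lemma~\ref{lem:partial-prop}\eqref{lem:partial-prop.iii} to each product $\partial_i^{(a)}(x^+)\partial_i^{(b)}(y^+)$ and then substituting the exponent values from the degree step, one checks that every term $\partial_i^{(p)}(x^+)\partial_i^{(r)}(y^+)$ with $p+r=n+1$ and $p,r\ge 1$ collects exactly two contributions: one from differentiating the left factor of $\partial_i^{(p-1)}(x^+)\partial_i^{(r)}(y^+)$, with coefficient $(p)_{q_i}q_i^{-r}$ times $q_i^{\frac12(p\,w_i(y^+)-r\,w_i(x^+))}$, and one from differentiating the right factor of $\partial_i^{(p)}(x^+)\partial_i^{(r-1)}(y^+)$, with coefficient $(r)_{q_i}q_i^{p}$ times the same power of~$q_i$; the extreme terms $p=0$ and $r=0$ get only one contribution, with coefficient $(n+1)_{q_i}$. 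By the $q$-integer identity above the coefficient of $\partial_i^{(p)}(x^+)\partial_i^{(r)}(y^+)$ equals $(n+1)_{q_i}\,q_i^{\frac12(p\,w_i(y^+)-r\,w_i(x^+))}$, and dividing through by $(n+1)_{q_i}$ completes the induction. The $(\partial_i^{op})^{(n)}$ statement follows from the identical argument with $q_i$ replaced by $q_i^{-1}$ everywhere, using $(m)_{q_i^{-1}}=(m)_{q_i}$ (so that $(\partial_i^{op})^{(n)}=(\partial_i^{op})^n/(n)_{q_i}!$ and $\partial_i^{op}\circ(\partial_i^{op})^{(n)}=(n+1)_{q_i}(\partial_i^{op})^{(n+1)}$ still hold).

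Finally, the top-degree identity falls out of~\eqref{eq:partial-pow} by putting $n=\ell_i(x^+)+\ell_i(y^+)$: in the resulting sum every summand with $a\ne\ell_i(x^+)$ vanishes, since then $a>\ell_i(x^+)$ or else $b=n-a>\ell_i(y^+)$, so only the term $q_i^{\frac12\alpha_i^\vee(\ell_i(x^+)\deg y^+-\ell_i(y^+)\deg x^+)}\,\partial_i^{(top)}(x^+)\,\partial_i^{(top)}(y^+)$ survives; as $U_q^+$ has no zero divisors this term is non-zero, whence $\ell_i(x^+y^+)=\ell_i(x^+)+\ell_i(y^+)$ and the claimed formula for $\partial_i^{(top)}(x^+y^+)$ holds. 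I do not expect a real obstacle: the entire content is the Leibniz rule of Lemma~\ref{lem:partial-prop}\eqref{lem:partial-prop.iii} together with one $q$-Pascal identity. The only steps that need care are bookkeeping the half-integer powers of~$q_i$ (handled by the degree computation in the first step) and, for the last line, recalling that $U_q^+$ is a domain.
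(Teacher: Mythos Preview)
Your argument is correct and is exactly the intended one: the paper states this as an immediate corollary of Lemma~\ref{lem:partial-prop}\eqref{lem:partial-prop.iii} with no explicit proof, and the induction you carry out (with the degree shift $w_i(\partial_i^{(k)}(x^+))=w_i(x^+)-2k$ and the identity $(p)_{q_i}q_i^{-r}+(r)_{q_i}q_i^{p}=(p+r)_{q_i}$) is precisely how one fills in the details. Your observation that the ``in particular'' clause implicitly uses that $U_q^+$ is a domain, to ensure $\ell_i(x^+y^+)=\ell_i(x^+)+\ell_i(y^+)$, is a genuine point the paper leaves tacit.
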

Define $\partial_i^-,\partial_i^-{}^{op}:U_q^-\to U_q^-$ by 
$\partial_i^-(x)=\partial_i(x^t)^t$ and $\partial_i^-{}^{op}(x)=\partial_i^{op}(x^t)^t$, $x\in U_q^-$. 
Then $\ell_i:U_q^-\to\ZZ_{\ge 0}$ and $(\partial_i^-)^{(top)}$ are defined accordingly. We will sometimes use the notation 
$\partial_i^+$, $\partial_i^+{}^{op}$ for $\partial_i$, $\partial_i^{op}$.

\begin{lemma}\label{lem:partial-form}
For all $x,y\in U_q^-$ and~$k\in\ZZ_{\ge 0}$ 
\begin{equation}\label{eq:partial-form}
\fgfrm{(\partial_i^-)^{(k)}(x)}{y}=\fgfrm{x}{F_i^{\la k\ra}y},\qquad \fgfrm{(\partial_i^-{}^{op})^{(k)}(x)}{y}=\fgfrm{x}{yF_i^{\la k\ra}}.
\end{equation}
\end{lemma}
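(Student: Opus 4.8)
The plan is to establish the two identities for $k=1$ and then iterate. For the base case I would first unwind the definitions: by construction $\partial_i^-(x)=\partial_i(x^t)^t$ and $\partial_i^-{}^{op}(x)=\partial_i^{op}(x^t)^t$, while $\fgfrm{\cdot}{\cdot}$ is the form of~\S\ref{subs:A-Dd-diag} obtained from the Hopf pairing $\lra{\cdot}{\cdot}\colon U_q^-\tensor U_q^+\to\kk$ and the anti-involution~${}^t$, so that (up to the degree-dependent normalization fixed in~\S\ref{subs:A-Dd-diag}, exactly as in the proof of Proposition~\ref{prop:basis of UZZ}) one has $\fgfrm{x}{y}=\lra{y}{x^{*t}}$. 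Now the defining relation~\eqref{eq:partial-def} expresses the commutator $[F_i,x^+]$ in $U_q(\tilde\gg)=\mathcal U_\chi(V^-,V^+)$ through $\partial_i(x^+)$, $\partial_i^{op}(x^+)$ and the $\invprod$-action~\eqref{eq:triangle-def}, whereas Lemma~\ref{lem:comm-doubl} (cf.~\cite{L1}*{Proposition~3.1.6}) expresses this same commutator through the Hopf pairing applied to the components of $\ul\Delta(x^+)$. Comparing the $K_{+i}$- and $K_{-i}$-homogeneous parts identifies $\partial_i$ and $\partial_i^{op}$, up to the $q_i^{\pm\frac12\alpha_i^\vee(\cdot)}$ twists coming from~$\invprod$, with the right and left Hopf-pairing differentiations by $E_i$; transporting this through~${}^t$ and $\fgfrm{x}{y}=\lra{y}{x^{*t}}$, and using the Hopf-pairing property together with $\lra{F_i}{E_i}=q_i-q_i^{-1}$, yields for all $x,y\in U_q^-$
$$
\fgfrm{(q_i-q_i^{-1})\,\partial_i^-(x)}{y}=\fgfrm{x}{F_iy},\qquad
\fgfrm{(q_i-q_i^{-1})\,\partial_i^-{}^{op}(x)}{y}=\fgfrm{x}{yF_i}.
$$
Since $(1)_{q_i}!=1$ and $F_i^{\la 1\ra}=(q_i-q_i^{-1})^{-1}F_i$, this is precisely~\eqref{eq:partial-form} for $k=1$. (Alternatively, as $\fgfrm{\cdot}{\cdot}$ is non-degenerate on each graded component, these adjointness relations characterize $\partial_i^-$ and $\partial_i^-{}^{op}$ uniquely, so one may instead check compatibility with products via~\eqref{eq:partial-inv} and agreement on generators, where~\eqref{eq:partial-def} gives $\partial_i^-(F_j)=\partial_i^-{}^{op}(F_j)=\delta_{ij}$.)

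For general $k$ I would simply iterate the $k=1$ identities. This is legitimate since $F_i^ry\in U_q^-$ for all $r\ge0$ and powers of the single element $F_i$ commute, so no straightening is required; one obtains $\fgfrm{(\partial_i^-)^k(x)}{y}=(q_i-q_i^{-1})^{-k}\fgfrm{x}{F_i^ky}$ and $\fgfrm{(\partial_i^-{}^{op})^k(x)}{y}=(q_i-q_i^{-1})^{-k}\fgfrm{x}{yF_i^k}$ for all $x,y\in U_q^-$. Dividing by $(k)_{q_i}!$ — so the left-hand sides become $\fgfrm{(\partial_i^-)^{(k)}(x)}{y}$ and $\fgfrm{(\partial_i^-{}^{op})^{(k)}(x)}{y}$ in the notation of Section~\ref{sec:prelim} — and invoking $\la k\ra_{q_i}!=(q_i-q_i^{-1})^k(k)_{q_i}!$ together with $F_i^{\la k\ra}=\la k\ra_{q_i}!^{-1}F_i^k$, the powers of $q_i-q_i^{-1}$ cancel and one lands exactly on~\eqref{eq:partial-form}.

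The only genuine work is the $k=1$ step: correctly identifying $\fgfrm{\cdot}{\cdot}$ with $\lra{\cdot}{\cdot}$ pre-composed with~${}^t$ and pinning down the scalar $q_i-q_i^{-1}$ so that~\eqref{eq:partial-def} translates into the stated adjointness. One must track the $q_i^{\pm\frac12\alpha_i^\vee(\cdot)}$ twists arising from the difference between the $\invprod$-action and plain multiplication; by~\eqref{eq:triangle-bar} these are $\bar\cdot$-symmetric degree-homogeneous scalars, and since $\fgfrm{\cdot}{\cdot}$ vanishes off the diagonal of the $\Gamma$-grading on $U_q^-$ they contribute trivially to all the pairings above and drop out of the final identity.
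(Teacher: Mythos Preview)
Your approach is the same as the paper's: establish the case $k=1$ by unwinding the definitions of $\partial_i^-$, $\fgfrm{\cdot}{\cdot}$ and the relation between $\partial_i$ and the Hopf-pairing derivation $\partial_{F_i}$, then iterate and rescale. The reduction to $k=1$ and the passage to general $k$ are carried out exactly as you describe, and the second identity is deduced from the first via ${}^*$-symmetry of~$\fgfrm{\cdot}{\cdot}$.

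The one point that is not quite right is your last paragraph. The $q$-power twists do \emph{not} drop out by mere diagonality of $\fgfrm{\cdot}{\cdot}$ on the $\Gamma$-grading. The normalization in $\fgfrm{u}{v}$ is $q^{-\frac12\ul\gamma(\deg v)}$, and the second arguments on the two sides of the desired identity have different degrees: $\deg y$ versus $\deg(F_i y)=\deg y+\alpha_i$. The mismatch is precisely $q^{-\frac12\alpha_i\cdot\deg y}$, and this is supplied by the twist relating $\partial_i^{op}$ to $\partial_{F_i}^{op}$ in Lemma~\ref{lem:partial-prop}\eqref{lem:partial-prop.v}, together with the degree constraint $\deg y=\deg x-\alpha_i$ forced by nonvanishing of the pairing. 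The bookkeeping closes because of the cocycle identity $\ul\gamma(\alpha+\alpha')=\ul\gamma(\alpha)+\ul\gamma(\alpha')+\alpha\cdot\alpha'$ and $\ul\gamma(\alpha_i)=0$ from~\eqref{eq:kappa}, not because the twists are the same scalar on both sides. The paper's proof makes this explicit in the chain
\[
q^{-\frac12\ul\gamma(\deg y)-\frac12\alpha_i\cdot(\deg x^{*t}-\alpha_i)}\lr{\partial_{F_i}^{op}(x^{*t})}{y}
=q^{-\frac12\ul\gamma(\deg y+\alpha_i)}\lr{x^{*t}}{F_iy}=\fgfrm{x}{F_iy}.
\]
So your outline is correct, but you should replace the ``drop out trivially'' claim by this one-line application of~\eqref{eq:kappa}.
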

\begin{proof}
It is sufficient to show that $(q_i-q_i^{-1})\fgfrm{\partial_i(x^t)^t}{y}=\fgfrm{x}{F_i y}$. Then an obvious induction yields 
$(q_i-q_i^{-1})^n \fgfrm{(\partial_i^-)^n(x)}{y}=\fgfrm{x}{F_i^n y}$ and the assertion follows. We have 
\begin{align*}
(q_i-q_i^{-1})\fgfrm{\partial_i(x^t)^t}{y}&=
(q_i-q_i^{-1})q^{-\frac12\ul\gamma(\deg y)} \lr{ \partial_i(x^t)^*}{y}
=(q_i-q_i^{-1})q^{-\frac12\ul \gamma(\deg y)} \lr{ \partial_i^{op}(x^{*t})}{y}\\
&=q^{-\frac12\ul\gamma(\deg y)-\frac12 \alpha_{i}\cdot (\deg x^{*t}-\alpha_{i})} \lr{ \partial_{F_i}^{op}(x^{*t})}{y}\\
&=q^{-\frac12\ul\gamma( \deg y)-\frac12 \alpha_{i}\cdot \deg y} \lr{ x^{*t}}{F_i y}=q^{-\frac12\ul\gamma(\deg y+\alpha_{i})} \lr{x^{*t}}{F_iy}=
\fgfrm{ x}{F_i y}.
\end{align*}
The second identity follows from the first since $\fgfrm{x^*}{y}=\fgfrm{x}{y^*}$.
\end{proof}
\begin{example}\label{ex:power}
Recall (\cite{L1}*{14.5.3}) that $F_i^{\la n\ra}\in\mathbf B^{\can}$ for all~$i\in I$, $n\in\ZZ_{\ge 0}$. Clearly, $\fgfrm{x}{F_i^{\la n\ra}}=0$ unless~$x\in \kk 
F_i^n$.
Since $\partial_i(E_i^n)=(n)_{q_i}E_i^{n-1}$,
it follows from Lemma~\ref{lem:partial-form} that $\fgfrm{F_i^n}{F_i^{\la n\ra}}=1$, hence $F_i^n=\delta_{F_i^{\la n\ra}}\in\mathbf B_{\nn_-}$. 
\end{example}

We will need some properties of~$\mathbf B_{\nn_\pm}$ with respect to~$\partial_i^\pm$ which we gather in the following proposition
\begin{proposition}\label{prop:deriv-dual-bas}
Let $b_\pm\in\mathbf B_{\nn_\pm}$. Then 
\begin{enumerate}[{\rm(a)}]
\item\label{prop:deriv-dual-bas.a} For all~$r\in\mathbb Z_{\ge 0}$, 
\begin{gather*}
(\partial_i^-)^{(r)}(b_-)=\sum_{b'_-\in\mathbf B_{\nn_-}} \tilde C^{F_i^r,b_-}_{b'_-}b'_-,\qquad 
(\partial_i^-{}^{op})^{(r)}(b_-)=\sum_{b'_-\in\mathbf B_{\nn_-}} \tilde C^{b_-,F_i^r}_{b'_-}b'_-,\\
(\partial_i^+)^{(r)}(b_+)=\sum_{b'_+\in\mathbf B_{\nn_+}} \tilde C^{E_i^r,b_+}_{b'_+}b'_+,\qquad 
(\partial_i^+{}^{op})^{(r)}(b_+)=\sum_{b'_+\in\mathbf B_{\nn_+}} \tilde C^{b_+,E_i^r}_{b'_+}b'_+,
\end{gather*}
where $\tilde C^{b'_\pm,b''_\pm}_{b_\pm}\in\ZZ[q,q^{-1}]$ are defined in Corollary~\ref{cor:str-const}.
Thus, in particular, $(\partial_i^{\pm}){}^{(r)}(\mathbf B_{\nn_\pm})\subset \ZZ[q,q^{-1}]\mathbf B_{\nn_\pm}$.
 \item\label{prop:deriv-dual-bas.b} $(\partial_i^\pm)^{(top)}(b_\pm),(\partial_i^\pm{}^{op})^{(top)}(b_\pm)\in\mathbf B_{\nn_\pm}$. 
 Moreover, for each $b_\pm\in \mathbf B_{\nn_\pm}\cap\ker\partial_i^\pm$ and for each~$n\in\ZZ_{\ge 0}$ 
 there exists a unique $\hat b_\pm\in\mathbf B_{\nn_\pm}$
 such that $\partial_i^\pm{}^{(top)}(\hat b_\pm)=b_\pm$ and $\ell_i(\hat b_+)=n$.
\end{enumerate}
\end{proposition}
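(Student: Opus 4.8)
The plan is to obtain~\eqref{prop:deriv-dual-bas.a} by dualizing multiplication by $F_i^{\la r\ra}$ through the pairing $\fgfrm{\cdot}{\cdot}$, and~\eqref{prop:deriv-dual-bas.b} by reducing, along each $i$-string of $\mathbf B^{\can}$, to the explicit rank-one situation. In both parts it suffices to argue for $\partial_i^-$ and $\partial_i^-{}^{op}$: the assertions for $\partial_i^+=\partial_i$ and $\partial_i^+{}^{op}=\partial_i^{op}$ are transported along the anti-isomorphism ${}^t\colon U_q^-\to U_q^+$, which maps $\mathbf B_{\nn_-}$ onto $\mathbf B_{\nn_+}$ (Definition~\ref{defn:dcb}) and intertwines $\partial_i^-$ with $\partial_i^+$ and $\partial_i^-{}^{op}$ with $\partial_i^+{}^{op}$ by construction; moreover, by Lemma~\ref{lem:partial-prop}\eqref{lem:partial-prop.ii} together with Lemma~\ref{lem:dual-cb-basic}, the $\partial_i^-{}^{op}$-statements follow from the $\partial_i^-$-statements by applying the anti-involution ${}^*$, which preserves $\mathbf B_{\nn_-}$.

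For~\eqref{prop:deriv-dual-bas.a}, fix $b_-=\delta_\beta\in\mathbf B_{\nn_-}$ with $\beta\in\mathbf B^{\can}$ and $r\ge0$. Since $\mathbf B_{\nn_-}$ is the $\fgfrm{\cdot}{\cdot}$-dual basis of $\mathbf B^{\can}$, the coefficient of $b'_-=\delta_b$ in $(\partial_i^-)^{(r)}(b_-)$ equals $\fgfrm{(\partial_i^-)^{(r)}(b_-)}{b}$, which by Lemma~\ref{lem:partial-form} equals $\fgfrm{b_-}{F_i^{\la r\ra}b}$. Expanding $F_i^{\la r\ra}\cdot b$ in the canonical basis (whose structure constants are Laurent polynomials, \cite{L1}*{\S14.4.14}) and using $\fgfrm{\delta_\beta}{b''}=\delta_{\beta,b''}$ together with $\delta_{F_i^{\la r\ra}}=F_i^r$ from Example~\ref{ex:power}, this coefficient is read off as the structure constant $\tilde C^{F_i^r,b_-}_{b'_-}$ of Corollary~\ref{cor:str-const} --- equivalently, via the compatibility of $\fgfrm{\cdot}{\cdot}$ with product and coproduct, as the coefficient of the relevant leg of $\ul\Delta$, after absorbing the bicharacter power and the sign of Lemma~\ref{lem:dual-cb-basic} into the normalization. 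Integrality, and hence the final assertion of~\eqref{prop:deriv-dual-bas.a}, is then immediate from Corollary~\ref{cor:str-const}. For $\partial_i^-{}^{op}$ one uses instead the second identity of Lemma~\ref{lem:partial-form}, $\fgfrm{(\partial_i^-{}^{op})^{(r)}(x)}{y}=\fgfrm{x}{yF_i^{\la r\ra}}$, which produces $\tilde C^{b_-,F_i^r}_{b'_-}$.

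For~\eqref{prop:deriv-dual-bas.b}, write $b_-=\delta_c$ and let $\tilde e_i,\tilde f_i,\varepsilon_i$ denote the crystal operators on $\mathbf B^{\can}$. The crucial input is the structure of $\mathbf B^{\can}$ along an $i$-string (cf.\ \cite{L1}*{Chapter~14} and \cite{Kas}): every $c\in\mathbf B^{\can}$ is $\tilde f_i^n c_0$ for a unique $i$-highest $c_0$ (that is, $\varepsilon_i(c_0)=0$) and a unique $n=\varepsilon_i(c)\ge0$, with $\tilde e_i^n c=c_0$; moreover the $U_q(\lie{sl}_2)_i$-submodule of $U_q^-$ spanned by a given string, together with the canonical basis elements it contains, is isomorphic as a based module to the one generated by $c_0$ in the rank-one case, where $\mathbf B^{\can}=\{F_i^{\la m\ra}\}$, $\mathbf B_{\nn_-}=\{F_i^m\}$ (Example~\ref{ex:power}), and $\partial_i(E_i^m)=(m)_{q_i}E_i^{m-1}$ gives at once $(\partial_i^-)^{(m)}(F_i^m)=1$. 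Dualizing this string decomposition through $\fgfrm{\cdot}{\cdot}$ by means of Lemma~\ref{lem:partial-form} yields $\ell_i(\delta_c)=\varepsilon_i(c)$ and
$$
(\partial_i^-)^{(top)}(\delta_c)=\delta_{\tilde e_i^{\varepsilon_i(c)}c}\in\mathbf B_{\nn_-},
$$
with no lower-order terms and coefficient exactly $1$: among the elements of $\mathbf B^{\can}$ of degree $\deg c-\varepsilon_i(c)\alpha_i$ only $\tilde e_i^{\varepsilon_i(c)}c$ lies in the $i$-string of $c$, so by Lemma~\ref{lem:partial-form} all other dual-basis components vanish, and the surviving coefficient is pinned to $1$ by the rank-one identity. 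This gives the first assertion of~\eqref{prop:deriv-dual-bas.b}; applying ${}^*$ yields it for $\partial_i^-{}^{op}$. For the second assertion, given $b_-=\delta_{c_0}\in\mathbf B_{\nn_-}\cap\ker\partial_i^-$, equivalently $\varepsilon_i(c_0)=0$, and $n\ge0$, the element $\hat b_-:=\delta_{\tilde f_i^n c_0}\in\mathbf B_{\nn_-}$ satisfies $\ell_i(\hat b_-)=\varepsilon_i(\tilde f_i^n c_0)=n$ and $(\partial_i^-)^{(top)}(\hat b_-)=\delta_{\tilde e_i^n\tilde f_i^n c_0}=\delta_{c_0}=b_-$, and it is the only such element by the uniqueness in the $i$-string parametrization; here the statement's $\ell_i(\hat b_+)$ should read $\ell_i(\hat b_\pm)$, and the $+$-versions follow via ${}^t$ as above.

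The step I expect to be the main obstacle is establishing the exact equality $(\partial_i^-)^{(top)}(\delta_c)=\delta_{\tilde e_i^{\varepsilon_i(c)}c}$ --- that the top quasi-derivation produces a single dual-canonical-basis vector with coefficient precisely $1$ and no corrections. This is the incarnation on the dual basis of ``the crucial property of crystal operators'': once the based-$U_q(\lie{sl}_2)_i$-module structure of $\mathbf B^{\can}$ along each $i$-string is available it reduces to the rank-one identity $(\partial_i^-)^{(m)}(F_i^m)=1$, but isolating exactly that term rests on the crystal-theoretic description of which canonical basis vectors can occur in $F_i^{\la r\ra}\cdot b$. Everything else --- Lemma~\ref{lem:partial-form}, Example~\ref{ex:power}, Corollary~\ref{cor:str-const}, and the ${}^t$, ${}^*$ symmetries --- is routine.
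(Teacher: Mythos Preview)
Your argument for part~\eqref{prop:deriv-dual-bas.a} is the same as the paper's: expand $(\partial_i^-)^{(r)}(\delta_b)$ in the dual basis, compute each coefficient via Lemma~\ref{lem:partial-form} as $\fgfrm{\delta_b}{F_i^{\la r\ra}b'}$, and read off the structure constant using Remark~\ref{rem:str-const-lus} and Example~\ref{ex:power}.

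Part~\eqref{prop:deriv-dual-bas.b} has a genuine gap. The claim that ``the $U_q(\lie{sl}_2)_i$-submodule of $U_q^-$ spanned by a given string, together with the canonical basis elements it contains, is isomorphic as a based module to the rank-one case'' is false: for $b'\in\mathbf B^{\can}$ with $\varepsilon_i(b')=0$, the product $F_i^{\la n\ra}b'$ is \emph{not} the crystal element $\tilde f_i^n b'$ in general --- there are correction terms from other $i$-strings. Consequently your inference ``only $\tilde e_i^{\varepsilon_i(c)}c$ lies in the $i$-string of $c$, so by Lemma~\ref{lem:partial-form} all other dual-basis components vanish'' does not follow: $\fgfrm{\delta_c}{F_i^{\la n\ra}b'}$ can pick up contributions from those correction terms even when $b'$ is outside the $i$-string of~$c$.

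The paper closes this gap with Lusztig's filtration rather than a based-module isomorphism. Set $\mathbf B^{\can}_{i;r}=(\mathbf B^{\can}\cap F_i^r U_q^-)\setminus(\mathbf B^{\can}\cap F_i^{r+1}U_q^-)$ and let $n=\ell_i(\delta_c)$. By \cite{L1}*{Theorem~14.3.2}, for each $b'\in\mathbf B^{\can}_{i;0}$ one has $F_i^{\la n\ra}b'-\pi_{i,n}(b')\in\sum_{r>n}\ZZ[q,q^{-1}]\mathbf B^{\can}_{i;r}$ with $\pi_{i,n}:\mathbf B^{\can}_{i;0}\to\mathbf B^{\can}_{i;n}$ a bijection. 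The correction terms are harmless because $\fgfrm{\delta_c}{\mathbf B^{\can}_{i;r}}\subset\fgfrm{(\partial_i^-)^{(r)}(\delta_c)}{U_q^-}=0$ for $r>n$, again by Lemma~\ref{lem:partial-form}. Hence $\fgfrm{\delta_c}{F_i^{\la n\ra}b'}=\fgfrm{\delta_c}{\pi_{i,n}(b')}=\delta_{c,\pi_{i,n}(b')}$, giving $(\partial_i^-)^{(top)}(\delta_c)=\delta_{\pi_{i,n}^{-1}(c)}$ with coefficient exactly~$1$; the bijection $\pi_{i,n}$ then yields existence and uniqueness of~$\hat b_-$. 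You correctly flagged ``which canonical basis vectors can occur in $F_i^{\la r\ra}\cdot b$'' as the crux --- that is precisely what \cite{L1}*{Theorem~14.3.2} supplies, and it replaces the based-module assertion you would otherwise need.
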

\begin{proof}
To prove~\eqref{prop:deriv-dual-bas.a}, note that by Lemma~\ref{lem:partial-form}, Remark~\ref{rem:str-const-lus}
and Example~\ref{ex:power} we have for any $b,b'\in\mathbf B^{\can}$
\begin{multline*}
(\partial_i^-)^{(r)}(\delta_b)=\sum_{b'\in\mathbf B^{\can}} \fgfrm{(\partial_i^-)^{(r)}(\delta_b)}{b'}\delta_{b'}=
\sum_{b'\in\mathbf B^{\can}} \fgfrm{\delta_b}{F_i^{\la r\ra}b'}\delta_{b'}\\=
\sum_{b',b''\in\mathbf B^{\can}}\tilde C^{\delta_{F_i^{\la r\ra}},\delta_{b'}}_{\delta_{b''}}
\fgfrm{\delta_b}{b''}\delta_{b'}=\sum_{b'\in\mathbf B^{\can}}\tilde C^{F_i^r,\delta_{b'}}_{\delta_{b}}
\delta_{b'}.
\end{multline*}
The remaining identities are proved similarly. 

To prove~\eqref{prop:deriv-dual-bas.b}, note that since  $\mathbf B_{\nn_+}=\mathbf B_{\nn_-}^t$ and $\mathbf B_{\nn_\pm}{}^*=\mathbf B_{\nn_\pm}$,
it suffices to prove that $(\partial_i^-)^{(top)}(b_-)\in\mathbf B_{\nn_-}$. 
Following~\cite{L1}*{\S14.3}, denote $\mathbf B^{\can}_{i;\ge r}=\mathbf B^{\can}\cap F_i^r U_q^-$ and $\mathbf B^{\can}_{i;r}=\mathbf B^{\can}_{i;\ge r}\setminus
\mathbf B^{\can}_{i;\ge r+1}$. It follows from~\cite{L1}*{\S14.3} that for all~$i\in I$, $\mathbf B^{\can}=\bigsqcup_{r\ge 0} \mathbf B^{\can}_{i;r}$.
Let $b\in\mathbf B^{\can}$ and let $n=\ell_i(\delta_b)$, $u=(\partial_i^-)^{(top)}(\delta_b)=
(\partial_i^-)^{(n)}(\delta_b)$. Then $u\in\ker \partial_i^-$ which, by
Lemma~\ref{lem:partial-form}, is orthogonal to $\mathbf B^{\can}_{i;s}$, $s>0$. Thus, we can write 
$$
u=\sum_{b'\in \mathbf B^{\can}_{i;0}} \fgfrm{u}{b'} \delta_{b'}=\sum_{b'\in \mathbf B^{\can}_{i;0}} \fgfrm{\delta_b}{F_i^{\la n\ra}b'}\delta_{b'}.
$$
By~\cite{L1}*{Theorem~14.3.2}, for each $b'\in \mathbf B^{\can}_{i;0}$ there exists a unique $\pi_{i,n}(b')\in\mathbf B^{\can}_{i;n}$
such that 
$F_i^{\la n\ra} b'-\pi_{i;n}(b')\in\sum_{r>n} \ZZ[q,q^{-1}]\mathbf B^{\can}_{i;r}$. Using Lemma~\ref{lem:partial-form}, we conclude that for 
any $b''\in\mathbf B^{\can}_{i;r}$ with~$r>n$, 
$\fgfrm{ \delta_b}{b''}\in \fgfrm{\delta_b}{F_i^{\la r\ra}U_q^-}=\fgfrm{(\partial_i^-)^{(r)}(\delta_b)}{U_q^-}=0$. Thus,
$$
u=\sum_{b'\in \mathbf B^{\can}_{i;0}} \fgfrm{\delta_b}{\pi_{i;n}(b')}\delta_{b'}.
$$
Note that, since~$u\not=0$, we cannot have $\fgfrm{\delta_b}{\pi_{i;n}(b')}=0$ for all~$b'\in\mathbf B^{\can}_{i;0}$.
Since~$\fgfrm{\delta_b}{b''}=\delta_{b,b''}$, we conclude that there exists a unique $b'\in\mathbf B^{\can}_{i;0}$ such that 
$\pi_{i;n}(b')=b$ and then $u=(\partial_i^-)^{(top)}(\delta_b)=\delta_{b'}$. Since $\pi_{i;n}:\mathbf B^{\can}_{i;0}\to\mathbf B^{\can}_{i;n}$
is a bijection by~\cite{L1}*{Theorem~14.3.2}, the assertion follows.
\end{proof}

Let $b_+\in\mathbf B_{\nn_+}$ and let~$r\le \ell_i(b_+)$. By the above Proposition, 
there exists a unique $b'_+$ such that $\ell_i(b'_+)=r$ and $\partial_i^{(r)}(b'_+)=\partial_i^{(top)}(b_+)$.
This implies that 
for each $b_+\in\mathbf B_{\nn_+}$ and each $0\le r\le \ell_i(b_+)$ there exists a unique element of~$\mathbf B_{\nn_+}$,
denoted~$\tilde\partial_i^r(b_+)$, such that $\ell_i(\tilde\partial_i^r(b_+))=\ell_i(b_+)-r$ and
\begin{equation}\label{eq:crystal-act-part}
\partial_i^{(r)}(b_+)-\binom{\ell_i(b_+)}{r}_{q_i}\tilde\partial_i^r(b_+)\in\sum_{b'_+\in\mathbf B_{\nn_+}\,:\, \ell(b'_+)<\ell_i(b_+)-r} \ZZ[q,q^{-1}]b'_+.
\end{equation}
The correspondence $b_+\mapsto \tilde\partial_i^r(b_+)$ is a bijection. In particular, $\partial_i^{(top)}(b_+)=\tilde\partial_i^{\ell_i(b_+)}(b_+)$. 
Moreover,
using~\cite{Kas}*{5.3.8-5.3.10} we obtain 
\begin{equation}\label{eq:crystal-act-part-precise}
\partial_i^{(r)}(b_+)-\binom{\ell_i(b_+)}{r}_{q_i}\tilde\partial_i^r(b_+)\in q_i^{\binom{r+1}2-r\ell_i(b_+)}
\sum_{b'_+\in\mathbf B_{\nn_+}\,:\, \ell(b'_+)<\ell_i(b_+)-r} q\ZZ[q]b'_+.
\end{equation}

\begin{example}\label{ex:special bull}
We now discuss the construction of elements of the form $F_i^r \bullet b_+$, $i\in I$, $r\in\ZZ_{\ge 0}$, $b_+\in\mathbf B_{\nn_+}$.
We need the following 
\begin{lemma}\label{lem:comm Fi}
For all $x_+\in U_q^+$, $i\in I$ and~$r\in\ZZ_{\ge 0}$
$$
x^+ F_i^r=\sum_{\substack{r',r''\ge 0\\r'+r''\le r}} 
(-1)^{r'} q_i^{-\binom{r'}2+\binom{r''}2}\la r'+r''\ra_{q_i}! \binom{r}{r'+r''}_{q_i}
K_{+i}^{r'}K_{-i}^{r''}
\invprod (F_i^{r-r'-r''} \partial_i^{(r')}\partial_i^{op}{}^{(r'')}(x^+)).
$$
\end{lemma}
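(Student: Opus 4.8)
The plan is to prove the identity by induction on $r$, using the basic commutation rule~\eqref{eq:partial-def} as the base case and the quasi-Leibniz rules for $\partial_i,\partial_i^{op}$ from Lemma~\ref{lem:partial}\eqref{lem:partial-prop.v},\eqref{lem:partial-prop.iii} together with the definition~\eqref{eq:triangle-def} of the $\invprod$-action to propagate it. First I would rewrite~\eqref{eq:partial-def} as a formula for $x^+F_i$: moving $F_i$ past $x^+$ produces $F_i x^+$ minus the correction term, i.e.
$$
x^+F_i = F_i x^+ - (q_i-q_i^{-1})\bigl(K_{+i}\invprod\partial_i(x^+) - K_{-i}\invprod\partial_i^{op}(x^+)\bigr),
$$
which is exactly the $r=1$ case after identifying $\la 1\ra_{q_i}!=1$, $\binom{1}{1}_{q_i}=1$ and collecting the $(r',r'')\in\{(0,0),(1,0),(0,1)\}$ terms (note $F_i x^+$ itself is the $(r',r'')=(0,0)$, $r-r'-r''=1$ contribution, since $F_i^1\partial_i^{(0)}\partial_i^{op}{}^{(0)}(x^+)=F_i x^+$ and the sign/coefficient are $1$). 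The slightly subtle point at the base case is that $F_i x^+$ is not literally $K^0\invprod(F_i x^+)$ unless one is careful, but since $K_{\pm i}^0=1$ acts trivially under $\invprod$ this is immediate.

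For the inductive step I would assume the formula for $r$ and multiply both sides on the right by $F_i$. On the right-hand side each summand $K_{+i}^{r'}K_{-i}^{r''}\invprod(F_i^{r-r'-r''}\,\partial_i^{(r')}\partial_i^{op}{}^{(r'')}(x^+))$ gets one more $F_i$ appended. I then commute that new $F_i$ to the left past $F_i^{r-r'-r''}$ (trivial — they're equal) so it sits just to the right of $\partial_i^{(r')}\partial_i^{op}{}^{(r'')}(x^+)$, apply the $r=1$ case to that element (whose $\partial_i$- and $\partial_i^{op}$-derivatives are $\partial_i^{(r'+1)}\partial_i^{op}{}^{(r'')}(x^+)$ and $\partial_i^{(r')}\partial_i^{op}{}^{(r''+1)}(x^+)$ up to the divided-power normalization $[r'+1]_{q_i}$, $[r''+1]_{q_i}$ coming from $\partial_i^{(r'+1)}=[r'+1]_{q_i}^{-1}\partial_i\partial_i^{(r')}$), and finally commute the freshly produced $K_{+i}$ or $K_{-i}$ factors through $K_{+i}^{r'}K_{-i}^{r''}\invprod(\,\cdot\,)$ using~\eqref{eq:triangle-def}/\eqref{eq:inv-act-defn}; this is where the powers of $q_i$ of the shape $q_i^{-\binom{r'}2+\binom{r''}2}$ are generated, since moving $K_{+i}$ past $F_i^{\,r-r'-r''}\partial_i^{(r')}\partial_i^{op}{}^{(r'')}(x^+)$ picks up the corresponding $\alpha_i^\vee$-weight. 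Regrouping by the new index pair $(r',r'')$ one collects exactly three contributions into each term, and the resulting recursion on the scalar coefficients is the Pascal-type identity
$$
q_i^{\,r''}\,\la r'\!+\!r''\ra_{q_i}!\binom{r}{r'+r''}_{q_i}[r'+1]_{q_i} + \cdots = \la r'\!+\!r''\ra_{q_i}!\binom{r+1}{r'+r''}_{q_i},
$$
which is a standard $q$-binomial manipulation; I would verify it by expanding $\binom{r+1}{m}_{q_i}=q_i^{m}\binom{r}{m}_{q_i}+q_i^{-(r+1-m)}\binom{r}{m-1}_{q_i}$ (or the mirror version) with $m=r'+r''$, tracking the $q_i$-powers carefully.

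The main obstacle I anticipate is purely bookkeeping: getting every power of $q_i$ right simultaneously — those coming from the $\invprod$-action when $K_{\pm i}$ crosses $F_i$-powers and derivatives of $x^+$, those hidden in converting between $\partial_i^k$ and $\partial_i^{(k)}$, and those in the $q$-binomial Pascal recursion — and checking that the factor $(-1)^{r'}$ and the overall sign of the $K_{-i}$-branch are consistent across the induction (the $K_{-i}$-term in the $r=1$ case carries the opposite sign, and one must check the two sign contributions to a given $(r',r'')$ cancel or combine correctly, yielding precisely $(-1)^{r'}$ and no residual sign on $r''$). Once the scalar recursion is pinned down and matched against the Pascal identity, the lemma follows. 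I would also remark that an alternative, perhaps cleaner, route is to note that $\partial_i,\partial_i^{op}$ together with left multiplication by $F_i$ generate a copy of a quantum Weyl-type algebra acting on $U_q^+$ via~\eqref{eq:partial-def}, and the claimed formula is then a normal-ordering identity in that algebra provable by the same induction; but the direct computation above is self-contained and I would present that.
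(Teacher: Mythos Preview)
Your inductive plan is correct and will go through once the $q$-bookkeeping is carried out; the only slip I see is notational (you write $[r'+1]_{q_i}$ for the divided-power normalization where the paper's convention gives $(r'+1)_{q_i}$, cf.\ the definition $X_i^{(n)}=X_i^n/(n)_{q_i}!$), which is harmless. The three-way Pascal recursion you describe is exactly what one gets, and the identity $\binom{r+1}{m}_{q_i}=q_i^{m-r-1}\binom{r}{m-1}_{q_i}+q_i^{m}\binom{r}{m}_{q_i}$ (or its mirror) closes it.

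The paper, however, does not argue by induction. It invokes the general product formula in the Drinfeld double, Proposition~\ref{prop:product}, which expresses $v\cdot f$ for $v\in\mathcal B(V)$, $f\in\mathcal B(V^\star)$ directly in terms of the iterated braided comultiplications of $v$ and $f$. Since $(\ul\Delta\tensor 1)\ul\Delta(F_i^r)$ is known explicitly by~\eqref{eq:comult Ea Fa} (a $q$-trinomial expansion), one substitutes, evaluates the pairings $\lra{F_i^{r'}}{\,\cdot\,}_\pm$ using~\eqref{eq:A-brd-qder-def} and Lemma~\ref{lem:partial-prop}\eqref{lem:partial-prop.v} to convert them into $\partial_i,\partial_i^{op}$, and then repackages the $K$-factors into the $\invprod$-action. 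This is a single closed-form computation rather than an induction. Your route is more elementary --- it uses nothing beyond the $r=1$ commutation~\eqref{eq:partial-def} and the definition of $\invprod$ --- whereas the paper's route is more structural, exploiting the Hopf-algebraic machinery already set up in the Appendix; the trade-off is that your argument carries a heavier load of explicit $q$-power tracking, while the paper front-loads that work into Proposition~\ref{prop:product}.
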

\begin{proof}
Since by~\eqref{eq:comult Ea Fa}
\begin{align*}
\ul\Delta(1\tensor\ul\Delta)(F_i^r)&=\sum_{r'+r''+r'''=r} \qbinom[q_i^2]{r}{r'+r''}\qbinom[q_i^2]{r'+r''}{r'} F_i^{r'}\tensor F_i^{r''}\tensor F_i^{r'''}\\
&=\sum_{r'+r''+r'''=r} q_i^{r'r''+r'r'''+r''r'''} \frac{ (r)_{q_i}!}{(r')_{q_i}!(r'')_{q_i}!(r''')_{q_i}!}\, F_i^{r'}\tensor F_i^{r''}\tensor F_i^{r'''},
\end{align*}
we have by Proposition~\ref{prop:product}, Lemma~\ref{lem:frm-deg}, \eqref{eq:A-brd-qder-def} and Lemma~\ref{lem:partial-prop}\eqref{lem:partial-prop.v}
\begin{align*}
x^+&F_i^r=\sum_{r'+r''+r'''=r}(-1)^{r'} q_i^{-r'(r'-1)-r'r''-r'r'''-r''r'''}\frac{ (r)_{q_i}!}{(r')_{q_i}!(r'')_{q_i}!(r''')_{q_i}!}\times\\
&\qquad\qquad\lra{F_i^{r'}}{\ul x^+_{(3)}}
\lra{F_i^{r'''}}{\ul x^+_{(1)}} K_{-i}^{r'''} F_i^{r''}\ul x^+_{(2)}K_{+i}^{r'}\\
&=\sum_{r'+r''+r'''=r}(-1)^{r'} q_i^{-r'(r'-1)-r'r''-r'r'''-r''r'''}\frac{ (r)_{q_i}!}{(r')_{q_i}!(r'')_{q_i}!(r''')_{q_i}!}
K_{-i}^{r'''} F_i^{r''}\partial_{F_i}^{r'}((\partial_{F_i}^{op})^{r'''}(x^+))K_{+i}^{r'}\\
&=\sum_{r'+r''+r'''=r}(-1)^{r'} q_i^{-r'(r'-1)-r'r''-2r'r'''-r''r'''+\frac12 (r'+r''')\alpha_i^\vee(\deg x_+)-
\binom{r'+1}2-\binom{r'''+1}2}\times\\
&\qquad\qquad\frac{ (r)_{q_i}!(q_i-q_i^{-1})^{r'+r'''}}{(r'')_{q_i}!}
K_{-i}^{r'''} F_i^{r''}\partial_i^{(r')}((\partial_i^{op})^{(r''')}(x^+))K_{+i}^{r'}\\
&=\sum_{r',r''\ge 0}(-1)^{r'} q_i^{\frac12 (r'+r'')\alpha_i^\vee(\deg x_+-r\alpha_i)-
\binom{r'}2+\binom{r''}2}\la r'+r''\ra_{q_i} \binom{r}{r'+r''}_{q_i}\times\\
&\qquad\qquad K_{-i}^{r''} F_i^{r-r'-r''}\partial_i^{(r')}((\partial_i^{op})^{(r'')}(x^+))K_{+i}^{r'}\\
&=\sum_{r',r''\ge 0}(-1)^{r'} q_i^{-
\binom{r'}2+\binom{r''}2}\la r'+r''\ra_{q_i} \binom{r}{r'+r''}_{q_i}
K_{+i}^{r'}K_{-i}^{r''}\invprod F_i^{r-r'-r''}\partial_i^{(r')}((\partial_i^{op})^{(r'')}(x^+)).\qedhere
\end{align*}
\end{proof}

In particular, if $b_+\in\mathbf B_{\nn_+}\cap \ker\partial_i^{op}$ then we have
$$
\overline{F_i^r b_+}=F_i^r b_++\sum_{r'=0}^{r}
(-1)^{r'} q_i^{-\binom{r'}2}\la r'\ra_{q_i}! \binom{r}{r'}_{q_i}\sum_{\substack{b'_+\in\mathbf B_{\nn_+}\cap\ker\partial_i^{op}\\
\ell_i(b'_+)\le \ell_i(b_+)-r}}\tilde C^{E_i^r,b_+}_{b'_+}
K_{+i}^{r'}
\invprod (F_i^{r-r'} b'_+)
$$
This implies that $F_i^r\bullet b_+=F_i^r \circ b_+$ is the unique $\bar\cdot$-invariant element of~$U_q(\tilde\gg)$ 
of the form
\begin{equation}\label{eq:frm-spec-bullet}
F_i^r b_+ + \sum_{r'=1}^{\min(r,\ell_i(b_+))}\sum_{\substack{b'_+\in\mathbf B_{\nn_+}\cap\ker\partial_i^{op}\\\ell_i(b'_+)\le \ell_i(b_+)-r'}}
C_{b_+,b'+;r}^{+} K_{+i}^{r'}\invprod (F_i^{r-r'} b'_+),\qquad C_{b_+,b'+;r}^+\in q\ZZ[q].
\end{equation}
Similarly, if $b_+\in\ker \partial_i$ then $F_i^r \circ b_+=F_i^r b_+$ and $F_i^r\bullet b_+$ is the unique $\bar\cdot$-invariant 
element of~$U_q(\tilde\gg)$ of the form 
\begin{equation}\label{eq:frm-spec-bullet-neg}
F_i^r b_+ + \sum_{r'=1}^{\min(r,\ell_i(b_+^*))}\sum_{\substack{b'_+\in\mathbf B_{\nn_+}\cap\ker\partial_i\\\ell_i(b'_+{}^*)\le \ell_i(b_+^*)-r'}}
C_{b_+,b'+;r}^{-} K_{-i}^{r'}\invprod (F_i^{r-r'} b'_+),\qquad C_{b_+,b'+;r}^{-}\in q^{-1}\ZZ[q^{-1}].
\end{equation}
The coefficients $C^\pm_{b_+,b'_+;r}$ can be expressed inductively, but in general it is not possible to write an explicit formula for them.
\end{example}

\section{Examples of double canonical bases}\label{sec:strconst}

\subsection{Double canonical basis of~\texorpdfstring{$U_q(\lie{sl}_2)$}{Uq(sl2)}}\label{subs:sl_2}
In this section we explicitly compute the double canonical basis in~$\mathcal H_q^+(\gg)$ and~$U_q(\tilde\gg)$ for~$\gg=\lie{sl}_2$.
\begin{lemma}\label{lem:circ-sl2}
In $\mathcal H_q^+(\lie{sl}_2)$ we have 
\begin{align}\label{eq:circ-sl2}
F^{m_-}\circ E^{m_+}
&= \sum_{j\ge 0} (-1)^j q^{j(|m_+-m_-|+1)} \qbinom[q^2]{\min(m_-,m_+)}{j} K_+^{j}\invprod F^{m_--j} E^{m_+-j},\quad m_\pm\in\ZZ_{\ge 0}.
\end{align}
\end{lemma}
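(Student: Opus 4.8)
The plan is to build $F^{m_-}\circ E^{m_+}$ from the central element $C:=FE-qK_+$ of $\mathcal H_q^+(\lie{sl}_2)$ and then to invoke the uniqueness clause of Main Theorem~\ref{thm:circle}. Throughout I use that in $\mathcal H_q^+(\lie{sl}_2)$ one has $K_+E=q^2EK_+$, $K_+F=q^{-2}FK_+$, $[E,F]=(q^{-1}-q)K_+$ (as $K_-\mapsto 0$), that $\mathbf B_{\nn_-}=\{F^r:r\ge0\}$ and $\mathbf B_{\nn_+}=\{E^r:r\ge0\}$, and that $\mathbf d_{F^{m_-},E^{m_+}}=1$ by Theorem~\ref{thm:g-ss-denom}.

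First I would check from the defining relations that $K_+$ commutes with $FE$, so that $C=FE-qK_+=EF-q^{-1}K_+$ is central, and $\overline C=\overline{FE}-q^{-1}\overline{K_+}=EF-q^{-1}K_+=C$. Next I would establish the closed form
\[
C^{\,n}=\sum_{j=0}^{n}(-1)^{j}q^{\,j}\qbinom[q^2]{n}{j}K_+^{\,j}F^{\,n-j}E^{\,n-j},\qquad n\ge0,
\]
by induction on $n$. The case $n=0$ is trivial; for the step one expands $C^{\,n+1}=(FE-qK_+)C^{\,n}$ using $K_+FE=FEK_+$ together with the straightening rule $FE\cdot F^{k}E^{k}=F^{k+1}E^{k+1}-(q^{2k+1}-q)K_+F^{k}E^{k}$ (itself immediate from $EF^{k}=F^{k}E-\la k\ra_{q}q^{k-1}K_+F^{k-1}$, the one-variable instance of Lemma~\ref{lem:comm Fi}); after reindexing, the coefficient of $K_+^{\,j}F^{n+1-j}E^{n+1-j}$ becomes $(-1)^{j}q^{\,j}(\qbinom[q^2]{n}{j}+(q^2)^{\,n+1-j}\qbinom[q^2]{n}{j-1})$, so the claim for $n+1$ is exactly the $q$-Pascal identity $\qbinom[q^2]{n+1}{j}=\qbinom[q^2]{n}{j}+(q^2)^{\,n+1-j}\qbinom[q^2]{n}{j-1}$.

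With the power formula in hand I would match it to the right-hand side $\Phi$ of~\eqref{eq:circ-sl2}. Writing $N=|m_+-m_-|$ and $m=\min(m_-,m_+)$, and using $K_+^{\,j}\invprod(F^{a}E^{b})=q^{\,j(a-b)}K_+^{\,j}F^{a}E^{b}$ (from~\eqref{eq:triangle-def}), one multiplies the power formula on the right by $E^{m_+-m_-}$ when $m_-\le m_+$, resp.\ on the left by $F^{m_--m_+}$ when $m_-\ge m_+$ (here using $F^{a}K_+^{\,j}=q^{2aj}K_+^{\,j}F^{a}$); comparing powers of $q$ then gives
\[
\Phi=C^{\,m_-}E^{\,m_+-m_-}\ \ (m_-\le m_+),\qquad \Phi=F^{\,m_--m_+}C^{\,m_+}=C^{\,m_+}F^{\,m_--m_+}\ \ (m_-\ge m_+).
\]
In particular $\overline\Phi=\Phi$, since $\overline C=C$, $\overline E=E$, $\overline F=F$, $\overline{\cdot}$ is an anti-automorphism and $C$ is central. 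On the other hand $\Phi$ has exactly the shape demanded by Main Theorem~\ref{thm:circle}: its $j=0$ term is $F^{m_-}E^{m_+}=\mathbf d_{F^{m_-},E^{m_+}}F^{m_-}E^{m_+}$, for $1\le j\le m$ its coefficient $(-1)^{j}q^{\,j(N+1)}\qbinom[q^2]{m}{j}$ lies in $q\ZZ[q]$ (as $j(N+1)\ge1$ and $\qbinom[q^2]{m}{j}\in 1+q^2\ZZ_{\ge0}[q^2]$ by~\eqref{eq:sq-qan}), and the $\widehat\Gamma$-degree condition is automatic. By the uniqueness clause of Main Theorem~\ref{thm:circle} this forces $\Phi=F^{m_-}\circ E^{m_+}$, which is~\eqref{eq:circ-sl2}.

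I do not expect a serious obstacle: the only non-formal input is the short induction for $C^{\,n}$, which collapses to $q$-Pascal. The real content is simply noticing the central $\overline{\cdot}$-invariant element $C$ and that $C^{\,n}=F^{\,n}\circ E^{\,n}$ — this is the $\lie{sl}_2$ shadow of the higher Casimirs mentioned in the introduction; if one preferred, the same proof could be organized by expanding $C^{\,n}$ from the opposite end.
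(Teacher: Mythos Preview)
Your proof is correct and essentially identical to the paper's: both introduce the central $\bar\cdot$-invariant element $C_+=FE-qK_+$, verify by induction (reducing to the same $q$-Pascal identity) that $C_+^{\,m}$ is the right-hand side of~\eqref{eq:circ-sl2} for $m_-=m_+=m$, extend to arbitrary $m_\pm$ by multiplying with $F^{[m_--m_+]_+}$ or $E^{[m_+-m_-]_+}$, and conclude via the uniqueness in Main Theorem~\ref{thm:circle}. The only cosmetic difference is that the paper organizes the induction as $\mathbf b_{m,m}C_+=\mathbf b_{m+1,m+1}$ whereas you multiply $C^{\,n}$ on the left by $FE-qK_+$; since $C_+$ is central these are literally the same computation.
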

\begin{proof}
For~$m_\pm\in\ZZ_{\ge 0}$ denote the right hand side of~\eqref{eq:circ-sl2} by~$\mathbf b_{m_-,m_+}$.
Let $C_+=\mathbf b_{1,1}=FE-q K_+$.
Observe that $C_+$ is central in~$\mathcal H_q^+(\lie g)$, since 
$
C_+ F=F(FE+(q^{-1}-q)K_+)-q K_+ F=
FC_+$ and similarly $[E,C_+]=0$. Furthermore, $\overline C_+=EF-q^{-1}K_+=FE+(q^{-1}-q)K_+-q^{-1}K_+=C_+$. We have  
\begin{align*}
\mathbf b_{m,m}C_+&=\sum_{j\ge 0}(-1)^j q^j \qbinom[q^2]{m}{j} K_+^{j} F^{m-j}E^{m-j}(FE-q K_+)\\
&=\sum_{j\ge 0} (-q)^j \qbinom[q^2]{m}{j} K_+^{j} (F^{m+1-j}E^{m+1-j}-q^{2(m-j)+1} K_+ F^{m-j}E^{m-j})\\
&=\sum_{j\ge 0} (-q)^j \Big(\qbinom[q^2]{m}{j}+q^{2(m-j+1)}\qbinom[q^2]{m}{j-1}\Big) K_+^j F^{m+1-j}E^{m+1-j}=\mathbf b_{m+1,m+1}.
\end{align*}
Therefore, $\mathbf b_{m,m}=C_+^m$, $m\in\ZZ_{\ge0}$, whence for all $m_\pm\in\ZZ_{\ge 0}$
$$
\mathbf b_{m_-,m_+}=\sum_{j\ge 0} (-q)^j \qbinom[q^2]{m}{j} F^{[m_--m_+]_+}(K_+^{j}\invprod F^{m-j} E^{m-j})E^{[m_+-m_-]_+}=
F^{[m_--m_+]_+}C_+^{m} E^{[m_+-m_-]_+},
$$
where $m=\min(m_+,m_-)$ and $[a]_+=\max(0,a)$.
Since $C_+$ is $\bar\cdot$-invariant and central, it follows that $\overline{\mathbf b_{m_-,m_+}}=\mathbf b_{m_-,m_+}$. 
By definition, $
\mathbf b_{m_-,m_+}-F^{m_-} E^{m_+}\in \sum_{j>0} q \ZZ[q] K_+^j\invprod F^{m_--j}E^{m_+-j}$, and the assertion follows 
by Theorem~\ref{thm:circle}.
\end{proof}
\noindent
Thus, the double canonical basis of~$\mathcal H_q^+(\lie{sl}_2)$ is
$$
\mathbf B_{\lie{sl}_2}^+=\{ K_+^a\invprod F^{m_-}C_+^{m_0} E^{m_+}\,:\, a,m_\pm,m_0\in\ZZ_{\ge 0},\, \min(m_+,m_-)=0\}.
$$

\plink{Casimir}
Let $C^{(0)}=1$, $C^{(1)}=C=C_+-q^{-1}K_-=FE-q K_+-q^{-1} K_-$
and define inductively 
\begin{equation}\label{eq:induct-cheb}
C^{(m+1)}=
CC^{(m)}-K_+ K_-C^{(m-1)},\qquad m\ge 1.
\end{equation}
Note that $C$ is central in~$U_q(\tilde\gg)$ and $\bar\cdot$-invariant, hence $\overline{C^{(m)}}=C^{(m)}$. 
It follows directly by induction on~$m$ that 
\begin{lemma}\label{lem:Cheb-explicit}
For all $m,k\ge 0$
\begin{equation}\label{eq:cb-sl2}
\begin{split}
F^k C^{(m)}&=\sum_{a,b\ge 0} (-1)^{a+b} q^{(k+1)(a-b)} \qbinom[q^{-2}]{m-a}{b}\qbinom[q^2]{m-b}{a} K_+^a K_-^b\invprod F^{m+k-a-b} E^{m-a-b}\\
C^{(m)}E^k&=\sum_{a,b\ge 0} (-1)^{a+b} q^{(k+1)(a-b)} \qbinom[q^{-2}]{m-a}{b}\qbinom[q^2]{m-b}{a} K_+^a K_-^b\invprod F^{m-a-b} E^{m+k-a-b}
\end{split}
\end{equation}
\end{lemma}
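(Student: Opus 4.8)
The plan is to prove the first identity by induction on~$m$ and to deduce the second from it via the anti-automorphism~${}^t$. The induction rests on the defining recursion~\eqref{eq:induct-cheb}, together with $C=FE-qK_+-q^{-1}K_-$ and the fact that $C$, all $C^{(m)}$ and $K_+K_-$ are central in~$U_q(\tilde\gg)$ (centrality of $K_+K_-$ uses $a_{ii}=2$). The base cases $m=0$ (only $(a,b)=(0,0)$ contributes, giving $F^kC^{(0)}=F^k$) and $m=1$ are checked directly; throughout, the right-hand side is read as a sum over $a,b\ge0$ with $a+b\le m$, terms with a negative exponent of $E$ or $F$ being zero, so that only genuine $q$-binomials occur.

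First I would record the commutation rules for the normalized monomials $M_{a,b;c,d}:=K_+^aK_-^b\invprod F^cE^d$, all of which follow directly from~\eqref{eq:triangle-def} and~\eqref{eq:commutation g}:
\begin{gather*}
F^kM_{a,b;c,d}=q^{k(a-b)}M_{a,b;c+k,d},\qquad M_{a,b;c,d}\,E=q^{a-b}M_{a,b;c,d+1},\\
M_{a,b;c,d}\,K_+=q^{c-d}M_{a+1,b;c,d},\qquad M_{a,b;c,d}\,K_-=q^{d-c}M_{a,b+1;c,d},\qquad (M_{a,b;c,d})^t=M_{a,b;d,c}
\end{gather*}
for all $a,b,c,d,k\ge0$. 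Setting $P^{(m)}_{a,b}:=\qbinom[q^{-2}]{m-a}{b}\qbinom[q^2]{m-b}{a}$, the claim reads $F^kC^{(m)}=\sum_{a,b}(-1)^{a+b}q^{(k+1)(a-b)}P^{(m)}_{a,b}\,M_{a,b;\,m+k-a-b,\,m-a-b}$. For the inductive step, centrality of~$C^{(m)}$ gives $CC^{(m)}=FC^{(m)}E-qC^{(m)}K_+-q^{-1}C^{(m)}K_-$, so by~\eqref{eq:induct-cheb}
\begin{equation*}
F^kC^{(m+1)}=F^{k+1}C^{(m)}E-q\,(F^kC^{(m)})K_+-q^{-1}(F^kC^{(m)})K_--(F^kC^{(m-1)})K_+K_-.
\end{equation*}
Substituting the inductive hypothesis (for $m$ and for $m-1$) into the four summands and applying the commutation rules above rewrites each summand as a combination of the $M_{a,b;\,(m+1)+k-a-b,\,(m+1)-a-b}$; comparing coefficients, the inductive step reduces to the single identity
\begin{equation}\label{eq:cheb-P-rec}
P^{(m+1)}_{a,b}=q^{2(a-b)}P^{(m)}_{a,b}+P^{(m)}_{a-1,b}+P^{(m)}_{a,b-1}-P^{(m-1)}_{a-1,b-1},\qquad a,b\ge0,
\end{equation}
with the convention $\qbinom[\nu]{n}{k}=0$ for $k<0$.

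The main technical point is~\eqref{eq:cheb-P-rec}, which I expect to prove by the $q$-Pascal recursion $\qbinom[\nu]{n}{k}=\nu^k\qbinom[\nu]{n-1}{k}+\qbinom[\nu]{n-1}{k-1}$ (used with $\nu=q^{2}$ and with $\nu=q^{-2}$): on the left-hand side one expands $\qbinom[q^{-2}]{m+1-a}{b}$ and $\qbinom[q^2]{m+1-b}{a}$, and on the right-hand side one further expands the factor $\qbinom[q^{-2}]{m+1-a}{b}$ inside $P^{(m)}_{a-1,b}$ and the factor $\qbinom[q^2]{m+1-b}{a}$ inside $P^{(m)}_{a,b-1}$; both sides then become the same sum of four products of $q$-binomials, the term $-P^{(m-1)}_{a-1,b-1}$ cancelling precisely one repeated cross term. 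The only delicate bookkeeping is keeping the two bases $q^{2}$ and $q^{-2}$ apart; the cases $(a,b)\in\{(0,0),(1,0),(1,1)\}$ already fix every normalisation.

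Finally, the second identity follows by applying~${}^t$ to the first: since ${}^t$ is $\kk$-linear, $(C^{(m)})^t=C^{(m)}$, $(F^k)^t=E^k$ and $(M_{a,b;c,d})^t=M_{a,b;d,c}$, the termwise image of the first identity is
\begin{equation*}
C^{(m)}E^k=\sum_{a,b}(-1)^{a+b}q^{(k+1)(a-b)}\qbinom[q^{-2}]{m-a}{b}\qbinom[q^2]{m-b}{a}\,K_+^aK_-^b\invprod F^{m-a-b}E^{m+k-a-b},
\end{equation*}
which is the asserted formula.
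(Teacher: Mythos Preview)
Your argument is correct and is exactly the approach the paper indicates: the paper states only ``It follows directly by induction on~$m$,'' and you have carried out that induction via the recursion~\eqref{eq:induct-cheb}, reducing the inductive step to the $q$-Pascal identity~\eqref{eq:cheb-P-rec} for the coefficients $P^{(m)}_{a,b}$, which holds as you describe. Deducing the second identity from the first by applying~${}^t$ (using $(C^{(m)})^t=C^{(m)}$ and $(K_+^aK_-^b\invprod F^cE^d)^t=K_+^aK_-^b\invprod F^dE^c$) is a clean shortcut and entirely in the spirit of the paper.
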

\begin{proposition}\label{lem:Cheb-via-iota}
For all~$m\ge 0$,
\begin{equation}\label{eq:cheb-iota}
C^{(m)}=\sum_{0\le i\le j,\, i+j\le m} (-1)^{j}q^{-j-i^2}\qbinom[q^{-2}]{m-i}{j}\qbinom[q^{-2}]{j}{i}K_+^i K_-^{j} \iota(F^{m-i-j}\circ E^{m-i-j}),\qquad m\ge 0.
\end{equation}
In particular, $C^{(m)}=F^m\bullet E^m\in\mathbf B_{\widetilde{\lie{sl}_2}}$.
\end{proposition}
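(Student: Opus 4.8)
The plan is to prove the displayed identity~\eqref{eq:cheb-iota} first and then read off $C^{(m)}=F^m\bullet E^m$ from the uniqueness statement in Main Theorem~\ref{thm:bullet}. To prove~\eqref{eq:cheb-iota} I would compare both sides in the PBW-type basis $\{K_+^aK_-^b\invprod F^cE^c\,:\,a,b,c\ge 0\}$ of $U_q(\widetilde{\lie{sl}_2})$. The left-hand side $C^{(m)}$ is written out explicitly by Lemma~\ref{lem:Cheb-explicit} with $k=0$. For the right-hand side I would substitute Lemma~\ref{lem:circ-sl2}: since $\iota$ is $\mathcal K_+$-equivariant for the $\invprod$-action and $\iota(F^{n-l}E^{n-l})=F^{n-l}E^{n-l}$, one has
\begin{equation*}
\iota(F^{n}\circ E^{n})=\sum_{l\ge 0}(-q)^l\qbinom[q^2]{n}{l}K_+^l\invprod F^{n-l}E^{n-l}
\end{equation*}
inside $U_q(\widetilde{\lie{sl}_2})$, and $K_+^iK_-^j\invprod(K_+^l\invprod x)=K_+^{i+l}K_-^j\invprod x$. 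Putting $a=i+l$, $b=j$ and matching the coefficient of $K_+^aK_-^b\invprod F^{m-a-b}E^{m-a-b}$ reduces~\eqref{eq:cheb-iota} to the purely combinatorial identity
\begin{equation*}
\sum_{i\ge 0}(-1)^i q^{-i(i+1)}\qbinom[q^{-2}]{m-i}{b}\qbinom[q^{-2}]{b}{i}\qbinom[q^2]{m-i-b}{a-i}=\qbinom[q^{-2}]{m-a}{b}\qbinom[q^2]{m-b}{a},\qquad 0\le a+b\le m.
\end{equation*}

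Proving this $q$-binomial identity is the crux. I would treat it by induction on $b$ using the $q$-Pascal recursions for $\qbinom[q^{-2}]{m-i}{b}$ and $\qbinom[q^{-2}]{b}{i}$ (the base case $b=0$ being trivial), or recognize the alternating sum $\sum_i(-1)^i q^{-i(i+1)}\qbinom[q^{-2}]{b}{i}(\cdots)$ as an instance of the $q$-Chu-Vandermonde summation after absorbing the power of $q$ into the binomial arguments; either way the argument is elementary but bookkeeping-heavy. An equivalent route, which sidesteps Lemma~\ref{lem:Cheb-explicit}, is a direct induction on $m$ from the Chebyshev recursion~\eqref{eq:induct-cheb}: one expands $C\cdot\bigl(K_+^iK_-^j\invprod\iota(F^n\circ E^n)\bigr)$ using the $\lie{sl}_2$-relations — eased by the fact that $K_+$, $K_-$ and $C_+:=FE-qK_+$ pairwise commute in $U_q(\widetilde{\lie{sl}_2})$ — and re-collects; this is comparable in difficulty and would also reprove the $k=0$ case of Lemma~\ref{lem:Cheb-explicit}.

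Granting~\eqref{eq:cheb-iota}, the last assertion follows by checking that $C^{(m)}$ satisfies the conditions that characterize $F^m\bullet E^m$ in Main Theorem~\ref{thm:bullet}. First, $F^m\in\mathbf B_{\nn_-}$ (Example~\ref{ex:power}) and $E^m=(F^m)^t\in\mathbf B_{\nn_+}$, so $F^m\bullet E^m$ is defined, and $\mathbf d_{F^m,E^m}=1$ since $\lie{sl}_2$ is semisimple (Theorem~\ref{thm:g-ss-denom}). Second, $C^{(m)}$ is $\bar\cdot$-invariant: $C$ is central and $\bar\cdot$-invariant and $\overline{K_+K_-}=K_+K_-$, so this follows by induction from~\eqref{eq:induct-cheb}. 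Third, in~\eqref{eq:cheb-iota} the term $(i,j)=(0,0)$ is exactly $\iota(F^m\circ E^m)$, while every other index has $j\ge 1$ (because $0\le i\le j$), so that term equals $K\invprod\iota(F^{m'}\circ E^{m'})$ with $K=K_+^iK_-^j\in\mathbf K\setminus\mathbf K_+$; its coefficient $(-1)^jq^{-j-i^2}\qbinom[q^{-2}]{m-i}{j}\qbinom[q^{-2}]{j}{i}$ lies in $q^{-1}\ZZ[q^{-1}]$ since $-j-i^2\le -1$ and both $q^{-2}$-binomials lie in $1+q^{-2}\ZZ_{\ge 0}[q^{-2}]$, and a one-line degree count shows every summand has the same $\widehat\Gamma$-degree as $F^mE^m$. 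By the uniqueness in Main Theorem~\ref{thm:bullet} this forces $C^{(m)}=F^m\bullet E^m$, which lies in $\mathbf B_{\widetilde{\lie{sl}_2}}$ by definition of the latter.
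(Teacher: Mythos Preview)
Your proposal is correct. Your primary route — expanding both sides in the PBW basis via Lemma~\ref{lem:Cheb-explicit} (for $C^{(m)}$) and Lemma~\ref{lem:circ-sl2} (for $\iota(F^n\circ E^n)$), then matching coefficients — is genuinely different from what the paper does. The paper takes precisely your ``equivalent route'': it proves~\eqref{eq:cheb-iota} by induction on~$m$ from the Chebyshev recursion~\eqref{eq:induct-cheb}, the key step being the auxiliary identity
\[
\iota(C_+^k)\,C=\iota(C_+^{k+1})-q^{-2k-1}K_-\iota(C_+^k)+(1-q^{-2k})K_-K_+\iota(C_+^{k-1}),
\]
after which the $q$-Pascal recursions collapse the sum. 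In fact the paper runs your argument backwards: it \emph{derives} your $q$-binomial identity (in the equivalent form displayed right after Corollary~\ref{cor:bullet-iota}) as a corollary of~\eqref{eq:cheb-iota} together with Lemma~\ref{lem:Cheb-explicit}. So your main approach presupposes the explicit form of~$C^{(m)}$ in Lemma~\ref{lem:Cheb-explicit} and trades the inductive bookkeeping for a $q$-hypergeometric summation, whereas the paper's approach needs neither Lemma~\ref{lem:Cheb-explicit} nor any off-the-shelf $q$-identity but requires the one-line product formula above. Your derivation of $C^{(m)}=F^m\bullet E^m$ from Main Theorem~\ref{thm:bullet} coincides with the paper's (and is spelled out in more detail).
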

\begin{proof}
Let $\iota:\mathcal H_q^+(\gg)\to U_q(\tilde\gg)$ be the natural inclusion of vector spaces.
One can show by induction on~$k$ that in~$U_q(\tilde\gg)$
\begin{equation}\label{eq:iota-Dlt}
\iota(C_+^k)C=\iota(C_+^{k+1})-q^{-2k-1}K_-\iota(C_+^k)+(1-q^{-2k})K_-K_+\iota(C_+^{k-1}).
\end{equation}
Denote by $X_m$ the right hand side of~\eqref{eq:cheb-iota}. 
It follows from~\eqref{eq:iota-Dlt} that
\begin{align*}
&X_mC-K_+K_- X_{m-1}\\&=\sum_{i,j\ge 0} (-1)^{j}q^{-j-i^2}\qbinom[q^{-2}]{m-i}{j}\qbinom[q^{-2}]{j}{i}
\Big(K_+^i K_-^{j} \iota(C_+^{m+1-i-j})\\&\qquad\qquad-q^{-2(m-i-j)-1} K_+^i K_-^{j+1}\iota(C_+^{m-i-j})+
(1-q^{-2(m-i-j)})K_+^{i+1}K_-^{j+1}\iota(C_+^{m-1-i-j})\Big)\\&
\qquad\qquad+\sum_{i,j\ge 0} (-1)^{j+1}q^{-j-i^2}\qbinom[q^{-2}]{m-1-i}{j}\qbinom[q^{-2}]{j}{i}
K_+^{i+1} K_-^{j+1} \iota(C_+^{m-1-i-j})\\&
=\sum_{i,j\ge 0} (-1)^{j}q^{-j-i^2}\qbinom[q^{-2}]{m-i}{j}\qbinom[q^{-2}]{j}{i}
\Big(K_+^i K_-^{j} \iota(C_+^{m+1-i-j})
-q^{-2(m-i-j)-1} K_+^i K_-^{j+1}\iota(C_+^{m-i-j})\Big)
\\&\quad+\sum_{i,j\ge 0} (-1)^{j+1}q^{-j-i^2}q^{-2(m-i)}\qbinom[q^{-2}]{m-1-i}{j}\qbinom[q^{-2}]{j}{i}
K_+^{i+1} K_-^{j+1} \iota(C_+^{m-1-i-j})\\&
=\sum_{i,j\ge 0} (-1)^{j}q^{-j-i^2}\Big(\qbinom[q^{-2}]{m-i}{j}\qbinom[q^{-2}]{j}{i}+
q^{-2(m+1-i-j)}\qbinom[q^{-2}]{m-i}{j-1}\Big(\qbinom[q^{-2}]{j-1}{i}\\&
\qquad\qquad+q^{-2(j-i)}\qbinom[q^{-2}]{j-1}{i-1}\Big)\Big)
K_+^i K_-^{j} \iota(C_+^{m+1-i-j})\\&
=\sum_{i,j\ge 0} (-1)^{j}q^{-j-i^2}\qbinom[q^{-2}]{m+1-i}{j}\qbinom[q^{-2}]{j}{i}K_+^i K_-^{j} \iota(C_+^{m+1-i-j})=X_{m+1}
\end{align*}
since 
\begin{gather*}
\qbinom[q^{-2}]{j-1}{i}+q^{-2(j-i)}\qbinom[q^{-2}]{j-1}{i-1}=\qbinom[q^{-2}]{j}{i},\\
q^{-2(m-i-j+1)}\qbinom[q^{-2}]{m-i}{j-1}+\qbinom[q^{-2}]{m-i}{j}=\qbinom[q^{-2}]{m+1-i}{j}.
\end{gather*}
Thus, $X_m$ satisfies the recurrence relation~\eqref{eq:induct-cheb}.  
Since $X_0=1$ and~$X_1=C$, we conclude that $X_m=C^{(m)}$ for all $m\ge 0$. 
The second assertion is now immediate by Theorem~\ref{thm:bullet} 
since $\overline{C^{(m)}}=C^{(m)}$ and by~\eqref{eq:cheb-iota}, 
\begin{equation*}
C^{(m)}-\iota(F^m\circ E^m)\in
\displaystyle\sum_{\substack{j>0\\0\le i\le\min(j,m-j)}} q^{-1}\ZZ[q^{-1}] K_-^jK_+^i\iota(F^{m-i-j}\circ E^{m-i-j}).\qedhere
\end{equation*}
\end{proof}
\begin{corollary}\label{cor:bullet-iota}
For all~$m_-,m_+\ge 0$,
\begin{equation}\label{eq:bullet-iota}
F^{m_-}\bullet E^{m_+}=\sum_{\substack{0\le i\le j\\ i+j\le m}}
(-1)^{j}q^{-j-i^2-(j-i)|m_+-m_-|}\qbinom[q^{-2}]{m-i}{j}\qbinom[q^{-2}]{j}{i}K_+^i K_-^{j} \invprod
\iota(F^{m_--i-j}\circ E^{m_+-i-j}),
\end{equation}
where $m=\min(m_+,m_-)$.
\end{corollary}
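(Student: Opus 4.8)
The plan is to reduce Corollary~\ref{cor:bullet-iota} to Proposition~\ref{lem:Cheb-via-iota}, which already handles the case $m_-=m_+$, by exploiting the fact that the element $C_+=FE-qK_+$ is central in $\mathcal H_q^+(\lie{sl}_2)$ and $C=FE-qK_+-q^{-1}K_-$ is central in $U_q(\tilde{\lie{sl}_2})$, together with the explicit factorization from the proof of Lemma~\ref{lem:circ-sl2}. Recall from there that, writing $m=\min(m_-,m_+)$ and $[a]_+=\max(0,a)$, one has $F^{m_-}\circ E^{m_+}=F^{[m_--m_+]_+}\, C_+^{m}\, E^{[m_+-m_-]_+}$ in $\mathcal H_q^+(\lie{sl}_2)$, since $C_+^m=\mathbf b_{m,m}$ and $C_+$ is central. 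Applying the vector-space embedding $\iota$, and noting that $\iota$ is multiplication-compatible up to the cross-relations, I want the analogous statement in $U_q(\tilde{\lie{sl}_2})$: namely $F^{m_-}\bullet E^{m_+}=q^{?}\,F^{[m_--m_+]_+}\,(F^m\bullet E^m)\,E^{[m_+-m_-]_+}$ up to a correcting power of $q$, or more precisely an identity expressing $F^{m_-}\bullet E^{m_+}$ via $C^{(m)}=F^m\bullet E^m$.

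Concretely, here is the sequence of steps I would carry out. First, I would establish that the right-hand side $Y_{m_-,m_+}$ of~\eqref{eq:bullet-iota} equals $q^{c}\,F^{[m_--m_+]_+}\,C^{(m)}\,E^{[m_+-m_-]_+}$ for an explicit integer $c=c(m_-,m_+)$, by substituting the formula~\eqref{eq:cheb-iota} for $C^{(m)}$ and using the quasi-commutation relations \eqref{eq:triangle-def} between $F,E$ and the $K_{\pm i}$ to push the outer $F$'s and $E$'s through the $\invprod$-terms; the power $q^{-(j-i)|m_+-m_-|}$ appearing in~\eqref{eq:bullet-iota} is exactly the shift produced by moving $F^{[m_--m_+]_+}$ and $E^{[m_+-m_-]_+}$ past $K_+^iK_-^j\invprod(\cdots)$, so this is a bookkeeping computation with $\alpha_i^\vee$. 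Second, I would verify that $Y_{m_-,m_+}$ is $\bar\cdot$-invariant: $C^{(m)}$ is $\bar\cdot$-invariant by Proposition~\ref{lem:Cheb-via-iota}, $\bar F=F$, $\bar E=E$, and by~\eqref{eq:triangle-bar} the $\invprod$-action commutes with $\bar\cdot$; one only needs the correcting power $q^c$ to be zero or to be absorbed correctly, which forces the precise choice of exponent (this is why the statement has $q^{-j-i^2-(j-i)|m_+-m_-|}$ rather than something asymmetric). Third, I would check the leading-term condition of Theorem~\ref{thm:bullet}: from~\eqref{eq:bullet-iota} the $(i,j)=(0,0)$ term is $\iota(F^{m_--j}\circ E^{m_+-j})|_{j=0}=\iota(F^{m_-}\circ E^{m_+})$ and every other term lies in $\sum q^{-1}\ZZ[q^{-1}]\,K\invprod\iota(b'_-\circ b'_+)$ with $K\in\mathbf K\setminus\mathbf K_+$ (since $j>0$ there forces a genuine $K_-$) and the degree condition $\deg_{\widehat\Gamma}$ matching. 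Then uniqueness in Theorem~\ref{thm:bullet} identifies $Y_{m_-,m_+}$ with $F^{m_-}\bullet E^{m_+}$.

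The main obstacle I anticipate is the precise $q$-power bookkeeping in the first step: one must verify that commuting $F^{[m_--m_+]_+}$ (or $E^{[m_+-m_-]_+}$) through the terms $K_+^iK_-^j\invprod \iota(F^{m-i-j}\circ E^{m-i-j})$ in~\eqref{eq:cheb-iota} produces exactly the factor $q^{-(j-i)|m_+-m_-|}$ and no residual dependence on $m$ beyond what is displayed, and that the $q^{-j-i^2}$ part is untouched. Since $\alpha_i^\vee(\deg_{\widehat\Gamma} F^{m-i-j}\circ E^{m-i-j})$ and $\alpha_i^\vee(\alpha_{+i}),\alpha_i^\vee(\alpha_{-i})$ are all explicit for $\lie{sl}_2$ ($a_{ii}=2$), this is ultimately a finite check, but it requires care with the $\invprod$-normalization~\eqref{eq:triangle-def} whose half-integer exponents are the source of the $i^2$ term. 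An alternative, cleaner route that avoids some of this: prove~\eqref{eq:bullet-iota} directly by the same inductive method used for Proposition~\ref{lem:Cheb-via-iota} — show the right-hand side $Y_{m_-,m_+}$ satisfies a recursion in $m_\pm$ (e.g. multiply $Y_{m_-,m_+}$ on the left by $F$ or on the right by $E$, using~\eqref{eq:iota-Dlt}-type identities and Lemma~\ref{lem:Cheb-explicit}) with the same base case, and invoke uniqueness. Either way the logical skeleton — $\bar\cdot$-invariance plus the triangularity estimate plus Theorem~\ref{thm:bullet} uniqueness — is the same; I would present the first (reduction to $C^{(m)}$) as it is shortest.
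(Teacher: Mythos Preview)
Your approach is correct and matches the paper's intended argument: the paper states this as a corollary of Proposition~\ref{lem:Cheb-via-iota} without proof, and the natural derivation is exactly what you describe---multiply \eqref{eq:cheb-iota} on the left by $F^{[m_--m_+]_+}$ (or on the right by $E^{[m_+-m_-]_+}$), use $F^a\iota(C_+^k)=\iota(F^{a+k}\circ E^k)$ and the commutation of $F$ past $K_\pm$ to identify the result with the right-hand side of~\eqref{eq:bullet-iota} (your correcting power $c$ comes out to~$0$), then verify $\bar\cdot$-invariance via centrality of $C^{(m)}$ together with $\min(a,b)=0$, check the triangularity, and invoke the uniqueness in Theorem~\ref{thm:bullet}. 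The paper records the resulting identity $F^{m_-}\bullet E^{m_+}=F^{m_--m}C^{(m)}E^{m_+-m}$ explicitly in Proposition~\ref{prop:DCB-sl2} immediately afterward.
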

Combining Lemma~\ref{lem:Cheb-explicit} and Proposition~\ref{lem:Cheb-via-iota} and 
using~\eqref{eq:bin-bar} we obtain the following identity.
\begin{proposition}
For all $m,a,b\ge 0$ with $a+b\le m$ we have in~$\ZZ[\nu]$
$$
\sum_{r=0}^{\min(a,b)} (-1)^{r}\nu^{\binom{r}2}\frac{[m-r]_\nu!}{[a-r]_\nu! [b-r]_\nu! [r]_\nu!}
=\nu^{a b}[m-a-b]_\nu! \qbinom[\nu]{m-b}{a}\qbinom[\nu]{m-a}{b} 
$$
\end{proposition}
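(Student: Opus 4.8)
The plan is to read off this $q$-binomial identity by comparing two expansions of the central Chebyshev element $C^{(m)}\in U_q(\widetilde{\lie{sl}_2})$ against the PBW-type basis, the relevant vectors being the \emph{balanced} monomials $K_+^a K_-^b\invprod F^c E^c$. On the one hand, taking $k=0$ in Lemma~\ref{lem:Cheb-explicit} gives directly
$$
C^{(m)}=\sum_{a,b\ge 0} (-1)^{a+b} q^{a-b}\,\qbinom[q^{-2}]{m-a}{b}\qbinom[q^2]{m-b}{a}\, K_+^a K_-^b\invprod F^{m-a-b} E^{m-a-b}.
$$
On the other hand, substituting into Proposition~\ref{lem:Cheb-via-iota} the value of $F^{\ell}\circ E^{\ell}$ furnished by Lemma~\ref{lem:circ-sl2} with $m_-=m_+=\ell$ (so $|m_+-m_-|=0$), namely $\iota(F^{\ell}\circ E^{\ell})=\sum_{s\ge0}(-1)^s q^s\qbinom[q^2]{\ell}{s}K_+^s\invprod F^{\ell-s}E^{\ell-s}$ with $\ell=m-i-j$, produces a second expansion.

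To merge the two $K$-monomials in the second expansion I would first note that a balanced monomial $F^cE^c$ is $\invprod$-invariant: its $\widehat\Gamma$-degree has equal $\Gamma$-components and so is killed by every $\alpha_i^\vee$; as $\mathcal K$ is commutative this collapses $K_+^iK_-^j(K_+^s\invprod F^{\ell-s}E^{\ell-s})$ to $K_+^{i+s}K_-^j\invprod F^{\ell-s}E^{\ell-s}$. Collecting, for fixed $a,b$, the coefficient of $K_+^aK_-^b\invprod F^{m-a-b}E^{m-a-b}$ (i.e.\ summing over $i$ with $s=a-i$, $j=b$, $0\le i\le\min(a,b)$), the second expansion contributes
$$
\sum_{i=0}^{\min(a,b)}(-1)^{a+b-i}q^{a-b-i^2-i}\,\qbinom[q^{-2}]{m-i}{b}\qbinom[q^{-2}]{b}{i}\qbinom[q^2]{m-i-b}{a-i}.
$$
Since these monomials are part of a basis of $U_q(\widetilde{\lie{sl}_2})$ I may equate this with the coefficient $(-1)^{a+b}q^{a-b}\qbinom[q^{-2}]{m-a}{b}\qbinom[q^2]{m-b}{a}$ read off from the first expansion; cancelling the common factor $(-1)^{a+b}q^{a-b}$ leaves
$$
\sum_{i=0}^{\min(a,b)}(-1)^{i}q^{-i^2-i}\,\qbinom[q^{-2}]{m-i}{b}\qbinom[q^{-2}]{b}{i}\qbinom[q^2]{m-i-b}{a-i}=\qbinom[q^{-2}]{m-a}{b}\qbinom[q^2]{m-b}{a}.
$$

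The remaining step is bookkeeping. Using \eqref{eq:bin-bar} to turn each $q^{-2}$-binomial into a $q^2$-binomial, I would check that the extra powers of $q$ combine so that, after removing a common factor $q^{2b(b-m)}$ from both sides, the $i$-th summand carries $q^{i^2-i}$ and the right side carries $q^{2ab}$. Setting $\nu=q^2$ (hence $q^{i^2-i}=\nu^{\binom{i}{2}}$ and $q^{2ab}=\nu^{ab}$) and expanding every $\qbinom[\nu]{\cdot}{\cdot}$ into $\nu$-factorials, the factors $[b]_\nu!$ and $[m-i-b]_\nu!$ cancel and $\qbinom[\nu]{m-i}{b}\qbinom[\nu]{b}{i}\qbinom[\nu]{m-i-b}{a-i}$ reduces to $\dfrac{[m-i]_\nu!}{[a-i]_\nu!\,[b-i]_\nu!\,[i]_\nu!\,[m-a-b]_\nu!}$; multiplying both sides by $[m-a-b]_\nu!$ yields precisely the asserted identity. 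As both sides lie in $\ZZ[\nu]$, proving it for $q$ transcendental suffices.

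The main (and essentially only) obstacle is the exponent bookkeeping: keeping exact track of the powers of $q$ coming from the $\invprod$-normalisation, from the factor $q^s$ in Lemma~\ref{lem:circ-sl2}, and from the conversions \eqref{eq:bin-bar}, and arranging it so that the cancellations against $(-1)^{a+b}q^{a-b}$ and the final passage to $\nu=q^2$ are transparent; no new idea is needed beyond the two formulas for $C^{(m)}$ already established.
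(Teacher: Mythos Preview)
Your proposal is correct and follows essentially the same route as the paper, which simply says ``Combining Lemma~\ref{lem:Cheb-explicit} and Proposition~\ref{lem:Cheb-via-iota} and using~\eqref{eq:bin-bar} we obtain the following identity.'' You have made this terse remark explicit: equate the two expansions of $C^{(m)}$ in the basis $K_+^aK_-^b\invprod F^cE^c$, then convert all $q^{-2}$-binomials to $q^2$-binomials via~\eqref{eq:bin-bar} and set $\nu=q^2$; the bookkeeping you outline is accurate.
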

Our preceding computations, together with Theorem~\ref{thm:bullet}, immediately yield the following
\begin{proposition}\label{prop:DCB-sl2}
For all $m_\pm\in\ZZ_{\ge 0}$,
\begin{align*}
F^{m_-}&\bullet E^{m_+}=F^{m_--m}C^{(m)} E^{m_+-m}
\\&
=\sum_{0\le a+b\le m} (-1)^{a+b} q^{(|m_+-m_-|+1)(a-b)} \qbinom[q^{-2}]{m-a}{b}\qbinom[q^2]{m-b}{a} K_+^a K_-^b\invprod F^{m_--a-b} E^{m_+-a-b}
\end{align*}
where $m=\min(m_+,m_-)$.
Thus, the double canonical basis in~$U_q(\widetilde{\lie{sl}_2})$ is given by 
$$
\{ K_+^{a_+}K_-^{a_-}\invprod F^{m_-}C^{(m_0)} E^{m_+}\,:\, a_\pm,m_\pm,m_0\in\ZZ_{\ge0},\, \min(m_+,m_-)=0\}.
$$
\end{proposition}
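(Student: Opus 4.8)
The plan is to establish the two displayed equalities and then read the basis off the corollary to Main Theorem~\ref{thm:bullet}. Set $m=\min(m_+,m_-)$. First I would reduce to the case $m_-\ge m_+=m$. The anti-involution ${}^t$ fixes $C=FE-qK_+-q^{-1}K_-$ and $K_+K_-$, and these are central, so by the defining recursion~\eqref{eq:induct-cheb} it fixes every $C^{(k)}$; consequently $(F^{m_--m}C^{(m)}E^{m_+-m})^t=F^{m_+-m}C^{(m)}E^{m_--m}$, while $(F^{m_-}\bullet E^{m_+})^t=F^{m_+}\bullet E^{m_-}$ by Theorem~\ref{thm:transp-star}. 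Hence the formula for the pair $(m_-,m_+)$ with $m_-\ge m_+$ is equivalent to the one for $(m_+,m_-)$ with $m_+\le m_-$, and it suffices to treat $m_-\ge m_+=m$, i.e.\ to show $F^{m_-}\bullet E^{m_+}=F^{m_--m}C^{(m)}$.

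For this I would appeal to the characterization in Main Theorem~\ref{thm:bullet} (with $\mathbf d_{F^{m_-},E^{m_+}}=1$, since $\lie{sl}_2$ is semisimple). The element $F^{m_--m}C^{(m)}$ is $\bar\cdot$-fixed because $C^{(m)}$ is central and $\bar\cdot$-fixed. For the triangularity requirement I would start from the expansion
$$C^{(m)}-\iota(F^m\circ E^m)\in\sum q^{-1}\ZZ[q^{-1}]\,K_-^jK_+^i\,\iota(F^{m-i-j}\circ E^{m-i-j})$$
over $j>0$, $0\le i\le\min(j,m-j)$ produced in the proof of Proposition~\ref{lem:Cheb-via-iota}, and left-multiply by $F^{m_--m}$. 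Here $\iota$ intertwines left multiplication by powers of $F$ (no crossing between $E$'s and $F$'s arises), and $F^{m_--m}(F^{m-i-j}\circ E^{m-i-j})=F^{m_--i-j}\circ E^{m_+-i-j}$ in $\mathcal H_q^+(\lie{sl}_2)$ by Lemma~\ref{lem:circ-sl2}; moving $F^{m_--m}$ past $K_-^jK_+^i$ produces the scalar $q^{2(i-j)(m_--m)}$, which lies in $\ZZ[q^{-1}]$ since $i\le j$, so the span is preserved. As every surviving term still carries $K_-^j$ with $j>0$ (hence $K\in\mathbf K\setminus\mathbf K_+$), and $F^{m_--m}\iota(F^m\circ E^m)=\iota(F^{m_-}\circ E^{m_+})$ by Lemma~\ref{lem:circ-sl2}, uniqueness in Main Theorem~\ref{thm:bullet} yields $F^{m_-}\bullet E^{m_+}=F^{m_--m}C^{(m)}$.

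The explicit expansion is then immediate from Lemma~\ref{lem:Cheb-explicit}: substituting $k=m_--m_+$ (and $m$ as above) into its formula for $F^kC^{(m)}$ reproduces the stated sum, once one notes $|m_+-m_-|=k$, and its formula for $C^{(m)}E^k$ covers the transposed case. (Alternatively, the whole sum formula drops out in one stroke from Corollary~\ref{cor:bullet-iota} by inserting Lemma~\ref{lem:circ-sl2} for each $\iota(F^{m_--i-j}\circ E^{m_+-i-j})$, using that $\invprod$ is a $\mathbf K$-action to collect terms by the monomials $K_+^aK_-^b$, and evaluating the resulting inner sum over $i$ by the combinatorial identity proved just above.) Finally, since $\mathbf B_{\nn_-}=\{F^r:r\ge0\}$ and $\mathbf B_{\nn_+}=\{E^r:r\ge0\}$, the corollary to Main Theorem~\ref{thm:bullet} identifies $\mathbf B_{\widetilde{\lie{sl}_2}}$ with $\{K_+^{a_+}K_-^{a_-}\invprod(F^{m_-}\bullet E^{m_+}):a_\pm,m_\pm\in\ZZ_{\ge0}\}$; replacing $(m_-,m_+)$ by $(m_--m,m_+-m)$ and setting $m_0=m$ rewrites this as the asserted set with $\min(m_+,m_-)=0$. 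I expect the one genuinely computational point to be the bookkeeping of $q$-powers in the triangularity check (equivalently, in the alternative route, matching the reindexed sum to the combinatorial identity); everything else is assembly of the preceding lemmas.
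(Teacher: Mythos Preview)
Your proposal is correct and follows essentially the same route as the paper: the proposition is stated there as an immediate consequence of the preceding computations (Lemma~\ref{lem:Cheb-explicit}, Proposition~\ref{lem:Cheb-via-iota}, Corollary~\ref{cor:bullet-iota}) together with the uniqueness in Main Theorem~\ref{thm:bullet}, and your argument spells out precisely how these pieces assemble, including the $q$-power bookkeeping needed to pass from multiplication by $K_+^iK_-^j$ to the $\invprod$-action required by Theorem~\ref{thm:bullet}. The reduction via ${}^t$ to the case $m_-\ge m_+$ is a minor but pleasant simplification over the paper's parallel treatment of the two cases through the two formulas in Lemma~\ref{lem:Cheb-explicit}.
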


An easy induction shows that 
\begin{equation}\label{eq:charsl2}
C^{(a)}C^{(b)}=\sum_{j=0}^{\min(a,b)} (K_-K_+)^j C^{(a+b-2j)}
\end{equation}
\begin{lemma}
For all $n\ge 0$ we have 
$$
F^n E^n = \sum_{r=0}^n \Big(\sum_{j=0}^r c_{r,j}^{(n)} K_-^j K_+^{r-j}\Big) C^{(n-r)},
\qquad E^n F^n=\sum_{r=0}^n \Big(\sum_{j=0}^r c_{r,j}^{(n)} K_-^{r-j} K_+^{j}\Big) C^{(n-r)}
$$
where $c_{0,0}^{(n)}=1$, $\overline{c_{r,j}^{(n)}}=c_{r,r-j}^{(n)}\in\ZZ_{\ge 0}[q,q^{-1}]$.
In particular, Conjecture~\ref{conj:strconst} holds for~$\lie g=\lie{sl}_2$.
\end{lemma}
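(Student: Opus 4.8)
The plan is to reduce everything to a single three-term recursion. Write $u_n:=F^nE^n\in U_q(\widetilde{\lie{sl}_2})$. Using $FE=C+qK_++q^{-1}K_-$ (equivalently $C=C^{(1)}=FE-qK_+-q^{-1}K_-$), the commutation relations $F^nK_+=q^{2n}K_+F^n$ and $F^nK_-=q^{-2n}K_-F^n$, and the centrality of $C$, one gets at once
\[
u_{n+1}=F^n\bigl(C+qK_++q^{-1}K_-\bigr)E^n=\bigl(C+q^{2n+1}K_++q^{-2n-1}K_-\bigr)\,u_n,\qquad u_0=1.
\]
(The parallel computation with $EF=C+q^{-1}K_++qK_-$ gives $E^{n+1}F^{n+1}=(C+q^{-2n-1}K_++q^{2n+1}K_-)E^nF^n$, which we will not actually need.)

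Next I would feed this recursion into the expansion. Adopt the convention $C^{(m)}=0$ for $m<0$, so that $CC^{(m)}=C^{(m+1)}+K_+K_-C^{(m-1)}$ holds for all $m\ge0$ by~\eqref{eq:induct-cheb}. Since $C$ is central and $\mathcal K$ commutative, multiplying a term $K_-^jK_+^{r-j}C^{(n-r)}$ by $C$, by $K_+$, or by $K_-$ only shifts the triple $(r,j,n-r)$ in a controlled way; a direct induction on~$n$ then yields
\[
F^nE^n=\sum_{r=0}^{n}\Bigl(\sum_{j=0}^{r}c^{(n)}_{r,j}K_-^jK_+^{r-j}\Bigr)C^{(n-r)},
\]
where $c^{(n)}_{r,j}\in\ZZ[q,q^{-1}]$ (set to $0$ outside $0\le j\le r\le n$) are determined by $c^{(0)}_{0,0}=1$ and
\[
c^{(n+1)}_{r,j}=c^{(n)}_{r,j}+c^{(n)}_{r-2,j-1}+q^{2n+1}c^{(n)}_{r-1,j}+q^{-2n-1}c^{(n)}_{r-1,j-1}.
\]
All three asserted properties are then immediate by induction from this recursion: $c^{(n)}_{0,0}$ propagates unchanged, so $c^{(n)}_{0,0}=1$; every term on the right is a nonnegative Laurent polynomial times a monomial with coefficient $1$, so $c^{(n)}_{r,j}\in\ZZ_{\ge0}[q,q^{-1}]$; and applying $\bar\cdot$ (which swaps $q^{2n+1}\leftrightarrow q^{-2n-1}$) together with the substitution $j\mapsto r-j$ shows $\overline{c^{(n)}_{r,j}}=c^{(n)}_{r,r-j}$. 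Finally the expansion of $E^nF^n$ comes for free by applying the bar anti-automorphism to that of $F^nE^n$, using $\overline{F^nE^n}=E^nF^n$, $\overline{K_\pm}=K_\pm$, $\overline{C^{(m)}}=C^{(m)}$ and the symmetry just established.

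For the assertion that Conjecture~\ref{conj:strconst} holds when $\gg=\lie{sl}_2$: by Theorem~\ref{thm:g-ss-denom} one has $\mathbf d_{b_-,b_+}=1$, so the structure constants in question are the coefficients of $F^{m_-}E^{m_+}$ in the double canonical basis $\{K\invprod b'_-\bullet b'_+\}$. Write $m=\min(m_-,m_+)$ and $F^{m_-}E^{m_+}=F^{m_--m}\,(F^mE^m)\,E^{m_+-m}$, substitute the expansion above, and use $C^{(k)}E^\ell=F^k\bullet E^{k+\ell}$ and $F^\ell C^{(k)}=F^{k+\ell}\bullet E^k$ from Proposition~\ref{prop:DCB-sl2}. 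After commuting the $K$'s past the remaining $F$'s and $E$'s and passing to the $\invprod$-action via~\eqref{eq:inv-act-defn}, each product $K_-^jK_+^{r-j}C^{(m-r)}$ becomes a (pairwise distinct) basis element $K_-^jK_+^{r-j}\invprod(F^{m_--r}\bullet E^{m_+-r})$ multiplied by a single positive power of~$q$. Hence each structure constant equals $c^{(m)}_{r,j}$ times a monomial in $q^{\ZZ}$, and so lies in $\ZZ_{\ge0}[q,q^{-1}]$.

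The only genuinely delicate points are bookkeeping ones: in the second step, pinning down the index ranges and the role of the convention $C^{(-1)}=0$ at the top of the expansion (so that the would-be $r=n+2$ term drops out), and in the third step, verifying that passing from ordinary products to the $\invprod$-form of the double basis introduces only honest positive $q$-monomials and never any sign. Once these are handled, the whole Lemma follows from the elementary inductions above, and I expect no serious obstacle.
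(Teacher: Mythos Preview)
Your proof is correct and follows essentially the same route as the paper: induct on $n$ to obtain a recursion for the $c^{(n)}_{r,j}$, then read off $c^{(n)}_{0,0}=1$, positivity, and the bar-symmetry directly from that recursion, and deduce the $E^nF^n$ formula by applying $\bar\cdot$. The only cosmetic difference is where the commutation happens: you first replace the middle $FE$ by $C+qK_++q^{-1}K_-$ and push the $K$'s left past $F^n$, obtaining the $n$-dependent recursion $c^{(n+1)}_{r,j}=c^{(n)}_{r,j}+c^{(n)}_{r-2,j-1}+q^{2n+1}c^{(n)}_{r-1,j}+q^{-2n-1}c^{(n)}_{r-1,j-1}$, whereas the paper commutes the single outer $F$ past the $K$'s already present in the expansion of $F^nE^n$, obtaining the $(r,j)$-dependent form $c^{(n+1)}_{r,j}=q^{2(r-2j)}\bigl(c^{(n)}_{r,j}+c^{(n)}_{r-2,j-1}+q^{-1}c^{(n)}_{r-1,j}+qc^{(n)}_{r-1,j-1}\bigr)$. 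Both are correct and give the same coefficients; your version makes the positivity and bar-symmetry checks marginally cleaner, while the paper's makes the prefactor structure in the final $\invprod$-expansion more visible. Your explicit reduction of Conjecture~\ref{conj:strconst} to the positivity of the $c^{(n)}_{r,j}$ (via Proposition~\ref{prop:DCB-sl2} and the passage from ordinary products to $\invprod$) is spelled out in more detail than the paper gives, and is correct---just note that the extra factor you pick up is $q^{(r-2j)|m_+-m_-|}$, a monomial with coefficient $+1$ (not necessarily a \emph{positive} power of $q$), which is all that is needed.
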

\begin{proof}
The induction base is clear since $FE=C+q K_+ + q^{-1} K_-$. Thus, $c^{(1)}_{0,0}=1$ and~$c^{(1)}_{1,0}=q=\overline{c^{(1)}_{1,1}}$.

For the inductive step we have
\begin{align*}
F^{n+1} &E^{n+1} = \sum_{r=0}^n F\Big(\sum_{j=0}^r c_{r,j}^{(n)} K_-^j K_+^{r-j}\Big) C^{(n-r)}E
=\sum_{r=0}^n \Big(\sum_{j=0}^r c_{r,j}^{(n)} q^{2(r-2j)} K_-^j K_+^{r-j}\Big) FE C^{(n-r)}\\
&=\sum_{r=0}^n \Big(\sum_{j=0}^r c_{r,j}^{(n)} q^{2(r-2j)} K_-^j K_+^{r-j}\Big) (C+q K_++q^{-1}K_-) C^{(n-r)}\\
&=\sum_{r=0}^n \Big(\sum_{j=0}^r c_{r,j}^{(n)} q^{2(r-2j)} K_-^j K_+^{r-j}\Big) C^{(n+1-r)}
+\sum_{r=0}^{n-1} \Big(\sum_{j=0}^r c_{r,j}^{(n)} q^{2(r-2j)} K_-^{j+1} K_+^{r+1-j}\Big) C^{(n-r-1)}\\
&+\sum_{r=0}^n \Big(\sum_{j=0}^r c_{r,j}^{(n)} q^{2(r-2j)+1} K_-^{j} K_+^{r+1-j}\Big)C^{(n-r)}
+\sum_{r=0}^n \Big(\sum_{j=0}^r c_{r,j}^{(n)} q^{2(r-2j)-1} K_-^{j+1} K_+^{r-j}\Big)C^{(n-r)}\\
&=\sum_{r=0}^{n+1} \Big(\sum_{j=0}^r q^{2(r-2j)} (c_{r,j}^{(n)}+c_{r-2,j-1}^{(n)}
+q^{-1} c_{r-1,j}^{(n)}+q c_{r-1,j-1}^{(n)})\Big) K_-^j K_+^{r-j} C^{(n+1-r)}
\end{align*}
where we use the convention that $c_{r,j}^{(n)}=0$ if $r<0$, $j<0$, $j>r$ or $r>n$. Set 
$$
c_{r,j}^{(n+1)}=q^{2(r-2j)} (c_{r,j}^{(n)}+c_{r-2,j-1}^{(n)}
+q^{-1} c_{r-1,j}^{(n)}+q c_{r-1,j-1}^{(n)})
$$
Then $c_{0,0}^{(n+1)}=1$ and $c_{r,j}^{(n+1)}\in\ZZ_{\ge 0}[q,q^{-1}]$. Also 
$$
\overline{c_{r,j}^{(n+1)}}=q^{2(2j-r)}(c_{r,r-j}^{(n)}+c_{r-2,r-j-1}^{(n)}+q c_{r-1,r-j-1}^{(n)}+q^{-1} c_{r-1,r-j}^{(n)})=
c_{r,r-j}^{(n+1)}.
$$
This proves the inductive step. The second formula follows from the first by applying $\bar\cdot$.
\end{proof}
\begin{remark}
One can prove, using the above computation, an even stronger statement, namely that for any two elements $\mathbf b$, $\mathbf b'$ of $\mathbf B_{\widetilde{\lie{sl}_2}}$,
$\mathbf b\mathbf b'$ decomposes as a linear combination of elements of the same basis with coefficients being Laurent polynomials 
in $q$ with positive coefficients. However, this fact is special for $\lie{sl}_2$ and is unlikely to hold in greater generality.
\end{remark}
\subsection{Action on a double basis for~\texorpdfstring{$\lie{sl}_2$}{sl\_2}}\label{subs:crystal}
We now consider the action of~$U_q(\gg)$ on the double canonical basis of~$U_q(\widetilde{\lie{sl}_2})$. To preserve $\bar\cdot$-invariance, 
it is necessary to consider its twisted version given by
\begin{equation}\label{eq:lambda-action}
F_{i}(x):=
q_i^{\frac12(\lambda_i+\alpha_i^\vee(x))} F_i x- q_i^{-\frac12(\lambda_i+\alpha_i^\vee(x))}
x F_i,\quad E_{i}(x):=
K_{+i}^{-1}\invprod (q_i^{-\frac{\lambda_i}2} E_i x-
q_i^{\frac{\lambda_i}2}x E_i),
\end{equation}
for any $\lambda\in\ZZ^I$ (cf.~\cite{joseph-mock}).
We denote the corresponding operators by $E_\lambda$, $F_\lambda$.
\begin{lemma}\label{lem:lambda-action}
Let $\lambda,a_\pm\in\ZZ$. Then for all $m_+>m_-$
\begin{align}
F_\lambda(K_-^{a_-}K_+^{a_+}\invprod F^{m_-}\bullet E^{m_+})&=\la \tfrac12\lambda+a_+-a_-+2m_+-2m_-\ra _q K_+^{a_++1}K_-^{a_-}\invprod F^{m_-}\bullet E^{m_+-1}\nonumber\\
&+\la \tfrac12\lambda+a_+-a_-+m_+-m_-\ra _q K_-^{a_-+1}K_+^{a_++1}\invprod F^{m_- -1}\bullet E^{m_+-2}\label{eq:lambda-f-pm}\\
&+\la \tfrac12\lambda+a_+-a_-+m_+-m_-\ra _q K_-^{a_-}K_+^{a_+}\invprod F^{m_-+1}\bullet E^{m_+}\nonumber\\
&+\la \tfrac12\lambda+a_+-a_-\ra _q K_-^{a_-+1}K_+^{a_+}\invprod F^{m_-}\bullet E^{m_+-1}\nonumber\\
\intertext{where we use the convention that $F^{r}\bullet E^s=0$ if $r<0$ or~$s<0$, while for $m_+\le m_-$}
F_\lambda(K_-^{a_-}K_+^{a_+}\invprod F^{m_-}\bullet E^{m_+})&
=\la  \tfrac12\lambda+a_+-a_-+m_+-m_-\ra _q K_-^{a_-}K_+^{a_+}\invprod F^{m_- +1}\bullet E^{m_+} \label{eq:lambda-f-mp}
\\
\intertext{Furthermore, for all $m_+\ge m_-$}
E_\lambda(K_-^{a_-}K_+^{a_+}\invprod F^{m_-}\bullet E^{m_+})&=\la \tfrac12\lambda+a_+-a_-\ra _q K_+^{a_+-1}K_-^{a_-}\invprod F^{m_-}\bullet E^{m_+ +1}\label{eq:lambda-e-mp}\\
\intertext{while for all $m_+<m_-$}
E_\lambda(K_-^{a_-}K_+^{a_+}\invprod F^{m_-}\bullet E^{m_+})&=\la \tfrac12\lambda+a_+-a_-+m_--m_+\ra _q K_-^{a_-}K_+^{a_+}\invprod F^{m_--1}\bullet E^{m_+} \nonumber\\
&+\la \tfrac12\lambda+a_+-a_-\ra _q K_-^{a_-+1}K_+^{a_+}\invprod F^{m_--2}\bullet E^{m_+ -1}\nonumber\\&
+\la \tfrac12\lambda+a_+-a_-\ra _q K_-^{a_-}K_+^{a_+-1 }\invprod F^{m_-}\bullet E^{m_++1} \label{eq:lambda-e-pm}\\
&+\la \tfrac12\lambda+a_+-a_-+m_+-m_-\ra _q K_-^{a_-+1}K_+^{a_+-1}\invprod F^{m_--1}\bullet E^{m_+}. \nonumber
\end{align}
\end{lemma}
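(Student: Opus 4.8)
The plan is to reduce everything to the explicit description of the double canonical basis in Proposition~\ref{prop:DCB-sl2}, namely $F^{m_-}\bullet E^{m_+}=F^{m_--m}C^{(m)}E^{m_+-m}$ with $m=\min(m_+,m_-)$, and then perform a direct computation in $U_q(\widetilde{\lie{sl}_2})$. Three features make this tractable. First, $C=C^{(1)}$, hence every Chebyshev element $C^{(m)}$, is central and $\bar\cdot$-invariant, so $F$ and $E$ commute with the $C^{(m)}$-factor and interact only with the pure $E$- or $F$-string. Second, in $U_q(\lie{sl}_2)$ one has the elementary commutation identity $FE^b=E^bF+\la b\ra_q E^{b-1}(q^{b-1}K_+-q^{-(b-1)}K_-)$ together with its mirror for $EF^a$ (special cases of Lemma~\ref{lem:comm Fi}), and the straightening rule $FE=C+qK_++q^{-1}K_-$. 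Third, the $\invprod$-normalization gives $K_-^{a_-}K_+^{a_+}\invprod y=q^{\frac12(a_--a_+)\alpha_1^\vee(\deg y)}K_-^{a_-}K_+^{a_+}y$, and since $\alpha_1^\vee(\deg(F^{m_-}\bullet E^{m_+}))=2(m_+-m_-)$, careful tracking of these powers against the commutation rules $K_\pm E=q^{\pm2}EK_\pm$, $K_\pm F=q^{\mp2}FK_\pm$ and the twist $q^{\pm\frac12(\lambda_1+\alpha_1^\vee(\deg\cdot))}$ in \eqref{eq:lambda-action} is exactly what collapses the two summands $q^{+\ast}F(\cdot)-q^{-\ast}(\cdot)F$ defining $F_\lambda$ (resp.\ $E_\lambda$) into single $\la\cdots\ra_q$-coefficients.

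First I would handle $F_\lambda$ in its two sub-cases. When $m_+\le m_-$ (so $m=m_+$ and $F^{m_-}\bullet E^{m_+}=F^{m_--m_+}C^{(m_+)}$) both $F\cdot F^{m_--m_+}C^{(m_+)}$ and $F^{m_--m_+}C^{(m_+)}\cdot F$ equal $F^{m_-+1}\bullet E^{m_+}$ up to the explicit powers of $q$ coming from moving $F$ past $K_-^{a_-}K_+^{a_+}$ and from the $\invprod$-convention; assembling these powers yields the single displayed term of \eqref{eq:lambda-f-mp}. When $m_+>m_-$ (so $m=m_-$ and $F^{m_-}\bullet E^{m_+}=C^{(m_-)}E^{m_+-m_-}$), the left multiplication produces $FC^{(m_-)}E^{m_+-m_-}=C^{(m_-)}FE^{m_+-m_-}$, a monomial with both $F$- and $E$-exponents positive that is \emph{not} a basis vector; I straighten it by writing $FE^{m_+-m_-}=(C+qK_++q^{-1}K_-)E^{m_+-m_--1}$ and then applying the Chebyshev recursion $CC^{(m_-)}=C^{(m_-+1)}+K_+K_-C^{(m_--1)}$ (from~\eqref{eq:induct-cheb}) and moving the resulting $K_\pm$ to the left past the remaining $E$-string. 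This is precisely where the four terms of \eqref{eq:lambda-f-pm} originate: the $C^{(m_-+1)}$-contribution ($=F^{m_-+1}\bullet E^{m_+}$), the $K_+K_-C^{(m_--1)}$-contribution ($=F^{m_--1}\bullet E^{m_+-2}$), and the two $K_\pm C^{(m_-)}$-contributions ($=F^{m_-}\bullet E^{m_+-1}$ with $K_+$ resp.\ $K_-$), the latter two recombining with the $\la m_+-m_-\ra_q$-commutator coming from the right multiplication via $E^bF=FE^b-\la b\ra_q E^{b-1}(q^{b-1}K_+-q^{-(b-1)}K_-)$. The $E_\lambda$-assertions \eqref{eq:lambda-e-mp} and \eqref{eq:lambda-e-pm} follow by the same scheme with the roles of $E,F$ and of $K_+,K_-$ interchanged (now the straightening intervenes in the case $m_+<m_-$); alternatively one can shorten the work by conjugating the $F_\lambda$-formulas by the anti-involution ${}^t$ (or ${}^*$), at the cost of tracking the extra $K_{+1}^{-1}\invprod$ in~\eqref{eq:lambda-action} and the sign flips on the $\la\cdots\ra_q$-coefficients.

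I expect the main obstacle to be not conceptual but the disciplined bookkeeping: one must simultaneously juggle the three interacting sources of $q$-powers (the torus commutation relations, the $\invprod$-normalization, and the twist) \emph{and} carry out the straightening of the non-standard monomials in the mixed cases through the Chebyshev recursion, so that the exponents come out exactly as $\tfrac12\lambda+a_+-a_-+\{0,\ m_+-m_-,\ 2(m_+-m_-)\}$. To guard against sign and exponent errors I would first verify the boundary instances ($m_-=0$, and $m_+=m_-+1$) by hand and then run the general term-by-term computation.
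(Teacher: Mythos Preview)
Your approach is essentially the same as the paper's: both rely on the explicit form $F^{m_-}\bullet E^{m_+}=F^{m_--m}C^{(m)}E^{m_+-m}$, centrality of $C^{(m)}$, the straightening $FE=C+qK_++q^{-1}K_-$, and the Chebyshev recursion~\eqref{eq:induct-cheb}. The paper streamlines the bookkeeping slightly by first observing $F_\lambda(K_+^{a_+}K_-^{a_-}\invprod x)=K_+^{a_+}K_-^{a_-}\invprod F_{\lambda+2a_+-2a_-}(x)$ (so one may take $a_\pm=0$) and by packaging the $FE$-straightening and torus commutation into the pair of identities $FE^k=CE^{k-1}+q^kK_+\invprod E^{k-1}+q^{-k}K_-\invprod E^{k-1}$, $E^kF=CE^{k-1}+q^{-k}K_+\invprod E^{k-1}+q^{k}K_-\invprod E^{k-1}$, which makes the four terms of~\eqref{eq:lambda-f-pm} drop out in one line.
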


\begin{proof}
It is an easy consequence of~\eqref{eq:cb-sl2} that 
\begin{align*}
FE^k&=CE^{k-1}+q^k K_+\invprod E^{k-1}+q^{-k}K_-\invprod E^{k-1},\\
E^kF&=CE^{k-1}+q^{-k} K_+\invprod E^{k-1}+q^{k}K_-\invprod E^{k-1},\qquad k\ge 1
\end{align*}
We also have $$
F_\lambda(K_+^{a_+}K_{-}^{a_-}\invprod x)=K_+^{a_+}K_{-}^{a_-}\invprod F_{\lambda+2a_+-2a_-}(x),
$$
so it is sufficient to prove all identities for $a_+=a_-=0$.
Suppose first that $m_+>m_-$. Then $F^{m_-}\bullet F^{m_+}=C^{(m_-)}E^{m_+-m_-}$ and 
\begin{align*}
F_\lambda( &C^{(m_-)}E^{m_+-m_-})=
q^{\tfrac12 \lambda+m_+-m_-} F C^{(m_-)}E^{m_+-m_-}-q^{-\tfrac12 \lambda-(m_+-m_-)} C^{(m_-)}E^{m_+-m_-} F
\\
&=\la \tfrac12 \lambda+m_+-m_-\ra _q CC^{(m_-)}E^{m_+-m_--1}
+\la \tfrac12 \lambda+2(m_+-m_-)\ra _q K_+\invprod C^{(m_-)}E^{m_+-m_--1}\\
&\phantom{=}+
\la \tfrac12 \lambda\ra _q K_-\invprod C^{(m_-)}E^{m_+-m_--1},
\end{align*}
and it remains to use~\eqref{eq:induct-cheb}.
If $m_+\le m_-$ then $F^{m_-}\bullet E^{m_+}=F^{m_- - m_+}C^{(m_+)}$ and 
$$
F_\lambda(F^{m_-}\bullet E^{m_+})=(q^{\frac\lambda 2-(m_--m_+)}-q^{-\frac\lambda2+m_- - m_+})F^{m_--m_++1}C^{(m_+)}
=\la  \tfrac12\lambda+m_+-m_-\ra _q F^{m_- +1}\bullet E^{m_+}.
$$
The identities involving~$E_\lambda$ are proved similarly.
\end{proof}

\begin{corollary}
If $k_+=\frac\lambda2+a_+-a_-\max(0,m_- - m_+)$ is a non-negative integer then $k_+=\max\{ k\ge 0\,:\, E_\lambda^k(K_-^{a_-}K_+^{a_+}\invprod F^{m_-}\bullet E^{m_+})\not=0\}$.
Similarly, if $k_-=\frac\lambda2+a_+-a_-+m_+-m_-+\max(0,m_+-m_-)$ is a non-negative integer then $k_-=\max\{k\ge 0\,:\,
F_\lambda^k(K_-^{a_-}K_+^{a_+}\invprod F^{m_-}\bullet E^{m_+})\not=0\}$.
\end{corollary}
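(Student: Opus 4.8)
The plan is to obtain both inequalities by iterating the explicit formulas of Lemma~\ref{lem:lambda-action}. Write $\mathbf v = K_-^{a_-}K_+^{a_+}\invprod F^{m_-}\bullet E^{m_+}$ and, for an arbitrary element $\mathbf w = K_-^{b_-}K_+^{b_+}\invprod F^{n_-}\bullet E^{n_+}$ of the double canonical basis of $\widehat U_q(\widetilde{\lie{sl}_2})$, set $\kappa(\mathbf w) = \frac{\lambda}{2} + b_+ - b_- + \max(0, n_- - n_+)$, so that $\kappa(\mathbf v) = k_+$. Inspecting \eqref{eq:lambda-e-mp} and \eqref{eq:lambda-e-pm} one sees that every basis element appearing in $E_\lambda(\mathbf w)$ has $n_- - n_+$ decreased by exactly $1$, and that among them there is a distinguished ``leading successor'' $\mathbf w^{\uparrow}$ — the unique term of \eqref{eq:lambda-e-mp} when $n_+ \ge n_-$, and the first term of \eqref{eq:lambda-e-pm} when $n_+ < n_-$ — which occurs with coefficient $\la \kappa(\mathbf w)\ra_q$ and satisfies $\kappa(\mathbf w^{\uparrow}) = \kappa(\mathbf w) - 1$. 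I would record at the same time that in both formulas the remaining summands have $b_-$ strictly larger than that of $\mathbf w^{\uparrow}$, or $b_-$ equal and $b_+$ strictly smaller.

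First I would fix the partial order on the basis given by $(b_-, -b_+)$ lexicographically, refined by $n_- - n_+$; by the previous remark $\mathbf w^{\uparrow}$ is strictly maximal among the summands of $E_\lambda(\mathbf w)$, and $E_\lambda$ is triangular for this order, so no cancellation can ever affect the leading term. Iterating gives $E_\lambda^{k}(\mathbf v) = \big(\prod_{l=0}^{k-1} \la k_+ - l\ra_q\big)\, \mathbf v^{\uparrow k} + (\text{strictly lower terms})$, where $\mathbf v^{\uparrow k}$ is the $k$-fold leading successor. Since $k_+$ is assumed to be a non-negative integer, none of the factors $\la k_+ - l\ra_q = q^{\,k_+ - l} - q^{-(k_+ - l)}$ with $0 \le l \le k-1$ equals $\la 0\ra_q = 0$ as long as $k \le k_+$; hence $E_\lambda^{k}(\mathbf v) \neq 0$ for all $0 \le k \le k_+$, which gives $\max\{k : E_\lambda^k(\mathbf v) \neq 0\} \ge k_+$.

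The main obstacle is the reverse inequality: one must show that $E_\lambda^{k_++1}(\mathbf v)$ is identically $0$, not merely that its leading term vanishes. I would prove this by induction on $\max(0, m_- - m_+)$, which $E_\lambda$ strictly decreases until it reaches $0$. When $m_+ \ge m_-$ the formula \eqref{eq:lambda-e-mp} reduces $E_\lambda$ to multiplication of a single basis element by $\la \tfrac{\lambda}{2} + a_+ - a_-\ra_q$ together with a shift, so $E_\lambda^{k}(\mathbf v) = \big(\prod_{l=0}^{k-1}\la \tfrac{\lambda}{2} + a_+ - l - a_-\ra_q\big)\,\mathbf v^{\uparrow k}$ is a single basis element and vanishes exactly for $k > k_+ = \tfrac{\lambda}{2} + a_+ - a_-$; this is the base case. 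For the inductive step one reads off from \eqref{eq:lambda-e-pm} that every summand $\mathbf w$ of $E_\lambda(\mathbf v)$ satisfies $\max(0, n_- - n_+) = \max(0, m_- - m_+) - 1$ together with $\kappa(\mathbf w) \le k_+ - 1$, with equality only for the leading successor; applying the induction hypothesis to each such $\mathbf w$ then forces $E_\lambda^{k_+}$ to annihilate all of them, whence $E_\lambda^{k_++1}(\mathbf v) = 0$. One needs to check along the way that, using that $k_+$ is a non-negative integer, either the $\kappa$-values encountered remain non-negative integers or the corresponding coefficients already vanish; this bookkeeping is the genuinely delicate point. Finally, the assertion for $F_\lambda$ is proved in exactly the same way from \eqref{eq:lambda-f-pm}--\eqref{eq:lambda-f-mp} with the roles of $K_+$ and $K_-$ interchanged in the order, or alternatively deduced from the $E_\lambda$ case via the anti-automorphism ${}^t$, which by Theorem~\ref{thm:transp-star} interchanges $m_+$ and $m_-$ and carries one family of operators to the other.
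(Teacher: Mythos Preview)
Your triangularity argument for the lower bound $\max\{k:E_\lambda^k(\mathbf v)\ne 0\}\ge k_+$ is sound and matches the paper's approach: both isolate the ``leading successor'' whose coefficient is $\la\kappa(\mathbf w)\ra_q$, verify $\kappa(\mathbf w^\uparrow)=\kappa(\mathbf w)-1$, and use a lexicographic order on the $K$-part to preclude cancellation. (Minor point: with your stated order the leading term is minimal, not maximal, but this does not affect the argument.)

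The upper bound, however, has a genuine gap --- and this is not merely ``delicate bookkeeping''. Your inductive step requires that every summand $\mathbf w$ of $E_\lambda(\mathbf v)$ either has $\kappa(\mathbf w)\in\ZZ_{\ge 0}$ (so the induction hypothesis applies) or appears with coefficient zero. But this fails: from \eqref{eq:lambda-e-pm} term~3, namely $K_-^{a_-}K_+^{a_+-1}\invprod F^{m_-}\bullet E^{m_++1}$, has coefficient $\la\tfrac\lambda2+a_+-a_-\ra_q=\la k_+-(m_--m_+)\ra_q$, which is nonzero whenever $k_+\ne m_--m_+$, while its $\kappa$-value is $k_+-2$. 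So for $k_+\in\{0,1\}$ and $m_->m_+$ the induction hypothesis does not cover this term, and your argument breaks down. Concretely, take $a_\pm=0$, $m_-=1$, $m_+=0$, $\lambda=-2$, so $k_+=0$; iterating \eqref{eq:lambda-e-mp}--\eqref{eq:lambda-e-pm} one finds $E_{-2}^s(F)\ne 0$ for every $s\ge 0$, so the stated equality actually fails in this case.

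Note that the paper's own proof establishes only the direction ``$E_\lambda^s(\mathbf v)=0$ \emph{only if} $s>k_+$'' when $m_->m_+$; the reverse implication is not addressed there either. The upper bound does hold under the stronger hypothesis $\tfrac\lambda2+a_+-a_-\ge 0$: then after $m_--m_+$ applications all terms lie in the regime $n_-\le n_+$ with $\kappa\le\tfrac\lambda2+a_+-a_-$, and the single-term base case \eqref{eq:lambda-e-mp} kills each of them in at most $\tfrac\lambda2+a_+-a_-$ further steps. If you add that hypothesis, your induction goes through cleanly.
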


\begin{proof}
 We prove only the first statement, the proof of the second one being similar. If $m_-\le m_+$ then by an obvious induction we obtain 
 $$
 E_\lambda^s(F^{m_-}\bullet E^{m_+}) =\la \tfrac\lambda2\ra _q\cdots\la \tfrac\lambda2-s+1\ra _q K_+^{-s}\invprod F^{m_-}\bullet E^{m_++s},
 $$
 which is zero if and only if $\lambda\in2\ZZ_{\ge 0}$ and $s\ge \tfrac\lambda2+1$.
 If $m_->m_+$ then each term in the right hand side of~\eqref{eq:lambda-e-pm} is of the form 
 $K\invprod F^{a}\bullet E^{b}$ with $a-b=m_--m_+-1$ and the term with the largest coefficient is 
 $F^{m_--1}\bullet E^{m_+}$. Thus,
 $$
 E_\lambda^{m_- - m_+}(F^{m_-}\bullet E^{m_+})=\la \tfrac\lambda2+m_--m_+\ra _q\cdots\la \tfrac\lambda2+1\ra _q F^{m_+}\bullet E^{m_+}+\cdots
 $$
 where the remaining terms are of the form $K\invprod F^a\bullet E^a$ with the coefficients 
 being of the form $\prod_{j=0}^{s} \la \tfrac\lambda2+k-j\ra _q$ with $k<m_- - m_+$. It follows that 
 $E^s_\lambda(F^{m_-}\bullet E^{m_+})=0$ only if $\tfrac12\lambda+m_--m_+\in\ZZ_{\ge 0}$ and~$s>\tfrac12\lambda+m_--m_+$.
\end{proof}
\noindent
Define
$$
\varepsilon^\lambda(F^{m_-}\bullet E^{m_+})=\tfrac\lambda2+\max(0,m_--m_+).
$$
Then we obtain the following 
\begin{corollary}
For all $\lambda\in\ZZ$, $m_\pm\in\mathbb Z_{\ge 0}$
$$
E_\lambda(F^{m_-}\bullet E^{m_+})=\la  \varepsilon^\lambda(F^{m_-}\bullet E^{m_+}) \ra _q \mathbf b+\sum_{\mathbf b'\,:\,
\varepsilon^\lambda (\mathbf b')<\varepsilon^\lambda(F^{m_-}\bullet E^{m_+})} c_{\mathbf b'}\bullet \mathbf b' 
$$
where 
$$
\mathbf b=\begin{cases}
           F^{m_--1}\bullet E^{m_+},& m_->m_+\\
           K_{+}^{-1}\invprod F^{m_-}\bullet E^{m_+ +1},& m_-\le m_+.
          \end{cases}
$$
\end{corollary}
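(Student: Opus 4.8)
The plan is to read the corollary off the explicit action formulas in Lemma~\ref{lem:lambda-action}. The first step is to extend $\varepsilon^\lambda$, which is a priori defined only on the elements $F^{m_-}\bullet E^{m_+}$, to the whole basis $\mathbf K^{-1}\invprod\mathbf B_{\widetilde{\lie{sl}_2}}$ of $\widehat U_q(\widetilde{\lie{sl}_2})$ by setting
$$
\varepsilon^\lambda(K_-^{a_-}K_+^{a_+}\invprod F^{m_-}\bullet E^{m_+})=\tfrac12\lambda+a_+-a_-+\max(0,m_--m_+),\qquad a_\pm\in\ZZ,\ m_\pm\in\ZZ_{\ge 0}.
$$
This is forced by the requirement $\varepsilon^\lambda(K_+^{a_+}K_-^{a_-}\invprod v)=\varepsilon^{\lambda+2a_+-2a_-}(v)$, which is the counterpart for $\varepsilon^\lambda$ of the identity $F_\lambda(K_+^{a_+}K_-^{a_-}\invprod x)=K_+^{a_+}K_-^{a_-}\invprod F_{\lambda+2a_+-2a_-}(x)$ recorded in the proof of Lemma~\ref{lem:lambda-action}; moreover, by the Corollary immediately preceding this one, $\varepsilon^\lambda$ so defined equals $\max\{k\ge 0\,:\,E_\lambda^k(\cdot)\ne 0\}$ whenever the right-hand side is a non-negative integer, so it is a legitimate $\varepsilon_i$-function in the sense of the perfect-basis definition.

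Next I would specialize Lemma~\ref{lem:lambda-action} to $a_-=a_+=0$ and split into its two cases. If $m_-\le m_+$, then~\eqref{eq:lambda-e-mp} gives the single term $E_\lambda(F^{m_-}\bullet E^{m_+})=\la\tfrac12\lambda\ra_q\,K_+^{-1}\invprod F^{m_-}\bullet E^{m_++1}$; since $\varepsilon^\lambda(F^{m_-}\bullet E^{m_+})=\tfrac12\lambda$ and $\varepsilon^\lambda(K_+^{-1}\invprod F^{m_-}\bullet E^{m_++1})=\tfrac12\lambda-1$ (because $m_-\le m_+<m_++1$), the assertion holds with $\mathbf b=K_+^{-1}\invprod F^{m_-}\bullet E^{m_++1}$ and empty error sum. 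If $m_->m_+$, then~\eqref{eq:lambda-e-pm} presents $E_\lambda(F^{m_-}\bullet E^{m_+})$ as a sum of four basis monomials; since $\varepsilon^\lambda(F^{m_-}\bullet E^{m_+})=\tfrac12\lambda+m_--m_+$, its first term is precisely $\la\varepsilon^\lambda(F^{m_-}\bullet E^{m_+})\ra_q\,\mathbf b$ with $\mathbf b=F^{m_--1}\bullet E^{m_+}$ and $\varepsilon^\lambda(\mathbf b)=\varepsilon^\lambda(F^{m_-}\bullet E^{m_+})-1$. Evaluating the extended $\varepsilon^\lambda$ on the remaining three terms, and using $m_--m_+\ge 1$, I get $\varepsilon^\lambda(K_-\invprod F^{m_--2}\bullet E^{m_+-1})=\varepsilon^\lambda(K_+^{-1}\invprod F^{m_-}\bullet E^{m_++1})=\varepsilon^\lambda(F^{m_-}\bullet E^{m_+})-2$ and $\varepsilon^\lambda(K_-K_+^{-1}\invprod F^{m_--1}\bullet E^{m_+})=\varepsilon^\lambda(F^{m_-}\bullet E^{m_+})-3$, all strictly less than $\varepsilon^\lambda(F^{m_-}\bullet E^{m_+})$ (in fact less than $\varepsilon^\lambda(\mathbf b)$), so these three terms belong to the error sum. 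This yields the corollary.

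The remaining work is boundary bookkeeping. A monomial in~\eqref{eq:lambda-e-pm} can vanish under the convention $F^r\bullet E^s=0$ for $r<0$ or $s<0$, and this happens only for $m_-=m_++1$ (killing the $K_-\invprod F^{m_--2}\bullet E^{m_+-1}$ term) or $m_+=0$ (killing the factor $E^{m_+-1}$); in each such case the surviving terms still carry the $\varepsilon^\lambda$-values computed above, so the conclusion is unaffected. One also checks that the four monomials in~\eqref{eq:lambda-e-pm} are pairwise distinct, having pairwise distinct $\mathbf K$-multipliers or pairwise distinct pairs $(m_-,m_+)$. I do not expect any genuine obstacle: the corollary is essentially a repackaging of Lemma~\ref{lem:lambda-action}, and the only mildly delicate points are pinning down the correct extension of $\varepsilon^\lambda$ to the localized basis and the boundary cases just described.
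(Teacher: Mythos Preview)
Your approach is correct and is exactly what the paper intends: the corollary is stated without proof, being a direct reading of Lemma~\ref{lem:lambda-action} together with the definition of~$\varepsilon^\lambda$. Your extension of $\varepsilon^\lambda$ to the full localized basis and your termwise computation of the $\varepsilon^\lambda$-values in~\eqref{eq:lambda-e-mp} and~\eqref{eq:lambda-e-pm} are precisely the unstated verifications. One small slip in your boundary bookkeeping: in the case $m_->m_+$ the term $K_-\invprod F^{m_--2}\bullet E^{m_+-1}$ vanishes exactly when $m_+=0$ (since $m_->m_+$ forces $m_-\ge 1$, so $m_--2<0$ can occur only if $m_-=1$, i.e.\ $m_+=0$), not for all $m_-=m_++1$; this does not affect the argument.
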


\subsection{Some elements in double canonical bases in ranks 2 and~3}\label{subs:dual-cb-elts}\label{ex:rank2}
We will need explicit formulae for some elements of dual canonical bases for computational purposes. 
We already listed the most obvious ones in Example~\ref{ex:power}.
\begin{example}\label{ex:fij}\plink{F_i^sji^r}
It easy to see, extending~\cite{L1}*{\S14.5.4}, 
that the elements $F_i^{\la s\ra}F_j^{\la 1\ra}F_i^{\la n-s\ra}$, $0\le s\le n\le -a_{ij}$ are in~$\mathbf B^{\can}$ and 
form a basis of the homogeneous component of~$U_q^-$ 
of degree $n\alpha_{-i}+\alpha_{-j}$. 
Let $F_{i^s j i^r}=\delta_{F_i^{\la r\ra}F_j^{\la 1\ra}F_i^{\la s\ra}}$,
$r,s\ge 0$, $r+s\le -a_{ij}$ and let $E_{i^s j i^r}=(F_{i^s j i^r})^{*t}$. We summarize their properties in the following Lemma,
which is proved by direct computations based on Lemma~\ref{lem:partial-form}.
\begin{lemma}
\begin{enumerate}[{\rm(a)}]
\item For all $k,l\ge 0$, $k+l< -a_{ij}$ we have 
$$
F_i F_{i^{k} j i^{l}}=q^{l+\frac12 a_{ij}} F_{i^{k+1}j i^{l}}+q^{-k-\frac12 a_{ij}} F_{i^{k}j i^{l+1}},\quad 
F_{i^{k} j i^{l}}F_i= q^{-l-\frac12 a_{ij}} F_{i^{k+1}j i^{l}}+q^{k+\frac12 a_{ij}} F_{i^{k}j i^{l+1}}
$$
or, equivalently,
$$
$$
$$
F_{i^k j i^{l+1}}=\frac{q^{-l-\frac12 a_{ij}}F_iF_{i^k j i^l}-q^{l+\frac12 a_{ij}}F_{i^k j i^l}F_i}{q^{-k-l-a_{ij}}-q^{k+l+a_{ij}}},
\qquad 
F_{i^{k+1}j i^{l}}=\frac{q^{-k-\frac12 a_{ij}} F_{i^k j i^l}F_i-q^{k+\frac12 a_{ij}}F_iF_{i^kji^l}}{q^{-k-l-a_{ij}}-q^{k+l+a_{ij}}}.
$$
If $k+l=-a_{ij}$ then 
$$
q^{-l-\frac12 a_{ij}}F_i F_{i^k j i^l}=q^{l+\frac12 a_{ij}} F_{i^k j i^l}F_i.
$$
 \item For all $r,s\ge 0$, $r+s\le -a_{ij}$ we have 
 \begin{equation*}\label{eq:coprod-Fisjir}
\ul\Delta(F_{i^s ji^r})=\sum_{r'+r''=r} q_i^{r'(r''+s+\frac12 a_{ij})}\binom{r}{r'}_{q_i} F_i^{r'}\tensor F_{i^s j i^{r''}}+
\sum_{s'+s''=s} q_i^{s''(s'+r+\frac{1}2 a_{ij})}\binom{s}{s''}_{q_i} F_{i^{s'}ji^r}\tensor F_i^{s''}.
\end{equation*}
\item For all $s,r,s',r'\ge 0$, $s+r=s'+r'\le -a_{ij}$, we have 
$$
\lr{E_{i^sji^r}}{F_{i^{s'}ji^{r'}}}=(-1)^{r+s'} q_i^{r's'+(r'-r)(a_{ij}+r'-1)}
p_{s',r,r'}(q)
\frac{\la 1\ra_{q_j}\la s \ra_{q_i}!\la r\ra_{q_i}!}{\prod_{t=0}^{s+r-1} q_i^{a_{ij}+t}-q_i^{-a_{ij}-t}}
$$
where 
$$
p_{s',r,r'}(q)=\sum_{l=0}^{\min(s',r)} q_i^{l (r'+s'+2a_{ij}-2)}\binom{s'}{l}_{q_i}\binom{r'}{r-l}_{q_i}\in\ZZ_{\ge 0}[q,q^{-1}].
$$
\end{enumerate}
\label{lem:par-spec}\label{lem:FijEij}
\end{lemma}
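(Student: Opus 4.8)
The plan is to obtain all three parts by duality, from the behaviour of Lusztig's canonical basis under the twisted derivations $\partial_i^-,\partial_i^-{}^{op}$, the one essential input being Lemma~\ref{lem:partial-form}. Throughout write $n=r+s$, and recall from~\cite{L1}*{\S14.5.4} that for $n\le -a_{ij}$ the monomials $F_i^{\la c\ra}F_j^{\la 1\ra}F_i^{\la d\ra}$ with $c+d=n$ exhaust the part of~$\mathbf B^{\can}$ of degree $n\alpha_i+\alpha_j$; thus, by definition, $F_{i^sji^r}=\delta_{F_i^{\la r\ra}F_j^{\la 1\ra}F_i^{\la s\ra}}$ is characterised by $\fgfrm{F_{i^sji^r}}{F_i^{\la c\ra}F_j^{\la 1\ra}F_i^{\la d\ra}}=\delta_{c,r}\delta_{d,s}$, and the whole computation starts from $F_{i^0ji^0}=F_j$ (Example~\ref{ex:power}).

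For part~(a), I would compute $F_iF_{i^kji^l}$ and $F_{i^kji^l}F_i$ by pairing against each $F_i^{\la c\ra}F_j^{\la 1\ra}F_i^{\la d\ra}$ with $c+d=k+l+1\le -a_{ij}$: using symmetry of $\fgfrm{\cdot}{\cdot}$ together with Lemma~\ref{lem:partial-form} for $k=1$, one has $\fgfrm{F_i\,x}{y}=\la 1\ra_{q_i}^{-1}\fgfrm{x}{\partial_i^-(y)}$ and $\fgfrm{x\,F_i}{y}=\la 1\ra_{q_i}^{-1}\fgfrm{x}{\partial_i^-{}^{op}(y)}$; then the quasi-Leibniz rule~\eqref{eq:partial-inv} applied to $\partial_i^-\!\big(F_i^{\la c\ra}F_j^{\la 1\ra}F_i^{\la d\ra}\big)$, in which the middle factor is killed, produces exactly two terms, proportional to $F_i^{\la c-1\ra}F_j^{\la 1\ra}F_i^{\la d\ra}$ and to $F_i^{\la c\ra}F_j^{\la 1\ra}F_i^{\la d-1\ra}$, with $q_i$-exponents read off from~\eqref{eq:partial-inv}; combining with the inductive identification of the form with a Kronecker delta yields the bidiagonal expansions of~(a) with coefficients $q^{\pm(l+\frac12 a_{ij})}$, $q^{\pm(k+\frac12 a_{ij})}$. (Equivalently one may argue combinatorially, using that the product structure constants of~$\mathbf B_{\nn_-}$ coincide with the coproduct structure constants of~$\mathbf B^{\can}$, by Remark~\ref{rem:str-const-lus} and Corollary~\ref{cor:str-const}, together with the divided-power coproduct of $F_i^{\la c\ra}F_j^{\la 1\ra}F_i^{\la d\ra}$.) Solving the resulting $2\times 2$ linear system for $F_{i^{k+1}ji^l}$ and $F_{i^kji^{l+1}}$ gives the ``equivalently'' formulas, whose common denominator $q^{-k-l-a_{ij}}-q^{k+l+a_{ij}}$ is the determinant of the system; and the identity for $k+l=-a_{ij}$ comes out of the same pairing once one invokes the description of~$\mathbf B^{\can}$ in degree $(1-a_{ij})\alpha_i+\alpha_j$ from~\cite{L1}*{\S14.5.4}, where the quantum Serre relation~\eqref{eq:qserre} has removed one monomial, forcing the expansions of $F_iF_{i^kji^l}$ and $F_{i^kji^l}F_i$ to be proportional.

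For part~(b), iterating~(a) expresses every $F_{i^sji^r}$ as a word in left and right multiplication by~$F_i$ applied to~$F_j$, so I would verify directly that the claimed coproduct formula is compatible with the recursions of~(a), the $q_i$-powers $q_i^{r'(r''+s+\frac12 a_{ij})}$ and $q_i^{s''(s'+r+\frac12 a_{ij})}$ being exactly the twists forced by~\eqref{eq:partial-inv} and $\ul\Delta(F_i)=F_i\tensor 1+1\tensor F_i$; alternatively, pair it against $F_i^{\la c\ra}F_j^{\la 1\ra}F_i^{\la d\ra}\tensor F_i^{\la e\ra}$ and $F_i^{\la e\ra}\tensor F_i^{\la c\ra}F_j^{\la 1\ra}F_i^{\la d\ra}$ using the adjunction between~$\ul\Delta$ and multiplication for~$\fgfrm{\cdot}{\cdot}$. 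For part~(c), the identity $\lr{x^{*t}}{y}=q^{\frac12\ul\gamma(\deg y)}\fgfrm{x}{y}$ from the proof of Proposition~\ref{prop:basis of UZZ}, together with $E_{i^sji^r}=(F_{i^sji^r})^{*t}$ and $\ul\gamma(n\alpha_i+\alpha_j)=nd_i(n+a_{ij}-1)$, reduces the claim to computing $\fgfrm{F_{i^sji^r}}{F_{i^{s'}ji^{r'}}}$; by symmetry of $\fgfrm{\cdot}{\cdot}$ and the defining property of $\delta$ this is the coefficient of $F_i^{\la r\ra}F_j^{\la 1\ra}F_i^{\la s\ra}$ in the $\mathbf B^{\can}$-expansion of $F_{i^{s'}ji^{r'}}$, which~(a) computes: unfolding the recursion over all admissible lattice paths, and passing between plain and divided-power monomials and the normalisation of $\fgfrm{\cdot}{\cdot}$, one is left with the factor $\la 1\ra_{q_j}\la s\ra_{q_i}!\la r\ra_{q_i}!$, the accumulated denominators $\prod_{t=0}^{n-1}(q_i^{a_{ij}+t}-q_i^{-a_{ij}-t})$, an overall $q_i$-monomial, and a sum $\sum_l q_i^{(\ast)l}\binom{s'}{l}_{q_i}\binom{r'}{r-l}_{q_i}$; collecting the $q_i$-exponents against the prefactor should reproduce the power $q_i^{r's'+(r'-r)(a_{ij}+r'-1)}$, the sign $(-1)^{r+s'}$, and the polynomial $p_{s',r,r'}$, whose membership in $\ZZ_{\ge 0}[q,q^{-1}]$ is then visible from the shape of the sum.

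The hard part will be this last step of~(c): collapsing the double summation to closed form requires recognising a $q$-Vandermonde/Rothe-type identity, and --- more delicately --- keeping the numerous normalisation twists straight (the rescaling~$\psi$, the factors $q^{-\frac12\ul\gamma}$, the half-bicharacter $\chi^{\frac12}$, the antipode signs) so that the accumulated $q_i$-powers and the overall sign match the stated formula on the nose; everything else is bookkeeping driven by~\eqref{eq:partial-inv} and Lemma~\ref{lem:partial-form}.
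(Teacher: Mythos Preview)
Your approach is correct and is precisely what the paper means by ``direct computations based on Lemma~\ref{lem:partial-form}'': the paper gives no further proof, and your plan of pairing against the monomial basis $F_i^{\la c\ra}F_j^{\la 1\ra}F_i^{\la d\ra}$ of~$\mathbf B^{\can}$ and using the adjunction of Lemma~\ref{lem:partial-form} together with the Leibniz rule~\eqref{eq:partial-inv} is exactly the intended route. One small slip: the adjunction reads $\fgfrm{F_i\,x}{y}=\la 1\ra_{q_i}\fgfrm{x}{\partial_i^-(y)}$ (not $\la 1\ra_{q_i}^{-1}$), since $F_i^{\la 1\ra}=\la 1\ra_{q_i}^{-1}F_i$; this only shifts a constant and does not affect your argument.
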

\end{example}
The following Lemma provides a partial converse to Theorem~\ref{thm:g-ss-denom}.
\begin{lemma}\label{lem:partial converse}
Suppose that $\lr{b_+}{b_-}\in\ZZ[q,q^{-1}]$ for all $b_\pm\in\mathbf B_{\nn_\pm}$. Then for every $i\not=j$, $a_{ij}a_{ji}<4$.
\end{lemma}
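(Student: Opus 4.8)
The plan is to prove the contrapositive: if $a_{ij}a_{ji}\ge 4$ for some $i\ne j$, I will exhibit $b_+\in\mathbf B_{\nn_+}$ and $b_-\in\mathbf B_{\nn_-}$ with $\lr{b_+}{b_-}\notin\ZZ[q,q^{-1}]$. No passage to a rank-two subalgebra is needed, since all the objects involved are supported on the pair $\{i,j\}$ and are supplied by Example~\ref{ex:fij}. As $a_{ij}a_{ji}\ge 4$ forces $\max(|a_{ij}|,|a_{ji}|)\ge 2$, after interchanging $i$ and $j$ if necessary I may assume $m:=-a_{ij}\ge 2$; set $n:=-a_{ji}\ge 1$, so $mn\ge 4$ and $d_i m=d_j n$. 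Because $1+1\le m$, the elements $F_{iji}=F_{i^1ji^1}\in\mathbf B_{\nn_-}$ and $E_{iji}\in\mathbf B_{\nn_+}$ are defined, and specializing $s=r=s'=r'=1$ in Lemma~\ref{lem:FijEij}(c) — using $p_{1,1,1}(q)=1+q_i^{2a_{ij}}$, $\la1\ra_{q_i}!=\la1\ra_{q_i}$, and $\prod_{t=0}^{1}\bigl(q_i^{a_{ij}+t}-q_i^{-a_{ij}-t}\bigr)=\la m\ra_{q_i}\la m-1\ra_{q_i}$ — yields
$$
\lr{E_{iji}}{F_{iji}}=q_i\,\bigl(1+q_i^{-2m}\bigr)\,\frac{\la1\ra_{q_j}\,\la1\ra_{q_i}^{2}}{\la m\ra_{q_i}\,\la m-1\ra_{q_i}}.
$$

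It then remains to check that the right-hand side is not a Laurent polynomial in $q$, which is a factorization argument in $\ZZ[q]$. Modulo units $\pm q^{\ZZ}$ one has $\la\ell\ra_{q_i}\doteq q^{2\ell d_i}-1=\prod_{e\mid 2\ell d_i}\Phi_e(q)$, $\la1\ra_{q_j}\doteq q^{2d_j}-1$, and $q_i^{2m}+1=\prod_{e\mid 4md_i,\ e\nmid 2md_i}\Phi_e(q)$. If $n\ge 2$, I would use the cyclotomic factor $\Phi_{2md_i}(q)$: it divides $\la m\ra_{q_i}$, hence the denominator, while it divides none of $q_i^{2m}+1$, $\la1\ra_{q_i}^{2}$, $\la1\ra_{q_j}$ — the three non-divisibilities reducing to: $2md_i\mid 2md_i$ (so $2md_i$ is excluded from the index set of $q_i^{2m}+1$), $m\nmid 1$, and $md_i\nmid d_j$, the last being equivalent via $d_i m=d_j n$ to $n\mid 1$, impossible. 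If instead $n=1$, then $m=mn\ge 4$ and $d_j=md_i$, and I would use $\Phi_{2(m-1)d_i}(q)$, which divides $\la m-1\ra_{q_i}$ but divides neither $q_i^{2m}-1=q^{2d_j}-1$, nor $q_i^{2m}+1$, nor $\la1\ra_{q_i}^{2}$, the obstructions amounting to $(m-1)\nmid 1$, $(m-1)\nmid 2$ and $(m-1)\nmid 1$, all valid since $m-1\ge 3$. Either way a cyclotomic polynomial $\Phi_e$ remains in the denominator of $\lr{E_{iji}}{F_{iji}}$, so $\lr{E_{iji}}{F_{iji}}\notin\ZZ[q,q^{-1}]$, contradicting the hypothesis; hence $a_{ij}a_{ji}<4$ for all $i\ne j$.

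The substantive part of the proof is this last divisibility bookkeeping; everything preceding it is a direct substitution into the formula of Lemma~\ref{lem:FijEij}(c). I expect the only delicate point to be organizing the two regimes $n\ge 2$ and $n=1$ — equivalently, tracking when $q_j$ is a large enough power of $q_i$ to absorb the "obvious" cyclotomic factor of $\la m\ra_{q_i}$ — and verifying that the index chosen in each regime ($2md_i$, respectively $2(m-1)d_i$) is avoided by every factor of the numerator; the symmetrizability relation $d_i m=d_j n$ is precisely what makes the case $n\ge 2$ go through.
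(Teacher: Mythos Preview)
Your argument is correct. The formula you extract from Lemma~\ref{lem:FijEij}(c) with $s=r=s'=r'=1$ is accurate, and the cyclotomic bookkeeping in each regime is sound: in the case $n\ge 2$ the factor $\Phi_{2md_i}$ survives in the denominator because $md_i\mid d_j$ is equivalent (via $d_im=d_jn$) to $n\mid 1$; in the case $n=1$, $m\ge 4$ the factor $\Phi_{2(m-1)d_i}$ survives because $m-1\ge 3$ kills all three divisibilities $(m-1)\mid 1$, $(m-1)\mid 2$, $(m-1)\mid 1$.

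The paper proceeds differently. It uses a two-step bootstrap with the simpler test elements $E_{ij}$ and $E_{i^2j}$ rather than your single element $E_{iji}$. First, from $\lr{E_{ij}}{F_{ij}}=\la 1\ra_{q_j}\la 1\ra_{q_i}/\la -a_{ij}\ra_{q_i}$ being a Laurent polynomial it deduces $d_i|a_{ij}|\le d_i+d_j$, forcing $|a_{ji}|<2$, hence $a_{ji}=-1$ (and so $d_j=|a_{ij}|d_i$). Second, assuming $|a_{ij}|\ge 4$, it computes $\lr{E_{i^2j}}{F_{i^2j}}=\la 1\ra_{q_i}\la 2\ra_{q_i}/\la |a_{ij}|-1\ra_{q_i}$ and rules this out by degree for $|a_{ij}|>4$ and by inspection for $|a_{ij}|=4$. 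What this buys is that each step handles a shorter expression with fewer cyclotomic factors, and the first step extracts the sharper intermediate conclusion $a_{ji}=-1$ that holds whenever the integrality hypothesis does. Your approach, by contrast, packages everything into a single pairing but pays for it with a longer divisibility check and a case split on~$n$; it also needs $-a_{ij}\ge 2$ from the outset just to have $F_{iji}$ available, whereas the paper's first step only needs $-a_{ij}\ge 1$. Both routes are of comparable length and either is acceptable.
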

\begin{proof}
We may assume without generality that $a_{ij},a_{ji}\not=0$ and $|a_{ij}|\ge |a_{ji}|$ hence $d_i\le d_j$.  
Then by Lemma~\ref{lem:par-spec}, $\lr{E_{ij}}{F_{ij}}=(q_j-q_j^{-1})(q_i-q_i^{-1})/(q_i^{a_{ij}}-q_i^{-a_{ij}})$ which
can only be in~$\ZZ[q,q^{-1}]$ if $d_j|a_{ji}|=d_i|a_{ij}|\le d_i+d_j$. Therefore, $|a_{ji}|\le 1+d_i/d_j<2$, hence $a_{ji}=-1$ and $d_j=-d_ia_{ij}$. 
Suppose that~$|a_{ij}|\ge 4$.
Applying Lemma~\ref{lem:par-spec} again we obtain  
$$
\lr{E_{i^2j}}{F_{i^2j}}=\frac{(q_j-q_j^{-1})(q_i-q_i^{-1})(q_i^2-q_i^{-2})}{(q_i^{a_{ij}}-q_i^{-a_{ij}})(q_i^{a_{ij}+1}-q_i^{-a_{ij}-1})}=
\frac{(q_i-q_i^{-1})(q_i^2-q_i^{-2})}{q_i^{|a_{ij}|-1}-q_i^{-|a_{ij}|+1}}.
$$
This cannot be a Laurent polynomial if~$|a_{ij}|>4$ by the degree considerations, while for 
$a_{ij}=-4$ we have $\lr{E_{i^2j}}{F_{i^2j}}=(q^4-1)/(q^4+q^2+1)\notin\ZZ[q,q^{-1}]$. Thus, $|a_{ij}|\le 3$.
\end{proof}
From now on, given $f=\sum_{j} a_j \nu^j\in\mathbb Z[\nu,\nu^{-1}]$, let $[f]_+=\sum_{j>0}a_j\nu^j$ and~$[f]_-=\sum_{j<0} a_j\nu^j$.
We will now consider some examples in rank~2.

First, assume that $a_{ji}=-1$ (in particular, this includes all subdiagrams of rank~2 for~$\gg$ semisimple and all 
affine cases except those of rank~2). Then $d_j=d d_i$, $a_{ij}=-d$ and by Lemma~\ref{lem:FijEij}
$$
[E_{i^s j},F_{i^s j}]=[E_{ji^s},F_{ji^s}]=-\frac{\la s\ra_{q_i}!}{\la d-1\ra_{q_i}\cdots\la d-s+1\ra_{q_i}}
(K_{+i}^s K_{+j}-K_{-i}^s K_{-j}),
$$
hence $\mathbf d_{F_{i^sj},E_{i^sj}}=\mathbf d_{F_{ji^s},E_{ji^s}}$ and 
$F_{i^sj}\bullet E_{i^sj}-\mathbf d_{F_{i^sj},E_{i^sj}} F_{i^sj}E_{i^sj}=F_{ji^s}\bullet E_{ji^s}-\mathbf d_{F_{ji^s},E_{ji^s}} F_{ji^s}E_{ji^s}$, while
\begin{align*}
&F_{ij}\bullet E_{ij}=F_{ij}E_{ij}-q_i K_{+i}K_{+j}-q_i^{-1} K_{-i}K_{-j},\\
&F_{i^dj}\bullet E_{i^d j}=F_{i^dj}E_{i^dj}-q_i^d K_{+i}^dK_{+j}-q_i^{-d} K_{-i}^dK_{-j},\\
\intertext{and for $d>2$}
&F_{i^2j}\bullet E_{i^2j}=\begin{cases}
(d-1)_{q_i} F_{i^2j}E_{i^2j}-q_i^2 K_{+i}^2K_{+j}-q_i^{-2} K_{-i}^2K_{-j},&\text{$d$ even}\\
(\tfrac12(d-1))_{q_i^2} F_{i^2j}E_{i^2j}-q_i K_{+i}^2K_{+j}-q_i^{-1} K_{-i}^2K_{-j},&\text{$d$ odd}
\end{cases}\\
\intertext{while for~$d>3$}
&F_{i^3j}\bullet E_{i^3j}=\begin{cases}
\binom{d-1}{2}_{q_i}F_{i^3j}E_{i^3j}-q_i^3 K_{+i}^3K_{+j}-q_i^{-3} K_{-i}^3K_{-j},& d=0\pmod3\\
(\tfrac13(d-1))_{q_i^3}(d-2)_{q_i}F_{i^3j}E_{i^3j}-q_i^2 K_{+i}^3K_{+j}-q_i^{-2} K_{-i}^3K_{-j},&d=1\pmod6\\
(\tfrac13(d-2))_{q_i^3}(d-1)_{q_i}F_{i^3j}E_{i^3j}-q_i^2 K_{+i}^3K_{+j}-q_i^{-2} K_{-i}^3K_{-j},&d=2\pmod6\\
(\tfrac13(d-1))_{q_i^3}(\tfrac12(d-2))_{q_i^2}F_{i^3j}E_{i^3j}-q_i K_{+i}^3K_{+j}-q_i^{-1} K_{-i}^3K_{-j},&d=4\pmod6\\
(\tfrac13(d-2))_{q_i^3}(\tfrac12(d-1))_{q_i^2}F_{i^3j}E_{i^3j}-q_i K_{+i}^3K_{+j}-q_i^{-1} K_{-i}^3K_{-j},&d=5\pmod6.
\end{cases}
\end{align*}
Note that if $d\le 2$ then $F_{ij}^k\in\mathbf B_{\nn_-}$; for $d=2$ we also have~$F_{i^2j}^k\in\mathbf B_{\nn_-}$ for all~$k\in\mathbb Z_{\ge 0}$.
Then we can use~\eqref{eq:bullet-iota} to compute $F_{ij}^{m_-}\bullet E_{ij}^{m_+}$ (respectively, $F_{i^2j}^{m_-}\bullet E_{i^2j}^{m_+}$)
for all $m_\pm\in\mathbb Z_{\ge 0}$.
Similarly, we obtain
\begin{align*}
F_{ij}\circ E_{ji}&=F_{ij}E_{ji}-q_i^d K_{+j} F_iE_i+(q_i^{d+1}-[q_i^{d-1}]_+)K_{+i}K_{+j},\\
F_{ij}\bullet E_{ji}&=\iota(F_{ij}\circ E_{ji})-q_i^{-1} K_{-i}\iota(F_j\circ E_j)+q_i^{-1-d} K_{-i}K_{-j}\\
F_{ji}\circ E_{ij}&=F_{ji}E_{ij}-q_i K_{+i} F_jE_j+q_i^{d+1} K_{+i}K_{+j}\\
F_{ji}\bullet E_{ij}&=\iota(F_{ji}\circ E_{ij})-q_i^{-d} K_{-j}\iota(F_i\circ E_i)+([q_i^{1-d}]_-q_i^{-1-d})K_{-i}K_{-j}
\end{align*}
If $d_i=d_j$, $a_{ij}=a_{ji}=-a$ we obtain 
$$
[E_{i^s j},F_{i^s j}]=[E_{ji^s},F_{ji^s}]=-\frac{\la 1\ra_{q_i}\la s\ra_{q_i}!}{\la a\ra_{q_i}\cdots\la a-s+1\ra_{q_i}}
(K_{+i}^s K_{+j}-K_{-i}^s K_{-j}),
$$
and so for all $1\le s\le a$
$$
F_{i^s j}\bullet E_{i^s j}-\binom{a}{s}_{q_i} F_{i^s j}E_{i^s j}=-q_i K_{+i}^s K_{+j}-q_i^{-1} K_{-i}^s K_{-j}=
F_{ji^s}\bullet E_{j i^s}-\binom{a}{s}_{q_i} F_{j i^s}E_{j i^s}.
$$
We also have 
\begin{align*}
F_{ij}\circ E_{ji}&=(a)_{q_i} F_{ij}E_{ji}-q_i^{a} K_{+j} F_iE_i+(q_i^{a+1}-[q_i^{a-1}]_+)K_{+i}K_{+j},\\
F_{ij}\bullet E_{ji}&=\iota(F_{ij}\circ E_{ji})-q_i^{-a} K_{-i}\iota(F_j\circ E_j)+(q_i^{-1-a}-[q_i^{1-a}]_-) K_{-i}K_{-j}-[q_i^{1-a}]_-K_{-i}K_{+j}.
\end{align*}
Furthermore, for $a>1$
\begin{align*}
F_{ij^2}&\circ E_{j^2i}=\tbinom{a}{2}_{q_i}F_{ij^2}E_{j^2i}-(q_i^{a}+[q_i^{a-2}]_+)(a)_{q_i}K_{+j} F_{ij}E_{ji}+
(q_i^2(1+[q_i^{2a-4}]_+))K_{+j}^2 F_iE_i\\&\quad-(q_i^3+(q_i^3-q_i)[q_i^{2a-4}]_+)K_{+i}K_{+j}^2\\
F_{ij^2}&\bullet E_{j^2i}=\iota(F_{ij^2}\circ E_{j^2i})-q_i^{-1-2\{a\}}[\tfrac{a-\{a\}}2]_{q_i^{-4}} K_{-i}\iota(F_j^2\circ E_j^2)\\
&\quad+
q_i^{-4}[a-3]_{q_i^{-2}} K_{-i}K_{+j}\iota(F_j\circ E_j)+(q_i^{-2a}+q^{-2(a-1)}-[q_i^{-2\{a\}}]_-)K_{-i}K_{-j}\iota(F_j\circ E_j)\\
&\quad+(q_i^{-2a+1}+q_i^{-2a+3-2\delta_{a,2}}-q_i^{-3}-[q_i^{-\{a\}}]_-)K_{-i}K_{-j}K_{+j}\\
&\quad+q_i^{-1}(1-\delta_{a,2}-q_i^{-2\{a\}}[\tfrac{a-\{a\}}2-1]_{q_i^{-4}})K_{-i}K_{+j}^2+
(q_i^{-2a+3}-q_i^{-2a+1}-[q_i^{-1+\{a\}}]_-)K_{-i}K_{-j}^2
\\
F_{i^2j}&\circ E_{ji^2}=\tbinom{a}{2}_{q_i} F_{i^2j}E_{ji^2}-q_i^{1 + 2\{a\}} [\tfrac{a-\{a\}}2]_{q_i^4}K_{+j}F_i^2E_i^2+
(q_i^{2 a} + q_i^{2 (a - 1)} - [q_i^{2\{a\}}]_+)K_{+i}K_{+j}F_iE_i
\\&\quad+(q_i^{2 a - 3} - q_i^{2 a - 1}-[q_i^{1-\{a\}}]_+)K_{+i}^2K_{+j}\\
F_{i^2j}&\bullet E_{ji^2}=\iota(F_{i^2j}\circ E_{ji^2})-(q_i^{-a}+[q_i^{-a+2}]_-)K_{-i} \iota(F_{ij}\circ E_{ji})+q_i^{-2}(1+[q^{-2a+4}]_-)K_{-i}^2 \iota(F_j\circ E_j)\\
&\quad+
[q_i^{-2+2\{a\}}]_-K_{-i}K_{+j} \iota(F_i\circ E_i)-[q_i^{-\{a\}}]_- K_{-i}K_{+i}K_{+j}
\\
&\quad+(-q_i^{-3}[q_i^{-2a+4}]_-+q_i^{-2}([q_i^{-2a+5}]_--q_i^{-1}))K_{-i}^2K_{-j}+(q_i^{-1}[q_i^{-2a+4}]_-+q_i^{-\{a\}})K_{-j}K_{+i}^2\\
F_{iji}&\circ E_{ji^2}=\tbinom{a}{2}_{q_i} F_{iji}E_{ji^2}-q_i^{a-1} K_{+i}K_{+j}F_iE_i+(q_i^{a}-[q_i^{a-2}]_+) K_{+i}^2 K_{+j}\\
F_{iji}&\bullet E_{ji^2}=F_{iji}\circ E_{ji^2}-q_i^{1 - a} K_{-i} F_{ji}\circ E_{ji}-[q_i^{2-a}]_- K_{-i}K_{+i}K_{+j}+(q_i^{-a}-[q_i^{2-a}]_-)K_{-i}^2 K_{-j}
\\
F_{iji}&\circ E_{iji}=(a)_{q_i}^2(a-1)_{q_i} F_{iji}E_{iji}-q_i^2 (a)_{q_i} [a-1]_{q_i^2} K_{+i} F_{ij}E_{ji}\\
&\quad+q_i^{a+2}[a-1]_{q_i^2} K_{+i} K_{+j} F_iE_i-q_i^{a-1}(1+q_i^{2a})K_{+i}^2 K_{+j}\\
F_{iji}&\bullet E_{iji}=\iota(F_{iji}\circ E_{iji})-q_i^{-2} [a-1]_{q_i^{-2}} K_{-i} \iota(F_{ji}\circ E_{ij})
+q_i^{-a - 2} [a - 1]_{q_i^{-2}} K_{-j}  K_{-i} \iota(F_i\circ E_i) \\&\quad
   - q_i^{1 - a} K_{-i}  K_{+i}  K_{+j} +
  (q_i^{-1 - a} [a - 1]_{q_i^{-2}} - q_i^{1 - a}) K_{-j}  K_{-i} 
     K_{+i} - q_i^{1 - a} (1 + q_i^{-2 a}) K_{-i}^2  K_{-j}
\\
\intertext{where $\{a\}=a\pmod 2$, $\{a\}\in\{0,1\}$. If $a>2$ we also have}
F_{iji^2}&\circ E_{ji^3}=\tbinom{a}{3}_{q_i} F_{iji^2}E_{ji^3}-q_i^{a - 2} K_{+i}^2 K_{+j} F_iE_i+(q_i^{a-1}-[q_i^{a-3}]_+) K_{+i}^3 K_{+j}\\
F_{iji^2}&\bullet E_{ji^3}=\iota(F_{iji^2}\circ E_{ji^3})-q_i^{2-a} K_{-i} \iota(F_{ji^2}\circ E_{ji^2})-[q_i^{3-a}]_- K_{-i}K_{+i}^2 K_{+j}\\
&\qquad+
(q_i^{1 - a} - [q_i^{3 - a}]_-) K_{-i}^3 K_{-j}.
\end{align*}

\begin{example}\label{ex:affine}
Let $d_i=d_j=1$ and $a_{ij}=a_{ji}=-2$. After~\cite{L1}*{\S14.5.5}, the elements of degree $2(\alpha_{-i}+\alpha_{-j})$ in~$\mathbf B^{\can}$ are
$$
F_i^{\la 2\ra}F_j^{\la 2\ra},\, F_i^{\la 1\ra}F_j^{\la 2\ra}F_i^{\la 1\ra},\, F_i^{\la 1\ra}F_j^{\la 1\ra}F_i^{\la 1\ra}F_j^{\la 1\ra}-
F_i^{\la 2\ra}F_j^{\la 2\ra}
$$
as well as three more elements obtained from these by applying the automorphism which interchanges $F_i$ and~$F_j$. The corresponding elements of~$\mathbf B_{\nn_-}$
are, respectively,
\begin{align*}
F_{j^2i^2}&=( q^2 (2)_q F_i^2 F_j^2-(2 q^3+(2)_q) F_i F_j F_i F_j+(q-q^{-1}) (F_i F_j^2 F_i+F_j F_i^2 F_j)\\
&\phantom{=}+((2)_q+2q^{-3})F_jF_iF_jF_i-
q^{-2}(2)_q F_j^2 F_i^2)/(\la 1\ra_q \la 2\ra_q \la 4\ra_q)\\
F_{ij^2i}&=(F_i^2 F_j^2+F_j^2 F_i^2+F_j F_i^2 F_j+(3)_q (F_iF_j^2 F_i-F_iF_jF_iF_j-F_jF_iF_jF_i))/(\la 1\ra_q \la 4\ra_q)\\
F_{jiji}&=(q^{-2}(2)_q F_j^2 F_i^2-q^2(2)_q F_i^2 F_j^2+(q^{-3}-q^3)(F_jF_i^2F_j+F_iF_j^2F_i)\\
&\phantom{=}+q^4(2q^{-3}+(2)_q)F_iF_jF_iF_j-q^{-4}(2q^3+(2)_q)F_jF_iF_jF_i)/(\la 1\ra_q\la 2\ra_q \la 4\ra_q)
\end{align*}
Set $E_{\alpha}=F_{\alpha}{}^{t*}$. 
Since $\mathbf d_{F_{ji},E_{ji}}=(2)_q$ by the previous example
we have 
\begin{align*}
F_{j^2i^2}\circ E_{j^2i^2}&=(2)_q(4)_q F_{j^2i^2}E_{j^2i^2}+(q-q^3)(2)_q K_{+i}K_{+j} F_{ji}E_{ji}-2q^2 K_{+i}^2K_{+j}^2\\
F_{j^2i^2}\bullet E_{j^2i^2}&=\iota(F_{j^2i^2}\circ E_{j^2i^2})+K_{-i}K_{-j}\Big((q^{-1}-q^{-3})\iota(F_{ji}\circ E_{ji})-2q^{-2} K_{-i}K_{-j}-q^{-2} K_{+i}K_{+j}\Big).
\end{align*}
Similarly,
\begin{align*}
F_{ij^2i}\circ E_{ij^2i}&=(2)_{q^2} F_{ij^2i}E_{ij^2i}+(q-q^3)K_{+i}F_{ij^2}E_{j^2i}+(q^5-q^3)(2)_q K_{+i}K_{+j}F_{ij}E_{ji}\\
&\quad+(q^3-q^5)K_{+i}K_{+j}^2 F_iE_i
+(q^6-q^4-q^2)K_{+i}^2K_{+j}^2\\
F_{ij^2i}\bullet E_{ij^2i}&=\iota(F_{ij^2i}\circ E_{ij^2i})+(q^{-1}-q^{-3})K_{-i}\iota(F_{j^2i}\circ E_{ij^2})-q^{-2}K_{-i}K_{+i}\iota(F_j^2\circ E_j^2)\\
&\quad+(q^{-5}-q^{-3})K_{-i}K_{-j}\iota(F_{ji}\circ E_{ij})+2q^{-3} K_{-i}K_{-j}K_{+i} \iota(F_j\circ E_j)\\
&\quad+(q^{-3}-q^{-5})K_{-i}K_{-j}^2 \iota(F_i\circ E_i)+(q^{-6} - q^{-4} - q^{-2}) K_{-i}^2 K_{-j}^2-
 q^{-4} K_{-i}K_{+i}K_{+j}^2 
\\&\quad+ 
 q^{-4} K_{-i}K_{-j}K_{+i}K_{+j}\\
 F_{jiji}\circ E_{jiji}&=(2)_q(4)_q F_{jiji}E_{jiji}+(q-q^3)(2)_q K_{+i}K_{+j}F_{ji}E_{ji}-2q^2 K_{+i}^2K_{+j}^2\\
 F_{jiji}\bullet E_{jiji}&=\iota(F_{jiji}\circ E_{jiji})+K_{-i}K_{-j}\Big((q^{-1}-q^{-3})\iota(F_{ji}\circ E_{ji})-2q^{-2}K_{-i}K_{-j}-q^{-2} K_{+i}K_{+j}\Big).
\end{align*}
\end{example}

\begin{example}\label{ex:partial converse}
Let $\gg=\widehat{\lie{sl}_3}$, that is, $I=\{1,2,3\}$ and $a_{ij}=a_{ji}=-1$ for all~$i\not=j$. 
For $\{i,j,k\}=\{1,2,3\}$ let 
$$
F_{ijk}=(\la 1\ra_q \la 3\ra_q)^{-1}\Big(q^{\frac32}((2)_q F_kF_jF_i-F_j F_k F_i-F_k F_i
    F_j)+q^{-\frac32}((2)_q F_iF_jF_k- F_iF_k F_j-F_jF_iF_k)\Big).
$$
Then $F_{ijk}=\delta_{F_k^{\la 1\ra}F_j^{\la 1\ra}F_i^{\la 1\ra}}$. We have 
\begin{align*}
F_{ijk}\bullet E_{ijk}&=(3)_q F_{ijk} E_{ijk}-q^2 K_{+i}K_{+j}K_{+k}-q^{-2} K_{-i}K_{-j}K_{-k}\\
F_{ijk}\bullet E_{ikj}&=(3)_q F_{ijk} E_{ikj}-q K_{+i}K_{+j}K_{+k}-q^{-1} K_{-i}K_{-j}K_{-k}\\
F_{ijk}\bullet E_{jik}&=(3)_q F_{ijk} E_{jik}-q K_{+i}K_{+j}K_{+k}-q^{-1} K_{-i}K_{-j}K_{-k}\\
F_{ijk}\circ  E_{jki}&=(3)_q F_{ijk} E_{jki}-q^3 K_{+j}K_{+k} F_i E_i+(q^4-q^2) K_{+i}K_{+j}K_{+k}\\
F_{ijk}\bullet E_{jki}&=\iota(F_{ijk}\circ  E_{jki})-q^{-3} K_{-i}\iota(F_{jk}\circ E_{jk})-q^{-2} K_{-i}K_{+j}K_{+k}+(q^{-4}-q^{-2})
K_{-i}K_{-j}K_{-k}\\
F_{ijk}\circ E_{kji}&=(3)_q F_{ijk} E_{kji}-q^3 K_{+k} F_{ij}E_{ji}+q^4 K_{+j}K_{+k} F_iE_i+(q-q^5) K_{+i}K_{+j}K_{+k}\\
F_{ijk}\bullet E_{kji}&=\iota(F_{ijk}\circ E_{kji})-q^{-3} K_{+i} \iota(F_{jk}\circ E_{kj})-q^{-2} K_{-i}K_{+k} \iota(F_j\circ E_j)
+q^{-4} K_{-i}K_{-j}\iota(F_k\circ E_k)\\
&\quad+(q^{-1}-q^{-5})K_{+i}K_{+j}K_{+k}-q^{-3}K_{-i}K_{-j}K_{+k}\\
F_{ijk}\circ E_{kij}&=(3)_q F_{ijk}E_{kij}-q^3 K_{+k} F_{ij}E_{ij}-q^2 K_{+i}K_{+j}K_{+k}\\
F_{ijk}\bullet E_{kij}&=\iota(F_{ijk}\circ E_{kij})-q^{-3} K_{-i}K_{-j}\iota(F_k\circ E_k)+(q^{-4}-q^{-2})K_{-i}K_{-j}K_{-k}-q^{-2}K_{-i}K_{-j}K_{+k}.
\end{align*}
These examples shows that we can have $\mathbf d_{b_-,b_+}\not=1$ even if all subdiagrams of rank~2 of the Dynkin diagram of~$\gg$
are of finite type.
\end{example}

\subsection{Reshetikhin--Semenov-Tian-Shanvksy map}\label{subs:tony}
Define a pairing $\{\cdot,\cdot\}:U_q^-\tensor U_q^+\to\kk$ by $\{u_-,u_+\}=\fgfrm{u_-}{u_+^{*t}}$, $u_\pm\in U_q^\pm$.
It follows from Proposition~\ref{lem:partial-form} that
\begin{alignat}{2}
&\{ u_-,u_+ E_i^{\la 1\ra}\}=\{\partial_i^-{}^{op}(u^-),u_+\},&\qquad & \{u_-,E_i^{\la 1\ra}u_+\}=\{\partial_i^-(u_-),u_+\}\nonumber\\
&\{ F_i^{\la1\ra}u_-,u_+\}=\{u_-,\partial_i(u_+)\},&&\{u_- F_i^{\la 1\ra},u_+\}=\{u_-,\partial_i^{op}(u_+)\}. \label{eq:frm-prop-xi}
\end{alignat}
Let $\Lambda$ be a fixed weight lattice for~$\lie g$ containing~$\Gamma$ and let
$\pi:\widehat\Gamma\to\Lambda$ be the homomorphism of monoids defined by $\pi(\alpha_{\pm i})=\pm\alpha_i$. Given $u\in U_q(\tilde\gg)$ homogeneous,
let $\deg_\Gamma u=\pi(\deg_{\widehat\Gamma}u)$. Note that $\{u_-,u_+\}\not=0$ implies that $\deg_\Gamma u_- = -\deg_\Gamma u_+$, $u_\pm\in U_q^\pm$.

Extend the $\alpha_i^\vee$ to elements of $\Hom_\ZZ(\Lambda,\ZZ)$.
Let $\check U_q(\tilde\gg)$ be the algebra $\widehat U_q(\gg)$ extended by adjoining elements of the form $K_{0,2\mu}$, $\mu\in\Lambda$. 
Thus,
$\check U_q(\tilde\gg)$ is generated by the $U_q^\pm$ and $K_{\alpha_-,2\mu+\alpha_+}$, $\mu\in\Lambda$, $\alpha_\pm\in\ZZ\Gamma$ such that 
for all $i\in I$
$$
K_{\alpha_-,2\mu+\alpha_+}E_i=q_i^{\alpha_i^\vee(2\mu+\alpha_+-\alpha_-)} E_i K_{\alpha_-,2\mu+\alpha_+},\qquad 
K_{\alpha_-,2\mu+\alpha_+}F_i=q_i^{-\alpha_i^\vee(2\mu+\alpha_+-\alpha_-)} F_i K_{\alpha_-,2\mu+\alpha_+}.
$$
It should be noted that $\check U_q(\tilde\gg)=\widehat U_q(\tilde\gg)$ if $2\Lambda=\ZZ\Gamma$.

Recall that a $U_q(\gg)$-module~$V$ is called {\em lowest weight} of lowest weight $-\mu\in\Lambda$ if there exists $v_{-\mu}\in V\setminus\{0\}$ such that 
$V=U(\gg) v_{-\mu}$, $U_q^- v_{-\mu}=0$ and $K_i v_{-\mu}=q_i^{-\alpha_i^\vee(\mu)} v_{-\mu}$, $i\in I$. Clearly, a lowest weight module is graded by $\Gamma$ and 
we denote by $|v|$ the degree of a homogeneous element~$v$ of~$V$; then $K_i v=q_i^{\alpha_i^\vee(-\mu+|v|)} v$, $i\in I$.

Let $V$ be a lowest weight module of lowest weight~$-\mu\in\Lambda$. 
Let $\la\cdot\,|\,\cdot\ra_V$ be a symmetric pairing $V\tensor V\to \kk$ such that 
$\la x u\,|\, v\ra_V=\la u\,|\,x^t v\ra_V$ for all $x\in U_q(\gg)$, $u,v\in V$. The radical of such a pairing is clearly a submodule of~$V$ hence 
for $V$ simple it is non-degenerate. Since $\la u\,|\, v\ra_V\not=0$ implies that $|u|=|v|$ for $u,v\in V$ homogeneous
and homogeneous components of $V$ are finite dimensional, it follows that 
if $\la\cdot\,|\,\cdot\ra_V$ is non-degenerate then any basis of~$V$ admits a dual basis with respect to~$\la\cdot\,|\,\cdot \ra_V$.

Let $\mathbf B_\pm$ be a homogeneous bases of~$U_q^\pm$. Define a map $\Xi:V\tensor V\to \check U_q(\tilde\gg)$ by 
\begin{equation}\label{eq:Xi-defn}
\Xi(v\tensor v'):=q^{\frac12\ul\gamma(|v'|)-\frac12\ul\gamma(|v|)}\sum_{b_\pm\in\mathbf B_\pm} q^{\eta(b_+)}
\la \check b_+ v'\,|\,\check b_-{}^t v\ra_V (K_{|\check b_+ v'|,0}\invprod b_-)(K_{0,2\mu-|v'|}\invprod b_+),
\end{equation}
for all $v,v'\in V$, where $\{\check b_\pm\}_{b_\pm\in\mathbf B_\pm}\subset U_q^\mp$ denotes the dual basis to~$\mathbf B_{\pm}$ with respect to the pairing 
$\{\cdot,\cdot\}$. Thus, $\{ \check b_+,b'_+\}=\delta_{b_+,b'_+}$, $\{b'_-,\check b_-\}=\delta_{b'_-,b_-}$. Note that the sum in~\eqref{eq:Xi-defn} is finite
since $|xv|=|v|+\deg_\Gamma x$ for any $v\in V$, $x\in U_q^-$ homogeneous, $\deg_\Gamma x\in-\Gamma$,
there are finitely many $\nu\in\Gamma$ such that $|v|-\nu\in\Gamma$ and all homogeneous
components of~$U_q^-$ are finite dimensional.

\begin{proposition}[Theorem~\ref{thm:tony-map}]
Let $V^\#$ be $V$ with the left action of~$U_q(\gg)$ defined by $x\lact v=S(x)^t v$, $x\in U_q(\gg)$, $v\in V$.
Then $\Xi:V^\#\tensor V\to \check U_q(\tilde\gg)$ is a homomorphism of left $U_q(\gg)$-modules where $V^\#\tensor V$
is endowed with a $U_q(\gg)$-module structure via the comultiplication and the $U_q(\gg)$ action on~$\check U_q(\tilde\gg)$ is the adjoint one.
\end{proposition}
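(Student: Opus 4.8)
\noindent The plan is to verify the intertwining identity $\Xi(x\cdot(v\tensor v'))=x(\Xi(v\tensor v'))$ on a set of algebra generators $x\in\{K_i,E_i,F_i\}$, $i\in I$, where on the left the $U_q(\gg)$-action is the one on $V^\#\tensor V$ written out explicitly in Theorem~\ref{thm:tony-map}(a) and on the right it is the adjoint action~\eqref{eq:adjoint-action}; once these three identities are established, the general case follows because the adjoint action is an algebra action and $V^\#\tensor V$ is a $U_q(\gg)$-module. Throughout I would work from the presentation~\eqref{eq:Xi-defn} of $\Xi$, together with the fact, noted right after~\eqref{eq:Xi-formula}, that the right-hand side is independent of the chosen homogeneous bases $\mathbf B_\pm$ of $U_q^\pm$, so that dual bases can be adapted to whichever computation is at hand.

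The case $x=K_i$ is bookkeeping: replacing $(v,v')$ by $(K_i^{-1}v,K_iv')$ does not change $|v|$ or $|v'|$, hence none of the scalar prefactors in~\eqref{eq:Xi-defn} (those built from $\ul\gamma$, from $\eta$, and the $\invprod$-normalisations of $K_{|\check b_+v'|,0}$ and $K_{0,2\mu-|v'|}$) is affected, while the Shapovalov coefficient $\la\check b_+v'\,|\,\check b_-{}^tv\ra_V$ and the remaining torus factors reassemble into conjugation of $\Xi(v\tensor v')$ by $K_{+i}$, which is $K_i(\Xi(v\tensor v'))$ by~\eqref{eq:adjoint-action}. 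For $x=F_i$ I would expand, via Theorem~\ref{thm:tony-map}(a), $\Xi(F_i(v\tensor v'))=\Xi(K_iv\tensor F_iv')-\Xi(E_iK_iv\tensor v')$ and compare it with $F_i(\Xi(v\tensor v'))=F_i\,\Xi(v\tensor v')-K_{-i}\Xi(v\tensor v')K_{-i}^{-1}F_i$. The extra $F_i$ produced on the right-hand side is commuted through the factor $b_-b_+\in U_q^-U_q^+$ using the defining commutator~\eqref{eq:partial-def} for $[F_i,U_q^+]$ and its transpose for $[U_q^-,E_i]$, producing the quasi-derivatives $\partial_i^{\pm},\partial_i^{\pm,op}$ of the $b_\pm$ with the correct $K_{\pm i}$-multipliers; resumming the dual bases — here invoking Proposition~\ref{prop:deriv-dual-bas}, Lemma~\ref{lem:partial-form} and the adjunction~\eqref{eq:frm-prop-xi} between $\{\cdot,\cdot\}$ and the quasi-derivations — turns these derivatives precisely into the insertions of $F_i$ on the $v'$-slot and of $E_iK_i$ on the $v$-slot of the Shapovalov form on the left. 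The case $x=E_i$ is handled by the mirror computation, with $\partial_i^{op}$ in place of $\partial_i$ and the second tensor factor playing the role of the first.

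The main obstacle is the exact balancing of $q$-powers: there are three independent sources — the global prefactor $q^{\frac12\ul\gamma(|v'|)-\frac12\ul\gamma(|v|)}$, the factor $q^{\eta(\deg_{\widehat\Gamma}b_+)}$, and the powers hidden inside the $\invprod$-normalised torus elements — and one must check that transporting a generator from $V$ onto the bases and then onto the Shapovalov form leaves every contribution matched; this is precisely what forces the normalisations chosen in~\eqref{eq:Xi-formula}. To keep the verification manageable I would appeal, where convenient, to the structural results of~\cite{BG-tony}, which already contain the analogous equivariance for the Reshetikhin--Semenov-Tian-Shansky element of~\cite{RST}, rather than redo the bookkeeping from scratch. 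This proves part~(a) of Theorem~\ref{thm:tony-map}; part~(b) then follows by specialising $V$ to a simple integrable module of lowest weight $-\mu$, noting injectivity from non-degeneracy of the Shapovalov form, and identifying $\ul\Xi(V\tensor V)$ with Joseph's component $\ad U_q(\gg)K_{0,2\mu}$ as in~\cite{joseph-mock}.
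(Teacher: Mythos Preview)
Your approach is essentially the paper's: verify on generators $K_i,E_i,F_i$, commute the generator through the $b_-b_+$ factor using the quasi-derivative relations~\eqref{eq:partial-def}, then resum via the adjunction~\eqref{eq:frm-prop-xi} between $\{\cdot,\cdot\}$ and $\partial_i^\pm,\partial_i^{\pm,op}$; the paper carries out the $E_i$ case explicitly and declares $F_i$ ``similar'', while you sketch $F_i$ and call $E_i$ the mirror. One small point: Proposition~\ref{prop:deriv-dual-bas} is specific to the dual canonical basis and is not needed here---since $\Xi$ is basis-independent the resummation goes through for arbitrary homogeneous dual bases via~\eqref{eq:frm-prop-xi} and the expansion $u_\pm=\sum_b\{\check b,u_\pm\}b$ alone, exactly as the paper does it.
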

\begin{remark}
The formulae in Theorem~\ref{thm:tony-map} are obtained from the action defined above. The module~$V^\#$ is highest weight of highest weight~$\mu$.
\end{remark}

\begin{proof}
Let $v,v'\in V$ be homogeneous and set $\xi=|v|,\xi'=|v'|$.
We also abbreviate $\la\cdot\,|\,\cdot\ra=\la\cdot\,|\,\cdot\ra_V$, 
$|x|=\deg_\Gamma x$ for $x\in U_q(\tilde\gg)$ homogeneous and 
set $\kappa(\xi',\xi)=\frac12\ul\gamma(\xi')-\frac12\gamma(\xi)$. Since $\la\check b_+ v'\,|\,\check b_-{}^t v\ra\not=0$ implies that 
$|b_-|+|b_+|=\xi'-\xi$, it follows that 
$K_i\Xi(v\tensor v')=q_i^{\alpha_i^\vee(\xi'-\xi)}\Xi(v\tensor v')=
\Xi(K_i^{-1} v\tensor K_i v')$.
Furthermore, 
\begin{align*}
& q^{-\kappa(\xi',\xi)}\Xi(E_i^{\la 1\ra}(v\tensor v'))=q^{-\kappa(\xi',\xi)}\Xi(v\tensor E_i^{\la 1\ra}(v'))-q^{-\kappa(\xi',\xi)}\Xi(K_{+i}^{-1}F_i^{\la 1\ra}(v)\tensor K_{+i}(v'))\\
&=
q^{\kappa(\xi'+\alpha_i,\xi)-\kappa(\xi',\xi)}\sum_{b_\pm\in\mathbf B_\pm} q^{\eta(b_+)}
\la \check b_+ E_i^{\la 1\ra}(v')\,|\,\check b_-{}^t (v)\ra (K_{\xi'-|b_+|+\alpha_i,0}\invprod b_-)(K_{0,2\mu-\xi'-\alpha_i}\invprod b_+)\\
&-q^{\kappa(\xi',\xi-\alpha_i)-\kappa(\xi',\xi)}q_i^{\alpha_i^\vee(\xi'-\xi)+2}\sum_{b_\pm\in\mathbf B_\pm} q^{\eta(b_+)}
\la \check b_+ (v')\,|\,(E_i^{\la 1\ra}\check b_-)^t (v)\ra (K_{\xi'-|b_+|,0}\invprod b_-)(K_{0,2\mu-\xi'}\invprod b_+)\\
&=
q_i^{\frac12\alpha_i^\vee(\xi')}\sum_{b_\pm\in\mathbf B_\pm} q^{\eta(b_+)}
\la E_i^{\la 1\ra}\check b_+ (v')\,|\,\check b_-{}^t (v)\ra (K_{\xi'-|b_+|+\alpha_i,0}\invprod b_-)(K_{0,2\mu-\xi'-\alpha_i}\invprod b_+)\\
&+
q_i^{\frac12\alpha_i^\vee(\xi')}\sum_{b_\pm\in\mathbf B_\pm} q^{\eta(b_+)}
\la K_{+i}\invprod\partial_i^-(\check b_+)(v')\,|\,\check b_-{}^t (v)\ra (K_{\xi'-|b_+|+\alpha_i,0}\invprod b_-)(K_{0,2\mu-\xi'-\alpha_i}\invprod b_+)\\
&-
q_i^{\frac12\alpha_i^\vee(\xi')}\sum_{b_\pm\in\mathbf B_\pm} q^{\eta(b_+)}
\la K_{-i}\invprod \partial_i^-{}^{op}(\check b_+) (v')\,|\,\check b_-{}^t (v)\ra (K_{\xi'-|b_+|+\alpha_i,0}\invprod b_-)(K_{0,2\mu-\xi'-\alpha_i}\invprod b_+)\\
&-q_i^{\frac12\alpha_i^\vee(2\xi'-\xi)+1}\sum_{b_\pm\in\mathbf B_\pm} q^{\eta(b_+)}
\la \check b_+ (v')\,|\,(E_i^{\la 1\ra}\check b_-)^t (v)\ra (K_{\xi'-|b_+|,0}\invprod b_-)(K_{0,2\mu-\xi'}\invprod b_+)\\
&=
q_i^{\frac12\alpha_i^\vee(\xi')}\sum_{b_\pm\in\mathbf B_\pm} q^{\eta(b_+)}
\la \check b_+ (v')\,|\,(\check b_-E_i^{\la 1\ra})^t (v)\ra (K_{\xi'-|b_+|+\alpha_i,0}\invprod b_-)(K_{0,2\mu-\xi'-\alpha_i}\invprod b_+)\\
&-q_i^{\frac12\alpha_i^\vee(2\xi'-\xi)+1}\sum_{b_\pm\in\mathbf B_\pm} q^{\eta(b_+)}
\la \check b_+ (v')\,|\,(E_i^{\la 1\ra}\check b_-)^t (v)\ra (K_{\xi'-|b_+|,0}\invprod b_-)(K_{0,2\mu-\xi'}\invprod b_+)
\\
&+
\sum_{b_\pm\in\mathbf B_\pm} q^{\eta(b_+)} q_i^{\frac12\alpha_i^\vee(3\xi'-2\mu-|b_+|)+1}
\la \partial_i^-(\check b_+)(v')\,|\,\check b_-{}^t (v)\ra
(K_{\xi'-|b_+|+\alpha_i,0}\invprod b_-)(K_{0,2\mu-\xi'-\alpha_i}\invprod b_+)\\
&-
\sum_{b_\pm\in\mathbf B_\pm} q^{\eta(b_+)}q_i^{\frac12\alpha_i^\vee(2\mu-\xi'+|b_+|)-1}
\la \partial_i^-{}^{op}(\check b_+) (v')\,|\,\check b_-{}^t (v)\ra
(K_{\xi'-|b_+|+\alpha_i,0}\invprod b_-)(K_{0,2\mu-\xi'-\alpha_i}\invprod b_+).
\end{align*}
On the other hand,
\begin{align*}
&q^{-\kappa(\xi',\xi)}[E_i^{\la 1\ra},\Xi(v\tensor v')]K_{+i}^{-1}\\
&=q_i^{\frac12\alpha_i^\vee(\xi')}\sum_{b_\pm\in\mathbf B_\pm} q^{\eta(b_+)}
\la \check b_+ (v')\,|\,\check b_-{}^t (v)\ra (K_{\xi'-|b_+|,0}\invprod E_i^{\la 1\ra}b_-)(K_{0,2\mu-\xi'-\alpha_i}\invprod b_+)\\
&-q_i^{\frac12\alpha_i^\vee(2\mu-\xi')+1}\sum_{b_\pm\in\mathbf B_\pm} q^{\eta(b_+)}q_i^{\frac12\alpha_i^\vee(|b_+|)}
\la \check b_+ (v')\,|\,\check b_-{}^t (v)\ra
(K_{\xi'-|b_+|,0}\invprod b_-)(K_{0,2\mu-\xi'-\alpha_i}\invprod b_+E_i^{\la 1\ra})\\
&=q_i^{\frac12\alpha_i^\vee(\xi')}\sum_{b_\pm\in\mathbf B_\pm} q^{\eta(b_+)}
\la \check b_+ (v')\,|\,\check b_-{}^t (v)\ra (K_{\xi'-|b_+|,0}\invprod b_-E_i^{\la 1\ra})(K_{0,2\mu-\xi'-\alpha_i}\invprod b_+)\\
&-q_i^{\frac12\alpha_i^\vee(\xi')}\sum_{b_\pm\in\mathbf B_\pm} q^{\eta(b_+)}
\la \check b_+ (v')\,|\,\check b_-{}^t (v)\ra (K_{\xi'-|b_+|,0}K_{+i}\invprod \partial_i^-(b_-))(K_{0,2\mu-\xi'-\alpha_i}\invprod b_+)\\
&+q_i^{\frac12\alpha_i^\vee(\xi')}\sum_{b_\pm\in\mathbf B_\pm} q^{\eta(b_+)}
\la \check b_+ (v')\,|\,\check b_-{}^t (v)\ra (K_{\xi'-|b_+|+\alpha_i,0}\invprod \partial_i^-{}^{op}(b_-))(K_{0,2\mu-\xi'-\alpha_i}\invprod b_+)\\
&-
\sum_{b_\pm\in\mathbf B_\pm} q^{\eta(b_+)}q_i^{\frac12\alpha_i^\vee(2\mu-\xi'+|b_+|)+1}
\la \check b_+ (v')\,|\,\check b_-{}^t (v)\ra
(K_{\xi'-|b_+|,0}\invprod b_-)(K_{0,2\mu-\xi'-\alpha_i}\invprod b_+E_i^{\la 1\ra})
\\
&=
\sum_{b'_+\in\mathbf B_+,\, b_-\in \mathbf B_-}\mskip-30mu q^{\eta(b'_+)} q_i^{\frac12\alpha_i^\vee(3\xi'-2\mu-|b'_+|)+1}
\la \check b'_+ (v')\,|\,\check b_-{}^t (v)\ra
(K_{\xi'-|b'_+|,0}\invprod b_-)(K_{0,2\mu-\xi'-\alpha_i}\invprod E_i^{\la 1\ra}b'_+)\\
&-
\sum_{b'_+\in\mathbf B_+,\, b_-\in \mathbf B_-}\mskip-30mu q^{\eta(b'_+)}q_i^{\frac12\alpha_i^\vee(2\mu-\xi'+|b'_+|)+1}
\la \check b'_+ (v')\,|\,\check b_-{}^t (v)\ra
(K_{\xi'-|b'_+|,0}\invprod b_-)(K_{0,2\mu-\xi'-\alpha_i}\invprod b'_+E_i^{\la 1\ra})
\\
&+q_i^{\frac12\alpha_i^\vee(\xi')}\mskip-30mu\sum_{b_+\in\mathbf B_+,\, b'_-\in \mathbf B_-}\mskip-30mu q^{\eta(b_+)}
\la \check b_+ (v')\,|\,\check b'_-{}^t (v)\ra
(K_{\xi'-|b_+|+\alpha_i,0}\invprod \partial_i^-{}^{op}(b'_-))(K_{0,2\mu-\xi'-\alpha_i}\invprod b_+)\\
&-q_i^{\frac12\alpha_i^\vee(\xi')+1}\mskip-30mu\sum_{b_+\in\mathbf B_+,\, b'_-\in \mathbf B_-}\mskip-30mu q^{\eta(b_+)}q_i^{\frac12\alpha_i^\vee(|b'_-|+|b_+|)}
\la \check b_+ (v')\,|\,\check b'_-{}^t (v)\ra
(K_{\xi'-|b_+|,0}\invprod \partial_i^-(b'_-))(K_{0,2\mu-\xi'}\invprod b_+).
\end{align*}
Furthermore, since $$
u_+=\sum_{b_+\in\mathbf B_+} \{ \check b_+,u_+\} b_+=\sum_{b_-\in\mathbf B_-} \{b_-,u_+\}\check b_-,\quad 
u_-=\sum_{b_+\in\mathbf B_+} \{ u_-,b_+\}\check b_+=\sum_{b_-\in\mathbf B_-} \{ u_-,\check b_-\} b_-
$$
for all $u_\pm\in U_q^\pm$, we obtain,
using~\eqref{eq:frm-prop-xi}
\begin{align*}
&q^{-\kappa(\xi',\xi)} [E_i^{\la 1\ra},\Xi(v\tensor v')]K_{+i}^{-1}\\
&=
\mskip-10mu
\sum_{\substack{b_+,b'_+\in\mathbf B_+,\\ b_-\in \mathbf B_-}} \mskip-20mu q^{\eta(b_+)} q_i^{\frac12\alpha_i^\vee(3\xi'-2\mu-|b_+|)+1}
\{ \check b_+,E_i^{\la 1\ra}b'_+\}
\la \check b'_+ (v')\,|\,\check b_-{}^t (v)\ra
(K_{\xi'-|b_+|+\alpha_i,0}\invprod b_-)(K_{0,2\mu-\xi'-\alpha_i}\invprod b_+)\\
&-
\mskip-10mu\sum_{\substack{b_+,b'_+\in\mathbf B_+,\\ b_-\in \mathbf B_-}}\mskip-20mu q^{\eta(b_+)}q_i^{\frac12\alpha_i^\vee(2\mu-\xi'+|b_+|)-1}
\{ \check b_+,b'_+E_i^{\la 1\ra}\}
\la \check b'_+ (v')\,|\,\check b_-{}^t (v)\ra
(K_{\xi'-|b_+|+\alpha_i,0}\invprod b_-)(K_{0,2\mu-\xi'-\alpha_i}\invprod b_+)
\\
&+ q_i^{\frac12\alpha_i^\vee(\xi')}\mskip-10mu\sum_{b_+\in\mathbf B_+,\, b_-,b'_-\in \mathbf B_-} \mskip-40mu q^{\eta(b_+)} \{ \partial_i^-{}^{op}(b'_-),\check b_-\}
\la \check b_+ (v')\,|\,\check b'_-{}^t (v)\ra
(K_{\xi'-|b_+|+\alpha_i,0}\invprod b_-)(K_{0,2\mu-\xi'-\alpha_i}\invprod b_+)\\
&-q_i^{\frac12\alpha_i^\vee(2\xi'-\xi)+1}\mskip-10mu\sum_{b_+\in\mathbf B_+,\, b'_-\in \mathbf B_-} \mskip-30mu q^{\eta(b_+)}
\{ \partial_i^-(b'_-),\check b_-\}
\la \check b_+ (v')\,|\,\check b'_-{}^t (v)\ra
(K_{\xi'-|b_+|,0}\invprod b_-)(K_{0,2\mu-\xi'}\invprod b_+)
\\
&=
\sum_{\substack{b_+,b'_+\in\mathbf B_+,\\ b_-\in \mathbf B_-}} \mskip-20mu q^{\eta(b_+)} q_i^{\frac12\alpha_i^\vee(3\xi'-2\mu-|b_+|)+1}
\{ \partial_i^-(\check b_+),b'_+\}
\la \check b'_+ (v')|\check b_-{}^t (v)\ra
(K_{\xi'-|b_+|+\alpha_i,0}\invprod b_-)(K_{0,2\mu-\xi'-\alpha_i}\invprod b_+)\\
&-
\mskip-10mu\sum_{\substack{b_+,b'_+\in\mathbf B_+,\\ b_-\in \mathbf B_-}}\mskip-20mu q^{\eta(b_+)}q_i^{\frac12\alpha_i^\vee(2\mu-\xi'+|b_+|)-1}
\{ \partial_i^-{}^{op}(\check b_+),b'_+\}
\la \check b'_+ (v')\,|\,\check b_-{}^t (v)\ra
(K_{\xi'-|b_+|+\alpha_i,0}\invprod b_-)(K_{0,2\mu-\xi'-\alpha_i}\invprod b_+)
\\
&+q_i^{\frac12\alpha_i^\vee(\xi')}\sum_{b_+\in\mathbf B_+,\, b_-,b'_-\in \mathbf B_-}\mskip-40mu q^{\eta(b_+)} \{ b'_-,\check b_-E_i^{\la 1\ra}\}
\la \check b_+ (v')\,|\,\check b'_-{}^t (v)\ra
(K_{\xi'-|b_+|+\alpha_i,0}\invprod b_-)(K_{0,2\mu-\xi'-\alpha_i}\invprod b_+)\\
&-q_i^{\frac12\alpha_i^\vee(2\xi'-\xi)+1}\sum_{b_+\in\mathbf B_+,\, b_-, b'_-\in \mathbf B_-}\mskip-40mu q^{\eta(b_+)}
\{ b'_-,E_i^{\la 1\ra}\check b_-\}
\la \check b_+ (v')\,|\,\check b'_-{}^t (v)\ra
(K_{\xi'-|b_+|,0}\invprod b_-)(K_{0,2\mu-\xi'}\invprod b_+)
\\
&=
\sum_{b_\pm\in\mathbf B_\pm} q^{\eta(b_+)} q_i^{\frac12\alpha_i^\vee(3\xi'-2\mu-|b_+|)+1}
\la \partial_i^-(\check b_+) (v')\,|\,\check b_-{}^t (v)\ra
(K_{\xi'-|b_+|+\alpha_i,0}\invprod b_-)(K_{0,2\mu-\xi'-\alpha_i}\invprod b_+)\\
&-
\sum_{b_\pm\in\mathbf B_\pm} q^{\eta(b_+)}q_i^{\frac12\alpha_i^\vee(2\mu-\xi'+|b_+|)-1}
\la \partial_i^-{}^{op}(\check b_+) (v')\,|\,\check b_-{}^t (v)\ra
(K_{\xi'-|b_+|+\alpha_i,0}\invprod b_-)(K_{0,2\mu-\xi'-\alpha_i}\invprod b_+)
\\
&+q_i^{\frac12\alpha_i^\vee(\xi')}\sum_{b_\pm\in\mathbf B_\pm} q^{\eta(b_+)} 
\la \check b_+ (v')\,|\,(\check b_-E_i^{\la 1\ra})^t (v)\ra
(K_{\xi'-|b_+|+\alpha_i,0}\invprod b_-)(K_{0,2\mu-\xi'-\alpha_i}\invprod b_+)\\
&-q_i^{\frac12\alpha_i^\vee(2\xi'-\xi)+1}\sum_{b_\pm\in\mathbf B_\pm} q^{\eta(b_+)}
\la \check b_+ (v')\,|\,(E_i^{\la 1\ra}\check b_-)^t (v)\ra
(K_{\xi'-|b_+|,0}\invprod b_-)(K_{0,2\mu-\xi'}\invprod b_+)\\
&=q^{-\kappa(\xi',\xi)}\Xi(E_i^{\la 1\ra}(v\tensor v')).
\end{align*}
The computation for the action of $F_i^{\la 1\ra}$ is similar and is omitted. 
\end{proof}
Let $\rho$ be an element of~$\Lambda$ satisfying $\alpha_i^\vee(\rho)=1$ for all $i\in I$. If~$\gg$ is finite dimensional then~$\rho$ is uniquely defined by this condition.
Extend the pairing $\cdot:\Gamma\times\Gamma\to \ZZ$ to a pairing $\Lambda\times\Lambda\to\QQ$.
\begin{lemma}\label{lem:can-inv}
Let $V$ be a lowest weight module of lowest weight $-\mu\in\Lambda$ and suppose that the pairing $\la\cdot,\cdot\ra_V$ is non-degenerate. Then 
the canonical invariant~$1_V$ in~$V^\#\widehat\tensor V$ is given by
$$
1_V=q^{2\rho\cdot\mu}\sum_{v\in \mathscr B_V} q^{-2\eta(|v^j|)}v\tensor \check v
$$ 
where $\mathscr B_V$ is a homogeneous basis of~$V$ and $\{\check v\}_{v\in \mathscr B_V}\subset V$ is its 
dual basis with respect to the pairing~$\la\cdot\,|\,\cdot\ra_V$.
\end{lemma}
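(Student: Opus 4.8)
\emph{Proof proposal.}
The plan is to argue in two stages: first that the right-hand side, which I denote
$\Omega_V=q^{2\rho\cdot\mu}\sum_{v\in\mathscr B_V}q^{-2\eta(|v|)}\,v\tensor\check v\in V^\#\widehat\tensor V$, is $U_q(\gg)$-invariant for the action used in Theorem~\ref{thm:tony-map}(a), and then that its normalization is the one that defines $1_V$ in \S\ref{subs:tony}.

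Two preliminary observations make the structure transparent. First, $\Omega_V$ is independent of the choice of homogeneous basis $\mathscr B_V$: in each weight space $V_\xi$ the partial sum $\sum_{|v|=\xi}v\tensor\check v$ is the canonical element of $V_\xi\tensor V_\xi^\ast$ with respect to $\la\cdot\,|\,\cdot\ra_V$, and the scalar $q^{2\rho\cdot\mu-2\eta(\xi)}$ depends only on $\xi$. Second, $\eta$ and $\rho\cdot(-)$ are both characters of $\Gamma$ agreeing on generators, since $2\rho\cdot\alpha_i=\alpha_i\cdot\alpha_i=2d_i=2\eta(\alpha_i)$; as a homogeneous $v$ of $\Gamma$-degree $|v|$ has $V^\#$-weight $\mu-|v|$ (recall $V^\#$ is highest weight $\mu$), the coefficient of $v\tensor\check v$ is $q^{2\rho\cdot(\mu-|v|)}=q^{2\rho\cdot\wt_{V^\#}(v)}$. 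Thus $\Omega_V$ has exactly the shape of the canonical invariant of a dual pair in a pivotal category, the twist $q^{2\rho}$ being the one implementing $S^2$; equivalently, under the identification $V^\#\cong V^\ast$ furnished by the non-degenerate form $\la\cdot\,|\,\cdot\ra_V$, it is the (completed) coevaluation element.

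For the invariance I would use the explicit action of Theorem~\ref{thm:tony-map}(a), with the operators on both tensor slots the ordinary operators of $V$. The element $K_i$ acts on $v\tensor\check v$ by $q_i^{\alpha_i^\vee((\mu-|v|)+(|v|-\mu))}=1$, so $K_i(\Omega_V)=\Omega_V$. For $F_i$, substituting $F_i(v\tensor v')=K_iv\tensor F_iv'-E_iK_iv\tensor v'$, using $K_iv=q_i^{\alpha_i^\vee(|v|-\mu)}v$, and then trading the $F_i$ on the second slot for an $E_i$ on the first via the Shapovalov transpose $\la F_iu\,|\,w\ra_V=\la u\,|\,E_iw\ra_V$ and the dual-basis expansion, one gets $F_i(\Omega_V)=\sum_{v}\bigl(c_{|v|+\alpha_i}q_i^{\alpha_i^\vee(|v|+\alpha_i-\mu)}-c_{|v|}q_i^{\alpha_i^\vee(|v|-\mu)}\bigr)\,E_iv\tensor\check v$, where $c_\xi=q^{2\rho\cdot\mu-2\eta(\xi)}$; this vanishes precisely because $c_{\xi+\alpha_i}/c_\xi=q^{-2\eta(\alpha_i)}=q_i^{-2}=q_i^{-\alpha_i^\vee(\alpha_i)}$. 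The computation for $E_i$ is the mirror image and reduces to the same identity. Since invariance pins $\Omega_V$ down only up to a scalar, the proof is completed by checking that the coefficient of $v_{-\mu}\tensor\check v_{-\mu}$ (take $v_{-\mu}\in\mathscr B_V$ the lowest weight vector, so that $\eta(0)=0$) equals $q^{2\rho\cdot\mu}$, which is the normalization fixed for the canonical invariant $1_V$ in \S\ref{subs:tony}.

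I expect the only real friction to be bookkeeping: keeping straight that the first tensor factor carries the twisted action $x\mapsto S(x)^{t}$ while the coproduct formula of Theorem~\ref{thm:tony-map}(a) is written with ordinary $V$-operators, and tracking the $q$-powers so that the exact cancellation in $F_i(\Omega_V)$—which both forces and is forced by $\eta|_\Gamma=\rho\cdot(-)$—is clearly visible. A secondary point, depending on the convention of \S\ref{subs:tony}, is to state precisely what ``the canonical invariant $1_V$'' is normalized to be, so that the final comparison is literally an equality and not merely a proportionality.
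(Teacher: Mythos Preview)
Your proposal is correct and follows essentially the same route as the paper: direct verification of invariance using the explicit action formulas of Theorem~\ref{thm:tony-map}(a), the Shapovalov adjointness $\la xu\,|\,w\ra_V=\la u\,|\,x^t w\ra_V$, and dual-basis reindexing, with the cancellation reducing to $q^{-2\eta(\alpha_i)}=q_i^{-2}$. The only cosmetic difference is that you carry out $F_i$ in detail and defer $E_i$, while the paper does $E_i$ and defers $F_i$; your added remarks on basis independence and the pivotal-category interpretation are extras not present in the paper, and your concern about normalization is reasonable but the paper's proof likewise establishes only invariance.
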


\begin{proof}
Note that for any $u\in V$ we have $u=\sum_{b\in\mathscr B_V} \la u,\check b\ra_V b=\sum_{b\in\mathscr B_V} \la b,u\ra_V \check b$. 
This sum is finite since each homogeneous piece of 
a lowest weight module is finite dimensional. Since $|v|=|\check v|$ for all $v\in\mathscr B_V$, $K_i(1_V)=1_V$. Furthermore, 
\begin{align*}
E_i(q^{-2\rho\cdot\mu} 1_V)&=\sum_{b\in\mathscr B_V} q^{-2\eta(|\check b|)}b\tensor E_i \check b-\sum_{b\in\mathscr B_V} q^{2\eta(|\check b|)} K_i^{-1}F_ib\tensor K_i \check b\\
&=\sum_{b,b'\in\mathscr B_V} q^{-2\eta(|\check b|)}\la b',E_i\check b\ra_V b\tensor \check b'-\sum_{b\in\mathscr B_V} q^{-2\eta(|\check b|)} q_i^{2} F_i b\tensor \check b
\\
&=\sum_{b,b'\in\mathscr B_V} q^{-2\eta(|\check b|)}q_i^{2}\la F_ib,\check b'\ra_V b'\tensor \check b-\sum_{b\in\mathscr B_V} q^{-2\eta(|\check b|)} q_i^{2} F_i b\tensor \check b=0
\end{align*}
and similarly $F_i(1_V)=0$.
\end{proof}

\subsection{Towards Conjecture~\ref{conj:tony-conj}}
\begin{example}
Let $\gg=\lie{sl}_2$ and let $V$ be the $(m+1)$-dimensional $U_q(\tilde\gg)$-module with its standard basis $v_a=E^{\la a\ra} v_0$, $0\le a\le m$. 
Then
$
E^{\la a\ra} v_b=\binom{a+b}{a}_q v_{b+a}$, $F^{\la a\ra}v_b=(-1)^a \binom{m-b+a}{a}_q v_{b-a}$, $0\le b\le m$
where we set $v_k=0$ if $k<0$ or~$k>m$. Denote $\{v^a\}_{0\le a\le m}$ the dual basis of~$V$ with respect to the pairing~$\la\cdot,\cdot\ra_m:=\la\cdot,\cdot\ra_V$.
Then we have 
\begin{align*}
\Xi(v^a\tensor v_b)&=q^{\binom{b}2-\binom{a}2}\sum_{k=\max(0,b-a)}^{b} q^{k}
\la v^a, E^{\la a-b+k\ra}F^{\la k\ra} v_b\ra_m (K_-^{b-k}\invprod F^{a-b+k})(K_+^{m-b}\invprod E^k)\\
&=q^{\binom{b}2-\binom{a}2}\sum_{k=\max(0,b-a)}^{b} (-1)^k q^{k}\binom{m-b+k}{k}_q\binom{a}{b-k}_q
(K_-^{b-k}\invprod F^{a-b+k})(K_+^{m-b}\invprod E^k),
\end{align*}
whence we obtain, using~\eqref{eq:bin-bar} and~\eqref{eq:cb-sl2}
\begin{align*}
&\Xi(1_V)=\sum_{a=0}^m q^{m-2a}\Xi(v^a\tensor v_a)\\
&=\sum_{0\le k\le a\le m} (-1)^k q^{k+m-2a}\binom{m-a+k}{k}_q\binom{a}{k}_q
(K_-^{a-k}\invprod F^{k})(K_+^{m-a}\invprod E^k)\\
&=\sum_{0\le k\le a\le m}  (-1)^k q^{(k+1)(m+k-2a)}\binom{m-a+k}{k}_q\binom{a}{k}_q
K_-^{a-k}K_+^{m-a}\invprod F^{k}E^k\\
&=\!\sum_{r,s\ge 0,\,r+s\le m}\mskip-32mu (-1)^{m-r-s} q^{(r+s-m-1)(r-s)}\binom{m-r}{s}_q\binom{m-s}{r}_q K_-^r K_+^s\invprod F^{m-r-s}E^{m-r-s}=(-1)^m C^{(m)}.
\end{align*}
This proves Conjecture~\ref{conj:tony-conj} for $\gg=\lie{sl}_2$.
\end{example}
\begin{example}
Let $\gg=\lie{sl}_{n+1}$ and let $V$ be the simple lowest weight module of lowest weight $-\omega_1$. 
It standard basis is $v_i$, $0\le i\le n$, where $E_i^{\la 1\ra}v_j=\delta_{i,j+1} v_{j+1}$ and 
$F_i^{\la 1\ra}v_j=-\delta_{i,j} v_{j-1}$. Denote $\alpha_{i,j}=\sum_{k=i}^j \alpha_k$.
Then $|v_i|=\alpha_{1,i}$ and $\ul\gamma(|v_i|)=-i+1-\delta_{i,0}$, $0\le i\le n$. Let $\{v^j\}$, $0\le j\le n$
be the dual basis of~$V$ with respect to the pairing $\la\cdot,\cdot\ra_V$. We have 
\begin{align*}
&\Xi(v^i\tensor v_j)=q^{\frac{i-j+\delta_{i,0}-\delta_{j,0}}2}\sum_{\substack{b_-\in\mathbf B_-\\
1\le k\le j+1}}\mskip-10mu
q^{j-k+1}
\la v^i, \check b_- F_k^{\la 1\ra}\cdots F_j^{\la 1\ra} v_j\ra_V (K_{\alpha_{1,k-1},0}\invprod b_-)(K_{0,2\omega_1-\alpha_{1,j}}\invprod E_{[k,j]}{}^*)\\
&=q^{\frac{i-j+\delta_{i,0}-\delta_{j,0}}2}\sum_{k=1}^{\min(i,j)+1} (-q)^{j-k+1}(K_{\alpha_{1,k-1},0}\invprod F_{[k,i]})(K_{0,2\omega_1-\alpha_{1,j}}\invprod E_{[k,j]}{}^*)\\
&=q^{\frac{i+j+1-\delta_{i,j}}2}\sum_{k=1}^{\min(i,j)+1}\mskip-20mu (-1)^{j-k+1}q^{1-k}
(K_{\alpha_{1,k-1},2\omega_1-\alpha_{1,j}}\invprod F_{[k,i]} E_{[k,j]}{}^*)
\end{align*}
where $E_{[j+1,j]}=1$, $E_{[j,j]}=E_j$, $E_{[i,j]}=q^{\frac12} E_{[i+1,j]}E_i^{\la 1\ra}-q^{-\frac12} E_i^{\la 1\ra} E_{[i+1,j]}$, $1\le i<j\le n$ and 
$F_{[i,j]}=E_{[i,j]}{}^{*t}$.
It is not hard to check that $E_{[i,j]}=T_i\cdots T_{j-1}(E_j)$ and that $\partial_k(E_{[i,j]})=\delta_{i,k}E_{[i+1,j]}$,
$\partial_k^{op}(E_{[i,j]})=\delta_{k,j} E_{[i,j-1]}$. Then 
\begin{equation}\label{eq:can-central-sln}
\Xi(1_V)
=K_{0,\omega_1-\omega_n}\sum_{0\le i\le j\le n} (-1)^{i+j} q^{n-i-j} K_{-1}\cdots K_{-i}K_{+(j+1)}\cdots K_{+n} F_{[i+1,j]}E_{[i+1,j]}{}^*.
\end{equation}
We claim that 
\begin{equation}\label{eq:Xi-dcb-sln}
\Xi(1_V)=(-1)^n K_{0,\omega_1-\omega_n}\invprod F_{[1,n]}\bullet E_{[1,n]}{}^*.
\end{equation}
First, we need to prove that $\Xi(1_V)$ is $\bar\cdot$-invariant. 
For, it is easy to 
show by induction on~$j-i$ that 
$$
\ul\Delta(E_{[i,j]}{}^*)=\sum_{k=i-1}^j q^{\frac12 \alpha_{i,k}\cdot\alpha_{k+1,j}} E_{[i,k]}{}^*\tensor 
E_{[k+1,j]}{}^*,\quad 
\ul\Delta(F_{[i,j]})=\sum_{k=i-1}^{j} q^{\frac12 \alpha_{k+1,j}\cdot \alpha_{i,k} }F_{[k+1,j]}\tensor F_{[i,k]},
$$
which in particular implies that 
$$
\lra{F_{[i,j]}}{E_{[a,b]}}=\delta_{i,a}\delta_{j,b}(q-q^{-1})^{1-\delta_{i,j+1}},\qquad  
\lra{F_{[i,j]}}{E_{[a,b]}{}^*}=\delta_{i,a}\delta_{j,b}(-q)^{i-j-\delta_{i,j+1}} (q-q^{-1})^{1-\delta_{i,j+1}}.
$$
Since 
\begin{align*}
&(\ul\Delta\tensor 1)\ul\Delta(E_{[i,j]}{}^*)=\sum_{i-1\le r\le k\le j} q^{\frac12 \alpha_{i,k}\cdot\alpha_{k+1,j}+\frac12 \alpha_{i,r}\cdot\alpha_{r+1,k}} 
E_{[i,r]}{}^*\tensor E_{[r+1,k]}{}^*\tensor 
E_{[k+1,j]}{}^*,
\\
&(1\tensor\ul\Delta)\ul\Delta(F_{[i,j]})=\sum_{i-1\le r\le k\le j} q^{\frac12 \alpha_{k+1,j}\cdot \alpha_{i,k}+\frac12\alpha_{i,r}\cdot\alpha_{r+1,k}}
F_{[k+1,j]}\tensor F_{[r+1,k]}\tensor F_{[i,r]},
\end{align*}
by Proposition~\ref{prop:product} we obtain 
\begin{multline*}
E_{[i,j]}^* F_{[i,j]}=\sum_{i-1\le r\le k\le j} (-1)^{j-k+i-r-\delta_{i,r+1}} q^{j-k-1-r+i-\delta_{i,r+1}+\delta_{j,k}}
(q-q^{-1})^{2-\delta_{i,r+1}-\delta_{k,j}}\times\\ K_{-i}\cdots K_{-r}K_{+(k+1)}\cdots K_{+j} F_{[r+1,k]}E_{[r+1,k]}{}^*.
\end{multline*}
Therefore,
\begin{multline*}
\overline{ K_{0,-\omega_1+\omega_n}\Xi(1_V)}=
\sum_{0\le i\le j\le n} (-1)^{i+j} q^{-n+i+j} K_{-1}\cdots K_{-i}K_{+(j+1)}\cdots K_{+n} E_{[i+1,j]}{}^*F_{[i+1,j]}\\
=\sum_{0\le i\le r\le k\le j\le n} (-1)^{1-k-r-\delta_{i,r}} q^{-n+2(i+j)-k-r-\delta_{i,r}+\delta_{j,k}}(q-q^{-1})^{2-\delta_{i,r}-\delta_{k,j}}\times\\
K_{-1}\cdots K_{-r}K_{+(k+1)}\cdots K_{+n} F_{[r+1,k]}E_{[r+1,k]}{}^*\\
=\sum_{0\le r\le k\le n}(-1)^{k+r-1} q^{-n-k-r}\Big(\sum_{j=k}^n\sum_{i=0}^r (-1)^{\delta_{i,r}} q^{2(i+j)-\delta_{i,r}+\delta_{j,k}}(q-q^{-1})^{2-\delta_{i,r}-\delta_{j,k}}\Big)\times\\
K_{-1}\cdots K_{-r}K_{+(k+1)}\cdots K_{+n} F_{[r+1,k]}E_{[r+1,k]}{}^*=K_{0,-\omega_1+\omega_n}\Xi(1_V)
\end{multline*}
since 
$$
\Big(\sum_{i=0}^r (-1)^{\delta_{i,r}} q^{2i-\delta_{i,r}}(q-q^{-1})^{1-\delta_{i,r}}\Big)
\Big(\sum_{j=k}^n q^{2j+\delta_{j,k}}(q-q^{-1})^{1-\delta_{j,k}}\Big)=-q^{2n}.
$$
This computation also shows that the image of $(-1)^n K_{0,-\omega_1+\omega_n}\Xi(1_V)$ in $\mathcal H_q^+(\gg)$ is $\bar\cdot$-invariant. Together with
Theorems~\ref{thm:circle} and~\ref{thm:compat-parab} this implies that 
\begin{equation}\label{eq:sln-circ}
F_{[a,b]}\circ E_{[a,b]}{}^*=\sum_{j=a-1}^b (-q)^{b-j} K_{+(j+1)}\cdots K_{+b} F_{[a,j]}E_{[a,j]}{}^*.
\end{equation}
Then 
\begin{multline*}
\sum_{i=0}^{n} (-q)^{-i} K_{-1}\cdots K_{-i} \iota(F_{[i+1,n]}\circ E_{[i+1,n]}{}^*)\\=
\sum_{0\le i\le j\le n} (-1)^{n-i-j} q^{n-i-j} K_{-1}\cdots K_{-i} K_{+(j+1)} \cdots K_{+n} F_{[i+1,j]}E_{[i+1,j]}{}^*=(-1)^n K_{0,-\omega_1+\omega_n}\Xi(1_V),
\end{multline*}
and~\eqref{eq:Xi-dcb-sln} follows by Theorem~\ref{thm:bullet}. In particular, we obtain an explicit formula for $F_{[i,j]}\circ E_{[i,j]}{}^*$ and
$F_{[i,j]}\bullet E_{[i,j]}{}^*$, $1\le i\le j\le n$.
\end{example}
\begin{example}
Let $\lie g=\lie{sp}_4$ and let $V(-\omega_1)$ be the 
lowest weight module of the lowest weight $-\omega_1$. Its standard basis is $\{v_i\}_{0\le i\le 3}$
with the non-trivial actions being  
$$
E_1^{\la 1\ra} v_0=v_1,\quad E_2^{\la 1\ra}v_1=v_2,\quad E_1^{\la 1\ra}v_2=v_3,\quad
F_1^{\la 1\ra} v_3=-v_2,\quad F_2^{\la 1\ra}v_2=-v_1,\quad F_1^{\la 1\ra}v_1=-v_0.
$$
Denote $\{v^i\}_{0\le i\le 3}$ the dual basis of $V(-\omega_1)$ with respect to the pairing~$\la \cdot,\cdot\ra_{V(-\omega_1)}$. Then 
\begin{align*}
\Xi(1_{V(-\omega_1)})&=q^4 \Xi(v^0\tensor v_0)+q^2 \Xi(v^1\tensor v_1)+q^{-2}\Xi(v^2\tensor v_2)+q^{-4}\Xi(v^3\tensor v_3)\\
&=q^4 K_{+1}^2 K_{+2}+q^2(K_{-1}K_{+1}K_{+2}-q K_{+1}K_{+2}  F_1E_1)\\
&+q^{-2}(K_{-1}K_{-2}K_{+1}-q^2 K_{-1}K_{+1}  F_2E_2+q^3 K_{+1}  F_{12}E_{21})\\
&+q^{-4}(K_{-1}^2 K_{-2}-q K_{-1}K_{-2}  F_1E_1+q^3 K_{-1}  F_{21}E_{12}-q^4 F_{121}E_{121}).
\end{align*}
It is easy to check that $\Xi(1_{V(-\omega_1)})=-F_{121}\bullet E_{121}$ since 
\begin{align*}
F_{121}\circ E_{121}&=F_{121}E_{121}-q K_{+1} F_{12}E_{21}+q^3 K_{+1}K_{+2} F_1 E_1-q^4 K_{+1}^2 K_{+2}\\
F_{121}\bullet E_{121}&=\iota(F_{121}\circ E_{121})-q^{-1} K_{-1}\iota( F_{21}\circ E_{12})-q^{-3}
K_{-1}K_{-2}\iota(F_1\circ E_1)+q^{-4} K_{-1}^2 K_{-2}.
\end{align*}

Similarly, for the lowest weight module~$V(-\omega_2)$ of lowest weight~$-\omega_2$. Its standard basis 
$\{v_i\}_{0\le i\le 4}$ satisfies 
\begin{alignat*}{4}
&E_2^{\la 1\ra}v_0=v_1,&\quad &E_1^{\la 1\ra}v_1=v_2,&\quad &E_1^{\la 1\ra}v_2=(2)_q v_3,&\quad &E_2^{\la 1\ra}v_3=v_4\\
&F_2^{\la 1\ra}v_1=-v_0,&\quad &F_1^{\la 1\ra}v_2=-(2)_q v_1,&\quad &F_1^{\la 1\ra}v_3=-v_2,&\quad &F_2^{\la 1\ra}v_4=-v_3
\end{alignat*}
and 
\begin{align*}
\Xi&(1_{V(-\omega_2)})=q^6 \Xi(v^0\tensor v_0)+q^2 \Xi(v^1\tensor v_1)+\Xi(v^2\tensor v_2)+q^{-2} \Xi(v^3\tensor v_3)+q^{-6}\Xi(v^4\tensor v_4)\\
&=q^6 K_{+1}^2 K_{+2}^2+q^2( K_{-2}K_{+1}^2K_{+2}-q^2 K_{+1}^2 K_{+2} F_2E_2)\\
&+(K_{-1}K_{-2}K_{+1}K_{+2}-(q+q^3)K_{-2}K_{+1}K_{+2} F_1E_1+(q^3+q^5)_q K_{+1}K_{+2} F_{21}E_{12})\\
&+q^{-2}(K_{-1}^2 K_{-2}K_{+2}-(2)_q K_{-1}K_{-2}K_{+2} F_1E_1+q^2 K_{-2}K_{+2} F_1^2 E_1^2-q^4 K_{+2} F_{211}E_{112})\\
&+q^{-6}(K_{-1}^2 K_{-2}^2-q^2 K_{-1}^2 K_{-2} F_2E_2+q^2(2)_q K_{-1}K_{-2} F_{12}E_{21}+q^4 K_{-2} F_{112}E_{211}
+q^6 F_{2112}E_{2112})
\end{align*}
where $E_{2112}=E_2 E_{112}-q^2 E_{12}^2$ and $F_{2112}=E_{2112}{}^{*t}$. Since
\begin{align*}
F_{2112}\circ E_{2112}&=F_{2112}E_{2112}-q^2 K_{+2} F_{211} E_{112}+(q^5+q^3) K_{+1}
    K_{+2} F_{21} E_{12}\\&\qquad-q^4
    K_{+1}^2 K_{+2} F_2 E_2+q^6 K_{+1}^2 K_{+2}^2\\
F_{2112}\bullet E_{2112}&=
\iota(F_{2112}\circ E_{2112})-q^{-2} K_{-2} \iota(F_{112}\circ E_{211})+(q^{-5}+q^{-3})K_{-1}K_{-2}\iota(F_{12}\circ E_{21})
\\&\qquad-q^{-4} K_{-1}^2 K_{-2}\iota(F_2\circ E_2)
+q^{-6} (K_{-1}K_{-2})^2+q^{-4}K_{-1}K_{+1}K_{-2}K_{+2},
\end{align*}
it follows that $\Xi(1_{V(-\omega_2)})=F_{2112}\bullet E_{2112}$.
\end{example}

\section{Bar-equivariant braid group actions}\label{sec:braid} 
\label{sec:symm}

\subsection{Invariant braid group action on Drinfeld double}\label{subs:braid-group}
Denote by $U'_q(\tilde\gg)$ the quotient of $\kk[z_i^{\pm1}\,:\,i\in I]\tensor_\kk U_q(\tilde\gg)$ by the 
ideal generated by $z_i^2\tensor 1-1\tensor K_{+i}K_{-i}$. It is easy to see that $\bar\cdot$ extends to an 
$\QQ$-linear anti-involution of~$U'_q(\tilde\gg)$ by $\bar z_i=z_i$.
Then it is immediate 
that the set 
$$
\mathbf B'_{\tilde\gg}=\{ \big(\textstyle\prod_{i\in I} z_i^{a_i}\big) \mathbf b\,:\, \mathbf b\in\mathbf B_{\tilde\gg},\, a_i\in\ZZ\}
$$
is a $\bar\cdot$-invariant basis of~$U'_q(\tilde\gg)$. In the sequel we use the presentation of~$U_q(\gg)$ obtained 
from~\eqref{eq:commutation g} and~\eqref{eq:qserre} by replacing $K_{\pm i}$ with~$K_i^{\pm 1}$. The following Lemma is immediate.
\begin{lemma}\label{lem:extend-double}
\begin{enumerate}[{\rm(a)}]
\item The assignments $E_i\mapsto E_i$, $F_i\mapsto F_i$, $K_{\pm i}\mapsto K_i^{\pm 1}$, $z_i\mapsto 1$
extends to a surjective homomorphism of algebras $\phi:U'_q(\tilde\gg)\to U_q(\gg)$.
 \item The assignments $E_i\mapsto E_i z_i^{-1}$, $F_i\mapsto F_i$, $K_i^{\pm 1}\mapsto K_{\pm i} z_i^{-1}$ extends to
 an injective homomorphism of algebras $\iota:U_q(\gg)\to U'_q(\tilde\gg)$
 which splits~$\phi$.
\end{enumerate}
\end{lemma}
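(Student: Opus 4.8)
The statement to prove is Lemma~\ref{lem:extend-double}, which asserts that two specified assignments extend to algebra homomorphisms $\phi:U'_q(\tilde\gg)\to U_q(\gg)$ and $\iota:U_q(\gg)\to U'_q(\tilde\gg)$, with $\phi$ surjective, $\iota$ injective, and $\phi\circ\iota=\mathrm{id}$. Since the excerpt breaks off right at the statement, here is my plan.

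\smallskip

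\emph{Plan for part (a).} The strategy is simply to check that the images under $\phi$ of the generators $E_i,F_i,K_{\pm i},z_i$ of $U'_q(\tilde\gg)$ satisfy all the defining relations of that algebra, so that $\phi$ is a well-defined algebra homomorphism by the universal property of a presentation. First I would record the presentation of $U'_q(\tilde\gg)$: it is generated by $E_i,F_i,K_{\pm i},z_i^{\pm1}$ subject to \eqref{eq:commutation g}, \eqref{eq:qserre}, the commutativity of $\mathcal K=\la K_{+i},K_{-i}\ra$, together with the $z_i$ being central invertible with $z_i^2=K_{+i}K_{-i}$. Applying the assignment $E_i\mapsto E_i$, $F_i\mapsto F_i$, $K_{\pm i}\mapsto K_i^{\pm1}$, $z_i\mapsto 1$: the commutation relation $[E_i,F_j]=\delta_{ij}(q_i^{-1}-q_i)(K_{+i}-K_{-i})$ maps to $[E_i,F_j]=\delta_{ij}(q_i^{-1}-q_i)(K_i-K_i^{-1})$, which is precisely the relation in the presentation of $U_q(\gg)$ used here (obtained from \eqref{eq:commutation g}--\eqref{eq:qserre} by replacing $K_{\pm i}$ by $K_i^{\pm1}$, as stated just above the Lemma); the $K_{\pm i}E_j$, $K_{\pm i}F_j$ relations map correctly since $K_i^{\pm1}$ acts the same way; the quantum Serre relations \eqref{eq:qserre} are preserved verbatim; the $z_i$ relations are trivially satisfied since $z_i\mapsto1$ and $K_i K_i^{-1}=1$. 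Surjectivity is then immediate because $E_i,F_i,K_i^{\pm1}$ generate $U_q(\gg)$ and all lie in the image.

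\smallskip

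\emph{Plan for part (b).} Similarly, I would verify that the images $E_i z_i^{-1}$, $F_i$, $K_{\pm i}z_i^{-1}$ in $U'_q(\tilde\gg)$ satisfy the defining relations of $U_q(\gg)$. Set $\tilde E_i=E_iz_i^{-1}$, $\tilde F_i=F_i$, $\tilde K_i=K_{+i}z_i^{-1}$; then $\tilde K_i^{-1}=K_{+i}^{-1}z_i=K_{-i}z_i^{-1}$ using $z_i^2=K_{+i}K_{-i}$, so the assignment is consistent with sending $K_i^{-1}$ to $K_{-i}z_i^{-1}$. The main relations to check: (i) $[\tilde E_i,\tilde F_j]=E_iz_i^{-1}F_j-F_jE_iz_i^{-1}=(E_iF_j-F_jE_i)z_i^{-1}=\delta_{ij}(q_i^{-1}-q_i)(K_{+i}-K_{-i})z_i^{-1}=\delta_{ij}(q_i^{-1}-q_i)(\tilde K_i-\tilde K_i^{-1})$, using centrality of $z_i$; (ii) $\tilde K_i\tilde E_j\tilde K_i^{-1}=q_i^{a_{ij}}\tilde E_j$ and $\tilde K_i\tilde F_j\tilde K_i^{-1}=q_i^{-a_{ij}}\tilde F_j$, which follow from the $\widehat\Gamma$-grading relations in \eqref{eq:commutation g} since $z_i$ is central and commutes with everything, so conjugation by $\tilde K_i=K_{+i}z_i^{-1}$ has the same effect as conjugation by $K_{+i}$; (iii) commutativity of the $\tilde K_i$, clear since the $K_{\pm i}$ and $z_i$ all commute; (iv) the quantum Serre relations for $\tilde E_i$: here one needs $\tilde E_i^{\la r\ra}\tilde E_j\tilde E_i^{\la s\ra}=E_i^{\la r\ra}E_j E_i^{\la s\ra}z_i^{-r}z_j^{-1}z_i^{-s}$ up to a scalar power of $q$ coming from moving $z_j^{-1}$ past powers of $E_i$ — but $z_j^{-1}$ is central, so there is no extra scalar and the Serre relation \eqref{eq:qserre} for the $E_i$ gives it directly; similarly for the $\tilde F_i=F_i$. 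Injectivity of $\iota$ follows formally once part (a) is done: $\phi\circ\iota$ sends $E_i\mapsto E_i z_i^{-1}\mapsto E_i\cdot1=E_i$ (since $\phi(z_i)=1$), $F_i\mapsto F_i$, $K_i^{\pm1}\mapsto K_i^{\pm1}$, hence $\phi\circ\iota=\mathrm{id}_{U_q(\gg)}$, so $\iota$ is a split injection.

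\smallskip

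\emph{Expected main obstacle.} There is essentially no deep obstacle — this is a routine "check the relations" verification, and the note in the excerpt already says "The following Lemma is immediate." The only point requiring a moment's care is bookkeeping around the element $z_i$: one must use that each $z_i$ is central (so it can be freely moved past $E_j,F_j,K_{\pm j}$ and past other $z_j$) and that $z_i^2=K_{+i}K_{-i}$ is needed precisely to identify the image of $K_i^{-1}$ under $\iota$ with $K_{-i}z_i^{-1}$ and to make the commutator relation (i) come out with the correct right-hand side $(q_i^{-1}-q_i)(\tilde K_i-\tilde K_i^{-1})$ rather than something involving $K_{+i}-K_{-i}$ directly. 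I would also make explicit at the outset that $U'_q(\tilde\gg)$, being a quotient of $\kk[z_i^{\pm1}]\tensor U_q(\tilde\gg)$ by the central relations $z_i^2=K_{+i}K_{-i}$, inherits from $U_q(\tilde\gg)$ its presentation plus these $z_i$-relations, so that invoking the universal property of the presentation is legitimate.
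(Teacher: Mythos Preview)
Your proposal is correct and is precisely the routine verification the paper has in mind: the paper gives no proof at all, stating just before the lemma that it ``is immediate.'' Your relation-by-relation check, with the key observations that the $z_i$ are central and that $z_i^2=K_{+i}K_{-i}$ forces $(K_{+i}z_i^{-1})^{-1}=K_{-i}z_i^{-1}$, is exactly what is needed, and the splitting $\phi\circ\iota=\id$ is the right way to deduce injectivity.
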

Clearly, there exists a unique anti-involution $\bar\cdot$ on~$U'_q(\tilde\gg)$ which commutes with~$\iota$ and~$\phi$.
It is also easy to see that there exists a unique basis $\mathbf B_\gg$ of~$U_q(\gg)$ such that 
$\iota(\mathbf B_\gg)=\mathbf B'_{\tilde\gg}\cap\iota(U_q(\gg))$. Clearly $\mathbf B_\gg=\phi(\mathbf B'_{\tilde\gg})$
and each element of~$\mathbf B_\gg$ is fixed by $\bar\cdot$.
From now on
we refer to~$\mathbf B_\gg$ as the {\em double canonical basis} of~$U_q(\gg)$.

Given $\alpha_\pm\in \Gamma$, set $\Ad^{\frac12} K_{\alpha_-,\alpha_+}(x)=\chi^{\frac12}((\alpha_-,\alpha_+),\deg_{\widehat\Gamma} x)
x$ for $x\in U_q(\tilde\gg)$ homogeneous. Let~$Q$ be the free abelian group generated by the $\alpha_i$, $i\in I$ and let 
$\widehat Q=Q\oplus Q$. Then $\Gamma$ (respectively, $\widehat\Gamma$) is a submonoid of~$Q$ (respectively, $\widehat Q$).
Extend $\alpha_i^\vee\in\Hom_\ZZ(\Gamma,\ZZ)$ to elements of $\Hom_\ZZ(Q,\ZZ)$ in a natural way.
The Weyl group~$W$ of~$\gg$ acts on~$Q$ and hence on~$\widehat Q$ via $s_i(\alpha)=\alpha-\alpha_i^\vee(\alpha)\alpha_i$, $i\in I$.
\begin{lemma}\label{lem:extend-double-braid}
In the presentation \eqref{eq:commutation g}--\eqref{eq:qserre} of~$U'_q(\tilde\gg)$, 
we have $T_i(z_j)=z_j z_i^{-a_{ij}}$, $i,j\in I$
$$
T_i(K_{\pm j})=\begin{cases}
K_{\mp i}z_i^{-2},&i=j\\
K_{\pm j}K_{\pm i}^{-a_{ij}},&i\not=j
\end{cases}
$$
and 
\begin{gather*}
T_i(E_j)=
\begin{cases} K_{-i} z_i^{-2}\invprod F_i,&i=j\\ 
\sum\limits_{r+s=-a_{ij}}\mskip-8mu (-1)^r q_i^{s+\frac12 a_{ij}} E_i^{\la r\ra}E_j E_i^{\la s\ra},  &i\ne j\\
\end{cases}
\\
T_i(F_j)=
\begin{cases} K_{+i}z_i^{-2}\invprod E_i,& i=j\\ 
\sum\limits_{r+s=-a_{ij}}\mskip-8mu  (-1)^r q_i^{s+\frac12 a_{ij}} F_i^{\la r\ra}F_j F_i^{\la s\ra}, &i\ne j\\
\end{cases}
\end{gather*}

Moreover, the $T_i$ satisfy the braid relations, commute with~$\bar\cdot$ and satisfy $T_i*=*T_i^{-1}$, $T_i\circ {}^t={}^t\circ T_i^{-1}$.
\end{lemma}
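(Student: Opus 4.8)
The statement of Lemma~\ref{lem:extend-double-braid} is essentially a compatibility statement: it says that the automorphisms $T_i$ of $\widehat U_q(\tilde\gg)$ from Theorem~\ref{thm:braid group double} descend to well-defined automorphisms $T_i$ of $U'_q(\tilde\gg)$, together with the explicit formulas in the presentation~\eqref{eq:commutation g}--\eqref{eq:qserre}, and that they retain the structural properties (braid relations, commuting with $\bar\cdot$, intertwining ${}^*$ and ${}^t$ with their inverses). The plan is to deduce everything from Theorem~\ref{thm:braid group double} via the change-of-presentation isomorphism and the two homomorphisms $\phi,\iota$ of Lemma~\ref{lem:extend-double}. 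First I would observe that $U'_q(\tilde\gg)$ contains $\widehat U_q(\tilde\gg)=U_q(\tilde\gg)[\mathbf K^{-1}]$ as a subalgebra: indeed the relation $z_i^2=K_{+i}K_{-i}$ forces $z_i$ to be invertible with $z_i^{-1}=z_i K_{+i}^{-1}K_{-i}^{-1}$, and $U'_q(\tilde\gg)$ is a free module over $\widehat U_q(\tilde\gg)$ with basis the monomials $\prod_{i}z_i^{a_i}$, $a_i\in\{0,1\}$. So $U'_q(\tilde\gg)\cong\widehat U_q(\tilde\gg)\tensor_{\ZZ[\mathbf K^{\pm1}]}\ZZ[z_i^{\pm1}\,:\,i\in I]$ where $z_i^2$ is identified with $K_{+i}K_{-i}$.

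Next I would extend each $T_i$ from $\widehat U_q(\tilde\gg)$ to $U'_q(\tilde\gg)$ by prescribing $T_i(z_j)=z_j z_i^{-a_{ij}}$. One must check this is consistent: $T_i(z_j)^2 = z_j^2 z_i^{-2a_{ij}} = (K_{+j}K_{-j})(K_{+i}K_{-i})^{-a_{ij}}$, which by Theorem~\ref{thm:braid group double}(a) equals $T_i(K_{+j})T_i(K_{-j})=T_i(K_{+j}K_{-j})$ — a one-line computation since $T_i(K_{\pm j})=K_{\pm j}K_{\pm i}^{-a_{ij}}$ and these commute. This guarantees $T_i$ is a well-defined algebra endomorphism of $U'_q(\tilde\gg)$; invertibility follows because the same recipe applied to $T_i^{-1}$ (using $T_i^{-1}(z_j)=z_j z_i^{-a_{ij}}$ again, as $T_i^{-1}$ and $T_i$ agree on the $K$'s up to the sign already recorded) produces a two-sided inverse. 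The explicit formulas for $T_i(E_j), T_i(F_j), T_i(K_{\pm j})$ in the \eqref{eq:commutation g}--\eqref{eq:qserre} presentation are then obtained by rewriting the formulas of Theorem~\ref{thm:braid group double}(a) in terms of $z_i$: for $i=j$ we use $K_{+i}^{-1}=z_i^{-2}K_{-i}$ to turn $q_i^{-1}K_{+i}^{-1}F_i$ into $q_i^{-1}z_i^{-2}K_{-i}F_i$, and by the normalization absorbing the $q_i^{\frac12\alpha_i^\vee}$ factor into $\invprod$ (as in Remark~\ref{rem:Lusz-sym} and the definition~\eqref{eq:triangle-def}) this is exactly $K_{-i}z_i^{-2}\invprod F_i$; similarly for $T_i(K_{+i})=K_{-i}z_i^{-2}$ since $K_{+i}^{-1}=z_i^{-2}K_{-i}$. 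For $i\ne j$ the formulas are literally those of Theorem~\ref{thm:braid group double} with no $z$'s appearing, and $T_i(z_j)=z_jz_i^{-a_{ij}}$ is forced by $T_i(z_j)^2=T_i(K_{+j}K_{-j})$ together with $T_i(z_j)\in U_q^0{}'$ having the correct weight.

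Finally the three assertions at the end. The braid relations and commutation with $\bar\cdot$ on $U'_q(\tilde\gg)$ follow from the corresponding statements on $\widehat U_q(\tilde\gg)$ (Theorem~\ref{thm:braid group double}(b),(c)) plus the fact that both sides of each braid relation, and both $\overline{T_i(z_j)}$ and $T_i(\bar z_j)=T_i(z_j)$, act the same way on the $z_i$: one checks $T_iT_jT_i(z_k)=T_jT_iT_j(z_k)$ (resp.\ the shorter relations) directly on the generators $z_k$ using $s_is_js_i=s_js_is_j$ in $W$ acting on $Q$, since $T_i(z_k)=z_{s_i(\alpha_k)}$ in the obvious notation where $z_\alpha:=\prod z_i^{c_i}$ for $\alpha=\sum c_i\alpha_i$. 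For $T_i{}^*=*T_i^{-1}$ and $T_i\circ{}^t={}^t\circ T_i^{-1}$, we have these identities on $\widehat U_q(\tilde\gg)$ from Theorem~\ref{thm:braid group double}(b), and on the $z_i$ both sides give $z_j z_i^{-a_{ij}}$ since ${}^*$ and ${}^t$ fix each $z_i$ (as they fix $K_{+i}K_{-i}$) and $T_i^{-1}(z_j)=z_jz_i^{-a_{ij}}=T_i(z_j)$. The main obstacle, such as it is, will be bookkeeping the half-power normalization: making sure that when one passes from the formulas of Theorem~\ref{thm:braid group double}(a), which are stated with the $\invprod$-action already built in, to the raw $\eqref{eq:commutation g}$-presentation formulas, the substitution $K_{+i}^{-1}\mapsto z_i^{-2}K_{-i}$ is applied with the correct power of $q_i$ — but this is entirely mechanical given~\eqref{eq:triangle-def} and Remark~\ref{rem:Lusz-sym}, so there is no genuine difficulty, only care.
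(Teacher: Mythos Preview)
Your proposal has a circular dependency. You plan to deduce Lemma~\ref{lem:extend-double-braid} from Theorem~\ref{thm:braid group double}, but in the paper the logical order is the reverse: the proof of Theorem~\ref{thm:braid group double} (which appears immediately after Lemma~\ref{lem:extend-double-braid} in Section~\ref{sec:symm}) consists of a single sentence noting that $\widehat U_q(\tilde\gg)$ embeds into $U'_q(\tilde\gg)$ and that all assertions of the theorem then follow from the lemma. So you cannot invoke Theorem~\ref{thm:braid group double} here; it is the lemma that must be established first, from scratch.

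The paper's actual approach is to work from Lusztig's symmetries $T''_{i,-1}$ and $T'_{i,1}$ on $U_q(\gg)$ (\cite{L1}*{\S37.1}), which are already known to be automorphisms satisfying the braid relations. Using the splitting $\iota:U_q(\gg)\hookrightarrow U'_q(\tilde\gg)$ of Lemma~\ref{lem:extend-double}, one computes what $T''_{i,-1}$ looks like on the generators $E_j,F_j,K_{\pm j},z_j$ of $U'_q(\tilde\gg)$, then sets $T_i:=\Ad^{\frac12}K_{-i}\circ T''_{i,-1}=T''_{i,-1}\circ\Ad^{\frac12}K_{+i}$. The formulas in the statement are then verified on generators; the $\bar\cdot$-equivariance and the identities with ${}^*,{}^t$ are checked directly using the explicit formula for $T_i^{-1}=T'_{i,1}\circ\Ad^{\frac12}K_{+i}$; and the braid relations are obtained by writing $T_{i_1}\cdots T_{i_r}=\Ad^{\frac12}K_{\sum_j s_{i_1}\cdots s_{i_{j-1}}(\alpha_{i_j}),0}\circ T''_{w,-1}$ and observing that the exponent $\sum_j s_{i_1}\cdots s_{i_{j-1}}(\alpha_{i_j})=\sum_{\beta\in R_+\cap w(-R_+)}\beta$ depends only on $w$, not on the reduced word. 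Your extension-by-$z_j$ argument and the verification that $T_i(z_j)^2=T_i(K_{+j}K_{-j})$ are fine as auxiliary observations, but they cannot replace an independent source for the automorphism property and braid relations; that source is Lusztig's theorem, not Theorem~\ref{thm:braid group double}.
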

\begin{proof}
Recall that our presentation of~$U_q(\gg)$ is obtained from the standard one by rescaling $E_i\mapsto (q_i^{-1}-q)^{-1} E_i$,
$F_i\mapsto (q_i-q_i^{-1})^{-1}F_i$ for all $i\in I$. In this presentation the symmetries $T'_{i,1}$, $T''_{i,-1}$ of~$U_q(\gg)$
defined in~\cite{L1}*{\S37.1.3} are given by
by $T'_{i,1}(K_j)=K_j K_i^{-a_{ij}}=T''_{i,-1}(K_j)$, 
$$
T''_{i,-1}(E_j)=\begin{cases}
             F_i K_i^{-1},& i=j\\
             \sum\limits_{r+s=-a_{ij}} (-1)^r q_i^s E_i^{\la r\ra} E_j E_i^{\la s\ra},& i\not=j
             \end{cases}
$$
$$
T''_{i,-1}(F_j)=\begin{cases}
              K_i E_i,& i=j\\
                           \sum\limits_{r+s=-a_{ij}} (-q_i)^{-r} F_i^{\la r\ra} F_j F_i^{\la s\ra},& i\not=j
             \end{cases}
$$
and
$$
T'_{i,1}(E_j)=\begin{cases}
               K_i F_i,&i=j\\
               \sum\limits_{r+s=-a_{ij}} (-1)^s q_i^r E_i^{\la r\ra}E_j E_i^{\la s\ra},&i\not=j
              \end{cases}
$$
$$
T'_{i,1}(F_j)=\begin{cases}
               E_i K_i^{-1},&i=j\\
               \sum\limits_{r+s=-a_{ij}}(-q_i)^{-r} F_i^{\la s\ra}F_j F_i^{\la r\ra},&i\not=j
              \end{cases}
$$

By~\cite{L1}*{Proposition~37.1.2} $T'_{i,1}$, $T''_{i,-1}$ are automorphisms of~$U_q(\gg)$ while by~\cite{L1}*{Theorem~39.4.3} 
they
satisfy the braid relations of the braid group of~$\gg$. Also, $T''_{i,-1}=(T'_{i,1})^{-1}$. It is easy to see that 
$T'_{i,-1}(E_j)$, $T''_{i,-1}(E_j)$ and $T'_{i,1}(E_j)$, $T''_{i,-1}(F_j)$, $i\not=j$, are given on~$U'_q(\tilde\gg)$
by the same formula as on~$U_q(\gg)$. Furthermore we have 
$$
z_i T''_{i,-1}(E_i)=T''_{i,-1}(E_i z_i^{-1})=F_i K_{-i}z_i^{-1},
$$
$$
z_i T'_{i,1}(E_i)=T'_{i,1}(E_i z_i^{-1})=K_{+i}F_iz_i^{-1}
$$
whence $T''_{i,-1}(E_i)=F_i K_{-i} z_i^{-2}$ and $T'_{i,1}(E_i)=K_{+i}z_i^{-2}F_i$. 
Similarly, $T''_{i,-1}(F_i)=K_{+i}z_i^{-1}E_i z_i^{-1}=K_{+i}z_i^{-2}E_i$ and $T'_{i,1}=F_iK_{-i}z_i^{-2}$. Finally,
$$
z_i T''_{i,-1}(K_{\pm i})=T''_{i,-1}(K_{\pm i} z_i^{-1})=K_{\mp i}z_i^{-1}=z_i T'_{i,1}(K_{\pm i})
$$
whence $T''_{i,-1}(K_{\pm i})=K_{\mp i}z_i^{-2}=T'_{i,1}(K_{\pm i})$, while $z_j^{-1} z_i^{a_{ij}} T''_{i,-1}(K_{\pm j})=K_{\pm j}z_j^{-1} (K_{\pm i}z_i^{-1})^{-a_{ij}}$.

Define $T_i(x)=T''_{i,-1}(\Ad^{\frac12} K_{+i}(x))=\Ad^{\frac12} K_{-i}(T''_{i,-1}(x))$, $x\in U'_q(\tilde\gg)$. Then we have $T_i^{-1}(x)=T'_{i,1}(\Ad^{\frac12} K_{+i}(x))$.
It is easy to see that $T_i$ is given on generators 
by 
the formulae from Lemma~\ref{lem:extend-double-braid}. For example, $T_i(E_i)=q_i K_{+i}z_i^{-2} F_i=K_{+i}z_i^{-2}\invprod F_i$. Thus, 
in particular, $T_i$ is an automorphism of~$U'_q(\tilde\gg)$. Clearly, $\overline{T_i(E_i)}=T_i(E_i)$, while for~$j\not=i$ 
$$
\overline{T_i(E_j)}=\sum\limits_{r+s=-a_{ij}}\mskip-8mu (-1)^s q_i^{-s-\frac12 a_{ij}} E_i^{\la s\ra}E_j E_i^{\la r\ra}
=\sum_{r+s=-a_{ij}} (-1)^{r} q_i^{s+\frac12 a_{ij}} E_i^{\la r\ra} E_j E_i^{\la s\ra}=T_i(E_j)
$$
where we used that $\overline{E^{\la k\ra}}=(-1)^k E^{\la k\ra}$. The remaining identities are checked similarly. The identities
involving $*$ and~${}^t$ can be checked using the explicit formulae for~$T_{i}^{-1}=T'_{i,1}\circ \Ad^{\frac12} K_{+i}$. 

It remains to prove that the $T_i$ satisfy the braid relations. For, let $w$ be an element of the Weyl group of~$\gg$ and 
let $w=s_{i_1}\cdots s_{i_r}$ be its reduced decomposition. It is sufficient to prove that $T_{i_1}\circ\cdots\circ T_{i_r}$ 
depends only on~$w$ and not on the reduced decomposition. This holds for Lusztig symmetries $T'_{i,1}$, $T''_{i,-1}$ by
\cite{L1}*{\S39.4.4}, whence for each $w\in W$ one has a well-defined automorphism~$T''_{w,-1}$ of~$U_q(\gg)$
satisfying $T''_{w,-1}=T''_{i_1,-1}\cdots T''_{i_r,-1}$.
We have 
$$
T_{i_1}\circ \cdots\circ  T_{i_r}(x)=\Ad^{\frac12} K_{\sum_{j=1}^r s_{i_1}\cdots s_{i_{j-1}}(\alpha_{i_j}),0}
\circ T''_{i_1,-1}\circ \cdots\circ  T''_{i_r,-1}=
\Ad^{\frac12} K_{\sum_{j=1}^r s_{i_1}\cdots s_{i_{j-1}}(\alpha_{i_j}),0}\circ  T''_{w,-1}.
$$
It is well-known that $\sum_{j=1}^r s_{i_1}\cdots s_{i_{j-1}}(\alpha_{i_{j}})=\sum_{\beta\in R_+\cap w(-R_+)}\beta$, where 
$R_+\subset Q$ denotes the set of positive roots of~$\gg$, 
depends only on~$w$ and not on its reduced decomposition. Therefore, the right hand side depends only on~$w$.
\end{proof}

\begin{proof}[Proof of Theorem~\ref{thm:braid group double}]
Note that $\widehat U_q(\tilde\gg)$ embeds into~$U'_q(\tilde\gg)$ 
via $E_i\mapsto E_i$, $F_i\mapsto F_i$, $K_{\pm i}\mapsto K_{\pm i}$, $K_{\pm i}^{-1}\mapsto K_{\mp i}z_i^{-2}$ for all~$i\in I$.
All assertions of Theorem~\ref{thm:braid group double} are then immediate consequences of Lemma~\ref{lem:extend-double-braid}.
\end{proof}
In particular, for each $w\in W$, we have a unique automorphism~$T_w$ of~$U_q(\tilde\gg)$ such that $T_{s_i}=T_i$ and 
$T_w=T_{w'}T_{w''}$ for any reduced decomposition $w=w'w''$, $w',w''\in W$. It follows from Lemma~\ref{lem:extend-double-braid}
that for all~$x\in U_q(\tilde\gg)$
\begin{equation}\label{eq:Tw-properties}
\overline T_w(x)=T_w(\overline x),\qquad T_w(x^*)=(T_{w^{-1}}^{-1}(x))^*,\qquad T_w(x^t)=(T_{w^{-1}}^{-1}(x))^t.
\end{equation}
Furthermore, we have for $x\in U_q(\tilde\gg)$ homogeneous
\begin{equation}\label{eq:our Tw to Lusztig}
T_{w}(x)=\chi^{\frac12}((\la w\ra,0),w\deg_{\widehat\Gamma}x) T''_{w,-1}(x)=
\chi^{\frac12}((0,\la w^{-1}\ra),\deg_{\widehat\Gamma}x) T''_{w,-1}(x).
\end{equation}
where $\la w\ra=\sum_{\beta\in R^+\cap w(-R_+)} \beta$ and the action of~$W$ on~$\Gamma$ is extended to~$\widehat \Gamma$ diagonally.

\subsection{Elements~\texorpdfstring{$T_w$}{T\_w}, quantum Schubert cells and their bases}\label{subs:quant-schub}
Let~$\gg$ be any Kac-Moody Lie algebra.
Given $w\in W$ define 
$$
U_q^+(w)=T_w(\mathcal KU_q^-)\cap U_q^+.
$$
Let $\mathbf i=(i_1,\dots,i_m)$, $m=\ell(w)$, be such that $w=s_{i_1}\cdots s_{i_m}$ is a reduced decomposition. Then 
for $\mathbf a=(a_1,\dots,a_m)\in\ZZ_{\ge 0}^m$ define 
\begin{equation}\label{eq:PBW-mon}
E_{\mathbf i}^\mathbf a:=
E_{i_1}^{a_1}T_{s_{i_1}}(E_{i_2}^{a_2})\cdots T_{s_{i_1}\cdots s_{i_{m-1}}}(E_{i_m}^{a_m}).
\end{equation}
It follows from~\cite{L1}*{} and~\eqref{eq:Tw-properties} that 
for all~$w\in W$, $i\in I$ such that $\ell(ws_i)=\ell(w)+1$, we have 
\begin{equation}\label{eq:Tw-positivity}
T_w(E_i),\, T_{w^{-1}}^{-1}(E_i)\in U_q^+,\quad T_w(F_i),\, T_{w^{-1}}^{-1}(F_i)\in U_q^-.
\end{equation}
Thus, the $E_{\mathbf i}^{\mathbf a}\in U_q^+$.
It follows from~\cite{L1}*{Proposition~40.2.1} that the $E_{\mathbf i}^{\mathbf a}$ 
are linearly independent. Let $U_q^+(w,1)$ be the $\kk$-subspace of~$U_q^+$ spanned by the~$E_{\mathbf i}^\mathbf a$,
$\mathbf a\in\ZZ_{\ge 0}^m$.
Let $U_q^+(w)'=T_w(U_q^+)\cap U_q^+$. 
\begin{conjecture}[\cite{T}*{Proposition~2.10} and~\cite{Kim2}*{Theorem~1.1}]
\label{conj:schub}
For any~$\gg$ we have a unique (tensor) factorization $U_q^+= U_q^+(w) \cdot U_q^+(w)'$. In particular, $U_q^+(w,1)=U_q^+(w)$.
\end{conjecture}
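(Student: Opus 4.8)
The plan is to prove both claims — the tensor factorization $U_q^+=U_q^+(w)\cdot U_q^+(w)'$ and the equality $U_q^+(w,1)=U_q^+(w)$ — simultaneously, by induction on $\ell(w)$, the base case $w=e$ being trivial. I would first record that $U_q^+(w)=T_w(\mathcal KU_q^-)\cap U_q^+$ and $U_q^+(w)'=T_w(U_q^+)\cap U_q^+$ are subalgebras of $U_q^+$, being intersections of subalgebras of $\widehat U_q(\tilde\gg)$ with $U_q^+$ (here $\mathcal K$ is taken inside $\widehat U_q(\tilde\gg)$), so the products make sense. The logical skeleton is that once one knows $U_q^+(w,1)\subseteq U_q^+(w)$ together with $U_q^+=U_q^+(w,1)\cdot U_q^+(w)'$ with \emph{injective} multiplication, the chain $U_q^+=U_q^+(w,1)\cdot U_q^+(w)'\subseteq U_q^+(w)\cdot U_q^+(w)'\subseteq U_q^+$ forces all inclusions to be equalities, and a homogeneous-component count, run by induction on the height of the weight (every graded piece of $U_q^+$ being finite-dimensional), then yields $\dim U_q^+(w,1)_\beta=\dim U_q^+(w)_\beta$ for all $\beta$, hence $U_q^+(w,1)=U_q^+(w)$ and the factorization is a tensor one. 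So it remains to establish these two facts.

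The inclusion $U_q^+(w,1)\subseteq U_q^+(w)$ is the easy half. Fix a reduced word $\mathbf i=(i_1,\dots,i_m)$ for $w$ and put $v_k=s_{i_k}s_{i_{k+1}}\cdots s_{i_m}$, $v_k'=s_{i_{k+1}}\cdots s_{i_m}$, so that $v_k=s_{i_k}v_k'$ is a reduced factorization and $T_w=T_{s_{i_1}\cdots s_{i_{k-1}}}T_{v_k}$. Applying the algebra homomorphism $T_w^{-1}$ to the PBW monomial \eqref{eq:PBW-mon} gives $T_w^{-1}(E_{\mathbf i}^{\mathbf a})=\prod_{k=1}^m T_{v_k}^{-1}(E_{i_k}^{a_k})$. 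The rank-one formulas of Theorem~\ref{thm:braid group double}(a) give $T_{i_k}^{-1}(E_{i_k})\in\kk^\times K_{-i_k}^{-1}F_{i_k}$, whence $T_{v_k}^{-1}(E_{i_k})=T_{v_k'}^{-1}T_{i_k}^{-1}(E_{i_k})\in\mathcal K\cdot T_{v_k'}^{-1}(F_{i_k})$; and since $\ell(s_{i_k}v_k')=\ell(v_k)=\ell(v_k')+1$, the positivity \eqref{eq:Tw-positivity} of the braid symmetries gives $T_{v_k'}^{-1}(F_{i_k})\in U_q^-$. Thus each $T_{v_k}^{-1}(E_{i_k}^{a_k})\in\mathcal KU_q^-$, and $\mathcal KU_q^-$ being a subalgebra, so is the product; i.e. $T_w^{-1}(E_{\mathbf i}^{\mathbf a})\in\mathcal KU_q^-$, which says precisely $E_{\mathbf i}^{\mathbf a}\in U_q^+(w)$.

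For the factorization I would run the induction on $w=s_iw'$ with $\ell(w)=\ell(w')+1$, so $T_w=T_iT_{w'}$. Prepending $i$ to a reduced word of $w'$ and using $T_i(U_q^+(w',1))\subseteq U_q^+$ (checked as in the previous paragraph) gives the tensor decomposition $U_q^+(w,1)=\bigoplus_{n\ge0}E_i^{\,n}\,T_i(U_q^+(w',1))$ (\cite{L1}*{\S40.2}). On the other hand, Lusztig's rank-one theory (\cite{L1}*{\S38}; the underlying derivations are the quasi-derivations $\partial_i,\partial_i^{op}$ of \S\ref{subs:inv-qd}) supplies the decomposition $U_q^+=\bigoplus_{n\ge0}E_i^{\,n}\,\mathbf f_i$ with $\mathbf f_i=U_q^+\cap T_i(U_q^+)$, and the fact that $T_i$ carries $U_q^+\cap T_i^{-1}(U_q^+)$ isomorphically onto $\mathbf f_i$. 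Comparing these two $E_i$-gradings, the inductive step reduces to the \emph{gluing lemma}: multiplication gives a linear isomorphism $T_i(U_q^+(w',1))\tensor U_q^+(w)'\xrightarrow{\ \sim\ }\mathbf f_i$, where $U_q^+(w)'=U_q^+\cap T_w(U_q^+)$. Indeed, granted this, $U_q^+=\bigoplus_n E_i^{\,n}\mathbf f_i=\big(\bigoplus_n E_i^{\,n}T_i(U_q^+(w',1))\big)\cdot U_q^+(w)'=U_q^+(w,1)\cdot U_q^+(w)'$ with injective multiplication, closing the induction via the first paragraph. To prove the gluing lemma I would feed the inductive hypothesis $U_q^+=U_q^+(w',1)\cdot U_q^+(w')'$ through $T_i$ and match $E_i$-degrees, the content being that $T_i$ identifies the $w'$-factorization of $U_q^+\cap T_i^{-1}(U_q^+)$ (to which $U_q^+(w',1)$ belongs) with the desired factorization of $\mathbf f_i$.

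I expect the gluing lemma to be the main obstacle. For $\gg$ of finite type it can be bypassed entirely: one completes $w$ to the longest element $\wnot$, extracts from a single reduced word of $\wnot$ a PBW basis of $U_q^+$ whose initial monomials span $U_q^+(w,1)$, and checks by a direct computation that these initial monomials are exactly $T_w(\mathcal KU_q^-)\cap U_q^+$ — this is the content of Proposition~\ref{prop:quant-schub-lus}. In the symmetrizable Kac-Moody case there is no $\wnot$, so one is forced to follow the interaction of each $T_i$ with the $E_i$-adapted PBW structure along a growing reduced word and to control the straightening of $T_i$-images against that word; the imaginary root directions make some graded pieces infinite-dimensional, which is harmless for the final height-induction (all homogeneous components being finite-dimensional) but demands care in the intermediate combinatorics. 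This is essentially the route taken by Tanisaki (\cite{T}) and, by a different argument exploiting the bilinear form on $U_q^+$, by Kimura (\cite{Kim2}).
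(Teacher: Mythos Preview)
The paper does not prove this statement in full generality: it states it as a conjecture, proves only the semisimple special case (Proposition~\ref{prop:quant-schub-lus}), and cites \cites{T,Kim2} for the general Kac--Moody result. So the comparison is really with Lemma~\ref{lem:lus-included} and Proposition~\ref{prop:quant-schub-lus}.

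Your ``easy half'' $U_q^+(w,1)\subset U_q^+(w)$ is exactly the paper's Lemma~\ref{lem:lus-included}, with the same factorization $w=u_rs_{i_r}v_r$ and the same appeal to~\eqref{eq:Tw-positivity}. Your closing paragraph on the finite-type bypass via $\wnot$ is precisely the paper's proof of Proposition~\ref{prop:quant-schub-lus}: extend a reduced word for $w$ to one for $\wnot$, use the resulting PBW basis of $U_q^+$, write any $u\in U_q^+(w)$ as $\sum c_{\mathbf a'}T_w(E_{\mathbf i'}^{\mathbf a'})$ with $c_{\mathbf a'}\in U_q^+(w,1)$, and use linear independence of the $E_{\mathbf i'}^{\mathbf a'}$ over $\mathcal KU_q^-$ (triangular decomposition) to force $c_{\mathbf a'}=0$ for $\mathbf a'\neq0$. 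So on the part the paper actually proves, you agree.

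Your inductive scheme for the general case is \emph{not} in the paper; it is a plausible outline, and you are honest that the gluing lemma is the obstruction. Let me make that obstruction concrete: you want to deduce $\mathbf f_i=T_i(U_q^+(w',1))\cdot U_q^+(w)'$ from the inductive hypothesis $U_q^+=U_q^+(w',1)\cdot U_q^+(w')'$ by restricting to $U_q^+\cap T_i^{-1}(U_q^+)$ and applying $T_i$. But a tensor factorization $U_q^+=A\cdot B$ does not in general restrict to $U_q^+\cap C=(A\cap C)\cdot(B\cap C)$ for a subalgebra $C$; here $A=U_q^+(w',1)$ does lie in $C=T_i^{-1}(U_q^+)$, but you would still need that every element of $C$ has its $B$-components in $C$, equivalently that $U_q^+(w')'\cap T_i^{-1}(U_q^+)$ maps onto $U_q^+(w)'$ under $T_i$. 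Unwinding, this needs $U_q^+(w)'\subset T_i(U_q^+)$, i.e.\ $T_i^{-1}(U_q^+)\cap T_{w'}(U_q^+)\subset U_q^+$, which is not obvious and is essentially where the real work in \cites{T,Kim2} lies. So your sketch correctly locates the difficulty but does not resolve it; that is consistent with the paper's decision to leave the general case to the cited references.
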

We retain an elementary proof for the special case of~$\gg$ semisimple here for reader's convenience, since the arguments in
\cites{T,Kim2} are rather long and non-trivial.
\begin{proposition}\label{prop:quant-schub-lus}
If~$\gg$ is semisimple then $U_q^+(w)=U_q^+(w,1)$. 
\end{proposition}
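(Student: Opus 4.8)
The strategy is to show the two subspaces $U_q^+(w)$ and $U_q^+(w,1)$ of $U_q^+$ coincide by a dimension-counting argument, graded piece by graded piece. Both spaces are $\Gamma$-graded subspaces of $U_q^+$, so it suffices to prove that for each $\gamma\in\Gamma$ their homogeneous components of degree $\gamma$ have the same finite dimension, together with one inclusion. First I would establish the inclusion $U_q^+(w,1)\subseteq U_q^+(w)$: the monomials $E_{\mathbf i}^{\mathbf a}$ in~\eqref{eq:PBW-mon} lie in $U_q^+$ by~\eqref{eq:Tw-positivity}, and applying $T_w^{-1}$ (equivalently, using the standard fact that $T_{s_{i_1}\cdots s_{i_{k}}}^{-1}(E_{i_{k+1}})$ is obtained by a reversed-word construction) one checks that each $E_{\mathbf i}^{\mathbf a}$ is sent by $T_w^{-1}$ into $\mathcal KU_q^-$; hence $E_{\mathbf i}^{\mathbf a}\in T_w(\mathcal KU_q^-)\cap U_q^+=U_q^+(w)$, and the span $U_q^+(w,1)$ lies in $U_q^+(w)$.

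\textbf{The dimension count.} For the reverse inclusion it is enough to show $\dim U_q^+(w)_\gamma\le \dim U_q^+(w,1)_\gamma$ for all $\gamma$. Here I would use the (tensor) triangular-type factorization for $U_q^+$ relative to a Weyl group element, which for $\gg$ semisimple is classical: writing $m=\ell(w)$ and fixing a reduced word $\mathbf i$, the PBW-type monomials $E_{\mathbf i}^{\mathbf a}$, $\mathbf a\in\ZZ_{\ge0}^m$, together with the analogously defined monomials $E_{\mathbf j}^{\mathbf b}$ attached to a reduced word for a complement of $w$ in $\wnot$ (so that $\wnot = w\cdot w'$ is a reduced factorization with $w'=w^{-1}\wnot$), give a basis of $U_q^+$; this is Lusztig's PBW basis associated to a reduced word of $\wnot$ refining the word $\mathbf i$. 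Concretely, $U_q^+ = U_q^+(w,1)\cdot T_w(U_q^+(w',1))$ with the product map a linear isomorphism, which gives the generating-function identity $\operatorname{ch} U_q^+ = \operatorname{ch} U_q^+(w,1)\cdot \operatorname{ch} T_w(U_q^+(w',1))$ in the completed group algebra of $\Gamma$. On the other hand, $U_q^+(w)'=T_w(U_q^+)\cap U_q^+$ visibly contains $T_w(U_q^+(w',1))$ (since $U_q^+(w',1)\subseteq U_q^+$ and its image under $T_w$ lands in $U_q^+$ by~\eqref{eq:Tw-positivity} applied along the word of $w'$ after $w$), and the multiplication map $U_q^+(w)\tensor U_q^+(w)'\to U_q^+$ is injective with image all of $U_q^+$ — this is the elementary part of Conjecture~\ref{conj:schub} available for $\gg$ finite type, which I would prove by observing that $T_w$ maps the triangular decomposition $U_q(\tilde\gg)=\mathcal K_-\tensor\mathcal K_+\tensor U_q^-\tensor U_q^+$ in a controlled way and intersecting with $U_q^+$. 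Comparing characters: $\operatorname{ch} U_q^+ = \operatorname{ch} U_q^+(w)\cdot\operatorname{ch} U_q^+(w)' \ge \operatorname{ch} U_q^+(w)\cdot\operatorname{ch} T_w(U_q^+(w',1)) = \operatorname{ch} U_q^+(w)\cdot\operatorname{ch} U_q^+(w',1)$, while also $\operatorname{ch} U_q^+ = \operatorname{ch} U_q^+(w,1)\cdot\operatorname{ch} U_q^+(w',1)$ by the PBW factorization. Since $\operatorname{ch} U_q^+(w',1)$ is invertible in the relevant completed ring and $U_q^+(w,1)\subseteq U_q^+(w)$, cancelling forces $\operatorname{ch} U_q^+(w,1)=\operatorname{ch} U_q^+(w)$, hence equality of the subspaces.

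\textbf{Main obstacle.} The technical heart is the identity $U_q^+ = U_q^+(w,1)\cdot T_w(U_q^+(w',1))$ with the multiplication a linear bijection, i.e. that the PBW monomials for a reduced word of $\wnot$ split cleanly along the initial segment giving $w$. For $\gg$ semisimple this follows from Lusztig's results on PBW bases (\cite{L1}, Chapters 37--40): any reduced word for $\wnot$ yields a basis of $U_q^+$ by ordered monomials in the root vectors $T_{s_{i_1}\cdots s_{i_{k-1}}}(E_{i_k})$, and restricting to the first $m=\ell(w)$ letters gives precisely the $E_{\mathbf i}^{\mathbf a}$ spanning $U_q^+(w,1)$, while the remaining letters give $T_w$ applied to the PBW basis of $U_q^+$ attached to the reduced word of $w'=w^{-1}\wnot$. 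I would spell this out using~\eqref{eq:Tw-positivity} and \cite{L1}*{Proposition~40.2.1}, being careful that the normalization $T_i$ used here differs from Lusztig's $T''_{i,-1}$ only by the bicharacter twist $\Ad^{\frac12}$ of~\eqref{eq:our Tw to Lusztig}, which rescales each PBW monomial by a unit in $R_0$ and therefore does not affect linear independence, spans, or the factorization. The finite-type hypothesis is used precisely to guarantee $\wnot$ exists and that all the relevant graded pieces are finite-dimensional so that character cancellation is legitimate.
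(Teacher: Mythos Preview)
Your proposal is correct and rests on the same two ingredients as the paper's proof: the inclusion $U_q^+(w,1)\subseteq U_q^+(w)$ (the paper isolates this as a lemma valid for any Kac-Moody~$\gg$), and Lusztig's PBW basis for $U_q^+$ attached to a reduced word of $w_\circ$ refining the chosen word~$\mathbf i$ for~$w$, together with the triangular decomposition applied after $T_w^{-1}$.

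The packaging differs. You establish the full tensor factorization $U_q^+\cong U_q^+(w)\otimes U_q^+(w)'$ (injectivity via $T_w^{-1}$ and the triangular decomposition, surjectivity via the PBW basis) and then cancel characters. The paper bypasses characters entirely: it takes $u\in U_q^+(w)$, expands it in the PBW basis as $u=\sum_{\mathbf a'} c_{\mathbf a'}\, T_w(E_{\mathbf i'}^{\mathbf a'})$ with $c_{\mathbf a'}\in U_q^+(w,1)\subseteq U_q^+(w)$, applies $T_w^{-1}$, and observes that $T_w^{-1}(c_{\mathbf a'})\in\mathcal KU_q^-$ while the $E_{\mathbf i'}^{\mathbf a'}$ are linearly independent over $\mathcal KU_q^-$ by the triangular decomposition; hence $T_w^{-1}(u)\in\mathcal KU_q^-$ forces $c_{\mathbf a'}=0$ for $\mathbf a'\ne 0$, i.e.\ $u\in U_q^+(w,1)$. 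This is exactly the injectivity you use for the factorization, applied element-by-element, so your character-cancellation step is an avoidable detour. On the other hand, your route does yield the full factorization of Conjecture~\ref{conj:schub} in finite type as a byproduct, whereas the paper's argument stops at the equality $U_q^+(w)=U_q^+(w,1)$.
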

\begin{proof}
We need the following
\begin{lemma}\label{lem:lus-included}
For any Kac-Moody Lie algebra~$\gg$, $U_q^+(w,1)\subset U_q^+(w)$.
\end{lemma}
\begin{proof}
Since $U_q^+(w,1)$ is contained in the subalgebra of~$U_q^+$ generated by the 
$T_{u_r}(E_{i_r})$, $1\le r\le m$, where $u_r=s_{i_1}\cdots s_{i_{r-1}}$,
it suffices to prove that $T_w^{-1}(T_{u_{r}}(E_{i_r}))\in \mathcal KU_q^-$, $1\le r\le m$.
Indeed, write $w=u_r s_{i_r}v_r$ where $v_r=s_{i_{r+1}}\cdots s_m$. Since $\ell(w)=\ell(u_r)+\ell(v_r)+1$ we have 
by~\eqref{eq:Tw-positivity}
\begin{equation*}
T_w^{-1}(T_{u_r}(E_{i_r}))=T_{v_r^{-1}}^{-1}(T_{i_{r}}^{-1}(E_{i_{r}}))=
T_{v_r^{-1}}^{-1}(K_{-i_r}^{-1}\invprod F_{i_r})\in \mathcal KU_q^-.\qedhere
\end{equation*}
\end{proof}
To prove the inclusion $U_q^+(w)\subset U_q^+(w,1)$ for~$\gg$ semisimple, 
let  $w_\circ$ be the longest element in~$W$ and set~$w'=w^{-1}w_\circ$.
Since~$\ell(w)+\ell(w')=\ell(w_\circ)$, we can choose a reduced word~$\mathbf i_\circ$
for~$w_\circ$ which is the concatenation of reduced words~$\mathbf i$ and~$\mathbf i'$ for 
$w$ and~$w'$ respectively. Then by~\cite{L1}*{Corollary~40.2.2},
monomials $E_{\mathbf i_\circ}^{\mathbf a\mathbf a'}$, $\mathbf a\in \ZZ_{\ge 0}^{\ell(w)}$,
$\mathbf a'\in\ZZ_{\ge 0}^{\ell(w')}$ form a basis of~$U_q^+$. 
Observe that $E_{\mathbf i_\circ}^\mathbf a=E_{\mathbf i}^{\mathbf a}T_w(E_{\mathbf i'}^{\mathbf a'})\in U_q^+(w)
T_w(E_{\mathbf i'}^{\mathbf a'})$. 
Let~$u\in U_q^+(w)$. 
Then we can write $u=\sum_{\mathbf a'\in\ZZ_{\ge 0}^{\ell(w')}} c_{\mathbf a'} T_w(E_{\mathbf i'}^{\mathbf a'})$,
where $c_{\mathbf a'}\in U_q^+(w)$.
Then 
$$
T_w^{-1}(u)=\sum_{\mathbf a'\in\ZZ_{\ge 0}^{\ell(w')}} T_w^{-1}(c_{\mathbf a'}) E_{\mathbf i'}^{\mathbf a'}.
$$
By definition of~$U_q^+(w)$, $T_w^{-1}(c_{\mathbf a'})\in \mathcal KU_q^-$. Note that the triangular decomposition 
$U_q(\tilde\gg)\cong \mathcal K\tensor U_q^-\tensor U_q^+$ implies that 
the $E_{\mathbf i'}^{\mathbf a'}$ are linearly independent over $\mathcal KU_q^-$.
Therefore, $T_w^{-1}(u)\in\mathcal KU_q^-$ if and only if $c_{\mathbf a'}=0$ unless~$\mathbf a'=0$.
\end{proof}

\subsection{Proof of Theorem~\ref{thm:g-ss-denom}}
We will often need the following identity, which is an immediate consequence of~\eqref{eq:par-der Ea Fa}
(cf.~\cite{L1}*{Lemma~1.4.4})
\begin{equation}\label{eq:par Ei Fi}
\lr{E_i^r}{F_i^r}=q_i^{\binom r2} \la r\ra_{q_i}!,\qquad r\in\ZZ_{\ge 0},\, i\in I.
\end{equation}

Let~$w\in W$ and let $w=s_{i_1}\cdots s_{i_m}$ be its reduced decomposition. Denote~$\mathbf i=(i_1,\dots,i_m)\in I^m$ and 
set~$w_r=s_{i_1}\cdots s_{i_r}$, $0\le r\le m$.
Given $\mathbf a\in\ZZ_{\ge 0}^I$, let $\mu_{\mathbf i}(\mathbf a):=q^{-\frac12 \sum_{r=1}^{m} a_r \la w_{r-1}^{-1}\ra\cdot \alpha_{i_r}} $
(cf.~\eqref{eq:our Tw to Lusztig}).
Let $U_\ZZ^\pm(w,1)=U_q^\pm(w,1)\cap U_\ZZ^\pm$. 
We need the following Lemma.
\begin{lemma}\label{lem:PBW-bas}
The elements $\{\mu_{\mathbf i}(\mathbf a) E_{\mathbf i}^{\mathbf a}\,:\,\mathbf a\in\ZZ_{\ge 0}^{\ell(w)}\}$
(respectively, $\{\mu_{\mathbf i}(\mathbf a) F_{\mathbf i}^{\mathbf a}\,:\,\mathbf a\in\ZZ_{\ge 0}^{\ell(w)}\}$
where $F_{\mathbf i}^{\mathbf a}=(E_{\mathbf i}^{\mathbf a})^{*t}$)
form a $\mathbb Z[q,q^{-1}]$-basis of~$U_\ZZ^+(w,1)$ (respectively, $U_\ZZ^-(w,1)$). Moreover,
$$
\lr{ \mu_{\mathbf i}(\mathbf a') E_{\mathbf i}^{\mathbf a'}}{\mu_{\mathbf i}(\mathbf a) F_{\mathbf i}^{\mathbf a}}\in
\ZZ[q,q^{-1}]
$$
and equals zero unless~$\mathbf a=\mathbf a'$.
\end{lemma}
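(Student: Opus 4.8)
The strategy is to reduce everything to Lusztig's PBW theory for $U_q^+$ together with the pairing formulae already developed in~\S\ref{subs:notation}. First I would recall the key structural fact (from~\cite{L1}*{\S40.2} and Proposition~\ref{prop:quant-schub-lus} when $\gg$ is semisimple, but for the PBW statement itself only~\cite{L1}*{Proposition~40.2.1} is needed): the monomials $E_{\mathbf i}^{\mathbf a}$, $\mathbf a\in\ZZ_{\ge 0}^{\ell(w)}$, are linearly independent and span $U_q^+(w,1)$, and correspondingly the $F_{\mathbf i}^{\mathbf a}=(E_{\mathbf i}^{\mathbf a})^{*t}$ span $U_q^-(w,1)$. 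The point of the twist $\mu_{\mathbf i}(\mathbf a)$ is to convert our rescaled $T_w$ (see~\eqref{eq:our Tw to Lusztig}) into Lusztig's $T''_{w,-1}$, so that $\mu_{\mathbf i}(\mathbf a) E_{\mathbf i}^{\mathbf a}$ is, up to an overall power of $q$ coming from the rescaling $E_i\mapsto(q_i^{-1}-q_i)E_i$ relating our presentation to Lusztig's, a genuine Lusztig PBW monomial in divided powers $E_i^{(a_i)}$. More precisely, I would show $\mu_{\mathbf i}(\mathbf a) E_{\mathbf i}^{\mathbf a}\in q^\ZZ\cdot \big(E_{i_1}^{(a_1)}\big)'T''_{i_1,-1}\big(E_{i_2}^{(a_2)}\big)'\cdots$ under $\psi_+$, where $(\,\cdot\,)'$ denotes Lusztig's divided powers; since Lusztig's PBW basis in divided powers is a $\ZZ[q,q^{-1}]$-basis of ${}_\ZZ U^+$ by~\cite{L1}*{\S41.1} (combined with the fact that $T''_{i,-1}$ preserves ${}_\ZZ U^+$), it follows that the $\mu_{\mathbf i}(\mathbf a)E_{\mathbf i}^{\mathbf a}$ form a $\ZZ[q,q^{-1}]$-basis of ${}_\ZZ U^+(w):={}_\ZZ U^+\cap U_q^+(w,1)$.

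Next I would pass from ${}_\ZZ U^+(w)$ to $U_\ZZ^+(w,1)=U_q^+(w,1)\cap U_\ZZ^+$ by the same duality argument as in Proposition~\ref{prop:UZZ-bas}: $U_\ZZ^+(w,1)$ is the $\ZZ[q,q^{-1}]$-dual, inside the graded pieces, of ${}_\ZZ U^-(w,1)$ with respect to $\lr{\cdot}{\cdot}$, once we know the pairing is perfect between these subspaces over $\ZZ[q,q^{-1}]$. So the crux is the last displayed formula: computing $\lr{\mu_{\mathbf i}(\mathbf a')E_{\mathbf i}^{\mathbf a'}}{\mu_{\mathbf i}(\mathbf a)F_{\mathbf i}^{\mathbf a}}$. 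For this I would use the orthogonality of Lusztig's PBW monomials under his bilinear form $(\cdot,\cdot)$ on $U_q^+$ (from~\cite{L1}*{Proposition~38.2.3}: the PBW monomials associated to a fixed reduced word are orthogonal, with explicit diagonal entries $\prod_r \big(E_{i_r}^{(a_r)},E_{i_r}^{(a_r)}\big)$), translate Lusztig's form into our pairing $\lr{\cdot}{\cdot}$ via the relation between $\lr{\cdot}{\cdot}$ and $\fgfrm{\cdot}{\cdot}$ and the ${}^{*t}$-twist used to define $F_{\mathbf i}^{\mathbf a}$, and finally evaluate the diagonal terms using~\eqref{eq:par Ei Fi}, i.e. $\lr{E_i^r}{F_i^r}=q_i^{\binom r2}\la r\ra_{q_i}!$. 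The twist $\mu_{\mathbf i}(\mathbf a)$ is precisely designed to absorb the $T_w$-vs-$T''_{w,-1}$ discrepancy so that no half-integral powers of $q$ survive; I would check that the product of the $\mu$'s with the $q$-power from~\eqref{eq:par Ei Fi} and the bicharacter contributions $\chi(\alpha_{i_r},\alpha_{i_s})$ from re-ordering into a single graded component lands in $q^\ZZ$, and that the integrality $\la r\ra_{q_i}!\in q^\ZZ\ZZ[q,q^{-1}]$ (indeed $(r)_{q_i}!\in\ZZ[q,q^{-1}]$) gives the claimed membership in $\ZZ[q,q^{-1}]$. Vanishing for $\mathbf a\ne\mathbf a'$ is immediate either from the grading (if $\deg E_{\mathbf i}^{\mathbf a'}\ne\deg F_{\mathbf i}^{\mathbf a}$) or from Lusztig's orthogonality within a fixed degree.

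\textbf{Main obstacle.} The delicate point is bookkeeping the powers of $q^{\frac12}$: our $T_w$ differs from Lusztig's $T''_{w,-1}$ by the factor $\chi^{\frac12}((\la w\ra,0),w\deg_{\widehat\Gamma}x)$ from~\eqref{eq:our Tw to Lusztig}, our $E_i,F_i$ differ from Lusztig's by the rescaling in Remark~\ref{rem:duality}, and our $\lr{\cdot}{\cdot}$ differs from $\fgfrm{\cdot}{\cdot}$ by $q^{-\frac12\ul\gamma}$ factors as in the definition of $\mathbf B_{\nn_\pm}$. One must verify that $\mu_{\mathbf i}(\mathbf a)$ is exactly the normalization that makes all the square roots cancel simultaneously on the $E$-side, the $F$-side, and in the pairing, so that the result is genuinely in $\ZZ[q,q^{-1}]$ rather than $\ZZ[q^{\frac12},q^{-\frac12}]$. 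I expect this to require a careful but essentially mechanical computation using~\eqref{eq:kappa}, the definition $\ul\gamma(\alpha)=\tfrac12\alpha\cdot\alpha-\eta(\alpha)$, and the identity $\la w_{r-1}^{-1}\ra\cdot\alpha_{i_r}=$ (height-type quantity) controlling how $T''_{w_{r-1},-1}$ shifts degrees; once the exponents are matched, integrality follows from~\eqref{eq:par Ei Fi} and $(r)_{q_i}!\in\ZZ[q,q^{-1}]$. The linear-independence and spanning parts are comparatively routine given~\cite{L1}*{\S40.2}.
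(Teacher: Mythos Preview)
Your overall strategy---use Lusztig's PBW monomials for ${}_\ZZ U^\pm(w,1)$, invoke the orthogonality from~\cite{L1}*{Proposition~38.2.3}, and evaluate the diagonal via~\eqref{eq:par Ei Fi}---is exactly what the paper does. But there is a genuine gap in your lattice bookkeeping.

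You claim that $\mu_{\mathbf i}(\mathbf a)E_{\mathbf i}^{\mathbf a}$ is, up to a power of~$q$, a Lusztig divided-power PBW monomial and hence lies in ${}_\ZZ U^+$. This is false. Writing $\check E_{\mathbf i}^{\mathbf a}:=E_{i_1}^{\la a_1\ra}T''_{w_1,-1}(E_{i_2}^{\la a_2\ra})\cdots T''_{w_{m-1},-1}(E_{i_m}^{\la a_m\ra})$ for Lusztig's PBW monomial (which, by~\cite{L1}*{Proposition~41.1.4}, is a $\ZZ[q,q^{-1}]$-basis of ${}_\ZZ U^+(w,1)$), the conversion~\eqref{eq:our Tw to Lusztig} gives
\[
E_{\mathbf i}^{\mathbf a}=\mu_{\mathbf i}(\mathbf a)\Big(\prod_{r=1}^m\la a_r\ra_{q_{i_r}}!\Big)\check E_{\mathbf i}^{\mathbf a},
\qquad\text{so}\qquad
\mu_{\mathbf i}(\mathbf a)E_{\mathbf i}^{\mathbf a}=\mu_{\mathbf i}(\mathbf a)^2\Big(\prod_{r}\la a_r\ra_{q_{i_r}}!\Big)\check E_{\mathbf i}^{\mathbf a}.
\]
The factor $\prod_r\la a_r\ra_{q_{i_r}}!$ is \emph{not} a unit in $\ZZ[q,q^{-1}]$, so the $\mu_{\mathbf i}(\mathbf a)E_{\mathbf i}^{\mathbf a}$ span a proper sublattice of ${}_\ZZ U^+(w,1)$, not all of it. Your proposed ``pass from ${}_\ZZ U^+$ to $U_\ZZ^+$ by duality'' is therefore starting from the wrong place.

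The correct (and shorter) route, which the paper takes, reverses the logic: one first knows that the $\check F_{\mathbf i}^{\mathbf a}$ form a basis of ${}_\ZZ U^-(w,1)$, and then computes directly
\[
\lr{\mu_{\mathbf i}(\mathbf a')E_{\mathbf i}^{\mathbf a'}}{\check F_{\mathbf i}^{\mathbf a}}
=\mu_{\mathbf i}(\mathbf a')\Big(\prod_r\la a'_r\ra_{q_{i_r}}!\Big)\lr{\check E_{\mathbf i}^{\mathbf a'}}{\check F_{\mathbf i}^{\mathbf a}}
\in\delta_{\mathbf a,\mathbf a'}q^{\ZZ},
\]
the $\prod\la a_r\ra_{q_{i_r}}!$ exactly cancelling the denominators coming from the divided powers in~\cite{L1}*{Proposition~38.2.3}. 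Since $U_\ZZ^+=\{x:\lr{x}{{}_\ZZ U^-}\subset\ZZ[q,q^{-1}]\}$ by definition, this pairing computation \emph{is} the proof that $\{\mu_{\mathbf i}(\mathbf a)E_{\mathbf i}^{\mathbf a}\}$ is a basis of $U_\ZZ^+(w,1)$: it is the basis dual to $\{\check F_{\mathbf i}^{\mathbf a}\}$. The last claim then follows since $\mu_{\mathbf i}(\mathbf a)F_{\mathbf i}^{\mathbf a}=\mu_{\mathbf i}(\mathbf a)^2(\prod_r\la a_r\ra_{q_{i_r}}!)\check F_{\mathbf i}^{\mathbf a}$ and $\mu_{\mathbf i}(\mathbf a)^2\in q^\ZZ$.
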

\begin{proof}
Set 
$$
\check E_{\mathbf i}^{\mathbf a}=E_{i_1}^{\la a_1\ra} T''_{w_1,-1}(E_{i_2}^{\la a_2\ra})\cdots T''_{w_{m-1},-1}(E_{i_m}^{\la a_m\ra}).
$$
Then by~\eqref{eq:our Tw to Lusztig},
$\check E_{\mathbf i}^{\mathbf a}=\mu_{\mathbf i}(\mathbf a)^{-1}\big(\prod_{i=1}^m \la a_i\ra_{q_i}!\big)^{-1} E_{\mathbf i}^{\mathbf a}$.
We also set $\check E_{\mathbf i}^{\mathbf a}=\check F_{\mathbf i}^{\mathbf a}$. It follows from~\cite{L1}*{Proposition~41.1.4}
that 
the monomials $\{\check E_\mathbf i^\mathbf a\}_{\mathbf a\in\ZZ_{\ge 0}^m}$ (respectively $\{\check F_\mathbf i^\mathbf a\}_{\mathbf a\in\ZZ_{\ge 0}^m}$)
form a $\ZZ[q,q^{-1}]$-basis of~${}_\ZZ U^+(w,1)$ (respectively, ${}_\ZZ U^-(w,1)$),
where ${}_\ZZ U^\pm(w,1)={}_\ZZ U^\pm\cap U_q^+(w,1)$. Moreover, it follows from~\cite{L1}*{Proposition~38.2.3} and~\eqref{eq:par Ei Fi} that 
$$
\lr{\check E_{\mathbf i}^{\mathbf a'}}{\check F_{\mathbf i}^\mathbf a}=\delta_{\mathbf a,\mathbf a'}
q^{\sum_{r=1}^m a_r \eta(w_{r-1}(\alpha_{i_r}))} \prod_{r=1}^N \lr{E_{i_r}^{\la a_r\ra}}{F_{i_r}^{\la a_r\ra}}
=\delta_{\mathbf a,\mathbf a'}q^{\sum_{r=1}^N a_r \eta(w_{r-1}(\alpha_{i_r}))} \prod_{r=1}^N q_{i_r}^{\binom{a_r}2}\frac{1}{\la a_r\ra_{q_{i_r}}!}.
$$
This implies that 
$$
\lr{\mu(\mathbf a') E_{\mathbf i}^{\mathbf a'}}{\check F_{\mathbf i}^\mathbf a}\in\delta_{\mathbf a,\mathbf a'}q^\ZZ$$ 
and so $\{ \mu(\mathbf a) E_{\mathbf i}^\mathbf a\}_{\mathbf a\in\ZZ_{\ge 0}^m}$ (respectively, 
$\{ \mu(\mathbf a) F_{\mathbf i}^\mathbf a\}_{\mathbf a\in\ZZ_{\ge 0}^m}$) is a $\ZZ[q,q^{-1}]$-basis of~$U^+_\ZZ$ (respectively, of~$U^-_\ZZ$).
Finally, 
$$
\lr{\mu(\mathbf a') E_{\mathbf i}^{\mathbf a'}}{\mu(\mathbf a) F_{\mathbf i}^\mathbf a}\in \mu(\mathbf a)^2\delta_{\mathbf a,\mathbf a'}
\ZZ[q,q^{-1}].
$$
Since $\mu(\mathbf a)^2\in q^\ZZ$, the last assertion follows.
\end{proof}
\begin{proof}[Proof of Theorem~\ref{thm:g-ss-denom}]
Suppose that~$\gg$ is semisimple and that $w=w_\circ$ is the longest element in~$W$. 
Then $U_\ZZ^\pm(w_\circ)=U_\ZZ^\pm$ and by Lemma~\ref{lem:PBW-bas}, $U_\ZZ^\pm$ admit a pair of bases $\mathbf B_\pm$
such that $\lr{\mathbf B_+}{\mathbf B_-}\subset\ZZ[q,q^{-1}]$. 
Thus, $\lr{U^+_\ZZ}{U^-_\ZZ}=\ZZ[q,q^{-1}]$. The same argument as in the proof of Proposition~\ref{prop:cyclotom} shows 
that $\lr{\mathbf B_{\nn_+}}{\mathbf B_{\nn_-}}\subset \ZZ[q,q^{-1}]$.
\end{proof}
\subsection{Braid group action for~\texorpdfstring{$U_q(\lie{sl}_2)$}{U\_qsl\_2}}\label{subs:braid-sl2}
Retain the notation from~\S\ref{subs:sl_2}.
\begin{lemma}\label{lem:Braid-sl2}
We have, for all $a_\pm\in\ZZ$, $m_\pm\in\ZZ_{\ge 0}$
$$
 T(K_-^{a_-} K_+^{a_+}\invprod F^{m_-}\bullet E^{m_+})=K_-^{-a_--m_-}K_+^{-a_+-m_+} \invprod F^{m_+}\bullet E^{m_-}.
$$
\end{lemma}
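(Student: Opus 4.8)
The plan is to compute $T(K_-^{a_-}K_+^{a_+}\invprod F^{m_-}\bullet E^{m_+})$ directly, using the explicit description of $F^{m_-}\bullet E^{m_+}$ from Proposition~\ref{prop:DCB-sl2} together with the explicit formulae for $T=T_1$ coming from Theorem~\ref{thm:braid group double}, namely $T(K_\pm)=K_\pm^{-1}$, $T(E)=q^{-1}K_+^{-1}F$, $T(F)=q^{-1}K_-^{-1}E$ (abbreviating $K_\pm=K_{\pm1}$). First I would reduce to the case $a_-=a_+=0$: since $T$ is an algebra automorphism of $\widehat U_q(\widetilde{\lie{sl}_2})$ with $T(K_\pm)=K_\pm^{-1}$, and since $K_-^{a_-}K_+^{a_+}\invprod x=q^{(a_--a_+)(m_+-m_-)}K_-^{a_-}K_+^{a_+}x$ for $x$ of $\widehat\Gamma$-degree $m_-\alpha_{-1}+m_+\alpha_{+1}$ (by the definition~\eqref{eq:triangle-def} and $\alpha_1^\vee(\alpha_{-1}+\alpha_{+1})=0$), the general statement follows from the case $a_\pm=0$ once one checks that the power of $q$ produced by the $\invprod$-normalisations matches on both sides; I would dispatch this matching at the end and otherwise focus on $T(F^{m_-}\bullet E^{m_+})$.

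The first key step is the identity $T(C^{(m)})=(K_-K_+)^{-m}\invprod C^{(m)}=K_-^{-m}K_+^{-m}C^{(m)}$ for all $m\ge0$, which I would prove by induction on $m$ using the recurrence $C^{(m+1)}=CC^{(m)}-K_+K_-C^{(m-1)}$. The base case $T(C)=K_-^{-1}K_+^{-1}C$ is a short computation: $T(FE)=T(F)T(E)=K_-^{-1}K_+^{-1}EF$ (commuting one $K_+^{-1}$ past $E$ via $K_+E=q^2EK_+$), and then substituting $EF=C+q^{-1}K_++qK_-$ cancels the $K_\pm^{-1}$ terms against $T(-qK_+-q^{-1}K_-)=-qK_+^{-1}-q^{-1}K_-^{-1}$. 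For the inductive step one uses that $C$, $C^{(m)}$ are central in $U_q(\widetilde{\lie{sl}_2})$, that $K_+K_-$ is central, and that $C^{(m)}$ is $\widehat\Gamma$-homogeneous of degree $m(\alpha_{-1}+\alpha_{+1})$ (so $\alpha_1^\vee(\deg C^{(m)})=0$ and $\invprod$ by $\mathbf K$ on $C^{(m)}$ is plain multiplication). The second key step is $T(E)^k=q^{-k^2}K_+^{-k}F^k$ and $T(F)^k=q^{-k^2}K_-^{-k}E^k$, obtained by moving the $K_\pm^{-1}$'s to the left via $K_\pm E=q^{\pm2}EK_\pm$, $K_\pm F=q^{\mp2}FK_\pm$; concretely $(K_-^{-1}E)^k=q^{-k(k-1)}K_-^{-k}E^k$ and symmetrically, so $T(F)^k=q^{-k}(K_-^{-1}E)^k=q^{-k^2}K_-^{-k}E^k$.

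With these in hand I would write $m=\min(m_+,m_-)$ and $F^{m_-}\bullet E^{m_+}=F^{m_--m}C^{(m)}E^{m_+-m}$ (Proposition~\ref{prop:DCB-sl2}), apply $T$ termwise, and then collect all $K$-factors to the left, using centrality of $K_+K_-$ and of $C^{(m)}$ and the relations $K_\pm E=q^{\pm2}EK_\pm$ to commute the surviving power of $E$ past the $K$'s. Treating the two cases $m_+\ge m_-$ and $m_+<m_-$ separately (one of $m_--m$, $m_+-m$ vanishes in each), the $K$-prefactor collapses to $K_-^{-m_-}K_+^{-m_+}$ and the remaining factor to $F^{m_+-m_-}C^{(m_-)}$ respectively $C^{(m_+)}E^{m_--m_+}$, which is exactly $F^{m_+}\bullet E^{m_-}$ again by Proposition~\ref{prop:DCB-sl2}; the total accumulated power of $q$ is $q^{-(m_+-m_-)^2}$, which is precisely the factor relating $K_-^{-m_-}K_+^{-m_+}(F^{m_+}\bullet E^{m_-})$ to $K_-^{-m_-}K_+^{-m_+}\invprod(F^{m_+}\bullet E^{m_-})$ since $\alpha_1^\vee(\deg(F^{m_+}\bullet E^{m_-}))=2(m_--m_+)$. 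Reinstating $a_\pm$, both sides pick up the same $q$-power by the degree computations above, which completes the proof. The main obstacle is purely bookkeeping — keeping straight the powers of $q$ produced by the $\invprod$-normalisations and by pushing $K_\pm^{\pm1}$ past powers of $E$ and $F$ — and there is no conceptual difficulty; one could alternatively shorten part of it by invoking the $\bar\cdot$-invariance of $T(F^{m_-}\bullet E^{m_+})$ (from Theorem~\ref{thm:braid group double}(b)) and the uniqueness in Theorem~\ref{thm:bullet}, but the direct route via Proposition~\ref{prop:DCB-sl2} seems shortest.
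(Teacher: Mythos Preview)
Your proof is correct and follows essentially the same route as the paper: first prove $T(C^{(m)})=(K_-K_+)^{-m}C^{(m)}$ by induction on~$m$ via the Chebyshev recurrence~\eqref{eq:induct-cheb}, then apply $T$ to the factored form $F^{m_-}\bullet E^{m_+}=F^{m_--m}C^{(m)}E^{m_+-m}$ from Proposition~\ref{prop:DCB-sl2} using the known values $T(E^k)$, $T(F^k)$, $T(K_\pm)$. The only stylistic difference is that the paper packages all the $q$-powers into the $\invprod$ notation (writing e.g.\ $T(E^k)=K_+^{-k}\invprod F^k$), which lets it avoid the explicit $q^{-k^2}$ bookkeeping and the case split on $\mathrm{sign}(m_+-m_-)$; your explicit approach is equivalent and the final matching of $q$-powers you describe is exactly what the $\invprod$ formalism encodes.
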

\begin{proof}
We claim that~$T(C^{(r)})=(K_+K_-)^{-r}C^{(r)}$. Indeed, this is obvious for~$r=0$
and easily seen to hold for~$r=1$.
Then by induction hypothesis we have 
\begin{equation*}
\begin{split}
T(C^{(r+1)})=T(C)T(C^{(r)})-(K_+ K_-)^{-1}T(C^{(r-1)})=(K_- K_+)^{-r-1}( C C^{(r)}-K_- K_+ C^{(r-1)})\\=(K_-K_+)^{-r-1}C^{(r+1)}.
\end{split}
\end{equation*}
Since $T(E^{m_+})=K_+^{-m_+}\invprod F^{m_+}$, $T(F^{m_-})=K_-^{-m_-}\invprod E^{m_-}$ and the~$C^{(r)}$, $r\ge 0$ are central 
we obtain, setting~$m=\min(m_-,m_+)$
\begin{align*}
 T(K_-^{a_-} K_+^{a_+}\invprod F^{m_-}\bullet E^{m_+})&=K_-^{-a_-+m}K_{+}^{-a_++m}\invprod (K_-^{-m_-+m}\invprod E^{m_--m})C^{(m)}
 (K_+^{-m_+ +m}\invprod F^{m_+-m})\\&=K_-^{-a_--m_-}K_+^{-a_+-m_+}\invprod F^{m_+}\bullet E^{m_-}.\qedhere
\end{align*}
\end{proof}

\subsection{Braid group action on elements of~\texorpdfstring{$\mathbf B_{\nn_+}$}{B\_n\_+}}\label{subs:tame}
Retain the notation of~\S\ref{subs:inv-qd}.
It follows from Proposition~\ref{prop:deriv-dual-bas}\eqref{prop:deriv-dual-bas.b} that 
for any element $b_+\in\mathbf B_{\nn_+}$ and~$r\in\mathbb Z_{\ge 0}$ there exists a unique 
$b'_+\in\mathbf B_{\nn_+}$ such that $\partial_i^{(top)}(b_+)=\partial_i^{(top)}(b'_+)$ and $\ell_i(b'_+)=\ell_i(b_+)+r$.
We denote this element by $\tilde\partial_i^{-r}(b_+)$. Observe that 
\begin{equation}\label{eq:crys-oper-deriv}
\tilde\partial_i^{-r}(b_+)=\tilde\partial_i^{-r-\ell_i(b_+)}
\partial_i^{(top)}(b_+).
\end{equation}
\begin{proposition}\label{prop:Ei square triang}
For all $b_+\in\mathbf B_{\nn_+}\cap \ker\partial_i$, $r\in \ZZ_{\ge 0}$, $i\in I$ we have 
\begin{equation}\label{eq:Ei square triang}
E_i^r\smallsquare b_+-\tilde\partial_i^{-r}(b_+)\in \sum_{ b'_+\in\mathbf B_{\nn_+}\,:\,
\ell_i(b'_+)< r}q \ZZ[q] b'_+,
\end{equation}
where for any $x\in U_q^+$ and $r\in\ZZ_{\ge 0}$ homogeneous we denote 
$$
E_i^r\smallsquare x:=q_i^{-\frac12 r\alpha_i^\vee(x)} E_i^r x,\qquad \alpha_i^\vee(x):=\alpha_i^\vee(\deg x).
$$
\end{proposition}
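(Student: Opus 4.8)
The plan is to establish \eqref{eq:Ei square triang} by induction on $\ell_i(b_+)$, after first reducing to the case $r\le \ell_i(b_+)$, for $b_+$ not necessarily in $\ker\partial_i$, via the following more flexible statement: for all $b_+\in\mathbf B_{\nn_+}$ and all $0\le r\le\ell_i(b_+)$,
\begin{equation}\label{eq:Ei-square-general-proposal}
E_i^r\smallsquare b_+ - \tilde\partial_i^{-r}\partial_i^{(top)}(b_+)\in\sum_{b'_+\in\mathbf B_{\nn_+}\,:\,\ell_i(b'_+)<\ell_i(b_+)+r-2\min(r,\ell_i(b_+))}\mskip-20mu q\ZZ[q]b'_+
\end{equation}
wait — this needs care; instead I would run the induction directly on $\ell_i(b_+)$ for $b_+\in\ker\partial_i$ using the known action of the quasi-derivation $\partial_i$. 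Concretely, set $n=\ell_i(b_+)$; by Proposition~\ref{prop:deriv-dual-bas}\eqref{prop:deriv-dual-bas.b} and the discussion around \eqref{eq:crystal-act-part}, the crystal operators $\tilde\partial_i^{-r}$ are bijections on $\mathbf B_{\nn_+}$ and the divided powers $\partial_i^{(s)}$ act triangularly with coefficients given by $q_i$-binomials, refined by \eqref{eq:crystal-act-part-precise}. The key computational input I would use is the commutation formula for $E_i$ and $F_i$-type divided powers inside $U_q^+$ together with Lemma~\ref{lem:partial-prop}: applying $\partial_i$ to $E_i^r\smallsquare b_+$ via the quasi-Leibniz rule \eqref{eq:partial-inv} gives $\partial_i(E_i^r\smallsquare b_+)$ in terms of $E_i^{r-1}\smallsquare b_+$ and $E_i^r\smallsquare\partial_i(b_+)=0$ (since $b_+\in\ker\partial_i$), up to an explicit scalar in $q_i^{\ZZ}(r)_{q_i}$.

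The heart of the argument is then a bar-invariance/triangularity bookkeeping in the spirit of Lemma~\ref{lem:auto} and Example~\ref{ex:special bull}. First I would observe that $E_i^r\smallsquare b_+$ is, up to the normalizing power of $q_i$, a monomial whose expansion in $\mathbf B_{\nn_+}$ has leading term (in the $\ell_i$-filtration) equal to $\binom{\ell_i(\tilde\partial_i^{-r}(b_+))}{r}_{q_i}^{-1}$-type multiple of $\tilde\partial_i^{-r}(b_+)$, computed by pairing against $F_i^{\la r\ra}$-divided powers and invoking Lemma~\ref{lem:partial-form}: indeed $\fgfrm{(\partial_i^-)^{(r)}(\,\cdot\,)}{\,\cdot\,}$ detects the $\ell_i$-top, and $\partial_i^{(top)}(E_i^r\smallsquare b_+)=\partial_i^{(top)}(\tilde\partial_i^{-r}(b_+))$ up to a nonzero scalar. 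The normalization $\smallsquare$ is designed precisely so that this leading scalar becomes $1$; this is the analogue of the computation that made $F_i^r\circ b_+$ have coefficient sequence in $q\ZZ[q]$ in \eqref{eq:frm-spec-bullet}. Then I would show the lower-order coefficients lie in $q\ZZ[q]$ by comparing $E_i^r\smallsquare b_+$ with its image under $\bar\cdot$: since $\overline{E_i}=E_i$, $\overline{b_+}=b_+$, and $\overline{E_i^r\smallsquare b_+}=q_i^{r\alpha_i^\vee(\deg b_+)}E_i^r\smallsquare b_+$ differs from $E_i^r\smallsquare b_+$ by a known power of $q_i$ times the same element plus strictly lower $\ell_i$-terms, the usual Lusztig-lemma positivity forces the coefficients into $q\ZZ[q]$. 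The cleanest route is to verify the hypotheses of Proposition~\ref{prop:LL} (equivalently \cite{BZ}*{Theorem~1.1}) with $\mathcal E=\mathbf B_{\nn_+}\cap\ker\partial_i$, the partial order given by $\ell_i$, and $\hat E$ the appropriate $\ell_i$-graded piece, then identify the resulting canonical element with $E_i^r\smallsquare b_+$ by uniqueness.

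For the induction step on $n=\ell_i(b_+)$ I would use \eqref{eq:crys-oper-deriv}: writing $b_+=\tilde\partial_i^{-n}(b_+^\circ)$ with $b_+^\circ=\partial_i^{(top)}(b_+)\in\ker\partial_i$ of $\ell_i$-degree $0$ reduces everything to the generating case, but more usefully, the relation $\partial_i(E_i^r\smallsquare b_+)\in q_i^{\ZZ}(r)_{q_i}\,E_i^{r-1}\smallsquare b_+ + (\text{terms killed})$ together with $\partial_i(\tilde\partial_i^{-r}(b_+))$ computed via \eqref{eq:crystal-act-part} lets me transport the inductive hypothesis for $r-1$ to $r$; applying $\partial_i$ is injective enough on the relevant filtration quotients (it lowers $\ell_i$ by exactly one on its non-kernel) to pin down the top coefficient, and a separate check at $\ell_i$-bottom handles the remaining ambiguity. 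The main obstacle I anticipate is precisely this last point: controlling the \emph{constant} ($\ell_i$-independent) part of the expansion, i.e.\ the terms $b'_+$ with $\ell_i(b'_+)=0$, since $\partial_i$ annihilates them and cannot be used to detect their coefficients; there I would need the explicit commutation formula (a variant of Lemma~\ref{lem:comm Fi} transported through ${}^{*t}$, or directly the coproduct formula for $E_i^r$ as in the proof of Lemma~\ref{lem:comm Fi}) to read off that these coefficients are manifestly in $q\ZZ[q]$ after the $\smallsquare$-normalization, using that the relevant $q$-binomial and $\la\cdot\ra_{q_i}!$ factors contribute only nonnegative powers of $q$ via \eqref{eq:crystal-act-part-precise}.
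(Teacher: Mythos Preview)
Your approach has a genuine gap. The central device you propose --- using $\bar\cdot$-invariance together with a Lusztig's Lemma argument to force the lower-order coefficients into $q\ZZ[q]$ --- does not close. First, $\bar\cdot$ is an \emph{anti}-involution on $U_q^+$, so $\overline{E_i^r\smallsquare b_+}=q_i^{\frac12 r\alpha_i^\vee(b_+)}\,b_+E_i^r$, which is not a scalar multiple of $E_i^r\smallsquare b_+$; the two agree only modulo the $\{\ell_i<r\}$ part. More seriously, every element of $\mathbf B_{\nn_+}$ is already $\bar\cdot$-invariant, so applying Proposition~\ref{prop:LL} with the $\ell_i$-filtration just reproduces $\mathbf B_{\nn_+}$: the unique bar-invariant element congruent to $E_i^r\smallsquare b_+$ modulo $q\ZZ[q]\cdot\{\ell_i<r\}$ exists, but identifying it with $\tilde\partial_i^{-r}(b_+)$ is exactly the statement to be proved, so the argument is circular. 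The induction on~$r$ via $\partial_i$ fails for the same reason you yourself flag at the end (the $\ell_i=0$ part is invisible to $\partial_i$) and for an additional one: by~\eqref{eq:crystal-act-part-precise} the lower terms of $\partial_i(\tilde\partial_i^{-r}(b_+))$ lie only in $q_i^{1-r}q\ZZ[q]\,\mathbf B_{\nn_+}$, not $q\ZZ[q]\,\mathbf B_{\nn_+}$, so the inductive hypothesis at $r-1$ does not propagate to~$r$. (Incidentally, ``induction on $\ell_i(b_+)$ for $b_+\in\ker\partial_i$'' is vacuous since then $\ell_i(b_+)=0$.)

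The paper's proof supplies the missing external input representation-theoretically. For each $\lambda\in\ZZ^I$ there is a $U_q(\gg)$-module structure on $U_q^+$ in which $E_i$ acts by $\partial_i$ and $F_i$ by an operator $F_{i;\lambda}$ built from left and right multiplication by $E_i$ (Lemma~\ref{lem:lambda-action-II}). An elementary identity (Lemma~\ref{lem:tmp-iterated-action}) then writes $E_i^r\smallsquare b_+$ as a renormalization of $F_{i;\lambda}^r(b_+)$ plus a finite sum of corrections, each of the form $q_i^{2k\lambda_i}$ times an element of $U_q^+$ independent of~$\lambda$, with $k\ge 1$. Kashiwara's result \cite{Kas}*{Proposition~5.3.1} controls $F_{i;\lambda}^r(b_+)$ in the dual canonical basis: its leading term is $\tilde\partial_i^{-r}(b_+)$ with scalar congruent to $1\pmod q$, and the lower terms lie in $q\QQ[q]\,\mathbf B_{\nn_+}$. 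The decisive trick is that $\lambda_i$ may be taken arbitrarily large, pushing every $q_i^{2k\lambda_i}$-correction into $q\ZZ[q]$; this gives $E_i^r\smallsquare b_+-\tilde\partial_i^{-r}(b_+)\in\sum q\QQ[q]\,b'_+$, and combining with the a~priori integrality~\eqref{eq:tmp-Laurent} yields $q\ZZ[q]$. This freedom to choose $\lambda$ large, together with the appeal to Kashiwara, is precisely what your proposal lacks.
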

\begin{proof}
First, note that for $b_+\in\ker\partial_i$, $\ell_i(E_i^r\smallsquare b_+)=r=\ell_i(\tilde\partial_i^{-r}(b_+))$ and by Corollary~\ref{cor:top-deriv} 
$$
\partial_i^{(r)}(E_i^r\smallsquare b_+-\tilde\partial_i^{-r}(b_+))=\partial_i^{(top)}(b_+)-\partial_i^{(top)}(b_+)=0,
$$
hence by Proposition~\ref{prop:deriv-dual-bas}\eqref{prop:deriv-dual-bas.a} and Corollary~\ref{cor:str-const}
\begin{equation}\label{eq:tmp-Laurent}
E_i^r\smallsquare b_+-\tilde\partial_i^{-r}(b_+)\in\sum_{b'_+\in\mathbf B_{\nn_+}\,:\,\ell_i(b'_+)<r} \ZZ[q,q^{-1}]b'_+.
\end{equation}
Given $\lambda=(\lambda_i)_{i\in I}\in\mathbb Z^I$ and $i\in I$, define $\kk$-linear operators on~$U_q^+$ by
$$
F_{i;\lambda}(x)=\frac{q_i^{-\lambda_i+\frac12\alpha_i^\vee(x)} E_i x- q_i^{-\frac12\alpha_i^\vee(x)+\lambda_i} x E_i}{q_i^{-1}-q_i},\qquad 
K_{i;\lambda}(x)=q_i^{\lambda_i-\alpha_i^\vee(x)} x,
$$
The following result is well-known (see e.g.~\cite{B}*{Section~3} and also Proposition~\ref{prop:double acts on halves diag}).
\begin{lemma}\label{lem:lambda-action-II}
For any $\lambda\in\ZZ^I$, the assignments $E_i\mapsto \partial_i$, $F_i\mapsto F_{i;\lambda}$,
$K_i\mapsto K_{i;\lambda}$ define a structure of a $U_q(\gg)$-module on~$U_q^+$. Moreover,
the submodule~$\mathscr V_\lambda$ of~$U_q^+$ generated by~$1$ is simple and if~$\lambda\in\ZZ_{\ge 0}^I$ then
$\mathscr V_\lambda=\{ x\in U_q^+\,:\, \ell_i(x^*)\le \lambda_i\}$ and is integrable.
\end{lemma}
\begin{remark}
Here we use the ``standard'' generators of~$U_q(\gg)$.
\end{remark}
We need the following technical fact which is easy to check by induction.
\begin{lemma}\label{lem:tmp-iterated-action}
For all~$\lambda\in\ZZ^I$, $r\in\mathbb Z_{\ge 0}$ and $x\in U_q^+$ homogeneous
$$
q_i^{r(\lambda_i-\alpha_i^\vee(x)-1)-\binom{r}2}F_{i;\lambda}^r(x)=(1-q_i^2)^{-r} q_i^{-\frac12 r\alpha_i^\vee(x)}
\sum_{k=0}^{r} (-1)^k q_i^{k(2\lambda_i-\alpha_i^\vee(x)-2r+2)+k(k-1)}\qbinom[q_i^2]{r}{k} E_i^{r-k}x E_i^k.
$$
\end{lemma}
This immediately implies that 
\begin{equation}\label{eq:tmp-square-action}
\begin{split}
E_i^r\smallsquare x&=
q_i^{r(\lambda_i-\alpha_i^\vee(x))-\binom{r+1}2}(1-q_i^2)^r F_{i;\lambda}^r(x)\\
&\qquad+q_i^{-\frac r2\alpha_i^\vee(x)}
\sum_{k=1}^{r} (-1)^{k+1} q_i^{k(2\lambda_i-\alpha_i^\vee(x)+k-2r+1)} \qbinom[q_i^2]{r}{k} E_i^{r-k}x E_i^k.
\end{split}
\end{equation}
Let $b_+\in\mathbf B_{\nn_+}\cap\ker \partial_i$. 
It follows by an obvious induction from~\cite{Kas}*{Proposition~5.3.1} that 
$$
q_i^{r\varphi_i(b_+)-\binom{r+1}2}(1-q_i^2)^r F_{i;\lambda}^{r}(b_+)=\prod_{t=0}^{r-1}(1-q_i^{2(\varphi_i(b_+)-t)})\tilde\partial_i^{-r}(b_+)+
\sum_{b'_+\in\mathbf B_{\nn_+}\,:\, \ell_i(b'_+)< r} q \QQ[q] b'_+,
$$
where $\varphi_i(b_+)=\lambda_i-\alpha_i^\vee(b_+)$.
Combining this identity with~\eqref{eq:tmp-square-action} we obtain 
\begin{equation}\label{eq:tmp-squar}
\begin{split}
 E_i^r&\smallsquare b_+=\prod_{t=0}^{r-1}(1-q_i^{2(\lambda_i-\alpha_i^\vee(b_+)-t)})\tilde\partial_i^{-r}(b_+)\\
 &+q_i^{-\frac r2\alpha_i^\vee(b_+)}\sum_{k=1}^{r} (-1)^{k+1} q_i^{k(2\lambda_i-\alpha_i^\vee(b_+)+k-2r+1)} 
\qbinom[q_i^2]{r}{k} E_i^{r-k} b_+ E_i^k+
\sum_{b'_+\in\mathbf B_{\nn_+}} q \QQ[q] b'_+.
\end{split}
\end{equation}
By Corollary~\ref{cor:str-const} we have for all~$1\le k\le r$
$$
q_i^{-\frac12 r\alpha_i^\vee(b_+)+k(2\lambda_i-\alpha_i^\vee(b_+)+k-2r+1)} E_i^{r-k}b_+ E_i^k
=q_i^{2\lambda_i}\sum_{b'_+\in\mathbf B_{\nn_+}} C_{b_+;r,k}^{b'_+} b'_+,\qquad C_{b_+;r,k}^{b'_+}\in\ZZ[q,q^{-1}].
$$
Since only finitely many terms in this sum are non-zero, there exists $\lambda_i\in\ZZ_{\ge 0}$, $\lambda_i\ge \alpha_i^\vee(b_+)+r$ such that 
$q_i^{2\lambda_i} C_{b_+,;r,k}^{b'_+}\in q\ZZ[q]$ for all~$b'_+\in\mathbf B_{\nn_+}$, $1\le k\le r$. Therefore, it 
follows from~\eqref{eq:tmp-squar} that 
$$
E_i^r\smallsquare b_+ - \tilde\partial_i^{-r}(b_+)\in \sum_{b'_+\in\mathbf B_{\nn_+}} q \QQ[q] b'_+.
$$
It remains to apply~\eqref{eq:tmp-Laurent}.
\end{proof}

\begin{corollary}\label{lem:crystal-decomp}
For any~$b_+\in\mathbf B_{\nn_+}$ we have 
\begin{equation}\label{eq:crystal-decomp}
b_+ - E_i^{\ell_i(b_+)}\smallsquare\partial_i^{(top)}(b_+)\in \sum_{b'_+\in\mathbf B_{\nn_+}\cap \ker\partial_i,\,0\le r<\ell_i(b_+)}
q\ZZ[q] E_i^r \smallsquare b'_+.
\end{equation}
\end{corollary}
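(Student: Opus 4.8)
The plan is to reduce the decomposition of a general $b_+\in\mathbf B_{\nn_+}$ to the ``highest'' part $\partial_i^{(top)}(b_+)$, which lies in $\ker\partial_i$, and then apply Proposition~\ref{prop:Ei square triang} to $E_i^{\ell_i(b_+)}\smallsquare\partial_i^{(top)}(b_+)$. First I would set $n=\ell_i(b_+)$ and $b'_+=\partial_i^{(top)}(b_+)=\partial_i^{(n)}(b_+)\in\mathbf B_{\nn_+}\cap\ker\partial_i$ by Proposition~\ref{prop:deriv-dual-bas}\eqref{prop:deriv-dual-bas.b}; by~\eqref{eq:crystal-act-part} (with $r=n$, using $\binom{n}{n}_{q_i}=1$) we have $b_+=\tilde\partial_i^n(b_+)=\partial_i^{(top)}(b_+)$ only when $n=0$, so in general I need to compare $b_+$ with the element $\tilde\partial_i^{-n}(b'_+)$ of $\mathbf B_{\nn_+}$ having the same top derivative and $\ell_i=n$. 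By the uniqueness in Proposition~\ref{prop:deriv-dual-bas}\eqref{prop:deriv-dual-bas.b}, $\tilde\partial_i^{-n}(b'_+)=b_+$ exactly, since $b_+$ itself has top derivative $b'_+$ and $\ell_i(b_+)=n$.

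Next I would invoke Proposition~\ref{prop:Ei square triang} with this $b'_+\in\mathbf B_{\nn_+}\cap\ker\partial_i$ and $r=n$:
$$
E_i^n\smallsquare b'_+-\tilde\partial_i^{-n}(b'_+)\in\sum_{b''_+\in\mathbf B_{\nn_+}\,:\,\ell_i(b''_+)<n}q\ZZ[q]\,b''_+.
$$
Since $\tilde\partial_i^{-n}(b'_+)=b_+$, this already gives
$$
b_+-E_i^{n}\smallsquare\partial_i^{(top)}(b_+)\in\sum_{b''_+\in\mathbf B_{\nn_+}\,:\,\ell_i(b''_+)<n}q\ZZ[q]\,b''_+.
$$
The remaining task is to re-expand each $b''_+$ with $\ell_i(b''_+)<n$ in the claimed form, i.e. as a $q\ZZ[q]$-combination of $E_i^r\smallsquare b'''_+$ with $b'''_+\in\mathbf B_{\nn_+}\cap\ker\partial_i$ and $0\le r<n$. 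I would do this by a downward induction on $\ell_i(b''_+)$: applying the same argument to $b''_+$ (which has $\ell_i(b''_+)=:n'<n$) expresses $b''_+-E_i^{n'}\smallsquare\partial_i^{(top)}(b''_+)$ as a $q\ZZ[q]$-combination of elements of strictly smaller $\ell_i$, and $\partial_i^{(top)}(b''_+)\in\mathbf B_{\nn_+}\cap\ker\partial_i$ with $n'<n$, so each term is of the desired type. The base case $\ell_i=0$ is trivial since then $b''_+\in\ker\partial_i$ and $b''_+=E_i^0\smallsquare b''_+$. Collecting terms and using that $q\ZZ[q]$ is closed under multiplication by $\ZZ[q]$ (hence by $\ZZ_{\ge0}[q]$-coefficients appearing in the iterated substitution) yields~\eqref{eq:crystal-decomp}.

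The main obstacle is purely bookkeeping: one must make sure that the iterated substitution keeps all coefficients in $q\ZZ[q]$ rather than merely $\ZZ[q,q^{-1}]$, and that the induction on $\ell_i$ terminates with only the allowed basis vectors $E_i^r\smallsquare b'''_+$, $b'''_+\in\ker\partial_i$, $r<n$, appearing. The coefficient control is guaranteed because each application of Proposition~\ref{prop:Ei square triang} introduces only $q\ZZ[q]$-coefficients and the products $q\ZZ[q]\cdot q\ZZ[q]\subset q\ZZ[q]$; the termination and the ``$r<\ell_i(b_+)$'' bound follow because every substitution strictly decreases the relevant $\ell_i$. So once Proposition~\ref{prop:Ei square triang} is in hand, this Corollary is a short induction with no genuine difficulty.
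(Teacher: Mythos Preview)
Your argument is correct and is essentially the paper's proof unwound. The paper observes that Proposition~\ref{prop:Ei square triang} says the elements $E_i^r\smallsquare b'_+$ (with $b'_+\in\mathbf B_{\nn_+}\cap\ker\partial_i$, $r\ge 0$) form a $\ZZ[q]$-basis of $\ZZ[q]\mathbf B_{\nn_+}$ with unitriangular transition matrix whose off-diagonal entries lie in $q\ZZ[q]$, and then simply notes that the inverse of such a matrix has the same shape; your inductive substitution is exactly how one proves that standard linear-algebra fact. One small phrasing issue: you say you want to ``re-expand each $b''_+$ \dots as a $q\ZZ[q]$-combination'', but $b''_+$ itself is $E_i^{n'}\smallsquare\partial_i^{(top)}(b''_+)$ plus a $q\ZZ[q]$-combination (the leading coefficient is $1$, not in $q\ZZ[q]$); the point, which you do state correctly in the next sentence, is that $b''_+$ already carries a $q\ZZ[q]$-coefficient from the previous step, so the product stays in $q\ZZ[q]$.
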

\begin{proof}
It follows from the Theorem that the elements $\{ E_i^r\smallsquare b_+\,:\, b_+\in\mathbf B_{\nn_+}\cap\ker\partial_i,\,r\ge 0\}$ form a $\ZZ[q]$-basis 
of the lattice $\ZZ[q]\mathbf B_{\nn_+}$ and the transfer matrix is unitriangular with off-diagonal elements in $q\ZZ[q]$. Then 
the inverse matrix has the same property.
\end{proof}
We can now prove the following 
\begin{theorem}
\label{thm:Spec braid action}
For all $b_+\in\mathbf B_{\nn_+}$, $i\in I$ 
$$
T_i(b_+)=K_{+i}^{-\ell_i(b_+)}\invprod F_i^{\ell_i(b_+)}\bullet T_i((\partial_i^{(top)}(b_+)),
\quad T_i^{-1}(b_+)=K_{-i}^{-\ell_i(b_+^*)}\invprod F_i^{\ell_i(b_+^*)}\bullet T_i^{-1}((\partial_i^{op})^{(top)}(b_+)).
$$
In particular, all elements of~$\mathbf B_{\nn_+}$ are tame.
\end{theorem}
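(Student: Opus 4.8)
The plan is to establish the first identity and then obtain the second by the symmetric argument, with tameness falling out at the end. Fix $b_+\in\mathbf B_{\nn_+}$, write $m=\ell_i(b_+)$ and $b_+^0=\partial_i^{(top)}(b_+)\in\mathbf B_{\nn_+}\cap\ker\partial_i$ (Proposition~\ref{prop:deriv-dual-bas}). I would first invoke the classical fact (going back to Lusztig and Saito; see \cite{L1}*{\S\S38,41}), transported from the canonical basis of $U_q^+$ to its dual $\mathbf B_{\nn_+}$ by means of the pairing $\fgfrm{\cdot}{\cdot}$ and Remark~\ref{rem:Lusz-sym}: for each $i\in I$, $T_i$ restricts to a bijection $\mathbf B_{\nn_+}\cap\ker\partial_i\xrightarrow{\ \sim\ }\mathbf B_{\nn_+}\cap\ker\partial_i^{op}$, with $T_i^{-1}$ inducing the inverse bijection. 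Thus $\tilde b:=T_i(b_+^0)$ lies in $\mathbf B_{\nn_+}\cap\ker\partial_i^{op}$, so~\eqref{eq:frm-spec-bullet} in Example~\ref{ex:special bull} applies to it: $F_i^m\bullet\tilde b$ is the \emph{unique} $\bar\cdot$-invariant element of $U_q(\tilde\gg)$ of the form $F_i^m\tilde b+\sum_{r'\ge 1}\sum_{b'_+}C^+_{r'}K_{+i}^{r'}\invprod(F_i^{m-r'}b'_+)$ with all $C^+_{r'}\in q\ZZ[q]$ and $b'_+\in\mathbf B_{\nn_+}\cap\ker\partial_i^{op}$. The second ingredient I would record is the elementary identity
\[
T_i(E_i^r\smallsquare x)=K_{+i}^{-r}\invprod\bigl(F_i^r\,T_i(x)\bigr),\qquad x\in U_q^+\ \text{homogeneous},\ r\ge 0,
\]
which follows from $T_i(E_i)=K_{+i}^{-1}\invprod F_i$ (Theorem~\ref{thm:braid group double}), the relations~\eqref{eq:commutation g}, the definition~\eqref{eq:triangle-def} of $\invprod$, and the identity $\deg_{\widehat\Gamma}T_i(x)=s_i(\deg_{\widehat\Gamma}x)$ together with $s_i\alpha_i^\vee=-\alpha_i^\vee$; read backwards, $T_i^{-1}$ carries $K_{+i}^{-r}\invprod(F_i^r y)$ to $E_i^r\smallsquare T_i^{-1}(y)$ whenever $y=T_i(x)$ with $x\in U_q^+$.

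Next I would set $\mathbf c:=K_{+i}^{-m}\invprod(F_i^m\bullet\tilde b)$, which is $\bar\cdot$-invariant. Applying $K_{+i}^{-m}\invprod$ to the expansion of $F_i^m\bullet\tilde b$ displayed above and then $T_i^{-1}$ — using the identity with $x=b_+^0$ on the leading term and with $x=T_i^{-1}(b'_+)\in\mathbf B_{\nn_+}\cap\ker\partial_i$ (legitimate by the classical fact) on the remaining terms — yields
\[
T_i^{-1}(\mathbf c)=E_i^m\smallsquare b_+^0+\sum_{0\le s<m}\ \sum_{c^0\in\mathbf B_{\nn_+}\cap\ker\partial_i}a_{s,c^0}\,E_i^s\smallsquare c^0,\qquad a_{s,c^0}\in q\ZZ[q].
\]
By Corollary~\ref{lem:crystal-decomp} the very same expansion holds with $T_i^{-1}(\mathbf c)$ replaced by $b_+$, so $T_i^{-1}(\mathbf c)-b_+$ is a $q\ZZ[q]$-linear combination of the $E_i^s\smallsquare c^0$ with $s<m$. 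Re-expanding these in $\mathbf B_{\nn_+}$ via Proposition~\ref{prop:Ei square triang}, whose transition matrix is unitriangular with off-diagonal entries in $q\ZZ[q]$, gives $T_i^{-1}(\mathbf c)-b_+\in\sum_{b'_+\in\mathbf B_{\nn_+}}q\ZZ[q]\,b'_+$. Since this element is fixed by $\bar\cdot$ and $\mathbf B_{\nn_+}$ is a $\bar\cdot$-invariant basis, each coefficient lies in $q\ZZ[q]\cap q^{-1}\ZZ[q^{-1}]=\{0\}$; hence $T_i^{-1}(\mathbf c)=b_+$, i.e. $T_i(b_+)=K_{+i}^{-m}\invprod(F_i^m\bullet T_i(b_+^0))$.

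The second identity would be proved verbatim, replacing $T_i$, $\partial_i$, $\ker\partial_i$, $K_{+i}$ and~\eqref{eq:frm-spec-bullet} by $T_i^{-1}$, $\partial_i^{op}$, $\ker\partial_i^{op}$, $K_{-i}$ and~\eqref{eq:frm-spec-bullet-neg} (using $T_i^{-1}(E_i)=K_{-i}^{-1}\invprod F_i$, the $\partial_i^{op}$-analogue of Corollary~\ref{lem:crystal-decomp}, and the bijection $\mathbf B_{\nn_+}\cap\ker\partial_i^{op}\xrightarrow{\sim}\mathbf B_{\nn_+}\cap\ker\partial_i$ induced by $T_i^{-1}$). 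Tameness then follows at once: $F_i^{\ell_i(b_+)}\in\mathbf B_{\nn_-}$ (Example~\ref{ex:power}) and $T_i(\partial_i^{(top)}(b_+)),\,T_i^{-1}((\partial_i^{op})^{(top)}(b_+))\in\mathbf B_{\nn_+}$, so $F_i^{\ell_i(b_+)}\bullet T_i(\partial_i^{(top)}(b_+))$ and $F_i^{\ell_i(b_+^*)}\bullet T_i^{-1}((\partial_i^{op})^{(top)}(b_+))$ lie in $\mathbf B_{\tilde\gg}$, whence $T_i(b_+),\,T_i^{-1}(b_+)\in\widehat{\mathbf B}_{\tilde\gg}$ for every $i$. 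The hard part is not the formal manipulation above but the classical input: extracting from \cite{L1} (and from Saito's analysis of PBW bases) the precise statement that $T_i$ permutes the $\partial_i$-singular part of the \emph{dual} canonical basis onto the $\partial_i^{op}$-singular part, with the $q$-normalisations matched to our rescaled presentation~\eqref{eq:commutation g} and to the $\invprod$-action.
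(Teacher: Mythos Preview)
Your proof is correct and uses exactly the same ingredients as the paper: the bijection $T_i:\mathbf B_{\nn_+}\cap\ker\partial_i\to\mathbf B_{\nn_+}\cap\ker\partial_i^{op}$ (Proposition~\ref{prop:Lusztig}), the identity $T_i(E_i^r\smallsquare x)=K_{+i}^{-r}\invprod(F_i^rT_i(x))$, the decomposition in Corollary~\ref{lem:crystal-decomp}, and a uniqueness argument via $\bar\cdot$-invariance. The only difference is orientation: the paper applies $T_i$ to the decomposition of $b_+$ and recognises the result directly as $K_{+i}^{-\ell_i(b_+)}\invprod(F_i^{\ell_i(b_+)}\circ T_i(\partial_i^{(top)}(b_+)))$ via the uniqueness in Theorem~\ref{thm:circle}, whereas you start from the target, pull back by $T_i^{-1}$, and carry out the uniqueness argument inside $U_q^+$ against the basis $\mathbf B_{\nn_+}$; the two are inverses of one another.
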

\begin{proof}
We only prove the first identity, the proof of the second one being similar.
We need the following crucial corollary of~\cite{L2}*{Theorem~1.2}.
\begin{proposition}\label{prop:Lusztig}
$T_i$ induces a bijection $\mathbf B_{\nn_+}\cap\ker \partial_i\to \mathbf B_{\nn_+}\cap\ker \partial_i^{op}$.
\end{proposition}
\begin{proof}
It follows from~\cite{L1}*{Lemma~38.1.3 and Proposition~38.1.6} that $T''_{i,-1}$ induces an isomorphism of algebras $\ker\partial_i=
\ker\partial_{F_i}^{op}
\to\ker\partial_i^{op}=\ker\partial_{F_i}$. Moreover \cite{L2}*{Theorem~1.2} 
implies that if $b\in \mathbf B^{\can}\cap\ker\partial_{E_i}^{op}$ then $T''_{i,-1}(b^{*t})\in
(\mathbf B^{\can})^{*t}\cap\ker\partial_i^{op}$. 
Now, let $b_+=\delta_{b}^{*t}\in\mathbf B_{\nn_+}\cap\ker\partial_i$ and~$b'\in \mathbf B^{\can}\cap\ker\partial_{E_i}^{op}$.
Then it follows from~\eqref{eq:our Tw to Lusztig} and \cite{L1}*{Proposition~38.2.1} that $\delta_{b,b'}=\fgfrm{\delta_b}{b'}
=q^{\frac12\nu}\fgfrm{T_i(\delta_b^{*t})^{*t}}{T''_{i,-1}(b'{}^{*t})^{*t}}=q^{\frac12\nu}\fgfrm{T_i(\delta_b^{*t})^{*t}}{b''}$,
where $b''\in \mathbf B^{\can}\cap \ker\partial_{E_i}$ and~$\nu\in\ZZ$ depends only 
on the degree of~$b$. This implies that $T_i(\delta_b^{*t})=q^{-\frac12\nu} \delta_{b''}^{*t}$. But since $T_i$ commutes with~$\bar\cdot$, it follows 
that $\nu=0$.
\end{proof}
We have, for any~$r>0$, $b'_+\in \mathbf B_{\nn_+}\cap\ker\partial_i$ 
\begin{align*}
T_i(E_i^r\smallsquare b'_+)&=q_i^{-\frac12 r\alpha_i^\vee(\deg b'_+)} T_i(E_i^r)T_i(b'_+)=
q_i^{-\frac12 r\alpha_i^\vee(\deg b_+)} (K_{+i}^{-r}\invprod F_i^r)T_i(b_+)\\
&=q_i^{-\frac12 r\alpha_i^\vee(\deg b_+)-\frac12 r\alpha_i^\vee(s_i(\deg b_+))} K_{+i}^{-r}\invprod( F_i^r T_i(b_+))
=K_{+i}^{-r}\invprod (F_i^r T_i(b_+)).
\end{align*}
Then applying~$T_i$ to~\eqref{eq:crystal-decomp} yields 
\begin{align*}
T_i(b_+)&=K_{+i}^{-\ell_i(b_+)}\invprod F_i^{\ell_i(b_+)} T_i(\partial_i^{(top)}(b_+))+
\sum_{\substack{0\le r<\ell_i(b_+)\\
b'_+\in\ker\partial_i\cap\mathbf B_{\nn_+}}} D_{b'_+;r}^{b_+} K_{+i}^{-r}\invprod (F_i^rT_i(b'_+))\\
&=K_{+i}^{-\ell_i(b_+)}\invprod \Big( F_i^{\ell_i(b_+)} T_i(\partial_i^{(top)}(b_+))+
\sum_{\substack{0< r\le \ell_i(b_+)\\
b'_+\in\ker\partial_i\cap\mathbf B_{\nn_+}}} D_{b'_+;\ell_i(b_+)-r}^{b_+} K_{+i}^{r}\invprod (F_i^{\ell_i(b_+)-r}T_i(b'_+)\Big).
\end{align*}
Since~$\bar\cdot$ commutes with the~$T_i$, this element is~$\bar\cdot$-invariant. Since 
all $D_{b'_+;s}^{b_+}\in q\ZZ[q]$ and $T_i(b'_+)\in\mathbf B_{\nn_+}\cap\ker\partial_i^{op}$ 
for all $b'_+\in \mathbf B_{\nn_+}\cap \ker\partial_i$, 
$T_i(b_+)=K_{+i}^{-\ell_i(b_+)}\invprod 
\iota(F_i^{\ell_i(b_+)}\circ T_i(\partial_i^{(top)}(b_+)))$ by Theorem~\ref{thm:circle}. But 
since for $b_+\in\ker\partial_i^{op}$, $\overline{F_i b_+}$ in~$U_q(\tilde\gg)$ and in~${\mathcal H}_q^+(\gg)$ coincide,
it follows that $F_i^{\ell_i(b_+)}\bullet T_i(\partial_i^{(top)}(b_+))=\iota(F_i^{\ell_i(b_+)}\circ T_i(\partial_i^{(top)}(b_+)))$
by Theorem~\ref{thm:bullet}.
\end{proof}
\begin{example}
We now use the above Theorem to compute $F_i^r\bullet b_+$, $r\ge 0$, $b_+\in\mathbf B_{\nn_+}\cap\ker\partial_i^{op}$ for 
$\lie g=\lie{sl}_3$. In this case $\mathbf B_{\nn_+}$ consists of elements 
$$
b_+(\mathbf a):=q^{\frac12(a_1-a_2)(a_{12}-a_{21})}
E_1^{a_1}E_2^{a_2}E_{12}^{a_{12}}E_{21}^{a_{21}},\qquad \mathbf a=(a_1,a_2,a_{12},a_{21})\in\ZZ_{\ge 0}^4,\,\min(a_1,a_2)=0.
$$
Then 
$$
\mathbf B_{\nn_+}\cap \ker\partial_1^{op}=\{ b_+(0,a_2,a_{12},0)\,:\, a_2,a_{12}\in\ZZ_{\ge 0}\}.
$$
Since $T_1^{-1}(E_2)=E_{21}$, $T_1^{-1}(E_{12})=E_2$ we have $T_1^{-1}(b_+(0,a_2,a_{12},0))=b_+(0,a_{12},0,a_2)$.
Then $F_1^r\bullet b_+(0,a_2,a_{12},0)=K_{+i}^r \invprod T_1(\tilde\partial_1^{-r}(b_+(0,a_{12},0,a_2))$.
Since
\begin{alignat*}{2}
&\ell_1(b_+(0,a'_2,a'_{12},a'_{21}))=a'_{12},&\qquad& \partial_1^{(top)}(b_+(0,a'_2,a'_{12},a'_{21}))=b_+(0,a'_2+a'_{12},0,a'_{21})\\
&\ell_1(b_+(a'_1,0,a'_{12},a'_{21}))=a'_1+a'_{12},&& \partial_1^{(top)}(b_+(a'_1,0,a'_{12},a'_{21}))=b_+(0,a'_1+a'_{12},0,a'_{21})
\end{alignat*}
we conclude that $$
\tilde\partial_1^{-r}(b_+(0,a_{12},0,a_2))=\begin{cases}b_+(0,a_{12}-r,r,a_2),&0\le r\le a_{12}\\
                                            b_+(r-a_{12},0,a_{12},a_2),& r>a_{12}.
                                           \end{cases}
$$
Since 
\begin{equation}\label{eq:sl_3 via square}
b_+(a'_1,a'_2,a'_{12},a'_{21})=\sum_{t=0}^{a'_{12}} (-1)^t q^{t(a'_1+a'_2+1)}\qbinom[q^2]{a'_{12}}{t} E_1^{a'_1+a'_{12}-t}\smallsquare b_+(0,a'_2+a'_{12}-t,0,a'_{21}+t),
\end{equation}
we obtain 
$$
F_1^r\bullet b_+(0,a_2,a_{12},0)=\sum_{t=0}^{\min(r,a_{12}) } (-1)^t q^{t(|a_{12}-r|+1)}\qbinom[q^2]{\min(r,a_{12}) }{t} 
K_{+1}^t \invprod F_1^{r -t} b_+(0,a_2+t,a_{12}-t,0).
$$
Then it is easy to see that $T_2(F_1^r\bullet b_+(0,a_2,a_{12},0))=K_{+2}^{-a_2} \invprod b_-(0,a_2,0,r)\bullet E_1^{a_{12}}=
(K_{+2}^{-a_2}\invprod F_1^{a_{12}}\bullet b_+(0,a_2,r,0))^t$. In a similar fashion, using~$T_1^{-1}$ we obtain
$$
F_1^r\bullet b_+(0,a_2,0,a_{21})=\sum_{t=0}^{\min(r,a_{21}) } (-1)^t q^{-t(|a_{21}-r|+1)}\qbinom[q^{-2}]{\min(r,a_{21}) }{t} 
K_{-1}^t \invprod F_1^{r -t} b_+(0,a_2+t,0,a_{21}-t).
$$

\end{example}

\subsection{Wild elements of a double canonical basis}\label{ex:braid-fails}
Assume that $a_{ij}=a_{ji}=-a$, $d_i=d_j=1$, $a\ge 2$ and consider elements $F_{ij}\bullet E_{ji}$ computed in~\S\ref{ex:rank2}.
Then for $a=2$ we have
$$
T_i(F_{ij}\bullet E_{ij})=K_{-i}^{-1} F_{iji}\bullet E_{ji^2}+K_{-i}^{-1} F_{ji^2}\bullet E_{ji^2},
$$
while for $a=3$ 
\begin{multline*}
T_i(F_{ij}\bullet E_{ij})=(3)_q K_{-i}^{-1} F_{iji^2}\bullet E_{ji^3}+(2)_q K_{-i}^{-1} F_{ji^3}\bullet E_{ji^3}+(2)_q 
F_{ji^2}\bullet E_{ji^2}+(2)_q K_{+i}K_{+j} F_i\bullet E_i\\
+K_{-j} K_{-i}{}^2+K_{+i}{}^2 K_{+j}+K_{-i}{}^{-1} K_{+i}^3 K_{+j}.
\end{multline*}

\appendix
\section{Drinfeld and Heisenberg doubles}\label{app:Heisenberg double}

\subsection{Nichols algebras}\label{subs:A-Nichols}
Let~$\kk$ be a field, let~$V$ be a~$\kk$-vector space  and let~$\Psi=\Psi_V:V\tensor V\to V\tensor V$ be a braiding, that is, $\Psi$ is invertible 
and $\Psi_{1,2}\Psi_{2,3}\Psi_{1,2}=\Psi_{2,3}\Psi_{1,2}\Psi_{2,3}$ as endomorphisms of~$V^{\tensor 3}$,
where 
$$
\Psi_{i,i+1}=\id_V^{\tensor (i-1)}\tensor\Psi\tensor \id_V^{\tensor (n-i-1)}\in\End_\kk(V^{\tensor n}),\qquad 1\le i<n.
$$
Define $[n]_\Psi,[n]_\Psi!\in\End_\kk(V^{\tensor n})$, $n\in\ZZ_{\ge 0}$, by 
\begin{gather*}
[n]_\Psi=\id_{V^{\tensor n}}+\Psi_{n-1,n}+\Psi_{n-1,n}\Psi_{n-2,n-1}+\cdots+\Psi_{n-1,n}\Psi_{n-2,n-1}\cdots\Psi_{1,2},
\\
[n]_\Psi!=([1]_\Psi\tensor\id_V^{\tensor n-1})\circ([2]_\Psi\tensor \id_V^{\tensor n-2})\circ\cdots\circ [n]_\Psi,
\end{gather*}
In particular, $[0]_\Psi!=1$ and $[1]_\Psi!=\id_V$. Furthermore, given $\sigma$ in the symmetric group $\mathbb S_n$, let 
$\Psi_\sigma=\Psi_{i_1,i_1+1}\cdots\Psi_{i_r,i_r+1}$ where $\sigma=(i_1,i_1+1)\cdots(i_r,i_r+1)$ is a reduced expression. A standard 
argument shows that $\Psi_\sigma$ depends only on~$\sigma$ and not on the reduced expression. In particular, if $\ell(\sigma)+\ell(\tau)=\ell(\sigma\tau)$,
where $\ell(\sigma)$ denotes the length of any reduced expression of~$\sigma$ as a product of elementary transpositions,
then $\Psi_{\sigma}\Psi_\tau=\Psi_{\sigma\tau}$.
Then 
$$
[n]_\Psi!=\sum_{\sigma\in \mathbb S_n} \Psi_\sigma.
$$
Let $\sigma_\circ:(1,\dots,n)\mapsto (n,\dots,1)$ be the longest element of~$\mathbb S_n$. Since $\ell(\sigma)+\ell(\sigma^{-1}\sigma_\circ)=
\ell(\sigma)+\ell(\sigma_\circ\sigma^{-1})=\ell(\sigma_\circ)$, it follows that $\Psi_{\sigma}\Psi_{\sigma^{-1}\sigma_\circ}=\Psi_{\sigma_\circ}=
\Psi_{\sigma_\circ\sigma^{-1}}\Psi_\sigma$. This implies that 
\begin{equation}\label{eq:brd-factorial}
[n]_\Psi!=\Psi_{\sigma_\circ}[n]_{\Psi^{-1}}!=[n]_{\Psi^{-1}}!\Psi_{\sigma_\circ}.
\end{equation}
Also, by~\cite{BS}*{Proposition 5.5} or~\cite{Del}*{Proposition~4.17}, $\Psi_{\sigma_\circ}\Psi_\tau=\Psi_{\sigma_\circ\tau\sigma_\circ}\Psi_{\sigma_\circ}$
for all~$\tau\in\mathbb S_n$, hence 
\begin{equation}\label{eq:brd-factorial-comm}
[n]_\Psi!\Psi_{\sigma_\circ}=\Psi_{\sigma_\circ}[n]_{\Psi}!
\end{equation}

Let $r,s>0$. Then the element $\Psi_\sigma\in\End_\kk(V^{\tensor (r+s)})$ where 
$\sigma:(1,\dots,r+s)\mapsto 
(s+1,\dots,r+s,1\dots,s)$ defines a brading $\Psi_{V^{\tensor r},V^{\tensor s}}$.

The tensor algebra~$T(V)=\bigoplus_{n\in\ZZ_{\ge0}} V^{\tensor n}$ of~$V$, where $V^{\tensor 0}=\kk$, is the free associative algebra generated 
by~$V$. The braiding $\Psi$ extends to a braiding $\Psi_{T(V)}:T(V)\tensor T(V)\to T(V)\tensor T(V)$ via $\Psi_{T(V)}|_{V^{\tensor r},V^{\tensor s}}
=\Psi_{V^{\tensor r},V^{\tensor s}}$.
Then~$T(V)\tensor T(V)$ can be endowed with a braided algebra structure 
via $m_{T(V)\tensor T(V)}:=(m_{T(V)}\tensor m_{T(V)})\circ(\id_{T(V)}\tensor\Psi_{T(V)}\tensor\id_{T(V)})$ where $m_{T(V)}:T(V)\tensor T(V)\to T(V)$
is the multiplication map. \plink{braided-comult}Furthermore, $T(V)$ 
becomes a braided bialgebra with the coproduct defined by $\ul\Delta(v)=v\tensor 1+1\tensor v$, $v\in V$, and the counit defined 
by $\varepsilon(1)=1$, $\varepsilon(v)=0$, $v\in V$.

The Woronowicz symmetrizer
$\operatorname{Wor}(\Psi):T(V)\to T(V)$ is the linear map defined by $$\operatorname{Wor}(\Psi)|_{V^{\tensor n}}=[n]_\Psi!.$$
It turns out (cf.~\cites{BB,Majid}) that~$\ker\operatorname{Wor}(\Psi)$ is a bi-ideal of~$T(V)$. Note that~\eqref{eq:brd-factorial}
implies that $\ker\operatorname{Wor}(\Psi)=\ker\operatorname{Wor}(\Psi^{-1})$.
\begin{definition}\plink{Nichols}
The quotient $T(V)/\ker\operatorname{Wor}(\Psi)$ is called the Nichols-Woronowicz algebra~$\mathcal B(V,\Psi)$ of~$(V,\Psi)$. 
\end{definition}
The algebra $\mathcal B(V,\Psi)$ is thus a braided bialgebra, where the braiding $\Psi_{\mathcal B(V,\Psi)}$ on~$\mathcal B(V,\Psi)\tensor
\mathcal B(V,\Psi)$ is induced by~$\Psi_{T(V)}$. 
By construction, $\mathcal B(V,\Psi)$ is $\ZZ_{\ge 0}$-graded, $\mathcal B^r(V,\Psi)$ being the canonical image of~$V^{\tensor r}$. Since 
$V\cap \ker\operatorname{Wor}(\Psi)=0$, $V$ identifies with its canonical image in~$\mathcal B(V,\Psi)$ and can be shown to coincide 
with the space of primitive elements in~$\mathcal B(V,\Psi)$. 

\plink{br antipode}
The braided antipode~$S_\Psi$ on~$T(V)$ is defined by $S_\Psi|_{V^{\tensor n}}=(-1)^n\Psi_{\sigma_\circ}$ where 
$\sigma_\circ:(1,\dots,n)\mapsto (n,\dots,1)$ is the longest permutation in~$\mathbb S_n$. It satisfies the usual properties, namely
\begin{equation}\label{eq:brd-antipode}
m\circ (S_\Psi\tensor 1)\circ\ul\Delta=\varepsilon,\quad \ul\Delta\circ S_\Psi=(S_\Psi\tensor S_\Psi)\circ\Psi_{T(V)}\circ \ul\Delta,\quad 
S_\Psi\circ m=m\circ\Psi_{T(V)}\circ (S_\Psi\tensor S_\Psi)
\end{equation}
where $m=m_{T(V)}$ (see for example~\cite{Majid}*{\S9.4.6}).
By~\eqref{eq:brd-factorial}, $S_\Psi$ preserves $\ker\operatorname{Wor}(\Psi)$ hence factors through to a map  
$S_\Psi:\mathcal B(V,\Psi)\to\mathcal B(V,\Psi)$ satisfying~\eqref{eq:brd-antipode}. 

\subsection{Bar and star involutions}\label{subs:bar-and-*}
Let $\bar\cdot:\kk\to\kk$ be a field involution and fix an additive involutive map $\bar\cdot:V\to V$
satisfying $\overline{x v}=\overline{x}\cdot {\overline v}$, $v\in V$, $x\in \kk$ (we will call such a map anti-linear). There is a unique 
anti-linear algebra homomorphism 
$\tilde\cdot: T(V)\to T(V)$ whose restriction to $\kk$ and~$V$ coincides with the corresponding~$\bar\cdot$. 
We say that~$\Psi$ is {\em unitary} 
if $
\,\tilde\cdot\circ \Psi\circ \tilde\cdot=\Psi^{-1}$.
If~$\Psi$ is unitary then, by~\eqref{eq:brd-factorial}, $\widetilde{\ker\operatorname{Wor}(\Psi)}=\ker\operatorname{Wor}(\Psi)$, hence 
$\tilde\cdot$ factors through to an anti-linear algebra involution of~$\mathcal B(V,\Psi)$.
\begin{proposition}\label{prop:tilde-delta}
$\Psi_{T(V)}(\tilde\cdot\tensor\tilde\cdot)\circ \ul\Delta=\ul\Delta\circ \tilde\cdot$. Moreover, the same identity 
holds for $\mathcal B(V,\Psi)$.
\end{proposition}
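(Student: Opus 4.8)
The plan is to prove the identity on $T(V)$ first and then descend to $\mathcal B(V,\Psi)$. On $T(V)$ I would argue that the two maps $R:=\ul\Delta\circ\tilde\cdot$ and $L:=\Psi_{T(V)}\circ(\tilde\cdot\tensor\tilde\cdot)\circ\ul\Delta$ (both anti-linear maps $T(V)\to T(V)\tensor T(V)$) are in fact anti-linear homomorphisms of algebras into the braided tensor algebra $(T(V)\tensor T(V),\,m_{T(V)\tensor T(V)})$, and that they agree on the generating set $\kk\cup V$ of $T(V)$; as $T(V)$ is generated as an algebra by $\kk$ and $V$, this yields $L=R$. That $R$ is such a homomorphism is immediate, being the composite of the anti-linear algebra homomorphism $\tilde\cdot$ with the algebra homomorphism $\ul\Delta$ (for the braided bialgebra structure of $T(V)$).

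For $L$ I would factor it through the auxiliary multiplication $m':=(m\tensor m)\circ(\id\tensor\Psi_{T(V)}^{-1}\tensor\id)$ on $T(V)\tensor T(V)$, where $m$ is the multiplication of $T(V)$, via the three arrows $\ul\Delta$, $(\tilde\cdot\tensor\tilde\cdot)$ and $\Psi_{T(V)}$. Here $\ul\Delta$ is an algebra homomorphism $T(V)\to(T(V)\tensor T(V),\,m_{T(V)\tensor T(V)})$; then $(\tilde\cdot\tensor\tilde\cdot)$ is an anti-linear algebra isomorphism from $(T(V)\tensor T(V),\,m_{T(V)\tensor T(V)})$ onto $(T(V)\tensor T(V),\,m')$ — this uses that $\tilde\cdot$ is multiplicative on each factor together with the identity $(\tilde\cdot\tensor\tilde\cdot)\circ\Psi_{T(V)}=\Psi_{T(V)}^{-1}\circ(\tilde\cdot\tensor\tilde\cdot)$, itself a consequence of the unitarity $\tilde\cdot\circ\Psi\circ\tilde\cdot=\Psi^{-1}$ and of $\tilde\cdot$ being involutive; a short computation with $\Psi_{T(V)}(b\tensor c)=\sum(bc)'\tensor(bc)''$ then gives $(\tilde\cdot\tensor\tilde\cdot)\bigl(m_{T(V)\tensor T(V)}((a\tensor b)\tensor(c\tensor d))\bigr)=m'\bigl((\tilde a\tensor\tilde b)\tensor(\tilde c\tensor\tilde d)\bigr)$. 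The last arrow $\Psi_{T(V)}$ must be a linear algebra isomorphism from $(T(V)\tensor T(V),\,m')$ back onto $(T(V)\tensor T(V),\,m_{T(V)\tensor T(V)})$; this is the compatibility of the braiding with the braided tensor product of algebras, and is the step I expect to be the main obstacle. I would deduce it from the hexagon identities and the naturality of $\Psi$ (in particular that $\Psi$ commutes with $m$ and that $\Psi_{V^{\tensor r},V^{\tensor s}}$ factors into elementary braidings), reducing the required equality $\Psi_{T(V)}\circ m'=m_{T(V)\tensor T(V)}\circ(\Psi_{T(V)}\tensor\Psi_{T(V)})$ to the Yang--Baxter relation; alternatively this may be quoted from the theory of braided tensor products (cf.~\cite{Majid}). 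Granting it, $L$ is an anti-linear algebra homomorphism into $(T(V)\tensor T(V),\,m_{T(V)\tensor T(V)})$. Finally, on $\kk$ both $L$ and $R$ send $1$ to $1\tensor 1$, while for $v\in V$ one has $R(v)=\ul\Delta(\overline v)=\overline v\tensor 1+1\tensor\overline v$ and $L(v)=\Psi_{T(V)}\bigl(\overline v\tensor 1+1\tensor\overline v\bigr)=1\tensor\overline v+\overline v\tensor 1$, since $\Psi_{T(V)}$ carries $V\tensor\kk$ onto $\kk\tensor V$ (and back) through the unit constraints; hence $L$ and $R$ agree on generators and $L=R$ on $T(V)$.

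It remains to pass to $\mathcal B(V,\Psi)=T(V)/\ker\operatorname{Wor}(\Psi)$. Since $\ker\operatorname{Wor}(\Psi)$ is a bi-ideal stable under $\tilde\cdot$ (by unitarity), the maps $\tilde\cdot$, $\ul\Delta$ and $\Psi_{T(V)}$ all descend along the projection $T(V)\twoheadrightarrow\mathcal B(V,\Psi)$ to $\tilde\cdot$, $\ul\Delta$ and $\Psi_{\mathcal B(V,\Psi)}$ on $\mathcal B(V,\Psi)$, the latter being induced by $\Psi_{T(V)}$ by construction. Applying the projection to the identity $\Psi_{T(V)}\circ(\tilde\cdot\tensor\tilde\cdot)\circ\ul\Delta=\ul\Delta\circ\tilde\cdot$ on $T(V)$ therefore gives the same identity on $\mathcal B(V,\Psi)$, which is the second assertion.
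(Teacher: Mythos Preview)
Your proposal is correct and follows essentially the same route as the paper: the paper proves the identity by induction on the tensor degree, and the inductive step is precisely your verification that $L$ is multiplicative, which in both cases reduces to the braiding identity $\Psi\circ(m\tensor m)\circ(\id\tensor\Psi^{-1}\tensor\id)=(m\tensor m)\circ(\id\tensor\Psi\tensor\id)\circ(\Psi\tensor\Psi)$ (this is exactly the commutative diagram the paper draws). Your packaging as ``two anti-linear algebra homomorphisms agreeing on generators'' is just the induction rephrased, and the descent to $\mathcal B(V,\Psi)$ is identical.
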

\begin{proof}
Let~$u\in V^{\tensor n}$, $n\ge 0$.
We prove that $\Psi_{T(V)}(\tilde {\ul u}_{(1)}\tensor \tilde {\ul u}_{(2)})=\ul\Delta(\tilde u)$ by induction on~$n$. The identity is 
clear for~$u\in V$. Furthermore, take $u \in V^{\tensor r}$, $v\in V^{\tensor s}$.
Then 
\begin{multline*}
\ul\Delta(\widetilde{uv})=\ul\Delta(\tilde u)\ul\Delta(\tilde{v})=\Psi_{T(V)}(\tilde {\ul u}_{(1)}\tensor \tilde {\ul u}_{(2)})\Psi_{T(V)}(\tilde v_{(1)}\tensor
\tilde v_{(2)})\\
=(m_{T(V)}\tensor m_{T(V)})\circ (1\tensor \Psi_{T(V)}\tensor 1)(\Psi_{T(V)}\tensor \Psi_{T(V)})(\tilde {\ul u}_{(1)}\tensor \tilde {\ul u}_{(2)}\tensor \tilde v_{(1)}
\tensor \tilde v_{(2)})
\end{multline*}
On the other hand,
\begin{multline*}
\Psi_{T(V)}(\tilde\cdot\tensor\tilde\cdot)\ul\Delta(uv)=\Psi_{T(V)}(\tilde\cdot\tensor\tilde\cdot)(m_{T(V)}\tensor m_{T(V)})(1\tensor \Psi_{T(V)}\tensor 1)(
{\ul u}_{(1)}\tensor {\ul u}_{(2)}\tensor v_{(1)}\tensor v_{(2)})\\=\Psi_{T(V)}(m_{T(V)}\tensor m_{T(V)})(1\tensor \Psi_{T(V)}^{-1}\tensor 1)(
\tilde {\ul u}_{(1)}\tensor \tilde {\ul u}_{(2)}\tensor \tilde v_{(1)}\tensor \tilde v_{(2)}).
\end{multline*}
So, the first assertion follows from the commutativity of the diagram
$$
\xymatrix@C=25ex{U_1\tensor U_2\tensor U_3\tensor U_4
\ar[d]_{\Psi_{U_1\tensor U_2,U_3\tensor U_4}}\ar[r]^{\id_{U_1}\tensor \Psi_{U_2,
U_3}\tensor \id_{U_4}}&U_1\tensor U_3\tensor U_2\tensor U_4
\ar[d]^{\Psi_{U_1,U_3}\tensor \Psi_{U_2,U_4}}\\
U_3\tensor U_4\tensor U_1\tensor U_2&U_3\tensor U_1\tensor U_4\tensor U_2\ar[l]^{\id_{U_3}\tensor \Psi_{U_1,
U_4}\tensor \id_{U_2}}}
$$
where $U_i=V^{\tensor r_i}$, $r_1+r_2=r$, $r_3+r_4=s$. The second assertion is immediate.
\end{proof}
Let $\tau_n\in\End(V^{\tensor n})$ be the map satisfying $v_1\tensor \cdots\tensor v_n\mapsto v_n\tensor\cdots\tensor v_1$, $v_i\in V$.
We say that~$\Psi$ is {\em self-transposed} if $\Psi=
\tau_2\Psi\tau_2$. Define~${}^*\in\End T(V)$ by ${}^*|_{V^{\tensor n}}=\tau_n$. 
Then ${}^*$ is the unique anti-automorphism of~$T(V)$ whose restriction to~$\kk$ and~$V$ is the identity.
Since for a self-transposed $\Psi$ we have $\tau_n\Psi_{i,i+1}\tau_n=\Psi_{n-i,n-i+1}$, $1\le i\le n-1$, it follows that
\begin{equation}\label{eq:Psisigma-taun}
\tau_n\Psi_\sigma\tau_n=\Psi_{\sigma_\circ\sigma\sigma_\circ},\qquad \sigma\in \mathbb S_n. 
\end{equation}
This implies that 
$[n]_{\Psi}!\circ\tau_n=\tau_n\circ[n]_\Psi!$ hence ${}^*$ preserves $\ker\operatorname{Wor}(\Psi)$ and so 
factors through to an anti-automorphism of~$\mathcal B(V,\Psi)$.
\begin{lemma}\label{lem:braid-star}
Suppose that $\Psi$ is self-transposed. Then $\ul\Delta\circ{}^*={}^*\tensor {}^*\circ\ul\Delta^{op}$ on~$T(V)$, where 
$\ul\Delta^{op}(u)={\ul u}_{(2)}\tensor {\ul u}_{(1)}$ in Sweedler's notation, $u\in T(V)$. Moreover, the same identity holds on~$\mathcal B(V,\Psi)$.
\end{lemma}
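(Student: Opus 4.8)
The plan is to mimic exactly the proof of Proposition~\ref{prop:tilde-delta}, replacing the braiding $\Psi_{T(V)}$ and the anti-linear algebra homomorphism $\tilde\cdot$ by the anti-automorphism ${}^*$, and tracking how ${}^*$ interacts with the braided multiplication and comultiplication. The key input is \eqref{eq:Psisigma-taun}, which for a self-transposed $\Psi$ says $\tau_n\Psi_\sigma\tau_n=\Psi_{\sigma_\circ\sigma\sigma_\circ}$; in particular, for $r+s=n$ with $U=V^{\tensor r}$, $V'=V^{\tensor s}$, the braiding $\Psi_{U,V'}=\Psi_\sigma$ with $\sigma:(1,\dots,n)\mapsto(s+1,\dots,n,1,\dots,s)$ satisfies $\tau_n\Psi_{U,V'}\tau_n=\Psi_{\sigma_\circ\sigma\sigma_\circ}$, and one checks $\sigma_\circ\sigma\sigma_\circ$ is the permutation defining $\Psi_{V',U}$ up to the identification by $\tau$'s; concretely this yields the compatibility $(\tau_s\tensor\tau_r)\circ\Psi_{U,V'}=\Psi_{V',U}\circ(\tau_r\tensor\tau_s)$ after composing with the flip. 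This is the ${}^*$-analogue of the unitarity relation $\tilde\cdot\circ\Psi\circ\tilde\cdot=\Psi^{-1}$ used in Proposition~\ref{prop:tilde-delta}.

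First I would establish the identity on $T(V)$ by induction on the tensor degree $n$. For $u\in V$ both sides equal $u\tensor 1+1\tensor u$, so the base case is immediate. For the inductive step, write a general element of degree $n$ as a product $uv$ with $u\in V^{\tensor r}$, $v\in V^{\tensor s}$, $r,s<n$. Using that ${}^*$ is an anti-automorphism, $(uv)^*=v^*u^*$, and the braided multiplication on $T(V)\tensor T(V)$ is $m_{T(V)\tensor T(V)}=(m\tensor m)\circ(1\tensor\Psi_{T(V)}\tensor 1)$, I would expand $\ul\Delta(v^*u^*)=\ul\Delta(v^*)\ul\Delta(u^*)$, apply the induction hypothesis $\ul\Delta(v^*)=({}^*\tensor{}^*)\ul\Delta^{op}(v)$ and likewise for $u$, and then reassemble. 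On the other side, expand $({}^*\tensor{}^*)\ul\Delta^{op}(uv)$ starting from $\ul\Delta(uv)=\ul\Delta(u)\ul\Delta(v)$, take the opposite comultiplication, and apply ${}^*\tensor{}^*$. The two resulting expressions are intertwined by a commuting-square diagram of the same shape as the one in the proof of Proposition~\ref{prop:tilde-delta}, namely
$$
\xymatrix@C=22ex{U_1\tensor U_2\tensor U_3\tensor U_4
\ar[d]\ar[r]^{\id\tensor\Psi_{U_2,U_3}\tensor\id}&U_1\tensor U_3\tensor U_2\tensor U_4
\ar[d]\\
U_3\tensor U_4\tensor U_1\tensor U_2&U_3\tensor U_1\tensor U_4\tensor U_2\ar[l]}
$$
with $U_i=V^{\tensor r_i}$, $r_1+r_2=r$, $r_3+r_4=s$; the vertical maps are built from $\Psi_{U_i,U_j}$'s and the relation \eqref{eq:Psisigma-taun} converts the $\tau$-conjugated braidings on one side into the plain braidings needed on the other. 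The only real bookkeeping is to verify that the signs/permutations match; since ${}^*$ carries no sign (unlike $S_\Psi$, which has the factor $(-1)^n$), this is cleaner than the antipode computation in \eqref{eq:brd-antipode}.

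Finally, the descent to $\mathcal B(V,\Psi)=T(V)/\ker\operatorname{Wor}(\Psi)$ is immediate, exactly as in Proposition~\ref{prop:tilde-delta}: by \eqref{eq:Psisigma-taun} we have $[n]_\Psi!\circ\tau_n=\tau_n\circ[n]_\Psi!$, so ${}^*$ preserves $\ker\operatorname{Wor}(\Psi)$, hence factors through; and $\ul\Delta$ also descends by the defining property of $\mathcal B(V,\Psi)$ as a braided bialgebra, so the identity on $T(V)$ passes to the quotient.

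The main obstacle I anticipate is not conceptual but notational: correctly identifying which permutations appear when one opposite-comultiplies and then applies ${}^*\tensor{}^*$ versus when one applies ${}^*$ first and then comultiplies, i.e.\ making the commuting-square diagram genuinely commute with the braidings as labeled. The relation \eqref{eq:Psisigma-taun} together with \eqref{eq:brd-factorial-comm} (which records $[n]_\Psi!\Psi_{\sigma_\circ}=\Psi_{\sigma_\circ}[n]_\Psi!$) should supply precisely the intertwiners needed, but one must be careful that ``self-transposed'' is used, and only used, exactly where $\tau_n\Psi_{i,i+1}\tau_n=\Psi_{n-i,n-i+1}$ is invoked — this is the hypothesis that makes $\ul\Delta^{op}$ (rather than $\ul\Delta$) appear on the right-hand side, and getting that placement right is the crux.
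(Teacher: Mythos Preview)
Your plan is correct and follows the same inductive skeleton as the paper: verify the identity on $V$, then write a general element as a product $uv$, expand $\ul\Delta((uv)^*)=\ul\Delta(v^*)\ul\Delta(u^*)$, apply the induction hypothesis, and reassemble. The descent to $\mathcal B(V,\Psi)$ is handled exactly as you say.

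The only difference is that the paper's execution is more direct than the diagram-chase you anticipate. Since $\Psi$ is self-transposed, \eqref{eq:Psisigma-taun} gives not merely an intertwining relation but an honest commutation $\Psi_{T(V)}\circ{}^*={}^*\circ\Psi_{T(V)}$ on $T(V)\tensor T(V)$ (where $(a\tensor b)^*=b^*\tensor a^*$). This is stronger than the unitary case of Proposition~\ref{prop:tilde-delta}, where $\tilde\cdot$ conjugates $\Psi$ to $\Psi^{-1}$ and a genuine braid-relation diagram is required to reconcile the two sides. Here, once you write
\[
\ul\Delta((uv)^*)=(m\tensor m)\bigl(\ul v_{(2)}^*\tensor\Psi_{T(V)}(\ul v_{(1)}^*\tensor\ul u_{(2)}^*)\tensor\ul u_{(1)}^*\bigr),
\]
the commutation lets you pull ${}^*$ through $\Psi_{T(V)}$ directly, yielding $({}^*\tensor{}^*)$ applied to the reversed braided product, i.e.\ to $\ul\Delta^{op}(uv)$. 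No four-object diagram is needed. Your approach would work, but you can shortcut it by first isolating the single identity $\Psi_{T(V)}\circ{}^*={}^*\circ\Psi_{T(V)}$ from \eqref{eq:Psisigma-taun}; this is exactly where the ``op'' comes from, since ${}^*$ swaps the tensor factors while $\Psi_{T(V)}$ remains unchanged.
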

\begin{proof}
The assertion clearly holds for~$v\in V$. Let $u\in V^{\tensor r}$, $v\in V^{\tensor s}$. By the induction hypothesis, 
\begin{align*}
\ul\Delta((uv)^*)&=\ul\Delta(v^*)\ul\Delta(u^*)=(\ul v_{(2)}^*\tensor \ul v_{(1)}^*)({\ul u}_{(2)}^*\tensor {\ul u}_{(1)}^*)\\&=
(m_{T(V)}\tensor m_{T(V)})(\ul v_{(2)}^*\tensor \Psi_{T(V)}(v^*_{(1)}
\tensor {\ul u}_{(2)}^*)\tensor {\ul u}_{(1)}^*)\\&=(m_{T(V)}\tensor m_{T(V)})(\ul v_{(2)}^*\tensor \Psi_{T(V)}(({\ul u}_{(2)}
\tensor \ul v_{(1)})^*)\tensor {\ul u}_{(1)}^*).\\
\intertext{By~\eqref{eq:Psisigma-taun} we have $\Psi_{T(V)}\circ{}^*={}^*\circ\Psi_{T(V)}$, hence}
\ul\Delta((uv)^*)&=(m_{T(V)}\tensor m_{T(V)})(\ul v_{(2)}^*\tensor \Psi_{T(V)}({\ul u}_{(2)}
\tensor \ul v_{(1)})^*\tensor {\ul u}_{(1)}^*)\\&=({}^*\tensor{}^*)\circ((m_{T(V)}\tensor m_{T(V)})({\ul u}_{(1)}\tensor \Psi_{T(V)}({\ul u}_{(2)}
\tensor \ul v_{(1)})\tensor \ul v_{(2)}))={}^*\tensor {}^*\ul\Delta^{op}(uv).\qedhere
\end{align*}
\end{proof}
If $\Psi$ is both self-transposed and unitary we can define $\bar\cdot=\tilde\cdot\circ{}^*$, which is the unique anti-linear anti-involution 
of $T(V)$ and $\mathcal B(V,\Psi)$ whose restriction to~$V$ coincides with~$\bar\cdot$. Clearly, $\bar\cdot\circ\Psi\circ\bar\cdot=\Psi^{-1}$.
We also have
\begin{equation}\label{eq:bar-delta}
\ul\Delta\circ\bar\cdot=\Psi_{T(V)}\circ (\bar\cdot\tensor\bar\cdot)\circ\ul\Delta^{op}=(\bar\cdot\tensor\bar\cdot)\circ\Psi_{T(V)}^{-1}\circ \ul\Delta^{op}
\end{equation}

\subsection{Pairing and quasi-derivations}\label{subs:A-pair}\label{subs:quasi-der} Let~$V^\star$ be another  
$\kk$-vector space with a braiding $\Psi^\star:V^\star\tensor V^\star\to V^\star\tensor V^\star$. 
Suppose that there exists a pairing $\lra{\cdot}{\cdot}:V^\star\tensor V\to \kk$ and
let $\lra{\cdot}{\cdot}'$ be the natural pairing $T(V^\star)\tensor T(V)\to \kk$ defined by 
$$\lra{f_1\tensor\cdots\tensor f_r}{v_1\tensor\cdots\tensor v_r}'=\prod_{k=1}^r \lra{f_k}{v_k},\qquad f_k\in V^\star,\,v_k\in V,\, 1\le k\le r,
$$
while $\lra{(V^\star)^{\tensor r}}{V^{\tensor s}}=0$ if~$r\not=s$.
If $\Psi^\star$ is the adjoint of~$\Psi$ with respect to $\lra{\cdot}{\cdot}'|_{V^\star{}^{\tensor 2}\tensor V^{\tensor 2}}$
define $\lra{\cdot}{\cdot}:T(V^\star)\tensor T(V)\to \kk$ by 
$$
\lra{ f}{u}=\lra{f}{\operatorname{Wor}(\Psi)(u)}'=\lra{\operatorname{Wor}(\Psi^\star)(f)}{u}',\qquad f\in T(V^\star),\quad u\in T(V).
$$
The following Lemma is standard.
\begin{lemma}
Suppose that   
$\Psi^\star$ is the adjoint of~$\Psi$. Then
\begin{enumerate}[{\rm(a)}]
 \item\label{lem:nichols-par.a} for all $f,f'\in T(V^\star)$, $v,v'\in T(V)$ we have 
 $$
 \lra{ff'}{v}=\lra{f}{\ul v_{(1)}}\lra{f'}{\ul v_{(2)}},\qquad 
 \lra{f}{vv'}=\lra{\ul f_{(1)}}{v}\lra{\ul f_{(2)}}{v'},
 $$
 where $\ul\Delta(v)=\ul v_{(1)}\tensor\ul v_{(2)}$ and $\ul\Delta(f)=\ul f_{(1)}\tensor\ul f_{(2)}$ in Sweedler's notation.
 \item Let $\lra{\cdot}{\cdot}:V^\star\tensor V\to \kk$ be non-degenerate. Then
 $\ker\operatorname{Wor}(\Psi)=\{ v\in T(V)\,:\, \lra{T(V^\star)}{v}=0\}$, 
 and $\ker\operatorname{Wor}(\Psi^\star)=\{ f\in T(V^\star)\,:\, \lra{f}{T(V)}=0\}$.
In particular, $\lra{\cdot}{\cdot}$ induces a non-degenerate pairing $\lra{\cdot}{\cdot}:\mathcal B(V^\star,\Psi^\star)\tensor \mathcal B(V,\Psi)\to \kk$ satisfying~\eqref{lem:nichols-par.a}.
\end{enumerate}
\end{lemma}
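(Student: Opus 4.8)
The statement to be established is the Lemma in \S\ref{subs:A-pair} on the pairing between Nichols algebras, namely that when $\Psi^\star$ is the adjoint of $\Psi$, the induced pairing $\lra{\cdot}{\cdot}$ on $T(V^\star)\tensor T(V)$ is a Hopf pairing (part (a)) and, under a non-degeneracy hypothesis on $\lra{\cdot}{\cdot}|_{V^\star\tensor V}$, its radicals on either side are exactly the Woronowicz kernels (part (b)). The plan is to reduce everything to the level of tensor algebras, where the braided coproduct is essentially ``shuffle''-type, and then transport the statements through the Woronowicz symmetrizers.

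\emph{Part (a).} First I would verify the compatibility of $\lra{\cdot}{\cdot}'$ with $\ul\Delta$ on $T(V)$, $T(V^\star)$ by induction on the tensor degree. The base case $v\in V$, $f\in V^\star$ is immediate since $\ul\Delta(v)=v\tensor 1+1\tensor v$. For the inductive step one uses that the coproduct on $T(V)$ is a braided algebra homomorphism, $\ul\Delta(vv')=\ul\Delta(v)\ul\Delta(v')$ with multiplication twisted by $\Psi_{T(V)}$, together with the fact that $\Psi^\star$ is the $\lra{\cdot}{\cdot}'$-adjoint of $\Psi$; the twist by $\Psi_{T(V)}$ on one side is absorbed by the twist $\Psi^\star$ on the other. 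This is a routine but slightly delicate bookkeeping with Sweedler notation, and is essentially the statement that $T(V)$ and $T(V^\star)$ are dually paired braided bialgebras with respect to $\lra{\cdot}{\cdot}'$. Then, since $\operatorname{Wor}(\Psi)$, $\operatorname{Wor}(\Psi^\star)$ are graded bialgebra endomorphisms (their kernels are bi-ideals, by \S\ref{subs:A-Nichols}), and since $\lra{f}{u}=\lra{f}{\operatorname{Wor}(\Psi)(u)}'=\lra{\operatorname{Wor}(\Psi^\star)(f)}{u}'$ — the two expressions agreeing because $\Psi^\star$ is the adjoint of $\Psi$, hence $[n]_{\Psi^\star}!$ is the adjoint of $[n]_\Psi!$ — the Hopf-pairing identities for $\lra{\cdot}{\cdot}'$ push forward to $\lra{\cdot}{\cdot}$: one inserts a single symmetrizer on either the $T(V)$-factor or the $T(V^\star)$-factor and uses coassociativity plus the bialgebra property of the symmetrizer to distribute it across the coproduct.

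\emph{Part (b).} For the radical computation, fix a homogeneous component of degree $n$. Non-degeneracy of $\lra{\cdot}{\cdot}|_{V^\star\tensor V}$ makes $\lra{\cdot}{\cdot}'$ a perfect pairing between $(V^\star)^{\tensor n}$ and $V^{\tensor n}$. Under a perfect pairing, the radical of the composed form $\lra{f}{\operatorname{Wor}(\Psi)(u)}'$ on the $V^{\tensor n}$-side is exactly $\ker\operatorname{Wor}(\Psi)|_{V^{\tensor n}}$: indeed $u$ is in the radical iff $\lra{(V^\star)^{\tensor n}}{[n]_\Psi!(u)}'=0$ iff $[n]_\Psi!(u)=0$. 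Symmetrically, since $[n]_{\Psi^\star}!$ is the transpose of $[n]_\Psi!$, the radical on the $V^\star$-side is $\ker\operatorname{Wor}(\Psi^\star)$. Passing to the quotients $\mathcal B(V,\Psi)=T(V)/\ker\operatorname{Wor}(\Psi)$ and $\mathcal B(V^\star,\Psi^\star)$ then yields a well-defined non-degenerate pairing, and the Hopf-pairing identities survive the quotient because they already hold at the tensor-algebra level and the relations imposed are precisely the radicals.

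\textbf{Main obstacle.} The genuinely substantive point is the inductive verification in part (a) that $\Psi^\star$ being the adjoint of $\Psi$ on degree-two components propagates to make $\lra{\cdot}{\cdot}'$ a braided Hopf pairing in all degrees — that is, keeping track of the braiding twists $\Psi_{T(V)}$, $\Psi_{T(V^\star)}$ on the respective coproducts and multiplications and checking they are mutually adjoint. This is a standard fact about dually paired braided bialgebras (cf.\ the references in \S\S\ref{subs:A-Nichols}--\ref{subs:bar-and-*}), so rather than reproving it from scratch I would either cite it or give the one-line inductive step explicitly; everything else is formal transport through graded bialgebra maps and perfect pairings.
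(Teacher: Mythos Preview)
The paper gives no proof of this lemma (it is simply declared ``standard''), so there is no reference argument to match. Your treatment of part~(b) is fine. Part~(a), however, rests on two claims that are both false. First, $\lra{\cdot}{\cdot}'$ is \emph{not} a Hopf pairing between the braided bialgebras $T(V^\star)$ and $T(V)$: under $\lra{\cdot}{\cdot}'$, concatenation on one side is dual to \emph{deconcatenation} on the other, not to $\ul\Delta$. Already for $f,f'\in V^\star$ and $v_1v_2\in V^{\tensor 2}$ the $(1,1)$-part of $\ul\Delta(v_1v_2)$ is $v_1\tensor v_2+\Psi(v_1\tensor v_2)$, so $\lra{f}{\ul v_{(1)}}'\lra{f'}{\ul v_{(2)}}'$ picks up an extra term beyond $\lra{ff'}{v_1v_2}'=\lra{f}{v_1}\lra{f'}{v_2}$. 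Second, $\operatorname{Wor}(\Psi)$ is \emph{not} a bialgebra endomorphism --- that its kernel is a bi-ideal does not imply this, and in fact $\operatorname{Wor}$ respects neither the product nor $\ul\Delta$ --- so there is no way to ``distribute it across the coproduct'' as you propose.

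The actual mechanism is the braided binomial factorization of the symmetrizer. Writing $\ul\Delta_{r,s}$ for the $V^{\tensor r}\tensor V^{\tensor s}$-component of $\ul\Delta|_{V^{\tensor(r+s)}}$, the length-additive coset decomposition of $\mathbb S_{r+s}$ over $\mathbb S_r\times\mathbb S_s$ yields $[r+s]_\Psi!=([r]_\Psi!\tensor[s]_\Psi!)\circ\ul\Delta_{r,s}$. Then for $f\in(V^\star)^{\tensor r}$, $f'\in(V^\star)^{\tensor s}$, $v\in V^{\tensor(r+s)}$ one has $\lra{ff'}{v}=\lra{f\tensor f'}{[r+s]_\Psi!(v)}'=\lra{f\tensor f'}{([r]_\Psi!\tensor[s]_\Psi!)\ul\Delta_{r,s}(v)}'=\lra{f}{\ul v_{(1)}}\lra{f'}{\ul v_{(2)}}$, and the second identity follows symmetrically. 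This single factorization does the work that you tried to split between your two steps, and neither of those steps is valid on its own.
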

\begin{remark}
The same construction works if~$\Psi^\star$ is the adjoint of~$\Psi^{-1}$.
\end{remark}

\begin{lemma}\label{lem:par}
$\lra{f}{S_\Psi(u)}=\lra{S_{\Psi^\star}(f)}{u}$ for all $f\in T(V^\star)$, $u\in T(V)$ (respectively, 
$f\in \mathcal B(V^\star,\Psi^\star)$, $u\in \mathcal B(V,\Psi)$).
\end{lemma}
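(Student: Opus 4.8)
The statement to prove is Lemma~\ref{lem:par}: that the pairing $\lra{\cdot}{\cdot}$ between $T(V^\star)$ (resp.\ $\mathcal B(V^\star,\Psi^\star)$) and $T(V)$ (resp.\ $\mathcal B(V,\Psi)$) intertwines the braided antipodes $S_{\Psi^\star}$ and $S_\Psi$. Since the pairing on the Nichols algebras is induced from that on the tensor algebras and $S_\Psi$, $S_{\Psi^\star}$ descend through $\ker\operatorname{Wor}(\Psi)$, $\ker\operatorname{Wor}(\Psi^\star)$, it suffices to prove the identity at the level of tensor algebras. The natural approach is a graded/bidegree argument: both $\lra{f}{S_\Psi(u)}$ and $\lra{S_{\Psi^\star}(f)}{u}$ vanish unless $f\in (V^\star)^{\tensor n}$ and $u\in V^{\tensor n}$ for a common $n$, so I would fix $n$ and work inside $(V^\star)^{\tensor n}\tensor V^{\tensor n}$.

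\textbf{Key steps.} First I would recall the explicit form of the braided antipode in degree $n$: $S_\Psi|_{V^{\tensor n}}=(-1)^n\Psi_{\sigma_\circ}$ and likewise $S_{\Psi^\star}|_{(V^\star)^{\tensor n}}=(-1)^n\Psi^\star_{\sigma_\circ}$, where $\sigma_\circ$ is the longest permutation in $\mathbb S_n$. Next, using the definition $\lra{f}{u}=\lra{f}{\operatorname{Wor}(\Psi)(u)}'=\lra{\operatorname{Wor}(\Psi^\star)(f)}{u}'$ and $\operatorname{Wor}(\Psi)|_{V^{\tensor n}}=[n]_\Psi!$, I would compute
$$
\lra{f}{S_\Psi(u)}=(-1)^n\lra{\operatorname{Wor}(\Psi^\star)(f)}{\Psi_{\sigma_\circ}(u)}'
=(-1)^n\lra{f}{[n]_\Psi!\,\Psi_{\sigma_\circ}(u)}'.
$$
By~\eqref{eq:brd-factorial-comm} we have $[n]_\Psi!\,\Psi_{\sigma_\circ}=\Psi_{\sigma_\circ}[n]_\Psi!$, so this equals $(-1)^n\lra{f}{\Psi_{\sigma_\circ}[n]_\Psi!(u)}'$. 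On the other side,
$$
\lra{S_{\Psi^\star}(f)}{u}=(-1)^n\lra{\Psi^\star_{\sigma_\circ}(f)}{\operatorname{Wor}(\Psi)(u)}'=(-1)^n\lra{\Psi^\star_{\sigma_\circ}(f)}{[n]_\Psi!(u)}'.
$$
Now the point is that $\Psi^\star$ is the adjoint of $\Psi$ with respect to $\lra{\cdot}{\cdot}'$ on $(V^\star)^{\tensor 2}\tensor V^{\tensor 2}$, hence $\Psi^\star_\sigma$ is the adjoint of $\Psi_\sigma$ with respect to $\lra{\cdot}{\cdot}'$ on $(V^\star)^{\tensor n}\tensor V^{\tensor n}$ for every $\sigma\in\mathbb S_n$ (this follows because both sides are built from the elementary braidings in the same reduced word, and $\lra{\cdot}{\cdot}'$ is the ``tensor-product'' pairing, so adjunction is compatible with the $\Psi_{i,i+1}$ factor by factor; one should note the order of factors reverses, but $\sigma_\circ$ is an involution so no difficulty arises, and in general $\Psi^\star_\sigma$ is adjoint to $\Psi_{\sigma}$ precisely). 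Therefore $\lra{\Psi^\star_{\sigma_\circ}(f)}{[n]_\Psi!(u)}'=\lra{f}{\Psi_{\sigma_\circ}[n]_\Psi!(u)}'$, and the two expressions agree. Finally I would remark that the identity on $\mathcal B(V,\Psi)$ is immediate since everything is compatible with passing to the quotients by $\ker\operatorname{Wor}(\Psi)$ and $\ker\operatorname{Wor}(\Psi^\star)$.

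\textbf{Main obstacle.} The only subtle point is the bookkeeping of the claim ``$\Psi^\star_\sigma$ is adjoint to $\Psi_\sigma$'': one must be careful that the pairing $\lra{\cdot}{\cdot}'$ pairs the $k$-th tensor slot of $T(V^\star)$ with the $k$-th slot of $T(V)$ without any reversal, so that each elementary braiding $\Psi_{i,i+1}$ transposes adjointly to $\Psi^\star_{i,i+1}$ acting on the same pair of slots; writing $\sigma$ as a product of elementary transpositions and moving each one across the pairing (using that the adjoint of a composite is the composite of adjoints in the reverse order, combined with the fact that $\Psi_\sigma$ does not depend on the reduced word) yields the assertion. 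I expect this to be a short computation once the conventions of~\S\ref{subs:A-pair} are pinned down, and indeed the excerpt's own phrasing (``The following Lemma is standard'') signals that this is meant to be routine; the real content is just the commutation relation~\eqref{eq:brd-factorial-comm}.
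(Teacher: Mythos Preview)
Your proposal is correct and follows essentially the same route as the paper: reduce to degree $n$, use $S_\Psi|_{V^{\tensor n}}=(-1)^n\Psi_{\sigma_\circ}$, invoke~\eqref{eq:brd-factorial-comm} to commute $[n]_\Psi!$ past $\Psi_{\sigma_\circ}$, and then move $\Psi_{\sigma_\circ}$ across $\lra{\cdot}{\cdot}'$ by adjointness. One small correction: the adjoint of $\Psi_\sigma$ with respect to $\lra{\cdot}{\cdot}'$ is $\Psi^\star_{\sigma^{-1}}$, not $\Psi^\star_\sigma$ in general (as you yourself note, the order reverses); your argument is saved precisely because $\sigma_\circ^{-1}=\sigma_\circ$, which you already observe, so the final identity $\lra{\Psi^\star_{\sigma_\circ}(f)}{[n]_\Psi!(u)}'=\lra{f}{\Psi_{\sigma_\circ}[n]_\Psi!(u)}'$ holds as claimed.
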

\begin{proof}
We may assume, without loss of generality, that $f\in V^\star{}^{\tensor n}$, $u\in V^{\tensor n}$. Let~$\sigma_\circ$ be the longest 
permutation in~$\mathbb S_n$. Then 
$$
\lra{f}{S_\Psi(u)}=(-1)^n \lra{f}{[n]_{\Psi}!\Psi_{\sigma_\circ}(u)}'=(-1)^n\lra{f}{\Psi_{\sigma_\circ}[n]_\Psi!(u)}'=
(-1)^n \lra{\Psi^\star_{\sigma_\circ^{-1}}(f)}{u}=\lra{S_{\Psi^\star}(f)}{u},
$$
where we used~\eqref{eq:brd-factorial-comm}. The assertion for Nichols algebras is now immediate.
\end{proof}

\begin{proposition}
\begin{enumerate}[{\rm(a)}]
 \item\label{prop:par-tilde-bar.a} Suppose that $\Psi$ is unitary and $\lra{\cdot}{\cdot}$ satisfies $\overline{\lra{\overline f}{\overline v}}=-\lra{f}{v}$, $f\in V^\star$,
 $v\in V$. Then for all $f\in T(V^\star)$, $v\in T(V)$ we have $\overline{\lra{\tilde{f}}{\tilde{v}}}=\lra{f}{S^{-1}_\Psi(v)}$ and $S^{-1}_\Psi(\tilde v)=
 \widetilde{S_\Psi(v)}$.
 \item \label{prop:par-tilde-bar.b} Suppose that $\Psi$ is self-transposed. Then $\lra{f^*}{v}=\lra{f}{v^*}$ for all~$f\in T(V^\star)$, $v\in T(V)$.
 \item \label{prop:par-tilde-bar.c} Suppose that the assumptions of~\eqref{prop:par-tilde-bar.a} and~\eqref{prop:par-tilde-bar.b} hold. 
 Then $\overline{\lra{\overline{f}}{\overline{v}}}=\lra{f}{S^{-1}_\Psi(v)}$ and $S_\Psi^{-1}(\overline v)=\overline{S_\Psi(v)}$ for all $f\in T(V^\star)$, $v\in T(V)$.
 \item\label{prop:par-tilde-bar.d} Identities \eqref{prop:par-tilde-bar.a}--\eqref{prop:par-tilde-bar.c} hold in corresponding Nichols algebras.
\end{enumerate}
\label{prop:par-tilde-bar}
\end{proposition}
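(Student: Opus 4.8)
The plan is to pass to homogeneous components. Fix $n\ge 0$ and homogeneous $f\in (V^\star)^{\tensor n}$, $v\in V^{\tensor n}$; then $\tilde\cdot$ acts on $(V^\star)^{\tensor n}$ and $V^{\tensor n}$ as the componentwise anti-linear involution, ${}^*$ acts as the flip $\tau_n$, $S_\Psi$ acts as $(-1)^n\Psi_{\sigma_\circ}$ on $V^{\tensor n}$, and the induced pairing is $\lra{f}{v}=\lra{f}{[n]_\Psi!(v)}'$. Two elementary bookkeeping facts drive everything. First, since $\tilde\cdot$ is an involutive anti-linear algebra endomorphism and, by unitarity, $\tilde\cdot\circ\Psi\circ\tilde\cdot=\Psi^{-1}$ on $V^{\tensor 2}$, using a common reduced word for $\sigma$ one gets $\tilde\cdot\circ\Psi_\sigma\circ\tilde\cdot=(\Psi^{-1})_\sigma$ for every $\sigma\in\mathbb S_n$ ($\Psi^{-1}$ being again a braiding, so $(\Psi^{-1})_\sigma$ is reduced-word independent); hence $\tilde\cdot\circ[n]_\Psi!\circ\tilde\cdot=[n]_{\Psi^{-1}}!$ and $\tilde\cdot\circ\Psi_{\sigma_\circ}\circ\tilde\cdot=\Psi_{\sigma_\circ}^{-1}$, so in particular $\tilde\cdot\circ S_\Psi\circ\tilde\cdot=S_\Psi^{-1}$, which is the second identity of~(a) and also shows $S_\Psi$ is invertible on $T(V)$. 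Second, by~\eqref{eq:Psisigma-taun}, $\tau_n\Psi_\sigma\tau_n=\Psi_{\sigma_\circ\sigma\sigma_\circ}$, so $\tau_n$ commutes with $[n]_\Psi!$ and with $\Psi_{\sigma_\circ}$, whence ${}^*$ commutes with $S_\Psi$ (and with $S_\Psi^{-1}$). One also needs the behaviour of the unextended pairing $\lra{\cdot}{\cdot}'$: applying $\overline{\lra{\bar f}{\bar v}}=-\lra{f}{v}$ in each of the $n$ tensor slots gives $\overline{\lra{\tilde f}{\tilde v}'}=(-1)^n\lra{f}{v}'$ for homogeneous $f,v$ of degree $n$, and being a product of pairings $\lra{\cdot}{\cdot}'$ satisfies $\lra{\tau_n(f)}{v}'=\lra{f}{\tau_n(v)}'$.

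With these in hand, part~(a) is a short computation using $[n]_\Psi!=[n]_{\Psi^{-1}}!\,\Psi_{\sigma_\circ}$ from~\eqref{eq:brd-factorial}:
\begin{align*}
\overline{\lra{\tilde f}{\tilde v}}&=\overline{\lra{\tilde f}{[n]_\Psi!(\tilde v)}'}=(-1)^n\lra{f}{(\tilde\cdot\circ[n]_\Psi!\circ\tilde\cdot)(v)}'=(-1)^n\lra{f}{[n]_{\Psi^{-1}}!(v)}'\\
&=(-1)^n\lra{f}{[n]_\Psi!\,\Psi_{\sigma_\circ}^{-1}(v)}'=\lra{f}{[n]_\Psi!\,S_\Psi^{-1}(v)}'=\lra{f}{S_\Psi^{-1}(v)}.
\end{align*}
For part~(b), since $\tau_n$ commutes with $[n]_\Psi!$,
\begin{align*}
\lra{f^*}{v}=\lra{\tau_n(f)}{[n]_\Psi!(v)}'=\lra{f}{\tau_n[n]_\Psi!(v)}'=\lra{f}{[n]_\Psi!\,\tau_n(v)}'=\lra{f}{v^*}.
\end{align*}

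Part~(c) combines~(a) and~(b): writing $\bar\cdot=\tilde\cdot\circ{}^*$,
\[
\overline{\lra{\bar f}{\bar v}}=\overline{\lra{\tilde\cdot(f^*)}{\tilde\cdot(v^*)}}\overset{(a)}{=}\lra{f^*}{S_\Psi^{-1}(v^*)}\overset{(b)}{=}\lra{f}{(S_\Psi^{-1}(v^*))^*}=\lra{f}{S_\Psi^{-1}(v)},
\]
the last step because ${}^*$ commutes with $S_\Psi^{-1}$ and is an involution; similarly $S_\Psi^{-1}(\bar v)=S_\Psi^{-1}\tilde\cdot(v^*)=\tilde\cdot S_\Psi(v^*)=\tilde\cdot(S_\Psi(v))^*=\overline{S_\Psi(v)}$ by the second identity of~(a) and ${}^*S_\Psi=S_\Psi{}^*$. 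Finally, for part~(d): since $\tilde\cdot$, ${}^*$, $\bar\cdot$ and $S_\Psi$ all preserve $\ker\operatorname{Wor}(\Psi)$ (respectively $\ker\operatorname{Wor}(\Psi^\star)$), as recalled in \S\ref{subs:A-Nichols} and \S\ref{subs:bar-and-*}, and the pairing on $\mathcal B(V^\star,\Psi^\star)\tensor\mathcal B(V,\Psi)$ is induced by $\lra{\cdot}{\cdot}$ on these quotients, all four identities descend verbatim. The only point needing genuine care is the first bookkeeping observation — tracking whether $\Psi$ or $\Psi^{-1}$ and which power of $-1$ occurs at each step, and that $(\Psi^{-1})_\sigma$ is well defined; once that is settled the rest is mechanical.
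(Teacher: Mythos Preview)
Your proof is correct and takes a genuinely different route from the paper. The paper proves parts~(a) and~(b) by induction on the tensor degree, passing through the braided coproduct: for~(a) it invokes Proposition~\ref{prop:tilde-delta} (compatibility of $\tilde\cdot$ with $\ul\Delta$) and the antipode axiom~\eqref{eq:brd-antipode}, and for~(b) it uses Lemma~\ref{lem:braid-star} ($\ul\Delta\circ{}^*={}^*\tensor{}^*\circ\ul\Delta^{op}$). You instead stay entirely at the level of the Woronowicz symmetrizer on $V^{\tensor n}$, using only the factorization $[n]_\Psi!=[n]_{\Psi^{-1}}!\,\Psi_{\sigma_\circ}$ from~\eqref{eq:brd-factorial}, the conjugation $\tilde\cdot\,\Psi_\sigma\,\tilde\cdot=(\Psi^{-1})_\sigma$ (hence $\tilde\cdot\,[n]_\Psi!\,\tilde\cdot=[n]_{\Psi^{-1}}!$), and the commutation $\tau_n[n]_\Psi!=[n]_\Psi!\,\tau_n$ from~\eqref{eq:Psisigma-taun}. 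This is more elementary and self-contained: you never touch the coproduct, so you do not need Proposition~\ref{prop:tilde-delta} or Lemma~\ref{lem:braid-star} at all. The paper's inductive argument, by contrast, makes explicit how these identities interact with the bialgebra structure, which is conceptually informative but heavier machinery than the statement requires. For part~(c) both arguments are essentially the same, and your treatment of~(d) is adequate.
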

\begin{proof}
To prove~\eqref{prop:par-tilde-bar.a} we use induction on the degree in~$T(V)$. The induction base is given by the assumption.
Suppose that the identity is established for all $f\in V^\star{}^{\tensor r}$, $v\in V^{\tensor r}$, $r<n$. Note that the induction
hypothesis implies 
$$
\overline{ \lra{ \tilde f\tensor \tilde g}{\tilde u\tensor\tilde v}}=\lra{f\tensor g}{(S^{-1}_\Psi\tensor S^{-1}_\Psi)(u\tensor v)},
\quad f\in V^\star{}^{\tensor r},\,g\in V^\star{}^{\tensor s},\,u\in V^{\tensor r},\,v\in V^{\tensor s},\, 0<r+s\le n.
$$
Furthermore, $S_{\Psi}^{-1}=S_{\Psi^{-1}}$.
Hence for all $u\in V^{\tensor n}$, $f\in V^\star{}^{\tensor r}$, $g\in V^\star{}^{\tensor s}$ with $0<r+s=n$ we have 
\begin{multline*}
\overline{ \lra{\tilde{fg}}{\tilde u}}=\overline{ \lra{ \tilde f\tensor \tilde g}{\ul\Delta(\tilde u)}}=
\overline{\lra{\tilde f\tensor \tilde g}{ \Psi_{T(V)}(\tilde\cdot\tensor\tilde\cdot)\ul\Delta(u)}}=
\overline{\lra{\tilde f\tensor \tilde g}{ (\tilde\cdot\tensor\tilde\cdot)\Psi_{T(V)}^{-1}\ul\Delta(u)}}\\
=\lra{f\tensor g}{ (S^{-1}_\Psi\tensor S^{-1}_\Psi)\Psi_{T(V)}^{-1}\ul\Delta(u)}
=\lra{f\tensor g}{\ul\Delta(S_{\Psi^{-1}}(u))}=\lra{fg}{S_{\Psi}^{-1}(u)},
\end{multline*}
where we used~\eqref{eq:brd-antipode} and Proposition~\ref{prop:tilde-delta}. The identity $S_\Psi^{-1}(\tilde v)=\widetilde{S_\Psi(v)}$ is a direct 
consequence of the unitarity of~$\Psi$ and the definition of~$S_\Psi$.

To prove~\eqref{prop:par-tilde-bar.b}, we also use induction on the degree in~$T(V)$. The induction base is obvious. Suppose that the identity is established for all $f\in V^\star{}^{\tensor r}$, $v\in V^{\tensor r}$, $r<n$. Then for all 
$u\in V^{\tensor n}$, $f\in V^\star{}^{\tensor r}$, $g\in V^\star{}^{\tensor s}$ with $0<r+s=n$ we have
\begin{multline*}
\lra{(fg)^*}{u}=\lra{g^*f^*}{u}=\lra{g^*\tensor f^*}{\ul\Delta(u)}=\lra{g^*}{{\ul u}_{(1)}}\lra{f^*}{{\ul u}_{(2)}}=\lra{g}{{\ul u}_{(1)}^*}\lra{f}{{\ul u}_{(2)}^*}\\
=\lra{f\tensor g}{{\ul u}_{(2)}^*\tensor {\ul u}_{(1)}^*}=\lra{f\tensor g}{({}^*\tensor{}^*)\ul\Delta^{op}(u)}=\lra{f\tensor g}{\ul\Delta(u^*)}=\lra{fg}{u^*},
\end{multline*}
where we used Lemma~\ref{lem:braid-star}. 

To prove~\eqref{prop:par-tilde-bar.c} note that by~\eqref{prop:par-tilde-bar.a} and~\eqref{prop:par-tilde-bar.b} we have
$$
\overline{ \lra{\overline f}{\overline g}}=\lra{f^*}{S_\Psi^{-1}(g^*)}=\lra{f}{(S_\Psi^{-1}(g^*))^*}.
$$
Let $f\in V^\star{}^{\tensor n}$, $g\in V^{\tensor n}$. Then $S_\Psi^{-1}(g^*)^*=(-1)^n \tau_n\Psi^{-1}_{\sigma_\circ}\tau_n(v)=
(-1)^n \Psi^{-1}_{\sigma_\circ}(v)=S_\Psi^{-1}(v)$, where we used~\eqref{eq:Psisigma-taun}. Part~\eqref{prop:par-tilde-bar.d} is immediate.
\end{proof}

Suppose that for every~$n>0$, there exists an invertible $L_n\in\End(V^{\tensor n})$ such that $L_n^2=(-1)^n {S_\Psi\circ{}^*}$,
$L_n\circ\bar\cdot=\bar\cdot\circ L_n^{-1}$ and $L_n\circ{}^*={}^*\circ L_n$. Let 
$L\in\End(T(V))$ be the linear operator defined by $L|_{V^{\tensor n}}=L_n$ and define $(\cdot,\cdot):\mathcal B(V^\star,\Psi^\star)\tensor 
\mathcal B(V,\Psi)\to\kk$ by 
$$
(f,v)=\lra{f}{L^{-1}(v)}.
$$
\begin{lemma}\label{lem:good-form}
Suppose that~$\Psi$ is self-transposed and unitary. Then 
for all $f\in\mathcal B^r(V^\star,\Psi^\star)$, $v\in \mathcal B^s(V,\Psi)$ we have 
$$
\overline{(\overline{f},\tilde v)}=(-1)^r \delta_{r,s} (f,v).
$$
\end{lemma}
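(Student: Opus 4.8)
\textbf{Proof plan for Lemma~\ref{lem:good-form}.}
The plan is to reduce the claimed identity to the pairing property of Proposition~\ref{prop:par-tilde-bar}\eqref{prop:par-tilde-bar.c} together with the defining properties of the operator~$L$. First I would fix $f\in\mathcal B^r(V^\star,\Psi^\star)$ and $v\in\mathcal B^s(V,\Psi)$; since the pairing $\lra{\cdot}{\cdot}$ vanishes off the diagonal, both sides are zero unless $r=s$, so I may assume $r=s$ throughout and must prove $\overline{(\overline f,\tilde v)}=(-1)^r(f,v)$. Unravelling the definition, $(\overline f,\tilde v)=\lra{\overline f}{L^{-1}(\tilde v)}$, and since $L\circ\bar\cdot=\bar\cdot\circ L^{-1}$ we have $L^{-1}(\tilde v)=L^{-1}(\overline{v^*})=\widetilde{L(v^*)}=\overline{(L(v^*))^*}$, using also $L\circ{}^*={}^*\circ L$; more directly, $\bar\cdot=\tilde\cdot\circ{}^*$ and $L$ commutes with $\bar\cdot$ up to inversion, so $L^{-1}(\tilde v)=\overline{L(v)^*}$ after one application of ${}^*$ and one of $\tilde\cdot$. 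Thus $(\overline f,\tilde v)=\lra{\overline f}{\overline{L(v)^*}}$, which by Proposition~\ref{prop:par-tilde-bar}\eqref{prop:par-tilde-bar.c} (valid in the Nichols algebras by part~\eqref{prop:par-tilde-bar.d}) satisfies $\overline{\lra{\overline f}{\overline{L(v)^*}}}=\lra{f}{S_\Psi^{-1}(L(v)^*)}$.

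The next step is to identify $S_\Psi^{-1}(L(v)^*)$ with $L^{-1}(v)$ up to the sign $(-1)^r$. On $\mathcal B^r$ we have $L^2=(-1)^r S_\Psi\circ{}^*$, hence $(S_\Psi\circ{}^*)^{-1}=(-1)^r L^{-2}$, i.e. ${}^*\circ S_\Psi^{-1}=(-1)^r L^{-2}$ as operators on $\mathcal B^r$ (here I use that $S_\Psi$ and ${}^*$ each square to something central in degree; more carefully, on $\mathcal B^r$ both $S_\Psi$ and ${}^*$ are, up to the braiding operator $\Psi_{\sigma_\circ}$ and a sign, the same, so they commute and $S_\Psi\circ{}^*$ is invertible with inverse $(-1)^r L^{-2}$). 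Applying ${}^*$ on the left: $S_\Psi^{-1}={}^*\circ(-1)^rL^{-2}=(-1)^r L^{-2}\circ{}^*$ since $L$ commutes with ${}^*$. Therefore $S_\Psi^{-1}(L(v)^*)=(-1)^r L^{-2}({}^*(L(v)^*))=(-1)^r L^{-2}(L(v))=(-1)^r L^{-1}(v)$, using again $L\circ{}^*={}^*\circ L$ so that ${}^*(L(v)^*)=L(v^{**})=L(v)$. Substituting back, $\overline{(\overline f,\tilde v)}=\lra{f}{(-1)^r L^{-1}(v)}=(-1)^r(f,v)$, which is the assertion.

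I expect the main obstacle to be the careful bookkeeping in degree $r$ of how $S_\Psi$, ${}^*$, and $L$ interact — specifically verifying that $S_\Psi\circ{}^*$ is invertible with the stated inverse and that these three operators pairwise commute (or anticommute with controlled signs) on each graded piece $\mathcal B^r$. The key facts are $S_\Psi|_{V^{\tensor n}}=(-1)^n\Psi_{\sigma_\circ}$ and ${}^*|_{V^{\tensor n}}=\tau_n$, together with~\eqref{eq:Psisigma-taun} which gives $\tau_n\Psi_\sigma\tau_n=\Psi_{\sigma_\circ\sigma\sigma_\circ}$ and in particular $\tau_n\Psi_{\sigma_\circ}=\Psi_{\sigma_\circ}\tau_n$ (since $\sigma_\circ\sigma_\circ\sigma_\circ=\sigma_\circ$); hence $S_\Psi$ and ${}^*$ commute on each $V^{\tensor n}$, so $(S_\Psi\circ{}^*)^2=S_\Psi^2\circ{}^{**}=(\Psi_{\sigma_\circ})^2$, which need not be the identity in general, so the hypothesis $L_n^2=(-1)^n S_\Psi\circ{}^*$ is precisely what makes the argument close. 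Once that algebra of operators is pinned down, the rest is a two-line substitution into Proposition~\ref{prop:par-tilde-bar}. The only remaining subtlety is making sure all the identities, stated in the excerpt for $T(V)$, descend to the Nichols algebra quotient — but this is exactly the content of Proposition~\ref{prop:par-tilde-bar}\eqref{prop:par-tilde-bar.d} and the fact that $S_\Psi$, ${}^*$, $\bar\cdot$, and (by hypothesis) $L$ all preserve $\ker\operatorname{Wor}(\Psi)$.
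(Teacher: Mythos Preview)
Your proposal is correct and follows essentially the same route as the paper's proof. The paper compresses the argument into a single chain of equalities
\[
\overline{(\overline{f},\tilde v)}=\overline{\lra{\overline{f}}{L_r^{-1}(\overline v^*)}}=
(-1)^r\lra{f}{ L_r^{-2}((L_r(v^*))^*)}=(-1)^r\lra{f}{L_r^{-1}(v)}=(-1)^r (f,v),
\]
which is exactly your steps in order: write $\tilde v=\overline{v^*}$, push $L^{-1}$ through $\bar\cdot$ via $L\circ\bar\cdot=\bar\cdot\circ L^{-1}$, apply Proposition~\ref{prop:par-tilde-bar}\eqref{prop:par-tilde-bar.c} to strip the bars at the cost of $S_\Psi^{-1}$, then use $L_r^2=(-1)^r S_\Psi\circ{}^*$ and $L\circ{}^*={}^*\circ L$ to collapse $S_\Psi^{-1}(L_r(v^*))$ to $(-1)^r L_r^{-1}(v)$. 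Your extended discussion of why $S_\Psi$ and ${}^*$ commute (via \eqref{eq:Psisigma-taun} with $\sigma=\sigma_\circ$) is correct but not needed here, since the hypothesis $L_n^2=(-1)^n S_\Psi\circ{}^*$ together with $L\circ{}^*={}^*\circ L$ already gives $S_\Psi^{-1}=(-1)^r L_r^{-2}\circ{}^*$ directly, without having to invert $S_\Psi$ and ${}^*$ separately.
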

\begin{proof}
Let $f\in\mathcal B^r(V^\star,\Psi^\star)$, $v\in \mathcal B^r(V,\Psi)$, the case $r\not=s$ being trivial.
Then 
\begin{equation*}
\overline{(\overline{f},\tilde v)}=\overline{\lra{\overline{f}}{L_r^{-1}(\overline v^*)}}=
(-1)^r\lra{f}{ L_r^{-2}((L_r(v^*))^*)}=(-1)^r\lra{f}{L_r^{-1}(v)}=(-1)^r (f,v).\qedhere
\end{equation*}
\end{proof}

\plink{quasi-der-1}
Given $f\in \mathcal B(V^\star,\Psi^\star)$, $v\in \mathcal B(V,\Psi)$ define $\kk$-linear operators $\partial_f,\partial_f^{op}: \mathcal B(V,\Psi)\to \mathcal B(V
,\Psi)$,
$\partial_v,\partial_v^{op}:\mathcal B(V^\star,\Psi^\star)\to \mathcal B(V^\star,\Psi^\star)$ by 
\begin{equation}\label{eq:A-brd-qder-def}
\begin{alignedat}{3}
&\partial_{v}(g)=\ul g_{(1)}\lra{ \ul g_{(2)}}{v},&\qquad& \partial_{v}^{op}(g)=\lra{\ul g_{(1)}}{v}\ul g_{(2)},&\qquad& f,g\in \mathcal B(V^\star,\Psi^\star)\\
&\partial_{f}(u)={\ul u}_{(1)}\lra{f}{{\ul u}_{(2)}},&& \partial_{f}^{op}(u)=\lra{f}{{\ul u}_{(1)}}{\ul u}_{(2)},&\qquad& u,v\in \mathcal B(V,\Psi).
\end{alignedat}
\end{equation}
Then for all $f,g\in \mathcal B(V^\star,\Psi^\star)$, $u, v\in \mathcal B(V,\Psi)$
\begin{equation}\label{eq:der-form}
\begin{aligned}
\lra{f}{u v}&=\lra{\partial_{v}(f)}{u}=\lra{\partial_{u}^{op}(f)}{v}\\
\lra{fg}{u}&=\lra{f}{\partial_{g}(u)}=\lra{g}{\partial_{f}^{op}(u)}.
\end{aligned}
\end{equation}
The definitions immediately imply that if $f\in \mathcal B(V^\star,\Psi^\star)$, $v\in \mathcal B(V,\Psi)$ are homogeneous then $\partial_f$, $\partial_f^{op}$,
$\partial_v$, $\partial_v^{op}$ are homogeneous. Moreover, if say $f\in \mathcal B^r(V^\star,\Psi^\star)$, $v\in \mathcal B^k(V,\Psi)$ then 
$\partial_f(v),\partial_f^{op}(v)\in \sum_{k'=0}^{k-r} \mathcal B^{k'}(V,\Psi)$ and $\partial_v(f),\partial_v^{op}(f)\in \sum_{r'=0}^{r-k} \mathcal B^{r'}(V^\star,\Psi^\star)$.
Thus, $\partial_f$, $\partial_f^{op}$, $\partial_v$, $\partial_v^{op}$ are locally nilpotent.
\begin{lemma}
\begin{enumerate}[{\rm(a)}]
 \item\label{lem:brder-prop.a} The assignment $v\mapsto \partial_v$, $v\in\mathcal B(V,\Psi)$ (respectively, $f\mapsto \partial_f$,
 $f\in\mathcal B(V^\star,\Psi^\star)$) defines a homomorphism of algebras $\mathcal B(V,\Psi)\to 
 \End_\kk \mathcal B(V^\star,\Psi^\star)$ (respectively, $\mathcal B(V^\star,\Psi^\star)\to \End_\kk \mathcal B(V,\Psi)$).
  \item\label{lem:brder-prop.b} The assignment $v\mapsto \partial_v^{op}$, $v\in\mathcal B(V,\Psi)$ (respectively, $f\mapsto \partial_f^{op}$,
 $f\in\mathcal B(V^\star,\Psi^\star)$) defines an anti-homomorphism of algebras $\mathcal B(V,\Psi)\to 
 \End_\kk \mathcal B(V^\star,\Psi^\star)$ (respectively, $\mathcal B(V^\star,\Psi^\star)\to \End_\kk \mathcal B(V,\Psi)$).
 \item\label{lem:brder-prop.c} For all $u,v\in\mathcal B(V,\Psi)$, $f,g\in\mathcal B(V^\star,\Psi^\star)$ 
 we have $\partial_u\partial_v^{op}=\partial_v^{op}\partial_ u$ and $\partial_f \partial_g^{op}=\partial_g^{op}\partial_f$.
\end{enumerate}
\label{lem:brder-prop}
\end{lemma}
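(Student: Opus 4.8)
\textbf{Proof strategy for Lemma~\ref{lem:brder-prop}.}
The plan is to prove all three parts by pairing manipulations, using that the pairing $\lra{\cdot}{\cdot}:\mathcal B(V^\star,\Psi^\star)\tensor \mathcal B(V,\Psi)\to\kk$ is non-degenerate when restricted to matched homogeneous components. The main computational input is the defining property of the quasi-derivations~\eqref{eq:der-form} together with the Hopf-pairing identities of Lemma~\ref{lem:nichols-par.a}\eqref{lem:nichols-par.a} and coassociativity of~$\ul\Delta$.

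For part~\eqref{lem:brder-prop.a}, I would show that $\partial_{vv'}=\partial_v\circ\partial_{v'}$ for all $v,v'\in\mathcal B(V,\Psi)$ (the claim $\partial_1=\id$ being immediate from the counit axiom). For any $f,g\in\mathcal B(V^\star,\Psi^\star)$ the pairing $\lra{fg}{\partial_{v'}(\partial_v(g'))}$ can be unravelled: apply~\eqref{eq:der-form} twice to move $v$ then $v'$ across the product $fg$, obtaining $\lra{\partial_v(\partial_{v'}(fg))}{g'}$ up to reindexing, and then recognize by~\eqref{eq:der-form} that $\partial_{v'}\partial_v$ acting on a product is realized by $\partial_{vv'}$ via the first identity in Lemma~\ref{lem:nichols-par.a}\eqref{lem:nichols-par.a}. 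Since $g'$ is arbitrary and the pairing is non-degenerate on each graded piece, the operator identity follows. The analogous argument for $\partial_f$, $f\in\mathcal B(V^\star,\Psi^\star)$, is verbatim with the roles of the two Nichols algebras interchanged. Part~\eqref{lem:brder-prop.b} is identical but uses the ``op'' identities in~\eqref{eq:der-form} and the fact that $\partial^{op}$ is built from $\ul\Delta^{op}$, which reverses the order, so that $\partial^{op}_{vv'}=\partial^{op}_{v'}\circ\partial^{op}_v$.

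For part~\eqref{lem:brder-prop.c}, the commutativity $\partial_u\partial_v^{op}=\partial_v^{op}\partial_u$ follows from coassociativity: expanding $\partial_u\partial_v^{op}(g)$ using $(\ul\Delta\tensor 1)\ul\Delta=(1\tensor\ul\Delta)\ul\Delta$ shows both composites equal $\sum \lra{\ul g_{(1)}}{v}\,\ul g_{(2)}\,\lra{\ul g_{(3)}}{u}$, where the pairing in the first factor acts on the leftmost leg and in the last on the rightmost, and these two contractions are on disjoint tensor legs hence commute freely. The same for $\partial_f\partial_g^{op}$. No obstacle is anticipated here — the only thing to be careful about is bookkeeping of which Sweedler leg each contraction hits, and the fact that contractions on disjoint legs genuinely commute requires only the associativity of~$\ul\Delta$, not any property of the braiding. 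Thus the ``hard'' part, if any, is merely organizing the triple-coproduct notation cleanly; all steps are formal consequences of the axioms already recorded in this subsection.
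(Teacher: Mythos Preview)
Your overall strategy for part~\eqref{lem:brder-prop.c} is exactly what the paper does: expand in triple Sweedler legs and observe that the two contractions hit disjoint legs, so coassociativity yields the commutation immediately.

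For parts~\eqref{lem:brder-prop.a} and~\eqref{lem:brder-prop.b}, however, your route diverges from the paper's and is more roundabout. You propose pairing against test elements and invoking non-degeneracy to deduce the operator identity. The paper instead computes directly from the definition \eqref{eq:A-brd-qder-def} and coassociativity: for example,
\[
\partial_f\partial_g(u)=\lra{g}{\ul u_{(2)}}\partial_f(\ul u_{(1)})
=\lra{g}{\ul u_{(3)}}\lra{f}{\ul u_{(2)}}\ul u_{(1)}
=\lra{fg}{\ul u_{(2)}}\ul u_{(1)}=\partial_{fg}(u),
\]
and the remaining identities follow by the same one-line expansion. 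This is both shorter and logically cleaner, since it does not use non-degeneracy of the pairing at all (so the lemma holds even when non-degeneracy fails). Your approach would work in the non-degenerate case, but note a slip in your write-up: the expression ``$\lra{fg}{\partial_{v'}(\partial_v(g'))}$'' does not type-check, as both arguments lie in $\mathcal B(V^\star,\Psi^\star)$; you presumably meant to pair $\partial_v\partial_{v'}(g)$ against a test element $u\in\mathcal B(V,\Psi)$ and use $\lra{\partial_v(f)}{u}=\lra{f}{uv}$ from~\eqref{eq:der-form}.
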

\begin{proof}
We have for all $u\in\mathcal B(V,\Psi)$,
$f,g\in\mathcal B(V^\star,\Psi^\star)$
\begin{align*}
\partial_{f}\partial_g(u)&=\lra{g}{\ul u_{(2)}}\partial_f(\ul u_{(1)})=\lra{g}{\ul u_{(3)}}\lra{f}{\ul u_{(2)}}\ul u_{(1)}
=\lra{fg}{\ul u_{(2)}}\ul u_{(1)}=\partial_{fg}(u),\\
\partial_{f}^{op}\partial_g^{op}(u)&=\lra{g}{\ul u_{(1)}}\partial_f^{op}(\ul u_{(2)})=\lra{g}{\ul u_{(1)}}\lra{f}{\ul u_{(2)}}\ul u_{(3)}
=\lra{gf}{\ul u_{(1)}}\ul u_{(2)}=\partial_{gf}^{op}(u),
\\
\intertext{and}
\partial_f\partial_g^{op}(u)&=\partial_f(\ul u_{(2)})\lra{g}{\ul u_{(1)}}=\lra{f}{\ul u_{(3)}}\ul u_{(2)}\lra{g}{\ul u_{(1)}}=
\lra{f}{\ul u_{(2)}}\partial_g^{op}(\ul u_{(1)})=\partial_g^{op}\partial_f(u).
\end{align*}
The corresponding statements for operators $\partial_v$, $v\in\mathcal B(V,\Psi)$ are checked similarly.
\end{proof}

\subsection{Double smash products}
Let $H$ be a bialgebra with the comultiplication~$\Delta_H$ and the counit~$\varepsilon_H$. Denote by $H^{cop}$ the bialgebra~$H$ with 
the opposite comultiplication. Suppose that $C$ is an $H^{cop}\tensor H$-module algebra. In other words, 
we have two commuting left actions~$\lact$ and $\,\tilde\lact\,$ of~$H$ on~$C$
satisfying 
$$
h\lact (cc')=(h_{(1)}\lact c)(h_{(2)}\lact c'),\quad h\,\tilde\lact\, (cc')=(h_{(2)}\,\tilde\lact\, c)(h_{(1)}
\,\tilde\lact\, c'),\quad h\in H,\, c,c'\in C,
$$
where $\Delta_H(h)=h_{(1)}\tensor h_{(2)}$.
\plink{double-smash}
Define $\mathcal D_{H,H^{cop}}(C)$ as $C\tensor H$ with the product 
$$
(c\tensor 1)\cdot (1\tensor h)=c\tensor h,\qquad 
(h\tensor 1)\cdot (1\tensor c)=(h_{(1)}\lact h_{(3)}\,\tilde\lact\, c)h_{(2)}.
$$
\begin{proposition}\label{prop:gen-double}
$\mathcal D_{H,H^{cop}}(C)$ is an associative algebra. Moreover, $H$ and~$C$ identify with subalgebras of~$\mathcal D_{H,H^{cop}}(C)$.
\end{proposition}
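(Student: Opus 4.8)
\textbf{Proof proposal for Proposition~\ref{prop:gen-double}.}

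The plan is to verify associativity of $\mathcal D_{H,H^{cop}}(C)$ directly on the generating subspaces $C\tensor 1$ and $1\tensor H$, and then identify $H$ and $C$ as subalgebras by inspecting the structure maps. First I would record the consequences of the module-algebra axioms: since $\lact$ and $\,\tilde\lact\,$ commute and both make $C$ into an $H$- (resp. $H^{cop}$-) module algebra, for any $h\in H$, $c\in C$ we have $h_{(1)}\lact h_{(3)}\,\tilde\lact\, c$ well-defined (using coassociativity to write $\Delta_H^{(2)}(h)=h_{(1)}\tensor h_{(2)}\tensor h_{(3)}$), and I would note the counit identities $\varepsilon_H(h_{(1)})h_{(2)}=h=h_{(1)}\varepsilon_H(h_{(2)})$ together with $1_H\lact c=c=1_H\,\tilde\lact\, c$, which already show $(1\tensor 1)$ is a two-sided unit and that $C\ni c\mapsto c\tensor 1$ and $H\ni h\mapsto 1\tensor h$ are unital algebra maps onto their images (the product rules $(c\tensor 1)(c'\tensor 1)=cc'\tensor 1$ and $(1\tensor h)(1\tensor h')=1\tensor hh'$ hold by definition of the smash-type product, once associativity is in hand).

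Next I would establish the single nontrivial straightening relation, namely the formula for $(1\tensor h)\cdot(c\tensor 1)=(h_{(1)}\lact h_{(3)}\,\tilde\lact\, c)\tensor h_{(2)}$, as the defining move, and then check the three associativity constraints that are not immediate: $((1\tensor h)(1\tensor h'))(c\tensor 1)=(1\tensor h)((1\tensor h')(c\tensor 1))$, $((1\tensor h)(c\tensor 1))(c'\tensor 1)=(1\tensor h)((c\tensor 1)(c'\tensor 1))$, and the mixed one with an $h$ on the right of everything. For the first, I would expand both sides using coassociativity of $\Delta_H$ (the $H^{cop}$ comultiplication only reverses the $\,\tilde\lact\,$-leg, so one must be careful about the order of the legs feeding $\lact$ versus $\,\tilde\lact\,$) and use that $\lact$ and $\,\tilde\lact\,$ are each algebra actions on $C$ — this is exactly where the "module algebra" hypothesis for the single combined action of $H^{cop}\tensor H$ is needed. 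For the second, the fact that $h_{(1)}\lact(-)$ and $h_{(1)}\,\tilde\lact\,(-)$ act by (anti)algebra endomorphisms of $C$ makes both sides equal after applying $\Delta_H$ once more. The mixed case follows by combining the two.

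The main obstacle I anticipate is purely bookkeeping: keeping the Sweedler legs straight when one action is a module-algebra action for $H$ and the other for $H^{cop}$, so that on the right-hand factor $C$ the two actions interleave with opposite coproduct orders. This is the usual subtlety in the Heisenberg/Drinfeld double construction, and the cleanest way to control it is to phrase $C$ as a genuine $(H^{cop}\tensor H)$-module algebra from the start (as the statement does) and expand $\Delta_{H^{cop}\tensor H}$ coherently, rather than juggling $\lact$ and $\,\tilde\lact\,$ separately. Once the straightening relation is shown to be consistent — i.e. compatible with the coassociativity of $H$ — associativity on all of $C\tensor H$ follows because every element is a sum of products of elements of $C\tensor 1$ and $1\tensor H$, and associativity of a product of generators reduces to the three checked cases by induction on word length. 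Finally, injectivity of $C\hookrightarrow\mathcal D_{H,H^{cop}}(C)$ and $H\hookrightarrow\mathcal D_{H,H^{cop}}(C)$ is clear since $\mathcal D_{H,H^{cop}}(C)=C\tensor H$ as a vector space and the two maps are $c\mapsto c\tensor 1$, $h\mapsto 1\tensor h$, which are visibly injective and, by the computations above, algebra homomorphisms.
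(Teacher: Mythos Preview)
Your proposal is correct and follows essentially the same approach as the paper: verify associativity by checking the straightening relation $(1\tensor h)\cdot(c\tensor 1)=(h_{(1)}\lact h_{(3)}\,\tilde\lact\, c)\tensor h_{(2)}$ against products in $C$ and in $H$. The paper is simply terser: it singles out the case $h\cdot(cc')=(h\cdot c)\cdot c'$ as the only nontrivial one and gives the four-line Sweedler computation
\[
(h\cdot c)\cdot c'=(h_{(1)}\lact h_{(5)}\,\tilde\lact\, c)(h_{(2)}\lact h_{(4)}\,\tilde\lact\, c')\cdot h_{(3)}
=(h_{(1)}\lact h_{(3)}\,\tilde\lact\,(cc'))\cdot h_{(2)}=h\cdot(cc'),
\]
using first the $H$-module-algebra axiom for $\lact$ and then the $H^{cop}$-module-algebra axiom for $\,\tilde\lact\,$; your case~(2) is exactly this, and your cases~(1) and~(3) are the ones the paper dismisses as trivial (they amount to $\lact,\,\tilde\lact\,$ being actions and to $(c\tensor 1)(1\tensor h)=c\tensor h$ being the definition).
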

\begin{proof}
The only non-trivial identity to check is $h\cdot (cc')=(h\cdot c)\cdot c'$ for all $h\in H$, $c,c'\in C$. We have 
\begin{align*}
(h\cdot c)\cdot c'&=(h_{(1)}\lact h_{(3)}\,\tilde\lact\, c)\cdot (h_{(2)}\cdot c')
=(h_{(1)}\lact h_{(5)}\,\tilde\lact\, c)\cdot (h_{(2)}\lact h_{(4)}\,\tilde\lact\, c')\cdot h_{(3)}\\
&=(h_{(1)}\lact ((h_{(4)}\,\tilde\lact\, c)\cdot (h_{(3)}\,\tilde\lact\, c')))\cdot h_{(2)}
=(h_{(1)}\lact h_{(3)}\,\tilde\lact\, (cc'))\cdot h_{(2)}=h\cdot (c\cdot c').\qedhere
\end{align*}
\end{proof}
\begin{remark}\label{rem:triv-act-semidirect}
Note that if $\lact$ (respectively, $\,\tilde\lact\,$) is trivial, that is $h\lact c=
\varepsilon_H(h)c$ (respectively, $h\,\tilde\lact\, c=\varepsilon_H(h)c$), then  $\mathcal D_{H,H^{cop}}(C)=C\rtimes H$ 
(respectively, $C\rtimes H^{cop}$).
\end{remark}

Suppose now that $C$ is a bialgebra and that $\Delta_C$, $\varepsilon_C$ are homomorphisms of $H^{cop}\tensor H$-modules, where $H^{cop}\tensor H$ acts naturally on
$C\tensor C$ and~$\kk$. Thus, $\Delta_C(h\lact h'\,\tilde\lact\, c)=(h'\,\tilde\lact\, c_{(1)})\tensor (h\lact c_{(2)})$ and
$\varepsilon_C(h\lact h'\,\tilde\lact\, c)=\varepsilon_C(c)\varepsilon_H(h)\varepsilon_H(h')$
for all $c\in C$, $h,h'\in H$.
\begin{proposition}
Suppose that the actions $\lact$, $\,\tilde\lact\,$ satisfy 
\begin{equation}\label{eq:comult-compat}
h_{(2)}\lact c_{(1)}\tensor h_{(1)}\,\tilde\lact\, c_{(2)}=\varepsilon_H(h)\Delta(c),\qquad c\in C,\,h\in H.
\end{equation}
Then $\mathcal D_{H,H^{cop}}(C)$ is a bialgebra with the comultiplication and the counit defined by $\Delta(c\cdot h)=\Delta_C(c)\cdot\Delta_H^{op}(h)$
and $\varepsilon(c\cdot h)=\varepsilon_C(c)\varepsilon_H(h)$, 
$c\in C$, $h\in H$, and $C$, $H^{cop}$ identify with its sub-bialgebras. If both~$C$ and~$H$ are Hopf algebras and 
\begin{equation}\label{eq:antipode-compat}
S_C(h\lact h'\,\tilde\lact\, c)=S_H^{-2}(h')\lact h\,\tilde\lact\, S_C(c),\qquad c\in C,\,h,h'\in H
\end{equation}
then $\mathcal D_{H,H^{cop}}(C)$ is a Hopf algebra with the antipode defined by $S(c\cdot h)=S_H^{-1}(h)\cdot S_C(c)$ and 
$C$, $H^{cop}$ identify with its Hopf subalgebras.
\end{proposition}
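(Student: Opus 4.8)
The plan is to verify directly that the stated comultiplication $\Delta$ and counit $\varepsilon$ turn $\mathcal D_{H,H^{cop}}(C)$ into a bialgebra, using the universal property of $\mathcal D_{H,H^{cop}}(C)$ as an algebra generated by the subalgebras $C$ and $H$ subject to the single cross relation $(h\tensor 1)\cdot(1\tensor c)=(h_{(1)}\lact h_{(3)}\,\tilde\lact\, c)h_{(2)}$ established in Proposition~\ref{prop:gen-double}. Since $\Delta_C$ and $\Delta_H^{op}$ are already algebra homomorphisms on the subalgebras $C$ and $H^{cop}$ respectively, and $\mathcal D\tensor\mathcal D$ is an associative algebra, to show that $\Delta$ extends to an algebra homomorphism $\mathcal D\to\mathcal D\tensor\mathcal D$ it suffices to check that the images $\Delta_C(c)$ and $\Delta_H^{op}(h)$ satisfy the same cross relation in $\mathcal D\tensor\mathcal D$; the counit $\varepsilon$ is handled the same way and is essentially immediate from $\varepsilon_C,\varepsilon_H$ being homomorphisms. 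Coassociativity and counitality of $\Delta$ then follow from those of $\Delta_C$ and $\Delta_H$ since they hold on generators.

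First I would record what the hypotheses give us: $C$ is a bialgebra in the category of $H^{cop}\tensor H$-modules in the sense that $\Delta_C(h\lact h'\,\tilde\lact\,c)=(h'\,\tilde\lact\,c_{(1)})\tensor(h\lact c_{(2)})$ and $\varepsilon_C(h\lact h'\,\tilde\lact\,c)=\varepsilon_C(c)\varepsilon_H(h)\varepsilon_H(h')$, and moreover the compatibility~\eqref{eq:comult-compat}, namely $h_{(2)}\lact c_{(1)}\tensor h_{(1)}\,\tilde\lact\,c_{(2)}=\varepsilon_H(h)\Delta_C(c)$. Then I would compute, for $h\in H$ and $c\in C$, the product $\Delta_H^{op}(h)\cdot\Delta_C(c)=(h_{(2)}\tensor h_{(1)})\cdot(c_{(1)}\tensor c_{(2)})=(h_{(2)}\cdot c_{(1)})\tensor(h_{(1)}\cdot c_{(2)})$ inside $\mathcal D\tensor\mathcal D$, expand each factor via the cross relation, and then reorganize using~\eqref{eq:comult-compat} together with the module-bialgebra axiom for $\Delta_C$. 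The bookkeeping is a Sweedler-notation manipulation in which the indices of $h$ split into enough legs that~\eqref{eq:comult-compat} can be applied to collapse one $\lact\tensor\,\tilde\lact\,$ pair to $\varepsilon_H$, after which what remains should rearrange into $\Delta_C(h_{(1)}\lact h_{(3)}\,\tilde\lact\,c)\cdot\Delta_H^{op}(h_{(2)})$, i.e.\ the image under $\Delta$ of the right-hand side of the cross relation. This is the technical heart of the bialgebra part; I expect the main obstacle to be arranging the Sweedler legs so that~\eqref{eq:comult-compat} fires at the right moment — the counit axiom and the $H^{cop}$-comodule twist must be tracked with care.

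For the Hopf part, once $\mathcal D$ is a bialgebra I would define $S(c\cdot h)=S_H^{-1}(h)\cdot S_C(c)$ and check it is a well-defined anti-homomorphism respecting the cross relation: applying the would-be antipode to both sides of $(h\tensor1)\cdot(1\tensor c)=(h_{(1)}\lact h_{(3)}\,\tilde\lact\,c)h_{(2)}$ and using~\eqref{eq:antipode-compat} (which says $S_C$ intertwines the $H^{cop}\tensor H$-action with its $S_H^{-2}$-twist) should give consistency; this is again a Sweedler computation, lighter than the one for $\Delta$. Then the antipode identity $m\circ(S\tensor\id)\circ\Delta=\eta\circ\varepsilon$ (and its mirror) need only be verified on the generating subalgebras $C$ and $H^{cop}$, where it reduces to the antipode identities for $S_C$ and $S_H$ respectively, since both sides of the antipode axiom are $\kk$-linear and multiplicative in the appropriate sense once $S$ is a bialgebra anti-endomorphism. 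Finally, the assertions that $C$ and $H^{cop}$ identify with sub-(bi/Hopf)algebras are immediate from the construction, as the inclusions $c\mapsto c\tensor1$ and $h\mapsto1\tensor h$ are algebra maps by Proposition~\ref{prop:gen-double} and intertwine the comultiplications and antipodes by the very definitions of $\Delta$, $\varepsilon$, $S$ on $\mathcal D$.
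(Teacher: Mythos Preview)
Your approach is correct and is essentially the same as the paper's: both reduce to verifying the single identity $\Delta_H^{op}(h)\cdot\Delta_C(c)=\Delta_C(h_{(1)}\lact h_{(3)}\,\tilde\lact\,c)\cdot\Delta_H^{op}(h_{(2)})$ via the module--bialgebra axiom for $\Delta_C$ together with~\eqref{eq:comult-compat}, and then the analogous compatibility $S(h\cdot c)=S_C(c)\cdot S_H^{-1}(h)$ via~\eqref{eq:antipode-compat}. The paper phrases this as checking that the map $\Delta$ defined on the vector space $C\tensor H$ is an algebra homomorphism, while you phrase it as checking that the cross relation is preserved by $\Delta_C\tensor\Delta_H^{op}$, but these are the same computation; your remark that the antipode axiom propagates from generators because the set of elements satisfying it is a subalgebra (when $S$ is an anti-algebra map) is a detail the paper leaves implicit.
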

\begin{proof}
We need to check that $\Delta(h\cdot c)=\Delta_H^{op}(h)\cdot \Delta_C(c)$ for all $c\in C$, $h\in H$.
Indeed
\begin{align*}
\Delta(h\cdot c)&=\Delta( (h_{(1)}\lact h_{(3)}\,\tilde\lact\, c)\cdot h_{(2)})=
\Delta_C(h_{(1)}\lact h_{(4)}\,\tilde\lact\, c)\cdot (h_{(3)}\tensor h_{(2)}) \\
&=(h_{(4)}\,\tilde\lact\, c_{(1)}\tensor h_{(1)}\lact c_{(2)})\cdot (h_{(3)}\tensor h_{(2)})=
\varepsilon(h_{(3)})(h_{(5)}\,\tilde\lact\,  c_{(1)}\tensor h_{(1)}\lact c_{(2)})\cdot (h_{(4)}\tensor h_{(2)})\\
&=(h_{(4)}\lact h_{(6)}\,\tilde\lact\, c_{(1)})\cdot h_{(5)}\tensor (h_{(1)}\lact h_{(3)}\,\tilde\lact\, c_{(2)})\cdot h_{(2)}\\
&=h_{(2)}\cdot c_{(1)}\tensor h_{(1)}\cdot c_{(2)}=\Delta^{op}_H(h)\cdot \Delta_C(c).
\end{align*}
The property of~$\varepsilon$ is obvious.
For the antipode, we have 
\begin{align*}
S(h\cdot c)&=S_H^{-1}(h_{(2)})\cdot S_C(h_{(1)}\lact h_{(3)}\,\tilde\lact\, c)=S_H^{-1}(h_{(2)})\cdot 
S_H^{-2}(h_{(3)})\lact h_{(1)}\,\tilde\lact\, S_C(c)\\
&=S_H^{-1}(h_{(4)})S_H^{-2}(h_{(5)})\lact S_H^{-1}(h_{(2)})h_{(1)}\,\tilde\lact\, S_C(c)\cdot S_H^{-1}(h_{(3)})\\
&=S_C(c)\cdot S_H^{-1}(h).\qedhere
\end{align*}
\end{proof}
Denote $H^{\boldsymbol{op}}$ the opposite algebra and coalgebra of~$H$. 
Note that we can endow $H^{op}\tensor C^{op}$ with an associative algebra structure via 
$$
c\cdot h=h_{(2)}\cdot (h_{(1)}\lact h_{(3)}\,\tilde\lact\, c).
$$
Denote the resulting algebra $\mathcal D_{H^{\boldsymbol{op}},H^{op}}(C^{op})$.
The following proposition is immediate.
\begin{proposition}
The map $\tau:C\tensor H\to H\tensor C$, $c\tensor h\mapsto h\tensor c$ is an isomorphism of algebras 
$\mathcal D_{H,H^{cop}}(C)^{op}\to \mathcal D_{H^{\boldsymbol{op}},H^{op}}(C^{op})$. Moreover, if \eqref{eq:comult-compat} and~\eqref{eq:antipode-compat} hold 
then $\tau$ is an isomorphism of Hopf algebras $\mathcal D_{H,H^{cop}}(C)^{\boldsymbol{op}}\to \mathcal D_{H^{\boldsymbol{op}},H^{op}}(C^{\boldsymbol{op}})$
\end{proposition}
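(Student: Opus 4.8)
The plan is to verify by hand that the linear flip $\tau$ transports the algebra structure of $\mathcal D_{H,H^{cop}}(C)^{op}$ onto that of $\mathcal D_{H^{\boldsymbol{op}},H^{op}}(C^{op})$, and, under the extra hypotheses, the coalgebra structure and antipode as well. Since each of the two algebras is presented by the embedded copies of $C$ and $H$ inside it together with a single cross relation, the whole verification reduces to matching these presentations, and nothing is computationally long.

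First I would record that $\tau\colon C\tensor H\to H\tensor C$ is visibly a linear isomorphism, that it carries $C\tensor 1$ and $1\tensor H$ onto the embedded copies of $C$ and $H$ in $\mathcal D_{H^{\boldsymbol{op}},H^{op}}(C^{op})$, and that on these subalgebras --- passing to the opposite multiplication on the source --- $\tau$ is the identity on $C^{op}$ and on $H^{op}$. It then remains only to compare the cross relations. On the source, in $\mathcal D_{H,H^{cop}}(C)^{op}$ one has, using the defining relation of $\mathcal D_{H,H^{cop}}(C)$,
\[
(c\tensor 1)\cdot_{\mathrm{op}}(1\tensor h)=(1\tensor h)\cdot(c\tensor 1)=(h_{(1)}\lact h_{(3)}\,\tilde\lact\, c)\tensor h_{(2)},
\]
so $\tau$ sends the left-hand side to $h_{(2)}\tensor(h_{(1)}\lact h_{(3)}\,\tilde\lact\, c)$; on the other hand $\tau(c\tensor 1)\cdot\tau(1\tensor h)$ is $c\cdot h=h_{(2)}\cdot(h_{(1)}\lact h_{(3)}\,\tilde\lact\, c)$, which is exactly the relation defining the product of $\mathcal D_{H^{\boldsymbol{op}},H^{op}}(C^{op})$. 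Hence $\tau$ is multiplicative on generators, and since it is bijective and the target product is by construction the transport of the (associative) opposite product on $\mathcal D_{H,H^{cop}}(C)$, the target is associative and $\tau$ is an algebra isomorphism; this proves the first claim. One could alternatively verify associativity of the target directly, mimicking the computation in the proof of Proposition~\ref{prop:gen-double} with the two ``op'' twists interchanged.

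For the Hopf-algebra statement I would assume \eqref{eq:comult-compat} and \eqref{eq:antipode-compat}, so that $\mathcal D_{H,H^{cop}}(C)$ is a Hopf algebra with $\Delta(c\cdot h)=\Delta_C(c)\cdot\Delta_H^{op}(h)$, $\varepsilon(c\cdot h)=\varepsilon_C(c)\varepsilon_H(h)$ and $S(c\cdot h)=S_H^{-1}(h)\cdot S_C(c)$; then $\mathcal D_{H,H^{cop}}(C)^{\boldsymbol{op}}$ has the opposite multiplication and the opposite comultiplication, while its antipode is again $S$. On the target, the comultiplication is assembled in the same way from $\Delta_{C^{\boldsymbol{op}}}=\Delta_C^{op}$ and $\Delta_{H^{op}}=\Delta_H$, the counit from $\varepsilon_C,\varepsilon_H$, and the antipode from $S_{C^{\boldsymbol{op}}}$ and $S_{H^{op}}=S_H^{-1}$; the key point is that conditions \eqref{eq:comult-compat} and \eqref{eq:antipode-compat} for the data $(H,\lact,\tilde\lact,C)$ become, after swapping the two Sweedler tensor factors of $C$, the analogous conditions for the opposite data, so that the general construction applies verbatim to $C^{\boldsymbol{op}}$ over $(H^{op})^{cop}\tensor H^{op}$. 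One then checks $\Delta_{\mathrm{tar}}\circ\tau=(\tau\tensor\tau)\circ\Delta_{\boldsymbol{op}}$, $\varepsilon_{\mathrm{tar}}\circ\tau=\varepsilon_{\boldsymbol{op}}$ and $S_{\mathrm{tar}}\circ\tau=\tau\circ S$, each of which amounts to writing $\Delta_C(c)\cdot\Delta_H^{op}(h)$ in Sweedler notation, flipping tensor factors, re-indexing and matching with the corresponding formula on the target.

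The only real nuisance I anticipate is the bookkeeping in this last step: one must keep track of which of the four ``op/cop'' decorations is active on each of $C$, $H$ and $\mathcal D$ at every point, and in particular make sure the left $H$-actions $\lact,\tilde\lact$ enter the opposite construction with the right variance so that $C^{\boldsymbol{op}}$ genuinely becomes an $(H^{op})^{cop}\tensor H^{op}$-module Hopf algebra. There is no conceptual obstacle --- it is all a diagram chase --- but it is easy to lose a twist, so I would anchor the whole argument on the displayed cross-relation identity above and reduce every remaining verification to it.
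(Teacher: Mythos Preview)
Your verification is correct and is exactly the direct check the paper has in mind: the paper declares the proposition ``immediate'' and gives no proof, because the cross relation defining $\mathcal D_{H^{\boldsymbol{op}},H^{op}}(C^{op})$ is set up precisely as the opposite-algebra reading of the cross relation in $\mathcal D_{H,H^{cop}}(C)$. Your computation $(c\tensor 1)\cdot_{\mathrm{op}}(1\tensor h)=(h_{(1)}\lact h_{(3)}\,\tilde\lact\, c)\tensor h_{(2)}\xrightarrow{\tau}h_{(2)}\cdot(h_{(1)}\lact h_{(3)}\,\tilde\lact\, c)$ makes this explicit, and your treatment of the Hopf structure under \eqref{eq:comult-compat}--\eqref{eq:antipode-compat} is the intended bookkeeping.
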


Let $\bar\cdot$ be a field involution on~$\kk$ and suppose that it extends to an anti-linear anti-involutions of algebras~$C$ and~$H$. Assume 
that $\bar\cdot$ is an anti-linear involution of coalgebras for~$H$. Note that then we have $\overline{S_H(h)}=S_H^{-1}(\overline h)$, $h\in H$.
Extend~$\bar\cdot$ to an anti-linear map $\mathcal D_{H,H^{op}}(C)\to \mathcal D_{H,H^{op}}(C)$ by 
$$
\overline{c\cdot h}=\overline h\cdot \overline c.
$$
\begin{lemma}\label{lem:bar-anti}
Suppose that 
\begin{equation}\label{eq:bar-act-gen-compat}
\overline{h_{(2)}}\lact \overline{h_{(1)}\lact c}=\varepsilon_H(\overline h)\overline c=
\overline{h_{(1)}}\,\tilde\lact\, \overline{h_{(2)}\,\tilde\lact\, c},\qquad h\in H,\,
c\in C.
\end{equation}
Then $\bar\cdot$ is an anti-linear anti-involution of the algebra~$\mathcal D_{H,H^{op}}(C)$.
\end{lemma}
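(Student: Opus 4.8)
\textbf{Proof proposal for Lemma~\ref{lem:bar-anti}.}
The plan is to verify directly that the anti-linear map $\overline{\cdot}$ defined by $\overline{c\cdot h}=\overline h\cdot\overline c$ is a genuine anti-involution of the algebra $\mathcal D_{H,H^{cop}}(C)$. Anti-linearity and the involutivity $\overline{\overline{c\cdot h}}=c\cdot h$ are immediate from the corresponding properties on $C$ and on $H$, so the only substantive point is the anti-multiplicativity relation $\overline{(c\cdot h)\cdot(c'\cdot h')}=\overline{c'\cdot h'}\cdot\overline{c\cdot h}$. Since $C$ and $H^{cop}$ are subalgebras and $\overline\cdot$ is already known to be an anti-involution on each of them, it suffices to check anti-multiplicativity on the mixed products, i.e. to show $\overline{h\cdot c}=\overline c\cdot\overline h$ for all $h\in H$, $c\in C$; the general case then follows by writing every element as $c\cdot h$ and using the two established subalgebra cases together with this mixed identity.

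First I would expand the defining cross relation $h\cdot c=(h_{(1)}\lact h_{(3)}\,\tilde\lact\, c)\,h_{(2)}$ and apply $\overline{\cdot}$, obtaining
$$
\overline{h\cdot c}=\overline{h_{(2)}}\cdot\overline{h_{(1)}\lact h_{(3)}\,\tilde\lact\, c}.
$$
On the other side, using the cross relation again (now in the form that multiplies $\overline c\in C$ on the right by $\overline h\in H$, which after reindexing reads $\overline c\cdot\overline h=\overline h_{(2)}\cdot(\overline h_{(1)}\lact \overline h_{(3)}\,\tilde\lact\,\overline c)$, valid because $\overline\cdot$ reverses the coproduct on $H$), I would bring both expressions to a common normal form $H\cdot C$. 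The two actions $\lact$ and $\tilde\lact$ commute, so the term $\overline{h_{(1)}\lact h_{(3)}\,\tilde\lact\, c}$ can be rewritten; here the hypothesis~\eqref{eq:bar-act-gen-compat}, in the form
$$
\overline{h_{(2)}}\lact\overline{h_{(1)}\lact c}=\varepsilon_H(\overline h)\overline c=\overline{h_{(1)}}\,\tilde\lact\,\overline{h_{(2)}\,\tilde\lact\, c},
$$
is exactly what is needed to ``undo'' the composite action under the bar and repackage it as a single action of a reindexed group-like-type element. I expect the computation to reduce, after a few applications of coassociativity and the counit axiom, to an identity of the shape $\overline{h_{(2)}}\cdot(\overline{h_{(3)}}\lact \overline{h_{(1)}}\,\tilde\lact\,\overline c)$, which is precisely $\overline c\cdot\overline h$.

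The main obstacle will be the bookkeeping of Sweedler indices: one must track how the antipode-free reversal of $\Delta_H$ under $\overline\cdot$ interacts with the triple-coproduct that appears in the cross relation, and confirm that~\eqref{eq:bar-act-gen-compat} is applied to the correct pair of tensor legs. A clean way to organize this is to first record the auxiliary fact that $\overline{\cdot}$ intertwines $\lact$ with $\tilde\lact$ up to reversal of the coproduct — more precisely, that \eqref{eq:bar-act-gen-compat} implies $\overline{h\lact c}$ and $\overline{h\,\tilde\lact\, c}$ can each be expressed through the other action of $\overline{h}$ — and then feed this into the normal-form computation. Once the mixed identity $\overline{h\cdot c}=\overline c\cdot\overline h$ is in hand, the extension to arbitrary elements is a one-line consequence of associativity and the already-known behavior of $\overline\cdot$ on $C$ and $H$, completing the proof.
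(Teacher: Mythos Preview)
Your overall strategy matches the paper's: reduce to showing $\overline{h\cdot c}=\overline c\cdot\overline h$, expand the cross relation, and use the hypothesis~\eqref{eq:bar-act-gen-compat}. But two concrete errors would derail your computation as written.

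First, you assert that $\overline\cdot$ reverses the coproduct on $H$. The standing assumption (stated just before the lemma) is that $\overline\cdot$ is an anti-linear \emph{involution of coalgebras} on $H$, i.e.\ $\Delta_H(\overline h)=\overline{h_{(1)}}\otimes\overline{h_{(2)}}$; it preserves the coproduct. Second, there is no nontrivial ``reverse'' cross relation: in $\mathcal D_{H,H^{cop}}(C)$ the normal form is $C\cdot H$, so $\overline c\cdot\overline h$ is already in normal form, and your proposed identity $\overline c\cdot\overline h=\overline h_{(2)}\cdot(\overline h_{(1)}\lact\overline h_{(3)}\,\tilde\lact\,\overline c)$ is not available.

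The paper's computation proceeds from your correct first step $\overline{h\cdot c}=\overline{h_{(2)}}\cdot\overline{(h_{(1)}\lact h_{(3)}\,\tilde\lact\, c)}$ by applying the cross relation \emph{again} to the product $\overline{h_{(2)}}\cdot(\text{element of }C)$, using that $\Delta_H$ commutes with $\overline\cdot$. This produces
\[
\overline{h_{(2)}}\lact\overline{h_{(4)}}\,\tilde\lact\,\overline{(h_{(1)}\lact h_{(5)}\,\tilde\lact\, c)}\cdot\overline{h_{(3)}},
\]
after which the two halves of~\eqref{eq:bar-act-gen-compat} collapse the $\tilde\lact$-layer (legs $4,5$) and then the $\lact$-layer (legs $1,2$) to counits, leaving $\overline c\cdot\overline h$. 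So your plan is right in spirit; the fix is to push $\overline{h\cdot c}$ all the way to $C\cdot H$ normal form via a second use of the cross relation, rather than trying to manipulate $\overline c\cdot\overline h$.
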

\begin{proof}
We have 
\begin{multline*}
\overline{h\cdot c}=\overline{ (h_{(1)}\lact h_{(3)}\,\tilde\lact\, c)\cdot h_{(2)}}=
\overline{ h_{(2)}}\cdot \overline{ (h_{(1)}\lact h_{(3)}\,\tilde\lact\, c)}=
\overline{h_{(2)}}\lact \overline{h_{(4)}}\,\tilde\lact\,\overline{ (h_{(1)}\lact h_{(5)}\,\tilde\lact\, c)}
\cdot \overline{h_{(3)}}\\
=\overline{\varepsilon_H(h_{(4)})}\, \overline{h_{(2)}}\lact \overline{ (h_{(1)}\lact c)} \cdot \overline{h_{(3)}}
=\varepsilon_H(\overline{h_{(1)}})\lact \overline{c} \cdot \overline{h_{(2)}}
=\overline c\cdot \overline h.
\end{multline*}
This shows that $\overline\cdot$ is a well-defined anti-linear anti-involution of $\mathcal D_{H,H^{cop}}(C)$.
\end{proof}
\begin{remark}
It is easy to check that~\eqref{eq:bar-act-gen-compat} holds if 
\begin{equation}\label{eq:bar-act-compat}
\overline{h\lact c}=S_H^{-1}(\overline h)\lact \overline c,\qquad 
\overline{h\,\tilde\lact\, c}=S_H(\overline h)\,\tilde\lact\, \overline c,\qquad h\in H,\, c\in C.
\end{equation}
\end{remark}

\subsection{Bialgebra pairings and doubles of bialgebras}
We will now consider a special case of the double smash product construction.
Given bialgebras $H$ and~$C$, $\phi\in\Hom_\kk(C\tensor H,\kk)$ is said to be a {\em bialgebra pairing} if for all $c,c'\in C$ and $h,h'\in H$
\begin{gather*}
\phi(cc',h)=\phi(c,h_{(1)})\phi(c',h_{(2)}),\,\, \phi(c,hh')=\phi(c_{(1)},h)\phi(c_{(2)},h'),\,\,
\phi(c,1)=\varepsilon_C(c),\,\,\phi(1,h)=\varepsilon_H(h).
\end{gather*}
If both $C$ and~$H$ are Hopf algebras, a bialgebra pairing $\phi$ is called a Hopf pairing if 
$$
\phi(S_C(c),h)=\phi(c,S_H(h)),\qquad c\in C,\, h\in H.
$$
Given a bialgebra pairing~$\phi:C\tensor H\to \kk$, define 
\begin{equation}\label{eq:pair-action}
h\underset\phi\lact c=c_{(1)}\phi(c_{(2)},h),\quad c\underset\phi\ract\, h=c_{(2)}\phi(c_{(1)},h),
\quad c\underset\phi\lact h=h_{(1)}\phi(c,h_{(2)}),\quad h\underset\phi\ract c=h_{(2)}\phi(c,h_{(1)})
\end{equation}
The following is easily checked.
\begin{lemma}\label{lem:pair-bimod}
Let $\phi$, $\phi'$ be two bialgebra pairings $C\tensor H\to \kk$. Then 
$\underset\phi\lact$, 
$\underset{\phi'}\ract$ 
define a structure of an $H$- (respectively, a $C$-) bimodule algebra on~$C$ (respectively, on~$H$). Moreover,
\begin{equation}\label{eq:act-delta}
\Delta_C(h\underset\phi\lact c \underset{\phi'}\ract h')=(c_{(1)}\underset{\phi'}\ract h')\tensor 
(h\underset\phi\lact c_{(2)}).
\end{equation}
\end{lemma}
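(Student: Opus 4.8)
The plan is to derive every assertion directly from the four defining identities of a bialgebra pairing together with coassociativity and counitality, by routine manipulation in Sweedler's notation; nothing else is needed, which is why the statement is labelled ``easily checked''. Recall that any bialgebra pairing $\psi\colon C\tensor H\to\kk$ satisfies $\psi(cc',h)=\psi(c,h_{(1)})\psi(c',h_{(2)})$, $\psi(c,hh')=\psi(c_{(1)},h)\psi(c_{(2)},h')$, $\psi(c,1)=\varepsilon_C(c)$ and $\psi(1,h)=\varepsilon_H(h)$. I will use the conventions that a left $H$-module algebra $A$ obeys $h\lact(aa')=(h_{(1)}\lact a)(h_{(2)}\lact a')$ and $h\lact 1_A=\varepsilon_H(h)1_A$, and a right $H$-module algebra obeys $(aa')\ract h=(a\ract h_{(1)})(a'\ract h_{(2)})$ and $1_A\ract h=\varepsilon_H(h)1_A$ (together with the $C$-module analogues); fixing these conventions is the only point at which one must be slightly careful.

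First I would check that $\underset\phi\lact$ from~\eqref{eq:pair-action} is a left action of $H$ on $C$: unitality $1\underset\phi\lact c=c_{(1)}\phi(c_{(2)},1)=c_{(1)}\varepsilon_C(c_{(2)})=c$ is the counit axiom, while $h\underset\phi\lact(h'\underset\phi\lact c)=c_{(1)}\phi(c_{(2)},h)\phi(c_{(3)},h')=c_{(1)}\phi(c_{(2)},hh')=(hh')\underset\phi\lact c$ by coassociativity of $\Delta_C$ and multiplicativity of $\phi$ in the second slot. For the module-algebra property I would expand $h\underset\phi\lact(cc')=c_{(1)}c'_{(1)}\phi(c_{(2)}c'_{(2)},h)$ using $\Delta_C(cc')=\Delta_C(c)\Delta_C(c')$, apply $\phi(c_{(2)}c'_{(2)},h)=\phi(c_{(2)},h_{(1)})\phi(c'_{(2)},h_{(2)})$, and rearrange scalars to get $(h_{(1)}\underset\phi\lact c)(h_{(2)}\underset\phi\lact c')$; with $h\underset\phi\lact 1_C=1_C\,\phi(1_C,h)=\varepsilon_H(h)1_C$ this makes $C$ a left $H$-module algebra. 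The same bookkeeping applied to $c\underset{\phi'}\ract h=c_{(2)}\phi'(c_{(1)},h)$ shows it is a right $H$-action making $C$ a right $H$-module algebra, and that the two actions commute, since $h\underset\phi\lact(c\underset{\phi'}\ract h')=c_{(2)}\phi'(c_{(1)},h')\phi(c_{(3)},h)=(h\underset\phi\lact c)\underset{\phi'}\ract h'$; hence $(C,\underset\phi\lact,\underset{\phi'}\ract)$ is an $H$-bimodule algebra. The statement that $H$ is a $C$-bimodule algebra is the mirror-image argument, now using $c\underset\phi\lact h=h_{(1)}\phi(c,h_{(2)})$ and $h\underset{\phi'}\ract c=h_{(2)}\phi'(c,h_{(1)})$ and the multiplicativity of the pairings in the \emph{first} slot.

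Finally, for~\eqref{eq:act-delta} I would first record, as a by-product of the previous step, that $x:=h\underset\phi\lact c\underset{\phi'}\ract h'=c_{(2)}\,\phi'(c_{(1)},h')\,\phi(c_{(3)},h)$, then apply $\Delta_C$ and use coassociativity in the form $\Delta_C^{3}(c)=c_{(1)}\tensor c_{(2)}\tensor c_{(3)}\tensor c_{(4)}$: since $\Delta_C(c_{(2)})$ becomes $c_{(2)}\tensor c_{(3)}$ while the former $c_{(3)}$ becomes $c_{(4)}$, one gets $\Delta_C(x)=c_{(2)}\phi'(c_{(1)},h')\tensor c_{(3)}\phi(c_{(4)},h)$; expanding the right-hand side $(c_{(1)}\underset{\phi'}\ract h')\tensor(h\underset\phi\lact c_{(2)})$ with the same reindexing reproduces exactly this expression. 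I expect no real obstacle here — every step is a one-line Sweedler computation — the only thing demanding attention being the consistent choice of left/right module-algebra conventions, made precisely so that $\Delta_C$ deconcatenates in the order displayed in~\eqref{eq:act-delta}.
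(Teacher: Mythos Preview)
Your proof is correct and is precisely the routine Sweedler computation the paper has in mind when it says ``The following is easily checked''; the paper provides no further argument. Your verification of each axiom and of~\eqref{eq:act-delta} matches the intended approach, so there is nothing to add.
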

Given two bialgebra pairings $\phi_+,\phi_-:C\tensor H\to\kk$ 
define $\mathcal D_{\phi_+,\phi_-}(C,H)$ as $\mathcal D_{H,H^{cop}}(C)$ where $h\lact c=h\underset{\phi_+}\lact c$ and 
$h\,\tilde\lact\, c=c\underset{\phi_-}\ract S_H^{-1}(h)$.
\plink{d-phi-pm}
Thus, in $\mathcal D_{\phi_+,\phi_-}(C,H)$ we have
\begin{equation}\label{eq:prod-double-gen}
\begin{split}
h\cdot c&=c_{(2)}\cdot h_{(2)}\phi_-(c_{(1)},S_H^{-1}(h_{(3)}))\phi_+(c_{(3)},h_{(1)})\\&=
(h_{(1)}\underset{\phi_+}\lact c\underset{\phi_-}\ract S_H^{-1}(h_{(3)}))\cdot h_{(2)}=
c_{(2)}\cdot (S_C^{-1}(c_{(1)})\underset{\phi_-}\lact h\underset{\phi_+}\ract c_{(3)})
\end{split}
\end{equation}
We abbreviate $\mathcal D_{\phi}(C,H)=\mathcal D_{\phi,\phi}(C,H)$

\begin{proposition}\label{prop:double-spec}
Let $H$ be a Hopf algebra, $C$ be a bialgebra and 
$\phi,\phi_\pm:C\tensor H\to\kk$ be bialgebra pairings.
\begin{enumerate}[{\rm(a)}]
\item\label{prop:Hopf-double-two-pair.a} $\mathcal D_{\phi_+,\phi_-}(C,H)$ is an associative algebra and~$C$, $H$ identify with its subalgebras.
\item\label{prop:Hopf-double-two-pair.b} $\mathcal D_{\phi}(C,H)$ is a bialgebra and $C$, $H^{cop}$ identify with its 
sub-bialgebras. Moreover, if $C$ is a Hopf algebra and $\phi$ is a Hopf 
pairing then $\mathcal D_{\phi}(C,H)$ is a Hopf algebra.
\end{enumerate}
\label{prop:Hopf-double-two-pair}
\end{proposition}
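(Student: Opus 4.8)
The statement to prove is Proposition~\ref{prop:Hopf-double-two-pair} (=Proposition~\ref{prop:double-spec}): that $\mathcal D_{\phi_+,\phi_-}(C,H)$ is an associative algebra with $C$, $H$ as subalgebras, and that $\mathcal D_\phi(C,H)$ is a bialgebra (a Hopf algebra if $C$ is Hopf and $\phi$ is a Hopf pairing). The entire point is that $\mathcal D_{\phi_+,\phi_-}(C,H)$ is a \emph{special case} of the double smash product $\mathcal D_{H,H^{cop}}(C)$ studied in the previous subsection, so the plan is to verify that the hypotheses of Proposition~\ref{prop:gen-double} and of the two subsequent propositions on $\mathcal D_{H,H^{cop}}(C)$ are met by the actions $h\lact c=h\underset{\phi_+}\lact c$ and $h\,\tilde\lact\, c=c\underset{\phi_-}\ract S_H^{-1}(h)$, and then invoke those results.

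\textbf{Step 1 (associativity and subalgebras).} First I would check that $\lact$ and $\,\tilde\lact\,$ are commuting left $H$-actions making $C$ an $H^{cop}\tensor H$-module algebra. That $\underset{\phi_+}\lact$ is a left action and an algebra action on $C$ is exactly Lemma~\ref{lem:pair-bimod} (the ``$H$-bimodule algebra'' part, using the left action). For $\,\tilde\lact\,$: the right action $\underset{\phi_-}\ract$ of $H$ on $C$ from Lemma~\ref{lem:pair-bimod} becomes a left action of $H^{op}$, hence a left action of $H$ after precomposing with the algebra anti-automorphism $S_H^{-1}$ (here one needs $H$ a Hopf algebra so $S_H$, and $S_H^{-1}$, exist — $S_H^{-1}$ exists because on a Hopf algebra with the pairing structure at hand $S_H$ is bijective, or one simply works formally with $S_H$ reversing multiplication and comultiplication). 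The module-algebra compatibility $h\,\tilde\lact\,(cc')=(h_{(2)}\,\tilde\lact\, c)(h_{(1)}\,\tilde\lact\, c')$ follows from the right-module-algebra property of $\underset{\phi_-}\ract$ together with the fact that $S_H^{-1}$ is a coalgebra anti-homomorphism. Commutativity of $\lact$ and $\,\tilde\lact\,$ is a short Sweedler computation using the bialgebra-pairing axioms for $\phi_+$ and $\phi_-$. Once these are in place, Proposition~\ref{prop:gen-double} gives associativity of $\mathcal D_{H,H^{cop}}(C)=\mathcal D_{\phi_+,\phi_-}(C,H)$ and that $H$, $C$ embed as subalgebras, proving part~(a); I would also record the product formula~\eqref{eq:prod-double-gen} by substituting the definitions of $\lact$, $\,\tilde\lact\,$ into $(h\tensor 1)\cdot(1\tensor c)=(h_{(1)}\lact h_{(3)}\,\tilde\lact\, c)h_{(2)}$ and simplifying with~\eqref{eq:pair-action}.

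\textbf{Step 2 (bialgebra and Hopf structure).} For part~(b), take $\phi_+=\phi_-=\phi$. I would verify the comodule-compatibility hypothesis~\eqref{eq:comult-compat}, i.e. $h_{(2)}\lact c_{(1)}\tensor h_{(1)}\,\tilde\lact\, c_{(2)}=\varepsilon_H(h)\Delta_C(c)$; expanding via~\eqref{eq:pair-action} this reads $c_{(1)}\tensor c_{(2)}\,\phi(c_{(2)\,(2)},h_{(2)})\,\phi(c_{(2)\,(1)},S_H^{-1}(h_{(1)}))$ — wait, I mean one collects the two pairing factors into $\phi(\text{something}, h_{(2)}S_H^{-1}(h_{(1)}))$ using multiplicativity of $\phi$ in the second slot, and $h_{(2)}S_H^{-1}(h_{(1)})$ collapses to $\varepsilon_H(h)1$ by the antipode axiom (for $S_H^{-1}$), giving the claim; also one needs $\Delta_C$ and $\varepsilon_C$ to be $H^{cop}\tensor H$-module maps, which is~\eqref{eq:act-delta} of Lemma~\ref{lem:pair-bimod} combined with the analogous fact for $\,\tilde\lact\,$. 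Then the bialgebra proposition for $\mathcal D_{H,H^{cop}}(C)$ applies and gives that $\mathcal D_\phi(C,H)$ is a bialgebra with $\Delta(c\cdot h)=\Delta_C(c)\cdot\Delta_H^{op}(h)$, and $C$, $H^{cop}$ as sub-bialgebras. Finally, when $C$ is a Hopf algebra and $\phi$ a Hopf pairing, I would check the antipode-compatibility~\eqref{eq:antipode-compat}, $S_C(h\lact h'\,\tilde\lact\, c)=S_H^{-2}(h')\lact h\,\tilde\lact\, S_C(c)$: this unwinds to an identity among $\phi(S_C(\cdot),\cdot)$, $\phi(\cdot,S_H(\cdot))$ and multiple antipodes, provable by moving $S_C$ through the pairing using the Hopf-pairing property and then reshuffling $S_H$'s with $S_H(gh)=S_H(h)S_H(g)$; then the Hopf part of the double-smash proposition gives the antipode $S(c\cdot h)=S_H^{-1}(h)\cdot S_C(c)$ and that $C$, $H^{cop}$ are Hopf subalgebras.

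\textbf{Main obstacle.} None of the individual computations is deep — they are all Sweedler-notation manipulations with the pairing axioms and antipode identities — so the only real care needed is bookkeeping: making sure the identification $h\,\tilde\lact\, c=c\underset{\phi_-}\ract S_H^{-1}(h)$ is consistent with all four compatibility conditions~\eqref{eq:comult-compat}, \eqref{eq:antipode-compat}, \eqref{eq:act-delta} (and the module-algebra axioms) simultaneously, and that the various occurrences of $S_H$ versus $S_H^{-1}$ and of $\Delta_H$ versus $\Delta_H^{op}$ match what Proposition~\ref{prop:gen-double} and its successors demand. I expect the $\,\tilde\lact\,$ verification of~\eqref{eq:comult-compat} and the antipode compatibility~\eqref{eq:antipode-compat} to be the fiddliest steps, but conceptually the proof is simply ``specialize the general double smash product construction and check hypotheses,'' so it can be kept short.
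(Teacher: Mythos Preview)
Your approach is essentially the same as the paper's: part~(a) follows immediately from Proposition~\ref{prop:gen-double} (the paper does not even spell out the module-algebra verifications you sketch), and for part~(b) the paper likewise reduces, via~\eqref{eq:act-delta}, to checking the two compatibility conditions~\eqref{eq:comult-compat} and~\eqref{eq:antipode-compat} by short Sweedler computations. Your identification of these as the only nontrivial checks, and your outline of how the antipode identities collapse the relevant expressions, matches the paper's argument.
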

\begin{proof}
Part~\eqref{prop:Hopf-double-two-pair.a} is immediate from Proposition~\ref{prop:gen-double}. To prove~\eqref{prop:Hopf-double-two-pair.b} note that
by~\eqref{eq:act-delta} we only need to check that~\eqref{eq:comult-compat} and~\eqref{eq:antipode-compat} hold. Indeed
\begin{align*}
h_{(2)}\underset\phi\lact c_{(1)}\tensor c_{(2)}\underset\phi\ract S_H^{-1}(h_{(1)})&=
\phi(c_{(2)},h_{(2)})\phi(c_{(3)},S_H^{-1}(h_{(1)}))c_{(1)}\tensor c_{(4)}\\&=\phi(c_{(2)},h_{(2)}S_H^{-1}(h_{(1)}))c_{(1)}\tensor c_{(3)}
=\varepsilon_H(h)c_{(1)}\tensor c_{(2)}=\varepsilon_H(h)\Delta(c).
\end{align*}
Finally, to prove~\eqref{eq:antipode-compat} note that 
\begin{align*}
S_C(h\lact c\ract &S_H^{-1}(h'))=\phi(c_{(3)},h)\phi(c_{(1)},S_H^{-1}(h'))S_C(c_{(2)})\\
&=\phi(S_C(c_{(3)}),S_H^{-1}(h))\phi(S_C(c_{(1)}),S_H^{-2}(h'))S_C(c_{(2)})=S_H^{-2}(h')\lact S_C(c)\ract S_H^{-1}(h).\qedhere
\end{align*}
\end{proof}
Note the following useful identity in~$\mathcal D_{\phi^+,\phi^-}(C,H)$
\begin{equation}\label{eq:wrong-product-double}
c\cdot h=h_{(2)}\cdot (S_H^{-1}(h_{(1)})\underset{\phi_+}\lact c\underset{\phi_-}\ract h_{(3)}),\qquad c\in C,\,h\in H.
\end{equation}
The following is a straightforward consequence of~\eqref{eq:pair-action} and~\eqref{eq:prod-double-gen}.  
\begin{proposition}\label{prop:double acts on halves}
$H$ is a left (respectively, right) $\mathcal D_\phi(C,H)$-module algebra via 
$c\lact h'=c\underset\phi\lact h'$ and $h\lact h'=h_{(2)}h'S^{-1}_H(h_{(1)})$ 
(respectively, via $h'\ract c=h'\underset\phi\ract c$ and $h'\ract h=S_H^{-1}(h_{(2)})h' h_{(1)}$), $c\in C$, $h,h'\in H$.
Moreover, if $C$ is a Hopf algebra then $C$ is a left (respectively, right)
$\mathcal D_\phi(C,H)$-module algebra via $h\lact c'=h\underset\phi\lact c'$, $c\lact c'=c_{(2)}c'S^{-1}_C(c_{(1)})$ 
(respectively, via $c'\ract h=c'\underset\phi\ract h$, $c'\ract c=S^{-1}_C(c_{(2)})c'c_{(1)}$), $c,c'\in C$, $h\in H$.
\end{proposition}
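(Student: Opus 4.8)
The plan is to verify directly that the stated formulas define module-algebra structures, using the associativity of $\mathcal D_\phi(C,H)$ (Proposition~\ref{prop:Hopf-double-two-pair}\eqref{prop:Hopf-double-two-pair.a}) together with the product formula~\eqref{eq:prod-double-gen} and the definitions~\eqref{eq:pair-action}. I will treat the four assertions in parallel, since each follows the same pattern; I spell out the left action of $\mathcal D_\phi(C,H)$ on $H$ and indicate why the remaining three are obtained by the evident symmetries (replacing $H$ by $H^{\boldsymbol{op}}$, or $\lact$ by $\ract$, and using that $C$ is a Hopf algebra in the last two cases).

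First I would check that $c\lact h'=c\underset\phi\lact h'$ and $h\lact h'=h_{(2)}h'S_H^{-1}(h_{(1)})$ together assemble into an honest left action of the algebra $\mathcal D_\phi(C,H)$ on $H$. The subalgebra $H\subset \mathcal D_\phi(C,H)$ acts by $h\cdot h'=h_{(2)}h'S_H^{-1}(h_{(1)})$, which is the standard (left-adjoint-type) action and is checked to be associative using coassociativity and the antipode axiom; the subalgebra $C$ acts by $c\cdot h'=c\underset\phi\lact h'=h'_{(1)}\phi(c,h'_{(2)})$, and associativity $c\cdot(c'\cdot h')=(cc')\cdot h'$ is exactly the statement that $\phi$ is a bialgebra pairing in its first argument. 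The only genuine content is the mixed compatibility: for $c\in C$, $h\in H$ one must verify that the action of the product $h\cdot c\in\mathcal D_\phi(C,H)$, computed via~\eqref{eq:prod-double-gen} as $h\cdot c=(h_{(1)}\underset{\phi}\lact c\underset{\phi}\ract S_H^{-1}(h_{(3)}))\cdot h_{(2)}$, agrees on $H$ with first acting by $c$ and then by $h$. Expanding both sides using~\eqref{eq:pair-action}, the left-hand side is a sum of terms $h_{(2)}\cdot\bigl(h'_{(1)}\phi(c_{(2)},h_{(1)})\phi(c_{(1)}\cdots)\bigr)\cdots$ and the right-hand side $h\lact(c\lact h')=h\lact(h'_{(1)}\phi(c,h'_{(2)}))$; both collapse, after repeated use of the pairing axioms and the antipode identities, to the same expression. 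This is the step I expect to be the main obstacle — not conceptually deep, but requiring careful bookkeeping of Sweedler indices and of where $S_H^{-1}$ is applied.

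Having done the $H$-case, the right-module version $h'\ract c=h'\underset\phi\ract c$, $h'\ract h=S_H^{-1}(h_{(2)})h'h_{(1)}$ follows by applying the previous computation to the opposite algebra, using the isomorphism $\mathcal D_{\phi}(C,H)^{op}\cong\mathcal D_{\phi^{\boldsymbol{op}}}(C^{op},H^{op})$ (or simply by the mirror-image calculation). Finally, when $C$ is a Hopf algebra, the actions of $\mathcal D_\phi(C,H)$ on $C$ by $h\lact c'=h\underset\phi\lact c'$ and $c\lact c'=c_{(2)}c'S_C^{-1}(c_{(1)})$ (and the right-handed analogue) are verified the same way, now with the roles of $C$ and $H$ interchanged and the mixed compatibility again reducing, via~\eqref{eq:prod-double-gen} rewritten in the form~\eqref{eq:wrong-product-double} $c\cdot h=h_{(2)}\cdot(S_H^{-1}(h_{(1)})\underset\phi\lact c\underset\phi\ract h_{(3)})$, to the bialgebra-pairing and antipode axioms. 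That each of these maps is a module-\emph{algebra} structure — i.e. $x\lact(yz)=(x_{(1)}\lact y)(x_{(2)}\lact z)$ for $x\in\mathcal D_\phi(C,H)$ with the bialgebra comultiplication of Proposition~\ref{prop:Hopf-double-two-pair}\eqref{prop:Hopf-double-two-pair.b} — is immediate on the generators $c\in C$, $h\in H$ from the pairing axioms and coassociativity, and hence holds in general.
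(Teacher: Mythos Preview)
Your proposal is correct and follows the same route the paper indicates: the paper offers no detailed argument, merely recording that the proposition ``is a straightforward consequence of~\eqref{eq:pair-action} and~\eqref{eq:prod-double-gen}'', and your outline is precisely the direct verification this phrase points to. The one place to be careful in writing it out is the module-\emph{algebra} axiom for the $H$-action on $H$: since $H^{cop}$, not $H$, is the sub-bialgebra of $\mathcal D_\phi(C,H)$, the relevant identity is $h\lact(h'h'')=(h_{(2)}\lact h')(h_{(1)}\lact h'')$, which indeed collapses via $S_H^{-1}(h_{(3)})h_{(2)}=\varepsilon(h_{(2)})$.
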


The compatibility conditions from Lemma~\ref{lem:bar-anti} read
\begin{equation}
\overline{c_{(1)}}\phi_+(\overline{c_{(2)}},\overline{h_{(2)}})\overline{\phi_+(c_{(3)},h_{(1)})}=\varepsilon_H(\overline h)\overline c
=\overline{c_{(3)}}\phi_-(\overline{c_{(2)}},S_H^{-1}(\overline{h_{(1)}}))\overline{\phi_-(c_{(1)},S_H^{-1}(h_{(2)}))}
\end{equation}
and are satisfied if
$$
\overline{\phi_\pm(\overline c,\overline h)}=\phi_\pm(c,S_H^{-1}(h)),\qquad c\in C,\,h\in H.
$$

\subsection{Bosonization of Nichols algebras}\label{subs:A-Bos-nichols}
Suppose that $V$ is a left Yetter-Drinfeld module over a Hopf algebra~$H$ with the comultiplication $\Delta_H$
and the antipode~$S_H$. That is, $V$ is a left $H$-module with the action 
denoted by $\lact$ 
and a left $H$-comodule 
with the co-action $\delta:V\to H\tensor V$. We use the Sweedler-type notation $\delta(v)=v^{(-1)}\tensor v^{(0)}$. The action and co-action 
are compatible, that is
\begin{equation}\label{eq:yd-compat}
\delta(h\lact v)=h_{(1)}v^{(-1)}S_H(h_{(3)})\tensor h_{(2)}\lact v^{(0)},\qquad h\in H,\,v\in V,
\end{equation}
where $(\Delta_H\tensor 1)\Delta_H=h_{(1)}\tensor h_{(2)}\tensor h_{(3)}$.

The category $\prescript{H}{H}{\mathscr{YD}}$ of left Yetter-Drinfeld modules over~$H$ is a braided tensor category with the 
braiding $\Psi:V\tensor W\to W\tensor V$ being given by 
\begin{equation}\label{eq:yd-braiding}
\Psi_{V,W}(v\tensor w)=v^{(-1)}\lact w\tensor v^{(0)},\qquad  v\in V,\, w\in W.
\end{equation}
Note that 
\begin{equation}\label{eq:yd-braiding-inv}
\Psi_{V,W}^{-1}(w\tensor v)=v^{(0)}\tensor S_H^{-1}(v^{(-1)})\lact w.
\end{equation}
In particular, $T(V)$ is a
braided Hopf algebra in the category~$\prescript{H}{H}{\mathscr{YD}}$.
We will denote the corresponding Nichols algebra by $\mathcal B(V)$.

Consider now the algebra $T(V)\rtimes H=T(V)\tensor H$ with the cross-relation
\begin{equation}\label{eq:cross-rel}
h\cdot u=(h_{(1)}\lact u)\cdot h_{(2)}.
\end{equation}
It has a co-algebra structure defined by 
\begin{equation}\label{eq:bosonisation-coalg}
\Delta(v)=v\tensor 1+\delta(v),\quad \Delta(h)=\Delta_H(h),\qquad \varepsilon(v)=0,\quad \varepsilon(h)=\varepsilon_H(h),\quad v\in V,\,h\in H.
\end{equation}
It is easy to check, using~\eqref{eq:yd-compat}, that this comultiplication and counit extend to homomorphisms of respective algebras.
\begin{lemma}\label{lem:comult-boson}
Let $u\in T(V)$. Then $
\Delta(u)=\ul u_{(1)}\ul u_{(2)}^{(-1)}\tensor \ul u_{(2)}^{(0)}$,
where $\ul\Delta(u)=\ul u_{(1)}\tensor \ul u_{(2)}$.
\end{lemma}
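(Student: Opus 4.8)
The statement to be proved is Lemma~\ref{lem:comult-boson}: for $u\in T(V)$, the comultiplication of the bosonization $T(V)\rtimes H$ satisfies $\Delta(u)=\ul u_{(1)}\ul u_{(2)}^{(-1)}\tensor \ul u_{(2)}^{(0)}$, where $\ul\Delta$ is the braided coproduct on $T(V)$ and $\delta(w)=w^{(-1)}\tensor w^{(0)}$ is the $H$-coaction. The plan is to argue by induction on the $\ZZ_{\ge 0}$-degree $n$ of $u$ in $T(V)=\bigoplus_{n\ge 0}V^{\tensor n}$, using that both sides are well-defined algebra maps (for the left side) or well-defined linear maps built from the coalgebra structure (for the right side), and that $T(V)$ is generated in degrees $0$ and~$1$.

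First I would check the base cases $n=0$ and $n=1$. For $n=0$, $u\in\kk$ and both sides give $u\tensor 1$ since $\ul\Delta(u)=u\tensor 1$ and $1^{(-1)}\tensor 1^{(0)}=1\tensor 1$; here one uses that $V^{(0)}$ lies in the group-like part and the coaction on $\kk\subset T(V)$ is trivial. For $n=1$, $u=v\in V$: by~\eqref{eq:bosonisation-coalg} we have $\Delta(v)=v\tensor 1+v^{(-1)}\tensor v^{(0)}$, while $\ul\Delta(v)=v\tensor 1+1\tensor v$, so $\ul v_{(1)}\ul v_{(2)}^{(-1)}\tensor\ul v_{(2)}^{(0)}=v\cdot 1^{(-1)}\tensor 1^{(0)}+1\cdot v^{(-1)}\tensor v^{(0)}=v\tensor 1+v^{(-1)}\tensor v^{(0)}$, matching. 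For the inductive step, suppose the formula holds for all elements of degree $<n$ and write a degree-$n$ element as $u=u'v$ with $u'\in V^{\tensor(n-1)}$ and $v\in V$. Then $\Delta(u)=\Delta(u')\Delta(v)$ since $\Delta$ is an algebra homomorphism on $T(V)\rtimes H$; expand this using the induction hypothesis for $u'$ and the base case for $v$, and multiply out in $T(V)\rtimes H$ using the cross-relation~\eqref{eq:cross-rel} $h\cdot w=(h_{(1)}\lact w)\cdot h_{(2)}$. On the other side, compute $\ul u_{(1)}\ul u_{(2)}^{(-1)}\tensor\ul u_{(2)}^{(0)}$ using that $\ul\Delta$ is a braided-algebra map: $\ul\Delta(u'v)=\ul\Delta(u')\ul\Delta(v)$ with the braided multiplication $m_{T(V)\tensor T(V)}=(m\tensor m)(1\tensor\Psi_{T(V)}\tensor 1)$ on $T(V)\tensor T(V)$, then apply the coaction $\delta$ in the right tensor factor. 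The key computation is that these two expansions agree; this reduces, after the inductive hypothesis is inserted, to a compatibility identity between the braiding $\Psi_{V,V}$ (given by~\eqref{eq:yd-braiding} as $\Psi_{V,W}(v\tensor w)=v^{(-1)}\lact w\tensor v^{(0)}$) and the $H$-module/comodule structure, which is precisely the Yetter--Drinfeld axiom~\eqref{eq:yd-compat} together with coassociativity of $\delta$ and the module-algebra property of $\lact$ on $T(V)$.

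The main obstacle I anticipate is bookkeeping: carefully tracking the Sweedler indices for the braided coproduct $\ul\Delta$, the $H$-comodule coaction $\delta$, and the $H$-comultiplication $\Delta_H$ simultaneously, while correctly moving $H$-factors past $T(V)$-factors via~\eqref{eq:cross-rel}. The cleanest way to organize this is to verify the identity first on the subalgebra generated by $V$ and $H$ separately (where it holds by definition/base case) and then use the fact that both $\Delta$ and the map $u\mapsto\ul u_{(1)}\ul u_{(2)}^{(-1)}\tensor\ul u_{(2)}^{(0)}$ are algebra homomorphisms from $T(V)$ (resp.\ $T(V)\rtimes H$) to $(T(V)\rtimes H)\tensor(T(V)\rtimes H)$ that agree on generators; the second map being an algebra homomorphism is itself the content that must be checked, and this is where~\eqref{eq:yd-compat} enters. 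Alternatively — and perhaps more transparently — one can invoke the standard bosonization (Radford--Majid) theory: $T(V)\rtimes H$ is the bosonization of the braided Hopf algebra $T(V)\in\prescript{H}{H}{\mathscr{YD}}$, and the displayed formula is exactly the standard expression for the coproduct of a bosonization in terms of the braided coproduct and the coaction. I would present the inductive argument explicitly for completeness but remark that it is the well-known bosonization coproduct formula, so the proof can be kept short: establish the base cases, note both sides are algebra maps agreeing on generators $V\cup H$ by~\eqref{eq:bosonisation-coalg} and~\eqref{eq:cross-rel}, and conclude.

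\begin{proof}
Both sides of the claimed identity are well-defined on $T(V)$: the left-hand side by definition of $\Delta$ on $T(V)\rtimes H$, and the right-hand side since $\ul\Delta:T(V)\to T(V)\tensor T(V)$ and $\delta:V\to H\tensor V$ extend (the latter as a comodule map) to all of $T(V)$. We argue by induction on the $\ZZ_{\ge0}$-grading degree $n$ of $u\in V^{\tensor n}$.

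For $n=0$ we have $u\in\kk$, $\ul\Delta(u)=u\tensor 1$, and $1^{(-1)}\tensor 1^{(0)}=1\tensor 1$, so the right-hand side equals $u\tensor 1=\Delta(u)$. For $n=1$, $u=v\in V$, and by~\eqref{eq:bosonisation-coalg}, $\Delta(v)=v\tensor 1+\delta(v)=v\tensor 1+v^{(-1)}\tensor v^{(0)}$. Since $\ul\Delta(v)=v\tensor 1+1\tensor v$, the two primitive terms contribute $v\cdot 1^{(-1)}\tensor 1^{(0)}+1\cdot v^{(-1)}\tensor v^{(0)}=v\tensor 1+v^{(-1)}\tensor v^{(0)}$, matching $\Delta(v)$.

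For the inductive step, let $u=u'v$ with $u'\in V^{\tensor(n-1)}$, $v\in V$. Since $\Delta$ is an algebra homomorphism on $T(V)\rtimes H$,
\[
\Delta(u'v)=\Delta(u')\Delta(v)=\big(\ul u'_{(1)}\ul u'_{(2)}{}^{(-1)}\tensor \ul u'_{(2)}{}^{(0)}\big)\big(v\tensor 1+v^{(-1)}\tensor v^{(0)}\big),
\]
using the induction hypothesis for $u'$ and the base case for $v$. Multiplying out inside $(T(V)\rtimes H)\tensor(T(V)\rtimes H)$ and applying the cross-relation~\eqref{eq:cross-rel} in the first tensor slot of each term gives
\[
\Delta(u'v)=\ul u'_{(1)}\,(\ul u'_{(2)}{}^{(-1)}\lact v)\,\ul u'_{(2)}{}^{(-2)}\tensor \ul u'_{(2)}{}^{(0)}
+\ul u'_{(1)}\,(\ul u'_{(2)}{}^{(-1)}\lact v^{(-1)})\,\ul u'_{(2)}{}^{(-2)}\tensor \ul u'_{(2)}{}^{(0)} v^{(0)},
\]
where we abbreviate $(\id\tensor\delta)\delta(\ul u'_{(2)})=\ul u'_{(2)}{}^{(-2)}\tensor\ul u'_{(2)}{}^{(-1)}\tensor\ul u'_{(2)}{}^{(0)}$ (using coassociativity of $\delta$) and have moved the group-like $H$-factor to the right of the braided factor $\ul u'_{(2)}{}^{(-1)}\lact v$, which is legitimate since this computation takes place in $T(V)\rtimes H$.

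On the other hand, since $\ul\Delta$ is a homomorphism of braided algebras, $\ul\Delta(u'v)=\ul\Delta(u')\ul\Delta(v)$ computed with the braided product $(m\tensor m)(1\tensor\Psi_{T(V)}\tensor 1)$. Using $\ul\Delta(v)=v\tensor 1+1\tensor v$ and~\eqref{eq:yd-braiding}, $\Psi_{T(V)}(\ul u'_{(2)}\tensor v)=\ul u'_{(2)}{}^{(-1)}\lact v\tensor \ul u'_{(2)}{}^{(0)}$, we obtain
\[
\ul\Delta(u'v)=\ul u'_{(1)}(\ul u'_{(2)}{}^{(-1)}\lact v)\tensor \ul u'_{(2)}{}^{(0)}+\ul u'_{(1)}\tensor \ul u'_{(2)} v.
\]
Applying $\id\tensor\delta$ and using that $\delta$ is a comodule-algebra map, $\delta(\ul u'_{(2)}v)=\ul u'_{(2)}{}^{(-1)}v^{(-1)}\tensor \ul u'_{(2)}{}^{(0)}v^{(0)}$, and that $\lact$ is a module-algebra action on $T(V)$ so $\delta(\ul u'_{(2)}{}^{(-1)}\lact v)$ is governed by the Yetter--Drinfeld axiom~\eqref{eq:yd-compat}, we find
\[
\ul u_{(1)}\ul u_{(2)}{}^{(-1)}\tensor \ul u_{(2)}{}^{(0)}
=\ul u'_{(1)}(\ul u'_{(2)}{}^{(-1)}\lact v)\ul u'_{(2)}{}^{(-2)}\tensor\ul u'_{(2)}{}^{(0)}
+\ul u'_{(1)}(\ul u'_{(2)}{}^{(-1)}\lact v^{(-1)})\ul u'_{(2)}{}^{(-2)}\tensor\ul u'_{(2)}{}^{(0)}v^{(0)},
\]
which coincides term by term with the expression for $\Delta(u'v)$ obtained above. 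This completes the induction, and since $T(V)$ is spanned by products $v_1\cdots v_n$ with $v_i\in V$, the identity holds for all $u\in T(V)$.
\end{proof}
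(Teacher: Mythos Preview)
Your approach is the same as the paper's: induction on degree, using that $\Delta$ is an algebra map and expanding $\ul\Delta(u'v)$ via the braided multiplication. The paper simply takes both factors of arbitrary positive degree rather than restricting the second to $V$, but this is a cosmetic difference.

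However, your displayed computations contain Sweedler-index errors. With your stated convention $(\id\tensor\delta)\delta(w)=w^{(-2)}\tensor w^{(-1)}\tensor w^{(0)}$, the cross-relation $h\cdot v=(h_{(1)}\lact v)h_{(2)}$ applied with $h=\ul u'_{(2)}{}^{(-1)}$ gives $(\ul u'_{(2)}{}^{(-2)}\lact v)\,\ul u'_{(2)}{}^{(-1)}$, not $(\ul u'_{(2)}{}^{(-1)}\lact v)\,\ul u'_{(2)}{}^{(-2)}$ as you wrote: the leftmost coaction component ($^{(-2)}$) is $h_{(1)}$ and acts via $\lact$, while $h_{(2)}$ is $^{(-1)}$ and stays on the right. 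Your first terms on both sides have the superscripts swapped.

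More seriously, your second term is wrong on both sides. On the left, $(\ul u'_{(1)}\ul u'_{(2)}{}^{(-1)}\tensor \ul u'_{(2)}{}^{(0)})(v^{(-1)}\tensor v^{(0)})$ has first tensor factor $\ul u'_{(1)}\,\ul u'_{(2)}{}^{(-1)}v^{(-1)}$, a product entirely inside $T(V)\rtimes H$ with $\ul u'_{(2)}{}^{(-1)},v^{(-1)}\in H$; there is no cross-relation to apply and no $\lact$ appears, since $v^{(-1)}$ lives in $H$, not in $T(V)$. Likewise, on the right side the second summand of $\ul\Delta(u'v)$ is $\ul u'_{(1)}\tensor\ul u'_{(2)}v$, and applying the coaction to $\ul u'_{(2)}v$ (comodule-algebra) gives $\ul u'_{(2)}{}^{(-1)}v^{(-1)}\tensor\ul u'_{(2)}{}^{(0)}v^{(0)}$, so this term is simply $\ul u'_{(1)}\,\ul u'_{(2)}{}^{(-1)}v^{(-1)}\tensor\ul u'_{(2)}{}^{(0)}v^{(0)}$. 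Once these are corrected, the two sides do match term by term, exactly as in the paper's proof. The Yetter--Drinfeld axiom~\eqref{eq:yd-compat} is in fact not needed in this computation; only coassociativity of $\delta$ and the comodule-algebra property are used.
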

\begin{proof}
For~$v\in V$ there is nothing to prove. Suppose that the identity holds for all~$u\in V^{\tensor r}$, $r<n$.
Let $u\in V^{\tensor r}$, $v\in V^{\tensor s}$, $r,s>0$, $r+s=n$.
Then 
$$
\ul\Delta(uv)=\ul\Delta(u)\ul\Delta(v)=(\ul u_{(1)}\tensor 1) \Psi(\ul u_{(2)}\tensor \ul v_{(1)})(1\tensor \ul v_{(2)})=
\ul u_{(1)}(\ul u_{(2)}^{(-1)}\lact \ul v_{(1)})\tensor \ul u_{(2)}^{(0)}\ul v_{(2)},
$$
whence
\begin{align*}
\Delta(uv)&=(\ul u_{(1)}\ul u_{(2)}^{(-1)}\tensor \ul u_{(2)}^{(0)})(\ul v_{(1)}\ul v_{(2)}^{(-1)}\tensor \ul v_{(2)}^{(0)})
=\ul u_{(1)}\ul u_{(2)}^{(-1)}\ul v_{(1)}\ul v_{(2)}^{(-1)}\tensor \ul u_{(2)}^{(0)}\ul v_{(2)}^{(0)}\\
&=\ul u_{(1)}(\ul u_{(2)}^{(-2)}\lact\ul v_{(1)})\ul u_{(2)}^{(-1)}\ul v_{(2)}^{(-1)}\tensor \ul u_{(2)}^{(0)}\ul v_{(2)}^{(0)}
=\ul{uv}_{(1)}\ul{uv}_{(2)}^{(-1)}\tensor \ul{uv}_{(2)}^{(0)}.\qedhere
\end{align*}
\end{proof}

Denote by $\ul S$ the braided antipode on~$T(V)$ corresponding to the braiding~$\Psi_{V,V}$. Note that $\ul S$ is a morphism in the 
category $\prescript{H}{H}{\mathscr{YD}}$ hence commutes with the action and the co-action of~$H$.
Define $S:T(V)\rtimes H\to T(V)\rtimes H$ by 
\begin{equation}\label{eq:antipode}
S(uh)=S_H(u^{(-1)}h)\ul S(u^{(0)}).
\end{equation}
\begin{lemma}\label{lem:antipode-boson}
$S$ is an antipode for $T(V)\rtimes H$. Moreover, $S$ is invertible and 
\begin{equation}\label{eq:antipode-inv}
S^{-1}(u h)=S_H^{-1}(h)\ul S^{-1}(u^{(0)})S_H^{-1}(u^{(-1)}),\qquad u\in T(V),\, h\in H.
\end{equation}
\end{lemma}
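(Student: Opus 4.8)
The final statement is Lemma~\ref{lem:antipode-boson}: the map $S$ defined by~\eqref{eq:antipode} is an antipode for the bosonization $T(V)\rtimes H$, it is invertible, and its inverse is given by~\eqref{eq:antipode-inv}. The plan is first to check that $S$ is an algebra anti-homomorphism $T(V)\rtimes H\to (T(V)\rtimes H)^{op}$, and then to verify the antipode axiom $m\circ(S\tensor\id)\circ\Delta=\eta\circ\varepsilon=m\circ(\id\tensor S)\circ\Delta$ on a set of algebra generators, namely on $V$ and on $H$; since both sides of the antipode axiom are (anti-)multiplicative in the appropriate sense once $S$ is known to be an anti-homomorphism, checking generators suffices. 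Finally I would exhibit the explicit two-sided inverse~\eqref{eq:antipode-inv} and check $S\circ S^{-1}=\id=S^{-1}\circ S$ directly, which gives invertibility for free.

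\textbf{Step 1: $S$ is an anti-homomorphism.} On $H$ the formula~\eqref{eq:antipode} reduces to $S(h)=S_H(h)$, which is an anti-automorphism of $H$. On $V$ it gives $S(v)=S_H(v^{(-1)})\lact \ul S(v^{(0)}) = -S_H(v^{(-1)})\lact v^{(0)}$ since $\ul S|_V=-\id$. I would then verify $S(h\cdot v)=S(v)\cdot S(h)$ using the cross-relation~\eqref{eq:cross-rel}, the Yetter--Drinfeld compatibility~\eqref{eq:yd-compat}, and the fact that $\ul S$ commutes with the $H$-action and $H$-coaction (being a morphism in $\prescript{H}{H}{\mathscr{YD}}$); and $S(v\cdot v')=S(v')\cdot S(v)$ using the braided-antipode property $\ul S\circ m = m\circ\Psi_{V,V}\circ(\ul S\tensor\ul S)$ from~\eqref{eq:brd-antipode} together with~\eqref{eq:yd-braiding}. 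Compatibility with $\ul\Delta$ and the coalgebra structure~\eqref{eq:bosonisation-coalg} of the bosonization is recorded in Lemma~\ref{lem:comult-boson}, which expresses $\Delta(u)=\ul u_{(1)}\ul u_{(2)}^{(-1)}\tensor \ul u_{(2)}^{(0)}$ for $u\in T(V)$; I would use this to reduce the anti-homomorphism check for a general $uh$ to the generator cases just handled.

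\textbf{Step 2: antipode axiom.} Using Lemma~\ref{lem:comult-boson}, for $u\in T(V)$ we have $m\big((S\tensor\id)\Delta(u)\big)= S\big(\ul u_{(1)}\ul u_{(2)}^{(-1)}\big)\,\ul u_{(2)}^{(0)} = S_H\big(\ul u_{(2)}^{(-1)}\big)\big(S_H(\ul u_{(1)}{}^{(-1)})\lact \ul S(\ul u_{(1)}{}^{(0)})\big)\,\ul u_{(2)}^{(0)}$, and I would collapse this using the cross-relation, the counit axiom for $S_H$ on $H$, and the braided-antipode axiom $m\circ(\ul S\tensor\id)\circ\ul\Delta=\ul\varepsilon$ from~\eqref{eq:brd-antipode}; the $H$-part contributes $S_H(u^{(-1)})u^{(0)(-1)}$-type cancellations that land in $\varepsilon_H$ after applying~\eqref{eq:yd-compat}. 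The other side is symmetric, using the third identity in~\eqref{eq:brd-antipode}. Then on $H$ the axiom is just the antipode axiom for $H$. The invertibility is then immediate: one checks by the same type of computation — again reducing to generators — that $S^{-1}$ as written in~\eqref{eq:antipode-inv} satisfies $S(S^{-1}(uh))=uh=S^{-1}(S(uh))$, using $\ul S^{-1}=\ul S_{\Psi^{-1}}$ and~\eqref{eq:yd-braiding-inv}, together with $S_H^{-1}$ being the antipode of $H^{cop}$.

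\textbf{Main obstacle.} The genuinely delicate point is bookkeeping the interaction between the $H$-coaction on $T(V)$, the braided antipode $\ul S$, and $S_H$ in Step~1 and the collapse in Step~2 — in particular getting the order of the tensor legs and the placement of $S_H$ versus $S_H^{-1}$ right when pushing the $H$-comodule structure past $\ul S$ via~\eqref{eq:yd-compat}. Everything is routine once one commits to consistent Sweedler notation $\delta(v)=v^{(-1)}\tensor v^{(0)}$ and $(\Delta_H\tensor 1)\Delta_H=h_{(1)}\tensor h_{(2)}\tensor h_{(3)}$, but this is where sign and side errors would creep in, so I would organize the argument so that all uses of $\ul S$, $\Psi_{V,V}$, and~\eqref{eq:yd-compat} are invoked as black boxes from the already-established identities~\eqref{eq:brd-antipode}, \eqref{eq:yd-compat}, \eqref{eq:yd-braiding}, \eqref{eq:yd-braiding-inv} and Lemma~\ref{lem:comult-boson}, rather than re-deriving anything by hand.
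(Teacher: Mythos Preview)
Your proposal is correct and follows essentially the same approach as the paper's proof: first establish that $S$ is an algebra anti-homomorphism by checking the three types of products $S(uh)$, $S(hu)$, $S(uv)$ (the paper does these for general $u,v\in T(V)$ rather than reducing to $V$, but your inductive reduction is equivalent), then verify the antipode convolution identity directly via Lemma~\ref{lem:comult-boson}, and finally check by direct computation that the map in~\eqref{eq:antipode-inv} is a two-sided inverse of~$S$. The only cosmetic difference is that the paper writes out the collapse $m(S\tensor 1)\Delta(u)=S_H((\ul\Delta(u))^{(-1)})\varepsilon(u^{(0)})=\varepsilon(u)$ in one line rather than invoking the ``generators suffice'' argument, but the underlying mechanism is identical.
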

\begin{proof}
By definition, we have $S(uh)=S(h)S(u)$, $u\in T(V)$, $h\in H$. Furthermore, 
using~\eqref{eq:yd-compat}, we obtain 
\begin{align*}
S(hu)&=S((h_{(1)}\lact u)h_{(2)})=S_H(h_{(1)}\lact u)^{(-1)}h_{(2)})\ul S((h_{(1)}\lact u)^{(0)})
\\&=S_H(h_{(1)}u^{(-1)}S_H(h_{(3)})h_{(4)})\ul S(h_{(2)}\lact u^{(0)})
=S_H(h_{(1)}u^{(-1)})\ul S(h_{(2)}\lact u^{(0)})
\\&=S_H(u^{(-1)})S_H(h_{(1)})(h_{(2)}\lact\ul S(u^{(0)})=S_H(u^{(-1)})(S_H(h_{(2)})h_{(3)})\lact \ul S(u^{(0)})S_H(h_{(1)})
\\&=S_H(u^{(-1)})\ul S(u^{(0)})S_H(h)=S(u)S(h).
\end{align*}
To prove that $S$ is an anti-endomorphism of $T(V)\rtimes H$, it remains to show that $S(uv)=S(v)S(u)$ for all $u,v\in T(V)$. 
Indeed,
\begin{align*}
S(uv)&=S_H((uv)^{(-1)})\ul S((uv)^{(0)})=S_H(v^{(-1)})S_H(u^{(-1)})((\ul S(u^{(0)}))^{(-1)}\lact \ul S(v^{(0)}))(\ul S(u^{(0)}))^{(0)}\\
&=S_H(v^{(-1)})S_H(u^{(-2)})(u^{(-1)}\lact \ul S(v^{(0)}))\ul S(u^{(0)})
\\&=S_H(v^{(-1)})(S_H(u^{(-2)})u^{(-1)}\lact \ul S(v^{(0)}))S_H(u^{(-3)})\ul S(u^{(0)})
\\&=S_H(v^{(-1)})\ul S(v^{(0)}))S_H(u^{(-1)})\ul S(u^{(0)})=S(v)S(u).
\end{align*}
We have 
$$
m(S\tensor 1)\Delta(u)=S(\ul u_{(1)}\ul u_{(2)}^{(-1)})\ul u_{(2)}^{(0)}=S_H(\ul u_{(1)}^{(-1)}\ul u_{(2)}^{(-1)})\ul S(\ul u_{(1)}^{(0)})\ul u_{(2)}^{(0)}=
S_H((\Delta(u))^{(-1)})\varepsilon(u^{(0)})=\varepsilon(u).
$$
On the other hand,
\begin{multline*}
m(1\tensor S)\Delta(u)=\ul u_{(1)}\ul u_{(2)}^{(-1)}S(\ul u_{(2)}^{(0)})=\ul u_{(1)}\ul u_{(2)}^{(-2)}S_H(\ul u_{(2)}^{(-1)})\ul S(\ul u_{(2)}^{(0)})
=\ul u_{(1)}\varepsilon_H(\ul u_{(2)}^{(-1)})\ul S(\ul u_{(2)}^{(0)})\\=\ul u_{(1)}\ul S(\ul u_{(2)})=\varepsilon(u).
\end{multline*}

Define $\tilde S:T(V)\rtimes H\to T(V)\rtimes H$ by $\tilde S(uh)=S_H^{-1}(h)\ul S^{-1}(u^{(0)})S_H^{-1}(u^{(-1)})$. Then we have 
\begin{align*}
\tilde S(hu)&=\tilde S((h_{(1)}\lact u)h_{(2)})=S_H^{-1}(h_{(2)})\ul S^{-1}((h_{(1)}\lact u)^{(0)})
S_H^{-1}((h_{(1)}\lact u)^{(-1)})\\&=S_H^{-1}(h_{(4)})\ul S^{-1}(h_{(2)}\lact u^{(0)})
S_H^{-1}(h_{(1)}u^{(-1)}S_H(h_{(3)}))\\&=
S_H^{-1}(h_{(4)})(h_{(2)}\lact\ul S^{-1}( u^{(0)}))
h_{(3)}S_H^{-1}(u^{(-1)})S_H^{-1}(h_{(1)})
\\&=S_H^{-1}(h_{(3)})h_{(2)}\ul S^{-1}( u^{(0)})S_H^{-1}(u^{(-1)})S_H^{-1}(h_{(1)})
\\&=\ul S^{-1}( u^{(0)})S_H^{-1}(u^{(-1)})S_H^{-1}(h)=\tilde S(u)S_H^{-1}(h).
\end{align*}
Now 
\begin{multline*}
S\tilde S(uh)=S(S_H^{-1}(h)\ul S^{-1}(u^{(0)})S_H^{-1}(u^{(-1)}))=u^{(-1)}S(\ul S(u^{(0)}))h\\=u^{(-2)}S_H(u^{(-1)})u^{(0)}h=\varepsilon_H(u^{(-1)})u^{(0)}h=uh,
\end{multline*}
while 
\begin{multline*}
\tilde S S(uh)=\tilde S( S_H(u^{(-1)}h)\ul S(u^{(0)}))=\tilde S(\ul S(u^{(0)}))u^{(-1)}h\\=u^{(0)}S_H^{-1}(u^{(-1)})u^{(-2)}h=u^{(0)}\varepsilon_H(u^{(-1)})h=
uh.
\end{multline*}
Thus, $\tilde S$ is the inverse of~$S$.
\end{proof}
Observe that $\ker\operatorname{Wor}(\Psi)$ is a bi-ideal in~$T(V)\rtimes H$. In particular, we can consider the quotient of~$T(V)\rtimes H$ by 
that ideal which is isomorphic to $\mathcal B(V)\rtimes H$. Clearly, Lemmata~\ref{lem:comult-boson} and~\ref{lem:antipode-boson} hold 
in $\mathcal B(V)\rtimes H$.

Let $\bar\cdot$ be a field involution on~$\kk$ and fix its extension to~$V$ as in~\S\ref{subs:bar-and-*}. Suppose 
that $\overline{ h\lact v}=S_H^{-1}(\overline h)\lact \overline v$ and that $(\bar\cdot\tensor\bar\cdot)\circ \delta\circ\bar \cdot=\delta$.
\begin{lemma}
Suppose that~$\Psi$ is self-transposed. Then~$\Psi$ is also unitary, that is 
$\bar\cdot\tensor \bar\cdot\circ \Psi=\Psi^{-1}\circ \bar\cdot$
\end{lemma}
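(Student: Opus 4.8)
The plan is to verify directly from the definitions that $\Psi$ unitary, i.e. $(\bar\cdot\tensor\bar\cdot)\circ\Psi=\Psi^{-1}\circ\bar\cdot$ on $V\tensor V$. Recall from~\eqref{eq:yd-braiding} that $\Psi(v\tensor w)=v^{(-1)}\lact w\tensor v^{(0)}$, and from~\eqref{eq:yd-braiding-inv} that $\Psi^{-1}(w\tensor v)=v^{(0)}\tensor S_H^{-1}(v^{(-1)})\lact w$. So the claim to be checked, applied to $v\tensor w$, is
\begin{equation*}
\overline{v^{(-1)}\lact w}\tensor\overline{v^{(0)}}=\Psi^{-1}(\overline{v\tensor w})=\Psi^{-1}(\overline v\tensor\overline w).
\end{equation*}
First I would use the hypothesis $(\bar\cdot\tensor\bar\cdot)\circ\delta\circ\bar\cdot=\delta$, which rewritten says $\delta(\overline v)=\overline{v^{(-1)}}\tensor\overline{v^{(0)}}$ — that is, the comodule structure on $\bar v$ is given by applying $\bar\cdot$ to each leg of $\delta(v)$. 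Substituting this into the formula for $\Psi^{-1}(\overline v\tensor\overline w)$ gives $\Psi^{-1}(\overline v\tensor\overline w)=\overline{v^{(0)}}\tensor S_H^{-1}(\overline{v^{(-1)}})\lact\overline w$.

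Next I would use the hypothesis $\overline{h\lact v}=S_H^{-1}(\overline h)\lact\overline v$ with $h=v^{(-1)}$ (acting on $w$): this yields $S_H^{-1}(\overline{v^{(-1)}})\lact\overline w=\overline{v^{(-1)}\lact w}$. Plugging this in, we get $\Psi^{-1}(\overline v\tensor\overline w)=\overline{v^{(0)}}\tensor\overline{v^{(-1)}\lact w}$, which is exactly the right-hand side of the displayed identity above once we note that the two tensor legs match (the $\overline{v^{(0)}}$ sits in the first slot and $\overline{v^{(-1)}\lact w}$ in the second, as in the formula for $(\bar\cdot\tensor\bar\cdot)\circ\Psi$ applied to $v\tensor w$). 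Since both sides of the desired identity are anti-linear in $v\tensor w$ and $V\tensor V$ is spanned by elementary tensors, this proves the claim; the statement that $\Psi$ is self-transposed is not actually needed for this direction, so I would simply remark that the conclusion holds under the two compatibility assumptions alone. The only mild subtlety — and the one place I would be careful — is keeping track of which tensor leg is which when composing $\Psi^{-1}$ with $\bar\cdot\tensor\bar\cdot$, since $\Psi^{-1}$ is most naturally written as a map $W\tensor V\to V\tensor W$ while $\bar\cdot$ and the other maps here are written on $V\tensor V$; there is no real obstacle, just bookkeeping of variable names.
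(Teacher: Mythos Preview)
There is a genuine error in your bookkeeping, precisely at the point you flagged as delicate. In the formula \eqref{eq:yd-braiding-inv}, $\Psi^{-1}(w\tensor v)=v^{(0)}\tensor S_H^{-1}(v^{(-1)})\lact w$, the coaction is applied to the \emph{second} tensor factor. Hence
\[
\Psi^{-1}(\overline v\tensor\overline w)=(\overline w)^{(0)}\tensor S_H^{-1}((\overline w)^{(-1)})\lact\overline v
=\overline{w^{(0)}}\tensor S_H^{-1}(\overline{w^{(-1)}})\lact\overline v
=\overline{w^{(0)}}\tensor\overline{w^{(-1)}\lact v},
\]
not $\overline{v^{(0)}}\tensor S_H^{-1}(\overline{v^{(-1)}})\lact\overline w$ as you wrote. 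Meanwhile the left-hand side is $(\bar\cdot\tensor\bar\cdot)\Psi(v\tensor w)=\overline{v^{(-1)}\lact w}\tensor\overline{v^{(0)}}$, with $\overline{v^{(-1)}\lact w}$ in the \emph{first} slot (contrary to your concluding sentence). So the identity you need is
\[
\overline{v^{(-1)}\lact w}\tensor\overline{v^{(0)}}=\overline{w^{(0)}}\tensor\overline{w^{(-1)}\lact v},
\]
which after undoing $\bar\cdot\tensor\bar\cdot$ is exactly the self-transposedness condition $v^{(-1)}\lact w\tensor v^{(0)}=w^{(0)}\tensor w^{(-1)}\lact v$, i.e.\ \eqref{eq:sym-braid-yd}. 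This is how the paper's proof proceeds, and the hypothesis is genuinely used.

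Your remark that self-transposedness ``is not actually needed'' is therefore incorrect. A quick sanity check in the diagonal case: the action compatibility gives $\overline{\chi(\alpha,\beta)}=\chi(\alpha,\beta)^{-1}$, whereas unitarity requires $\overline{\chi(\alpha,\beta)}=\chi(\beta,\alpha)^{-1}$; these agree only when $\chi$ is symmetric, i.e.\ when $\Psi$ is self-transposed.
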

\begin{proof}
Since~$\Psi$ is self-transposed, it follows that
\begin{equation}\label{eq:sym-braid-yd}
u^{(-1)}\lact v\tensor u^{(0)}=v^{(0)}\tensor v^{(-1)}\lact u
\end{equation}
Applying~$\bar\cdot\tensor\bar\cdot$ to both sides yields
$$
(\overline\cdot\tensor \overline\cdot)\circ\Psi(u\tensor v)=
\overline{u^{(-1)}\lact v}\tensor \overline{u^{(0)}}=\overline{v^{(0)}}\tensor \overline{v^{(-1)}\lact u}
=\overline{v^{(0)}}\tensor S_H^{-1}(\overline{v^{(-1)}})\lact \overline u=\Psi^{-1}(\overline u\tensor \overline v),
$$
where we used~\eqref{eq:yd-braiding-inv}.
\end{proof}
Thus, if~\eqref{eq:sym-braid-yd} holds,
$\mathcal B(V)$ admits the anti-linear anti-involution~$\bar\cdot$. Then by Lemma~\ref{lem:bar-anti}, \eqref{eq:bar-act-compat} and 
Remark~\ref{rem:triv-act-semidirect}, $\bar\cdot$ extends uniquely to an anti-linear anti-involution on $\mathcal B(V)\rtimes H$
such that $\overline{v\cdot h}=\overline h\cdot \overline v$, $v\in V$, $h\in H$. Thus, we obtain the following 
\begin{lemma}\label{lem:boson-bar-anti}
Suppose that $\Psi:V\tensor V\to V\tensor V$ is self-transposed, $\bar\cdot$ commutes with the co-action on~$V$ and 
$\overline{h\lact v}=S_H^{-1}(\overline h)\lact \overline v$. Then $\bar\cdot$ extends to an anti-linear 
algebra anti-involution of~$\mathcal B(V)\rtimes H$.
\end{lemma}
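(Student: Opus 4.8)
The plan is to assemble the statement from three facts already in place: that a self-transposed Yetter--Drinfeld braiding is automatically unitary, that a self-transposed \emph{and} unitary braiding equips its Nichols algebra with a canonical bar-anti-involution (the discussion in \S\ref{subs:bar-and-*}), and Lemma~\ref{lem:bar-anti} applied to $\mathcal B(V)\rtimes H$ regarded as a double smash product.

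First I would invoke the preceding Lemma: a self-transposed $\Psi$ arising from a Yetter--Drinfeld module with $\bar\cdot$ commuting with the coaction and $\overline{h\lact v}=S_H^{-1}(\overline h)\lact\overline v$ is unitary, whose hypotheses are exactly those assumed here via~\eqref{eq:sym-braid-yd}. Thus $\Psi=\Psi_{V,V}$ is both self-transposed and unitary, so by \S\ref{subs:bar-and-*} the anti-linear algebra endomorphism $\tilde\cdot$ and the anti-automorphism ${}^*$ descend from $T(V)$ to $\mathcal B(V,\Psi)$, and $\bar\cdot:=\tilde\cdot\circ{}^*$ is the unique anti-linear algebra anti-involution of $\mathcal B(V)$ restricting to the given $\bar\cdot$ on $V$.

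Next I would realize $\mathcal B(V)\rtimes H$ as the double smash product $\mathcal D_{H,H^{cop}}(\mathcal B(V))$ with $\lact$ the Yetter--Drinfeld action and $\,\tilde\lact\,$ the trivial action $h\,\tilde\lact\, c=\varepsilon_H(h)c$: by Remark~\ref{rem:triv-act-semidirect} this reproduces exactly the cross relation~\eqref{eq:cross-rel}. To apply Lemma~\ref{lem:bar-anti} it then suffices to check~\eqref{eq:bar-act-compat} (which implies~\eqref{eq:bar-act-gen-compat} by the Remark following Lemma~\ref{lem:bar-anti}), namely $\overline{h\lact c}=S_H^{-1}(\overline h)\lact\overline c$ and $\overline{h\,\tilde\lact\, c}=S_H(\overline h)\,\tilde\lact\,\overline c$ for $h\in H$, $c\in\mathcal B(V)$. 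The second identity is immediate since the action is via $\varepsilon_H$ and $\varepsilon_H\circ S_H=\varepsilon_H$. The first holds on generators $v\in V$ by hypothesis, and I would propagate it to all of $\mathcal B(V)$ by induction on the $\ZZ_{\ge0}$-grading, using that $\lact$ makes $\mathcal B(V)$ an $H$-module algebra, that $\bar\cdot$ is anti-multiplicative on $\mathcal B(V)$ and a coalgebra morphism on $H$, that $\overline{S_H(h)}=S_H^{-1}(\overline h)$, and that $\Delta_H(S_H^{-1}(\overline h))=S_H^{-1}(\overline{h_{(2)}})\tensor S_H^{-1}(\overline{h_{(1)}})$. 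Lemma~\ref{lem:bar-anti} then yields that $\bar\cdot$ extends to an anti-linear algebra anti-involution of $\mathcal D_{H,H^{cop}}(\mathcal B(V))=\mathcal B(V)\rtimes H$ with $\overline{v\cdot h}=\overline h\cdot\overline v$, as required.

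The hard part will be the inductive step in that last verification: expanding $\overline{h\lact(c_1c_2)}$ by the module-algebra axiom produces two order reversals — anti-multiplicativity of $\bar\cdot$ on $\mathcal B(V)$ and the reversal of $\Delta_H$ effected by $S_H^{-1}$ — and one must see that they cancel so that the right-hand side reassembles as $S_H^{-1}(\overline h)\lact(\overline{c_2}\,\overline{c_1})$. Everything else is a formal unwinding of the definitions of $\tilde\cdot$, ${}^*$ and the double smash product.
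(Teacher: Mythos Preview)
Your proposal is correct and follows exactly the paper's approach: the paragraph immediately preceding the Lemma in the paper invokes the preceding Lemma (self-transposed $\Rightarrow$ unitary), the construction of $\bar\cdot$ on $\mathcal B(V)$ from \S\ref{subs:bar-and-*}, Remark~\ref{rem:triv-act-semidirect} to identify $\mathcal B(V)\rtimes H$ as a double smash product with trivial $\tilde\lact$, and then Lemma~\ref{lem:bar-anti} via~\eqref{eq:bar-act-compat}. Your inductive verification that $\overline{h\lact c}=S_H^{-1}(\overline h)\lact\overline c$ propagates from $V$ to all of $\mathcal B(V)$ is a detail the paper leaves implicit, and your sketch of how the two order reversals cancel is accurate.
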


\subsection{Drinfeld double}\label{subs:A-DrD}
Let $C$, $H$ be Hopf algebras and fix a Hopf pairing~$\xi:C\tensor H\to\kk$.
Let $V$ (respectively, $V^\star$) be an object in $\prescript{H}{H}{\mathscr{YD}}$ (respectively,
in $\prescript{C}{C}{\mathscr{YD}}$).  
Then we have a right $C$-module (respectively, $H$-module) structure on~$V$ (respectively, $V^\star$)
defined by 
\begin{equation}\label{eq:other-action}
f\ract h=\xi(f^{(-1)},h) f^{(0)},\qquad v\ract c=\xi(c,v^{(-1)})v^{(0)},\qquad f\in V^\star,\,v\in V,\,c\in C,\,h\in H.
\end{equation}
Assume that a pairing $\lra{\cdot}{\cdot}:V^\star\tensor V\to \kk$ 
satisfies
\begin{equation}\label{eq:compat-cond}
\lra{f}{h\lact v}=\lra{f\ract h}{v},\qquad \lra{c\lact f}{v}=\lra{f}{v\ract c}, \qquad f\in V^\star,\,v\in V,\,c\in C,\,h\in H.
\end{equation}
\begin{lemma}\label{lem:Psi-adj}
Suppose that~\eqref{eq:compat-cond} holds. Then 
the braiding $\Psi^\star$ is the adjoint of~$\Psi$ with respect to $\lra{\cdot}{\cdot}':V^\star{}^{\tensor 2}\tensor V^{\tensor 2}\to\kk$
in the notation of~\S\ref{subs:A-pair}.
\end{lemma}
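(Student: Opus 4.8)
The plan is to verify the identity defining adjointness directly on elementary tensors. Since $\lra{\cdot}{\cdot}'$ is, by its definition in~\S\ref{subs:A-pair}, the componentwise product of the pairings on the tensor factors, it suffices to prove
$$
\lra{\Psi^\star(f\tensor f')}{v\tensor v'}'=\lra{f\tensor f'}{\Psi(v\tensor v')}',\qquad f,f'\in V^\star,\ v,v'\in V.
$$
First I would expand the left-hand side using formula~\eqref{eq:yd-braiding} for the braiding of $\prescript{C}{C}{\mathscr{YD}}$ applied to $V^\star$, which gives $\lra{f^{(-1)}\lact f'}{v}\,\lra{f^{(0)}}{v'}$. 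Applying the second compatibility in~\eqref{eq:compat-cond} with $c=f^{(-1)}$, and then the definition~\eqref{eq:other-action} of the right $C$-action on~$V$, turns the first factor into $\lra{f'}{v\ract f^{(-1)}}=\xi(f^{(-1)},v^{(-1)})\,\lra{f'}{v^{(0)}}$; hence the left-hand side equals $\xi(f^{(-1)},v^{(-1)})\,\lra{f'}{v^{(0)}}\,\lra{f^{(0)}}{v'}$.

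Next I would expand the right-hand side using~\eqref{eq:yd-braiding} for the braiding of $\prescript{H}{H}{\mathscr{YD}}$ applied to $V$, obtaining $\lra{f}{v^{(-1)}\lact v'}\,\lra{f'}{v^{(0)}}$. Applying the first compatibility in~\eqref{eq:compat-cond} with $h=v^{(-1)}$, and then~\eqref{eq:other-action} for the right $H$-action on~$V^\star$, rewrites the first factor as $\lra{f\ract v^{(-1)}}{v'}=\xi(f^{(-1)},v^{(-1)})\,\lra{f^{(0)}}{v'}$; hence the right-hand side equals $\xi(f^{(-1)},v^{(-1)})\,\lra{f^{(0)}}{v'}\,\lra{f'}{v^{(0)}}$. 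The two expressions coincide, which is exactly the assertion.

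This is a routine Sweedler-notation manipulation, so I do not anticipate a genuine obstacle; the only points demanding care are purely bookkeeping. One must keep track that $f^{(-1)}$ lies in $C$ while $v^{(-1)}$ lies in $H$, so that $\xi(f^{(-1)},v^{(-1)})$ is meaningful, and one must be sure to invoke the correct one of the two identities in~\eqref{eq:compat-cond}, and the correct right action from~\eqref{eq:other-action}, on each of the two sides. Having established the identity on elementary tensors, it extends by bilinearity to all of $V^\star{}^{\tensor 2}\tensor V^{\tensor 2}$, giving the claim.
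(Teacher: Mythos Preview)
Your proposal is correct and follows essentially the same approach as the paper: both expand the two sides using the Yetter--Drinfeld braiding~\eqref{eq:yd-braiding}, then apply the compatibility~\eqref{eq:compat-cond} together with the induced right actions~\eqref{eq:other-action} to reduce each side to the common expression $\xi(f^{(-1)},v^{(-1)})\,\lra{f'}{v^{(0)}}\,\lra{f^{(0)}}{v'}$. The only cosmetic difference is that the paper presents the computation as a single chain from the left-hand side to the right-hand side, whereas you compute each side separately to the middle.
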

\begin{proof}
We need to show that for all $f,g\in V^\star$, $u,v\in V$
$$
\lra{\Psi^\star(f\tensor g)}{u\tensor v}'=\lra{f\tensor g}{\Psi(u\tensor v)}'
$$
which, by the definition of~$\Psi$ and~$\Psi^\star$ is equivalent to
$$
\lra{f^{(-1)}\lact g}{u}\lra{f^{(0)}}{v}=\lra{f}{u^{(-1)}\lact v}\lra{g}{u^{(0)}}
$$
But, using~\eqref{eq:other-action} and~\eqref{eq:compat-cond} we obtain 
\begin{align*}
\lra{f^{(-1)}\lact g}{u}\lra{f^{(0)}}{v}&=\lra{g}{u\ract f^{(-1)}}\lra{f^{(0)}}{v}=
\lra{g}{u^{(0)}}\xi(f^{(-1)},u^{(-1)})\lra{f^{(0)}}{v}\\
&=\lra{g}{u^{(0)}}\lra{f\ract u^{(-1)}}{v}=\lra{g}{u^{(0)}}\lra{f}{u^{(-1)}\lact v}.\qedhere
\end{align*}
\end{proof}
Thus, we can define the pairing $\lra{\cdot}{\cdot}:T(V^\star)\tensor T(V)\to \kk$ as in~\S\ref{subs:A-pair}. Note that~\eqref{eq:compat-cond}
holds for all $f\in T(V^\star)$, $v\in T(V)$. Clearly, we can replace $T(V)$, $T(V^\star)$ by the corresponding Nichols algebras.

It should be noted that $V$ is not an $H$-$C$ bimodule with respect to the actions $\lact$ and $\ract$.
Given $c\in C$, $h\in H$ define, for all $v\in V$, $f\in V^\star$
\begin{equation}\label{eq:Dchi(C,H)-module}
v\ract (c\cdot h)=S_H^{-1}(h)\lact (v\ract c),\qquad (c\cdot h)\lact f=c\lact (f\ract S_H^{-1}(h)).
\end{equation}
\begin{lemma}\label{lem:Dchi(C,H)-YD-mod}
$V$ (respectively, $V^\star$) is a right (respectively, left) Yetter-Drinfeld module over $\mathcal D_\xi(C,H)$, with the right 
coaction on~$V$ defined by $\delta_R(v)=v^{(0)}\tensor v^{(-1)}$, the left coaction on~$V^\star$ defined by $\delta(f)=f^{(-1)}\tensor f^{(0)}$
and the left (right) action defined by~\eqref{eq:Dchi(C,H)-module}.
\end{lemma}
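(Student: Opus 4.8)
The plan is to verify directly that the two structures --- right module and right comodule --- satisfy the Yetter-Drinfeld compatibility axiom over $\mathcal D_\xi(C,H)$. First I would record the algebra structure we are using: by Proposition~\ref{prop:double acts on halves} (combined with the explicit product formula \eqref{eq:prod-double-gen}) the module axioms over $\mathcal D_\xi(C,H)$ reduce to checking compatibility separately with the subalgebras $C$ and $H^{cop}$ and then checking that the cross-relation $h\cdot c = c_{(2)}\cdot\bigl(S_C^{-1}(c_{(1)})\underset\xi\lact h\underset\xi\ract c_{(3)}\bigr)$ is respected. So the proof naturally splits into: (i) $V$ is a right Yetter-Drinfeld module over $H^{cop}$ with coaction $\delta_R(v)=v^{(0)}\tensor v^{(-1)}$ and action $\lact$ (equivalently, converting the given left YD structure over $H$ into a right YD structure over $H^{cop}$, which is a standard categorical dualization); (ii) $V$ is a right Yetter-Drinfeld module over $C$ with the action $v\ract c=\xi(c,v^{(-1)})v^{(0)}$ from \eqref{eq:other-action} and the same coaction; (iii) the two actions are ``interlocked'' in the way dictated by the double, so that together they give an action of the whole double compatible with the single coaction $\delta_R$.

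For step (i) I would just rewrite the left YD condition \eqref{eq:yd-compat} for $V\in\prescript{H}{H}{\mathscr{YD}}$ after applying the flip and $S_H\leftrightarrow S_H^{-1}$ dictionary; this is essentially the observation that $\prescript{H}{H}{\mathscr{YD}}\simeq \mathscr{YD}^{H^{cop}}_{H^{cop}}$ as braided categories, and the braiding \eqref{eq:yd-braiding} matches up with the one produced on the right-module side. For step (ii) the key computation is to check that
$$
\delta_R(v\ract c) = (v\ract c)^{(0)}\tensor (v\ract c)^{(-1)}
$$
agrees with the formula forced by the right YD axiom over $C$, namely $(v^{(0)}\ract c_{(2)})\tensor S_C(c_{(1)})\,v^{(-1)}\,c_{(3)}$; here one uses that $\xi$ is a \emph{Hopf} pairing (so $\xi(S_C(c),h)=\xi(c,S_H(h))$) together with the fact that $v^{(-1)}\in H$ is grouplike-like only up to the comodule axioms, so one expands $\delta_R(v)$, applies $\xi$ in the appropriate slots, and collapses using $\xi(c_{(1)},-)\xi(c_{(2)},-)=\xi(c,-\cdot -)$ and the antipode axiom for $C$. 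The analogous statement for $V^\star$ over $C$ is the mirror image, already packaged by Lemma~\ref{lem:Psi-adj} and \eqref{eq:compat-cond}, and over $H$ it uses \eqref{eq:Dchi(C,H)-module}.

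The main obstacle --- and the step I would spend the most care on --- is step (iii): checking that the cross-relation of $\mathcal D_\xi(C,H)$ is compatible with the single right coaction $\delta_R$. Concretely, one must verify that applying $\delta_R$ to $v\ract(h\cdot c)$ and to $v\ract(c_{(2)}\cdot(S_C^{-1}(c_{(1)})\underset\xi\lact h\underset\xi\ract c_{(3)}))$ give the same element of $V\tensor \mathcal D_\xi(C,H)$, which amounts to a Hopf-algebraic identity mixing $\Delta_H$, $\Delta_C$, $S_H$, $S_C$, $\xi$, and the action/coaction of $V$. I expect this to follow by a diagram-chase that repeatedly uses \eqref{eq:yd-compat} (in its right-handed form), the Hopf pairing axioms for $\xi$, and \eqref{eq:pair-action}; the bookkeeping of Sweedler indices is the only real difficulty, and I would organize it by first treating the pure $H$ and pure $C$ cases and then the mixed term, so that the general case is assembled from pieces already checked. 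Once all three pieces are in place, the YD axiom over $\mathcal D_\xi(C,H)$ holds because the double is generated by $C$ and $H$ and the axiom is multiplicative in the acting algebra, which completes the proof; the statement for $V^\star$ is proved by the evident left-right symmetric argument, or by appealing to the isomorphism $\mathcal D_\xi(C,H)^{\boldsymbol{op}}\cong\mathcal D_{H^{\boldsymbol{op}},H^{op}}(C^{\boldsymbol{op}})$ already established.
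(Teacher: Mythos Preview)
Your overall architecture---verify the right module structure on $V$ over the double, then check the Yetter--Drinfeld compatibility on the generating subalgebras $H^{cop}$ and $C$, then conclude by multiplicativity---is correct and is exactly what the paper does. Your steps (i) and (ii) match the paper's argument: the paper notes that since $H^{cop}$ is a sub-bialgebra of $\mathcal D_\xi(C,H)$ the YD condition for $H^{cop}$ is inherited from the original left YD structure, and then computes directly that
\[
(v^{(0)}\ract c_{(2)})\tensor S_C(c_{(1)})\,v^{(-1)}\,c_{(3)} \;=\; \delta_R(v\ract c),
\]
which is your step (ii).

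However, your step (iii) is misdirected. Once the module structure on $V$ is established, the YD compatibility axiom is multiplicative in the acting element (as you yourself observe in your final sentence), so there is nothing further to check involving $\delta_R$ and the cross-relation: applying $\delta_R$ to $v\ract(h\cdot c)$ and to $v\ract\bigl(c_{(2)}\cdot(\ldots)\bigr)$ gives the same result \emph{trivially}, because the two arguments are already equal in $V$. The genuine work at the cross-relation is the \emph{module} axiom, not the YD axiom: one must show that the prescription \eqref{eq:Dchi(C,H)-module} is consistent with the relation $h\cdot c=(h_{(1)}\underset{\xi}\lact c\underset{\xi}\ract S_H^{-1}(h_{(3)}))\cdot h_{(2)}$ in the double, i.e.\ that $(v\ract h)\ract c=v\ract(h\cdot c)$. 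You mention this in passing in your first paragraph but never carry it out. The paper does this as its first computation, expanding $(v\ract h)\ract c=\xi(c,(S_H^{-1}(h)\lact v)^{(-1)})(S_H^{-1}(h)\lact v)^{(0)}$ via \eqref{eq:yd-compat} and the Hopf-pairing identities for $\xi$ until it collapses to $v\ract(h\cdot c)$. This is the computation you should have labeled as ``the main obstacle''; the tools you list (the right-handed form of \eqref{eq:yd-compat}, the Hopf pairing axioms) are the right ones, just aimed at the wrong target.
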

\begin{proof}
Let $c\in C$, $h\in H$ and~$v\in V$.
By definition, we have $v\ract (c\cdot h)=(v\ract c)\ract h$. 
On the other hand,
\begin{multline*}
(v\ract h)\ract c=\xi(c,(S_H^{-1}(h)\lact v)^{(-1)})(S_H^{-1}(h)\lact v)^{(0)}
=\xi(c,S_H^{-1}(h_{(3)})v^{(-1)}h_{(1)})S_H^{-1}(h_{(2)})\lact v^{(0)}\\
=\xi(c_{(1)},S^{-1}_H(h_{(3)}))\xi(c_{(3)},h_{(1)})\xi(c_{(2)},v^{(-1)})
S_H^{-1}(h_{(2)})\lact v^{(0)}\\
=\xi(c_{(1)},S^{-1}_H(h_{(3)}))\xi(c_{(3)},h_{(1)})S_H^{-1}(h_{(2)})\lact (v\ract c_{(2)})
\\
=\xi(c_{(1)},S^{-1}_H(h_{(3)}))\xi(c_{(3)},h_{(1)})S_H^{-1}(h_{(2)})(v\ract (c_{(2)}\cdot h_{(2)})=
v\ract (h\cdot c).
\end{multline*}
Thus, \eqref{eq:Dchi(C,H)-module} defines a right $\mathcal D_\xi(C,H)$-module structure on~$V$. 
It remains to verify that this action is compatible with the right co-action. Recall that 
$H^{cop}$ identifies with a sub-bialgebra of~$\mathcal D_\xi(C,H)$, hence we only need to check the compatibility 
condition for $c\in C$. We have 
\begin{align*}
(v^{(0)}&\ract c_{(2)})\tensor S_C(c_{(1)})v^{(-1)}c_{(3)}=
(v^{(0)}\ract c_{(2)})\tensor S_C(c_{(1)})c_{(4)}v^{(-2)}\xi(c_{(3)},S_H^{-1}(v^{(-1)}))\xi(c_{(5)},v^{(-3)})\\
&=v^{(0)}\tensor S_C(c_{(1)})c_{(4)}v^{(-3)}\xi(S_C^{-1}(c_{(3)}),v^{(-2)})\xi(c_{(2)},v^{(-1)})\xi(c_{(5)},v^{(-4)})\\
&=v^{(0)}\tensor S_C(c_{(1)})c_{(4)}v^{(-2)}\xi(S_C^{-1}(c_{(3)})c_{(2)},v^{(-1)})\xi(c_{(5)},v^{(-3)})\\
&=v^{(0)}\tensor S_C(c_{(1)})c_{(2)}v^{(-1)}\xi(c_{(3)},v^{(-2)})=v^{(0)}\tensor v^{(-1)}\xi(c,v^{(-2)})
=\xi(c,v^{(-1)})\delta_R(v^{(0)})\\
&=\delta_R(v\ract c).
\end{align*}
The assertion for~$V^\star$ is proved similarly.
\end{proof}

\plink{U chi V C V H}
\begin{definition}
Fix pairings $\lra{\cdot}{\cdot}_\pm:V^\star\tensor V\to\kk$ such that~\eqref{eq:compat-cond} holds for both of them.
The algebra $\mathcal U_\xi(V^\star,C, V,H)$ is generated by $V$, $V^\star$, $\mathcal D_\xi(C,H)$ subjects to the following relations
\begin{enumerate}[(i)]
 \item The subalgebra generated by $V$ (respectively, $V^\star$) and $\mathcal D_\xi(C,H)$ is isomorphic to 
 $\mathcal D_\xi(C,H)\ltimes \mathcal B(V)$ (respectively, $\mathcal B(V^\star)\rtimes D_\xi(C,H)$)
 \item $[v,f]=f^{(-1)}\lra{f^{(0)}}{v}_+-v^{(-1)}\lra{f}{v^{(0)}}_-$, $f\in V^\star$, $v\in V$.
\end{enumerate}
\end{definition}
\begin{proposition}\label{prop:braided-double}
The algebra $\mathcal U_\xi(V^\star,C,V,H)$ is isomorphic to the braided double $\mathcal B(V^\star)\rtimes \mathcal D_\xi(C,H)\ltimes \mathcal B(V)$
in the sense of~\cite{BB} and admits a triangular decomposition. In particular, if $\lra{\cdot}{\cdot}_-$ equals zero, $\mathcal U_\xi(V^\star,C,V,H)$
is the Heisenberg double.
\end{proposition}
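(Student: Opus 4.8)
The plan is to prove Proposition~\ref{prop:braided-double} by exhibiting a vector space isomorphism $\mathcal B(V^\star)\tensor \mathcal D_\xi(C,H)\tensor\mathcal B(V)\to \mathcal U_\xi(V^\star,C,V,H)$ and checking that the multiplication on the left-hand side induced by the defining relations~(i)--(ii) coincides with the braided double multiplication of~\cite{BB}. First I would recall from~\cite{BB} the definition of the braided double attached to the triple $(\mathcal B(V^\star),\mathcal D_\xi(C,H),\mathcal B(V))$: one needs $\mathcal B(V)$ to be an algebra in the braided category $\prescript{\mathcal D_\xi(C,H)}{}{\mathscr{YD}}$ of (right) Yetter-Drinfeld modules over $\mathcal D_\xi(C,H)$, and similarly $\mathcal B(V^\star)$ a left one, together with a compatible pairing. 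Lemma~\ref{lem:Dchi(C,H)-YD-mod} supplies exactly the required Yetter-Drinfeld structures on $V$ and $V^\star$ over $\mathcal D_\xi(C,H)$, and since $\mathcal B(V)$ (resp. $\mathcal B(V^\star)$) is the Nichols algebra of $V$ (resp. $V^\star$) with the braiding~\eqref{eq:yd-braiding} coming from this Yetter-Drinfeld structure, it is automatically an algebra object in the braided category. So the categorical hypotheses of~\cite{BB} are met by construction.

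Next I would verify that the cross relation~(ii), namely $[v,f]=f^{(-1)}\lra{f^{(0)}}{v}_+-v^{(-1)}\lra{f}{v^{(0)}}_-$, is precisely the commutation relation prescribed by the braided double formalism for the given pairing data. Here I would use Lemma~\ref{lem:Psi-adj}, which guarantees that $\Psi^\star$ is the adjoint of $\Psi$ with respect to $\lra{\cdot}{\cdot}'$, so that the pairing extends consistently to $\mathcal B(V^\star)\tensor \mathcal B(V)$ and satisfies the Hopf-pairing-type identities~\eqref{eq:der-form}; these are exactly what makes the braided double associative. The associativity of $\mathcal U_\xi(V^\star,C,V,H)$ then follows from~\cite{BB}, and the triangular decomposition $\mathcal U_\xi(V^\star,C,V,H)\cong \mathcal B(V^\star)\tensor\mathcal D_\xi(C,H)\tensor\mathcal B(V)$ as vector spaces is part of that same package (it follows from a PBW-type argument since the defining relations only move $V$ past $V^\star$ and past $\mathcal D_\xi(C,H)$, never creating new monomials of lower degree in an uncontrolled way — the braided double is free as a module over $\mathcal B(V^\star)\tensor\mathcal D_\xi(C,H)$ on $\mathcal B(V)$). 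The final sentence, that taking $\lra{\cdot}{\cdot}_-=0$ yields the Heisenberg double, is immediate: with $\lra{\cdot}{\cdot}_-=0$ relation~(ii) becomes $[v,f]=f^{(-1)}\lra{f^{(0)}}{v}_+$, which is the defining one-sided cross relation of the Heisenberg double of $\mathcal B(V)\rtimes\mathcal D_\xi(C,H)$ with its graded dual.

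The main obstacle I anticipate is bookkeeping the compatibility~\eqref{eq:Dchi(C,H)-module} of the $\mathcal D_\xi(C,H)$-actions on $V$ and $V^\star$ with the pairings $\lra{\cdot}{\cdot}_\pm$ — that is, checking that~\eqref{eq:compat-cond} upgrades from $C$ and $H$ separately to the whole double $\mathcal D_\xi(C,H)$, so that the cross relation~(ii) is stable under conjugation by $\mathcal D_\xi(C,H)$ and hence the algebra is well-defined of the expected size. This is the place where one must combine the Hopf pairing $\xi$ with the twisted actions~\eqref{eq:Dchi(C,H)-module} and use the identities from Proposition~\ref{prop:double acts on halves}; the computation is of the same flavour as the verification in Lemma~\ref{lem:Dchi(C,H)-YD-mod} but needs to be carried through for the pairing rather than just the module structure. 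Once that compatibility is in hand, the identification with the braided double of~\cite{BB} is formal, and I would simply cite that reference for associativity and the triangular (PBW) decomposition rather than reprove them.
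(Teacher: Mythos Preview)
Your plan is essentially the paper's proof. The paper proceeds by defining $\beta=\beta_+-\beta_-$ with $\beta_+(f,v)=f^{(-1)}\lra{f^{(0)}}{v}_+$ and $\beta_-(f,v)=v^{(-1)}\lra{f}{v^{(0)}}_-$, so that relation~(ii) reads $[v,f]=\beta(f,v)$, and then invokes \cite{BB}*{Theorem~A}, which reduces the braided-double property (and hence the triangular decomposition) to the single compatibility
\[
x_{(1)}\beta_\pm(f,v\ract x_{(2)})=\beta_\pm(x_{(1)}\lact f,v)x_{(2)},\qquad x\in \mathcal D_\xi(C,H),
\]
checked by a two-line computation using the Yetter--Drinfeld compatibility from Lemma~\ref{lem:Dchi(C,H)-YD-mod}. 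This is precisely the ``main obstacle'' you anticipate; note that it is a condition on $\beta$ rather than a direct upgrade of~\eqref{eq:compat-cond}, and Lemma~\ref{lem:Psi-adj} is not needed here. The Heisenberg-double remark follows exactly as you say.
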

\begin{proof}
Define $\beta:V^\star\tensor V\to \mathcal D_\xi(C,H)$ by $\beta=\beta_+-\beta_-$ where 
$\beta_+(f,v)=f^{(-1)}\lra{f^{(0)}}{v}_+$ and $\beta_-(f,v)=v^{(-1)}\lra{f}{v^{(0)}}_-$, $f\in V^\star$, $v\in V$. Then 
in~$\mathcal U_\xi(V^\star,C,V,H)$ we have $[v,f]=\beta(f,v)$. Thus, $\mathcal U_\xi(V^\star,C,V,H)$ is a braided double.
By~\cite{BB}*{Theorem~A}, it remains to prove that  
\begin{equation}\label{eq:tmp-BB-compat}
x_{(1)}\beta_\pm(f,v\ract x_{(2)})=\beta_\pm(x_{(1)}\lact f,v)x_{(2)},\qquad x\in \mathcal D_\xi(C,H),\,v\in V,\,f\in V^\star.
\end{equation}
Using Lemma~\ref{lem:Dchi(C,H)-YD-mod}, we obtain
\begin{align*}
\beta_+(x_{(1)}\lact f,v)x_{(2)}&=
(x_{(1)}\lact f)^{(-1)}\lra{(x_{(1)}\lact f)^{(0)}}{v}_+x_{(2)}
=x_{(1)}f^{(-1)}S(x_{(3)})x_{(4)}\lra{x_{(2)}\lact f^{(0)}}{v}_+\\&=
x_{(1)}f^{(-1)}\lra{f^{(0)}}{v\ract x_{(2)}}_+=x_{(1)}\beta_+(f,v\lact x_{(2)}),
\\
\intertext{while}
x_{(1)}\beta_-(f,v\ract x_{(2)})&=\lra{f}{v^{(0)}\ract x_{(3)}}_- x_{(1)}S(x_{(2)})v^{(-1)}x_{(4)}=
\lra{f}{v^{(0)}\ract x_{(1)}}_- v^{(-1)}x_{(2)}\\&=
v^{(-1)}\lra{x_{(1)}\lact f}{v^{(0)}}_-x_{(2)}=\beta_-(x_{(1)}\lact f,v)x_{(2)}.\qedhere
\end{align*}
\end{proof}
We now obtain another presentation of~$\mathcal U_\xi(V^\star,C,V,H)$.
Given a pairing $\lra{\cdot}{\cdot}:V^\star\tensor V\to\kk$ satisfying~\eqref{eq:compat-cond}, 
define $\phi:\mathcal B(V^\star)\rtimes C\tensor \mathcal B(V)\rtimes H\to \kk$ by $\phi(fc,vh)=\lra{f}{v}\xi(c,h)$.
\begin{lemma}\label{lem:hopf-pair}
$\phi$ is a Hopf pairing. 
\end{lemma}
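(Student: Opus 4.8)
The plan is to verify the four defining conditions of a Hopf pairing directly from the definitions, reducing everything to the corresponding properties of the two ingredient pairings $\lra{\cdot}{\cdot}:\mathcal B(V^\star)\tensor\mathcal B(V)\to\kk$ and $\xi:C\tensor H\to\kk$. The one genuine subtlety is that the coalgebra structure of the bosonization $\mathcal B(V)\rtimes H$ is \emph{not} the tensor-product coalgebra: by Lemma~\ref{lem:comult-boson} we have $\Delta(vh)=\ul v_{(1)}\ul v_{(2)}^{(-1)}h_{(1)}\tensor \ul v_{(2)}^{(0)}h_{(2)}$, with the comodule leg of the second Sweedler component of $v$ getting absorbed into the $H$-part of the first tensor factor (and symmetrically for $\mathcal B(V^\star)\rtimes C$ using the left coaction). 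So the computations are not purely formal; one must track how $\phi$ interacts with these coactions, and this is where the compatibility hypothesis~\eqref{eq:compat-cond} and the fact (Lemma~\ref{lem:Psi-adj}) that $\Psi^\star$ is adjoint to $\Psi$ enter.

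First I would record the normalization identities $\phi(1,vh)=\varepsilon(vh)$ and $\phi(fc,1)=\varepsilon(fc)$: since $\lra{f}{1}=\varepsilon(f)$ and $\lra{1}{v}=\varepsilon(v)$ vanish on elements of positive degree in the Nichols algebras, while $\xi(c,1)=\varepsilon_C(c)$ and $\xi(1,h)=\varepsilon_H(h)$, both reduce immediately to the counit formula $\varepsilon(fc)=\varepsilon(f)\varepsilon_C(c)$ for the bosonization. Next, for the multiplicativity in the second variable, $\phi(fc,(vh)(v'h'))$, I would use the cross relation~\eqref{eq:cross-rel} to bring $(vh)(v'h')$ into the normal form $v(h_{(1)}\lact v')h_{(2)}h'$, expand $\Delta(fc)$ via the bosonization coproduct of $\mathcal B(V^\star)\rtimes C$, and then peel off the pairing degree by degree. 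The $H$-comodule legs produced by $\Delta$ pair against the $H$-module action $h_{(1)}\lact v'$ through the first relation in~\eqref{eq:compat-cond}, converting $\lra{\text{--}}{h\lact v'}$ into $\lra{\text{--}\ract h}{v'}$ and hence into $\xi$-values via~\eqref{eq:other-action}; what remains is exactly the statement that $\lra{\cdot}{\cdot}$ and $\xi$ are separately multiplicative (the former being part of the Nichols-algebra pairing construction of~\S\ref{subs:A-pair}, the latter by hypothesis on $\xi$). Multiplicativity in the first variable is the mirror-image computation, using the second relation in~\eqref{eq:compat-cond} and the left coaction on $V^\star$.

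The main obstacle — really the only nontrivial bookkeeping — will be the multiplicativity step, because reconciling the ``twisted'' coproduct of the bosonization (Lemma~\ref{lem:comult-boson}) with the untwisted tensor-factor shape of $\phi$ forces a careful chain of Sweedler manipulations: one has to commute several comodule legs past each other and invoke the Yetter--Drinfeld compatibility~\eqref{eq:yd-compat} together with the adjointness $\Psi^\star=\Psi^*$ of Lemma~\ref{lem:Psi-adj} to see that the extra $H$-action terms cancel correctly. Once multiplicativity in both variables is in hand, compatibility with antipodes — needed only if $C$, $H$ are Hopf algebras and $\xi$ is a Hopf pairing, in which case $\mathcal D_\xi(C,H)$ is a Hopf algebra by Proposition~\ref{prop:Hopf-double-two-pair} and $\mathcal B(V)\rtimes H$, $\mathcal B(V^\star)\rtimes C$ are Hopf algebras by Lemma~\ref{lem:antipode-boson} — follows formally: it suffices to check $\phi(S(fc),vh)=\phi(fc,S(vh))$ on generators, where by~\eqref{eq:antipode} the antipode of the bosonization is $S(vh)=S_H(v^{(-1)}h)\ul S(v^{(0)})$, and the identity reduces via Lemma~\ref{lem:par} (stating $\lra{f}{S_\Psi(v)}=\lra{S_{\Psi^\star}(f)}{v}$) and the Hopf-pairing property $\xi(S_C(c),h)=\xi(c,S_H(h))$ to a routine verification. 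I would therefore organize the proof as: (1) counit conditions; (2) multiplicativity in the second argument; (3) multiplicativity in the first argument (by symmetry); (4) the antipode condition. $\qed$
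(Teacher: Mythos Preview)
Your proposal is correct and follows essentially the same approach as the paper: direct verification of the counit, multiplicativity, and antipode axioms using~\eqref{eq:compat-cond} to convert comodule legs into pairing values via~\eqref{eq:other-action}. The paper's computation is in fact somewhat simpler than you anticipate --- the Yetter--Drinfeld compatibility~\eqref{eq:yd-compat} is not invoked in the multiplicativity step, only~\eqref{eq:compat-cond} and the cross relation~\eqref{eq:cross-rel} are needed --- and the antipode identity is checked for general $fc$ and $vh$ by a direct Sweedler computation rather than by reduction to generators via Lemma~\ref{lem:par}.
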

\begin{proof}
We have 
\begin{align*}
\phi(&(fc)_{(1)},vh)\phi((fc)_{(2)},v'h')=\lra{\ul f_{(1)}}{v}\lra{\ul f_{(2)}^{(0)}}{v'}\xi(\ul f_{(2)}^{(-1)}c_{(1)},h)\xi(c_{(2)},h')
\\&=\lra{\ul f_{(1)}}{v}\lra{\ul f_{(2)}^{(0)}}{v'}\xi(\ul f_{(2)}^{(-1)},h_{(1)})\xi(c_{(1)},h_{(2)})\xi(c_{(2)},h')=
\lra{\ul f_{(1)}}{v}\lra{\ul f_{(2)}\ract h_{(1)}}{v'}\xi(c,h_{(2)}h')\\
&=\lra{\ul f_{(1)}}{v}\lra{\ul f_{(2)}}{h_{(1)}\lact v'}\xi(c,h_{(2)}h')
=\lra{f}{v(h_{(1)}\lact v')}\xi(c,h_{(2)}h')\\
&=\phi(fc,v(h_{(1)}\lact v')h_{(2)}h')=\phi(fc,(vh)\cdot (v'h')),
\end{align*}
where we used~\eqref{eq:compat-cond}.
Similarly,
\begin{align*}
\phi( &fc,(vh)_{(1)})\phi(f'c',(vh)_{(2)})=\lra{f}{\ul v_{(1)}}\lra{f'}{\ul v_{(2)}^{(0)}}
\xi(c,\ul v_{(2)}^{(-1)}h_{(1)})\xi(c',h_{(2)})\\&=
\lra{f}{\ul v_{(1)}}\lra{f'}{\ul v_{(2)}^{(0)}}
\xi(c_{(1)},\ul v_{(2)}^{(-1)})\xi(c_{(2)},h_{(1)})\xi(c',h_{(2)})=
\lra{f}{\ul v_{(1)}}\lra{f'}{\ul v_{(2)}\ract c_{(1)}}
\xi(c_{(2)}c',h)\\
&=\lra{f}{\ul v_{(1)}}\lra{c_{(1)}\lact f'}{\ul v_{(2)}}
\xi(c_{(2)}c',h)=\lra{f(c_{(1)}\lact f')}{v}
\xi(c_{(2)}c',h)\\
&=\phi(f(c_{(1)}\lact f')c_{(2)}c',vh)=\phi((fc)\cdot (f'c'),vh).
\end{align*}

Clearly, $\phi(fc,1)=\varepsilon(f)\varepsilon_C(c)$ while $\phi(1,vh)=\varepsilon(v)\varepsilon_H(h)$. Finally, we have 
\begin{align*}
\phi(S(fc),vh)&=\phi(S_C(f^{(-1)}c)\ul S(f^{(0)}),vh)=\phi( S_C(f^{(-1)}c_{(2)})\lact \ul S(f^{(0)})S_C(f^{(-2)}c_{(1)}),vh)\\&=
\lra{ S_C(f^{(-1)}c_{(2)})\lact \ul S(f^{(0)})}{v}\xi(S_C(f^{(-2)}c_{(1)}),h)\\&=
\lra{\ul S(f^{(0)}) }{v^{(0)}}\xi(S_C(f^{(-1)}c_{(2)}),v^{(-1)})\xi(S_C(f^{(-2)}c_{(1)}),h)\\&=
\lra{f^{(0)}}{\ul S(v^{(0)})}\xi(f^{(-1)}c_{(2)},S_H(v^{(-1)}))\xi(f^{(-2)}c_{(1)},S_H(h))\\&
=\lra{f^{(0)}}{\ul S(v^{(0)})}\xi(f^{(-1)}c,S_H(h)S_H(v^{(-1)}))
=\lra{f^{(0)}}{\ul S(v^{(0)})}\xi(f^{(-1)}c,S_H(v^{(-1)}h))\\&
=\lra{f^{(0)}}{\ul S(v^{(0)})}\xi(f^{(-1)},S_H(v^{(-1)}h_{(2)}))\xi(c,S_H(v^{(-2)}h_{(1)}))\\&
=\lra{f}{S_H(v^{(-1)}h_{(2)})\lact \ul S(v^{(0)})}\xi(c,S_H(v^{(-2)}h_{(1)}))\\&
=\phi(fc,S_H(v^{(-1)}h_{(2)})\lact \ul S(v^{(0)})S_H(v^{(-2)}h_{(1)}))
\\&=\phi(fc,S_H(v^{(-1)}h)\ul S(v^{(0)}))=\phi(fc,S(vh)).\qedhere
\end{align*}
\end{proof}

\begin{theorem}\label{thm:Uxi-double}
The algebra~$\mathcal U_\xi(V^\star,C,V,H)$ is isomorphic to 
$\mathcal D_{\phi_+,\phi_-}(\mathcal B(V^\star)\rtimes C,\mathcal B(V)\rtimes H)$
where $\phi_\pm(fc,vh)=\lra{f}{v}_\pm \xi(c,h)$.
In particular, for all $v\in \mathcal B(V)$, $f\in \mathcal B(V^\star)$ we have in~$\mathcal U_\xi(V^\star,C,V,H)$
\begin{equation}\label{eq:product-formula-double}
v\cdot f=\ul f_{(2)}^{(0)}\ul f_{(3)}^{(-1)}\cdot\ul v_{(2)}\ul v_{(3)}^{(-2)}\xi(\ul f_{(2)}^{(-1)}\ul f_{(3)}^{(-2)},
S_H^{-1}(\ul v_{(3)}^{(-1)}))
\lra{\ul f_{(1)}}{\ul S^{-1}(\ul v_{(3)}^{(0)})}_-
\lra{\ul f_{(3)}^{(0)}}{\ul v_{(1)}}_+
\end{equation}
Moreover, if $\lr{\cdot}{\cdot}_+=\lr{\cdot}{\cdot}_-$ then $\mathcal U_\xi(V^\star,C,V,H)$ is a Hopf algebra with 
the comultiplication defined by $\Delta(f)=f\tensor 1+f^{(-1)}\tensor f^{(0)}$, $\Delta(v)=
1\tensor v+v^{(0)}\tensor v^{(-1)}$, $\Delta(c)=\Delta_C(c)$, $\Delta(h)=\Delta^{op}_H(h)$, $v\in V$, $f\in V^\star$,
$c\in C$, $h\in H$.
\end{theorem}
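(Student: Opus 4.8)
The plan is to produce the obvious identification of generators and verify it respects products and triangular decompositions. Write $A=\mathcal B(V^\star)\rtimes C$ and $B=\mathcal B(V)\rtimes H$; these are Hopf algebras by the bosonization of Lemma~\ref{lem:antipode-boson}, and by Lemma~\ref{lem:hopf-pair} the maps $\phi_\pm(fc,vh)=\lra{f}{v}_\pm\xi(c,h)$ are (Hopf, hence bialgebra) pairings $A\tensor B\to\kk$, so the double smash product $\mathcal D_{\phi_+,\phi_-}(A,B)$ is a well-defined associative algebra by Proposition~\ref{prop:Hopf-double-two-pair}\eqref{prop:Hopf-double-two-pair.a}. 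As a vector space it equals $A\tensor B=\mathcal B(V^\star)\tensor C\tensor\mathcal B(V)\tensor H$, which has the same graded dimension as $\mathcal U_\xi(V^\star,C,V,H)=\mathcal B(V^\star)\tensor\mathcal D_\xi(C,H)\tensor\mathcal B(V)=\mathcal B(V^\star)\tensor C\tensor H\tensor\mathcal B(V)$ by the triangular decomposition of Proposition~\ref{prop:braided-double}. It therefore suffices to build a surjective algebra homomorphism $\mathcal U_\xi(V^\star,C,V,H)\to\mathcal D_{\phi_+,\phi_-}(A,B)$ sending $V^\star,V,C,H$ to the evident copies inside $A$ and $B$.

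I would first check that $\mathcal D_\xi(C,H)$ sits inside $\mathcal D_{\phi_+,\phi_-}(A,B)$ as a subalgebra: for $c\in C\subset A$ and $h\in H\subset B$ one has $\phi_\pm(c,h)=\xi(c,h)$ (the degree-zero components of $\mathcal B(V^\star)$, $\mathcal B(V)$ are $\kk$ and pair to $1$), while $\Delta_A|_C=\Delta_C$ and $\Delta_B|_H=\Delta_H$, so the relation~\eqref{eq:prod-double-gen} restricted to $C$ and $H$ is exactly the one defining $\mathcal D_\xi(C,H)=\mathcal D_{\xi,\xi}(C,H)$. Since $A$ is generated by $V^\star$ and $C$ and $B$ by $V$ and $H$, the algebra $\mathcal D_{\phi_+,\phi_-}(A,B)$ is generated by $V^\star$, $V$ and $\mathcal D_\xi(C,H)$. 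Next I would verify that these generators satisfy the defining relations of $\mathcal U_\xi(V^\star,C,V,H)$. Relation~(i) for $V$ asserts that the subalgebra generated by $V$ and $\mathcal D_\xi(C,H)$ is $\mathcal D_\xi(C,H)\ltimes\mathcal B(V)$: the $V$--$H$ relations are internal to $B$ and are the bosonization relations~\eqref{eq:cross-rel}, and the $V$--$C$ relations come from~\eqref{eq:prod-double-gen} using $\Delta_A|_C=\Delta_C$, which matches the right $\mathcal D_\xi(C,H)$-module algebra structure on $\mathcal B(V)$ of Lemma~\ref{lem:Dchi(C,H)-YD-mod} and~\eqref{eq:Dchi(C,H)-module} used in the braided double of~\cite{BB}; relation~(i) for $V^\star$ is the mirror statement, internal to $A$. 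The crucial relation~(ii), $[v,f]=f^{(-1)}\lra{f^{(0)}}{v}_+-v^{(-1)}\lra{f}{v^{(0)}}_-$, is obtained by specializing~\eqref{eq:prod-double-gen} to $h=v\in B$ and $c=f\in A$, expanding $\Delta_A(f)$ and $\Delta_B(v)$ by Lemma~\ref{lem:comult-boson} so that the braided coproducts $\ul\Delta$ of $\mathcal B(V^\star)$ and $\mathcal B(V)$ enter, and isolating the lowest-degree terms; running the identical computation without truncation yields the general product formula~\eqref{eq:product-formula-double}. The resulting surjection is then an isomorphism by the dimension count above.

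For the final assertion take $\lr{\cdot}{\cdot}_+=\lr{\cdot}{\cdot}_-$, so $\phi_+=\phi_-=\phi$, a Hopf pairing $A\tensor B\to\kk$ by Lemma~\ref{lem:hopf-pair}. By Proposition~\ref{prop:Hopf-double-two-pair}\eqref{prop:Hopf-double-two-pair.b}, $\mathcal D_\phi(A,B)$ is a Hopf algebra with $\Delta|_A=\Delta_A$ and $\Delta|_B=\Delta_B^{op}$. Transporting this structure along the isomorphism just built and reading off $\Delta$ on $V^\star\subset\mathcal B(V^\star)$ and $V\subset\mathcal B(V)$ via Lemma~\ref{lem:comult-boson} (the latter coproduct replaced by its opposite, since $B$ enters with $\Delta_B^{op}$), together with $\Delta|_C=\Delta_C$ and $\Delta|_H=\Delta_H^{op}$, gives precisely the comultiplication stated in the theorem.

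The main obstacle is the computation extracting~\eqref{eq:product-formula-double} (and its truncation, relation~(ii)) from~\eqref{eq:prod-double-gen}: one must correctly iterate the bosonization coproduct of Lemma~\ref{lem:comult-boson} on both $A$ and $B$, track the resulting Sweedler legs through the $\xi$- and $\lr{\cdot}{\cdot}_\pm$-evaluations, and reorganize the sum using the Yetter--Drinfeld compatibility~\eqref{eq:yd-compat} and the auxiliary actions~\eqref{eq:other-action} — a long but purely mechanical bookkeeping exercise once the indexing conventions are fixed.
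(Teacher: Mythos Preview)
Your proposal is correct and follows essentially the same strategy as the paper: identify the generators, verify the cross-relations, and invoke the triangular decomposition for the dimension count. The only cosmetic difference is direction---you build a surjection $\mathcal U_\xi(V^\star,C,V,H)\to\mathcal D_{\phi_+,\phi_-}(A,B)$ by checking the defining relations of~$\mathcal U$ inside~$\mathcal D$, whereas the paper verifies the cross-relations of~$\mathcal D$ (namely $v\cdot c$, $h\cdot f$, $v\cdot f$ for $v\in V$, $f\in V^\star$) inside~$\mathcal U$ and declares a map $\mathcal D\to\mathcal U$; your direction is arguably the cleaner one since~$\mathcal U$ is genuinely presented by generators and relations. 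The paper carries out the computation of~\eqref{eq:product-formula-double} explicitly rather than deferring it, but the steps are exactly the ones you outline: iterate Lemma~\ref{lem:comult-boson}, substitute into~\eqref{eq:prod-double-gen}, and simplify using Lemma~\ref{lem:antipode-boson} and the pairing axioms.
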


\begin{proof}
Let $\mathcal D=\mathcal D_{\phi_+,\phi_-}(\mathcal B(V^\star)\rtimes C,\mathcal B(V)\rtimes H)$.
Clearly, the subalgebra of~$\mathcal U:=\mathcal U_\xi(V^\star,C,V,H)$ generated by $V^\star$ and~$C$ identifies 
with $\mathcal B(V^\star)\rtimes C$.
Likewise, the subalgebra generated by $V$ and~$H$ identifies with $\mathcal B(V)\rtimes H$
since 
$$
(h_{(1)}\lact v)\cdot h_{(2)}=h_{(3)}\cdot ((h_{(1)}\lact v)\ract h_{(2)})=
h_{(3)}\cdot (S_H^{-1}(h_{(2)})h_{(1)}\lact v)=h\cdot v,\qquad h\in H,\, v\in V.
$$
Furthermore, 
in $\mathcal D$ we have, for all $v\in V$, $f\in V^\star$, $c\in C$ and~$h\in H$ 
\begin{gather*}
\begin{split}
v\cdot c&=c_{(2)}v_{(2)}\phi_-(c_{(1)},S^{-1}(v_{(3)}))\phi_+(c_{(3)},v_{(1)})=
c_{(1)} v^{(0)}\phi_+(c_{(2)},v^{(-1)})\\&=c_{(1)}v^{(0)}\xi(c_{(2)},v^{(-1)})=c_{(1)}(v\ract c_{(2)})
\end{split}\\
\intertext{while}
\begin{split}
h\cdot f&=f_{(2)}h_{(2)}\phi_-(f_{(1)},S_H^{-1}(h_{(3)}))\phi_+(f_{(3)},h_{(1)})=f^{(0)}h_{(1)}\phi_-(f^{(-1)},S_H^{-1}(h_{(2)}))
=(h_{(2)}\lact f)\cdot h_{(1)}
\end{split}
\\
\intertext{and}
\begin{split}
v\cdot f&=f_{(2)}v_{(2)}\phi_-(f_{(1)},S^{-1}(v_{(3)}))\phi_+(f_{(3)},v_{(1)})=v^{(-1)}\phi_-(f,S^{-1}(v^{(0)}))+f\cdot v+f^{(-1)}\phi_+(f^{(0)},v)
\\&=f\cdot v+f^{(-1)}\lra{f^{(0)}}{v}_+-v^{(-1)}\lra{f}{v^{(0)}}_-.
\end{split}
\end{gather*}
Thus, all relations between generators of~$\mathcal D$ hold in~$\mathcal U$, hence we have a homomorphism of algebras 
$\mathcal D\to \mathcal U$, which is clearly an isomorphism of vector spaces.

It remains to prove~\eqref{eq:product-formula-double}. Observe that Lemma~\ref{lem:comult-boson}
implies that 
$$
(\Delta\tensor 1)\Delta(v)=\Delta(\ul v_{(1)}\ul v_{(2)}^{(-1)})\tensor\ul v_{(2)}^{(0)}=
\ul v_{(1)}\ul v_{(2)}^{(-1)}\ul v_{(3)}^{(-2)}\tensor \ul v_{(2)}^{(0)}\ul v_{(3)}^{(-1)}\tensor \ul v_{(3)}^{(0)}=v_{(1)}\tensor v_{(2)}\tensor v_{(3)}
$$
and similarly for $(\Delta\tensor 1)\Delta(f)$. 
Then by~\eqref{eq:prod-double-gen} and Lemma~\ref{lem:antipode-boson} we have 
\begin{align*}
v\cdot f&= f_{(2)}v_{(2)}\phi_-(f_{(1)},S^{-1}(v_{(3)}))\phi_+(f_{(3)},v_{(1)})
\\
&=\ul f_{(2)}^{(0)}\ul f_{(3)}^{(-1)}\ul v_{(2)}^{(0)}\ul v_{(3)}^{(-1)}\phi_-(\ul f_{(1)}\ul f_{(2)}^{(-1)}\ul f_{(3)}^{(-2)},S^{-1}(\ul v_{(3)}^{(0)}))
\phi_+(\ul f_{(3)}^{(0)},\ul v_{(1)}\ul v_{(2)}^{(-1)}\ul v_{(3)}^{(-2)})\\
&=\ul f_{(2)}^{(0)}\ul f_{(3)}^{(-1)}\ul v_{(2)}\ul v_{(3)}^{(-1)}\phi_-(\ul f_{(1)}\ul f_{(2)}^{(-1)}\ul f_{(3)}^{(-2)},S^{-1}(\ul v_{(3)}^{(0)}))
\lra{\ul f_{(3)}^{(0)}}{\ul v_{(1)}}_+\\
&=\ul f_{(2)}^{(0)}\ul f_{(3)}^{(-1)}\ul v_{(2)}\ul v_{(3)}^{(-2)}\phi_-(\ul f_{(1)}\ul f_{(2)}^{(-1)}\ul f_{(3)}^{(-2)},\ul S^{-1}(\ul v_{(3)}^{(0)})
S_H^{-1}(\ul v_{(3)}^{(-1)}))
\lra{\ul f_{(3)}^{(0)}}{\ul v_{(1)}}_+\\
&=\ul f_{(2)}^{(0)}\ul f_{(3)}^{(-1)}\cdot\ul v_{(2)}\ul v_{(3)}^{(-2)}\xi(\ul f_{(2)}^{(-1)}\ul f_{(3)}^{(-2)},
S_H^{-1}(\ul v_{(3)}^{(-1)}))
\lra{\ul f_{(1)}}{\ul S^{-1}(\ul v_{(3)}^{(0)})}_-
\lra{\ul f_{(3)}^{(0)}}{\ul v_{(1)}}_+.\qedhere
\end{align*}
\end{proof}
The identity~\eqref{eq:product-formula-double} can be also written in the following form 
$$
v\cdot f=\ul f_{(2)}^{(0)}\ul f_{(3)}^{(-1)}\ul v_{(3)}^{(-2)}(\ul v_{(2)}\ract v_{(3)}^{(-3)})\xi(\ul f_{(2)}^{(-1)}\ul f_{(3)}^{(-2)},
S_H^{-1}(\ul v_{(3)}^{(-1)}))
\lra{\ul f_{(1)}}{\ul S^{-1}(\ul v_{(3)}^{(0)})}_-
\lra{\ul f_{(3)}^{(0)}}{\ul v_{(1)}}_+.
$$ 
Note that if $\lra{\cdot}{\cdot}_-=0$ on~$V^\star\tensor V$, we obtain 
$$
v\circ_+ f=\ul f_{(1)} \ul f_{(2)}^{(-1)}\lra{\ul f_{(2)}^{(0)}}{\ul v_{(1)}}_+\ul v_{(2)}=\ul f_{(1)} \beta_+(\ul f_{(2)},\ul v_{(1)})\ul v_{(2)}
,\qquad v\in\mathcal B(V),\,f\in\mathcal B(V^\star),
$$
where $\beta_+:\mathcal B(V^\star)\tensor\mathcal B(V)\to C$ is defined by
$$
\beta_+(f,v)=f^{(-1)}\lra{f^{(0)}}{v}_+,\qquad v\in\mathcal B(V),\, f\in\mathcal B(V^\star).
$$
We denote the corresponding braided double $\mathcal B(V^\star)\rtimes C\ltimes \mathcal B(V)$ by $\mathcal H_+(V^\star,C,V)$. 
Similarly, if $\lra{\cdot}{\cdot}_+=0$ on~$V^\star\tensor V$ we have 
\begin{multline*}
v\circ_- f=\ul f_{(2)}^{(0)}\cdot\ul v_{(1)}\ul v_{(2)}^{(-2)}\xi(\ul f_{(2)}^{(-1)},
S_H^{-1}(\ul v_{(2)}^{(-1)}))
\lra{\ul f_{(1)}}{\ul S^{-1}(\ul v_{(2)}^{(0)})}_-\\
=(\ul v_{(2)}^{(-1)}\lact \ul f_{(2)})\ul v_{(1)}\ul v_{(2)}^{(-2)}
\lra{\ul S^{-1}(\ul f_{(1)})}{\ul v_{(2)}^{(0)}}_-=(\beta_-(\ul S^{-1}(\ul f_{(1)}),\ul v_{(2)}^{(0)})\lact \ul f_{(2)})\ul v_{(1)}\ul v_{(2)}^{(-1)}
\\
=(\ul v_{(2)}^{(-1)}\lact \ul f_{(2)})\ul v_{(2)}^{(-2)}(\ul v_{(1)}\ract \ul v_{(2)}^{(-3)})
\lra{\ul S^{-1}(\ul f_{(1)})}{\ul v_{(2)}^{(0)}}_-\\
=\beta_-(\ul S^{-1}(\ul f_{(1)}),\ul v_{(2)}^{(0)})\ul f_{(2)}(\ul v_{(1)}\ract \ul v_{(2)}^{(-1)})
\end{multline*}
where $\beta_-:\mathcal B(V^\star)\tensor \mathcal B(V)\to H$ is defined by 
$$
\beta_-(f,v)=\lra{f}{v^{(0)}}_-v^{(-1)},\qquad v\in\mathcal B(V),\, f\in\mathcal B(V^\star).
$$
The corresponding braided double is denoted $\mathcal H_-(V^\star,H,V)$.
Clearly, $\mathcal H_\pm(V^\star,C,V)$ naturally identify with subspaces of~$\mathcal U_\xi(V^\star,C,V,H)$.

Consider also some special cases of~\eqref{eq:product-formula-double}. If~$f\in V^\star$ we have 
\begin{equation}\label{eq:f-lin}
[v,f]=f^{(-1)}\lra{f^{(0)}}{\ul v_{(1)}}_+\ul v_{(2)} -\ul v_{(1)}\lra{f}{\ul v_{(2)}^{(0)}}_- \ul v_{(2)}^{(-1)},\qquad v\in\mathcal B(V).
\end{equation}
Similarly, if $v\in V$ we have 
\begin{equation}\label{eq:v-lin}
\begin{split}
[v,f]&=\ul f_{(1)}\ul f_{(2)}^{(-1)}
\lra{\ul f_{(2)}^{(0)}}{v}_+ 
-
\ul f_{(2)}^{(0)}v^{(-2)}\xi(\ul f_{(2)}^{(-1)},
S_H^{-1}(v^{(-1)}))
\lra{\ul f_{(1)}}{v^{(0)}}_-\\
&=\ul f_{(1)}\ul f_{(2)}^{(-1)}
\lra{\ul f_{(2)}^{(0)}}{v}_+ 
-
(v^{(-1)}\lact\ul f_{(2)})v^{(-2)}
\lra{\ul f_{(1)}}{v^{(0)}}_-\\
&=\ul f_{(1)}\lra{\ul f_{(2)}^{(0)}}{v}_+ \ul f_{(2)}^{(-1)}
-
v^{(-1)}\lra{\ul f_{(1)}}{v^{(0)}}_-\ul f_{(2)}
\end{split}
\end{equation}

Let $\bar\cdot$ be a field involution of~$\kk$. Suppose that it extends to~$V$, $V^\star$, $C$ and~$H$ and that $\xi$ satisfies
$$
\overline{\xi(\overline c,\overline h)}=\xi(c,S_H^{-1}(h))
$$
and $\overline{c\lact f}=S_C^{-1}(\overline c)\lact \overline f$, $\overline{h\lact v}=
S_H^{-1}(\overline h)\lact \overline v$. Then $\bar\cdot$ extends to an anti-linear algebra anti-involution and 
coalgebra involution of~$\mathcal D_\xi(C,H)$. Moreover, we have 
$$
\overline{ h\lact f}=\overline{ \xi(f^{(-1)},S_H^{-1}(h))f^{(0)}}=
\xi(\overline{f^{(-1)}},\overline h)\overline f^{(0)}=S_H(\overline h)\lact \overline f.
$$
Since $H^{cop}$ identifies with a sub-bialgebra of~$\mathcal D_\xi(C,H)$, it follows that 
for all $x\in \mathcal D_\xi(C,H)$ we have $\overline{x\lact f}=S^{-1}(\overline x)\lact \overline f$.
Assuming that~$\Psi^\star$ is self-transposed, it follows from Lemma~\ref{lem:boson-bar-anti} that $\bar\cdot$ extends to
an anti-linear algebra anti-involution of~$\mathcal B(V^\star)\rtimes \mathcal D_\xi(C,H)$.
Similarly, $\overline{v\ract x}=\overline v\ract S^{-1}(\overline x)$ for all $x\in \mathcal D_\xi(C,H)$ and 
$v\in V$, whence $\bar\cdot$ extends to an anti-linear algebra anti-involution of~$\mathcal D_\xi(C,H)\ltimes \mathcal B(V)$.

\begin{proposition}\label{prop:bar-Uchi}
Suppose that $\overline{\lra{f}{v}_\pm}=-\lra{\overline f}{\overline v}_\pm$, $f\in V^\star$, $v\in V$. Then~$\bar\cdot$ extends to 
an anti-linear algebra anti-involution of~$\mathcal U_\xi(V^\star,C,V,H)$.
\end{proposition}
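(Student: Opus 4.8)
The plan is to exploit the presentation of $\mathcal U:=\mathcal U_\xi(V^\star,C,V,H)$ by the generators $V$, $V^\star$, $\mathcal D_\xi(C,H)$ and the relations~(i),~(ii) of its definition. The discussion preceding the statement has already produced a well-defined anti-linear algebra anti-involution $\bar\cdot$ on each of the subalgebras $\mathcal B(V^\star)\rtimes\mathcal D_\xi(C,H)$ and $\mathcal D_\xi(C,H)\ltimes\mathcal B(V)$ (this is where the self-transposedness of $\Psi^\star$ and $\Psi$ and the bar-compatibility of the actions and coactions on $V^\star$ and $V$ enter, through Lemma~\ref{lem:boson-bar-anti}). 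These two anti-involutions agree on the common subalgebra $\mathcal D_\xi(C,H)$, and together the two subalgebras contain all the generators of $\mathcal U$ and (by the triangular decomposition of Proposition~\ref{prop:braided-double}) span it. So the first step is to observe that there is at most one anti-linear anti-homomorphism $\bar\cdot\colon\mathcal U\to\mathcal U$ restricting to these, and that its existence reduces to checking compatibility with the only defining relation not internal to one of the two subalgebras — namely the cross-relation~(ii), $[v,f]=\beta(f,v)$ for $v\in V$, $f\in V^\star$, where $\beta=\beta_+-\beta_-$ with $\beta_+(f,v)=f^{(-1)}\lra{f^{(0)}}{v}_+\in C$ and $\beta_-(f,v)=v^{(-1)}\lra{f}{v^{(0)}}_-\in H$ (the relations~(i) being automatically respected, since $\bar\cdot$ is an anti-involution of the respective subalgebras).

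Next I would carry out that one compatibility check. Applying the prospective (anti-linear, order-reversing) map to relation~(ii) gives $\overline{vf-fv}=\overline{f}\,\overline{v}-\overline{v}\,\overline{f}=-[\overline{v},\overline{f}]$; since $\overline{v}\in V$ and $\overline{f}\in V^\star$, relation~(ii) read in $\mathcal U$ gives $[\overline{v},\overline{f}]=\beta(\overline{f},\overline{v})$. Hence the point to be proved is the single identity
\[
\overline{\beta(f,v)}=-\beta(\overline{f},\overline{v}),
\]
with $\bar\cdot$ on the left the already-constructed anti-involution of $\mathcal D_\xi(C,H)$ (which restricts to the given ones on $C$ and on $H$). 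I would verify it summand by summand. Using that $\lra{\cdot}{\cdot}_\pm$ is scalar-valued, that $\bar\cdot$ is anti-linear and order-reversing, the hypothesis $\overline{\lra{f}{v}_\pm}=-\lra{\overline{f}}{\overline{v}}_\pm$, and the bar-compatibility of the coactions (for $V^\star$: $\delta(\overline{f})=\overline{f^{(-1)}}\tensor\overline{f^{(0)}}$, and the analogous identity for the $H$-coaction of $V$), one computes
\[
\overline{\beta_+(f,v)}=\overline{\lra{f^{(0)}}{v}_+}\;\overline{f^{(-1)}}=-\lra{\overline{f^{(0)}}}{\overline{v}}_+\,\overline{f^{(-1)}}=-\overline{f}^{(-1)}\lra{\overline{f}^{(0)}}{\overline{v}}_+=-\beta_+(\overline{f},\overline{v}),
\]
and likewise $\overline{\beta_-(f,v)}=-\beta_-(\overline{f},\overline{v})$; subtracting (bar is additive) yields the desired identity. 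As an alternative, one could instead carry out the verification on the double smash product side, invoking Theorem~\ref{thm:Uxi-double} and the product formula~\eqref{eq:product-formula-double}, but the computation above is the short route.

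Finally I would conclude: the assignment on generators then extends to an anti-linear algebra anti-homomorphism $\bar\cdot\colon\mathcal U\to\mathcal U$, and it is involutive because $\bar\cdot\circ\bar\cdot$ is a $\kk$-linear algebra endomorphism of $\mathcal U$ fixing every generator (the restrictions of $\bar\cdot$ to $V$, to $V^\star$ and to $\mathcal D_\xi(C,H)$ are involutive), hence is the identity; thus $\bar\cdot$ is an anti-linear algebra anti-involution of $\mathcal U$. I expect the only genuinely delicate point to be the bookkeeping of the single sign: the minus sign in $\overline{\lra{f}{v}_\pm}=-\lra{\overline{f}}{\overline{v}}_\pm$ is exactly what compensates the sign produced by the order-reversal in $\overline{[v,f]}$, so that it matches $\beta(\overline{f},\overline{v})$; everything else is formal once the corner anti-involutions supplied by Lemma~\ref{lem:boson-bar-anti} are in hand.
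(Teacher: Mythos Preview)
Your proposal is correct and follows essentially the same approach as the paper: reduce to the cross-relation $[v,f]=\beta(f,v)$ using the already-established anti-involutions on $\mathcal B(V^\star)\rtimes\mathcal D_\xi(C,H)$ and $\mathcal D_\xi(C,H)\ltimes\mathcal B(V)$, then verify $\overline{\beta(f,v)}=-\beta(\overline f,\overline v)$ via the hypothesis on $\lra{\cdot}{\cdot}_\pm$ and the bar-compatibility of the coactions. The paper writes the same check as $\overline{[v,f]}=[\overline f,\overline v]$ in one displayed line; your version is simply a more detailed unpacking of that line.
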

\begin{proof}
Define $\bar\cdot$ on~$\mathcal U=\mathcal U_\xi(V^\star,C,V,H)$ by $\overline{ f \cdot x\cdot v}=\overline v\cdot \overline x\cdot \overline f$,
$x\in \mathcal D_\xi(C,H)$, $v\in\mathcal B(V)$, $f\in\mathcal B(V^\star)$.
Since the restrictions of $\bar\cdot$ to $\mathcal D_\xi(C,H)\ltimes \mathcal B(V)$ and $\mathcal B(V^\star)\rtimes \mathcal D_\xi(C,H)$ are
well-defined anti-linear algebra anti-involutions, it remains to prove that $\overline{[v,f]}=[\overline f,\overline v]$
for all $v\in V$, $f\in V^\star$. Indeed,
\begin{equation*}
[\bar f,\bar v]=-\overline{f^{(-1)}}\lra{\overline{f^{(0)}}}{\overline v}_++\overline{v^{(-1)}}\lra{\overline f}{\overline{v^{(0)}}}_-
=\overline{ f^{(-1)}\lra{f^{(0)}}{v}_+- v^{(-1)}\lra{f}{v^{(0)}}_-}=\overline{[v,f]}.\qedhere
\end{equation*}
\end{proof}

\subsection{Diagonal braidings}\label{subs:A-diag-braid}
We now consider an important special case of the constructions discussed above.
Let $\Gamma$ be an abelian monoid and fix its bicharacter $\chi:\Gamma\times\Gamma\to \kk^\times$.
Let $V=\bigoplus_{\alpha\in\Gamma} V_\alpha$ be a $\Gamma$-graded vector space over~$\kk$.
Define a braiding 
$\Psi:V\tensor V\to V\tensor V$ by $\Psi(v\tensor v')=\chi(\alpha,\alpha')v'\tensor v$, where $v\in V_\alpha$, $v'\in V_{\alpha'}$.
Furthermore, let~$V^\star=\bigoplus_{\alpha\in \Gamma} V^\star$ be another $\Gamma$-graded vector space over~$\kk$ and let $\lra{\cdot}{\cdot}:V^\star\tensor V\to \kk$
be any pairing satisfying $\lra{V^\star_\alpha}{V_{\alpha'}}=0$ if~$\alpha\not=\alpha'\in\Gamma$. Then~$\lra{\cdot}{\cdot}$ is non-degenerate provided that 
its restrictions to $V^{\star}_\alpha\tensor V_\alpha$ are non-degenerate for all $\alpha\in\Gamma$.
If~$\Psi^\star$ is the adjoint of~$\Psi$ with respect to the form $\lra{\cdot}{\cdot}'$ in the notation of~\S\ref{subs:A-pair} then 
it is easy to see that $\Psi^\star(f\tensor f')=\chi(\alpha',\alpha)f'\tensor f$, $f\in V^\star_\alpha$, $f'\in V^\star_{\alpha'}$.
Henceforth we will assume that $\lra{\cdot}{\cdot}$ is non-degenerate and denote 
$\Gamma_0=\{\alpha\in\Gamma\,:\, V_\alpha\not=0\}=\{\alpha\in\Gamma\,:\, V^\star_{\alpha}\not=0\}$. We will always assume that 
$\Gamma$ is generated by~$\Gamma_0$.

The algebras $T(V)$, $\mathcal B(V,\Psi)$, $T(V^\star)$ and~$\mathcal B(V^\star,\Psi^\star)$ are naturally $\Gamma$-graded.
By abuse of notation we write $\chi(x,y)=\chi(\deg x,\deg y)$ where 
$x$, $y$ are homogeneous elements of~$T(V)$, $\mathcal B(V,\Psi)$ or~$T(V^\star)$, $\mathcal B(V^\star,\Psi^\star)$ and $\deg x$
denotes the degree of~$x$ with respect to~$\Gamma$.
Note that if $u\in\mathcal B(V,\Psi)$ is homogeneous and  
$\ul\Delta(u)=\ul u_{(1)}\tensor \ul u_{(2)}$ in Sweedler's notation then $\deg u=\deg \ul u_{(1)}+\deg\ul u_{(2)}$. Furthermore, 
if $u,v\in T(V)$ are homogeneous then $\Psi(u\tensor v)=\chi(u,v)v\tensor u$ hence 
$\ul\Delta(uv)=\chi(\ul u_{(2)},\ul v_{(1)})\ul u_{(1)}\ul v_{(1)}\tensor \ul u_{(2)}\ul v_{(2)}$.
\begin{lemma}\label{lem:frm-deg}
For $f\in\mathcal B(V^\star,\Psi^\star)$, $v\in \mathcal B(V,\Psi)$ homogeneous, $\lra{f}{v}=0$ unless $\deg f=\deg v$.
\end{lemma}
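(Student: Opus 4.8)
The plan is to prove Lemma~\ref{lem:frm-deg} by reducing the pairing of Nichols-algebra elements to the level of tensor algebras, where the grading behavior is transparent. Recall from~\S\ref{subs:A-pair} that the pairing $\lra{\cdot}{\cdot}:\mathcal B(V^\star,\Psi^\star)\tensor \mathcal B(V,\Psi)\to\kk$ is induced by the pairing $\lra{\cdot}{\cdot}':T(V^\star)\tensor T(V)\to\kk$ via the Woronowicz symmetrizers, and that $\lra{f}{v}=\lra{f}{\operatorname{Wor}(\Psi)(v)}'=\lra{\operatorname{Wor}(\Psi^\star)(f)}{v}'$. So it suffices to prove the statement at the level of $\lra{\cdot}{\cdot}'$, since $\operatorname{Wor}(\Psi)$ and $\operatorname{Wor}(\Psi^\star)$ preserve the $\Gamma$-grading (each $[n]_\Psi!$ is a sum of maps $\Psi_\sigma$, and a diagonal braiding $\Psi(v\tensor v')=\chi(\deg v,\deg v')v'\tensor v$ only permutes tensor factors, hence preserves the total $\Gamma$-degree of any homogeneous tensor).

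First I would observe that for homogeneous $f=f_1\tensor\cdots\tensor f_r\in V^\star{}^{\tensor r}$ with $f_k\in V^\star_{\beta_k}$ and homogeneous $v=v_1\tensor\cdots\tensor v_s\in V^{\tensor s}$ with $v_\ell\in V_{\alpha_\ell}$, the definition of $\lra{\cdot}{\cdot}'$ gives $\lra{f}{v}'=0$ unless $r=s$, and when $r=s$ it equals $\prod_{k=1}^r\lra{f_k}{v_k}$. Since $\lra{V^\star_{\beta}}{V_{\alpha}}=0$ unless $\alpha=\beta$, each factor $\lra{f_k}{v_k}$ vanishes unless $\beta_k=\alpha_k$, so a nonzero value of $\lra{f}{v}'$ forces $\beta_k=\alpha_k$ for all $k$, hence $\deg f=\sum_k\beta_k=\sum_k\alpha_k=\deg v$. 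By bilinearity this extends to all homogeneous $f\in T(V^\star)$, $v\in T(V)$: writing $\deg$ for the $\Gamma$-grading, $\lra{f}{v}'\ne 0$ implies $\deg f=\deg v$.

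Next I would push this through the quotient. Given homogeneous $f\in\mathcal B(V^\star,\Psi^\star)$ of degree $\gamma$ and homogeneous $v\in\mathcal B(V,\Psi)$ of degree $\gamma'$, lift them to homogeneous representatives $\tilde f\in T(V^\star)$, $\tilde v\in T(V)$ of the same respective degrees (the canonical projections $T(V^\star)\twoheadrightarrow\mathcal B(V^\star,\Psi^\star)$ and $T(V)\twoheadrightarrow\mathcal B(V,\Psi)$ are maps of $\Gamma$-graded algebras since $\ker\operatorname{Wor}$ is a graded ideal). Then $\lra{f}{v}=\lra{\tilde f}{\operatorname{Wor}(\Psi)(\tilde v)}'$; since $\operatorname{Wor}(\Psi)(\tilde v)$ is still homogeneous of degree $\gamma'$, the tensor-algebra statement gives $\lra{f}{v}=0$ unless $\gamma=\gamma'$, as claimed.

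I do not anticipate a serious obstacle here — the only point requiring a line of care is the claim that a diagonal braiding, and hence every $\Psi_\sigma$ and every $[n]_\Psi!$, preserves the $\Gamma$-grading on $V^{\tensor n}$; this is immediate because $\Psi$ multiplies by a scalar $\chi(\alpha,\alpha')\in\kk^\times$ and swaps two adjacent homogeneous factors, leaving the multiset of degrees (and in particular the sum) unchanged. Everything else is the bookkeeping above. The lemma is in fact the diagonal-braiding specialization of the general fact that the Nichols pairing respects whatever grading makes $\lra{\cdot}{\cdot}$ on $V^\star\tensor V$ graded, so one could alternatively phrase the proof as "immediate from the construction of $\lra{\cdot}{\cdot}$ in~\S\ref{subs:A-pair} together with the hypothesis $\lra{V^\star_\alpha}{V_{\alpha'}}=0$ for $\alpha\ne\alpha'$," but I would spell out the tensor-algebra reduction for clarity.
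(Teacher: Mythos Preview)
Your argument is correct, but it takes a different route from the paper. The paper proves the lemma by induction on the $\ZZ_{\ge 0}$-degree of~$f$, using the bialgebra-pairing identity $\lra{F_\alpha f}{v}=\lra{F_\alpha}{\ul v_{(1)}}\lra{f}{\ul v_{(2)}}$ established in \S\ref{subs:A-pair}: since $\lra{F_\alpha}{\ul v_{(1)}}=0$ unless $\deg\ul v_{(1)}=\alpha$, and by induction $\lra{f}{\ul v_{(2)}}=0$ unless $\deg\ul v_{(2)}=\deg f$, the result follows because $\deg\ul v_{(1)}+\deg\ul v_{(2)}=\deg v$. You instead unwind the definition of $\lra{\cdot}{\cdot}$ all the way back to the tensor-algebra pairing $\lra{\cdot}{\cdot}'$ and observe that both $\operatorname{Wor}(\Psi)$ and the componentwise pairing $\lra{\cdot}{\cdot}'$ manifestly respect the $\Gamma$-grading for a diagonal braiding. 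Your approach is arguably more transparent, since it makes explicit \emph{why} the grading is preserved at every step of the construction and requires nothing beyond the definitions; the paper's approach is slightly slicker in that it stays inside the Nichols algebra and leverages the Hopf-pairing structure already in hand, and it would generalize verbatim to any situation where that structure is available even without an explicit tensor-algebra model.
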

\begin{proof}
This statement clearly holds for $f\in V^\star$, $v\in V$. Let $f\in \mathcal B^{r-1}(V^\star,\Psi^\star)$ and $v \in \mathcal B^r(V,\Psi)$
be homogeneous. 
Then for all~$\alpha\in \Gamma_0$, $F_\alpha\in V^\star_\alpha$,
$\lra{F_\alpha f}{v}=\lra{F_\alpha}{\ul v_{(1)}}\lra{f}{\ul v_{(2)}}$ which is zero unless $\deg \ul v_{(1)}=\alpha$ 
and $\deg \ul v_{(2)}=\deg f$, whence $\deg v=\alpha+\deg f$. Since $\mathcal B^{r}(V^\star,\Psi^\star)\subset \sum_{\alpha\in \Gamma_0} V^\star_\alpha
\mathcal B^{r-1}(V^\star,\Psi^\star)$,
the assertion follows.
\end{proof}
\begin{lemma}\label{lem:deriv-leibnitz}
For all $f,g\in \mathcal B(V^\star,\Psi^\star)$, $u,v\in \mathcal B(V,\Psi)$ homogeneous
\begin{equation}\label{eq:partial-Leibnitz}
\begin{aligned}
&\partial_f(uv)=\chi(\ul f_{(1)},v)\chi(\ul f_{(1)},\ul f_{(2)})^{-1}\partial_{\ul f_{(1)}}(u)\partial_{\ul f_{(2)}}(v)\\
&\partial_f^{op}(u v)=\chi(u,\ul f_{(2)})\chi(\ul f_{(1)},\ul f_{(2)})^{-1} \partial_{\ul f_{(1)}}^{op}(u)\partial_{\ul f_{(2)}}^{op}(v),\\
&\partial_u(fg)=\chi(g,\ul u_{(1)})\chi(\ul u_{(2)},\ul u_{(1)})^{-1}\partial_{\ul u_{(1)}}(f)\partial_{\ul u_{(2)}}(g)\\
& \partial_u^{op}(f g)=\chi(\ul u_{(2)},f)\chi(\ul u_{(2)},\ul u_{(1)})^{-1} \partial_{\ul u_{(1)}}^{op}(f)\partial_{\ul u_{(2)}}^{op}(g).
\end{aligned}
\end{equation}
In particular, for all $E_\alpha\in V_\alpha$, $F_\alpha\in V^\star_\alpha$ 
\begin{equation}\label{eq:part-F_i-leibnitz}
\begin{gathered}\partial_{F_\alpha}(uv)=\chi(\alpha,\deg v)
\partial_{F_\alpha}(u)v+u\partial_{F_\alpha}(v),
\quad
\partial_{F_\alpha}^{op}(uv)= \partial_{F_\alpha}^{op}(u)v+\chi(\deg u,\alpha) u\partial_{F_\alpha}^{op}(v)
\end{gathered}
\end{equation}
and 
\begin{equation}\label{eq:part-E_i-leibnitz}
\begin{gathered}
\partial_{E_\alpha}(fg)=\chi(\deg g,\alpha) \partial_{E_\alpha}(f)g+f\partial_{E_\alpha}(g),
\quad \partial_{E_\alpha}^{op}(fg)= \partial_{E_\alpha}^{op}(f)g+\chi(\alpha,\deg f)f\partial_{E_\alpha}^{op}(g).
\end{gathered}
\end{equation}
\end{lemma}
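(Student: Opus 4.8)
The plan is to prove the four displayed identities for $\partial_f(uv)$, $\partial_f^{op}(uv)$, $\partial_u(fg)$, $\partial_u^{op}(fg)$ by a direct computation from the definitions in~\eqref{eq:A-brd-qder-def}, using the fact that in a diagonal braiding one has the explicit coproduct formula for products. The core observation is that for homogeneous $u,v\in\mathcal B(V,\Psi)$ the braiding acts by the scalar $\chi(\deg u,\deg v)$, so that
\begin{equation*}
\ul\Delta(uv)=\chi(\ul u_{(2)},\ul v_{(1)})\,\ul u_{(1)}\ul v_{(1)}\tensor \ul u_{(2)}\ul v_{(2)},
\end{equation*}
as already noted in~\S\ref{subs:A-diag-braid}, and symmetrically $\ul\Delta(fg)=\chi(\ul f_{(1)},\ul g_{(2)})^{-1}\,\ul f_{(1)}\ul g_{(1)}\tensor\ul f_{(2)}\ul g_{(2)}$ for $f,g\in\mathcal B(V^\star,\Psi^\star)$ (because $\Psi^\star(f\tensor f')=\chi(\deg f',\deg f)f'\tensor f$, the opposite scalar). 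By Lemma~\ref{lem:frm-deg}, $\lra{\cdot}{\cdot}$ is supported on the diagonal of the $\Gamma$-grading, which is what will let every stray $\chi$-factor collapse to a factor depending only on the total degrees appearing in the answer.

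First I would establish the first identity. Applying $\partial_f(w)=\ul w_{(1)}\lra{f}{\ul w_{(2)}}$ with $w=uv$ and inserting the product coproduct formula gives
\begin{equation*}
\partial_f(uv)=\chi(\ul u_{(2)},\ul v_{(1)})\,\ul u_{(1)}\ul v_{(1)}\,\lra{f}{\ul u_{(2)}\ul v_{(2)}}
=\chi(\ul u_{(2)},\ul v_{(1)})\,\ul u_{(1)}\ul v_{(1)}\,\lra{\ul f_{(1)}}{\ul u_{(2)}}\lra{\ul f_{(2)}}{\ul v_{(2)}},
\end{equation*}
using Lemma~\ref{lem:brder-prop} (the pairing axiom $\lra{f}{xy}=\lra{\ul f_{(1)}}{x}\lra{\ul f_{(2)}}{y}$). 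Now $\lra{\ul f_{(1)}}{\ul u_{(2)}}\neq 0$ forces $\deg\ul f_{(1)}=\deg\ul u_{(2)}$ and $\lra{\ul f_{(2)}}{\ul v_{(2)}}\neq 0$ forces $\deg\ul f_{(2)}=\deg\ul v_{(2)}$, so on the surviving terms $\chi(\ul u_{(2)},\ul v_{(1)})=\chi(\ul f_{(1)},\ul v_{(1)})$ and, writing $\partial_{\ul f_{(1)}}(u)=\ul u_{(1)}\lra{\ul f_{(1)}}{\ul u_{(2)}}$, $\partial_{\ul f_{(2)}}(v)=\ul v_{(1)}\lra{\ul f_{(2)}}{\ul v_{(2)}}$, we arrive at $\partial_f(uv)=\chi(\ul f_{(1)},v)\,\partial_{\ul f_{(1)}}(u)\,\partial_{\ul f_{(2)}}(v)$ once we account for the discrepancy between $\deg\ul v_{(1)}$ and $\deg v$; the bookkeeping shows the correct universal factor is $\chi(\ul f_{(1)},v)\chi(\ul f_{(1)},\ul f_{(2)})^{-1}$, which matches the claimed formula. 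The $\partial^{op}$ case is handled the same way with $\partial_f^{op}(w)=\lra{f}{\ul w_{(1)}}\ul w_{(2)}$, tracking the mirror-image $\chi$-factors; the two identities for $\partial_u,\partial_u^{op}$ on $\mathcal B(V^\star,\Psi^\star)$ follow by the identical argument with the roles of $V$ and $V^\star$ (hence $\Psi$ and $\Psi^\star$, hence $\chi$ and its transpose) interchanged.

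Finally, the specializations~\eqref{eq:part-F_i-leibnitz} and~\eqref{eq:part-E_i-leibnitz} are immediate: if $f=F_\alpha\in V^\star_\alpha$ then $F_\alpha$ is primitive in $\mathcal B(V^\star,\Psi^\star)$, so $\ul\Delta(F_\alpha)=F_\alpha\tensor 1+1\tensor F_\alpha$; substituting the two terms into the general formula, and noting $\chi(F_\alpha,1)=\chi(1,F_\alpha)=1$, $\partial_1=\partial_1^{op}=\id$, produces exactly the two-term Leibniz rules with the single factor $\chi(\alpha,\deg v)$ (resp.\ $\chi(\deg u,\alpha)$). The main obstacle I anticipate is purely organizational rather than conceptual: keeping the numerous $\chi$-exponents straight through the substitutions and confirming that every factor not appearing in the final answer is forced to be $1$ by the degree constraints coming from Lemma~\ref{lem:frm-deg}. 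Once a consistent convention for Sweedler indices and the direction of each braiding is fixed, the computation is routine.
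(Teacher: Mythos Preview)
Your proposal is correct and follows essentially the same approach as the paper: apply the definition of $\partial_f$ to $uv$, expand $\ul\Delta(uv)$ via the diagonal braiding formula, split the pairing using $\lra{f}{xy}=\lra{\ul f_{(1)}}{x}\lra{\ul f_{(2)}}{y}$, and then use Lemma~\ref{lem:frm-deg} together with the bicharacter identity $\chi(\ul f_{(1)},\ul v_{(1)})\chi(\ul f_{(1)},\ul v_{(2)})=\chi(\ul f_{(1)},v)$ to rewrite the surviving $\chi$-factor; the remaining identities and the primitive-element specializations then follow as you describe. One small correction: the pairing axiom you invoke is the bialgebra-pairing property from~\S\ref{subs:A-pair}, not Lemma~\ref{lem:brder-prop} (which records multiplicativity of $f\mapsto\partial_f$ rather than the coproduct compatibility).
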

\begin{proof}
We prove only the first identity; others are proved similarly. We have 
\begin{multline*}
\partial_f(uv)=\lra{f}{\ul{uv}_{(2)}}\ul{uv}_{(1)}=\chi(\ul u_{(2)},\ul v_{(1)})\lra{f}{\ul u_{(2)}\ul v_{(2)}}\ul u_{(1)}\ul v_{(1)}\\=
\chi(\ul u_{(2)},\ul v_{(1)})\lra{\ul f_{(1)}}{\ul u_{(2)}}\lra{\ul f_{(2)}}{\ul v_{(2)}}\ul u_{(1)}\ul v_{(1)}=
\chi(\ul f_{(1)},v)\chi(\ul f_{(1)},\ul f_{(2)})^{-1} \partial_{\ul f_{(1)}}(u)\partial_{\ul f_{(2)}}(v),
\end{multline*}
where we used that $\chi(x,\ul v_{(1)})\chi(x,\ul v_{(2)})=\chi(x,v)$ for all~$x,v\in \mathcal B(V,\Psi)$ and Lemma~\ref{lem:frm-deg}. 
\end{proof}
An obvious induction together with~\eqref{eq:der-form} implies then that 
\begin{equation}\label{eq:par-der Ea Fa}
\partial_{E_\alpha}(F_\alpha^r)=\partial_{E_\alpha}^{op}(F_\alpha^r)=\lra{F_\alpha}{E_\alpha}[r]_{\chi_\alpha} F_\alpha^{r-1},\qquad
\lra{F_\alpha^r}{E_\alpha^r}=(\lra{F_\alpha}{E_\alpha})^r [r]_{\chi_\alpha}!,
\qquad r\in\mathbb Z_{\ge 0},
\end{equation}
where we abbreviate $\chi_\alpha:=\chi(\alpha,\alpha)$.
Note also the following 
identity (cf.~\cite{L1}*{Lemma~1.4.2})
\begin{equation}\label{eq:comult Ea Fa}
\ul\Delta(F_\alpha^r)=\sum_{r'+r''=r} \qbinom[\chi_\alpha]{r}{r'} F_\alpha^{r'}\tensor F_\alpha^{r''}.
\end{equation}

Clearly, $\Psi$ is self-transposed provided that $\chi$ is symmetric, that is $\chi(\gamma,\gamma')=\chi(\gamma',\gamma)$ for all~$\gamma,\gamma'\in \Gamma$. 
In that case, if the $V_\alpha$ are finite dimensional for all $\alpha\in\Gamma_0$, 
$\mathcal B(V^\star,\Psi^\star)$
is isomorphic to $\mathcal B(V,\Psi)$ as a braided bialgebra.

If~$\chi$ is symmetric, let $v=v_1\cdots v_r\in\mathcal B^r(V,\Psi)$ where $v_i\in V_{\alpha_i}$ and so $\alpha_i\in\Gamma_0$. 
The definition of the braided antipode (cf.~\S\ref{subs:A-Nichols}) immediately implies that
$$
\ul S(v)=\ul S_\Psi(v)=(-1)^r \Big(\prod_{1\le i<j\le r} \chi(\alpha_i,\alpha_j)\Big) v^*.
$$
If $\alpha_1+\cdots+\alpha_r=\alpha'_1+\cdots+\alpha'_s$ with $\alpha_i,\alpha'_j\in \Gamma_0$, $1\le i\le r$, $1\le j\le s$
implies that $r=s\pmod 2$ and $\prod_{1\le i<j\le r}\chi(\alpha_i,\alpha_j)=\prod_{1\le i<j\le s}\chi(\alpha'_i,\alpha'_j)$ (which is 
manifestly the case if $\Gamma$ is freely generated by~$\Gamma_0$)
\plink{sgn}
we can define a unique character $\sgn:\Gamma\to\{\pm 1\}$ with $\sgn(\alpha)=-1$, $\alpha\in\Gamma_0\cup\{0\}$ and 
a function $\gamma:\Gamma\to\kk^\times$ satisfying $\gamma(\alpha)=1$, $\alpha\in \Gamma_0\cup\{0\}$ and 
$$
\chi(\alpha,\alpha')=\chi_\gamma(\alpha,\alpha'):=\frac{\gamma(\alpha+\alpha')}{\gamma(\alpha)\gamma(\alpha')},\qquad \alpha,\alpha'\in\Gamma.
$$
Then for any $v\in\mathcal B(V)$ homogeneous 
\begin{equation}\label{eq:symm-diag-braid}
\ul S(v)=\sgn(v)\gamma(v) v^*,
\end{equation}
where we abbreviate $\sgn(v):=\sgn(\deg v)$ and~$\gamma(v):=\gamma(\deg v)$. We will say that $\Gamma$ affords a sign character 
if there exists a character $\sgn:\Gamma\to\{\pm 1\}$ satisfying $\sgn(\alpha)=-1$, $\alpha\in \Gamma_0$.

Suppose that $\bar\cdot:V\to V$ preserves the $\Gamma$-grading.
Then the braiding~$\Psi$ is unitary if and only if $\overline{\chi(\alpha,\alpha')}=\chi(\alpha',\alpha)^{-1}$.
The following is an immediate consequence of~\eqref{eq:symm-diag-braid} and Proposition~\ref{prop:par-tilde-bar}(\ref{prop:par-tilde-bar.c},\ref{prop:par-tilde-bar.d}).
\begin{proposition}\label{prop:bar-frm}
Suppose that $\Gamma$ affords a sign character, 
$\chi=\chi_\gamma$ with $\gamma(\alpha)=1$, $\alpha\in\Gamma_0\cup\{0\}$ and
$\overline{\chi(\alpha,\alpha')}=\chi(\alpha',\alpha)^{-1}$ for all $\alpha,\alpha'\in\Gamma_0$.
Assume that the pairing $\lra{\cdot}{\cdot}:V^\star\tensor V\to\kk$
satisfies $\overline{\lra{\overline{F_\alpha}}{\overline{E_\alpha}}}=-\lra{F_\alpha}{E_\alpha}$, $\alpha\in \Gamma_0$, 
$E_\alpha\in V_\alpha$, $F_\alpha\in V^\star_\alpha$. Then for all $f\in T(V^\star)$, $u\in T(V)$ or 
$f\in\mathcal B(V^\star,\Psi^\star)$, $u\in\mathcal B(V,\Psi)$ homogeneous we have 
$$
\overline{ \lra{\overline{f}}{\overline{u^*}}}=\sgn(u)\gamma(u)^{-1} \lra{f}{u}.
$$
\end{proposition}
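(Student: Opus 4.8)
The plan is to deduce Proposition~\ref{prop:bar-frm} directly from the general duality results of~\S\ref{subs:A-pair}, using~\eqref{eq:symm-diag-braid} to convert the braided antipode into the ordinary transpose~${}^*$ together with the scalar $\sgn(u)\gamma(u)$. First I would verify that the hypotheses of Proposition~\ref{prop:par-tilde-bar}\eqref{prop:par-tilde-bar.c} are met in the diagonal setting: the braiding $\Psi$ is self-transposed because $\chi$ is symmetric (since $\Gamma$ affords a sign character and $\chi=\chi_\gamma$, symmetry of $\chi$ is automatic from the definition of $\chi_\gamma$), and $\Psi$ is unitary because of the assumption $\overline{\chi(\alpha,\alpha')}=\chi(\alpha',\alpha)^{-1}$ on $\Gamma_0$, which propagates to all of $\Gamma$ as both sides are bicharacters. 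The assumption $\overline{\lra{\overline{F_\alpha}}{\overline{E_\alpha}}}=-\lra{F_\alpha}{E_\alpha}$ on degree-$\Gamma_0$ components is exactly the hypothesis $\overline{\lra{\overline f}{\overline v}}=-\lra{f}{v}$ on $V^\star\tensor V$ needed in Proposition~\ref{prop:par-tilde-bar}\eqref{prop:par-tilde-bar.a}, since the form vanishes off the diagonal grading by construction and every homogeneous piece of $V$ is spanned by such $E_\alpha$.

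Granting these, Proposition~\ref{prop:par-tilde-bar}\eqref{prop:par-tilde-bar.c},\eqref{prop:par-tilde-bar.d} gives, for homogeneous $f$ and $u$ (in $T(V^\star),T(V)$ or in the Nichols quotients),
\[
\overline{\lra{\overline f}{\overline u}}=\lra{f}{S_\Psi^{-1}(u)}.
\]
The next step is to rewrite $S_\Psi^{-1}(u)$ in terms of~${}^*$. From~\eqref{eq:symm-diag-braid} we have $\ul S(u)=\sgn(u)\gamma(u)u^*$; since $\sgn(u),\gamma(u)\in\kk^\times$ and ${}^*$ is an involution preserving the $\Gamma$-grading (so $\sgn(u^*)=\sgn(u)$, $\gamma(u^*)=\gamma(u)$), applying $\ul S$ once more gives $\ul S^2(u)=\sgn(u)^2\gamma(u)^2 u=\gamma(u)^2 u$; hence $\ul S^{-1}(u)=\sgn(u)\gamma(u)^{-1}u^*$. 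Substituting $u^*$ for $u$ in the displayed identity (legitimate, as the identity holds for all homogeneous arguments and $u^*$ has the same $\Gamma$-degree), and using $\overline{u^*}$ on the right-hand side of~\eqref{eq:symm-diag-braid}'s substitution, I get
\[
\overline{\lra{\overline f}{\overline{u^*}}}=\lra{f}{S_\Psi^{-1}(u^*)}=\lra{f}{\sgn(u^*)\gamma(u^*)^{-1}(u^*)^*}=\sgn(u)\gamma(u)^{-1}\lra{f}{u},
\]
which is precisely the claimed formula. The passage between $T(V)$ and $\mathcal B(V,\Psi)$ is covered by part~\eqref{prop:par-tilde-bar.d} of Proposition~\ref{prop:par-tilde-bar}, so nothing extra is needed there.

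I expect no serious obstacle here: the statement is essentially a repackaging of Proposition~\ref{prop:par-tilde-bar} in the diagonal case, and the only care required is bookkeeping of the scalars $\sgn$ and $\gamma$ and checking that $\ul S^{-1}$, rather than $\ul S$, appears after the substitution $u\mapsto u^*$. The mildly delicate point is to make sure that $\gamma$ and $\sgn$, which are a priori only pinned down on $\Gamma_0\cup\{0\}$, are nonetheless well-defined multiplicative/character-type data on all of $\Gamma$ with $\chi=\chi_\gamma$; but this is exactly the content of the discussion preceding~\eqref{eq:symm-diag-braid}, which we may assume, so it does not need to be reproved. Consequently the entire argument is a two-line formal deduction once the hypotheses are matched up, and I would write it as such.
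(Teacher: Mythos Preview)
Your proposal is correct and follows exactly the paper's own approach: the paper states that the proposition is an immediate consequence of~\eqref{eq:symm-diag-braid} together with Proposition~\ref{prop:par-tilde-bar}(\ref{prop:par-tilde-bar.c},\ref{prop:par-tilde-bar.d}), and you have simply unpacked that citation by verifying the hypotheses and writing out the computation of~$\ul S^{-1}(u)$ from~\eqref{eq:symm-diag-braid}. Your bookkeeping of the scalars $\sgn(u)$ and $\gamma(u)$ via $\ul S^2(u)=\gamma(u)^2 u$ is accurate, and the substitution $u\mapsto u^*$ is exactly the right move.
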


Suppose that $\gamma(\alpha)$ is a square in~$\kk$ for all $\alpha\in\Gamma$ and fix $\gamma^{\frac12}:\Gamma\to\kk^\times$.
Set $\chi^{\frac12}=\chi_{\gamma^{\frac12}}$.
The operator $L_n:V^{\tensor n}\to V^{\tensor n}$
defined on~$u\in V^{\tensor n}$ homogeneous by $L_n(u)=\gamma(u)^{\frac12} u$ clearly satisfies $L_n^2(u)=(-1)^n S_\Psi(u^*)$, commutes 
with~${}^*$ and is unitary with respect to~$\bar\cdot$. The following is straightforward corollary of Lemma~\ref{lem:good-form}.
\begin{corollary}\label{cor:par-form-general}
In the assumptions of Proposition~\ref{prop:bar-frm},
the form $(\cdot,\cdot):\mathcal B(V^\star,\Psi^\star)\tensor \mathcal B(V,\Psi)\to\kk$ is defined for $u\in \mathcal B(V,\Psi)$ homogeneous 
and $f\in\mathcal B(V^\star,\Psi^\star)$
by $(f,u)=(\gamma^{\frac12}(u))^{-1}\lra{f}{u}$ and satisfies
$$
\overline{(\overline f,\overline u^*)}=\sgn(u)(f,u)
$$
and for all $f,f'\in\mathcal B(V^\star,\Psi^\star)$, $u,u'\in\mathcal B(V,\Psi)$ homogeneous 
$$
(ff',u)=(\chi^{\frac12}(f,f'))^{-1}(f,\ul u_{(1)})(f',\ul u_{(2)}),
\quad (f,uu')=(\chi^{\frac12}(u,u'))^{-1}(\ul f_{(1)},u)(\ul f_{(2)},u').
$$
\end{corollary}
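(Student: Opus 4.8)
\textbf{Plan of proof for Corollary~\ref{cor:par-form-general}.}
The statement has three parts, and the plan is to deduce all of them from results already in place: Proposition~\ref{prop:bar-frm}, Lemma~\ref{lem:good-form}, and the pairing identities at the end of~\S\ref{subs:A-pair}. First I would unwind the definition: by construction $L$ acts on a homogeneous $u\in\mathcal B(V,\Psi)$ by $L(u)=\gamma^{\frac12}(u)u$, so $L^{-1}(u)=(\gamma^{\frac12}(u))^{-1}u$, and therefore
$$
(f,u)=\lra{f}{L^{-1}(u)}=(\gamma^{\frac12}(u))^{-1}\lra{f}{u},
$$
which is the claimed formula for the form; here $\gamma^{\frac12}(u)$ is well defined since $\gamma(\alpha)$ is assumed to be a square in~$\kk$. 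I would also record that $L$ is homogeneous of degree~$0$, commutes with~${}^*$, and satisfies $L\circ\bar\cdot=\bar\cdot\circ L^{-1}$ (because $\overline{\gamma^{\frac12}(\alpha)}=(\gamma^{\frac12}(\alpha))^{-1}$, which follows from $\overline{\chi(\alpha,\alpha')}=\chi(\alpha',\alpha)^{-1}$ applied to $\chi_\gamma$), and that $L^2(u)=(-1)^n S_\Psi(u^*)$ on $V^{\tensor n}$; these are exactly the hypotheses needed to invoke Lemma~\ref{lem:good-form}.

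For the bar-compatibility identity $\overline{(\overline f,\overline u^*)}=\sgn(u)(f,u)$, the approach is to combine Lemma~\ref{lem:good-form} with Proposition~\ref{prop:bar-frm}. Lemma~\ref{lem:good-form} gives, for $f\in\mathcal B^r(V^\star,\Psi^\star)$ and $v\in\mathcal B^s(V,\Psi)$, that $\overline{(\overline f,\tilde v)}=(-1)^r\delta_{r,s}(f,v)$, where $\tilde{}$ is the anti-linear algebra \emph{homomorphism}; combined with the anti-linear anti-involution $\bar\cdot=\tilde\cdot\circ{}^*$ of~\S\ref{subs:bar-and-*} this rewrites as $\overline{(\overline f,\overline{v^*})}=(-1)^r\delta_{r,s}(f,v)$. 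Meanwhile Proposition~\ref{prop:bar-frm} (via $\chi=\chi_\gamma$ with $\gamma(\alpha)=1$ on $\Gamma_0\cup\{0\}$) says $\sgn(u)=(-1)^r$ whenever $u\in\mathcal B^r(V,\Psi)$ is homogeneous, because $\sgn$ is the character with value $-1$ on $\Gamma_0$; so the sign $(-1)^r$ is precisely $\sgn(u)$. The case $r\neq s$ is handled by Lemma~\ref{lem:frm-deg} (both sides vanish). This yields $\overline{(\overline f,\overline{u^*})}=\sgn(u)(f,u)$, matching the assertion (I would double-check the placement of ${}^*$ versus the overline so the displayed form reads exactly as in the statement).

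For the two multiplicativity identities, the strategy is to transport the pairing identities for $\lra{\cdot}{\cdot}$ through the normalization by $\gamma^{\frac12}$, using the cocycle relation $\chi^{\frac12}(\alpha,\alpha')=\gamma^{\frac12}(\alpha+\alpha')/(\gamma^{\frac12}(\alpha)\gamma^{\frac12}(\alpha'))$. Starting from $\lra{ff'}{u}=\lra{f}{\ul u_{(1)}}\lra{f'}{\ul u_{(2)}}$ (Lemma in \S\ref{subs:A-pair}) and using $\deg u=\deg\ul u_{(1)}+\deg\ul u_{(2)}$ together with Lemma~\ref{lem:frm-deg} to force $\deg f=\deg\ul u_{(1)}$, $\deg f'=\deg\ul u_{(2)}$ in each non-zero term, I multiply by $(\gamma^{\frac12}(u))^{-1}=(\gamma^{\frac12}(\ul u_{(1)}))^{-1}(\gamma^{\frac12}(\ul u_{(2)}))^{-1}(\chi^{\frac12}(f,f'))^{-1}$ to obtain $(ff',u)=(\chi^{\frac12}(f,f'))^{-1}(f,\ul u_{(1)})(f',\ul u_{(2)})$; the identity $(f,uu')=(\chi^{\frac12}(u,u'))^{-1}(\ul f_{(1)},u)(\ul f_{(2)},u')$ is obtained symmetrically from $\lra{f}{uu'}=\lra{\ul f_{(1)}}{u}\lra{\ul f_{(2)}}{u'}$. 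The main obstacle, such as it is, is bookkeeping rather than conceptual: one must verify carefully that the grading constraints from Lemma~\ref{lem:frm-deg} kick in in every Sweedler term so that the split $\gamma^{\frac12}(u)=\gamma^{\frac12}(\ul u_{(1)})\gamma^{\frac12}(\ul u_{(2)})\chi^{\frac12}(\ldots)$ is legitimate, and that the hypothesis ``$\Gamma$ affords a sign character / is freely generated by $\Gamma_0$'' (inherited from Proposition~\ref{prop:bar-frm}'s setting) guarantees $\sgn$, $\gamma$, and hence $\gamma^{\frac12}$ are well defined as claimed.
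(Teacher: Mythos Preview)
Your proposal is correct and follows precisely the approach the paper intends: the paper merely states that the corollary is a ``straightforward corollary of Lemma~\ref{lem:good-form}'' after recording that $L_n(u)=\gamma^{\frac12}(u)u$ satisfies $L_n^2=(-1)^nS_\Psi\circ{}^*$, commutes with~${}^*$, and is unitary. Your write-up supplies exactly the omitted details --- the identification $\tilde v=\overline{v^*}=(\overline v)^*$ to convert Lemma~\ref{lem:good-form} into the displayed bar-identity, the observation $(-1)^r=\sgn(u)$ from the sign character, and the cocycle computation $\gamma^{\frac12}(\alpha+\alpha')=\gamma^{\frac12}(\alpha)\gamma^{\frac12}(\alpha')\chi^{\frac12}(\alpha,\alpha')$ combined with the bialgebra pairing identities of~\S\ref{subs:A-pair} and Lemma~\ref{lem:frm-deg} for the multiplicativity formulae.
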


\subsection{Drinfeld double in the diagonal case}
\label{subs:A-Dd-diag}
\plink{K alpha,alpha'}
Let $H=\kk[\Gamma\oplus \Gamma]\cong \kk[\Gamma]\tensor\kk[\Gamma]$ be the monoidal bialgebra of~$\Gamma\oplus
\Gamma$ with 
a basis $K_{\alpha,\alpha'}$, $\alpha,\alpha'\in\Gamma$. 
Denote by $H^+$ (respectively, $H^-$) the subalgebra of~$H$ generated by the $K_{0,\alpha}$ (respectively, $K_{\alpha,0}$), $\alpha\in\Gamma$;
clearly, $H^\pm\cong \kk[\Gamma]$.
Let $\widehat H$ (respectively, $\widehat H^\pm$) be localizations of the corresponding algebras at $K_{\alpha,\alpha'}$ (respectively, $K_{0,\alpha}$,
$K_{\alpha,0}$), $\alpha,\alpha'\in\Gamma$. Then $\widehat H$ identifies with $\mathcal D_{\xi_\chi}(\widehat H^-,\widehat H^+)$ where 
the Hopf pairing $\xi_\chi:\widehat H^-\tensor \widehat H^+\to\kk$ is defined by $\xi_\chi(K_{\alpha,0},K_{0,\alpha'})=\chi(\alpha',\alpha)$.

Let $V$, $V^\star$ be $\Gamma$-graded $\kk$-vector spaces as in~\S\ref{subs:A-diag-braid}. We 
regard $V$ (respectively, $V^\star$) as left Yetter-Drinfeld $\widehat H^+$- (respectively, $\widehat H^-$)-module via
\begin{alignat*}{3}
K_{0,\alpha}\lact v&=\chi(\alpha,\beta)v, &\qquad &\delta(v)=K_{0,\beta}\tensor v\\
K_{\alpha,0}\lact f&=\chi(\beta,\alpha)f,&
&\delta(f)=K_{\beta,0}\tensor f,\qquad \alpha,\beta\in\Gamma,&\qquad & v\in V_\beta,\, f\in V^\star_\beta.
\end{alignat*}
Then by~\eqref{eq:other-action} we have
$$
v\ract K_{\alpha,0}=\chi(\beta,\alpha)v,\qquad f\ract K_{0,\alpha}=\chi(\alpha,\beta) f,
$$
and we can regard $V$ (respectively, $V^\star$) as a right (respectively, left) Yetter-Drinfeld module over $\widehat H$ as 
in~\eqref{eq:Dchi(C,H)-module}, with $v\ract K_{\alpha,\alpha'}=\chi(\alpha',\beta)^{-1}\chi(\beta,\alpha)v$ and 
$K_{\alpha,\alpha'}\lact f=\chi(\beta,\alpha)\chi(\alpha',\beta)^{-1}f$.

Let $\lra{\cdot}{\cdot}_\pm$ be pairings $V^\star\tensor V\to \kk$ satisfying the assumptions of~\S\ref{subs:A-diag-braid}.
Clearly, \eqref{eq:compat-cond} holds.
Denote by $\partial_f^\pm,\partial_f^{\pm op}:\mathcal B(V)\to\mathcal B(V)$ and $\partial_v^\pm,\partial_v^{\pm op}:\mathcal B(V^\star)\to
\mathcal B(V^\star)$, $v\in \mathcal B(V)$, $f\in  \mathcal B(V^\star)$,
the linear operators corresponding to the respective pairings $\lra{\cdot}{\cdot}_\pm$,
as defined in~\S\ref{subs:quasi-der}.
\plink{U chi V V}
Consider now the algebra $\mathcal U_\chi(V^\star,V)$ which is the 
subalgebra of $\widehat{\mathcal U}_\chi(V^\star,V):=\mathcal U_{\xi_\chi}(V^\star,\widehat H^-,V,\widehat H^+)$ generated by $V^\star$, $V$ and $H^\pm$.
In particular, we have the following cross-relations
\begin{equation}\label{eq:cross-rel-Dd}
\begin{gathered}
K_{\alpha,\alpha'}E_\beta=\chi(\beta,\alpha)^{-1}\chi(\alpha',\beta)E_\beta K_{\alpha,\alpha'},\qquad 
K_{\alpha,\alpha'}F_\beta=\chi(\beta,\alpha)\chi(\alpha',\beta)^{-1} F_\beta K_{\alpha,\alpha'}\\
[E_\alpha,F_\beta]=K_{\beta,0}\lra{F_\beta}{E_\alpha}_- - K_{0,\alpha}\lra{F_\beta}{E_\alpha}_+,
\qquad E_\alpha\in V_\alpha,\, F_\beta\in V^\star_\beta,\,\alpha,\alpha',\beta\in\Gamma.
\end{gathered}
\end{equation}
If $\lr{\cdot}{\cdot}_+=\lr{\cdot}{\cdot}_-$ then,  by Theorem~\ref{thm:Uxi-double}, $\widehat{\mathcal U}_\chi(V^\star,V)$ is a Hopf algebra 
with the comultiplication defined by 
\begin{equation}\label{eq:Dd-comult}
\Delta(F_\alpha)=F_\alpha\tensor 1+K_{\alpha,0}\tensor F_\alpha,\qquad \Delta(E_\alpha)=1\tensor E_\alpha+E_\alpha\tensor K_{0,\alpha}.
\end{equation}
and the antipode 
\begin{equation}\label{eq:Dd-antip}
S(F_\alpha)=-K_{\alpha,0}^{-1} F_\alpha,\qquad S(E_\alpha)=-E_\alpha K_{0,\alpha}^{-1}
\end{equation}
for all $\alpha\in\Gamma$, $E_\alpha\in V_\alpha$ and~$F_\alpha\in V^\star_\alpha$.

\begin{lemma}\label{lem:comm-doubl}
For all $\alpha\in\Gamma$, $E_\alpha\in V_\alpha$, $F_\alpha\in V^\star_\alpha$, 
$v\in \mathcal B(V)$, $f\in\mathcal B(V^\star)$ we have in $\mathcal U_\chi(V^\star,V)$
\begin{equation}\label{eq:Fi-Ei-comm}
[v,F_\alpha]=K_{\alpha,0}\partial^+_{F_\alpha}{}^{op}(v)-\partial_{F_\alpha}^-(v)K_{0,\alpha},\qquad 
[E_\alpha,f]=
\partial_{E_\alpha}^+(f)K_{\alpha,0}
-K_{0,\alpha}\partial_{E_\alpha}^-{}^{op}(f).
\end{equation}
\end{lemma}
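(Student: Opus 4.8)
The plan is to deduce both identities by specializing the general product formula~\eqref{eq:product-formula-double}, or rather its linearizations~\eqref{eq:f-lin} and~\eqref{eq:v-lin}, to the diagonal Drinfeld double. Recall from~\S\ref{subs:A-Dd-diag} and Theorem~\ref{thm:Uxi-double} that $\mathcal U_\chi(V^\star,V)$ is a subalgebra of $\widehat{\mathcal U}_\chi(V^\star,V)=\mathcal U_{\xi_\chi}(V^\star,\widehat H^-,V,\widehat H^+)$, so it suffices to compute inside the latter. There the left $\widehat H^-$-coaction on $V^\star$ is $\delta(F_\alpha)=K_{\alpha,0}\tensor F_\alpha$ for $F_\alpha\in V^\star_\alpha$, and the left $\widehat H^+$-coaction on $V$ sends a homogeneous $w\in V_\gamma$ to $K_{0,\gamma}\tensor w$ (cf.\ Lemma~\ref{lem:Dchi(C,H)-YD-mod}).

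\textbf{First identity.} Fix $\alpha\in\Gamma$ and $F_\alpha\in V^\star_\alpha$, and apply~\eqref{eq:f-lin} with $f=F_\alpha$: for $v\in\mathcal B(V)$ one gets $[v,F_\alpha]=K_{\alpha,0}\lra{F_\alpha}{\ul v_{(1)}}_+\ul v_{(2)}-\ul v_{(1)}\lra{F_\alpha}{\ul v_{(2)}^{(0)}}_-\ul v_{(2)}^{(-1)}$. In the first term the scalar pulls out and the definition of $\partial^{op}$ in~\eqref{eq:A-brd-qder-def} gives $K_{\alpha,0}\bigl(\lra{F_\alpha}{\ul v_{(1)}}_+\ul v_{(2)}\bigr)=K_{\alpha,0}\,\partial^+_{F_\alpha}{}^{op}(v)$. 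In the second term $\ul v_{(2)}^{(-1)}=K_{0,\deg\ul v_{(2)}}$; by Lemma~\ref{lem:frm-deg} the coefficient $\lra{F_\alpha}{\ul v_{(2)}^{(0)}}_-$ vanishes unless $\deg\ul v_{(2)}=\alpha$, so we may replace $\ul v_{(2)}^{(-1)}$ by $K_{0,\alpha}$, pull the scalar to the left, and recognize $-\bigl(\ul v_{(1)}\lra{F_\alpha}{\ul v_{(2)}}_-\bigr)K_{0,\alpha}=-\partial^-_{F_\alpha}(v)K_{0,\alpha}$ again from~\eqref{eq:A-brd-qder-def}. This is exactly $[v,F_\alpha]=K_{\alpha,0}\partial^+_{F_\alpha}{}^{op}(v)-\partial^-_{F_\alpha}(v)K_{0,\alpha}$.

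\textbf{Second identity.} The argument is entirely parallel: apply the last expression in~\eqref{eq:v-lin} with $v=E_\alpha\in V_\alpha$, using $E_\alpha^{(-1)}=K_{0,\alpha}$, $E_\alpha^{(0)}=E_\alpha$ and $\ul f_{(2)}^{(-1)}=K_{\deg\ul f_{(2)},0}$. Lemma~\ref{lem:frm-deg} lets one replace $\ul f_{(2)}^{(-1)}$ by $K_{\alpha,0}$ on the locus where $\lra{\ul f_{(2)}^{(0)}}{E_\alpha}_+\neq0$, and~\eqref{eq:A-brd-qder-def} identifies the two resulting terms as $\partial^+_{E_\alpha}(f)K_{\alpha,0}$ and $-K_{0,\alpha}\partial^-_{E_\alpha}{}^{op}(f)$.

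\textbf{Main point.} The only substantive step is the identification of the group-like legs $\ul v_{(2)}^{(-1)}$ and $\ul f_{(2)}^{(-1)}$ produced by the coproduct with the fixed elements $K_{0,\alpha}$, $K_{\alpha,0}$; this is precisely where homogeneity of the pairing (Lemma~\ref{lem:frm-deg}) is invoked, everything else being direct substitution. As an alternative to using~\eqref{eq:product-formula-double}, one could take~\eqref{eq:cross-rel-Dd} as the base case (both sides agree when $v\in V$ or $f\in V^\star$, since the $\partial$'s of linear elements are scalars) and induct on the $\mathbb Z_{\ge0}$-degree using the quasi-Leibniz rules~\eqref{eq:part-F_i-leibnitz}--\eqref{eq:part-E_i-leibnitz} together with~\eqref{eq:cross-rel-Dd}; I would present the first route as the main proof and mention the second only in passing.
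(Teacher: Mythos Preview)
Your proof is correct and follows exactly the route the paper takes: specialize \eqref{eq:f-lin} and \eqref{eq:v-lin} to the diagonal setting, use the explicit coactions $\delta(F_\alpha)=K_{\alpha,0}\tensor F_\alpha$ and $\delta(w)=K_{0,\deg w}\tensor w$, invoke the homogeneity of the pairing (Lemma~\ref{lem:frm-deg}) to pin down the group-like legs, and recognize the resulting expressions as the quasi-derivations from~\eqref{eq:A-brd-qder-def}. The paper's proof is a single sentence citing precisely these ingredients; you have simply unpacked it.
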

\begin{proof}
This is immediate from~\S\ref{subs:quasi-der}, \eqref{eq:f-lin}, \eqref{eq:v-lin} and the fact that if $\lra{f}{E_\alpha}_\pm\not=0$ (respectively,
$\lra{F_\alpha}{v}_\pm\not=0$) then 
$\delta(f)=K_{\alpha,0}\tensor f$ (respectively, $\delta(v)=K_{0,\alpha}\tensor v$). 
\end{proof}
The following result is an easy consequence of Proposition~\ref{prop:double acts on halves}, Lemma~\ref{lem:comult-boson},
\eqref{eq:Dd-comult} and \eqref{eq:Dd-antip}. 
\begin{proposition}\label{prop:double acts on halves diag}
Let $\la\cdot,\cdot\ra_+=\la\cdot,\cdot\ra_-$. 
Then 
$\mathcal B(V)\rtimes H^+$ is a left (respectively right) $\widehat{\mathcal U}_\chi(V^\star,V)$-module algebra via 
$F_\alpha\lact v=
\partial_{F_\alpha}(v)K_{0,\alpha}$, $E_\alpha\lact v=E_\alpha v-K_{0,\alpha}vK_{0,\alpha}^{-1}E_{\alpha}$
(respectively, $v\ract F_\alpha=\partial_{F_\alpha}^{op}(v)$, $v\ract E_\alpha=K_{0,\alpha}^{-1}[v,E_\alpha]$), $v\in \mathcal B(V)$,
$\alpha\in\Gamma$, $E_\alpha\in V_\alpha$, $F_\alpha\in V^\star_\alpha$. 
Similarly, $\mathcal B(V^\star)\rtimes H^-$ is a left (respectively, right) $\widehat{\mathcal U}_\chi(V^\star,V)$-module 
algebra via $E_\alpha\lact f=\partial_{E_\alpha}(f)K_{\alpha,0}$, $F_\alpha\lact f=[F_\alpha, f] K_{\alpha,0}^{-1}$
(respectively, $f\ract E_\alpha=\partial^{op}_{E_\alpha}(f)$, $f\ract F_\alpha=f F_\alpha-F_\alpha K_{\alpha,0}^{-1} f K_{\alpha,0}$),
$f\in\mathcal B(V^\star)$, $\alpha\in\Gamma$, $E_\alpha\in V_\alpha$, $F_\alpha\in V^\star_\alpha$.
\end{proposition}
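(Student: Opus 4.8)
The plan is to deduce the statement from the Drinfeld-double description of $\widehat{\mathcal U}_\chi(V^\star,V)$ together with the general module-algebra construction for doubles. First, since $\la\cdot,\cdot\ra_+=\la\cdot,\cdot\ra_-=:\la\cdot,\cdot\ra$, Theorem~\ref{thm:Uxi-double} identifies $\widehat{\mathcal U}_\chi(V^\star,V)=\mathcal U_{\xi_\chi}(V^\star,\widehat H^-,V,\widehat H^+)$ with the Drinfeld double $\mathcal D_\phi(\mathcal B(V^\star)\rtimes\widehat H^-,\,\mathcal B(V)\rtimes\widehat H^+)$, where $\phi(fc,vh)=\la f,v\ra\,\xi_\chi(c,h)$; both tensor factors are Hopf algebras by Lemma~\ref{lem:antipode-boson}.

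Next I would invoke Proposition~\ref{prop:double acts on halves} with $C=\mathcal B(V^\star)\rtimes\widehat H^-$ and $H=\mathcal B(V)\rtimes\widehat H^+$. This yields that $\mathcal B(V)\rtimes\widehat H^+$ is both a left and a right module algebra over $\widehat{\mathcal U}_\chi(V^\star,V)$: the subalgebra $\widehat H^-$ acts via $\underset\phi\lact$ (resp.\ $\underset\phi\ract$), the elements of $V^\star$ act via $\underset\phi\lact$ (resp.\ $\underset\phi\ract$), and the elements of $\mathcal B(V)\rtimes\widehat H^+$ act by the adjoint actions $h\lact h'=h_{(2)}h'S^{-1}_H(h_{(1)})$ and $h'\ract h=S^{-1}_H(h_{(2)})h'h_{(1)}$; symmetrically, $\mathcal B(V^\star)\rtimes\widehat H^-$ is a left and right module algebra. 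Restricting along the inclusion $\mathcal U_\chi(V^\star,V)\hookrightarrow\widehat{\mathcal U}_\chi(V^\star,V)$ produces module-algebra actions of $\mathcal U_\chi(V^\star,V)$; the remaining task is to compute these on the generators $F_\alpha\in V^\star$, $E_\alpha\in V$ and on $\widehat H^\pm$, to bring them to the stated closed form, and to check that the resulting operators preserve the (mildly localized) subalgebra $\mathcal B(V)\rtimes H^+$, resp.\ $\mathcal B(V^\star)\rtimes H^-$.

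For the generator computations I would proceed as follows. For $F_\alpha\in V^\star$ acting on $v\in\mathcal B(V)$ I expand $F_\alpha\underset\phi\lact v=\ul v_{(1)}\,\phi(F_\alpha,\ul v_{(2)})$ using Lemma~\ref{lem:comult-boson} for the bosonization coproduct $\Delta(v)=\ul v_{(1)}\ul v_{(2)}^{(-1)}\tensor\ul v_{(2)}^{(0)}$; since $\phi(F_\alpha,-)=\la F_\alpha,-\ra$ vanishes off the degree-$\alpha$ component and there the coaction contributes the grouplike $K_{0,\alpha}$, the braided-derivation identity~\eqref{eq:A-brd-qder-def} gives $F_\alpha\lact v=\partial_{F_\alpha}(v)K_{0,\alpha}$, and the right-handed version gives $v\ract F_\alpha=\partial_{F_\alpha}^{op}(v)$. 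For $E_\alpha\in V\subset\mathcal B(V)\rtimes\widehat H^+$ I would feed the coproduct and antipode from~\eqref{eq:Dd-comult} and~\eqref{eq:Dd-antip} into the adjoint formulas; after commuting the torus elements past homogeneous $v$ via~\eqref{eq:cross-rel-Dd} and using that $K_{0,\alpha}vK_{0,\alpha}^{-1}$ is a scalar multiple of $v$, these collapse to $E_\alpha\lact v=E_\alpha v-K_{0,\alpha}vK_{0,\alpha}^{-1}E_\alpha$ and $v\ract E_\alpha=K_{0,\alpha}^{-1}[v,E_\alpha]$. The analogous computations on $\mathcal B(V^\star)$, using the braided derivations $\partial_{E_\alpha},\partial^{op}_{E_\alpha}$ and again~\eqref{eq:Dd-comult}--\eqref{eq:Dd-antip}, give the statements for $\mathcal B(V^\star)\rtimes H^-$. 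The module-algebra axioms then hold because they already hold in the ambient localized picture supplied by Proposition~\ref{prop:double acts on halves}.

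The main obstacle is the bookkeeping with the torus factors $K_{0,\alpha}$: Proposition~\ref{prop:double acts on halves} most naturally delivers the adjoint action inside the localization $\mathcal B(V)\rtimes\widehat H^+$ (with $K_{0,\alpha}^{-1}$ present), and one must carefully rewrite it — using homogeneity, the commutation relations~\eqref{eq:cross-rel-Dd}, Lemma~\ref{lem:comm-doubl} and~\eqref{eq:Dd-comult}--\eqref{eq:Dd-antip} — into the renormalized form that visibly preserves $\mathcal B(V)\rtimes H^+$, while verifying that the module-algebra compatibility survives this rewriting. Everything else (the derivation identities, the grading arguments, and the inheritance of the algebra axioms) is routine.
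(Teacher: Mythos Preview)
Your approach is essentially the same as the paper's: it states the proposition as ``an easy consequence of Proposition~\ref{prop:double acts on halves}, Lemma~\ref{lem:comult-boson}, \eqref{eq:Dd-comult} and \eqref{eq:Dd-antip}'', which are exactly the ingredients you invoke (via the Drinfeld-double identification of Theorem~\ref{thm:Uxi-double}). Your step of restricting along $\mathcal U_\chi(V^\star,V)\hookrightarrow\widehat{\mathcal U}_\chi(V^\star,V)$ is unnecessary since the statement is already about $\widehat{\mathcal U}_\chi(V^\star,V)$, and your concern about preserving the unlocalized $\mathcal B(V)\rtimes H^+$ is legitimate but is not addressed by the paper either.
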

Given $f\in \mathcal B(V^\star)$, $v\in \mathcal B(V)$ homogeneous, 
we obtain by~\eqref{eq:product-formula-double}
\begin{equation}\label{eq:product-formula-double-diag}
v\cdot f=\ul f_{(2)}
K_{\deg\ul f_{(3)},0}
\cdot\ul v_{(2)}
K_{0,\deg\ul v_{(3)}}\chi(\ul f_{(2)},\ul v_{(3)})^{-1}\chi(\ul f_{(3)},
\ul v_{(3)})^{-1}
\lra{\ul f_{(1)}}{\ul S^{-1}(\ul v_{(3)})}_-
\lra{\ul f_{(3)}}{\ul v_{(1)}}_+
\end{equation}
Since $\lra{\ul f_{(1)}}{\ul S^{-1}(\ul v_{(3)})}_-=0$ unless $\deg \ul v_{(3)}=\deg \ul f_{(1)}$ 
this can be written in the following form 
\begin{equation}\label{eq:product-formula-double-diag-II}
\begin{split}
v\cdot f&=(\chi(\ul f_{(2)},\ul f_{(1)})\chi(\ul f_{(2)},\ul f_{(3)})\chi( \ul f_{(3)}, \ul f_{(1)}))^{-1}
\lra{\ul f_{(1)}}{\ul S^{-1}(\ul v_{(3)})}_-
\lra{\ul f_{(3)}}{\ul v_{(1)}}_+ \times\\&\qquad\qquad\qquad K_{\deg\ul f_{(3)},0}\ul f_{(2)}\ul v_{(2)}K_{0,\deg\ul v_{(3)}},
\end{split}
\end{equation}
where, as before, we abbreviate $\chi(x,y):=\chi(\deg x,\deg y)$.
The following Proposition generalizes~\cite{L1}*{Proposition~3.1.7} and 
is an immediate consequence of~\eqref{eq:product-formula-double-diag-II} and~\eqref{eq:symm-diag-braid}.
\begin{proposition}\label{prop:product}
Suppose that $\Gamma$ affords the sign character, 
$\chi=\chi_\gamma$ with $\gamma:\Gamma\to\kk^\times$ satisfying~$\gamma(\alpha)=1$, $\alpha\in\Gamma_0\cup\{0\}$.
Then for all $f\in \mathcal B(V^\star)$, $v\in \mathcal B(V)$ homogeneous 
we have in $\mathcal U_\chi(V^\star,V)$ 
\begin{multline*}
v\cdot f=\sgn(\ul f_{(1)})
(\chi(\ul f_{(2)},\ul f_{(1)})\chi(\ul f_{(2)},\ul f_{(3)})\chi(\ul f_{(3)},\ul f_{(1)})\gamma(\ul f_{(1)}))^{-1}\times\\
\lra{\ul f_{(1)}}{\ul v_{(3)}{}^*}_-
\lra{\ul f_{(3)}}{\ul v_{(1)}}_+ K_{\deg\ul f_{(3)},0}\ul f_{(2)}\ul v_{(2)}K_{0,\deg\ul v_{(3)}}.
\end{multline*}
\end{proposition}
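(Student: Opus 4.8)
The plan is to read off the identity directly from the general product formula~\eqref{eq:product-formula-double-diag-II}, in which the only ingredient not already in the desired form is the scalar factor $\lra{\ul f_{(1)}}{\ul S^{-1}(\ul v_{(3)})}_-$. The extra ingredients I will use are: the explicit description~\eqref{eq:symm-diag-braid} of the braided antipode $\ul S$ on $\mathcal B(V)$ in the symmetric diagonal setting; the fact that $\ul S$ preserves the $\Gamma$-grading (it is a morphism in $\prescript{H}{H}{\mathscr{YD}}$, equivalently $\ul S|_{\mathcal B^r(V)}=(-1)^r\Psi_{\sigma_\circ}$ merely permutes tensor legs with scalars); the fact that ${}^*$ is a $\kk$-linear involutive anti-automorphism; and the fact that $\sgn$ takes values in $\{\pm1\}$, so that $\sgn(w)^2=1$.

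The first step is to invert~\eqref{eq:symm-diag-braid}. Fix a homogeneous $w\in\mathcal B(V)$ and set $u=\ul S^{-1}(w)$; since $\ul S$ preserves the grading, $\deg u=\deg w$, hence $\sgn(u)=\sgn(w)$ and $\gamma(u)=\gamma(w)$. Applying~\eqref{eq:symm-diag-braid} to $u$ and using $\ul S(u)=w$ gives $w=\sgn(w)\gamma(w)\,u^*$; applying ${}^*$ to both sides, and using $(u^*)^*=u$, $\sgn(w)^2=1$ and $\kk$-linearity of ${}^*$, yields
$$
\ul S^{-1}(w)=\sgn(w)\,\gamma(w)^{-1}\,w^*,\qquad w\in\mathcal B(V)\text{ homogeneous.}
$$

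The second step is the substitution. Taking $w=\ul v_{(3)}$ above and inserting this into~\eqref{eq:product-formula-double-diag-II} produces
$$
\lra{\ul f_{(1)}}{\ul S^{-1}(\ul v_{(3)})}_-=\sgn(\ul v_{(3)})\,\gamma(\ul v_{(3)})^{-1}\,\lra{\ul f_{(1)}}{\ul v_{(3)}{}^*}_-.
$$
By Lemma~\ref{lem:frm-deg} this pairing vanishes unless $\deg\ul f_{(1)}=\deg\ul v_{(3)}{}^*=\deg\ul v_{(3)}$; therefore in every term that actually contributes to $v\cdot f$ one has $\sgn(\ul v_{(3)})=\sgn(\ul f_{(1)})$ and $\gamma(\ul v_{(3)})=\gamma(\ul f_{(1)})$, so the scalar $\sgn(\ul v_{(3)})\gamma(\ul v_{(3)})^{-1}$ may be replaced by $\sgn(\ul f_{(1)})\gamma(\ul f_{(1)})^{-1}$. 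Feeding this back into~\eqref{eq:product-formula-double-diag-II} and absorbing the factor $\gamma(\ul f_{(1)})^{-1}$ into the parenthesized product of $\chi$-values gives precisely the claimed expression for $v\cdot f$.

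The argument is a short manipulation of already-established formulas, so there is no serious obstacle; the single point deserving care is the degree-matching step via Lemma~\ref{lem:frm-deg}, which is what legitimately transfers $\sgn$ and $\gamma$ from the leg $\ul v_{(3)}$ of the iterated coproduct of $v$ to the leg $\ul f_{(1)}$ of that of $f$, thereby putting the identity into a form whose scalar coefficients depend only on the $\mathcal B(V^\star)$-legs. I would also remark that $\lr{\cdot}{\cdot}_-$ enters the formula only through the $\ul S^{-1}$-term and $\lr{\cdot}{\cdot}_+$ only through the remaining pairing, so no relation between $\lr{\cdot}{\cdot}_+$ and $\lr{\cdot}{\cdot}_-$ is needed: the proposition holds for any pair of pairings satisfying the standing hypotheses of~\S\ref{subs:A-Dd-diag}.
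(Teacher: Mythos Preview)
Your proof is correct and follows exactly the route the paper indicates: the paper states that the proposition is an immediate consequence of~\eqref{eq:product-formula-double-diag-II} and~\eqref{eq:symm-diag-braid}, and you have simply spelled out that deduction (inverting~\eqref{eq:symm-diag-braid} and using Lemma~\ref{lem:frm-deg} to match degrees).
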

Suppose that $\lra{\cdot}{\cdot}_-=0$ (respectively, $\lra{\cdot}{\cdot}_+=0$) on~$V^\star\tensor V$. 
Then we obtain for $v\in\mathcal B(V)$, $f\in\mathcal B(V^\star)$
homogeneous
\begin{align}\label{eq:Heis-prod-diag}
&v\circ_+ f=\ul f_{(1)} K_{\deg \ul f_{(2)},0}\lra{\ul f_{(2)}}{\ul v_{(1)}}_+\ul v_{(2)},\quad 
\\
\label{eq:Heis-prod-diag-other}
&v\circ_- f=\sgn(\ul f_{(1)})
(\chi(\ul f_{(2)},\ul f_{(1)})\gamma(\ul f_{(1)}))^{-1}
\lra{\ul f_{(1)}}{\ul v_{(2)}{}^*}_-
\ul f_{(2)}\ul v_{(1)}K_{0,\deg\ul v_{(2)}}.
\end{align}

We conclude this section with the following Lemma.
\begin{lemma}\label{lem:A-bar-*-diag-Dd}
Retain the assumptions of Proposition~\ref{prop:product}.
\begin{enumerate}[{\rm(a)}]
 \item\label{lem:A-bar-*-diag-Dd.a} 
 If $\overline{\chi(\alpha,\beta)}=\chi(\alpha,\beta)^{-1}$ for all $\alpha,\beta\in\Gamma$
 and  $\overline{\lra{\overline f}{\overline v}_\pm}=
 -\lra{f}{v}$, $f\in V^\star$, $v\in V$, then $\mathcal U_\chi(V^\star,V)$ admits a unique 
 anti-linear anti-involution extending $\bar\cdot:V\to V$, $\bar\cdot:V^\star\to V^\star$ and satisfying $\bar K_{\alpha,\alpha'}=K_{\alpha,\alpha'}$,
 $\alpha,\alpha'\in\Gamma$.
 
 \item\label{lem:A-bar-*-diag-Dd.b} Suppose that $\lra{\cdot}{\cdot}_+=\lra{\cdot}{\cdot}_-$. 
 Then ${}^*$ extends to an anti-involution of~$\mathcal U_\chi(V^\star,V)$ whose restrictions to 
 $V$, $V^\star$ are the identity maps while $K_{\alpha,\alpha'}{}^*=K_{\alpha',\alpha}$, $\alpha,\alpha'\in\Gamma$.
 \item\label{lem:A-bar-*-diag-Dd.c} 
 Any pair of graded isomorphisms $\xi_-:V^\star\to V$ and~$\xi_+:V\to V^\star$ satisfying $\xi_\pm\circ \xi_\mp=\id_{V^\mp}$ gives raise 
 to an anti-involution $\xi$ of $\mathcal U_\chi(V^\star,V)$ satisfying $\xi(K_{\alpha,\alpha'})=K_{\alpha,\alpha'}$, $\alpha,\alpha'\in\Gamma$.
 Moreover, if the assumptions of part~\eqref{lem:A-bar-*-diag-Dd.a} hold and $\xi_\pm$ commute with~$\bar\cdot$ then so does~$\xi$.
\end{enumerate}
\end{lemma}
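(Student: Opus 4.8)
The statement to prove is Lemma~\ref{lem:A-bar-*-diag-Dd}, which asserts three kinds of (anti-)involutions on $\mathcal U_\chi(V^\star,V)$: an anti-linear anti-involution $\bar\cdot$, the anti-involution ${}^*$, and the anti-involution $\xi$ built from a pair of graded isomorphisms $\xi_\pm$, each compatible with the torus in the prescribed way.

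\medskip

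The plan is to treat the three parts in turn, in each case reducing the claim to facts already established in the appendix. For part~\eqref{lem:A-bar-*-diag-Dd.a}, I would first observe that the hypothesis $\overline{\chi(\alpha,\beta)}=\chi(\alpha,\beta)^{-1}$ makes the diagonal braiding $\Psi$ on $V$ (and $\Psi^\star$ on $V^\star$) unitary, while $\chi=\chi_\gamma$ symmetric makes it self-transposed; by Proposition~\ref{prop:par-tilde-bar}\eqref{prop:par-tilde-bar.d} we get anti-linear anti-involutions $\bar\cdot$ on $\mathcal B(V,\Psi)$ and $\mathcal B(V^\star,\Psi^\star)$ compatible with the pairing in the sense $\overline{\lra{\overline f}{\overline{u^*}}}=\sgn(u)\gamma(u)^{-1}\lra{f}{u}$ (Proposition~\ref{prop:bar-frm}). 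Next I would invoke Proposition~\ref{prop:bar-Uchi}, whose hypotheses are exactly $\overline{\lra{f}{v}_\pm}=-\lra{\overline f}{\overline v}_\pm$, to conclude that $\bar\cdot$ extends to an anti-linear algebra anti-involution of $\mathcal U_{\xi_\chi}(V^\star,\widehat H^-,V,\widehat H^+)$, provided the Hopf pairing $\xi_\chi$ satisfies $\overline{\xi_\chi(\overline c,\overline h)}=\xi_\chi(c,S_H^{-1}(h))$ and the $H$-action compatibility $\overline{h\lact f}=S_H^{-1}(\overline h)\lact\overline f$ etc. For the monoidal bialgebra $H=\kk[\Gamma\oplus\Gamma]$ with $\bar K_{\alpha,\alpha'}=K_{\alpha,\alpha'}$ and $\xi_\chi(K_{\alpha,0},K_{0,\alpha'})=\chi(\alpha',\alpha)$, these are direct checks using $S_H(K_{\alpha,\alpha'})=K_{\alpha,\alpha'}^{-1}$ and $\overline{\chi}=\chi^{-1}$. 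Restricting to the subalgebra generated by $V^\star$, $V$, $H^\pm$ gives the asserted anti-involution of $\mathcal U_\chi(V^\star,V)$; uniqueness is clear since $\bar\cdot$ is prescribed on a generating set.

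\medskip

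For part~\eqref{lem:A-bar-*-diag-Dd.b}, with $\lra{\cdot}{\cdot}_+=\lra{\cdot}{\cdot}_-$ the algebra $\mathcal U_\chi(V^\star,V)$ is (the subalgebra of) a Hopf algebra by Theorem~\ref{thm:Uxi-double}, and ${}^*$ is self-transposed-compatible: since $\chi$ is symmetric, ${}^*$ is an anti-automorphism of each Nichols algebra (\S\ref{subs:bar-and-*}) and by Proposition~\ref{prop:par-tilde-bar}\eqref{prop:par-tilde-bar.b} it is compatible with the pairing, $\lra{f^*}{v}=\lra{f}{v^*}$. The cleanest route is to define ${}^*$ on generators by the identity on $V,V^\star$ and $K_{\alpha,\alpha'}{}^*=K_{\alpha',\alpha}$, and verify it preserves the defining relations \eqref{eq:cross-rel-Dd}: the torus relations are symmetric in $\alpha\leftrightarrow\alpha'$, and the cross relation $[E_\alpha,F_\beta]=K_{\beta,0}\lra{F_\beta}{E_\alpha}-K_{0,\alpha}\lra{F_\beta}{E_\alpha}$ is sent by the anti-automorphism to $[F_\beta,E_\alpha]^{?}$, which matches after using $K_{\beta,0}{}^*=K_{0,\beta}$, $K_{0,\alpha}{}^*=K_{\alpha,0}$ and swapping the roles — this is essentially the observation that ${}^*$ interchanges the two ``halves'' $\mathcal H_+$ and $\mathcal H_-$ symmetrically. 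One then checks $({}^*)^2=\id$ on generators. Alternatively, and perhaps more conceptually, I would note that ${}^*$ arises as the composite relating $\mathcal U_\chi(V^\star,V)$ with $\mathcal U_\chi(V^\star,V)^{op}$ via the symmetry of the construction, but the relation-checking argument is the most direct.

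\medskip

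For part~\eqref{lem:A-bar-*-diag-Dd.c}, given graded isomorphisms $\xi_\pm:V^{\mp}\to V^{\pm}$ inverse to each other, I would define $\xi$ on generators by $\xi|_V=\xi_+$, $\xi|_{V^\star}=\xi_-$ and $\xi(K_{\alpha,\alpha'})=K_{\alpha,\alpha'}$, extended as an algebra anti-homomorphism, and verify that the relations \eqref{eq:cross-rel-Dd} are preserved; here one uses that $\xi_\pm$ are graded (so the torus relations transform correctly, noting $\xi$ fixes $K_{\alpha,\alpha'}$ and reverses order) and that $\lra{\xi_-(f)}{\xi_+(v)}$-type pairings behave symmetrically — more precisely, since $\xi$ is an anti-homomorphism, $\xi([E_\alpha,F_\beta]) = [\xi(F_\beta),\xi(E_\alpha)]$, and one must match this with $\xi(K_{\beta,0})\lra{F_\beta}{E_\alpha} - \xi(K_{0,\alpha})\lra{F_\beta}{E_\alpha}$; the key point is that the pairings $\lra{\cdot}{\cdot}_\pm$ need not themselves be $\xi$-equivariant in general, so one should take the defining relations in the symmetric form where the bracket $[E_\alpha, F_\beta]$ already encodes both terms, and check degree-by-degree that the coefficient matrices match because $\xi_\pm$ are isomorphisms of graded spaces. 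Then $\xi^2$ fixes all generators since $\xi_+\xi_-=\id=\xi_-\xi_+$, so $\xi$ is an involution. The final clause — that $\xi$ commutes with $\bar\cdot$ when the $\xi_\pm$ do — follows because $\bar\cdot$ and $\xi$ are both determined by their action on the generating set $V\cup V^\star\cup\{K_{\alpha,\alpha'}\}$, and they commute there by hypothesis (on $V,V^\star$) and trivially (on $K_{\alpha,\alpha'}$, both fixing it), hence commute everywhere; one must only be slightly careful that $\bar\cdot$ is anti-linear and $\xi$ is linear so the composite in either order is anti-linear and the comparison is legitimate.

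\medskip

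I expect the main obstacle to be the relation-checking in parts~\eqref{lem:A-bar-*-diag-Dd.b} and~\eqref{lem:A-bar-*-diag-Dd.c}: one must be careful about the direction in which an anti-homomorphism sends the bracket $[E_\alpha,F_\beta]$ and about the asymmetry between the $K_{\beta,0}$ and $K_{0,\alpha}$ terms in \eqref{eq:cross-rel-Dd}. The cleanest way around this is to work throughout with the product formula \eqref{eq:product-formula-double-diag-II} (or Proposition~\ref{prop:product}) rather than with the defining relations in isolation, applying $\bar\cdot$, ${}^*$, or $\xi$ to both sides of that formula and checking the two sides transform consistently; the symmetry of $\chi$, the identity $\bar\chi=\chi^{-1}$, and the behaviour $\ul S(v)=\sgn(v)\gamma(v)v^*$ from \eqref{eq:symm-diag-braid} are exactly what is needed to make the bookkeeping close. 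Everything else is routine.
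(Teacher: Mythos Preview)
Your approach is essentially the same as the paper's: part~(a) is reduced to Proposition~\ref{prop:bar-Uchi}, part~(b) is a direct check of the relations~\eqref{eq:cross-rel-Dd}, and part~(c) defines $\xi$ on generators and verifies the relations. The paper's proof of~(c) is slightly cleaner in that it writes the extension to the Nichols algebras explicitly as $\xi(v)=\xi_+(v^*)$, $\xi(f)=\xi_-(f^*)$, which makes manifest that $\xi$ restricts to anti-homomorphisms $\mathcal B(V)\to\mathcal B(V^\star)$ and $\mathcal B(V^\star)\to\mathcal B(V)$ (composition of the braided-bialgebra isomorphisms $\xi_\pm$ with the anti-automorphism~${}^*$).

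There is, however, a genuine gap in your treatment of the cross relation in part~(c). You correctly compute that $\xi([E_\alpha,F_\beta])=[\xi_-(F_\beta),\xi_+(E_\alpha)]$, and you notice that matching this with $\xi$ applied to the right-hand side of~\eqref{eq:cross-rel-Dd} seems to require something about the pairings. But your proposed resolution---``check degree-by-degree that the coefficient matrices match because $\xi_\pm$ are isomorphisms of graded spaces''---does not work: carrying out the computation shows one needs $\lra{\xi_+(E_\alpha)}{\xi_-(F_\alpha)}_\pm=\lra{F_\alpha}{E_\alpha}_\pm$, i.e.\ the bilinear form $(v,w)\mapsto\lra{\xi_+(v)}{w}_\pm$ on each $V_\alpha$ must be symmetric, which is not automatic for arbitrary graded isomorphisms $\xi_\pm$. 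The paper's proof also just says ``observe that~\eqref{eq:cross-rel-Dd} are preserved'', so it is no more explicit on this point; in all applications in the paper each $V_\alpha$, $\alpha\in\Gamma_0$, is one-dimensional, and then writing $\xi_+(E_\alpha)=c_\alpha F_\alpha$, $\xi_-(F_\alpha)=c_\alpha^{-1}E_\alpha$ one sees $\lra{\xi_+(E_\alpha)}{\xi_-(F_\alpha)}_\pm=\lra{F_\alpha}{E_\alpha}_\pm$ automatically. So your concern is legitimate, but the fix is to note this compatibility (automatic in the intended setting), not to appeal to $\xi_\pm$ being isomorphisms.
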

\begin{proof}
Part~\eqref{lem:A-bar-*-diag-Dd.a} is an immediate consequence of Proposition~\ref{prop:bar-Uchi}. Part~\eqref{lem:A-bar-*-diag-Dd.b}
follows from~\eqref{eq:cross-rel-Dd}. To prove~\eqref{lem:A-bar-*-diag-Dd.c}, note that $\xi_\pm$ 
define isomorphisms of braided bialgebras $\xi_+:\mathcal B(V)\to \mathcal B(V^\star)$ (respectively, $\xi_-:\mathcal B(V^\star)\to\mathcal B(V)$)
such that $\xi_+\circ \xi_-=\id_{\mathcal B(V^\star)}$ and $\xi_-\circ \xi_+=\id_{\mathcal B(V)}$. Define $\xi: \mathcal U_\chi(V^\star,V)\to
\mathcal U_\chi(V^\star,V)$ by $\xi(f)=\xi_-(f^*)$, $f\in \mathcal B(V^\star)$, $\xi(v)=\xi_+(v^*)$, $v\in \mathcal B(V)$ and 
$\xi(h)=h$, $h\in H_\pm$. It remains to observe that~\eqref{eq:cross-rel-Dd} are preserved by~$\xi$.
\end{proof}

For an anti-involution~$\xi$ commuting with~$\bar\cdot$, 
define a pairing $\fgfrm{\cdot}{\cdot}:\mathcal B(V^\star)\tensor \mathcal B(V^\star)\to \kk$ by 
\plink{fgfrm-gen}
$$
\fgfrm{f}{g}=(f,\xi(g^*)),\qquad f,g\in \mathcal B(V^\star)
$$
in the above notation and that of Corollary~\ref{cor:par-form-general}. In particular, we have for $f\in\mathcal B(V^\star)$ homogeneous
\begin{equation}\label{eq:prop-symm-form-bar}
\overline{\fgfrm{\overline{f}}{\overline{g}^*}}=\sgn(f) \fgfrm{f}{g}.
\end{equation}
Since the braidings 
$\Psi$ and~$\Psi^\star$ are self-transposed in the sense of~\S\ref{subs:bar-and-*}, $\fgfrm{\cdot}{\cdot}$ is symmetric (note that this form is similar 
to the one defined in~\cite{L1}*{\S1.2.3}).

\section*{List of notation}
\def\bqq{{\setbox0\hbox{$\widehat{U}_q^+$}\setbox2\null\ht2\ht0\dp2\dp0\box2}}
\def\hr#1{\hyperlink{#1}{\pageref*{page:#1}}}

\noindent
{
\scriptsize
\begin{tabular}{p{1.49in}@{\bqq}l@{\hskip.25in}p{1.49in}@{\bqq}l@{\hskip.25in}p{1.49in}@{\bqq}l}
$U_q(\tilde\gg)$&p.~\hr{U_q(g)}&${}^t$, ${}^*$&p.~\hr{t *}&$C^{(m)}$&p.~\hr{Casimir}\\
$\mathcal H^\pm_q(\gg)$&p.~\hr{H^+_q(g)}&$\widehat U_q(\tilde\gg)$&p.~\hr{U tg localized}&$E_{ij}$, $F_{ij}$, $E_{i^sji^r}$, $F_{i^sji^r}$&p.~\hr{F_i^sji^r}\\
$U_q^\pm$, $\mathcal K_\pm$, $\mathbf K_\pm$&p.~\hr{subalg}&$T_i$&p.~\hr{T_i}&$\ul\Delta$&p.~\hr{braided-comult}\\
$\mathbf B_{\nn^\pm}$&p.~\hr{B pm}, \hr{B pm defn}&$[a]_\nu$, $[a]_{\nu}!$,$\genfrac[]{0pt}{}{a}{b}_{\nu}$&p.~\hr{q int}&$\mathcal B(V,\Psi)$&p.~\hr{Nichols}\\
$\bar\cdot$&p.~\hr{bar}&$(a)_\nu$, $(a)_\nu!$, $\binom{a}{b}_\nu$&p.~\hr{q (int)}&$\partial_v$, $\partial_v^{op}$&p.~\hr{quasi-der-1}\\
$\Gamma$, $\alpha_i$, $\widehat\Gamma$, $\alpha_{\pm i}$, $\deg_{\widehat\Gamma}$&p.~\hr{Gamma}&$\la a\ra_\nu$, $\la a\ra_\nu!$&p.~\hr{q <int>}&$\mathcal D_{H,H^{cop}}(C)$&p.~\hr{double-smash}\\
$\diamond$&p.~\hr{diamond}&$\chi$, $\eta$, $\ul\gamma$&p.~\hr{chi eta gamma}&$\mathcal D_{\phi_+,\phi_-}(C,H)$&p.~\hr{d-phi-pm}\\
$\mathbf d_{b_-,b_+}$&p.~\hr{d b_- b_+}&$\ZZU^\pm$, $U^\pm_\ZZ$&p.~\hr{ZZU}&$\mathcal U_\xi(V^\star,C,V,H)$&p.~\hr{U chi V C V H}\\
$b_-\circ b_+$&p.~\hr{b_- o b_+}&$\mathbf B^{\can}$&p.~\hr{B can}&$K_{\alpha,\alpha'}$&p.~\hr{K alpha,alpha'}\\
$\mathbf B^+_{\gg}$&p.~\hr{B^+ g}&$\fgfrm{\cdot}{\cdot}$&p.~\hr{frm ()}, \hr{fgfrm-gen}&$\sgn$&p.~\hr{sgn}\\
$b_-\bullet b_+$&p.~\hr{b_- . b_+}&$\partial_i$, $\partial_i^{op}$&p.~\hr{quasi-der}&$\mathcal U_\chi(V^\star,V)$, $\widehat{\mathcal U}_\chi(V^\star,V)$&p.~\hr{U chi V V}\\
$\mathbf B_{\tilde\gg}$&p.~\hr{B tg}&$\ell_i$, $\partial_i^{(top)}$&p.~\hr{l_i}\\
\end{tabular}
}
\begin{bibdiv}
\begin{biblist}
\bib{AS}{article}{
   author={Andruskiewitsch, Nicol{\'a}s},
   author={Schneider, Hans-J{\"u}rgen},
   title={Pointed Hopf algebras},
   conference={
      title={New directions in Hopf algebras},
   },
   book={
      series={Math. Sci. Res. Inst. Publ.},
      volume={43},
      publisher={Cambridge Univ. Press},
      place={Cambridge},
   },
   date={2002},
   pages={1--68},
}

\bib{BB}{article}{
   author={Bazlov, Yuri},
   author={Berenstein, Arkady},
   title={Braided doubles and rational Cherednik algebras},
   journal={Adv. Math.},
   volume={220},
   date={2009},
   number={5},
   pages={1466--1530},
   issn={0001-8708},
}

\bib{B}{article}{
author={Berenstein, Arkady},
title={Group-like elements in quantum groups and Feigin’s conjecture},
eprint={q-alg/9605016}
}

\bib{BG-tony}{article}{
author={Berenstein, Arkady},
author={Greenstein, Jacob},
title={Generalized Joseph's decompositions},
journal={C. R. Math. Acad. Sci. Paris},
date={to appear},
}

\bib{BK}{article}{
   author={Berenstein, Arkady},
   author={Kazhdan, David},
   title={Geometric and unipotent crystals. II. From unipotent bicrystals to
   crystal bases},
   book={
      series={Contemp. Math.},
      volume={433},
      publisher={Amer. Math. Soc., Providence, RI},
   },
   date={2007},
   pages={13--88},
}

\bib{BZ}{article}{
author={Berenstein, Arkady},
author={Zelevinsky, Andrei},
title={Triangular bases in quantum cluster algebras},
journal={Int. Math. Res. Not.},
volume={2014},
date={2014},
number={6},
pages={1651--1688}
}

\bib{BS}{article}{
   author={Brieskorn, Egbert},
   author={Saito, Kyoji},
   title={Artin-Gruppen und Coxeter-Gruppen},
   language={German},
   journal={Invent. Math.},
   volume={17},
   date={1972},
   pages={245--271},
}

\bib{Del}{article}{
   author={Deligne, Pierre},
   title={Les immeubles des groupes de tresses g\'en\'eralis\'es},
   language={French},
   journal={Invent. Math.},
   volume={17},
   date={1972},
   pages={273--302},
}

\bib{Don}{article}{
   author={Donin, J.},
   title={Double quantization on the coadjoint representation of ${\rm
   sl}(n)$},
   note={Quantum groups and integrable systems, Part I (Prague, 1997)},
   journal={Czechoslovak J. Phys.},
   volume={47},
   date={1997},
   number={11},
   pages={1115--1122},
}

\bib{Dr}{article}{
   author={Drinfel{\cprime}d, V. G.},
   title={Almost cocommutative Hopf algebras},
   language={Russian},
   journal={Algebra i Analiz},
   volume={1},
   date={1989},
   number={2},
   pages={30--46},
}

\bib{Fos}{thesis}{
author={Foster, John},
title={\href{https://scholarsbank.uoregon.edu/xmlui/bitstream/handle/1794/13269/Foster_oregon_0171A_10698.pdf?sequence=1}{Semisimplicity of certain representation categories}},
type={Ph.D. thesis},
date={2013},
organization={U. of Oregon Eugene}
}

\bib{GZh}{article}{
   author={Gould, Mark D.},
   author={Zhang, Yao Zhong},
   title={Quantum affine Lie algebras, Casimir invariants, and
   diagonalization of the braid generator},
   journal={J. Math. Phys.},
   volume={35},
   date={1994},
   number={12},
   pages={6757--6773},
}

\bib{joseph-mock}{article}{
   author={Joseph, Anthony},
   title={On the mock Peter-Weyl theorem and the Drinfeld double of a
   double},
   journal={J. Reine Angew. Math.},
   volume={507},
   date={1999},
   pages={37--56},
   issn={0075-4102},
}

\bib{joseph3}{article}{
   author={Joseph, Anthony},
   title={On semi-invariants and index for biparabolic (seaweed) algebras.
   I},
   journal={J. Algebra},
   volume={305},
   date={2006},
   number={1},
   pages={487--515},
   issn={0021-8693},
}

\bib{Majid}{book}{
   author={Majid, Shahn},
   title={Foundations of quantum group theory},
   publisher={Cambridge University Press, Cambridge},
   date={1995},
   pages={x+607},
   isbn={0-521-46032-8},
}
	
\bib{Ki}{article}{
   author={Kimura, Yoshiyuki},
   title={Quantum unipotent subgroup and dual canonical basis},
   journal={Kyoto J. Math.},
   volume={52},
   date={2012},
   number={2},
   pages={277--331},
   issn={2156-2261},
}

\bib{Kim2}{article}{
   author={Kimura, Yoshiyuki},
   title={Remarks on quantum unipotent subgroup and dual canonical basis},
   eprint={1506.07912},
}
\bib{Kas}{article}{
   author={Kashiwara, Masaki},
   title={Global crystal bases of quantum groups},
   journal={Duke Math. J.},
   volume={69},
   date={1993},
   number={2},
   pages={455--485},
}

\bib{L1}{book}{
   author={Lusztig, George},
   title={Introduction to quantum groups},
   series={Progress in Mathematics},
   volume={110},
   publisher={Birkh\"auser Boston, Inc., Boston, MA},
   date={1993},
}

\bib{L2}{article}{
   author={Lusztig, George},
   title={Braid group action and canonical bases},
   journal={Adv. Math.},
   volume={122},
   date={1996},
   number={2},
   pages={237--261},
   issn={0001-8708},
}

\bib{QF1}{article}{
author={Qin, Fan},
title={Quantum groups via cyclic quiver varieties I},
eprint={1312.1101}
}

\bib{QF}{article}{
author={Qin, Fan},
title={Quantum groups via cyclic quiver varieties II: bases},
date={in preparation}
}

\bib{RST}{article}{
   author={Reshetikhin, N. Yu.},
   author={Semenov-Tian-Shansky, M. A.},
   title={Quantum $R$-matrices and factorization problems},
   journal={J. Geom. Phys.},
   volume={5},
   date={1988},
   number={4},
   pages={533--550 (1989)},
}

\bib{SST}{article}{
author={Samsonov, M.},
author={Stolin, A.},
author={Tolstoy, V.},
title={Quantum seaweed algebras and quantization of affine Cremmer-Gervais $r$-matrices}, 
eprint={math/0605236}
}

\bib{T}{article}{
author={Tanisaki, Toshiyuki},
title={Modules over quantized coordinate algebras and PBW-bases},
eprint={1409.7973},
}
\end{biblist}

\end{bibdiv}

\end{document}